\theoremstyle{plain}
\newtheorem{theorem}{Theorem}[section]
\newtheorem{lemma}[theorem]{Lemma}
\newtheorem{corollary}[theorem]{Corollary}
\newtheorem{proposition}[theorem]{Proposition}
\theoremstyle{definition}
\newtheorem{definition}{Definition}[section]
\theoremstyle{remark}
\newtheorem{remark}{Remark}[section]
\def\tightlist{}
\setlist[enumerate,1]{label=\arabic*.}
\setlist[enumerate,2]{label=(\alph).}
\def\maxwidth{\ifdim\Gin@nat@width>\linewidth\linewidth\else\Gin@nat@width\fi}
\def\maxheight{\ifdim\Gin@nat@height>\textheight\textheight\else\Gin@nat@height\fi}
\title{Optimality of Gerver's Sofa}
\author{Jineon Baek\thanks{Department of Mathematics, Yonsei University, Seoul, Korea. \texttt{jineon@yonsei.ac.kr}}}
\date{\today}
\begin{document}

\frontmatter
\maketitle

\chapter*{Abstract}
\addcontentsline{toc}{chapter}{\protect\numberline{~}Abstract}

We resolve the \emph{moving sofa problem} by showing that Gerver's construction with 18 curve sections attains the maximum area $2.2195\cdots$.

% We resolve the \emph{moving sofa problem},
% posed by Moser in 1966, which asks for the maximum area of a connected planar shape
% that can move around the right-angled corner of a L-shaped hallway with unit width.
% We prove the conjecture made by Gerver in 1994 that his construction with 18 curve sections, 
% known as \emph{Gerver's sofa}, attains the maximum area $2.2195\cdots$.

\begin{center}
\includegraphics{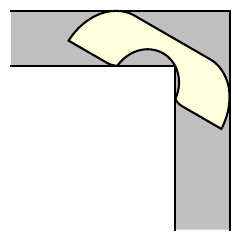}
\end{center}

% See \Cref{sec:moving-sofa-problem} for a detailed overview of the proof, which is recommended for all first-time readers and will suffice for most readers.
% \Crefrange{sec:monotone-sofas-and-caps}{sec:optimality-of-gerver's-sofa} contain the full details of the proof.
% \Cref{sec:concluding-remarks} ends with concluding remarks.

\tableofcontents

\mainmatter
\chapter{Moving Sofa Problem}
\label{sec:moving-sofa-problem}
% stub

\section{Introduction}
\label{sec:introduction}
Moving a large couch through a narrow hallway requires a well-planned pivoting. The \emph{moving sofa problem} is asked in a two-dimensional idealization of such a situation:

\begin{quote}
What is the largest area \(\alpha_{\text{max}}\) of a connected planar shape that can move around the right-angled corner of a hallway with unit width?
\end{quote}

Such a movable shape is called a \emph{moving sofa} that we define precisely as below.

\begin{definition}

Define the \emph{hallway} \(L\) as the union \(L := H_L \cup V_L\) of its \emph{horizontal side} \(H_L := (-\infty, 1] \times [0, 1]\) and \emph{vertical side} \(V_L := [0, 1] \times (-\infty, 1]\).

\label{def:hallway}
\end{definition}

\begin{definition}

A \emph{moving sofa} \(S\) is any translation\footnote{We allow arbitary translation of a moving sofa \(S\) to locate it at any position we want, even outside the hallway \(L\). Only a translation of \(S\) needs to be inside the horizontal side \(H_L\), navigate its way inside \(L\), and end at the vertical side \(V_L\).} of a nonempty, connected, and closed\footnote{Taking the closure of \(S\) does not hurt the movability.} subset of \(H_L\) that can be moved inside \(L\) by a continuous rigid motion to a subset of \(V_L\).\footnote{Recall that the \emph{special Euclidean group} \(\mathrm{SE}(2)\) is the Lie group of all sign-preserving isometries of \(\mathbb{R}^2\). The movability can be stated formally as follows: there is a continuous curve \(\Phi_t \in \text{SE}(2)\) parametrized by \(t \in [0, 1]\), such that \(\Phi_0\) is a translation, \(\Phi_0(S) \subseteq H_L\), \(\Phi_t(S) \subseteq L\) for all \(t \in [0, 1]\), and \(\Phi_1(S) \subseteq V_L\).}

\label{def:moving-sofa}
\end{definition}

The moving sofa problem combines the two objectives of \emph{motion planning} and \emph{area maximization}. Despite numerous works on each subject, the problem has remained open since the initial publication by Leo Moser in 1966 \autocite{moser1966problem}.

\begin{definition}

Denote the area (Borel measure) of a Borel measurable \(X \subseteq \mathbb{R}^2\) as \(|X|\).

\label{def:area}
\end{definition}

The best bounds known so far on the maximum area \(\alpha_{\max}\) of a moving sofa are summarized as
\begin{equation}
\label{eqn:area-bounds}
|G| = 2.2195\cdots \leq \alpha_{\max} \leq 2.37.
\end{equation}
The lower bound comes from Gerver’s sofa \(G\) of area \(|G| = 2.2195\dots\) constructed in 1992 \autocite{gerverMovingSofaCorner1992} (see \Cref{fig:gerver}). The upper bound comes from a computer-assisted approach of Kallus and Romik in 2018 \autocite{kallusImprovedUpperBounds2018}.

\begin{figure}
\centering
\includegraphics{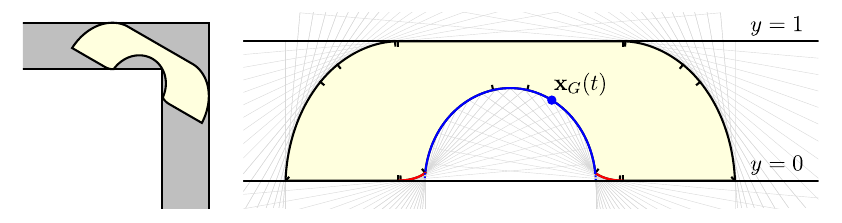}
\caption{Gerver’s sofa \(G\). The ticks denote the endpoints of 18 analytic curves and segments constituting the boundary of \(G\) \autocite{romikDifferentialEquationsExact2018}. The supporting hallways \(L_t\) containing \(G\) are depicted as grey in the right side.}
\label{fig:gerver}
\end{figure}

There were many evidences supporting that Gerver’s sofa \(G\) attains the maximum area \(\alpha_{\max} = |G|\). Gerver proved that a maximum-area moving sofa satisfies a certain local optimality condition (Theorem 1 of \autocite{gerverMovingSofaCorner1992}), and showed that his sofa \(G\) also satisfies the same condition (Theorem 2 of \autocite{gerverMovingSofaCorner1992}). Local optimality of \(G\) was further explored in \autocite{romikDifferentialEquationsExact2018} and \autocite{dengSolvingMovingSofa2024}, and many numerical experiments also supported \(\alpha_{\max} = |G|\) \autocite{gibbsComputationalStudySofas2014,batschNumericalApproachAnalysing2022,lengDeepLearningEvidence2024}.

We show that Gerver’s sofa \(G\) indeed attains the maximum area. The proof does not require computer assistance, except for numerical computations that can be done on a scientific calculator.

\begin{theorem}

Gerver’s sofa \(G\) attains the maximum area \(\alpha_{\max}\) of a moving sofa.

\label{thm:main}
\end{theorem}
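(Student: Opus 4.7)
The plan is to establish the matching upper bound $\alpha_{\max} \leq |G|$ by reformulating the problem as a variational problem over rotation trajectories, combined with Gerver's already-known local optimality of $G$.

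First I would pass to the co-moving frame: rather than sliding a sofa $S$ through the stationary hallway $L$, view $L$ as a rotated-translated copy $L_{\theta, v}$ sweeping around a stationary $S$. After a standard reduction showing that the rotation angle may be taken monotone and reparametrized as $\theta \in [0, \pi/2]$, every moving sofa $S$ is contained in the intersection $K(v) := \bigcap_{\theta \in [0, \pi/2]} L_{\theta, v(\theta)}$, where the continuous map $v : [0, \pi/2] \to \mathbb{R}^2$ encodes the translation trajectory. This gives $\alpha_{\max} \leq \sup_v |K(v)|$ and reduces the theorem to an extremal problem for the functional $F(v) := |K(v)|$ on the space of admissible trajectories.

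Next I would compute the first variation of $F$ and interpret its vanishing as Gerver's local optimality condition (Theorem 1 of \autocite{gerverMovingSofaCorner1992}). Gerver's Theorem 2 then identifies his explicit trajectory $v_G$ (tied to the 18 analytic arcs forming $\partial G$) as a critical trajectory with $F(v_G) = |G|$. The remaining task is to promote criticality to global optimality, for which I would prove that $F$ is \emph{concave} on the appropriately convex space of admissible $v$, or at least that $v_G$ is its unique critical point modulo translation. Concavity plus criticality of $v_G$ would yield $\alpha_{\max} \leq F(v_G) = |G|$, matching the lower bound and proving the theorem.

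The main obstacle is establishing concavity (or unique criticality) of $F$. The functional is defined by intersecting a curved family of bodies $L_{\theta, v(\theta)}$ whose positions depend nonlinearly on $v$, and the set $K(v)$ is only piecewise analytic, so naive second-derivative computations are obscured by moving boundary strata. My approach would be to rewrite $F$ as a line integral of a geometric density along $\partial K(v)$, expand the second variation as a symmetric bilinear form, and exploit the right-angled structure of $L$ (so that the horizontal and vertical sides decouple into two orthogonal families of constraints) to extract a definiteness property. Carefully matching Gerver's 18-piece boundary decomposition to the strata of $\partial K(v_G)$ and verifying the definiteness on the critical stratum should be the principal technical input; numerical values appearing in Gerver's construction will then need to be checked on a scientific calculator, as advertised.
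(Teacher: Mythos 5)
Your high-level architecture — work in the co-moving frame, express the sofa as the intersection of rotating hallways parametrized by a trajectory, identify Gerver's sofa as a critical point, and then promote criticality to global optimality via concavity — does mirror the paper's strategy. But the step you flag as ``the main obstacle'' is exactly where the proposal breaks, and it does not break for a fixable technical reason.

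The functional $F(v) := |K(v)|$, where $K(v) = \bigcap_\theta L_{\theta, v(\theta)}$, is \emph{not} concave in the trajectory $v$, and there is no hope of extracting a definiteness property by exploiting the right-angled structure of $L$ and decomposing the second variation. The obstruction is combinatorial: the intersection $K(v)$ is a convex cap minus a \emph{union} of wedges swept out by the inner corner of the hallway, and the union structure of the niche — which sides of which wedges touch which portions of the boundary, and whether the corner trajectory self-intersects — changes discontinuously as $v$ moves. Because of this there is no manageable closed-form area formula for $|K(v)|$ that holds uniformly in $v$, which means the second variation of $F$ cannot even be written as a single bilinear form on trajectory space. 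The paper makes this explicit: ``a major obstacle in showing the global optimality of $G$ is that there is no manageable formula of the area $\alpha(\mathbf{x})$ of the sofa in terms of the rotation path.''

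The paper's resolution is not to prove concavity of $F$. Instead it proves a new structural constraint on the maximizer — the \emph{injectivity condition} (\Cref{thm:injectivity}), derived by proving and solving a differential inequality on arm lengths — which forces the inner-corner trajectory of the optimal sofa to be a non-self-intersecting arc. This is then used to replace the true niche by a fixed-template underestimate $N'$ (a ``core'' traced by $\mathbf{x}_K$ and two ``tails'' bounded by convex arcs of auxiliary bodies $B, D$), giving a quadratic upper bound $\mathcal{Q}(K,B,D) = |K| - |N'|$ on an \emph{enlarged} convex domain $\mathcal{L}$ of convex-body triples (\Cref{def:cap-tail-space}, \Cref{def:upper-bound-q}). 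Concavity is then proved for $\mathcal{Q}$ on $\mathcal{L}$, not for $F$ on trajectory space, and the mechanism is Mamikon's theorem (\Cref{thm:mamikon}, \Cref{thm:upper-bound-concave}): attaching tangent-sweep regions makes the total area linear in $K$, so $\mathcal{Q}$ is ``linear minus convex,'' hence concave. Your plan substitutes the idea ``second variation of $F$ is negative definite'' for all of this, but that statement is false, and none of the auxiliary structure — injectivity, the triple $(K,B,D)$, the fixed-template region $N'$, Mamikon — appears in your sketch even in embryo.

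Two smaller gaps worth noting. First, you treat ``every moving sofa $S$ is contained in $K(v)$'' as essentially free; the paper points out that Gerver's own argument here has a real hole (the connected component issue in \Cref{thm:gerver}) which requires reworking the balancing argument with new ideas (\Cref{sec:logical-gap}, \Cref{thm:monotonization-is-connected}, \Cref{thm:balanced-polygon-sofa-connected}). Second, ``the rotation angle may be taken $\theta \in [0,\pi/2]$'' is also not a one-line reduction; the paper devotes a chapter to showing a balanced maximum sofa admits a movement with rotation angle exactly $\pi/2$ (\Cref{thm:angle}), using elementary geometry plus the balancedness of the maximizer, and this step is used downstream.
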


The problem is difficult because there is no universal formula for the area that works for all possible moving sofas. To address this, we prove a property called the \emph{injectivity condition} for a maximum-area moving sofa \(S_{\max}\). For each moving sofa \(S\) satisfying the condition, we will define a larger shape \(R\) that resembles the shape of Gerver’s sofa (\Cref{fig:upper-bound}). The area \(\mathcal{Q}(S)\) of \(R\) is then an upper bound of the area of \(S\), and \(\mathcal{Q}(S)\) matches the exact area of \(S\) if it is Gerver’s sofa \(G\). Injectivity condition of \(S\) ensures that the boundary of region \(R\) forms a Jordan curve, allowing us to compute \(\mathcal{Q}(S)\) by using Green’s theorem.

\begin{figure}
\centering
\includegraphics{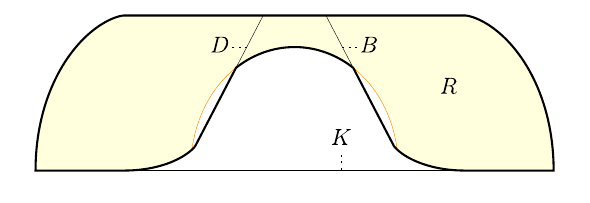}
\caption{A moving sofa \(S\) (light yellow) is enclosed by a slightly larger region \(R\) (bold lines) of area \(\mathcal{Q}(S)\) with a shape similar to Gerver’s sofa. Three convex bodies \(K, B\), and \(D\) represent different parts of \(R\) (bold and thin lines). \(K\) is a superset of \(R\), and \(B, D\) are subsets of \(R\).}
\label{fig:upper-bound}
\end{figure}

The upper bound \(\mathcal{Q}(S)\) of the area of a moving sofa \(S\) is then maximized with respect to \(S\) as follows. We use Brunn-Minkowski theory to express \(\mathcal{Q}\) as a quadratic functional on the space \(\mathcal{L}\) of tuples \((K, B, D)\) of convex bodies (\Cref{fig:upper-bound}). We use Mamikon’s theorem to establish the global concavity of \(\mathcal{Q}\) on \(\mathcal{L}\) (\Cref{fig:mamikon-thm-on-sofa}). We use the local optimality equations on Gerver’s sofa \(G\) by Romik \autocite{romikDifferentialEquationsExact2018} to show that \(S = G\) locally maximizes \(\mathcal{Q}(S)\). Because \(\mathcal{Q}\) is concave, \(G\) also maximizes \(\mathcal{Q}\) globally. As the upper bound \(\mathcal{Q}\) matches the area at \(G\), the sofa \(G\) also maximizes the area globally, estalishing \Cref{thm:main}.

The full proof of \Cref{thm:main} is divided into three main steps. Step 1 restricts the possible shapes of a maximum-area moving sofa \(S_{\max}\). Step 2 establishes the injectivity condition for \(S_{\max}\). Step 3 constructs the upper bound \(\mathcal{Q}(S)\) for the area of a moving sofa \(S\) satisfying the injectivity condition, and maximizes \(\mathcal{Q}(S)\) with respect to \(S\).

\begin{enumerate}
\def\labelenumi{\arabic{enumi}.}
\tightlist
\item
  Reduce the possible shapes of \(S_{\max}\).

  \begin{enumerate}
  \def\labelenumii{(\alph{enumii})}
  \tightlist
  \item
    \(S_{\max}\) is \emph{monotone} (\Cref{sec:_monotone-sofas-and-caps}, \Cref{sec:monotone-sofas-and-caps}).
  \item
    \(S_{\max}\) is \emph{balanced} (\Cref{sec:_balanced-maximum-sofas-and-caps}, \Cref{sec:balanced-maximum-sofas-and-caps}).
  \item
    \(S_{\max}\) have \emph{rotation angle} \(\pi/2\) (\Cref{sec:_rotation-angle-of-balanced-maximum-sofas}, \Cref{sec:rotation-angle-of-balanced-maximum-sofas}).
  \end{enumerate}
\item
  Show that \(S_{\max}\) satisfies the \emph{injectivity condition} (\Cref{sec:_injectivity-condition}, \Cref{sec:injectivity-condition}).
\item
  Establish the upper bound \(\mathcal{Q}\) of sofa area with injectivity condition (\Cref{sec:_optimality-of-gerver's-sofa}, \Cref{sec:optimality-of-gerver's-sofa}).

  \begin{enumerate}
  \def\labelenumii{(\alph{enumii})}
  \tightlist
  \item
    Define the convex domain \(\mathcal{L}\) of \(\mathcal{Q}\) (\Cref{sec:_definition-of-q}, \Cref{sec:domain-of-q}).
  \item
    Define a quadratic functional \(\mathcal{Q}\) on \(\mathcal{L}\) and show that it is an upper bound of sofa area (\Cref{sec:quadraticity-of-q}, \Cref{sec:definition-of-q}).
  \item
    Show that \(\mathcal{Q}\) is concave on \(\mathcal{L}\) (\Cref{sec:optimality-of-q-at-gerver's-sofa}, \Cref{sec:concavity-of-q}).
  \item
    Show that Gerver’s sofa is a local (and thus global) optimum of \(\mathcal{Q}\) (\Cref{sec:optimality-of-q-at-gerver's-sofa}, \Cref{sec:directional-derivative-of-q}).
  \end{enumerate}
\end{enumerate}

Step 1-(a) narrows down the possible shapes of \(S_{\max}\) to a \emph{monotone sofa}, a convex body with a dent carved out by the inner corner of the supporting hallways (\Cref{fig:monotone-sofa}). Step 1-(b) reprove an important local optimality condition by Gerver that the side lengths of \(S_{\max}\) should balance each other (\Cref{thm:gerver}). As the original proof by Gerver has a logical gap that does not address the connectedness of a moving sofa, we introduce new ideas and rework the proof carefully. Step 1-(c) uses previous steps and elementary geometry to show that \(S_{\max}\) rotates the full right angle in its movement.

Step 2 proves the \emph{injectivity condition} on \(S_{\max}\) which is the key for establishing the upper bound \(\mathcal{Q}\) later. It states that the trajectory of the inner corner \((0, 0)\) of \(L\) does not make self-loops in the perspective (frame of reference) of the moving sofa (\Cref{fig:injectivity-figure}). To prove this condition for \(S_{\max}\), we establish a new differential inequality on \(S_{\max}\) (\Cref{eqn:ineq-example}) heavily inspired by an ODE of Romik that balance the differential sides of Gerver’s sofa (\Cref{eqn:ode-example}).

Step 3-(a) extends the space of all moving sofas \(S\) with injectivity condition to a collection \(\mathcal{L}\) of tuples \((K, B, D)\) of convex bodies, so that each \(S\) maps to \((K, B, D) \in \mathcal{L}\) one-to-one (but not necessarily onto). The convex bodies describe different parts of the region \(R\) enclosing \(S\) (\Cref{fig:upper-bound}). Step 3-(b) defines the upper bound \(\mathcal{Q}\) on the extended domain \(\mathcal{L}\). We follow the boundary of \(R\) and express its area \(\mathcal{Q}\) using Green’s theorem and the quadratic area expressions on \(K, B\), and \(D\) from Brunn-Minkowski theory. We use injectivity condition and Jordan curve theorem to rigorously show that \(\mathcal{Q}(K, B, D)\) is an upper bound of the area of \(S\).

Step 3-(c) uses Mamikon’s theorem to establish the concavity of \(\mathcal{Q}\) on \(\mathcal{L}\) (\Cref{fig:mamikon-thm-on-sofa}). Step 3-(d) calculates the directional derivative of \(\mathcal{Q}\) at the convex bodies \((K, B, D) \in \mathcal{L}\) arising from Gerver’s sofa \(G\). The local optimality ODEs on \(G\) by Romik \autocite{romikDifferentialEquationsExact2018} are used to show that the directional derivative is always non-positive. This implies that \(G\) is a local optimum of \(\mathcal{Q}\) in \(\mathcal{L}\). The concavity of \(\mathcal{Q}\) on \(\mathcal{L}\) implies that \(G\) is also a global optimum of \(\mathcal{Q}\) in \(\mathcal{L}\). As the value of \(\mathcal{Q}\) at \(G\) matches the area, the sofa \(G\) also globally maximizes the area, completing the proof of \Cref{thm:main}.

\Cref{sec:monotone-sofas-and-caps} to \Cref{sec:optimality-of-gerver's-sofa} provide the full details of the proof of \Cref{thm:main}. Given the large volume, \Cref{sec:_monotone-sofas-and-caps} to \Cref{sec:_optimality-of-gerver's-sofa} overviews each chapter and explains its motivation. Readers are strongly encouraged to start with the overview sections to understand the core idea hidden in the details.

The notations and definitions used in the overviews will be often simpler that the ones used in the full proof. That is, the definitions made in this \Cref{sec:moving-sofa-problem} starting \Cref{sec:_monotone-sofas-and-caps} are specific to this chapter alone. Starting from \Cref{sec:monotone-sofas-and-caps}, all notations and definitions will be redefined for consistency in the detailed proofs. We always assume the plane with~\(x\)- and \(y\)-coordinates, and the variables~\(x\)~and~\(y\) are always associated with these coordinates.

\subsubsection{Acknowledgements}

The author thanks Dan Romik for his thorough support and encouragement that greatly helped the research process.~His feedback on the presentation significantly improved the clarity of this work. His package \texttt{MovingSofas.nb}\footnote{\url{https://www.math.ucdavis.edu/~romik/data/uploads/software/movingsofas-v1.3.nb}} helped making the intricate details of the problem much more accessible to the author. The package was also used to generate figures of Gerver’s sofa in this work.

Acknowledgment is extended to Joseph Gerver and Thomas Hales for their interest in this work and their in-depth discussions. The author also appreciates David Speyer’s efforts in understanding the details and help in refining the presentation.~The author thanks Michael Zieve and Joonkyung Lee for their mentorship and valuable advice.

Thanks are also due to Martin Strauss, Jeffrey Lagarias and Alexander Barvinok for their interest, help, and advice during the early stages of the research, as well as to Rolf Schneider for his suggestions on the proof of \Cref{thm:boundary-measure}. The author acknowledges Hyunuk Nam, Seewoo Lee, Changki Yun, Jaemin Choi, Yeonghyeon Kim, Joonhyung Shin, Yugeun Shim, and Seungwon Park for their interest, discussions, and encouragement.

A prior version of the proof of \Cref{thm:main} was computer-assisted. Although the software developed for this purpose\footnote{\url{https://github.com/jcpaik/sofa-designer}} does not appear in the final proof, it played an important role in shaping the intuition and strategy behind the full proof. The author thanks an anonymous mentor and Hyunuk Nam for their discussions that helped the development of the software.

This research was supported by the National Research Foundation of Korea (NRF) under grant MSIT NRF-2022R1C1C1010300.~The author also acknowledges support from the Korea Foundation for Advanced Studies during the completion of this research.

\section{Monotone Sofas and Caps}
\label{sec:_monotone-sofas-and-caps}
\begin{quote}
\textbf{Summary:} This section is an overview of \Cref{sec:monotone-sofas-and-caps}. We show that a moving sofa \(S\) of maximum area can be assumed to be a \emph{monotone sofa}, which is an intersection of the \emph{supporting hallways} \(L_t\) of \(S\) (\Cref{sec:_monotone-sofa}). A monotone sofa \(S\) is equal to its \emph{cap} \(K := \mathcal{C}(S)\), a convex body, subtracted by the \emph{niche} \(\mathcal{N}(K)\) determined by cap \(K\). Thus, the monotone sofa \(S\) can be identified with its cap \(K\), and the moving sofa problem becomes the maximization of the \emph{sofa area functional} \(\mathcal{A}(K) = |K| - |\mathcal{N}(K)|\) with respect to the cap \(K\) (\Cref{sec:_cap-and-niche}).
\end{quote}
\subsection{Monotone Sofa}
\label{sec:_monotone-sofa}
A fundamental idea of Gerver \autocite{gerverMovingSofaCorner1992} is to see a moving sofa \(S\) as the intersection of rotating hallways. Look at the movement of \(S\) inside the hallway \(L\) in perspective of \(S\). Then \(S\) is fixed in our frame of reference and \(L\) rotates and translates around \(S\) while containing \(S\) inside (bottom of \Cref{fig:movement-pov}). So \(S\) is a common subset of the rotating hallways (right side of \Cref{fig:gerver}).

\begin{figure}
\centering
\includegraphics{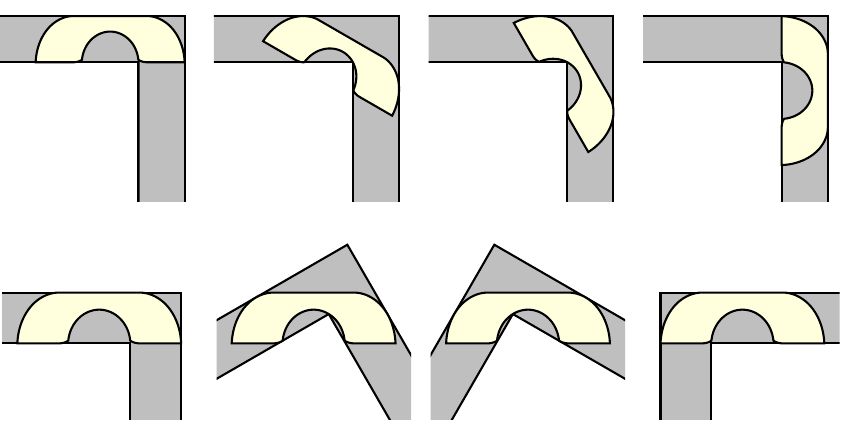}
\caption{The movement of a moving sofa in the perspective of hallway (top) and sofa (bottom).}
\label{fig:movement-pov}
\end{figure}

We will make the details of this idea precise. First, define the angle \(\omega\) that \(S\) rotates inside \(L\).

\begin{definition}

The \emph{rotation angle} \(\omega\) of a moving sofa \(S\) is the \emph{clockwise} angle that it rotates as it moves from \(H_L\) to \(V_L\) inside \(L\).\footnote{This is the angular difference between the two rigid motions \(\Phi_0\) and \(\Phi_1\) sending \(S\) to \(H_L\) and \(V_L\) respectively.}

\label{def:rotation-angle-intro}
\end{definition}

Define the unit-width strips \(H\) and \(V_\omega\).

\begin{definition}

Let \(R_t : \mathbb{R}^2 \to \mathbb{R}^2\) denote the rotation of \(\mathbb{R}^2\) around the origin by the counterclockwise angle of \(t \in \mathbb{R}\).

\label{def:rotation-map-intro}
\end{definition}

\begin{definition}

Define the horizontal strip \(H := \mathbb{R} \times [0, 1]\), vertical strip \(V := [0, 1] \times \mathbb{R}\), and its rotation \(V_\omega\) around the origin by a counterclockwise angle \(\omega \in \mathbb{R}\).

\label{def:strips-intro}
\end{definition}

Gerver showed that we can assume \(\omega \in (0, \pi/2]\) for the moving sofa problem (see \Cref{thm:rotation-angle-simple-bound} for details). Let \(S\) be any moving sofa with rotation angle \(\omega \in (0, \pi/2]\). Without loss of generality, we will always translate \(S\) and put it in the \emph{standard position} defined as below. Recall that a line \emph{supports} \(S\) if it contains a point of \(S\) but does not separate any two points of \(S\).

\begin{definition}

A moving sofa \(S\) with rotation angle \(\omega \in (0, \pi/2]\) is in \emph{standard position} if the upper sides \(y=1\) of \(H\) and \(x \cos \omega + y \sin \omega = 1\) of \(V_\omega\) support \(S\) from above.

\label{def:standard-position-intro}
\end{definition}

\begin{proposition}

For any moving sofa \(S\) with rotation angle \(\omega \in (0, \pi/2]\), there is a translation of \(S\) in standard position which is (i) unique if \(\omega < \pi/2\), or (ii) unique up to horizontal translations if \(\omega = \pi/2\).

\label{pro:standard-position-shape-intro}
\end{proposition}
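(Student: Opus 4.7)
The plan is to verify existence and uniqueness of the standard position by reducing it to two linear equations on the translation vector, with the $2\times 2$ coefficient matrix being invertible exactly when $\omega < \pi/2$.

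First I would establish that $S$ is bounded, hence compact. By the definition of the rotation angle, the initial motion $\Phi_0$ is a translation sending $S$ into $H_L \subseteq H$, so $S$ itself lies in some translate of $H$. The final motion $\Phi_1$ decomposes as $T \circ R_{-\omega}$ for a translation $T$ and sends $S$ into $V_L \subseteq V$, so $R_{-\omega}(S)$ lies in a translate of $V$, i.e.\ $S$ lies in a translate of $V_\omega = R_\omega(V)$. For $\omega \in (0, \pi/2]$, the strips $H$ and $V_\omega$ bound coordinates in two linearly independent directions, so $S$ is bounded in every direction; combined with closedness this gives compactness. In particular, the suprema $M_1 := \max_{(x,y) \in S} y$ and $M_\omega := \max_{(x,y) \in S} (x \cos\omega + y\sin\omega)$ are finite and attained.

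Next I would set up the system for a translation $(a,b)$ placing $S + (a,b)$ in standard position. The condition that $y = 1$ supports $S + (a,b)$ from above reads $b = 1 - M_1$, since once the maximum height equals $1$, every other point automatically satisfies $y \le 1$. Likewise the condition that the line $x\cos\omega + y\sin\omega = 1$ supports $S + (a,b)$ from above reads $a\cos\omega + b\sin\omega = 1 - M_\omega$. This is a linear system for $(a,b)$ with coefficient matrix
\[
\begin{pmatrix} 0 & 1 \\ \cos\omega & \sin\omega \end{pmatrix},
\]
whose determinant is $-\cos\omega$. For $\omega \in (0, \pi/2)$ the system has a unique solution, giving (i). For $\omega = \pi/2$ the two equations collapse to the same condition $b = 1 - M_1$, so $b$ is uniquely determined while $a$ is free, giving (ii); note that this agrees with the observation that $V_{\pi/2} = H$, so there is genuinely only one supporting constraint.

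The main obstacle, if any, is purely bookkeeping: one must be careful that the rotation angle decomposition of $\Phi_1$ produces exactly the strip $V_\omega$ and not $V_{-\omega}$, and that ``support from above'' is rendered consistently as a one-sided constraint so that only the maximum, not the minimum, of the relevant linear functional enters. Everything else is elementary linear algebra, and the proposition follows immediately from invertibility of the coefficient matrix.
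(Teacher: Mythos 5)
Your proposal is correct and follows essentially the same line of reasoning as the paper, which simply observes that the lines $y=1$ and $x\cos\omega + y\sin\omega = 1$ intersect transversally when $\omega < \pi/2$ and coincide when $\omega = \pi/2$. You fill in the details the paper leaves implicit — in particular, you explicitly verify compactness of $S$ so that the maxima $M_1$ and $M_\omega$ are attained, and you reduce the supporting conditions to an explicit $2\times 2$ linear system whose determinant $-\cos\omega$ vanishes precisely at $\omega=\pi/2$ (where the two equations collapse to the single constraint $b = 1 - M_1$ because $M_{\pi/2} = M_1$). This is the same argument, made self-contained.
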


\begin{proof}
Observe that the lines \(y=1\) and \(x \cos \omega + y \sin \omega = 1\) intersect properly if \(\omega < \pi/2\), and overlaps if \(\omega = \pi/2\).
\end{proof}

A moving sofa \(S\) put in standard position is a common subset of \(H\), \(V_\omega\) and rotating hallways \(L_t\) parametrized by its counterclockwise angle \(t \in [0, \omega]\).

\begin{proposition}

Fix an arbitrary moving sofa \(S\) with rotation angle \(\omega \in (0, \pi/2]\) in standard position. Then \(S\) is contained in each of the following sets.

\begin{enumerate}
\def\labelenumi{\arabic{enumi}.}
\tightlist
\item
  The horizontal strip \(H\).
\item
  For every angle \(t \in [0, \omega]\), the rotating hallway \(L_t\) which is a translation of \(R_t(L)\).
\item
  The rotated vertical strip \(V_\omega = R_\omega(V)\).
\end{enumerate}

\label{pro:moving-sofa-common-subset}
\end{proposition}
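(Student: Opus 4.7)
The plan is to switch to the perspective of the sofa: keep $S$ fixed and view the hallway $L$ as rotating and translating around it while containing it. First I would invoke the continuous family $\Phi_s \in \mathrm{SE}(2)$, $s \in [0,1]$, from footnote~3 of \Cref{def:moving-sofa}: $\Phi_0$ is a translation, $\Phi_0(S) \subseteq H_L$, $\Phi_s(S) \subseteq L$ for every $s$, and $\Phi_1(S) \subseteq V_L$. Each $\Phi_s$ decomposes as a counterclockwise rotation $R_{\alpha_s}$ composed with a translation; lifting through the covering $\mathbb{R} \to \mathbb{R}/2\pi\mathbb{Z}$, the map $s \mapsto \alpha_s$ becomes a continuous real-valued function with $\alpha_0 = 0$, and by \Cref{def:rotation-angle-intro} we have $\alpha_1 = -\omega$. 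Inverting the inclusions gives $S \subseteq \Phi_s^{-1}(L)$, and $\Phi_s^{-1}(L)$ is a translate of $R_{-\alpha_s}(L)$.

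For item~2, fix $t \in [0, \omega]$. Since $s \mapsto -\alpha_s$ is continuous with endpoint values $0$ and $\omega$, the intermediate value theorem produces some $s$ with $-\alpha_s = t$; setting $L_t := \Phi_s^{-1}(L)$ gives a translate of $R_t(L)$ containing $S$. For item~1, $\Phi_0^{-1}(H_L)$ is a translate of $H_L$, hence lies inside a translate of the strip $H$, so the $y$-extent of $S$ has width at most $1$. The standard position hypothesis that $y = 1$ supports $S$ from above forces $\sup y(S) = 1$ and $S \subseteq \{y \leq 1\}$, and combining with the width bound yields $y(S) \subseteq [0,1]$, i.e., $S \subseteq H$. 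Item~3 is analogous: $\Phi_1^{-1}(V_L)$ is a translate of $R_\omega(V_L)$, hence of the width-$1$ strip $V_\omega$; the supporting line $x\cos\omega + y\sin\omega = 1$ guaranteed by standard position then pins the translate down to $V_\omega$ itself.

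I do not anticipate any real obstacle; the argument is essentially bookkeeping on $\mathrm{SE}(2)$ together with a single application of the intermediate value theorem. The one point that requires care is the sign convention: the rotation angle $\omega$ of the sofa is measured \emph{clockwise}, while $R_t$ is counterclockwise, so the rotational part of $\Phi_s$ decreases from $0$ to $-\omega$ whereas the hallway $L_t = \Phi_s^{-1}(L)$, viewed from the sofa, rotates counterclockwise from $0$ to $\omega$. Once this is set up, items~1 and~3 reduce to the elementary fact that a width-$1$ strip is uniquely determined by one of its two bounding lines.
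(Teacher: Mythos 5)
Your proof is correct and follows essentially the same route as the paper: intermediate value theorem on the rotation angle for item~2, and the width-$1$ bound from containment in $H_L$ (resp.\ $V_L$) combined with the standard-position supporting lines for items~1 and~3. The only difference is that you spell out the $\mathrm{SE}(2)$ decomposition and the angle lift explicitly, where the paper argues more informally via ``frame of reference''; both are fine.
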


\begin{proof}
The initial position of \(S\) at \(L\) is contained in \(H_L \subset H\). So the width of \(S\) measured along the \(y\)-axis is at most one. Because \(S\) is in standard position, the line \(y=1\) supports \(S\) from above and we have \(S \subseteq H\).

The sofa \(S\) is rotated clockwise by \(\omega\) after its movement in \(L\). By the intermediate value theorem, for every \(t \in [0, \omega]\) there is a moment in the movement where a copy of \(S\) is rotated clockwise by \(t\) inside \(L\). See this in the frame of reference of \(S\) to conclude that \(S \subset L_t\) for some translation \(L_t\) of \(R_t(L)\).

The final position of \(S\) at \(L\) is contained in \(V_L \subset V\). Look at this in the frame of reference of \(S\). Then \(S\) is in a translation of \(V_\omega\), so the width of \(S\) measured along the direction \((\cos \omega, \sin \omega)\) is at most one. Because \(S\) is in standard position, the line \(x \cos \omega + y \sin \omega = 1\) is a supporting line above \(S\), and we have \(S \subseteq V_\omega\).
\end{proof}

By \Cref{pro:moving-sofa-common-subset}, any moving sofa \(S\) with rotation angle \(\omega \in (0, \pi/2]\) in standard position is contained in the intersection
\begin{equation}
\label{eqn:sofa-intersection}
\mathcal{I} := H \cap V_\omega \cap \bigcap_{t \in [0, \omega]} L_t.
\end{equation}
of two strips \(H\), \(V_\omega\) and the hallways \(L_t\) each rotated counterclockwise by \(t \in [0, \omega]\) and translated. So we have \(S \subseteq \mathcal{I}\), and it is natural to identify a maximum-area moving sofa \(S\) with the intersection \(\mathcal{I}\) and maximize \(\mathcal{I}\) by fixing \(H\), \(V_\omega\) and translating the hallways \(L_t\) for each \(t \in [0, \omega]\). All known derivations of Gerver’s sofa \(G\) \autocite{gerverMovingSofaCorner1992,romikDifferentialEquationsExact2018,dengSolvingMovingSofa2024} follow this approach.

However, recall that a moving sofa \(S\) is defined as a \emph{connected} set (e.g.~page 267 of \autocite{gerverMovingSofaCorner1992}). So the connectedness of \(\mathcal{I}\) in \Cref{eqn:sofa-intersection} is necessary to identify a maximum-area \(S\) with the intersection \(\mathcal{I}\). But it has not been rigorously established in the existing works that uses the idea \(S = \mathcal{I}\) \autocite{gerverMovingSofaCorner1992,romikDifferentialEquationsExact2018,kallusImprovedUpperBounds2018}.\footnote{Gerver requires a moving sofa \(S\) to be connected (Page 267 of \autocite{gerverMovingSofaCorner1992}). The proof of Theorem 1 in \autocite{gerverMovingSofaCorner1992} then defines a subcollection \(\mathcal{T}\) of intersections \(\mathcal{I}\) in \Cref{eqn:sofa-intersection} and uses compactness to find a set \(T \in \mathcal{T}\) of maximum area. However, Gerver does not show in his proof that the set \(T\) should be connected, which is a logical gap not trivial to fix. In \autocite{romikDifferentialEquationsExact2018}, Romik assumes the equality \(S = \mathcal{I}\) (Equation 8, p319) to give a streamlined derivation of Gerver’s sofa, but does not rigorously prove \(S = \mathcal{I}\) for a maximum-area \(S\). In \autocite{kallusImprovedUpperBounds2018}, Kallus and Romik require \(S\) to be connected and choose the largest-area connected component \(S\) of \(\mathcal{I}\), allowing the possibility of \(S \neq \mathcal{I}\).} Also, \Cref{pro:moving-sofa-common-subset} does not yet imply that the hallways \(L_t\) should move continuously with respect to \(t\).

(See the right side of \Cref{fig:monotone-sofa}) To resolve these issues, we let each rotated hallway \(L_t\) in the \Cref{eqn:sofa-intersection} be the \emph{supporting hallway} of angle \(t\) making contact with \(S\). We first give names to the different parts of \(L_t\) for further discussions.

\begin{definition}

(See \Cref{fig:hallway-detailed}) Let \(L_t\) be the hallway rotated counterclockwise by \(t \in [0, \omega]\) in \Cref{eqn:sofa-intersection}. Let \(\mathbf{x}(t)\) be the \emph{inner corner} of \(L_t\) corresponding to the point \((0, 0)\) of \(L\). Let \(\mathbf{y}(t)\) be the \emph{outer corner} of \(L_t\) corresponding to the point \((1, 1)\) of \(L\). Let \(a(t)\) and \(c(t)\) be the right and left \emph{outer walls} of \(L_t\) respectively, corresponding to the walls \(x=1\) and \(y=1\) of \(L\). Let \(b(t)\) and \(d(t)\) be the right and left \emph{inner walls} of \(L_t\) respectively, corresponding to the walls \(x=0\) and \(y=0\) of \(L\).

\label{def:rotating-hallway-parts-intro}
\end{definition}

Starting from any hallway \(L_t\) of counterclockwise angle \(t\) containing \(S\), the \emph{supporting hallway} is obtained by pushing \(L_t\) in the directions of \(-(\cos t, \sin t)\) and \(-(-\sin t, \cos t)\) continuously, until the two outer walls \(a(t)\) and \(c(t)\) of \(L_t\) makes contact with \(S\). As this move only pulls the inner walls \(b(t)\) and \(d(t)\) of \(L_t\) away from \(S\), the new supporting hallway \(L_t\) still contains \(S\) and now moves continuously with respect to \(t\).

After letting each \(L_t\) be the supporting hallways of \(S\), the intersection \(\mathcal{I}\) in \Cref{eqn:sofa-intersection} is now completely determined by \(S\), so that we will denote it as \(\mathcal{I}(S)\). We show that this \(\mathcal{I}(S)\) is always connected for any moving sofa \(S\) of rotation angle \(\omega \in (0, \pi/2]\) (\Cref{thm:monotonization-is-connected}). By looking at \(\mathcal{I}(S) \subseteq L_t\) in the frame of reference of \(L_t\), the intersection \(\mathcal{I}(S)\) also admits a continuous movement inside \(L\). So \(\mathcal{I}(S)\) is a moving sofa containing \(S\) (\Cref{thm:monotonization}).

Define a \emph{monotone sofa} as the intersection \(\mathcal{I}(S)\) of supporting hallways arising from some moving sofa \(S\). Then we can always assume that a maximum-area sofa \(S\) is monotone by taking the intersection \(\mathcal{I}(S)\) and making it larger. In particular, Gerver’s sofa \(G\) is a monotone sofa because \(G\) is the intersection of supporting hallways (\Cref{fig:gerver}). We also show that for any monotone sofa \(S\), taking the intersection again does not enlarge the set and \(S = \mathcal{I}(S)\) itself is the intersection of supporting hallways \(L_t\) of \(S\) (\Cref{thm:monotonization-idempotent}),

\begin{figure}
\centering
\includegraphics{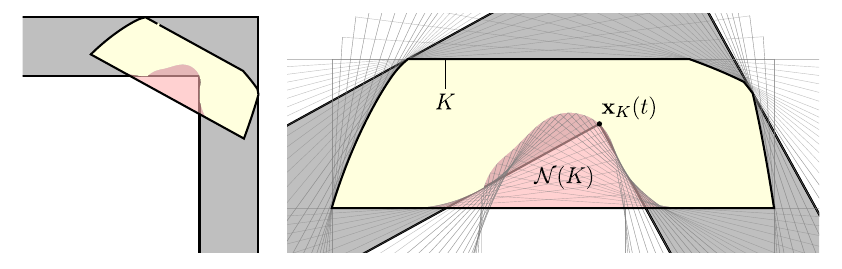}
\caption{The movement of a monotone sofa \(S\) with rotation angle \(\omega = \pi/2\) in perspective of the hallway (left) and the sofa (right).}
\label{fig:monotone-sofa}
\end{figure}

\subsection{Cap and Niche}
\label{sec:_cap-and-niche}
Let \(S\) be a monotone sofa with rotation angle \(\omega \in (0, \pi/2]\). The outer walls \(a(t)\) and \(c(t)\) of the supporting hallways \(L_t\) of \(S\) form the supporting lines of a convex body \(K := \mathcal{C}(S)\) that we call the \emph{cap} of \(S\). Define the \emph{parallelogram} \(P_\omega := H \cap V_\omega\). Then the cap \(K = \mathcal{C}(S)\) is
\begin{equation}
\label{eqn:sketch-cap}
\mathcal{C}(S) := P_\omega \cap \bigcap_{t \in [0, \omega]} Q_t^+
\end{equation}
where \(Q_t^+\) is the closed convex cone with vertex \(\mathbf{y}(t)\) bounded from above by the outer walls \(a(t)\), \(c(t)\) of \(L_t\). Because \(S\) was in standard position (\Cref{def:standard-position-intro}), the cap \(K\) is inscribed in the parallelogram \(P_\omega\) and makes contact with all four sides of \(P_\omega\) ((1) of \Cref{def:cap}).

The monotone sofa \(S\) is obtained from the cap \(K\) by subtracting the \emph{niche} \(\mathcal{N}(K)\) of cap \(K\), the union of all the triangular regions carved out by the inner walls \(b(t)\), \(d(t)\) of \(L_t\). Explicitly, define the \emph{fan}
\[
F_\omega := \left\{ (x, y) : y \geq 0, x \cos \omega + y \sin \omega \geq 0 \right\}
\]
bounded from below by the bottom sides of the parallelogram \(P_\omega\). Then the niche \(\mathcal{N}(K)\) is
\begin{equation}
\label{eqn:sketch-niche}
\mathcal{N}(K) = F_\omega \cap \bigcup_{t \in [0, \omega]} Q_t^-
\end{equation}
where \(Q_t^-\) is the open convex cone with vertex \(\mathbf{x}(t)\) bounded from above by the inner walls \(b(t)\) and \(d(t)\) of \(L_t\). We can derive \(S = K \setminus \mathcal{N}(K)\) from the equality \(L_t = Q_t^+ \setminus Q_t^-\) (\Cref{thm:monotonization-structure}). Note that \(L_t\) and \(Q_t^-\) can be recovered from the supporting lines of cap \(K\) (\Cref{lem:cap-same-support-function}), so the niche \(\mathcal{N}(K)\) is indeed determined by \(K\).

Because a monotone sofa \(S = K \setminus \mathcal{N}(K)\) is completely determined by its cap \(K := \mathcal{C}(S)\), we will identify \(S\) with its cap \(K\). We will prove \(\mathcal{N}(K) \subset K\) using elementary geometry (\Cref{thm:niche-in-cap}). Then the area \(|K| - |\mathcal{N}(K)|\) of \(S\) can be understood in terms of the cap and niche separately. We will define \(\mathcal{K}_\omega^\mathrm{c}\) as the space of all caps with rotation angle \(\omega \in (0, \pi/2]\). Now the moving sofa problem becomes the maximization of the \emph{sofa area functional} \(\mathcal{A}_\omega(K) := |K| - |\mathcal{N}(K)|\) on \(K \in \mathcal{K}_\omega^\mathrm{c}\).

\section{Balancing Argument of Gerver}
\label{sec:_balancing-argument-of-gerver}
\begin{quote}
\textbf{Summary:} This section reviews an important theorem of Gerver, stating that there is a maximum-area moving sofa which is a limit of polygons with opposite sides of the same length (\Cref{sec:balancing-argument}). We argue that the balancing argument of Gerver, while holds the essence of the proof, has a subtle logical gap that does not take account of the connectedness of a moving sofa (\Cref{sec:logical-gap}).
\end{quote}
\subsection{Balancing Argument}
\label{sec:balancing-argument}
Call a polygon \(P\) \emph{balanced} if, for any two parallel lines \(l^+\) and \(l^-\) of distance one on the plane, the total length of all edges of \(P\) in one line \(l^+\) is equal to that of the other line \(l^-\). Theorem 1 in \autocite{gerverMovingSofaCorner1992} by Gerver states that there exist a maximum-area moving sofa \(S_{\omega}\) that can be approximated sufficiently close by balanced polygons \(S_\Theta\).

We copy the full statement of the theorem as appears exactly in Gerver’s paper \autocite{gerverMovingSofaCorner1992} (footnote ours). We will rephrase the theorem in our words, so the reader may skim it for first read.

\begin{theorem}

(Theorem 1 in \autocite{gerverMovingSofaCorner1992}) There exists a real number \(\gamma\), \(\pi/3 \leq \gamma \leq \pi/2\), and a region \(S\), such that \(S\) can move around the corner of \(H\),\footnote{This \(H\) in \autocite{gerverMovingSofaCorner1992} is the hallway \(L\) in our paper.} rotating through an angle of \(-\gamma\) in the process,\footnote{This \(\gamma\) in \autocite{gerverMovingSofaCorner1992} is the rotation angle \(\omega\) in our paper. His proof of the bound \(\pi/3 \leq \gamma \leq \pi/2\) is factored out separately as \Cref{thm:rotation-angle-simple-bound}.} such that no region of greater area can move around the corner, and such that for arbitrarily large \(n\), \(S\) can be approximated arbitrarily closely by a polygonal region \(P_n\) with the following properties:\footnote{This \(P_n\) in \autocite{gerverMovingSofaCorner1992} is the polygon \(S_{\Theta_n}\) in our description (\Cref{eqn:polygon}).} The boundary of \(P_n\) is a balanced polygon. \(P_n\) is the intersection of \(n+1\) sets \(H_\alpha\) (where \(\alpha = k \gamma/n\) and \(0 \leq k \leq n\)). \(H_0\) is the half-strip\footnote{This \(H_0\) in \autocite{gerverMovingSofaCorner1992} is the horizontal side \(H_L\) of \(L\) in our paper.} \(x \leq 1\), \(0 \leq y \leq 1\). \(H_\gamma\) is a translation of the half strip\footnote{This \(H_{\gamma}\) in \autocite{gerverMovingSofaCorner1992} is the vertical side \(V_L\) of \(L\) in our paper rotated counterclockwise by \(\gamma\).} \(y \leq 1\), \(0 \leq x \leq 1\) rotated by angle \(\gamma\). For \(0 < \alpha < \gamma\), \(H_\alpha\) is a translation of \(H\) rotated by angle\footnote{This \(H_\alpha\) in \autocite{gerverMovingSofaCorner1992} is the rotating hallway \(L_\alpha\) containing \(S\) in our paper. The proof of Theorem 1 in \autocite{gerverMovingSofaCorner1992} actually takes each \(L_\alpha\) as the supporting hallway of angle \(\alpha\), using the support functions \(p(\alpha)\) and \(q(\alpha)\) of \(S\).} \(\gamma\).

\label{thm:gerver}
\end{theorem}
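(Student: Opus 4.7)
The plan is to combine Gerver's balancing argument with the monotone-sofa framework of \Cref{sec:_cap-and-niche} to sidestep the connectedness pitfall flagged in the footnote. The bound $\pi/3 \leq \gamma \leq \pi/2$ is handled separately by \Cref{thm:rotation-angle-simple-bound}, so I may fix any $\omega \in [\pi/3, \pi/2]$ and work with monotone sofas of rotation angle $\omega$, each identified with its cap $K \in \mathcal{K}_\omega^\mathrm{c}$ via the decomposition $S = K \setminus \mathcal{N}(K)$.

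First, for each partition $\Theta = \{0 = t_0 < t_1 < \cdots < t_n = \omega\}$ I would discretize: let $\mathcal{K}_\Theta^\mathrm{c} \subseteq \mathcal{K}_\omega^\mathrm{c}$ denote the compact convex subset of caps whose upper boundary is polygonal with edges supported only by the outer walls $a(t_k), c(t_k)$ for $t_k \in \Theta$. The sofa-area functional $\mathcal{A}_\omega$ attains a maximum $K_\Theta$ on this set, and $S_\Theta := K_\Theta \setminus \mathcal{N}(K_\Theta)$ is then automatically a connected monotone sofa. This is precisely where the reformulation in cap-space removes Gerver's logical gap: connectedness is baked into the cap--niche decomposition rather than imposed as an ad hoc constraint on polygons.

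Second comes the variational calculation. Let $\varphi$ parametrize a rigid outward translation of the $k$-th hallway $L_{t_k}$, which simultaneously translates the outer wall $a(t_k)$ and its parallel inner wall $b(t_k)$ (at unit distance) by the same amount. A short calculation from \Cref{eqn:sketch-cap} and \Cref{eqn:sketch-niche} gives
\[
\frac{d}{d\varphi}\bigl(|K| - |\mathcal{N}(K)|\bigr) = \ell_a - \ell_b,
\]
where $\ell_a$ and $\ell_b$ are the total edge lengths of $\partial S_\Theta$ lying on $a(t_k)$ and $b(t_k)$ respectively; an analogous identity holds for the $c(t_k)$--$d(t_k)$ direction. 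At the critical cap $K_\Theta$ all these derivatives vanish, and since the two walls inside each $L_{t_k}$ are the only parallel-at-unit-distance pairs on $\partial S_\Theta$, the polygon $S_\Theta$ is balanced.

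Third, choose $\Theta_n$ with mesh tending to zero. An approximation argument gives $\mathcal{A}_\omega(K_{\Theta_n}) \to \sup_K \mathcal{A}_\omega(K) = |S_\omega|$, and Blaschke selection extracts a Hausdorff-convergent subsequence $K_{\Theta_n} \to K_\infty \in \mathcal{K}_\omega^\mathrm{c}$ of maximum sofa area. The resulting monotone sofa $S_\infty := K_\infty \setminus \mathcal{N}(K_\infty)$ is then a maximum-area moving sofa approximated by the balanced polygons $S_{\Theta_n}$, yielding the theorem. The hardest step is making the variational identity rigorous: $\mathcal{N}(K)$ depends non-smoothly on $K$, because the inner wall $b(t_k)$ can switch which cone $Q_{t_j}^-$ occludes it as $\varphi$ varies. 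I expect this to be handled by verifying the identity as one-sided derivatives and using that both the left and right derivatives of $\mathcal{A}_\omega$ at a maximum are nonpositive, which sandwiches the difference $\ell_a - \ell_b$ to zero and gives the balance equation.
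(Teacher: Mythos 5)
Your overall plan — discretize over an angle set $\Theta$, maximize an area functional over polygon caps, extract the balance equation by a one-sided variational argument, and pass to a Hausdorff limit via Blaschke selection — matches the paper's architecture. The Nef-polygon-style bookkeeping you gesture at for the non-smooth dependence of the niche on the cap is also essentially what the paper does in \Cref{thm:simple-nef-polygon} and \Cref{lem:balancing}. But there is one genuine gap in the middle of your argument, and it is precisely the gap the whole of \Cref{sec:balanced-maximum-sofas-and-caps} is built to close.

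You assert that because you are optimizing over caps $K \in \mathcal{K}_\Theta^\mathrm{c}$, the set $S_\Theta := K_\Theta \setminus \mathcal{N}(K_\Theta)$ is ``automatically a connected monotone sofa,'' with connectedness ``baked into the cap--niche decomposition.'' This is false. The space of caps $\mathcal{K}_\omega^\mathrm{c}$ (and hence its subset $\mathcal{K}_\Theta^\mathrm{c}$) contains caps whose niche is \emph{not} contained in the cap — see \Cref{rem:niche-not-in-cap} for an explicit example — and \Cref{thm:monotonization-connected-iff} shows that $\mathcal{N}(K) \subseteq K$ is precisely equivalent to $K \setminus \mathcal{N}(K)$ being connected. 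So $K \setminus \mathcal{N}(K)$ is a moving sofa if and only if the containment holds, and nothing in the definition of the cap space forces it. Consequently, the maximizer $K_\Theta$ that you extract could a priori have $S_\Theta$ disconnected, and then it is not a valid moving sofa at all. If you instead restrict the optimization to the subset of caps with $\mathcal{N}(K) \subseteq K$, you face exactly Gerver's original problem: the balancing perturbation can push you off that subset (\Cref{fig:balancing-progress}), so the first-order stationarity conditions need not hold at the constrained maximum. The paper's resolution is to maximize the discrete functional $\mathcal{A}_\Theta$ over \emph{all} polygon caps, \emph{allowing} disconnected $S_\Theta$ so the unconstrained balancing argument applies (\Cref{thm:balanced-polygon-sofa}), and then to \emph{derive} the containment $\mathcal{N}_\Theta(K_\Theta) \subseteq K_\Theta$ as a consequence of balancedness via a separate nontrivial argument (\Cref{thm:balanced-polygon-sofa-connected}), namely that the bottom polyline $\mathbf{p}_{K_\Theta}$ is a permutation of the upper boundary of $K_\Theta$ and hence stays inside $K_\Theta$. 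Your proposal is missing this step.

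A secondary issue: you optimize $\mathcal{A}_\omega(K) = |K| - |\mathcal{N}(K)|$ with the full continuous niche, but a polygon cap $K$ still has a niche $\mathcal{N}(K)$ swept by a continuum of supporting hallways, so $K \setminus \mathcal{N}(K)$ is generally not a polygon and the sides-on-lines balancing statement doesn't directly parse. The paper instead maximizes $\mathcal{A}_\Theta(K) = |K| - |\mathcal{N}_\Theta(K)|$ with the discrete niche cut out by the finitely many hallways $L_{t_k}$, which is both the object Gerver's Theorem 1 refers to and the thing that makes the edge-length derivative $\ell_a - \ell_b$ a finite discrete quantity; $\mathcal{A}_\Theta$ is an upper bound for $\mathcal{A}_\omega$, and the two agree in the limit (\Cref{thm:limiting-maximum-cap-max}). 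You should switch to the discrete niche before running the variational computation, and then supply the containment argument from balancedness.
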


We now explain the \Cref{thm:gerver} and its proof by Gerver in our words. Fix the rotation angle \(\omega \in (0, \pi/2]\). As described in \Cref{sec:_monotone-sofas-and-caps}, a maximum-area moving sofa \(S\) is the connected intersection
\[
\mathcal{I} := H \cap V_\omega \cap \bigcap_{t \in [0, \omega]} L_t
\]
of two unit-width strips \(H, V_\omega\) and hallways \(L_t\) of counterclockwise angle \(t\). Discretize the problem by taking a finite nonempty subset \(\Theta\) of \((0, \omega)\) and the polygon intersection
\begin{equation}
\label{eqn:polygon}
S_\Theta := H \cap V_\omega \cap \bigcap_{t \in \Theta} L_t
\end{equation}
instead. The approximated problem now is to maximize the area of \(S_\Theta\) by translating the hallways \(L_t\) each rotated counterclockwise by \(t \in \Theta\).

(See \Cref{fig:balanced-sofa}) Gerver’s main idea in \autocite{gerverMovingSofaCorner1992} is that each maximum-area polygon \(S_\Theta\) in \Cref{eqn:polygon} should be balanced. \Cref{thm:gerver} states that, as \(n \to \infty\) and the angle set \(\Theta = \Theta_n := \left\{ i \omega / n : 1 \leq i < n \right\}\) gets denser in \([0, \omega]\), the balanced polygons \(S_\Theta\) should converge to some maximum-area moving sofa \(S_\omega\). For the proof of \Cref{thm:gerver}, Gerver uses the following \emph{balancing argument} to show that each \(S_\Theta\) is indeed balanced,\footnote{This balancing argument on \(S_{\Theta}\) (or \(P_n\) in \autocite{gerverMovingSofaCorner1992}) is done in the second paragraph of page 273 in \autocite{gerverMovingSofaCorner1992}.} and use compactness to show that such \(S_\Theta\)’s converge to some maximum-area sofa \(S_\omega\).

\begin{quote}
\textbf{Balancing Argument:} Assume for the sake of contradiction that a maximum-area polygon \(S_\Theta\) in \Cref{eqn:polygon} is not balanced. Take any pair of two parallel lines \(l^+\) and \(l^-\) of distance one, so that the total side lengths \(s^+\) and \(s^-\) of \(S_\Theta\) respectively on the lines \(l^+\) and \(l^-\) are not equal. Then all sides of \(S_\Theta\) on \(l^{\pm}\) are contributed by exactly one of \(X = H\), \(V_\omega\) or \(L_t\). Let \(\pm v\) be the normal unit vectors of parallel lines \(l^{\pm}\) respectively, directing outwards from each other. If \(s^+ > s^-\) (resp. \(s^- > s^+\)), translate \(X\) slightly by \(\epsilon v\) (resp. \(- \epsilon v\)) for sufficiently small \(\epsilon > 0\). If we pushed either \(X = H\) or \(V_\omega\), translate the whole \(S_\Theta\) with \(H, V_\omega, L_t\) together, to put \(H\) and \(V_\omega\) back to their initial positions. We just increased the area of \(S_\Theta\) by \(\epsilon |s^+ - s^-| + o(\epsilon) > 0\) by translating the hallways \(L_t\), contradicting the maximality of \(S_\Theta\).
\end{quote}

\begin{figure}
\centering
\includegraphics{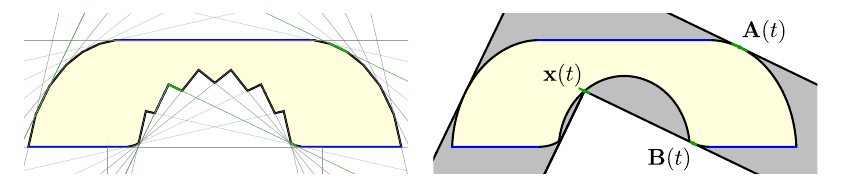}
\caption{A maximum-area polygon intersection \(S_\Theta\) should have balanced side lengths (left). By taking the angle set \(\Theta\) denser in \([0, \omega]\), the polygon \(S_\Theta\) converges to a maximum-area monotone sofa with balanced side lengths (right).}
\label{fig:balanced-sofa}
\end{figure}

\subsection{Logical Gap}
\label{sec:logical-gap}
The balancing argument of Gerver, while holds great importance and contains the gist of the proof of \Cref{thm:gerver}, has a subtle logical gap that does not address the connectedness of moving sofas.

In the first paragraph of \autocite{gerverMovingSofaCorner1992}, he defines a moving sofa as a \emph{connected} planar region. However, neither the connectedness of the polygons \(S_\Theta\), nor the limiting shape \(S_\omega\) of \(S_\Theta\) are established in the proof of Theorem 1 in \autocite{gerverMovingSofaCorner1992}.\footnote{In comparison, a lot of work in \Cref{sec:monotone-sofas-and-caps} and \Cref{sec:balanced-maximum-sofas-and-caps} are done to ensure the connectedness of the intersection \(\mathcal{I}\) or \(S_\Theta\) that we find.} To fill this gap in Gerver’s proof, it is natural to simply assume that each maximum-area polygon \(S_\Theta\) is taken \emph{among} connected intersections.\footnote{The other option is to allow each maximum-area polygon \(S_\Theta\) to be disconnected, but then proving that its limit \(S_\omega\) is connected would require completely new ideas.} However, this will not work because the balancing argument on \(S_\Theta\) may break the connectedness of \(S_\Theta\). See the following example.

(See \Cref{fig:balancing-progress}) Take the angle set \(\Theta = \left\{ \pi/6, \pi/3 \right\}\) and rotation angle \(\omega = \pi/2\). Define the unit vector \(\mathbf{u} := (\cos \pi/6, \sin \pi/6)\). Take a sufficiently small positive real number \(c > 0\). Take the hallways \(L_{\pi/6}\), \(L_{\pi/3}\) with angles in \(\Theta\) and inner corners \(\mathbf{x}(\pi/6) = (0, 1) - c \mathbf{u}\), \(\mathbf{x}(\pi/3) = (-0.9, 0.98)\) respectively. The intersection \(S_\Theta\) in \Cref{eqn:polygon} is not balanced, as the side of \(S_\Theta\) with normal angle \(\mathbf{u}\) is larger than the side with opposite normal angle \(-\mathbf{u}\) for all \(c \geq 0\) (depicted green). The balancing argument will now push \(L_{\pi/6}\) in the positive direction of \(\mathbf{u}\), decreasing \(c\) as long as \(c \geq 0\). But as \(c\) becomes negative, the intersection \(S_\Theta\) becomes disconnected.

Thus, while the balancing argument of Gerver can guarantee the balancedness of a maximum-area \(S_\Theta\), it cannot guarantee the connectedness of \(S_\Theta\). In the example above, it is actually possible to preserve the connectedness of \(S_\Theta\) by carefully choosing another pair of edges to balance. However, such an extra consideration is not also made in \autocite{gerverMovingSofaCorner1992}. The next \Cref{sec:_balanced-maximum-sofas-and-caps} provides a strategy that circumvents this issue.

\begin{figure}
\centering
\includegraphics{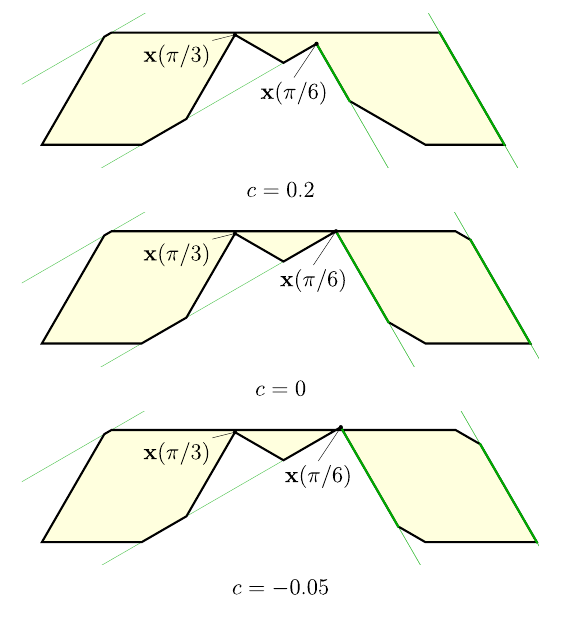}
\caption{Balancing argument breaks the connectivity of a polygon intersection \(S_\Theta\).}
\label{fig:balancing-progress}
\end{figure}

\section{Balanced Maximum Sofas and Caps}
\label{sec:_balanced-maximum-sofas-and-caps}
\begin{quote}
\textbf{Summary:} This section is an overview of \Cref{sec:balanced-maximum-sofas-and-caps}. We rework the proof of \Cref{thm:gerver} by Gerver, taking account of the connectedness of moving sofas. We show the existence of a \emph{balanced maximum sofa}, a monotone sofa of the maximum area that can be approximated sufficiently close by balanced polygons.
\end{quote}
\subsection{Limit of Maximum Polygon Caps}
\label{sec:limit-of-maximum-polygon-caps}
Our goal now is to bridge the gap discussed in \Cref{sec:logical-gap} and show that the \emph{connected} polygon intersection
\begin{equation}
\label{eqn:polygon-conn}
S_\Theta := H \cap V_\omega \cap \bigcap_{t \in \Theta} L_t
\end{equation}
of maximum area is balanced. Recall that the strips \(H\) and \(V_\omega\) are fixed, and each hallways \(L_t\) of counterclockwise angle \(t \in \Theta\) can translate freely.

We will first write the polygon \(S_\Theta = K \setminus \mathcal{N}_\Theta(K)\) as the difference of the \emph{polygon cap} \(K := \mathcal{C}_\Theta(K)\) and the \emph{polygon niche} \(\mathcal{N}_\Theta(K)\), analogous to the cap \(K\) and niche \(\mathcal{N}(K)\) of a monotone sofa \(S\) in \Cref{sec:_monotone-sofas-and-caps}. Explicitly, the polygon cap \(K\) is defined as
\[
\mathcal{C}_\Theta(K) := P_\omega \cap \bigcap_{t \in \Theta} Q_t^+
\]
following the \Cref{eqn:sketch-cap} of caps, and the polygon niche \(\mathcal{N}_\Theta(K)\) is defined as
\[
\mathcal{N}_\Theta(K) := F_\omega \cap \bigcup_{t \in \Theta} Q_t^-
\]
following the \Cref{eqn:sketch-niche} of niche. From \(L_t = Q_t^+ \setminus Q_t^-\), we can also obtain \(S_\Theta = K \setminus \mathcal{N}_\Theta(K)\) back.

Instead of maximizing \(S_\Theta\) directly, we will maximize the \emph{polygon area functional} \(\mathcal{A}_\Theta(K) := |K| - |\mathcal{N}_\Theta(K)|\) with respect to the polygon cap \(K\), where we allow the polygon sofa \(S_{\Theta} = K \setminus \mathcal{N}_\Theta(K)\) to be disconnected. For an example, we allow the case \(c = -0.05\) in \Cref{fig:balancing-progress} where \(\mathcal{N}_\Theta(K) \not\subset K\). Call such a maximizer \(K_\Theta\) of \(\mathcal{A}_\Theta(K)\) a \emph{maximum polygon cap}.

We will show in \Cref{sec:maximum-polygon-cap} that the side lengths of maximum polygon cap \(K_\Theta\) and niche \(\mathcal{N}_\Theta(K_\Theta)\) balance each other (\Cref{thm:balanced-polygon-sofa}) and that \(\mathcal{N}_\Theta(K_\Theta) \subset K_\Theta\) (\Cref{thm:balanced-polygon-sofa-connected}). This is the technical part of the proof that we outline in the next \Cref{sec:balancedness-of-maximum-polygon-cap}. By taking the angle set \(\Theta\) denser in \([0,\omega]\), the maximum polygon caps \(K_\Theta\) converge to some cap \(K_\omega\) with rotation angle \(\omega\) that we call as the \emph{balanced maximum cap} (\Cref{def:balanced-maximum-cap}).

As the maximum polygon caps \(K_\Theta\) converge to a balanced maximum cap \(K_\omega\), that \(\mathcal{N}_\Theta(K_\Theta) \subset K_\Theta\) implies \(\mathcal{N}(K_\omega) \subseteq K_\omega\) too, so that the set \(S_\omega := K_\omega \setminus \mathcal{N}(K_\omega)\) is connected and forms a monotone sofa.\footnote{\Cref{thm:niche-in-cap} shows that for any cap \(K\), we have \(\mathcal{N}(K) \subset K\) if and only if the set \(K \setminus \mathcal{N}(K)\) is connected.} We call such \(S_\omega\) a \emph{balanced maximum sofa} (\Cref{def:balanced-maximum-sofa}). As each \(K_\Theta\) is a maximizer of \(\mathcal{A}_\Theta\), the limit \(K_\omega\) is also a maximizer of \(\mathcal{A}_\omega\), and the area \(\mathcal{A}_\omega(K_\omega) = |K_\omega| - \left| \mathcal{N}(K_\omega) \right|\) of a balanced maximum sofa \(S_\omega\) achieves the maximum area among all monotone sofas of rotation angle \(\omega\).

\subsection{Balancedness of Maximum Polygon Cap}
\label{sec:balancedness-of-maximum-polygon-cap}
Now we overview the technical proof that the side lengths of a maximum polygon cap \(K_\Theta\) and its polygon niche \(\mathcal{N}_\Theta(K_\Theta)\) are balanced. That is, for any unit vector \(v\), the total length of all sides of \(K_\Theta\) and \(\mathcal{N}_\Theta(K_\Theta)\) with normal angle \(v\) is equal to that of normal angle \(-v\) (\Cref{def:polygon-cap-balanced}; see \Cref{fig:balanced-polygon-sofa-color}). We will also obtain \(\mathcal{N}_\Theta(K_\Theta) \subset K_\Theta\) as a consequence. We omit many details that can be found in the full \Cref{sec:balanced-maximum-sofas-and-caps}.

We extend the space \(\mathcal{K}_\Theta^\mathrm{c}\) of all polygon caps \(K\) with angle set \(\Theta\) using the \emph{support function} of \(K\).

\begin{definition}

Define \(u_t := (\cos t, \sin t)\) and \(v_t := (-\sin t, \cos t)\).

\label{def:unit-vectors-intro}
\end{definition}

\begin{definition}

For any planar convex body \(K\) (a compact, convex subset of \(\mathbb{R}^2\)), define the \emph{support function} \(h_K(t) := \sup \left\{ u_t \cdot p : p \in K \right\}\).

\label{def:support-function-intro}
\end{definition}

The support function \(h_K(t)\) of \(K\) is the signed distance from the origin \((0, 0)\) to the supporting line of \(K\) with normal vector \(u_t\) outwards from \(K\). Let \(\Theta^\diamond = \Theta \cup (\Theta + \pi/2) \cup \left\{ \omega, \pi/2 \right\}\). We embed the space \(\mathcal{K}_\Theta^\mathrm{c}\) of all polygon caps \(K\) with the angle set \(\Theta\) to the space \(\mathcal{H}_\Theta\) of all functions \(h : \Theta^\diamond \to \mathbb{R}\) by taking the support function \(h_K\) and restricting it to \(\Theta^\diamond\). This embedding allows to see \(\mathcal{H}_\Theta\) as an extension of \(\mathcal{K}_\Theta^\mathrm{c}\).

We will extend the polygon area functional \(\mathcal{A}_\Theta(K)\) on \(K \in \mathcal{K}_\Theta^\mathrm{c}\) to the larger space \(h \in \mathcal{H}_\Theta\). To do so, we write the cap \(K\) and niche \(\mathcal{N}_\Theta(K)\) as the \emph{Nef polygons} obtained from boolean set operations on half-planes. For any \(t \in S^1\) and \(h \in \mathbb{R}\), define the closed half-planes \(H_{\pm}(t, h)\) and the open half-planes \(H_{\pm}^\circ(t, h)\) with the boundary \(l(t, h)\) as the following.
\begin{gather*}
H_-(t, h) := \left\{ p \in \mathbb{R}^2 : p \cdot u_t \leq h \right\} \qquad H_-^{\circ}(t, h) := \left\{ p \in \mathbb{R}^2 : p \cdot u_t < h \right\} \\
H_+(t, h) := \left\{ p \in \mathbb{R}^2 : p \cdot u_t \geq h \right\} \qquad H_+^{\circ}(t, h) := \left\{ p \in \mathbb{R}^2 : p \cdot u_t > h \right\}
\end{gather*}

Let \(h := h_K\) be the support function of \(K\). Then we can then write the cap
\[
K = \bigcap_{t \in \left\{ \omega, \pi/2 \right\} } \big(H_-(t, h(t)) \cap H_+(t, h(t) - 1)\big) \cap \bigcap_{t \in \Theta \cup (\Theta + \pi/2)} H_-(t, h(t))
\]
and and the niche
\begin{align*}
\mathcal{N}_{\Theta}(K) & = \bigcap_{t \in \left\{ \omega, \pi/2 \right\} } H_+(t, h(t) - 1) \cap \phantom{{}X{}} \\
& \phantom{{}={}} \bigcup_{t \in \Theta} \left( H_-^\circ(t, h(t) - 1) \cap H_-^\circ(t + \pi/2, h(t + \pi/2) - 1) \right)
\end{align*}
purely as boolean operations on the half-planes \(H_{\pm}(t, h(t))\) and \(H_{\pm}(t, h(t) - 1)\) determined by \(h = h_K\) (\Cref{def:height-extensions}). This amounts to saying that \(K\) and \(\mathcal{N}_\Theta(K)\) are the \emph{Nef polygons} determined by such half-planes. The Nef polygon formulas \(\mathcal{C}_\Theta(h)\) and \(\mathcal{N}_\Theta(h)\) of \(K\) and \(\mathcal{N}_\Theta(K)\) respectively in \(h = h_K\) generalizes to all \(h \in \mathcal{H}_\Theta\). Now the polygon area \(\mathcal{A}_\Theta(K)\) extends to
\[
\mathcal{A}_\Theta(h) := |\mathcal{C}_\Theta(h)| - |\mathcal{N}_\Theta(h)|
\]
over all \(h \in \mathcal{H}_\Theta\).

To prove that a maximum polygon cap \(K_\Theta\) is balanced, we use the method of contradiction and assume that \(K_\Theta\) is not balanced. Let \(h := h_{K_\Theta}\) so that \(\mathcal{A}_\Theta(K_\Theta) = \mathcal{A}_\Theta(h)\). \Cref{lem:not-balanced-positive} carefully chooses the angle \(t \in \Theta^\diamond\) so that the balancing move on unbalanced sides of \(K_\Theta = \mathcal{C}_\Theta(h)\) and \(\mathcal{N}_\Theta(K_\Theta) = \mathcal{N}_\Theta(h)\) always moves a hallway in a \emph{positive} direction of \(u_t\). So the move increases the value of \(h(t)\) by a sufficiently small \(\epsilon > 0\) and makes \(\mathcal{A}_\Theta(h)\) slightly larger. Let \(h^+ \in \mathcal{H}_\Theta\) be the incremented function so that \(\mathcal{A}_\Theta(h^+) > \mathcal{A}_\Theta(h)\).

Our way of choosing the angle \(t\) guarantees that the convex polygon \(K^+ := \mathcal{C}_\Theta(h^+)\) obtained back from \(h^+\) is always a \emph{translation} of some polygon cap \(K^0 \in \mathcal{K}_\Theta^\mathrm{c}\) (\Cref{lem:height-positive-increment}). By translating \(K^+\) back to \(K^0\), we conclude \(\mathcal{A}_\Theta(h^+) = \mathcal{A}_\Theta(K^0)\) and thus
\[
\mathcal{A}_\Theta(K_\Theta) = \mathcal{A}_\Theta(h) < \mathcal{A}_\Theta(h^+) = \mathcal{A}_\Theta(K^0),
\]
reaching contradiction with the maximality of \(K_\Theta\). This is the sketch of the rigorous proof of \Cref{thm:balanced-polygon-sofa}.

We can then use the balancedness of \(K_\Theta\) and \(\mathcal{N}_\Theta(K_\Theta)\) to show that \(\mathcal{N}_\Theta(K_\Theta) \subset K_\Theta\). See \Cref{fig:balanced-polygon-sofa-color}. Balancedness essentially implies that the \emph{polyline} \(\mathbf{p}_{K_\Theta}\) obtained from the bottom sides of \(S_\Theta = K_\Theta \setminus \mathcal{N}_\Theta(K_\Theta)\) is a ‘permutation’ of the upper sides of polygon cap \(K_\Theta\). This implies that the polyline \(\mathbf{p}_{K_\Theta}\) should be contained inside \(K_\Theta\), so that \(\mathcal{N}_\Theta(K_\Theta) \subset K_\Theta\). This is the essential idea behind \Cref{thm:balanced-polygon-sofa-connected} that rigorously proves \(\mathcal{N}_\Theta(K_\Theta) \subset K_\Theta\).

\section{Rotation Angle of Balanced Maximum Sofas}
\label{sec:_rotation-angle-of-balanced-maximum-sofas}
\begin{quote}
\textbf{Summary:} This section is an overview of \Cref{sec:rotation-angle-of-balanced-maximum-sofas}. We show that the balanced maximum sofa found in the previous step admits a movement with rotation angle \(\pi/2\).
\end{quote}
\subsection{Statement}
\label{sec:_statement}
Recall that the \emph{rotation angle} \(\omega\) of a moving sofa \(S\) is the clockwise angle that \(S\) rotates during its movement inside \(L\) (\Cref{def:rotation-angle}). It can be strictly less than the angle \(\pi/2\) of the hallway \(L\). For example, the square \(S := [0, 1]^2\) have rotation angle \(\omega = 0\) as it can be moved inside \(L\) by only translation.

Gerver showed that there is a maximum-area moving sofa \(S_{\max}\) with rotation angle \(\pi/ 3 \leq \omega \leq \pi/2\) (\Cref{thm:gerver}). His argument, reproduced in the \Cref{thm:rotation-angle-simple-bound} below, actually proves a slightly improved lower bound \(\omega \geq \sec^{-1}(2.2) = 62.96\cdots\degree\).

\begin{theorem}

(Modification of page 271 of \autocite{gerverMovingSofaCorner1992}) Let \(S\) be any moving sofa of area \(\geq 2.2\). Then \(S\) admits a movement in \(L\) with rotation angle \(\omega \in [\sec^{-1}(2.2) , \pi/2]\).

\label{thm:rotation-angle-simple-bound}
\end{theorem}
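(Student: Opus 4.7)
The plan is a two-strip area bound: for any motion with rotation angle $\omega \in (0, \pi/2]$, \Cref{pro:moving-sofa-common-subset} places $S$ (in standard position) inside $P_\omega := H \cap V_\omega$, and a direct computation gives $|P_\omega| = \sec \omega$. The hypothesis $|S| \geq 2.2$ then immediately yields $\sec \omega \geq 2.2$, i.e., $\omega \geq \sec^{-1}(2.2)$.

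I would first establish the existence of a motion of $S$ with $\omega \in [0, \pi/2]$. Given any motion of $S$ with continuous rotation profile $\theta(t)$ and $\theta(0) = 0$, if $\theta(1) > \pi/2$ I would cut short at the first time $t_*$ with $\theta(t_*) = \pi/2$, then extend by a translation-only path to a configuration inside $V_L$; the structural identity $V_L = R_{\pi/2}(H_L) + (1, 0)$ ensures that after rotating by $\pi/2$ clockwise the configuration has exactly the right orientation to be slid into $V_L$. The degenerate case $\omega = 0$ is then ruled out directly: a pure-translation motion forces $S \subseteq H_L - \tau_0$ (so $y$-extent $\leq 1$) and $S \subseteq V_L - \tau_1$ (so $x$-extent $\leq 1$), hence $|S| \leq 1 < 2.2$, contradicting the hypothesis. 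This leaves $\omega \in (0, \pi/2]$.

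For such $\omega$, \Cref{pro:moving-sofa-common-subset} gives $S \subseteq P_\omega$ in standard position. The parallelogram $P_\omega$ is bounded by $y = 0, 1$ and by $x \cos\omega + y \sin\omega = 0, 1$, with horizontal sides of length $\sec \omega$ (as the rotated strip $V_\omega$ cut by $y = 0$ has $x$-range $[0, \sec\omega]$) and perpendicular height $1$, so $|P_\omega| = \sec \omega$. Therefore $2.2 \leq |S| \leq \sec \omega$, which gives $\cos \omega \leq 1/2.2$ and hence $\omega \geq \sec^{-1}(2.2)$, completing the required range $\omega \in [\sec^{-1}(2.2), \pi/2]$.

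The main obstacle is the first step, rigorously producing a motion with $\omega \leq \pi/2$. While the "cut-at-$\pi/2$-and-translate" reduction is intuitive from the shape of $L$, verifying that the translation-only extension from $\Phi_{t_*}(S)$ to a subset of $V_L$ can be carried out while staying inside $L$ requires a careful geometric argument, and this subtlety is glossed over in Gerver's original page~271. Once the upper bound $\omega \leq \pi/2$ is secured, the parallelogram-area computation is completely elementary, and it is precisely this computation that sharpens Gerver's $\pi/3$ lower bound (valid for $|S| \geq 2$, where $\sec^{-1}(2) = \pi/3$) to $\sec^{-1}(2.2)$ (valid for $|S| \geq 2.2$).
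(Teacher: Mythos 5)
Your approach mirrors the paper's: truncate the motion at the first time $\theta = \pi/2$, slide horizontally into $V_L$, and use the parallelogram bound $|S| \leq |P_\omega| = \sec\omega$ to get $\omega \geq \sec^{-1}(2.2)$. The slide-in concern you flag is resolvable along the paper's lines: $S_{\pi/2}$ has $x$-width at most $1$ and lies in $L \subseteq (-\infty, 1]^2$, and at each intermediate rightward shift $\delta \in [0, 1 - c]$ ($c$ the rightmost $x$-coordinate of $S_{\pi/2}$), points with $y \geq 0$ stay in $(-\infty, 1] \times [0, 1] \subseteq L$ while points with $y < 0$ necessarily started in $V_L$ and keep $x + \delta \in [0, 1]$. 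Also note that $V_L = R_{\pi/2}(H_L) + (1, 0)$ rotates $H_L$ \emph{counter}clockwise, the opposite of the sofa's clockwise rotation; the fact you actually need is simply that a $\pi/2$ rotation exchanges $x$- and $y$-widths, so $S_{\pi/2}$ inherits $x$-width $\leq 1$ from $\Phi_0(S) \subseteq H_L$.

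The genuine gap is that you silently assume $\omega \geq 0$. The rotation angle of \Cref{def:rotation-angle} is a signed clockwise angle, and \Cref{def:moving-sofa} does not forbid a motion in which $S$ arrives in $V_L$ rotated counterclockwise, i.e.\ $\omega < 0$. Your jump from ruling out $\omega = 0$ to ``this leaves $\omega \in (0, \pi/2]$'' omits the negative range entirely. For $\omega \in (-\pi/2, 0)$ the parallelogram bound $|H \cap V_\omega| = \sec|\omega|$ still applies by symmetry, but for $\omega \leq -\pi/2$ the strips $H$ and $V_\omega$ become parallel or worse and $H \cap V_\omega$ is unbounded, so the estimate fails. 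The paper disposes of $\omega \leq -\pi/4$ by a separate argument: the intermediate value theorem gives a moment when $S$ is rotated counterclockwise by $\pi/4$ inside $L$, and in the frame of $S$ this places $S$ inside $H \cap L'$ where $L'$ is a hallway rotated by $\pi/4$, a region of area only $\sqrt{2} < 2.2$. You need such a case to make the reduction to $\omega \geq 0$ rigorous.
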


\begin{proof}
Assume any movement of \(S\) inside \(L\) with rotation angle \(\omega \in \mathbb{R}\). Without loss of generality, we can assume that \(S\) is in its initial position at \(H_L \subseteq H\).

First assume \(\omega \leq -\pi/4\). By the intermediate value theorem, there is a moment where \(S\) is rotated clockwise by \(-\pi/4 \in [\omega, 0]\) (or, counterclockwise by \(\pi/4\)) inside \(L\) during its assumed movement. Looking at this in the perspective of \(S\), the sofa \(S\) is contained in a hallway \(L'\) rotated clockwise by \(\pi/4\) and translated. The intersection \(H \cap L'\) containing \(S\) have area \(\sqrt{2} = 1.4142\dots\) and we get contradiction as \(|S| \geq 2.2\).

Now assume \(|\omega| < \sec^{-1}(2.2)\). The sofa \(S\) is rotated clockwise by \(\omega\) in its final position at \(V_L \subseteq V\). Look at this in perspective of \(S\), then \(S\) is contained in a translation of \(V_\omega\). The intersection of \(H\) and a translation of \(V_\omega\) is a parallelogram of area \(\sec(\omega) < 2.2\). This contradicts \(|S| \geq 2.2\).

So we should have \(\sec^{-1}(2.2) \leq \omega\) because \(\sec^{-1}(2.2) = 62.96\cdots\degree > \pi/4\). We finish the proof by assuming \(\omega > \pi/2\) and finding another movement of \(S\) in \(L\) with rotation angle \(\pi/2\). By the intermediate value theorem, there is a moment in the movement of \(S\) with rotation angle \(\omega\), where \(S\) is rotated clockwise by \(\pi/2 \in [0, \omega]\) in \(L\). Call the position of \(S\) at this moment \(S_{\pi/2}\). Instead of following the rest of the movement of \(S\), translate \(S_{\pi/2}\) horizontally in the positive direction of the \(x\)-axis until it makes contact with the outer wall \(x=1\) of \(L\). Since \(S\) was initially in \(H_L\), the width of \(S_{\pi/2}\) measured along the \(x\)-axis is at most one. So after the horizontal translation, \(S_{\pi/2}\) will lie completely inside the destination \(V_L\), finishing a full moment of \(S\) with rotation angle \(\pi/2\).
\end{proof}

By \Cref{thm:rotation-angle-simple-bound} and Gerver’s sofa of area \(|G| = 2.2195\dots > 2.2\), we can assume the rotation angle \(\omega \in [\sec^{-1}(2.2), \pi/2]\) of a maximum-area moving sofa. \Cref{sec:rotation-angle-of-balanced-maximum-sofas} proves the equality \(\omega = \pi/2\) for balanced maximum sofas.

\begin{theorem}

Let \(S_\omega\) be an arbitrary balanced maximum sofa with area \(\geq 2.2\) and rotation angle \(\omega \in [\sec^{-1}(2.2) , \pi/2]\). Then a rotated copy of \(S_\omega\) admits a movement inside \(L\) with rotation angle \(\omega = \pi/2\).

\label{thm:angle}
\end{theorem}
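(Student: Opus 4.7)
My plan is to show that when $S_\omega$ is placed in standard position, its original motion of rotation angle $\omega$ can be extended at its endpoint so the total rotation reaches $\pi/2$. In standard position, the cap $K_\omega = \mathcal{C}(S_\omega)$ is inscribed in $P_\omega = H \cap V_\omega$ and touches all four sides, so $S_\omega$ has width exactly $1$ both in the direction $(0, 1)$ normal to $H$ and in the direction $(\cos\omega, \sin\omega)$ normal to $V_\omega$. The extended motion I envision has three phases: run the original motion to place $S_\omega$ inside $V_L$ with orientation rotated clockwise by $\omega$; translate $S_\omega$ downward along $V_L$ far below the inner corner of $L$ so that the sofa only interacts with the straight walls of $V_L$; and then rotate $S_\omega$ clockwise by an additional $\pi/2 - \omega$ inside $V_L$.

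The crucial step is the third phase, which succeeds if and only if $S_\omega$ has width at most $1$ in every intermediate direction $(\cos\theta, \sin\theta)$ for $\theta \in [\omega, \pi/2]$. I would prove this intermediate width bound by contradiction, using the maximality and balancedness of $S_\omega$: if the width strictly exceeded $1$ at some $\theta^* \in (\omega, \pi/2)$, I would construct a perturbation of $K_\omega$ that trims the offending bulge in direction $(\cos\theta^*, \sin\theta^*)$ and redistributes area to produce a valid cap in $\mathcal{K}_\omega^{\mathrm{c}}$ of strictly larger sofa area $\mathcal{A}_\omega$, contradicting the maximality of $K_\omega$ among balanced caps of rotation angle $\omega$.

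The main obstacle will be controlling the perturbation so that it simultaneously respects every hallway constraint $L_t$ with $t \in [0, \omega]$, and so that the area gained by trimming is not absorbed by a corresponding growth of the niche $\mathcal{N}(K_\omega)$. My preferred route is to argue at the polygonal level of \Cref{sec:balancedness-of-maximum-polygon-cap}: for each balanced maximum polygon cap $K_\Theta$ with $\Theta \subset (0, \omega)$, enlarge $\Theta$ by appending angles in $(\omega, \pi/2]$, rerun the balancing argument on the larger angle set, and pass to the limit as $\Theta$ becomes dense in $(0, \pi/2)$, extracting a balanced maximum sofa of rotation angle $\pi/2$ of at least as large area. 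The rotated copy of $S_\omega$ appearing in the theorem statement then absorbs the shift in the orientation of the standard position as the rotation angle changes from $\omega$ to $\pi/2$.
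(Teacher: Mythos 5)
The overall skeleton of your proof — place $S_\omega$ in standard position, observe that the cap is circumscribed in $P_\omega$, run the original motion, slide deep into the straight part of $V_L$, rotate clockwise by the extra $\pi/2-\omega$, and reduce the whole argument to the width bound $h_{S_\omega}(\theta)+h_{S_\omega}(\theta+\pi)\le 1$ for all $\theta\in[\omega,\pi/2]$ — is exactly the right framing, and it matches the paper's observation that $S_\omega\subseteq P_\omega\setminus\Delta_\omega$ has width $\le 1$ in precisely these directions. Where your proposal has a genuine gap is in the argument for that width bound, and both of your suggested routes to it run into concrete obstructions.

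Your first route is to \emph{trim the bulge} in direction $u_{\theta^*}$ and ``redistribute'' to find a cap of strictly larger $\mathcal{A}_\omega$. But trimming $K_\omega$ strictly decreases $|K|$, and it also changes the niche $\mathcal{N}(K)$ in a way that is not monotone in a useful direction, so there is no reason to expect a net gain in $|K|-|\mathcal{N}(K)|$. More fundamentally, the bulge sits at the bottom corner near $O$, which is exactly where the niche lives; the area you'd gain by deleting the bulge is the same area that the paper must show is already carved away by the niche, so a perturbation/variation argument merely moves the difficulty around rather than resolving it. Your proposal also never invokes the hypothesis $|S_\omega|\ge 2.2$, which is essential: the paper only establishes the width bound (via disjointness from $\Delta_\omega$) using the area bound to push two contact points $q_0,q_1$ far enough from $O$ (\Cref{lem:cap-support-elementary-bound}, \Cref{def:d-min}).

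Your second route, the polygonal re-balancing argument, has a logical defect in the direction of the inequality. A polygon cap $K_\Theta$ with rotation angle $\omega<\pi/2$ and angle set $\Theta\subset(0,\omega)$ is an intersection involving a half-plane with normal angle $\omega+\pi\in(\pi,3\pi/2)$, which is \emph{not} an admissible normal angle for a cap with rotation angle $\pi/2$ (\Cref{def:cap}(2)); so $K_\Theta$ does not automatically live in $\mathcal{K}^{\mathrm{c}}_{\Theta'}$ after enlarging $\Theta$ to $\Theta'\subset(0,\pi/2)$. Even setting that aside, the effect of enlarging $\Theta$ is not favorable: adding angles adds hallway constraints, so $\mathcal{A}_{\Theta}(K)\ge\mathcal{A}_{\Theta'}(K)$ pointwise and $\max_K\mathcal{A}_\Theta(K)\ge\max_K\mathcal{A}_{\Theta'}(K)$, i.e.\ the maximal area can only go \emph{down}, while simultaneously the parallelogram relaxes from $P_\omega$ to $P_{\pi/2}=H$, which could push it back up — and there is no a priori reason the second effect dominates. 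The content of \Cref{thm:balanced-consumed} is precisely that, for the balanced maximum sofa with area $\ge 2.2$, the extra hallway constraints for $t\in(\omega,\pi/2)$ are vacuous because $S_\omega$ already avoids $\Delta_\omega$; this is established using the balancedness inequality $w_K^\circ\le\sigma_K(\pi/2)$ (\Cref{thm:balanced-maximum-sofa-ineq}), not by a re-balancing. Without an argument of comparable specificity, the width bound, and hence the theorem, remains unproved.
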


To prove \Cref{thm:angle}, we will show that (a rotated copy of) \(S_\omega\) can rotate an extra angle of \(\pi/2 - \omega\) inside \(H_L\) before its movement with angle \(\omega\).

The main step is to show that a triangular region \(\Delta_\omega\) is disjoint from \(S_\omega\) (\Cref{thm:balanced-consumed}). Recall that the cap of the monotone sofa \(S_\omega\) is inscribed in the parallelogram \(P_\omega\) of width 1 with the lower-left corner \(O := (0, 0)\) and upper-right corner \(o_\omega := (\tan(\pi/4 - \omega/2), 1)\), making an angle of \(\omega + \pi/2\) at both corners (see \Cref{eqn:sketch-cap} and the left of \Cref{fig:triangle-full}). The region \(\Delta_\omega\) is then defined as the triangular region near \(O\) formed by three vertices \(O\), \(o_\omega - (0, 1)\), and \(o_\omega - (\cos \omega, \sin \omega)\).

Once we show the main step that \(S_\omega \subseteq P_\omega\) is disjoint from \(\Delta_\omega\), we obtain enough room to rotate \(S_\omega\) \emph{counterclockwise} by an angle of \(\pi/2 - \omega\) inside the horizontal side \(H_L\) (see the right of \Cref{fig:triangle-full}). Follow this rotation of \(S_\omega\) in reverse so that it rotates \emph{clockwise} by \(\pi/2 - \omega\). Then follow the original movement of \(S_\omega\) in \(L\) with rotation angle \(\omega\). We have just found the movement of a rotated copy of \(S_\omega\) with full rotation angle \(\pi/2\), proving \Cref{thm:angle}.

\begin{figure}
\centering
\includegraphics{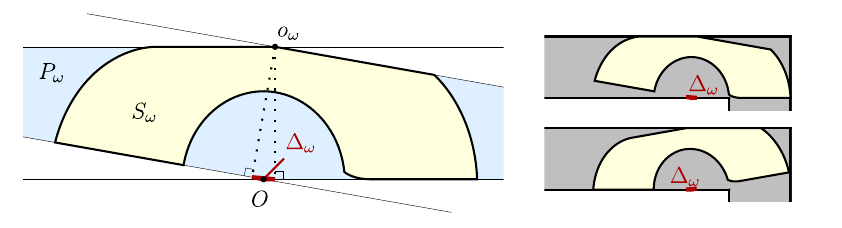}
\caption{The moving sofa \(S_\omega\) of maximum area with a fixed rotation angle \(\omega\) is inscribed in the parallelogram \(P_\omega\) and disjoint from the triangular region \(\Delta_\omega\) (left). So it can rotate counterclockwise by the angle of \(\pi/2-\omega\) inside the horizontal side \(H_L\) (right).}
\label{fig:triangle-full}
\end{figure}

\subsection{Proof Outline}
\label{sec:proof-outline}
We now outline the proof of the main step that \(S_\omega\) is disjoint from \(\Delta_\omega\) (\Cref{thm:balanced-consumed}). We use the balancedness of \(S_\omega\) established in \Cref{sec:_balanced-maximum-sofas-and-caps}. In particular, the horizontal sides of \(S_\omega\) should be equal in their length (the blue sides of \Cref{fig:balanced-sofa} and \Cref{fig:proof-outline}).

\begin{figure}
\centering
\includegraphics{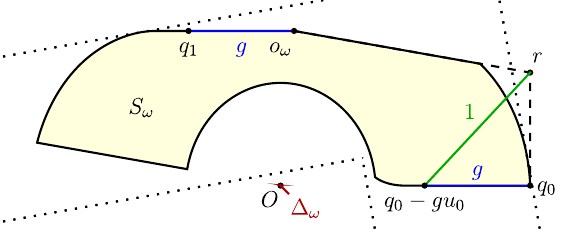}
\caption{Proof of \Cref{thm:angle}. We find two points \(q_0, q_1 \in S_\omega\) sufficiently away from the origin. Then we take a supporting hallway \(L_t\) containing \(S_\omega\) and thus the two points \(q_0, q_1 \in S_\omega\) (dashed), so that \(L_t\) is disjoint from \(\Delta_\omega\).}
\label{fig:proof-outline}
\end{figure}

(See \Cref{fig:proof-outline}) Let \(K\) be the cap of the monotone sofa \(S_\omega\). We will find two points \(q_0, q_1 \in S_\omega\) on the upper boundary of \(K\) sufficiently away from \(O\). Have the right endpoint \(q_0\) of \(K\) on the \(x\)-axis sufficiently away from the origin \(O\) by the distance \(d_{\omega, \min}\) (\Cref{def:d-min}), by using \(|K| > 2.2\) and reflecting \(K\) along the line passing through \(O\) and \(o_\omega\) if necessary. Take a right triangle with the right-angled vertex \(q_0\) and side 1 (green), to find a lower bound \(g\) (blue) of the horizontal side length of \(S_\omega\) on the line \(y = 0\). By the balancedness of \(S_\omega\), the side length of \(S_\omega\) on the line \(y=1\) is also bounded from below by \(g\). Define \(q_1 \in K\) as the point on the line \(y=1\) exactly \(g\) away from the endpoint \(o_\omega\) on this side.

Now that we found two points \(q_0, q_1 \in S_\omega\), we take the supporting hallway \(L_t\) of \(S_\omega\) angle \(t = \pi/2 - \omega \in (0, \omega)\). Using that \(L_t\) contains the two points \(q_0, q_1 \in S_\omega\) sufficiently away from \(O\), technical calculations show that the region \(\Delta_\omega\) must be enclosed by the inner walls of \(L_t\) as in \Cref{fig:proof-outline}. So \(\Delta_\omega\) must be disjoint with \(L_t\) and thus also with \(S_\omega\) as desired.

\section{Surface Area Measure}
\label{sec:_surface-area-measure}
\begin{quote}
\textbf{Summary:} This section is an overview of \Cref{sec:surface-area-measure}. A planar convex body \(K\) does not necessarily have a differentiable boundary. Using the Lebesgue–Stieltjes measure and Brunn-Minkowski theory, we prove an equality that allows us to use the \emph{surface area measure} \(\sigma_K\) of \(K\) as a weak derivative of the boundary of \(K\).
\end{quote}

(See \Cref{fig:convex-body}) A \emph{planar convex body} \(K\) is a nonempty, compact, and convex subset of \(\mathbb{R}^2\). For any planar convex body \(K\) and angle \(t\), define \(l_K(t)\) as the supporting line of \(K\) with normal vector \(u_t\) directing outwards from \(K\). Define the \emph{edge} \(e_K(t)\) of \(K\) as the intersection \(e_K(t) := K \cap l_K(t)\).

Let \(S^1\) be the circle taken as \(\mathbb{R}\) modulo \(2\pi\). The \emph{surface area measure} \(\sigma_K\) of \(K\) is a measure on \(S^1\) that describes the length of the edges \(e_K(t)\) of a planar convex body \(K\) in terms of the angle \(t\). For any Borel subset \(X\) of \(S^1\), the value \(\sigma_K(X)\) is equal to the one-dimensional length of the set \(\bigcup_{t \in X} e_K(t)\). We give two examples.

\begin{enumerate}
\def\labelenumi{\arabic{enumi}.}
\item
  If \(K\) is the rectangle \([-1, 1] \times [0, 1]\), then \(\sigma_K\) measures the side lengths of \(K\). That is, \(\sigma_K\left( \left\{ t \right\} \right)\) is equal to \(1\) if \(t = 0, \pi\), and equal to \(2\) if \(t = \pi/2, 3\pi/2\). The measure \(\sigma_K\) on \(S^1\) is zero outside the finite set \(\left\{ 0, \pi/2, \pi, 3\pi/2 \right\}\) of normal angles of \(K\).

  In general, if \(K\) is a polygon, then \(\sigma_K\) at the singleton \(\left\{ t \right\}\) of angle \(t\) will measure the side length of the edge of \(K\) with normal vector \(u_t\).
\item
  If \(K\) is the semicircle \(\left\{ (x, y) : x^2 + y^2 \leq 1, y \geq 0 \right\}\) of radius one above the \(x\)-axis, then \(\sigma_K\) measures the \emph{differential} side lengths of \(K\). That is, \(\sigma_K\) restricted to \([0, \pi]\) is the usual Borel measure on \([0, \pi]\). The value \(\sigma_K\left( \left\{ 3\pi/2 \right\} \right)\) is equal to 2. The measure \(\sigma_K\) is zero on \((\pi, 2\pi) \setminus \left\{ 3\pi/2 \right\}\).

  In general, if \(K\) has a smooth boundary and each edge \(e_K(t)\) is a single point on the boundary with curvature \(\kappa(t) > 0\), then the density of the measure \(\sigma_K\) at the point \(e_K(t)\) is the \emph{radius of curvature} \(R(t) := 1 / \kappa(t)\).
\end{enumerate}

The measure \(\sigma_K\) is very useful in our analysis of \(K\). Recall that the support function \(h_K(t) := \sup \left\{ p \cdot u_t : p \in K \right\}\) is the signed distance that the edge \(e_K(t)\) makes from the origin. The area \(|K|\) of \(K\) can be expressed using \(\sigma_K\) and \(h_K\) as
\[
|K| = \frac{1}{2} \int_{t \in S^1}h_K(t) \, \sigma_K(dt).
\]

The measure \(\sigma_K\) also acts as a ‘weak derivative’ of a possibly non-differentiable boundary of \(K\). Recall that \(v_t := (-\sin t, \cos t)\). For each angle \(t\), define the \emph{vertices} \(v_K^-(t)\) and \(v_K^+(t)\) of \(K\) as the endpoints of the edge \(e_K(t)\) that are furthermost in the direction of \(-v_t\) and \(v_t\) respectively. Note that \(v_K^+(t)\) depends on \(K\) but \(v_t\) does not. Then the equality
\begin{equation}
\label{eqn:to-reveal}
\mathrm{d} v_K^+(t) = v_t \, \sigma_K
\end{equation}
holds, whose meaning we elaborate as below.

Let \(I := [a, b]\) be a closed interval. For any right-continuous \(f : I \to \mathbb{R}\) of bounded variation, let \(\mathrm{d} f\) denote the \emph{Lebesgue–Stieltjes measure} of \(f\) which is the unique Borel measure on \(I\) such that \(\mathrm{d} f \left( \left\{ a \right\} \right) = 0\) and \(\mathrm{d} f \left( (c, d] \right) = f(d) - f(c)\) for any \((c, d] \subset I\).

The Lebesgue–Stieltjes measure \(\mathrm{d} f\) acts as a rigorous justification of the differential \(df\). We can state informal calculations of differentials like \(d(t^2) = 2t \, dt\) rigorously as the equality \(\mathrm{d}(t^2) = 2t \, \mathrm{d}t\) of measures, where the variable \(t\) parametrizes the interval \(I\). Note that the Lebesgue–Stieltjes measure \(\mathrm{d} t\) is from the function \(g(t) = t\) on \(t \in I\), so that \(\mathrm{d} t\) denotes the usual Borel measure of \(I\). Correspondingly, the measure \(2t \, \mathrm{d} t\) have density function \(2t\) on \(t \in I\). So the value of measure \(2t\, \mathrm{d} t\) on \((c, d]\) is \(\int_c^d 2t\,dt = d^2 - c^2\), which is equal to that of \(d\left( t^2 \right)\).

It turns out that the vertex \(v_K^+ : S^1 \to \mathbb{R}^2\) as a function of angle \(t \in S^1\) is right-continuous and of bounded variation. So using the notion above, the left-hand side \(\mathrm{d} v_K^+(t)\) of \Cref{eqn:to-reveal} makes sense as a pair of Lebesgue–Stieltjes measures of the \(x\) and \(y\)-coordinates of \(v_K^+(t)\). The right-hand side of \Cref{eqn:to-reveal} is the pair \((- \sin t \cdot \sigma_K, \cos t \cdot \sigma_K)\) of measures on \(S^1\), where \(-\sin t \cdot \sigma_K\) is the measure \(\sigma_K\) on \(S^1\) multiplied pointwise with the measurable function \(-\sin t\) on \(t \in S^1\).

Intuitively, \Cref{eqn:to-reveal} states that the differential of \(v_K^+\) is the vector with direction \(v_t\) and side length \(\sigma_K\) at \(t \in S^1\). The equality will be used frequently in later parts.

\section{Injectivity Condition}
\label{sec:_injectivity-condition}
\begin{quote}
\textbf{Summary:} This section is an overview of \Cref{sec:injectivity-condition}. We show the key property, called the \emph{injectivity condition}, on any balanced maximum sofa \(S\) of rotation angle \(\pi/2\). The main idea is to prove (\Cref{sec:a-differential-inequality}) and solve for (\Cref{sec:solving-the-differential-inequality}) a differential inequality on \(S\) that compares the differential side lengths of \(S\) (\Cref{eqn:ineq-example}). The inequality is inspired by an ODE of Romik \autocite{romikDifferentialEquationsExact2018} that balances the differential side lengths of moving sofas (\Cref{eqn:ode-example}).
\end{quote}
\subsection{Statement}
\label{sec:__statement}
Recall from \Cref{sec:_monotone-sofas-and-caps} and \Cref{eqn:sketch-cap} that a monotone sofa \(S\) with rotation angle \(\omega = \pi/2\) is the intersection
\[
S = H \cap \bigcap_{t \in [0, \pi/2]} L_t
\]
of the strip \(H\) and supporting hallways \(L_t\) of \(S\) (the vertical strip \(V_{\omega}\) overlaps with \(H\) as \(\omega = \pi/2\)). Recall that \(\mathbf{x}(t)\) is the inner corner of \(L_t\). The curve \(\mathbf{x} : [0, \pi/2] \to \mathbb{R}^2\), called the \emph{rotation path} of \(S\) by Romik \autocite{romikDifferentialEquationsExact2018}, determines \(L_t\), the monotone sofa \(S\), and its area \(\alpha(\mathbf{x})\) completely. Gerver’s sofa \(G\) is derived so that any local perturbation of the rotation path \(\mathbf{x} := \mathbf{x}_G\) of \(G\) does not increase the area \(\alpha(\mathbf{x})\) \autocite{gerverMovingSofaCorner1992,romikDifferentialEquationsExact2018,dengSolvingMovingSofa2024}.

A major obstacle in showing the global optimality of \(G\) is that there is no managable formula of the area \(\alpha(\mathbf{x})\) of the sofa in terms of the rotation path \(\mathbf{x} : [0, \pi/2] \to \mathbb{R}^2\). All known derivations of \(G\) assumes a specific shape of \(G\) to find a workable formula of \(\alpha(\mathbf{x})\) \autocite{gerverMovingSofaCorner1992,romikDifferentialEquationsExact2018,dengSolvingMovingSofa2024}. We prove the following condition to overcome this obstacle. Recall that \(u_t = (\cos t, \sin t)\) and \(v_t = (-\sin t, \cos t)\).

\begin{theorem}

(Injectivity condition; abridged) The rotation path \(\mathbf{x} : [0, \pi/2] \to \mathbb{R}^2\) of any balanced maximum sofa \(S\) is continuously differentiable, and \(\mathbf{x}'(t) \cdot u_t < 0\) and \(\mathbf{x}'(t) \cdot v_t > 0\) for all \(t \in (0, \pi/2)\).

\label{thm:injectivity-abridged}
\end{theorem}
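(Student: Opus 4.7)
The plan is to translate the maximality and balancedness of $S$ into a differential inequality on the rotation path $\mathbf{x}$, and then to show that the only solutions compatible with $S$ being a valid monotone sofa satisfy the claimed strict sign conditions. The argument should parallel Romik's derivation of an ODE for Gerver's sofa $G$: where balancedness plus first-order optimality at $G$ produces equalities between differential side lengths at pairs of angles, the same analysis applied to a general balanced maximum sofa yields one-sided inequalities instead, since we only know first-order optimality among balanced perturbations and not a full Euler--Lagrange equation.

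I would begin by establishing baseline regularity of $\mathbf{x}$. Because $S$ is a monotone sofa of rotation angle $\pi/2$, the supporting hallway $L_t$ depends continuously on $t$, so $\mathbf{x}$ is continuous, and the support function $h_K$ of the cap $K = \mathcal{C}(S)$ is of bounded variation; using the Lebesgue--Stieltjes formalism of \Cref{sec:_surface-area-measure}, one can write $\mathbf{x}(t)$ and the differentials of the inner walls $b(t), d(t)$ in terms of measures associated with $h_K$. This gives $\mathbf{x}$ one-sided derivatives almost everywhere. The full $C^1$ regularity on $(0, \pi/2)$ should then fall out at the end, once the strict inequalities are in place and rule out jumps in $\mathbf{x}'$.

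The heart of the proof is the derivation of the differential inequality. For a fixed $t_0 \in (0, \pi/2)$, I would consider a compactly supported perturbation of $\mathbf{x}$ near $t_0$, pushing the inner corner along $u_{t_0}$ or $v_{t_0}$ while compensating at a paired angle so that balancedness (from Step 1) is preserved and the perturbed shape is still a monotone sofa with $\mathcal{N}(K) \subset K$. Expressing the first-order change in the sofa area $\mathcal{A}$ using the surface area measures $\sigma_K$ and $\sigma_{\mathcal{N}(K)}$ near $t_0$ and its paired angle, the condition that this change be non-positive in both admissible directions collapses into a pointwise inequality relating $\mathbf{x}'(t_0) \cdot u_{t_0}$ and $\mathbf{x}'(t_0) \cdot v_{t_0}$ to their analogs at the paired angle.

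Finally I would solve the inequality. If either strict sign fails at some $t_0$, propagating the non-strict inequality along $t$ should force $\mathbf{x}$ either to exit the parallelogram $P_{\pi/2}$ or to cause the niche to protrude outside the cap before time $\pi/2$, both contradicting the monotone-sofa structure established in earlier chapters. The main obstacle will be constructing the admissible perturbations: unlike the finite-dimensional polygon setting of \Cref{sec:_balanced-maximum-sofas-and-caps}, the continuous version must simultaneously preserve balancedness, monotonicity, and $\mathcal{N}(K) \subset K$ to first order. This requires carefully coupled perturbations at two different angles, and tracking their joint effect on $\sigma_K$ and $\sigma_{\mathcal{N}(K)}$ is where the bulk of the technical work should lie.
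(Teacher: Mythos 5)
Your high-level plan (derive a differential inequality from balancedness, then solve it to get the strict sign conditions) matches the paper's strategy, but you diverge on both halves in ways that matter.

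On the derivation of the inequality, you propose to perturb $\mathbf{x}$ directly in the continuous setting, coupling an increment at $t_0$ with a compensating increment at a paired angle so as to preserve balancedness, monotonicity, and $\mathcal{N}(K)\subset K$ to first order. You rightly flag this construction as the main obstacle. The paper avoids it entirely: the differential inequality $\sigma_K \le k_0(g_K^+(t))\,\mathrm{d}t$ is established \emph{first on maximum polygon caps} (\Cref{thm:balanced-discrete-ineq}) and then passed to the limit (\Cref{thm:balanced-ineq-limit}). The key ingredient is not a variational first-order condition on the limiting sofa but the already-established discrete balancedness $\sigma_{K_\Theta}(t) = \tau_{K_\Theta}(t)$ of \Cref{thm:balanced-polygon-sofa}, combined with a purely geometric overestimate of the niche side length $\tau_{K_\Theta}(t)$ using the half-planes $H_K^{\mathrm{b}}$ and $H_K^{\mathrm{d}}$ (\Cref{lem:leg-computation}). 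So the inequality is not an artifact of ``first-order optimality only giving inequalities''; it is an equality (balancedness) pushed through a one-sided geometric bound. Attempting the continuous variational route as you describe would require re-proving something like connectivity preservation under perturbation, which is exactly the gap in Gerver's original argument that the whole monotone-sofa/polygon-cap machinery was built to circumvent.

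On solving the inequality, your contradiction argument is likely to fail as stated. Having $\mathbf{x}'(t_0)\cdot u_{t_0} \ge 0$, i.e.\ $f_K(t_0)\le 1$, at some $t_0$ is not by itself incompatible with $\mathbf{x}$ staying inside $P_{\pi/2}$ or with $\mathcal{N}(K)\subset K$; the cap can absorb a locally shallow arm length without visibly misbehaving at that scale. The paper instead runs a constructive bootstrap: it recasts the inequality as $f_K'(t)\ge m_0(g_K(t))$ (\Cref{thm:leg-length-bounds}), uses the reflection symmetry $g_K(t)\ge f_K(\pi/2-t)$, and iterates the integral operator $\mathcal{F}$ to produce improving lower bounds $f_0\le f_1\le \cdots$. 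It takes eleven iterations to push the bound past $1$ (\Cref{lem:lower-bound-threshold}). That the argument requires this many rounds of self-reinforcement is strong evidence that no single-shot ``propagate to a contradiction'' argument will succeed --- the inequality is too weak pointwise to rule out $f_K(t_0)\le 1$ locally, and only global reinforcement closes the gap.

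Finally, the $C^1$ regularity of $\mathbf{x}$ does not ``fall out at the end'' as you suggest. In the paper it is established \emph{before} solving the inequality, via \Cref{cor:cap-nondegenerate} and \Cref{pro:cap-nondegenerate-continuity}: condition (1) of the injectivity definition (absolute continuity of $\sigma_K$ on $[0,\pi/2)$ and $(\pi/2,\pi]$) forces $A_K^+(t)=A_K^-(t)$ and $C_K^+(t)=C_K^-(t)$, whence $\mathbf{y}_K$ and $\mathbf{x}_K$ are $C^1$ by \Cref{thm:inner-corner-deriv}. This regularity is an input to the iteration, not an output of it.
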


\Cref{thm:injectivity-abridged} is an abridged version that captures the essense of the full statement (\Cref{thm:injectivity}). We call it the \emph{injectivity condition} as it implies that the rotation path \(\mathbf{x} : [0, \pi/2] \to \mathbb{R}^2\) does not self-intersect (\Cref{fig:injectivity-figure}). Assuming \Cref{thm:injectivity-abridged}, we have
\[
\mathbf{x}'(t) \cdot (1, 0) = \cos t \; (\mathbf{x}'(t) \cdot u_t) - \sin t \; (\mathbf{x}'(t) \cdot v_t) < 0
\]
for all \(t \in (0, \pi/2)\) so the \(x\)-coordinate of \(\mathbf{x}(t)\) strictly decreases as \(t\) increases. Thus the trajectory of \(\mathbf{x}(t)\) forms a Jordan arc, and the area enclosed by \(\mathbf{x}(t)\) can be expressed using Green’s theorem.

\begin{figure}
\centering
\includegraphics{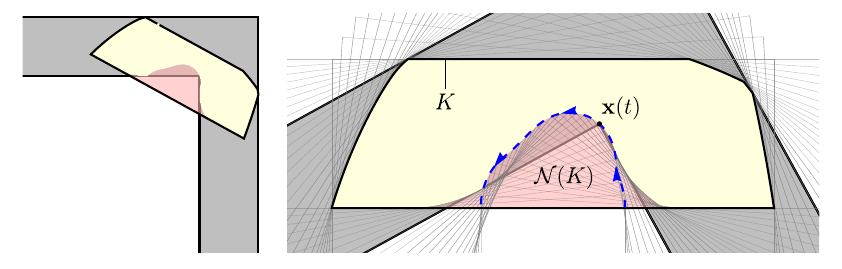}
\caption{Injectivity condition on a monotone sofa \(S = K \setminus \mathcal{N}(K)\) with cap \(K\) implies that the inner corner \(\mathbf{x}(t)\) of the supporting hallways \(L_t\), as a curve over \(t \in [0, \pi/2]\), does not self-intersect.}
\label{fig:injectivity-figure}
\end{figure}

\subsection{A Differential Inequality}
\label{sec:a-differential-inequality}
Assume an arbitrary balanced maximum sofa \(S\) with rotation angle \(\pi/2\) and supporting hallways \(L_t\) for \(t \in [0, \pi/2]\). In \autocite{romikDifferentialEquationsExact2018}, Romik introduced a set of ODEs that balance the \emph{differential} side lengths of \(S\).

\begin{definition}

(See the figures below \Cref{thm:gerver-odes}) For any angle \(t\) where the wall \(a(t)\) (resp. \(b(t)\), \(c(t)\), and \(d(t)\)) of the hallway \(L_t\) is tangent to \(S\), define \(\mathbf{A}(t)\) (resp. \(\mathbf{B}(t)\), \(\mathbf{C}(t)\), and \(\mathbf{D}(t)\)) as the corresponding point of tangency.

\label{def:sofa-boundaries-intro}
\end{definition}

The balancing ODEs by Romik are in terms of the four curves \(\mathbf{A}(t)\), \(\mathbf{B}(t)\), \(\mathbf{C}(t)\), \(\mathbf{D}(t)\) in \Cref{def:sofa-boundaries-intro} and the inner corner \(\mathbf{x}(t)\) of \(L_t\). In particular, he derives Gerver’s sofa \(S = G\) by parametrizing the bounday with the five curve segments (\Cref{fig:gerver-curves}), each on a different interval of \(t\), and solving for the ODEs on the curve segments.

Assume for now that all five curves are well-defined and continuously differentiable on their respective domain. This be alleviated in the full proof at \Cref{sec:injectivity-condition}. As \(S\) is a monotone sofa, the points \(\mathbf{A}(t)\) and \(\mathbf{C}(t)\) are well-defined over all \(t \in (0, \pi/2)\) and extends naturally to \(t \in [0, \pi/2]\) by taking limits. If the point \(\mathbf{B}(t)\) is well-defined (that is, if \(S\) makes contact with the wall \(b(t)\) of \(L_t\)), then since \(\mathbf{A}(t)\) and \(\mathbf{B}(t)\) are points of tangency of parallel lines \(a(t)\) and \(b(t)\) of distance one, we have \(\mathbf{B}(t) = \mathbf{A}(t) - u_t\) (Theorem 1 of \autocite{romikDifferentialEquationsExact2018}). Likewise, we have \(\mathbf{D}(t) = \mathbf{B}(t) - v_t\) if \(\mathbf{D}(t)\) is well-defined.

(See the right side of \Cref{fig:balanced-sofa}) Assume that for some angle \(t\) and its neighborhood, the supporting hallway \(L_t\) makes contact with \(S\) at three points \(\mathbf{A}(t)\), \(\mathbf{B}(t)\), and \(\mathbf{x}(t)\). Recall that \(S\) is the limit of balanced polygons \(S_\Theta\) (\Cref{sec:limit-of-maximum-polygon-caps}). As \(S_\Theta\) converges to \(S\), the three balanced sides (green) of \(S_\Theta\) on the walls \(a(t)\) and \(b(t)\) becomes the differential sides of \(G\) contributed by three points \(\mathbf{A}(t)\), \(\mathbf{B}(t)\), and \(\mathbf{x}(t)\). So their lengths balance each other as
\begin{equation}
\label{eqn:ode-example}
\left< \mathbf{A}'(t) , v_t \right> = \left< - \mathbf{B}'(t), v_t \right> + \left< \mathbf{x}'(t), v_t \right>
\end{equation}
which is one of the ODEs by Romik (Equation (20) of \autocite{romikDifferentialEquationsExact2018}). See the equations following \Cref{thm:gerver-odes} for many other examples.

\Cref{eqn:ode-example} is very useful but depends on the assumption that \(S\) makes contact with \(L_t\) at three points \(\mathbf{A}(t), \mathbf{B}(t), \mathbf{x}(t)\). So we (essentially) prove the following weaker \emph{inequality} that works for any \(S\) regardless of whether it makes contact with \(L_t\) at \(\mathbf{B}(t)\) or \(\mathbf{x}(t)\).
\begin{equation}
\label{eqn:ineq-example}
\left< \mathbf{A}'(t) , v_t \right> \leq \max\left( \left< - \mathbf{B}'(t), v_t \right>, 0 \right)  + \left| \left< \mathbf{x}'(t), v_t \right> \right|
\end{equation}
Even if \(S\) does not make contact with \(L_t\) on the line \(b(t)\), the point \(\mathbf{B}(t)\) extends naturally over all \(t \in [0, \pi/2]\) by letting \(\mathbf{B}(t) := \mathbf{A}(t) - u_t\).

We sketch the idea behind \Cref{eqn:ineq-example}. Our description here is only a rough sketch of the ideas and we hide the details of magnitude analysis. See the proof of \Cref{thm:balanced-discrete-ineq} for full details.

Take the maximum polygon sofa \(S_\Theta\) that approximates \(S\). Take three adjacent angles \(t - \delta, t, t + \delta\) from the finite angle set \(\Theta\). It turns out that the side of \(S_\Theta\) on the line \(a(t)\) is of magnitude \(\delta \left< \mathbf{A}'(t) , v_t \right> + O(\delta^2)\), so after dividing by \(\delta\), converges to \(\left< \mathbf{A}'(t) , v_t \right>\) as \(\delta \to 0\) and \(\Theta\) gets denser in \([0, \pi/2]\). This is the left-hand side of \Cref{eqn:ineq-example}.

Let \(\vec{b}(t)\) be the half-line on \(b(t)\) from \(\mathbf{x}(t)\) that represents the right inner wall of \(L_t\) starting with \(\mathbf{x}(t)\). We now overestimate all sides of \(S_\Theta\) on the half-line \(\vec{b}(t)\). Define \(R\) as the union of three closed half-planes \(H^\mathrm{d}(s)\), each of angle \(s = t - \delta, t, t + \delta\) bounded from below by the left inner wall \(d(s)\). We will overestimate the sides of \(S_\Theta\) on \(\vec{b}(t) \cap R\) and \(\vec{b}(t) \setminus R\) respectively. Adding two estimates below and sending \(\delta \to 0\) will give the right-hand side of \Cref{eqn:ineq-example}.

\begin{itemize}
\item
  The length of set \(\vec{b}(t) \cap R\) is of magnitude \(\leq \delta \left|\left< \mathbf{x}'(t) , v_t \right>\right| + O(\delta^2)\).

  To see this, observe that the point \(\mathbf{x}(t)\) is on the boundary \(d(t)\) of \(H^\mathrm{d}(t)\) and is away from the boundary \(d(t \pm \delta)\) of \(H^\mathrm{d}(t \pm \delta)\) by the signed distance \(\mp \delta \left< \mathbf{x}'(t) , v_t \right> + O(\delta^2)\) along the direction \(v_t\). Exact verification is done in \Cref{lem:leg-computation}.
\item
  The sides of \(S_\Theta\) on \(\vec{b}(t) \setminus R\) are of magnitude \(\leq \delta\max\left( \left< - \mathbf{B}'(t), v_t \right>, 0 \right) + O(\delta^2)\).

  To see this, observe that for each angle \(s = t - \delta, t, t + \delta\), the set \(S_\Theta\) is disjoint fron the inner quadrant \(Q_s^-\) of \(L_t\) bounded from above by \(b(s)\) and \(d(s)\). So for each \(s\), the set \(S_\Theta \setminus R\) is contained in the closed half-plane \(H^\mathrm{b}(s)\) bounded from below by the line \(b(s)\). Now the sides of \(S_\Theta\) on \(\vec{b}(t) \setminus R\) is contained in the segment of \(H^\mathrm{b}(t - \delta) \cap H^\mathrm{b}(t) \cap H^\mathrm{b}(t + \delta)\) on the line \(b(t)\). This segment is contributed by the lines \(b(t)\) and \(b(t \pm \delta)\) and is of the claimed magnitude.
\end{itemize}

In the actual proof prested in \Cref{sec:injectivity-condition}, \Cref{eqn:ineq-example} is not stated as-is and formulated quite differently as
\begin{equation}
\label{eqn:ineq-actual}
\sigma_K \leq k_0(g(t)) \, \mathrm{d} t
\end{equation}
in \Cref{thm:balanced-ineq-limit} where
\[
k_0(x) := \max\left( |x - 1|, (|x - 1| + 1) / 2 \right).
\]
This is to ensure that the inequality works for general \(S\) that may have the contact point \(\mathbf{A}(t)\) that is not differentiable in \(t\), or have more than one contact points with outer wall \(a(t)\). So the actual proof proceeds with \Cref{eqn:ineq-actual} that works for any \(K\), but it essentially follows the idea behind the proof of \Cref{eqn:ineq-example} sketched above.

We derive \Cref{eqn:ineq-actual} from \Cref{eqn:ineq-example} as below. This explains why the inequalities are more or less equivalent and why the function \(k_0\) is involved. First use \(\mathbf{B}(t) = \mathbf{A}(t) - u_t\) and \(u_t' = v_t\) and write
\[
\left< - \mathbf{B}'(t), v_t \right> = - \left< \mathbf{A}'(t), v_t \right> + 1.
\]
Then by letting \(\alpha := \left< \mathbf{x}'(t), v_t \right>\), \Cref{eqn:ineq-example} implies
\[
\left< \mathbf{A}'(t), v_t \right> \leq \max\left( \left| \alpha \right|, \left( |\alpha| + 1 \right) /2 \right) 
\]
in both cases \(\left< \mathbf{A}'(t), v_t \right> < 1\) and \(\left< \mathbf{A}'(t), v_t \right> \geq1\). Using \Cref{eqn:to-reveal}, the left-hand side is the differential side length at \(\mathbf{A}(t)\) equal to the density of \(\sigma_K\). It turns out that the value \(\alpha = \left< \mathbf{x}'(t), v_t \right>\) in right-hand side is equal to \(g(t) - 1\) where \(g(t)\) is the \emph{arm length} that will be defined soon. Substituting both sides, we get \Cref{eqn:ineq-actual}.

\subsection{Solving the Differential Inequality}
\label{sec:solving-the-differential-inequality}
We now sketch the argument that solves the \Cref{eqn:ineq-actual} and proves the injectivity condition.

Recall that \(\mathbf{y}(t)\) is the outer corner of \(L_t\) corresponding to \((1, 1)\) of \(L\) (\Cref{def:rotating-hallway-parts-intro}). For each \(t \in [0, \pi/2]\), the \emph{arm lengths} \(f(t)\) and \(g(t)\) measure the distance from outer corner \(\mathbf{y}(t)\) to \(\mathbf{A}(t)\) and \(\mathbf{C}(t)\) respectively.\footnote{The actual definition of arm lengths (\Cref{def:cap-tangent-arm-length}) have signs \(f_K^{\pm}(t)\) and \(g_K^{\pm}(t)\) in superscript as we cannot guarantee that the cap \(K\) meets the line \(a(t)\) at a single point \(\mathbf{A}(t)\). This sketch assumes that \(K\) meets \(a(t)\) at a single point, so that \(f(t) = f_K^{\pm}(t)\) (and the same for \(g\)).} A computation (\Cref{thm:inner-corner-deriv}) shows that
\[
\mathbf{x}'(t) = -(f(t) - 1) u_t + (g(t) - 1) v_t
\]
so that proving \(f(t), g(t) > 1\) on \(t \in (0, \pi/2)\) is sufficient for establishing the injectivity condition (\Cref{thm:injectivity-abridged}).

We will express \Cref{eqn:ineq-actual} purely in terms of arm lengths. The derivative of \(f(t)\) is
\[
f'(t) = g(t) - \left< \mathbf{A}'(t), v_t \right> 
\]
because \(\left< \mathbf{y}'(t), v_t \right> = g(t)\) and \(f(t) = \left< \mathbf{y}(t) - \mathbf{A}(t), v_t \right>\) (\Cref{thm:arm-length-differentiation}). As the side length \(\left< \mathbf{A}'(t), v_t \right>\) corresponds to \(\sigma_K\) at \(t\), \Cref{eqn:ineq-actual} is equivalent to
\begin{equation}
\label{eqn:iteration}
f'(t) \geq g(t) - k_0(g(t)) = m_0(g(t))
\end{equation}
where
\[
m_0(x) := x - k_0(x) = x - \max\left( |x - 1|, (|x - 1| + 1) / 2 \right)
\]
is monotonically increasing. This is done rigorously in \Cref{thm:leg-length-bounds}.

We now use \Cref{eqn:iteration} to iteratively obtain better lower bounds \(f_0(t), f_1(t), \dots\) of \(f(t)\) on \(t \in [0, \pi/2]\). Let \(f_0(t) := 0\) so that \(f_0(t)\) is a trivial lower bound of \(f(t)\). The same argument on \(S\) reflected along the \(y\)-axis shows that \(f_0(\pi/2 - t)\) is a lower bound of \(g(t)\). We have \(f(0) = 1\) because the point \(\mathbf{A}(0)\) should be on the \(x\)-axis. \Cref{eqn:iteration} implies that
\[
f(t) \geq 1 + \int_0^t m_0(g(u))\, du \geq 1 + \int_0^t m_0(f_0(\pi/2 - u))\, du.
\]
We just obtained a new lower bound of \(f(t)\) in the right-hand side. By letting
\begin{equation}
\label{eqn:lb-iter}
f_1(t) := \max\left(f_0(t), 1 + \int_0^t m_0(f_0(\pi/2 - u))\,du\right)
\end{equation}
we obtain a better lower bound \(f_1(t)\) of \(f(t)\). A symmetric argument also shows that \(g(t) \geq f_1(\pi/2 - t)\). Further iterations of \Cref{eqn:lb-iter} will give monotonically increasing lower bounds \(f_2, f_3, \dots\) of \(f\). Somewhat magically, eleven iterations of this improvement gives \(f(t) \geq f_{11}(t) > 1\), proving the injectivity condition (\Cref{fig:arm-length-lower-bounds}). Detailed computations are done in \Cref{sec:bounding-arm-lengths}.

\begin{figure}
\centering
\includegraphics{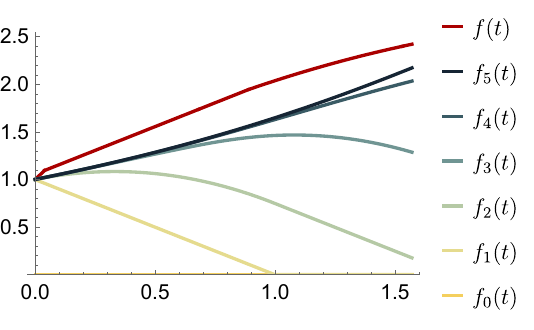}
\caption{The arm length \(f(t)\) of Gerver’s sofa \(G\) and the lower bounds \(f_0(t), f_1(t), \dots\) of \(f(t)\). Numerical computations show that three iterations are sufficient to give \(f(t) \geq f_3(t) > 1\). But to minimize computer assistance, we do more iterations and show \(f_i(t) \geq (i - 1) / 12\) for \(i \leq 10\) in \Cref{lem:lower-bound-j-iter}, which is sufficient to prove the injectivity hypothesis.}
\label{fig:arm-length-lower-bounds}
\end{figure}

\section{Optimality of Gerver's Sofa}
\label{sec:_optimality-of-gerver's-sofa}
\begin{quote}
\textbf{Summary:} This section is an overview of \Cref{sec:optimality-of-gerver's-sofa} that proves the main \Cref{thm:main}. The previous \Cref{sec:convex-domain-and-convex-curves} prepares a minimal theoretical framework needed to execute the ideas below in \Cref{sec:optimality-of-gerver's-sofa}.

We establish an upper bound \(\mathcal{Q}(S)\) of the area of any monotone sofa \(S\) satisfying the injectivity condition. To do so, we construct a region \(R\) enclosing \(S\) so that \(R = S\) if \(S\) is Gerver’s sofa \(G\). The upper bound \(\mathcal{Q}\) is then defined as the area of \(R\) (\Cref{sec:_definition-of-q}). We define a convex space \(\mathcal{L}\) of tuples \((K, B, D)\) of convex bodies, so that each sofa \(S\) embeds one-to-one to a tuple \((K, B, D) \in \mathcal{L}\) and \(\mathcal{Q}\) is a quadratic functional on \(\mathcal{L}\) via Brunn-Minkowski theory (\Cref{sec:quadraticity-of-q}). The concavity of \(\mathcal{Q}\) on \(\mathcal{L}\) is established using Mamikon’s theorem, and the local optimality of \(\mathcal{Q}\) at \(G\) is established using the local optimality ODEs on \(G\) by Romik (\Cref{sec:optimality-of-q-at-gerver's-sofa}). As \(G\) is a local optimum of a globally concave \(\mathcal{Q}\), it is also a global optimum of \(\mathcal{Q}\) and thus the area.
\end{quote}
\subsection{Definition of $\texorpdfstring{\mathcal{Q}}{Q}$}
\label{sec:_definition-of-q}
(See \Cref{fig:gerver}) The niche of Gerver’s sofa \(G\) has a characteristic shape made of one ‘core’ colored blue and the two ‘tails’ colored red. Assuming that a maximum-area sofa \(S_{\max}\) follows the same shape, the derivation \(S_{\max} = G\) is essentially done in the existing works establishing the local optimality of \(G\) \autocite{gerverMovingSofaCorner1992,romikDifferentialEquationsExact2018,dengSolvingMovingSofa2024}. So the difficulty of the moving sofa problem lies in showing that \(S_{\max}\) indeed follows the same shape as \(G\).

We circumvent the difficulty by defining a \emph{larger} region \(R\) contains \(S_{\max}\) and have the desired shape of one core and two tails. Then the upper bound \(\mathcal{Q}\) of the area of a moving sofa is simply defined as the area of \(R\). Take any monotone sofa \(S = K \setminus \mathcal{N}(K)\) of rotation angle \(\pi/2\) with cap \(K\), satisfying the injectivity condition. Recall that \(\mathbf{x}(t)\), \(b(t)\), and \(d(t)\) are respectively the inner corner, right inner wall, and left inner wall of supporting hallway \(L_t\) with angle \(t\). We construct \(R\) as follows.

(See \Cref{fig:sofa-ub-deco}) There is a specific angle \(\varphi \in [0.039, 0.040]\) such that the rotation path \(\mathbf{x}_G : [0, \pi/2] \to \mathbb{R}\) of Gerver’s sofa draws the core portion of the niche at the interval \([\varphi, \pi/2 - \varphi]\). Using the same angles, cut the cap \(K\) and niche \(\mathcal{N}(K)\) of arbitrary monotone sofa \(S\) into three parts, using the lines \(b(\varphi)\) and \(d(\pi/2 - \varphi)\) passing through \(\mathbf{x}(\varphi)\) and \(\mathbf{x}(\pi/2 - \varphi)\) of \(S\) respectively. Let \(H^\mathrm{R}\) (resp. \(H^\mathrm{L}\)) be the half-planes bounded from left by \(b(\varphi)\) (resp. right by \(d(\pi/2 - \varphi)\)). Let \(H^\mathrm{M}\) be the region \(\mathbb{R}^2 \setminus H^\mathrm{R} \setminus H^\mathrm{L}\). Then the sets \(H^\mathrm{R}, H^\mathrm{M}, H^{\mathrm{L}}\) partition the plane into three parts.\footnote{The half-planes \(H^\mathrm{R}\) and \(H^\mathrm{L}\) do overlap technically, but it does not matter as the region of overlap \(H^\mathrm{R} \cap H^\mathrm{L}\) is disjoint from the sets \(K\) and \(\mathcal{N}(K)\) that we divide (\Cref{lem:cap-left-right-disjoint}).}

The injectivity condition on \(S\) (\Cref{thm:injectivity-abridged}) implies that the rotation path \(\mathbf{x}(t)\) should be in \(H^\mathrm{R}\), \(H^\mathrm{M}\), and \(H^\mathrm{L}\) as \(t\) is in the interval \([0, \varphi]\), \([\varphi, \pi/2 - \varphi]\), and \([\pi/2 - \varphi, \pi/2]\) respectively (\Cref{lem:monotonicity-intervals}). Using this, we take a subset \(N'\) of the niche \(\mathcal{N}(K)\) as follows. In the region \(H^\mathrm{R}\), take only the region swept out by the right inner wall \(b(t)\) as \(t \in [\varphi, \pi/2]\), where \(\mathbf{x}(t)\) is outside \(H^\mathrm{R}\) (colored blue). In the region \(H^\mathrm{M}\), take only the region bounded by the lines \(b(\varphi), d(\pi/2 - \varphi)\), \(y=0\), and the inner corner \(\mathbf{x}(t)\) restricted to \([\varphi, \pi/2 - \varphi]\) (colored green). In the region \(H^\mathrm{L}\), take only the region swept out by the left inner wall \(d(t)\) as \(t \in [0, \pi/2 - \varphi]\), where \(\mathbf{x}(t)\) is outside \(H^\mathrm{L}\) (colored red). The injectivity condition guarantees that the final region \(N'\) is a subset of the niche \(\mathcal{N}(K)\).

\begin{figure}
\centering
\includegraphics{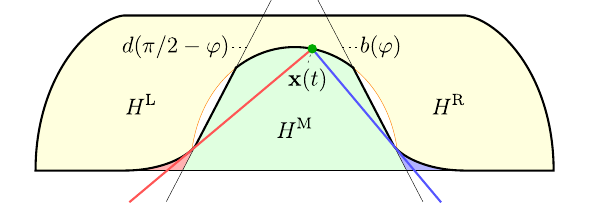}
\caption{The overestimated region \(R\) is obtained by taking the region \(N'\) (blue in \(H^\mathrm{R}\), green in \(H^\mathrm{M}\), red in \(H^\mathrm{L}\)) away from the cap \(K\) of monotone sofa \(S\).}
\label{fig:sofa-ub-deco}
\end{figure}

The region \(R\) is now simply defined as \(K \setminus N'\). While \(R\) may not be a moving sofa, the ‘niche’ \(N'\) of \(R\) consists of one core and two tails like that of \(G\) does. For a general monotone sofa \(S\), the endpoints of the core and two tails of \(N'\) does not match each other, so we simply connect them by the line segments each on \(b(\varphi)\) and \(d(\pi/2 - \varphi)\). As Gerver’s sofa \(G\) is constructed by design to have the matching endpoints of core and tails, the region \(R\) is equal to \(S\) if \(S = G\) (\Cref{thm:upper-bound-q-gerver-match}).

\subsection{Quadraticity of $\texorpdfstring{\mathcal{Q}}{Q}$}
\label{sec:quadraticity-of-q}
The collection \(\mathcal{K}\) of all planar convex bodies form a \emph{convex domain} with the barycentric operation \(c_\lambda(K_1, K_2) := (1 - \lambda) K_1 + \lambda K_2\). Here,
\[
aK := \left\{ ap : p \in K \right\}
\]
is the \emph{dilation} of a convex body \(K\) by \(a \geq 0\), and
\[
K_1 + K_2 := \left\{ x_1 + x_2 \in \mathbb{R}^2 : x_1 \in K_1, x_2 \in K_2 \right\}
\]
is the \emph{Minkowski sum} of convex bodies \(K_1, K_2\). The operations satisfy necessary properties (commutative, associative, and distributive) that makes \(\mathcal{K}\) an abstract convex cone.

Many values on convex body \(K\), including the support function \(h_K(t) := v_K^+(t) \cdot u_t\), vertex \(v_K^+(t)\), and surface area measure \(\sigma_K\) are convex-linear in \(K\). Correspondingly, the area
\[
|K| = \frac{1}{2} \int_{t \in S^1} h_K(t) \, \sigma_K(t)
\]
of \(K \in \mathcal{K}\) is a \emph{quadratic} functional on the convex domain \(\mathcal{K}\). This notion of a quadratic functional on a (abstract) convex domain is established rigorously in \Cref{sec:convex-domain}.

In the previous \Cref{sec:_definition-of-q}, we defined an overestimation \(R\) of a monotone sofa \(S\). We will now define the three convex bodies \(K\), \(B\), and \(D\) from \(S\) that represents different parts of the region \(R\). This step is very important. While the area of \(R\) does not have a quadratic expression involving \(K\) only, it does have a quadratic expression involving \(K, B\), and \(D\) (\Cref{def:upper-bound-q}) that is amenable to further analysis (see also \Cref{rem:cap-tail-extension}).

(Compare \Cref{fig:sofa-ub-deco} with \Cref{fig:upper-bound}) Again, let \(S\) be any monotone sofa of rotation angle \(\pi/2\) satisfying the injectivity condition. The convex body \(K\) is the (usual) cap of \(S\). Convex bodies \(B\) and \(D\) are the portions of the region \(R\) in the half-planes \(H^\mathrm{R}\) and \(H^\mathrm{L}\) respectively. Put precisely, \(B\) (resp. \(D\)) is the cap \(K\) intersected with the closed half-planes bounded from below by \(b(t)\) for all \(t \in [\varphi, \pi/2]\) (resp. \(d(t)\) for all \(t \in [0, \pi/2 - \varphi]\)). This defines the convex bodies \(K, B\), and \(D\) from \(S\).

The three convex bodies satisfy certain linear constraints (\Cref{lem:right-left-body}). An example is
\begin{equation}
\label{eqn:constraint-example}
h_K(t) + h_B(\pi + t) \leq 1
\end{equation}
for every \(t \in [\varphi^\mathrm{R}, \pi/2]\), which holds because \(B\) is bounded from below by the line \(b(t)\) which is distance one away from the the supporting line \(a(t)\) of \(K\). With this, define \(\mathcal{L}\) as the collection of all tuples \((K, B, D)\) of convex bodies satisfying such constraints (\Cref{def:cap-tail-space}). The collection of all monotone sofas \(S\) with rotation angle \(\pi/2\) now embeds to a subset of \(\mathcal{L}\) by constructing the convex bodies \((K, B, D)\) from \(S\) as above. The space \(\mathcal{L}\) is a convex domain with pairwise barycentric operation
\[
c_\lambda((K_1, B_1, D_1), (K_2, B_2, D_2)) := \left( c_\lambda(K_1, K_2), c_\lambda(B_1, B_2), c_\lambda(D_1, D_2) \right).
\]

We now define the upper bound \(\mathcal{Q}(K, B, D)\) of the area of a monotone sofa \(S\) as a quadratic functional on \((K, B, D) \in \mathcal{L}\) (\Cref{def:upper-bound-q}). Recall that \(\mathcal{Q}\) is equal to \(|K| - |N'|\) where \(|N'|\) is the area of the underestimated niche in \Cref{fig:sofa-ub-deco}. Using injectivity condition, we essentially\footnote{The actual proof takes three Jordan curves, two bounding the red and blue regions of \Cref{fig:sofa-ub-deco} (\Cref{lem:cap-left-right-tail}) and one bounding the green region of \Cref{fig:sofa-ub-deco} (\Cref{lem:cap-middle-lower-estimate}).} show that the boundary \(\gamma\) of \(N'\) is a Jordan curve. Take \(\gamma\) counterclockwise, then by Green’s theorem the region \(N'\) have area
\[
\mathcal{J}(\gamma) := \frac{1}{2} \int_a^b \gamma(t) \times \gamma'(t)\, dt
\]
which we call the \emph{curve area functional} on \(\gamma : [a, b] \to \mathbb{R}^2\). So we have \(\mathcal{Q} = |K| - \mathcal{J}(\gamma)\) in particular.

For a convex body \(X = B\) or \(D\), define the segment \(\mathbf{u}_X^{a, b}\) of the boundary of \(X\) as the union of all edges of \(X\) with normal vectors \(u_t\) of angle \(t \in (a, b)\). We further express \(\mathcal{J}(\gamma)\) as a quadratic term on \(K, B\), and \(D\) by breaking the boundary \(\gamma\) of \(N'\) into the following five segments.

\begin{enumerate}
\def\labelenumi{\arabic{enumi}.}
\tightlist
\item
  The segment \(\textbf{d}_D := \mathbf{u}_D^{3\pi/2, 3\pi/2 + \varphi^\mathrm{L}}\) of the boundary of \(D\), representing the left tail of \(N'\).
\item
  The line segment connecting the right end \(Y_D\) of the left tail \(\mathbf{d}_D\), to the left end \(\mathbf{x}_K^\mathrm{L} := \mathbf{x}_K(\pi/2 - \varphi)\) of the core \(\mathbf{x}_K\).
\item
  The rotation path \(\mathbf{x}_K : [\varphi, \pi/2 - \varphi] \to \mathbb{R}^2\) of cap \(K\) reversed in direction, representing the core of \(N'\).
\item
  The line segment connecting the right end \(\mathbf{x}_K^\mathrm{R} := \mathbf{x}_K(\varphi)\) of the core \(\mathbf{x}_K\), to the left end \(X_B\) of the right tail \(\mathbf{b}_B\).
\item
  The segment \(\mathbf{b}_B := \mathbf{u}_{B}^{\pi + \varphi^\mathrm{R}, 3\pi/2}\) of the boundary of \(B\), representing the right tail of \(N'\).
\end{enumerate}

Each segment corresponds to each term in the appearing order of the \Cref{def:upper-bound-q} of \(\mathcal{Q}(K, B, D) = |K| - \mathcal{J}(\gamma) =\)
\[
|K| + \mathcal{J}\left( \mathbf{d}_D \right) + \mathcal{J} \left( Y_D,   \mathbf{x}_K^\mathrm{L} \right) - \mathcal{J}\left( \mathbf{x}_K|_{[\varphi, \pi/2 - \varphi]} \right) + \mathcal{J} \left( \mathbf{x}_K^\mathrm{R}, X_B \right)  + \mathcal{J}\left( \mathbf{b}_B \right)
\]
where \(\mathcal{J}(p, q) := (x_p y_q - x_q y_p)/2\) is the curve area functional of the segment from \(p = (x_p, y_p)\) to \(q = (x_q, y_q)\).

We now argue that \(\mathcal{Q}\) is quadratic in \(K\), \(B\), and \(D\). The area \(|K|\) is quadratic in \(K\) as seen above. The quadraticity of the core term \(\mathcal{J}\left( \mathbf{x}_K|_{[\varphi, \pi/2 - \varphi]} \right)\) comes from linearity of \(\mathbf{x}_K\) in \(K\). \Cref{thm:convex-curve-area-functional} computes
\[
\mathcal{J}\left( \mathbf{u}_X^{a, b} \right) = \frac{1}{2} \int_{t \in (a, b)}h_K(t)\, \sigma_K(dt)
\]
which is quadratic in \(K\). This establishes the quadraticity of two tail terms \(\mathcal{J}(\mathbf{d}_D)\) and \(\mathcal{J}(\mathbf{b}_B)\). The terms on two line segments come from bilinearity of \(\mathcal{J}(p, q) := (x_p y_q - x_q y_p)/2\) in \(p, q \in \mathbb{R}^2\).

\subsection{Optimality of $\texorpdfstring{\mathcal{Q}}{Q}$ at Gerver's Sofa}
\label{sec:optimality-of-q-at-gerver's-sofa}
(See \Cref{fig:mamikon-simple}) Let \(K\) be any convex body. Take an interval \([a, b]\) of length \(\leq 2\pi\). For each angle \(t \in [a, b]\), assume a tangent segment \(s_t\) of \(K\) with length \(\alpha (t)\) and one endpoint \(v_K^+(t)\) on \(K\), making an angle of \(t\) from the \(y\)-axis. \emph{Mamikon’s theorem} states that the region swept out by the segments \(s_t\) over all \(t \in [a, b]\) is exactly \(\frac{1}{2}\int_a^b \alpha(t)^2\,dt\).

\begin{figure}
\centering
\includegraphics{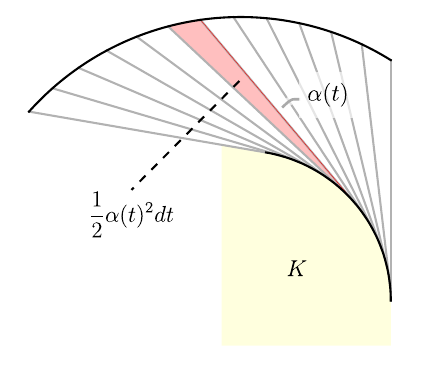}
\caption{Mamikon’s theorem.}
\label{fig:mamikon-simple}
\end{figure}

(See \Cref{fig:mamikon-thm-on-sofa}) Mamikon’s theorem is used to show that \(\mathcal{Q}\) is globally concave on \(\mathcal{L}\). The idea is to attach multiple ‘Mamikon regions’ (grey) to the region \(R\) of area \(\mathcal{Q}\). The length \(\alpha(t)\) of each tangent segment in grey with angle \(t\) turns out to be linear in \(\mathcal{L}\). So the area \(\frac{1}{2}\int_a^b \alpha(t)^2\,dt\) of each Mamikon region is convex and quadratic in \(\mathcal{L}\). We show in \Cref{lem:mamikon-middle-eq} that the total area of \(R\) and all Mamikon regions (bounded by bold lines) is linear in \(K\). So the area \(\mathcal{Q}\) of \(R\) is a linear functional (bold lines) subtracted by convex quadratic functionals (grey regions), which is concave. \Cref{sec:concavity-of-q} rigorously checks the full details.

\begin{figure}
\centering
\includegraphics{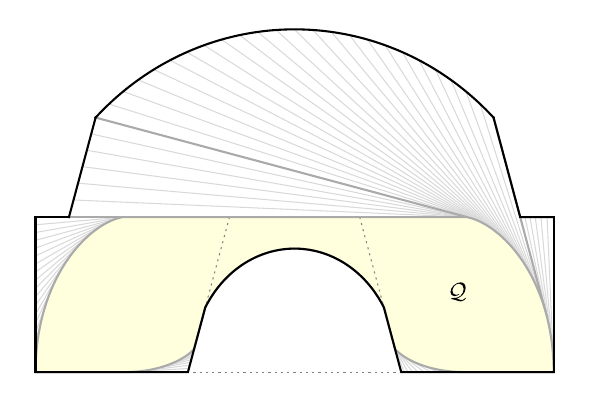}
\caption{Mamikon’s theorem applied to the upper bound \(\mathcal{Q}\) of sofa area. As Mamikon regions in grey are added to the region \(R\) with area \(\mathcal{Q}\), the resulting shape bounded by bold lines have an area linear in \(K\).}
\label{fig:mamikon-thm-on-sofa}
\end{figure}

To establish the main \Cref{thm:main}, it suffices to show that the tuple \((K, B, D) \in \mathcal{L}\) arising from Gerver’s sofa \(G\) is a maximizer of \(\mathcal{Q}\). Assuming this, recall that a balanced maximum sofa \(S^*\) attaining the maximum area also satisfies the injectivity condition (\Cref{thm:injectivity-abridged}). So the maximum-area \(S^*\) also corresponds to another tuple \((K^*, B^*, D^*) \in \mathcal{L}\) of convex bodies as described in \Cref{sec:quadraticity-of-q}. Because the region \(R\) of Gerver’s sofa \(G\) matches with \(G\), we have \(\mathcal{Q}(K, B, D) = |R| = |G|\). By the optimality of \(\mathcal{Q}\) at \((K, B, D)\), and that \(\mathcal{Q}(K^*, B^*, D^*)\) is an upper bound of the area \(|S^*|\), we have
\[
\mathcal{Q}(K, B, D) \geq \mathcal{Q}(K^*, B^*, D^*) \geq |S^*|.
\]
So we have \(|G| \geq |S^*|\), proving the main \Cref{thm:main}.

We now show that \(\mathcal{Q}\) is maximized at the point \((K, B, D) \in \mathcal{L}\) from Gerver’s sofa \(G\). Choose an arbitrary \((K^*, B^*, D^*) \in \mathcal{L}\). The directional derivative of \(\mathcal{Q}\) at \((K, B, D)\) in the direction towards \((K^*, B^*, D^*)\) is defined as
\begin{equation}
\label{eqn:dir-deriv-sample}
\begin{split}
& \phantom{{}={}} D \mathcal{Q}(K, B, D; K^*, B^*, D^*) \\
& := \left. \frac{d}{d\lambda} \right|_{\lambda = 0} \mathcal{Q}(c_{\lambda}((K, B, D), (K^*, B^*, D^*))
\end{split}
\end{equation}
where \(\lambda \in [0, 1]\) interpotates between \((K, B, D)\) and \((K^*, B^*, D^*)\). If the value is \(\leq 0\) regardless of the choice of \((K^*, B^*, D^*) \in \mathcal{L}\), then \((K, B, D)\) indeed achieves the maximum value of concave and quadratic \(\mathcal{Q}\) as desired; this can be shown by quadraticity of \(\mathcal{Q}\) (\Cref{thm:quadratic-variation}).

So it remains to compute \Cref{eqn:dir-deriv-sample} and show that it is non-positive. Assuming that \((K^*, B^*, D^*)\) is close enough to \((K, B, D)\), the value of \(D \mathcal{Q}\) is approximately the rate of change of \(\mathcal{Q}\) along the interval \(\lambda \in [0, 1]\), so
\[
D \mathcal{Q}(K, B, D; K^*, B^*, D^*)  \simeq \mathcal{Q}(K^*, B^*, D^*) - \mathcal{Q}(K, B, D)
\]
and we use them interchangeably here for ease of explanation (we do not use this in full calculation). Instead of computing the full \(D\mathcal{Q}\) for general \((K^*, B^*, D^*) \in \mathcal{L}\) as in \Cref{sec:directional-derivative-of-q}, we take two representative cases of \((K^*, B^*, D^*)\) and illustrate how \(D\mathcal{Q}\) is computed.

Recall that the boundary of \(G\) is parametrized by the contact points \(\mathbf{A}(t)\), \(\mathbf{B}(t)\), \(\mathbf{C}(t)\), \(\mathbf{D}(t)\), and \(\mathbf{x}(t)\) that \(G\) makes with supporting hallways \(L_t\) (\Cref{fig:gerver-curves}). In both representative cases of \((K^*, B^*, D^*)\), fix a particular angle \(t \in (0, \pi/2)\) so that \(G\) meets \(L_t\) at three points \(\mathbf{A}(t)\), \(\mathbf{B}(t)\), and \(\mathbf{x}(t)\) as in the right side of \Cref{fig:balanced-sofa}. Also, fix a sufficient small \(\delta > 0\) and let \(I := [t, t + \delta]\). Take an arbitrary \(\epsilon > 0\) that is sufficiently small relative to \(\delta\). We now assume the first case of \((K^*, B^*, D^*) \in \mathcal{L}\).

\begin{quote}
\textbf{Case 1:} Recall that \(G = H \cap \bigcap_{s \in [0, \pi/2]} L_s\) is the intersection of the horizontal strip \(H\) and supporting hallways \(L_s\). Translate each \(L_s\) to \(L_s^* := L_s + \epsilon u_t\) by \(\epsilon u_t\) for any \(s \in I = [t, t + \delta]\) and fix \(L_s^* := L_s\) for any other \(s \not\in I\). Take the new sofa \(G^* := H \cap \bigcap_{t \in [0, \pi/2]} L_s^*\) which is a slight perturbation of \(G\). Assume the case where the convex bodies \((K^*, B^*, D^*) \in \mathcal{L}\) come from \(G^*\) as described in \Cref{sec:quadraticity-of-q}.
\end{quote}

In this Case 1, the new sofa \(G^*\) is obtained from \(G\) by the following changes in region. We ignore second-order or smaller terms of \(\delta\) and \(\epsilon\) in length.

\begin{enumerate}
\def\labelenumi{\arabic{enumi}.}
\tightlist
\item
  Adding a rectangle of approximate base \(\delta \left< \mathbf{A}'(t), u_t \right>\) and height \(\epsilon\) near the point \(\mathbf{A}(t)\).
\item
  Removing a rectangle of approximate base \(\delta \left< -\mathbf{B}'(t), u_t \right>\) and height \(\epsilon\) near the point \(\mathbf{B}(t)\).
\item
  Removing a parallelogram with approximate sides of vector \(\delta \mathbf{x}'(t)\) and \(\epsilon u_t\) near the point \(\mathbf{x}(t)\).
\end{enumerate}

So the area change for Case 1 is approximately
\begin{equation}
\label{eqn:approx}
\begin{split}
\phantom{{}={}} & \phantom{{}\simeq{}} D \mathcal{Q}(K, B, D; K^*, B^*, D^*) \simeq |G^*| - |G| \\
\phantom{{}={}} & \simeq \delta \left( \epsilon \left< \mathbf{A}'(t), u_t \right> - \epsilon \left< -\mathbf{B}'(t), u_t \right> - \epsilon \left< \mathbf{x}'(t), u_t \right> \right).
\end{split}
\end{equation}
Romik’s ODE \Cref{eqn:ode-example} on \(G\) now balance the differential side lengths and make the value of \(D\mathcal{Q}\) from \((K, B, D)\) to \((K^*, B^*, D^*)\) equal to zero. In fact, this is essentially how Romik derived his ODE for \(G\) in \autocite{romikDifferentialEquationsExact2018}.

Now we assume a slightly more general Case 2 where the edges of \(K\) and \(B\) along the angle \(t\) can move in the direction of \(\pm u_t\) independently.

\begin{quote}
\textbf{Case 2.} Take \(\delta' > 0\) sufficiently smaller than \(\delta\) so that \(U := (t - \delta', t + \delta + \delta')\) is a sufficiently close neighborhood of \(I = [t, t + \delta]\). Take \(\epsilon_K\) and \(\epsilon_B > 0\) sufficiently smaller than \(\delta\) and \(\delta'\). Take any \((K^*, B^*, D^*) \in \mathcal{L}\) so that the followings are true.

\begin{itemize}
\tightlist
\item
  \(h_{K^*}(s) = h_K(s) + \epsilon_K\) for \(s \in I\) and \(h_{K^*}(s) = h_K(s)\) for \(s \not\in U\)
\item
  \(h_{B^*}(s) = h_B(s) - \epsilon_B\) for \(s \in I + \pi = [\pi + t, \pi + t + \delta]\) and \(h_{B^*}(s) = h_B(s)\) for \(s \not\in U + \pi\).
\end{itemize}
\end{quote}

Observe that this Case 2 reduces to the previous Case 1 if we let \(\epsilon_K = \epsilon_B = \epsilon\). Recall that for a general \((K^*, B^*, D^*) \in \mathcal{L}\), the value \(\mathcal{Q}(K^*, B^*, D^*)\) is equal to \(|K^*| - |N'|\) where \(N'\) is the region bounded by five curves and lines from \(K^*, B^*\), and \(D^*\) as in \Cref{sec:quadraticity-of-q}. In this case, the rectangle in (1) near \(\mathbf{A}(t)\) now have height \(\epsilon_K\), and the rectangle near \(\mathbf{B}(t)\) in (2) and parallelogram near \(\mathbf{x}(t)\) in (3) have sides of length \(\epsilon_B\) and \(\epsilon_K\) respectively. Now the change in the value of \(\mathcal{Q}\) is approximately
\[
\begin{split}
\phantom{{}={}} & \phantom{{}\simeq{}} D \mathcal{Q}(K, B, D; K^*, B^*, D^*) \\
\phantom{{}={}} & \simeq \delta \left( \epsilon_K \left< \mathbf{A}'(t), u_t \right> - \epsilon_B \left< -\mathbf{B}'(t), u_t \right> - \epsilon_K \left< \mathbf{x}'(t), u_t \right> \right).
\end{split}
\]

Now it takes a bit more than just \Cref{eqn:ode-example} to show \(D \mathcal{Q} \leq 1\). Because the hallway \(L_t\) meets \(G\) at the points \(\mathbf{A}(t)\) and \(\mathbf{B}(t)\), we have \(h_K(t) + h_B(\pi + t) = 1\). The condition in \Cref{eqn:constraint-example} to \((K^*, B^*, D^*) \in \mathcal{L}\) implies \(h_{K^*}(t) + h_{B^*}(\pi + t) \leq 1\). So we should have \(\epsilon_K \leq \epsilon_B\). Now \Cref{eqn:ode-example} with \(\left< -\mathbf{B}'(t), u_t \right> \geq 0\), that the differential side at \(\mathbf{B}(t)\) is nondegenerate, imply
\[
\begin{split}
\phantom{{}={}} & \phantom{{}\simeq{}} D \mathcal{Q}(K, B, D; K^*, B^*, D^*) \simeq |G^*| - |G| \\
\phantom{{}={}} & \simeq \delta \left( \epsilon_K \left< \mathbf{A}'(t), u_t \right> - \epsilon_B \left< -\mathbf{B}'(t), u_t \right> - \epsilon_K \left< \mathbf{x}'(t), u_t \right> \right) \\
& \leq \delta \left( \epsilon_K \left< \mathbf{A}'(t), u_t \right> - \epsilon_K \left< -\mathbf{B}'(t), u_t \right> - \epsilon_K \left< \mathbf{x}'(t), u_t \right> \right) = 0.
\end{split}
\]

In the full calculation of \(D\mathcal{Q}\), all the differential sides of \(K, B,\) and \(D\) are simultaneously taken into account. The interval \(t \in [0, \pi/2]\) is divided into five intervals \(I_1, \dots, I_5\), where the set of contact points between \(G\) and \(L_t\) is fixed for each \(t \in I_i\) (\Cref{def:gerver-intervals}; see also the second column of \Cref{thm:gerver-odes}). For each \(t \in I_i\), the differential sides with normal vectors \(\pm u_t\) (or \(\pm v_t\)) contribute to \(D \mathcal{Q}\) in a way analogous to the Case 2 above. In particular, Case 2 corresponds to the case \(t \in I_4\) and normal vectors \(\pm u_t\).

In the full proof of \(D\mathcal{Q} \leq 0\) at Gerver’s sofa \(G\), we first import a total of 10 ODEs in \(\mathbf{A}, \mathbf{B}, \mathbf{C}, \mathbf{D},\) and \(\mathbf{x}\) that balance all the differential sides of \(G\) from \autocite{romikDifferentialEquationsExact2018} (\Cref{thm:gerver-odes}). Each ODE is effective on an interval \(t \in I_i\) and a specific direction \(\pm u_t\) or \(\pm v_t\). Then the side lengths of \(G\) on \(\mathbf{A}(t), \mathbf{B}(t), \mathbf{C}(t), \mathbf{D}(t),\) and \(\mathbf{x}(t)\) are represented as the surface area measures of \(K, B,\) and \(D\) (\Cref{pro:measure-translation}) and a measure \(\iota_K\) (\Cref{def:i-cap}). The 10 ODEs of Romik are translated to equalities of measures on each \(I_i\) in \Cref{thm:upper-bound-q-gerver}. They are used in the full computation at \Cref{thm:variation-a2-gerver} where the contribution by each angle \(t \in I_i\) is shown to be nonnegative. The analysis of Case 2 above, in particular, corresponds to the third case in the proof of \Cref{thm:variation-a2-gerver}.

\chapter{Monotone Sofas and Caps}
\label{sec:monotone-sofas-and-caps}
This chapter follows the overview in \Cref{sec:_monotone-sofas-and-caps} and shows that a maximum-area sofa is a \emph{monotone sofa} \(S\) which is the \emph{cap} \(K\) subtracted by \emph{niche} \(\mathcal{N}(K)\).

\begin{itemize}
\tightlist
\item
  \Cref{sec:planar-convex-body} prepares standard notions on a planar convex body \(K\).
\item
  \Cref{sec:supporting-hallway} defines the \emph{supporting hallway} \(L_S(t)\) of any moving sofa \(S\) making contact with \(S\) in the outer walls.
\item
  \Cref{sec:monotone-sofa} defines the intersection \(\mathcal{I}(S)\) of unit-width stripes \(H\), \(V_\omega\), and the supporting hallways \(L_S(t)\) containing \(S\). We show that \(\mathcal{I}(S)\) is connected (\Cref{thm:monotonization-is-connected}) so that it is a larger moving sofa containing \(S\) (\Cref{thm:monotonization}). With this, we define a \emph{monotone sofa} as the intersection \(\mathcal{I}(S)\) coming from some moving sofa \(S\) (\Cref{def:monotone-sofa}).
\item
  \Cref{sec:cap-and-niche} defines the notion of cap \(K\) (\Cref{def:cap}) and niche \(\mathcal{N}(K)\) (\Cref{def:niche}), and shows that a monotone sofa \(S\) is equal to the cap \(K := \mathcal{C}(S)\) subtracted by niche \(\mathcal{N}(K)\) (\Cref{thm:monotone-sofa-structure}).
\item
  \Cref{sec:cap-contains-niche} shows that for a monotone sofa \(S\), its cap \(K\) contains the niche \(\mathcal{N}(K)\) as a subset (\Cref{thm:niche-in-cap}). Along the way, many notions on the cap and niche are defined. We define the space of all caps \(\mathcal{K}_\omega^\mathrm{c}\) with rotation angle \(\omega\) and turn the problem into the maximization of the sofa area functional \(\mathcal{A}_\omega(K) := |K| - |\mathcal{N}(K)|\) on caps \(K \in \mathcal{K}_\omega^\mathrm{c}\).
\end{itemize}
\section{Planar Convex Body}
\label{sec:planar-convex-body}
\begin{definition}

A \emph{planar convex body} \(K\) is a nonempty, compact, and convex subset of \(\mathbb{R}^2\). Define \(\mathcal{K}\) as the collection of all planar convex bodies \(K\).

\label{def:convex-body}
\end{definition}

All convex bodies appearing in this work will be planar, so we will omit the word ‘planar’. Many authors also require \(K^\circ\) to be nonempty, but we follow \autocite{schneider_2013} and allow \(K^\circ\) to be empty. That is, a closed line segment or a point is also a convex body.

We define standard notions on a convex body \(K\). Note that the notions generalize naturally to any nonempty and compact \(S \subseteq \mathbb{R}^2\). Here, \(S^1\) is taken as \(\mathbb{R} / 2\pi \mathbb{Z}\). We will denote an element or interval of \(S^1\) by its representation in \(\mathbb{R}\).

\begin{figure}
\centering
\includegraphics{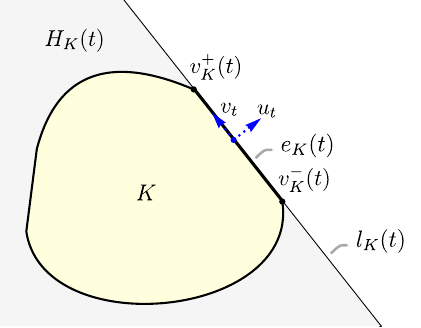}
\caption{A planar convex body \(K\) with its edge, vertices, supporting line, and half-plane.}
\label{fig:convex-body}
\end{figure}

\begin{definition}

For any subset \(X\) of \(\mathbb{R}^2\), denote the topological closure, boundary, and interior as \(\overline{X}\), \(\partial X\), and \(X^\circ\) respectively.

\label{def:topological-operations}
\end{definition}

\begin{definition}

For any angle \(t\) in \(S^1\) or \(\mathbb{R}\), define the unit vectors \(u_t = \left( \cos t, \sin t \right)\) and \(v_t = \left( -\sin t,\cos t \right)\).

\label{def:frame}
\end{definition}

\begin{definition}

For any \(t\in S^1\) and \(h \in \mathbb{R}\), define the line \(l(t, h)\) with the \emph{normal angle} \(t\) and the signed distance \(h\) from the origin as
\[
l(t, h) = \left\{ p \in \mathbb{R}^2 : p \cdot u_t = h \right\}.
\]

\label{def:line}
\end{definition}

\begin{definition}

For any \(t \in S^1\) and \(h \in \mathbb{R}\), define the \emph{closed half-planes} \(H_{\pm}(t, h)\) and the \emph{open half-planes} \(H_{\pm}^\circ(t, h)\) with the boundary \(l(t, h)\) as the following.
\begin{gather*}
H_-(t, h) := \left\{ p \in \mathbb{R}^2 : p \cdot u_t \leq h \right\} \qquad H_-^{\circ}(t, h) := \left\{ p \in \mathbb{R}^2 : p \cdot u_t < h \right\} \\
H_+(t, h) := \left\{ p \in \mathbb{R}^2 : p \cdot u_t \geq h \right\} \qquad H_+^{\circ}(t, h) := \left\{ p \in \mathbb{R}^2 : p \cdot u_t > h \right\}
\end{gather*}
We say that \(H_{-}(t, h)\) and \(H_{-}^{\circ}(t, h)\) have the \emph{normal angle} \(t\) and \emph{normal vector} \(u_t\).

\label{def:half-plane}
\end{definition}

Consequently, \(H_+(t, h)\) and \(H_+^{\circ}(t, h)\) have the normal angle \(t + \pi\).

\begin{definition}

For any nonempty and compact \(S \subseteq \mathbb{R}^2\), define its \emph{support function} \(h_S : S^1 \to \mathbb{R}\) as the value \(h_S(t) := \sup \left\{ s \cdot u_t : s \in S \right\}\).

\label{def:support-function}
\end{definition}

\begin{definition}

For any nonempty and compact \(S \subseteq \mathbb{R}^2\) and angle \(t \in S^1\), define the \emph{supporting line} \(l_S(t)\) of \(S\) with \emph{normal angle} \(t\) as \(l_S(t) := l(t, h_S(t))\). Define the \emph{supporting half-plane} \(H_S(t)\) of \(S\) with \emph{normal angle} \(t\) as \(H_S(t) := H_-(t, h_S(t))\).

\label{def:supporting-line-half-plane}
\end{definition}

\begin{definition}

For any nonempty and compact \(S\) and angle \(t\) in \(S^1\) or \(\mathbb{R}\), the \emph{width} of \(S\) along the direction of angle \(t\) (or unit vector \(u_t\)) is the distance between the parallel supporting lines \(l_S(t)\) and \(l_S(t + \pi)\) of \(S\) defined as \(h_S(t) + h_S(t + \pi)\).

\label{def:width}
\end{definition}

In this work only, we use the following notions of \emph{vertices} and \emph{edges} of a convex body \(K\).

\begin{definition}

For any convex body \(K\) and \(t \in S^1\), define the \emph{edge} \(e_K(t)\) of \(K\) as the intersection of \(K\) with the supporting line \(l_K(t)\).

\label{def:convex-body-edge}
\end{definition}

\begin{definition}

For any convex body \(K\) and \(t \in S^1\), let \(v_K^+(t)\) and \(v_K^-(t)\) be the endpoints of the edge \(e_K(t)\) such that \(v_K^+(t)\) is positioned farthest in the direction of \(v_t\) and \(v_K^-(t)\) is positioned farthest in the opposite direction \(-v_t\). We call \(v_K^{\pm}(t)\) the \emph{vertices} of \(K\).

\label{def:convex-body-vertex}
\end{definition}

It is possible that the edge \(e_K(t)\) can be a single point. In such case, the supporting line \(l_K(t)\) makes contact with \(K\) at the single point \(v_K^+(t) = v_K^-(t)\).

\begin{definition}

Define \(\mathcal{H}^1\) as the \emph{Hausdorff measure of dimension one} on \(\mathbb{R}^2\).

\label{def:hausdorff-measure}
\end{definition}

That is, if \(X\) is a disjoint union of finite line segments in \(\mathbb{R}^2\), then \(\mathcal{H}^1(X)\) is the sum of all lengths of the line segments.

\begin{definition}

Denote the \emph{Hausdorff distance} between convex bodies \(K_1\) and \(K_2\) as \(d_{\mathrm{H}}\). This is the supremum norm between \(h_{K_1}\) and \(h_{K_2}\) (Lemma 1.8.14, page 66 of \autocite{schneider_2013}).

\label{def:hausdorff-distance}
\end{definition}

\begin{definition}

Denote the \emph{surface area measure} of a convex body \(K\) as \(\sigma_K\). For simplicity, denote \(\sigma_K(\left\{ t \right\})\) as \(\sigma_K(t)\).

\label{def:surface-area-measure}
\end{definition}

We refer to page 214 of \cite{schneider_2013} for a full construction of \(\sigma_K\). It measures the side lengths of \(K\), which is the defining property of \(\sigma_K\).

\begin{theorem}

(Theorem 4.2.3 of \cite{schneider_2013}) For any Borel subset \(X\) of \(S^1\), the value \(\sigma_K(X)\) is the Hausdorff measure \(\mathcal{H}^1\) of the union \(\bigcup_{t \in X} e_K(t)\) of edges of \(K\).

\label{thm:surface-area-measure}
\end{theorem}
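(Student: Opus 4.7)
The plan is to reduce the statement to Schneider's general construction of $\sigma_K$ via the reverse spherical image, and then observe that in the plane this construction coincides with the edge formula in \Cref{def:convex-body-edge}. Concretely, I would recall from Section 4.2 of \cite{schneider_2013} that for a convex body $K \subset \mathbb{R}^n$ one has the characterization $\sigma_K(X) = \mathcal{H}^{n-1}(\tau_K(X))$, where the reverse spherical image is $\tau_K(X) := \{p \in \partial K : \text{some outer unit normal to } K \text{ at } p \text{ lies in } X\}$. In the planar case $n = 2$, a unit vector $u_t$ is an outer unit normal to $K$ at $p$ exactly when the supporting line $l_K(t)$ passes through $p$, i.e., when $p \in e_K(t)$. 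Hence $\tau_K(X) = \bigcup_{t \in X} e_K(t)$ and the theorem drops out of the definition.

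The one subtle point is the multi-valuedness of the Gauss map at corner points of $\partial K$: a corner $p$ belongs to $e_K(t)$ for an entire arc of $t$'s, so the edges $e_K(t)$ for distinct $t$ need not be disjoint. I would address this by noting that the set of such corners is at most countable, since the open arcs of outer normals attached to distinct corners are pairwise disjoint in $S^1$ and $S^1$ has a countable base. Therefore the overlap between distinct $e_K(t)$'s is an $\mathcal{H}^1$-null set, and the measure of the union $\bigcup_{t \in X} e_K(t)$ is unambiguous and additive in $X$.

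As a sanity check and an independent route, I would first verify the formula by hand for polygons. For a polygon $K$ with edges at outer-normal angles $t_1, \dots, t_N$ and lengths $\ell_1, \dots, \ell_N$, the surface area measure is the atomic measure $\sigma_K = \sum_i \ell_i \delta_{t_i}$, and $\bigcup_{t \in X} e_K(t)$ is the disjoint union of those edges with $t_i \in X$, of total length $\sum_{t_i \in X} \ell_i$; both sides match trivially. From there, one can pass to an arbitrary convex body by Hausdorff-approximation $K_n \to K$ using weak continuity of the map $K \mapsto \sigma_K$ (Theorem 4.2.1 of \cite{schneider_2013}) and continuity of the edge-length functional on closed arcs, then extend to all Borel sets by a monotone-class argument.

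The main obstacle is strictly bookkeeping: it is aligning the definition of $\sigma_K$ used in \cite{schneider_2013}, given via the polynomial expansion of the area of the Minkowski sum $K + \lambda B$, with the reverse-spherical-image formula. That alignment is the content of Schneider's Theorem 4.2.3 itself, so in practice the cleanest path, and the one the paper takes, is simply to quote that theorem rather than re-derive the planar specialization.
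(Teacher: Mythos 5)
The paper does not prove this statement; it is quoted verbatim as Theorem 4.2.3 of Schneider's monograph, and your proposal correctly identifies this at the end. Your sketch of the underlying argument (identifying the reverse spherical image $\tau_K(X)$ with $\bigcup_{t\in X} e_K(t)$, noting that corner points where distinct edges overlap form a countable and hence $\mathcal{H}^1$-null set, and the polygon sanity check) is an accurate account of what Schneider's proof does in the planar case, so it is consistent with the paper's treatment — there is simply nothing further in the paper to match against.
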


So if \(K\) is a convex polygon, then \(\sigma_K\) is a discrete measure such that the measure \(\sigma_K\left( t \right)\) at point \(t\) is the length of the edge \(e_K(t)\). For another example, assume that \(K\) is a smooth convex polygon where for every \(t \in S^1\), the tangent line \(l_K(t)\) always meets \(K\) at a single point \(v(t) = v_K^{\pm}(t)\) which is smooth in \(t \in S^1\). Then the distribution function \(R : S^1 \to \mathbb{R}_{\geq 0}\) of \(\sigma_K\) is the radius of curvature \(R(t) = \left\lVert v'(t) \right\rVert\) of \(\partial K\) at \(v(t)\).

\begin{proposition}

For any convex body \(K\) and \(t \in S^1\), \(\sigma_K\left( t \right)\) is the length of the edge \(e_K(t)\). Consequently, \(v_K^+(t) = v_K^-(t) + \sigma_K\left( t \right) v_t\).

\label{pro:surface-area-measure-side-length}
\end{proposition}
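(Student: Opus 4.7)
The proposition is essentially a specialization of the cited \Cref{thm:surface-area-measure} to a singleton, together with an elementary observation about the geometry of the edge $e_K(t)$. My plan is to handle the two assertions in sequence.

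First, for the identity $\sigma_K(t) = \mathcal{H}^1(e_K(t))$, I would apply \Cref{thm:surface-area-measure} to the Borel set $X = \{t\} \subseteq S^1$. The theorem then gives $\sigma_K(\{t\}) = \mathcal{H}^1\bigl( \bigcup_{s \in \{t\}} e_K(s) \bigr) = \mathcal{H}^1(e_K(t))$, which by the notational convention in \Cref{def:surface-area-measure} is exactly $\sigma_K(t)$. This step is immediate; there is nothing to prove beyond unfolding the definitions.

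Second, for the vector identity $v_K^+(t) = v_K^-(t) + \sigma_K(t) v_t$, I would observe that $e_K(t) = K \cap l_K(t)$ is the intersection of a convex body with a line, hence a compact convex subset of the line $l_K(t)$; in other words, $e_K(t)$ is a (possibly degenerate) closed line segment contained in $l_K(t)$. The line $l_K(t)$ has normal $u_t$ and therefore direction $\pm v_t$. By \Cref{def:convex-body-vertex}, $v_K^+(t)$ and $v_K^-(t)$ are the endpoints of this segment extremal in the directions $v_t$ and $-v_t$ respectively, so their difference is a nonnegative multiple of $v_t$, equal in magnitude to the length of the segment. Since the $\mathcal{H}^1$ measure of a line segment coincides with its Euclidean length, the length equals $\mathcal{H}^1(e_K(t)) = \sigma_K(t)$ by the first part. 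The degenerate case $e_K(t) = \{v_K^+(t)\} = \{v_K^-(t)\}$ is handled uniformly since both sides vanish.

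There is no serious obstacle: the work is entirely bookkeeping. The only point that warrants a sentence of care is that $\{t\}$ is indeed a Borel subset of $S^1$, so the cited theorem applies; and that $e_K(t)$ is genuinely a line segment (convexity of $K$ is what makes the intersection with a line convex, and compactness of $K$ makes it compact). Once these are noted, both claims follow directly.
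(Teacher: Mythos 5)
Your proof is correct and follows the same route as the paper: specialize \Cref{thm:surface-area-measure} to the singleton $X = \{t\}$, then read off the vector identity from the definition of $v_K^\pm(t)$ as endpoints of the segment $e_K(t)$. The paper's own proof is the one-liner ``Let $X = \{t\}$ in \Cref{thm:surface-area-measure}''; you have simply spelled out the elementary geometric bookkeeping for the second assertion that the paper leaves implicit.
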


\begin{proof}
Let \(X = \left\{ t \right\}\) in \Cref{thm:surface-area-measure}.
\end{proof}

We add the following definition.

\begin{definition}

Let \(K\) be any convex body. For every \(a, b \in S^1\) such that \(b \neq a, a + \pi\), define \(v_K(a, b)\) as the intersection \(l_K(a) \cap l_K(b)\).

\label{def:convex-body-tangent-lines-intersection}
\end{definition}

We prove a technical lemma on the limit of vertices of \(K\).

\begin{theorem}

Let \(K\) be any convex body and \(t \in S^1\) be an arbitrary angle. We have the following right limits converging to \(v_K^+(t)\). In particular, the vertex \(v_K^+(t)\) is right-continuous on \(t \in S^1\).
\[
\lim_{ s \to t^+ } v_K^+(t) = \lim_{ s \to t^+ } v_K^-(u) = \lim_{ s \to t^+ } v_K(t, s) = v_K^+(t)
\]
Similarly, we have the following left limits.
\[
\lim_{ s \to t^- } v_K^+(s) = \lim_{ s \to t^- } v_K^-(s) = \lim_{ s \to t^- } v_K(s, t) = v_K^-(t)
\]

\label{thm:limits-converging-to-vertex}
\end{theorem}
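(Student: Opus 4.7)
The plan is to prove the three right-limit statements by combining compactness of $K$ with the extremal characterization of $v_K^+(t)$, and then deduce the convergence of $v_K(t, s)$ from a short closed-form computation sandwiched by the already-established limits. The left-limit statements then follow by mirror symmetry. The central analytic input throughout is continuity of $h_K$ together with the rotation identity $u_{t + \delta} = \cos \delta \cdot u_t + \sin \delta \cdot v_t$.

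I would first handle $v_K^{\pm}(s) \to v_K^+(t)$ as $s \to t^+$. Fix a sequence $s_n \to t^+$ with $\delta_n := s_n - t > 0$. Since $v_K^{\pm}(s_n) \in K$ and $K$ is compact, it suffices to show every accumulation point $p$ equals $v_K^+(t)$. The identity $v_K^{\pm}(s_n) \cdot u_{s_n} = h_K(s_n)$ together with continuity of $h_K$ yields $p \cdot u_t = h_K(t)$, so $p \in e_K(t)$. To pin down $p = v_K^+(t)$, take any $q \in e_K(t)$ and use $v_K^{\pm}(s_n) \cdot u_{s_n} \geq q \cdot u_{s_n}$. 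Expanding via the rotation identity, the $u_t$-components of $v_K^{\pm}(s_n) - q$ satisfy $v_K^{\pm}(s_n) \cdot u_t \leq h_K(t) = q \cdot u_t$ and so contribute nonpositively; the $v_t$-components must therefore compensate, and dividing by $\sin \delta_n > 0$ yields $v_K^{\pm}(s_n) \cdot v_t \geq q \cdot v_t$. Passing to the limit gives $p \cdot v_t \geq q \cdot v_t$ for every $q \in e_K(t)$, which forces $p = v_K^+(t)$ by definition of $v_K^+(t)$ as the endpoint of $e_K(t)$ farthest in direction $v_t$.

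For $v_K(t, s)$ with $\delta := s - t > 0$, solving $v_K(t, s) \cdot u_t = h_K(t)$ and $v_K(t, s) \cdot u_s = h_K(s)$ via the rotation identity gives the closed form
\[
v_K(t, s) \cdot v_t \;=\; \frac{h_K(s) - h_K(t) \cos \delta}{\sin \delta}.
\]
A two-sided estimate, using $v_K^+(t) \in K$ tested against $u_s$ for the lower bound and $v_K^+(s) \in K$ tested against $u_t$ for the upper bound, produces the sandwich
\[
v_K^+(t) \cdot v_t \;\leq\; \frac{h_K(s) - h_K(t) \cos \delta}{\sin \delta} \;\leq\; v_K^+(s) \cdot v_t,
\]
and the convergence $v_K^+(s) \to v_K^+(t)$ from the previous step squeezes the ratio to $v_K^+(t) \cdot v_t$. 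Combined with $v_K(t, s) \cdot u_t = h_K(t) = v_K^+(t) \cdot u_t$, this yields $v_K(t, s) \to v_K^+(t)$.

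The left-limit assertions are obtained by the mirror argument: with $\delta_n < 0$, the sign flip from $\sin \delta_n < 0$ reverses the extremality inequality in the first step, so one now obtains $p \cdot v_t \leq q \cdot v_t$ for every $q \in e_K(t)$, which singles out $p = v_K^-(t)$; the same closed form and sandwich then give $v_K(s, t) \to v_K^-(t)$ with $v_K^-$ replacing $v_K^+$. The main obstacle I anticipate is this last step for $v_K(t, s)$: the explicit formula is immediate, but the sandwich genuinely needs the earlier convergence of $v_K^{\pm}(s)$ to pin the right-hand side, so the step order is forced and the geometric conclusion cannot be reached by compactness of $K$ alone.
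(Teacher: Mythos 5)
Your proof is correct but takes a genuinely different route from the paper. The paper's argument is geometric: it fixes $\epsilon > 0$, picks $p := v_K^+(t) + \epsilon v_t \notin K$, chooses $\epsilon' > 0$ small enough that the segment from $p$ to $q := p - \epsilon' u_t$ misses $K$, and forms the right triangle $T$ with vertices $v_K^+(t)$, $p$, $q$. A convexity argument shows $K \cap H_T \subseteq T$, from which it follows that $e_K(t_0) \subseteq T$ and $v_K(t, t_0) \in T$ for all $t_0$ slightly greater than $t$; since $T$ has diameter $< 2\epsilon$, this produces an $\epsilon$--$\delta$ bound that handles all three right limits at once. Your route is analytic: compactness to produce accumulation points, the rotation identity $u_s = \cos\delta\, u_t + \sin\delta\, v_t$ together with the support inequalities to show any accumulation point of $v_K^{\pm}(s_n)$ lies in $e_K(t)$ and is $v_t$-maximal there (hence equals $v_K^+(t)$), and then a sandwich for $v_K(t,s)$ built from the closed form $v_K(t,s)\cdot v_t = (h_K(s) - h_K(t)\cos\delta)/\sin\delta$. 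What you gain are explicit formulas and no need for the geometric containment lemma $K \cap H_T \subseteq T$; what you give up is uniformity — as you correctly flag, your third limit genuinely consumes the already-established convergence $v_K^+(s) \to v_K^+(t)$ to pin the upper end of the sandwich, whereas in the paper the single triangle $T$ feeds all three claims independently of one another. Both approaches invoke a basic regularity fact: you use continuity of $h_K$, the paper uses openness of $\mathbb{R}^2 \setminus K$.
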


\begin{proof}
We only compute the right limits. Left limits can be shown using a symmetric argument.

Let \(\epsilon > 0\) be arbitrary. Let \(p = v_K^+(t) + \epsilon v_t\). By the definition of \(v_K^+(t)\), the point \(p\) is not in \(K\). As \(\mathbb{R}^2 \setminus K\) is open, we can take some positive \(\epsilon' < \epsilon\) such that the closed line segment \(s\) connecting \(p\) and \(q = p - \epsilon' u_t\) is disjoint from \(K\) as well. Define the closed right-angled triangle \(T\) with vertices \(v_K^+(t)\), \(p\), and \(q\). Take the line \(l\) that passes through both \(q\) and \(v_K^+(t)\). Call the two closed half-planes divided by the line \(l\) as \(H_T\) and \(H'\), where \(H_T\) contains \(p\) and \(H'\) does not contain \(p\). By definition, \(H'\) has normal angle \(s \in (t, t + \pi/2)\).

We show that \(K \cap H_T \subseteq T\). Observe that the intersection \(X := H_K(t) \cap H_T\) is a cone centered at the point \(v_K^+(t)\), with the line segment \(s\) dividing \(X\) into triangle \(T\) and an unbounded convex set \(X \setminus T\). Now take any point \(r \in K \cap H_T\). If \(r \not\in T\), then since \(r \in X \setminus T\) and \(v_K^+(t) \in T\) the line connecting \(r \in K\) and \(v_K^+(t) \in K\) should pass through a point in \(s\). This, however, contradicts that \(s\) is disjoint from convex \(K\). So we should have \(r \in T\) and thus \(K \cap H_T \subseteq T\).

Now take arbitrary \(t_0 \in (t, s)\). We show that the edge \(e_K(t_0)\) should lie inside \(T\). It suffices to show that any point \(z\) in \(K\) that attains the maximum value of \(z \cdot u_{t_0}\) is in \(T\). Define the fan \(F := H_K(t) \cap H'\), so that \(F\) is bounded by lines \(l_K(t)\) and \(l\) with the vertex \(v_K^+(t)\). If \(z \in F\), it should be that \(z = v_K^+(t) \in T\), because \(v_K^+(t) \in K\) and \(v_K^+(t) \cdot u_{t_0} > z \cdot u_{t_0}\) for every point \(z\) in \(F\) other than \(z = v_K^+(t)\). If \(z \in K \setminus F\) on the other hand, we have \(K \setminus F = K \setminus H' \subseteq K \cap H_T \subseteq T\) so \(z \in T\). This completes the proof of \(e_K(t_0) \subseteq T\).

Observe that the triangle \(T\) contains \(v_K^+(t)\) and has diameter \(< 2\epsilon\) because the two perpendicular sides of \(T\) containing \(p\) have length \(\leq \epsilon\). So the endpoints \(v_K^+(u)\) and \(v_K^-(u)\) of the edge \(e_K(t_0) \subseteq T\) are distance at most \(2\epsilon\) away from \(v_K^+(t)\). This completes the epsilon-delta argument for \(\lim_{ s \to t^+ } v_K^+(s) = \lim_{ s \to t^+ } v_K^-(s) = v_K^+(t)\).

From \(e_K(t_0) \subseteq T\) and that the vertex \(p\) of \(T\) maximizes the value of \(z \cdot u_{t_0}\) over all \(z \in T\), we get that \(p\) is either on \(l_K(t_0)\) or outside the half-plane \(g_K(t_0)\). On the other hand we have \(v_K^+(t) \in g_K(t_0)\). So the line \(l_K(t_0)\) passes through the segment connecting \(p\) and \(v_K^+(t)\), and the intersection \(v_K(t, t_0) = l_K(t) \cap l_K(t_0)\) is inside \(T\). This with that the diameter of \(T\) is less than \(2 \epsilon\) proves \(\lim_{ s \to t^+ } v_K(t, s) = v_K^+(t)\).
\end{proof}

\section{Supporting Hallway}
\label{sec:supporting-hallway}
In this \Cref{sec:supporting-hallway}, we define the \emph{supporting hallways} \(L_S(t)\) of a moving sofa \(S\). We first name the parts of the hallway \(L\).

\begin{figure}
\centering
\includegraphics{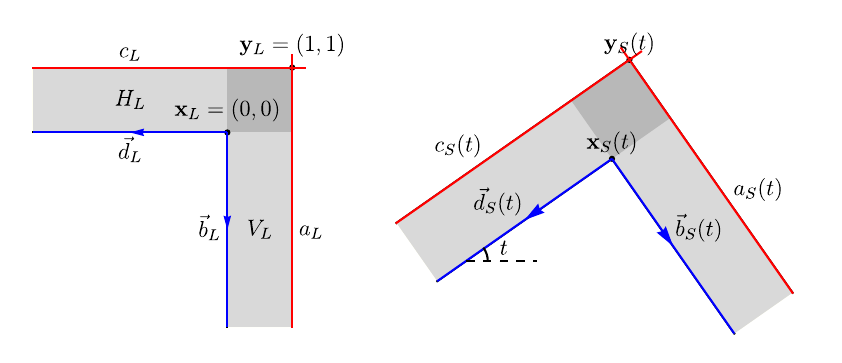}
\caption{The hallway \(L\) (\Cref{def:hallway}, on left), supporting hallway \(L_S(t)\) (\Cref{def:tangent-hallway}, on right), and their corresponding parts (\Cref{def:hallway-parts}, \Cref{def:rotating-hallway-parts}).}
\label{fig:hallway-detailed}
\end{figure}

\begin{definition}

Let \(\mathbf{x}_L = (0, 0)\) and \(\mathbf{y}_L = (1, 1)\) be the inner and outer corner of \(L\) respectively.

Let \(a_L\) and \(c_L\) be the lines \(x=1\) and \(y=1\) representing the \emph{outer walls} of \(L\) passing through \(\mathbf{y}_L\). Let \(\vec{b}_L\) and \(\vec{d}_L\) be the half-lines \(\left\{ 0 \right\} \times (-\infty, 0]\) and \((-\infty, 0] \times \left\{ 0 \right\}\) from the inner corner \(\mathbf{x}_L\) representing the \emph{inner walls} of \(L\). Let \(b_L\) and \(d_L\) be the lines \(x=0\) and \(y=0\) extending \(\vec{b}_L\) and \(\vec{d}_L\) respectively.

Let \(Q_L^+ = (-\infty, 1]^2\) be the closed quarter-plane bounded by outer walls \(a_L\) and \(c_L\). Let \(Q_L^- = (-\infty, 0)^2\) be the open quarter-plane bounded by inner walls \(\vec{b}_L\) and \(\vec{d}_L\), so that \(L = Q_L^+ \setminus Q_L^-\).

\label{def:hallway-parts}
\end{definition}

Now we define the supporting hallways on any nonempty and compact subset of \(\mathbb{R}^2\).

\begin{definition}

For any nonempty and compact \(S \subset \mathbb{R}^2\) and angle \(t \in S^1\), define the rigid transformation \(f_{S, t}\) of \(\mathbb{R}^2\) as
\[
f_{S, t}(p) := R_t(p) + (h_S(t) - 1)  u_t + (h_S(t + \pi/2) - 1) v_t
\]
and define the \emph{supporting hallway} \(L_S(t) := f_{S, t}(L)\) of \(S\) with angle \(t\).

\label{def:tangent-hallway}
\end{definition}

The following \Cref{pro:tangent-hallway} is the defining property of \(L_S(t)\).

\begin{proposition}

For any shape \(S\) and angle \(t \in S^1\), the supporting hallway \(L_S(t)\) is the unique translation of \(R_t(L)\) such that the outer walls of \(L_S(t)\) corresponding to the outer walls \(a\) and \(c\) of \(L\) are the tangent lines \(l_S(t)\) and \(l_S(t + \pi/2)\) of \(S\) respectively.

\label{pro:tangent-hallway}
\end{proposition}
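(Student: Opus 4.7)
The plan is to verify the two claims of the proposition separately: first that $L_S(t) = f_{S,t}(L)$ as constructed in \Cref{def:tangent-hallway} has the stated outer walls, and second that this condition pins the translate of $R_t(L)$ down uniquely. Both parts reduce to direct bookkeeping once one tracks how a line $l(s, h)$ transforms under rotation and translation.

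For existence, I would first recognize the outer walls $a_L$ and $c_L$ of $L$ as the lines $l(0, 1)$ and $l(\pi/2, 1)$ respectively in the notation of \Cref{def:line}, since each is at signed distance $1$ from the origin with outward unit normal $u_0$ or $u_{\pi/2}$. The rotation $R_t$ preserves signed distances from the origin and rotates normal angles by $t$, so it sends these to $l(t, 1)$ and $l(t + \pi/2, 1)$. A subsequent translation by a vector $w$ sends $l(s, h)$ to $l(s, h + w \cdot u_s)$. For the specific vector $w := (h_S(t) - 1) u_t + (h_S(t + \pi/2) - 1) v_t$ used in $f_{S,t}$, the orthonormality of $(u_t, v_t)$ together with $v_t = u_{t + \pi/2}$ gives $w \cdot u_t = h_S(t) - 1$ and $w \cdot u_{t + \pi/2} = h_S(t + \pi/2) - 1$. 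Substituting, the two transformed outer walls become $l(t, h_S(t)) = l_S(t)$ and $l(t + \pi/2, h_S(t + \pi/2)) = l_S(t + \pi/2)$, as required.

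For uniqueness, I would observe that the normal angles $t$ and $t + \pi/2$ are orthogonal, so $l_S(t)$ and $l_S(t + \pi/2)$ are perpendicular lines meeting at a single point $P$. In any translate of $R_t(L)$, its two outer walls intersect precisely at the image of the outer corner $R_t(\mathbf{y}_L)$. If these outer walls are required to coincide with $l_S(t)$ and $l_S(t + \pi/2)$ respectively, then the image of $R_t(\mathbf{y}_L)$ must be $P$, which fixes the translation vector uniquely. Since $L_S(t)$ already realizes this configuration by the existence argument, it is the unique such translate.

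There is no real obstacle in this proof; the content is essentially that the vector $w$ in \Cref{def:tangent-hallway} has been engineered so that its inner products with $u_t$ and $v_t$ are exactly the shifts needed to send the rotated outer walls of $L$ to the supporting lines $l_S(t)$ and $l_S(t + \pi/2)$ of $S$. The only minor care needed is in keeping track of which outer wall of $L$ corresponds to which supporting line, which is handled automatically by the correspondence induced by $f_{S,t}$.
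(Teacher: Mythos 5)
Your proof is correct and follows essentially the same approach as the paper: identify $a_L = l(0,1)$, $c_L = l(\pi/2, 1)$, track how these lines transform under $R_t$ and a translation, and observe that matching them with $l_S(t)$ and $l_S(t + \pi/2)$ forces the translation vector to be exactly the $w$ in \Cref{def:tangent-hallway}. The only minor stylistic difference is that the paper parametrizes all translates as $R_t(L) + c_1 u_t + c_2 v_t$ and obtains existence and uniqueness simultaneously from a single ``if and only if $c_1 = h_S(t) - 1$, $c_2 = h_S(t+\pi/2) - 1$'' conclusion, whereas you verify existence by direct computation and then argue uniqueness separately via the intersection point of the two perpendicular supporting lines; both are equally valid.
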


\begin{proof}
Let \(c_1\) and \(c_2\) be arbitrary real values. Then \(L' = R_t(L) + c_1 u_t + c_2 v_t\) is an arbitrary rigid transformation of \(L\) rotated counterclockwise by \(t\). The outer walls of \(L'\) corresponding to the outer walls \(a\) and \(c\) of \(L\) (\Cref{def:hallway-parts}) are \(l(t, c_1 + 1)\) and \(l(t + \pi/2, c_2 + 1)\) respectively. They match with the tangent lines \(l_S(t) = l(t, h_S(t))\) and \(l_S(t + \pi/2) = l(t + \pi/2, h_S(t + \pi/2))\) of \(S\) if and only if \(c_1 = h_S(t) - 1\) and \(c_2 = h_S(t + \pi/2) - 1\). That is, if and only if \(L' = L_S(t)\).
\end{proof}

Name the parts of supporting hallway \(L_S(t)\) corresponding to the parts of \(L\) (\Cref{def:hallway-parts}) under the transformation \(f_{S, t}\) for future use.

\begin{definition}

For any shape \(S\) and angle \(t \in S^1\), let \(\mathbf{x}_S(t), \mathbf{y}_S(t), a_S(t)\), \(b_S(t), c_S(t)\), \(d_S(t)\), \(\vec{b}_S(t)\), \(\vec{d}_S(t)\), \(Q^+_S(t)\), \(Q^-_S(t)\) be the parts of \(L_S(t)\) corresponding to the parts \(\mathbf{x}_L\), \(\mathbf{y}_L\), \(a_L\), \(b_L\), \(c_L\), \(d_L\), \(\vec{b}_L\), \(\vec{d}_L\), \(Q^+_L\), \(Q^-_L\) of \(L\) respectively with \(f_{S, t}\). That is, for any \(? = \mathbf{x}\), \(\mathbf{y}\), \(a\), \(b\), \(c\), \(d\), \(\vec{b}\), \(\vec{d}\), \(Q^+\), \(Q^-\), let \(?_S(t) := f_{S, t}(?_L)\).

\label{def:rotating-hallway-parts}
\end{definition}

\begin{proposition}

We have \(L_S(t) = Q_S^+(t) \setminus Q_S^-(t)\). Also we can express the parts of \(L_S(t)\) purely in terms of the supporting lines, half-planes, and support function \(h_S\) of \(S\) as below.
\begin{gather*}
\mathbf{x}_S(t) = (h_S(t) - 1) u_t + (h_S(t + \pi/2) - 1) v_t \\
\mathbf{y}_S(t) = h_S(t) u_t + h_S(t + \pi/2) v_t \\
a_S(t) = l_S(t) = l(t, h_S(t)) \qquad
b_S(t) = l(t, h_S(t) - 1) \\
c_S(t) = l_S(t + \pi/2) = l(t + \pi/2, h_S(t + \pi/2)) \\ 
d_S(t) = l(t + \pi/2, h_S(t + \pi/2) - 1) \\
Q_S^+(t) = H_S(t) \cap H_S(t + \pi/2) = H_-(t, h_S(t)) \cap H_-(t + \pi/2, h_S(t + \pi/2)) \\
Q_S^-(t) = H_-^{\circ}(t, h_S(t) - 1) \cap H_-^{\circ}(t + \pi/2, h_S(t + \pi/2))
\end{gather*}

\label{pro:rotating-hallway-parts}
\end{proposition}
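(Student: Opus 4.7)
The plan is to unpack the definition \(L_S(t) = f_{S,t}(L)\) from \Cref{def:tangent-hallway} and push each of the parts of \(L\) named in \Cref{def:hallway-parts} through the explicit map \(f_{S,t}\). Since \(R_t\) sends the standard basis \((1,0), (0,1)\) to \(u_t, v_t\), the rigid motion unfolds as
\[
f_{S, t}(x, y) = (x + h_S(t) - 1)\, u_t + (y + h_S(t + \pi/2) - 1)\, v_t,
\]
so every verification reduces to reading off the \(u_t\)- and \(v_t\)-coordinates of the image. The claim \(L_S(t) = Q_S^+(t) \setminus Q_S^-(t)\) is immediate: \(f_{S,t}\) is a bijection of \(\mathbb{R}^2\), the equality \(L = Q_L^+ \setminus Q_L^-\) holds by \Cref{def:hallway-parts}, and the sets \(Q_S^{\pm}(t)\) are by \Cref{def:rotating-hallway-parts} the images \(f_{S,t}(Q_L^{\pm})\).

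Substituting \(\mathbf{x}_L = (0,0)\) and \(\mathbf{y}_L = (1,1)\) into the displayed formula immediately gives the asserted expressions for \(\mathbf{x}_S(t)\) and \(\mathbf{y}_S(t)\). For the walls, I would use the general fact that \(f_{S,t}\) sends the line \(x = x_0\) to \(\{ p : p \cdot u_t = x_0 + h_S(t) - 1 \} = l(t, x_0 + h_S(t) - 1)\) and the line \(y = y_0\) to \(l(t + \pi/2, y_0 + h_S(t+\pi/2) - 1)\). Plugging in \(x_0 = 1, 0\) for \(a_L, b_L\) and \(y_0 = 1, 0\) for \(c_L, d_L\) produces the four stated lines, and comparison with \Cref{def:supporting-line-half-plane} identifies \(a_S(t) = l_S(t)\) and \(c_S(t) = l_S(t + \pi/2)\).

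For the quadrants, I would write \(Q_L^+ = \{x \le 1\} \cap \{y \le 1\}\) and apply the same image-of-a-line computation to each defining half-plane, which produces the closed half-planes \(H_-(t, h_S(t))\) and \(H_-(t + \pi/2, h_S(t+\pi/2))\); these equal \(H_S(t)\) and \(H_S(t+\pi/2)\) by \Cref{def:supporting-line-half-plane}. The same recipe applied to \(Q_L^- = \{x < 0\} \cap \{y < 0\}\) yields the intersection of the corresponding open half-planes \(H_-^\circ(t, h_S(t) - 1)\) and \(H_-^\circ(t+\pi/2, h_S(t+\pi/2) - 1)\).

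There is no genuine obstacle here; the proposition is purely a bookkeeping translation of \Cref{def:hallway-parts} through the rigid motion \(f_{S,t}\). The only care-points are (i) keeping the \(-1\) shifts straight, since the translation piece of \(f_{S,t}\) is pre-tuned to land the \emph{outer} walls on the supporting lines of \(S\), while the \emph{inner} walls end up one unit inward, and (ii) preserving the open/closed distinction between \(Q_L^+\) and \(Q_L^-\) so that \(L_S(t)\) is still a closed set minus an open one.
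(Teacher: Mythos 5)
Your proposal is correct and takes essentially the same approach as the paper, which simply cites \Cref{def:tangent-hallway} and \Cref{pro:tangent-hallway}; you are spelling out the bookkeeping that the paper leaves implicit. One thing worth flagging: your computation for \(Q_S^-(t)\) correctly yields \(H_-^\circ(t, h_S(t) - 1) \cap H_-^\circ(t + \pi/2, h_S(t + \pi/2) - 1)\), with a \(-1\) in \emph{both} factors, because \(Q_L^- = \{x < 0\} \cap \{y < 0\}\) and the image of \(\{y < 0\}\) under \(f_{S,t}\) is cut out by \(p \cdot v_t < h_S(t + \pi/2) - 1\). The proposition as printed omits the \(-1\) in the second factor; your version is the consistent one (and is what is used elsewhere, e.g., in the definition of \(H_K^\mathrm{d}(t)\)), so this is a typo in the stated formula rather than an error in your argument.
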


\begin{proof}
Follows from \Cref{def:tangent-hallway} and \Cref{pro:tangent-hallway}.
\end{proof}

With the following \Cref{pro:tangent-hallway-contains}, we can assume that the rotating hallways \(L_t\) in \Cref{pro:moving-sofa-common-subset} containing a moving sofa \(S\) are the supporting hallways \(L_t = L_S(t)\) of \(S\).

\begin{proposition}

Let \(S\) be any nonempty compact set contained in a translation of \(R_t(L)\) with angle \(t \in S^1\). Then the supporting hallway \(L_S(t)\) with angle \(t\) also contains \(S\).

\label{pro:tangent-hallway-contains}
\end{proposition}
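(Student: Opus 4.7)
The plan is to parametrize the given translation of $R_t(L)$ and compare its outer and inner quarter-planes with those of $L_S(t)$ using the support function of $S$.

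First I would write the assumed translation of $R_t(L)$ containing $S$ as $L' := R_t(L) + c_1 u_t + c_2 v_t$ for some $c_1, c_2 \in \mathbb{R}$, following the setup in the proof of \Cref{pro:tangent-hallway}. Its outer and inner quarter-planes are then
\[
Q'^+ = H_-(t, c_1 + 1) \cap H_-(t + \pi/2, c_2 + 1), \qquad Q'^- = H_-^\circ(t, c_1) \cap H_-^\circ(t + \pi/2, c_2),
\]
and $L' = Q'^+ \setminus Q'^-$. Since $S \subseteq L' \subseteq Q'^+$, the definition of the support function gives $h_S(t) \leq c_1 + 1$ and $h_S(t + \pi/2) \leq c_2 + 1$, i.e.\ $h_S(t) - 1 \leq c_1$ and $h_S(t + \pi/2) - 1 \leq c_2$.

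Next, using the expressions for $Q_S^+(t)$ and $Q_S^-(t)$ from \Cref{pro:rotating-hallway-parts}, I would verify the two containments needed for $S \subseteq L_S(t) = Q_S^+(t) \setminus Q_S^-(t)$. The inclusion $S \subseteq Q_S^+(t)$ is immediate from the definition of $h_S$, since $S$ lies in both supporting half-planes $H_-(t, h_S(t))$ and $H_-(t + \pi/2, h_S(t + \pi/2))$. For the disjointness $S \cap Q_S^-(t) = \emptyset$, I would use the inequalities above: they give $H_-^\circ(t, h_S(t) - 1) \subseteq H_-^\circ(t, c_1)$ and $H_-^\circ(t + \pi/2, h_S(t + \pi/2) - 1) \subseteq H_-^\circ(t + \pi/2, c_2)$, so intersecting yields $Q_S^-(t) \subseteq Q'^-$. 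Since $S \cap Q'^- = \emptyset$ from $S \subseteq L'$, we conclude $S \cap Q_S^-(t) = \emptyset$.

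Combining the two containments gives $S \subseteq Q_S^+(t) \setminus Q_S^-(t) = L_S(t)$, as desired. There is no real obstacle here: the statement is essentially the observation that pulling the outer walls of a rotated hallway inward until they touch $S$ only pushes the inner walls outward, so the hallway still contains $S$. The only care needed is to correctly track that shrinking $Q'^+$ to $Q_S^+(t)$ forces $Q_S^-(t)$ to \emph{shrink} as well (the inner walls translate by the same amount as the outer walls, in the same direction), which is what the chain of inequalities above makes precise.
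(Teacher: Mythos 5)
Your proof is correct and essentially the same as the paper's: both show that the outer quarter-plane $Q_S^+(t)$ of the supporting hallway sits inside the outer quarter-plane of the given hallway $L'$, then transfer this containment to the inner quarter-planes to conclude that $S$ avoids $Q_S^-(t)$. The only difference is cosmetic: you carry out the comparison explicitly via the coordinates $c_1, c_2$ and support-function inequalities, whereas the paper phrases the same step as ``shifting by $-u_t - v_t$''.
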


\begin{proof}
Assume that a translation \(L'\) of \(R_t(L)\) contains \(S\). Then \(L' = f(L)\) for some rigid transformation \(f\) with counterclockwise rotation \(t\). Define \(Q'^+ = f(Q^+_L)\) and \(Q'^- = f(Q^-_L)\) as the quarter-planes of \(L'\) corresponding to that of \(L\). Then \(L' = Q'^+ \setminus Q'^-\) and \(Q'^+\) is a convex cone containing \(S\) with boundaries of normal angles \(t\) and \(t + \pi/2\). By \Cref{pro:rotating-hallway-parts}, \(Q_S^+(t)\) is the intersection of two supporting half-planes of \(S\) with normal angles \(t\) and \(t + \pi/2\). So we should have \(Q_S^+(t) \subseteq Q'^+\). Shifting this by \(-u_t - v_t\), we get \(Q_S^-(t) \subseteq Q'^-\). Now \(S \subset L'\) is disjoint from \(Q_S^-(t) \subseteq Q'^-\), and we have \(S \subseteq Q_S^+(t) \setminus Q_S^-(t) = L_S(t)\).
\end{proof}

\section{Monotone Sofa}
\label{sec:monotone-sofa}
We now define the notion of monotone sofas. We first prepare basic definitions.

\begin{definition}

Let \(R_t : \mathbb{R}^2 \to \mathbb{R}^2\) denote the rotation of \(\mathbb{R}^2\) around the origin by the counterclockwise angle of \(t \in \mathbb{R}\).

\label{def:rotation-map}
\end{definition}

\begin{definition}

Define the horizontal strip \(H := \mathbb{R} \times [0, 1]\), vertical strip \(V := [0, 1] \times \mathbb{R}\), and its rotation \(V_\omega := R_\omega(V)\) around the origin by a counterclockwise angle \(\omega \in \mathbb{R}\).

\label{def:strips}
\end{definition}

\begin{definition}

The \emph{rotation angle} \(\omega\) of a moving sofa \(S\) is the clockwise angle that it rotates as it moves from \(H_L\) to \(V_L\) inside \(L\).

\label{def:rotation-angle}
\end{definition}

\begin{definition}

A moving sofa \(S\) with rotation angle \(\omega \in (0, \pi/2]\) is in \emph{standard position} if \(h_S(\omega) = h_S(\pi/2) = 1\).

\label{def:standard-position}
\end{definition}

We will assume without loss of generality that a moving sofa \(S\) can be put in standard position after some translation.

\begin{definition}

For any \(\omega \in (0, \pi/2]\), define the \emph{parallelogram} \(P_\omega := H \cap V_\omega\) with rotation angle \(\omega\). Let \(O := (0, 0)\) and \(o_\omega := (\tan(\pi/4 - \omega/2), 1)\) represent the lower left and upper right vertices of \(P_\omega\) respectively.

\label{def:parallelogram}
\end{definition}

\begin{proposition}

For any moving sofa \(S\) with rotation angle \(\omega \in (0, \pi/2]\), there is a translation of \(S\) in standard position which is (i) unique if \(\omega < \pi/2\), or (ii) unique up to horizontal translations if \(\omega = \pi/2\). After such a translation, we have \(S \subseteq P_\omega\).

\label{pro:standard-position-shape}
\end{proposition}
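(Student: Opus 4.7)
The approach is to reduce the standard-position condition to a $2 \times 2$ linear system in the translation vector, then derive $S \subseteq P_\omega$ from two width bounds that come directly from the moving-sofa definition. First I would observe that translating $S$ by a vector $v$ sends $h_S(t)$ to $h_S(t) + v \cdot u_t$, so the conditions $h_S(\pi/2) = 1$ and $h_S(\omega) = 1$ in \Cref{def:standard-position} become
\begin{align*}
v \cdot u_{\pi/2} &= 1 - h_S(\pi/2), \\
v \cdot u_{\omega} &= 1 - h_S(\omega).
\end{align*}
When $\omega < \pi/2$, the unit vectors $u_{\pi/2}$ and $u_\omega$ are linearly independent, so the system has a unique solution $v \in \mathbb{R}^2$, giving the unique translation into standard position. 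When $\omega = \pi/2$, the two equations coincide; the component $v \cdot u_{\pi/2}$ is pinned down but the horizontal component of $v$ is free, which is precisely uniqueness up to horizontal translation.

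For the inclusion $S \subseteq P_\omega$ after such a translation, the key inputs are the two width bounds
\[
h_S(\pi/2) + h_S(3\pi/2) \leq 1, \qquad h_S(\omega) + h_S(\omega + \pi) \leq 1.
\]
The first follows because the initial position of $S$ sits inside $H_L \subseteq H$, so $S$ has width at most $1$ along the $y$-direction (widths are translation-invariant). The second follows from the given rigid motion of $S$ into $V_L$: in the frame of reference of $S$, at the terminal moment of the motion the hallway $V_L$ appears as a translation of $R_\omega(V_L) \subseteq R_\omega(V) = V_\omega$, so $S$ fits in some translation of $V_\omega$ and hence has width at most $1$ along the direction $u_\omega$. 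Combined with the standard-position equalities this yields $h_S(3\pi/2) \leq 0$ and $h_S(\omega + \pi) \leq 0$, so $S \subseteq H_-(\pi/2, 1) \cap H_-(3\pi/2, 0) = H$ and similarly $S \subseteq V_\omega$, giving $S \subseteq H \cap V_\omega = P_\omega$.

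The argument presents no deep obstacle; the only step that requires care is converting the rigid-motion definition of rotation angle $\omega$ into the clean width bound along $u_\omega$, which is exactly the passage to the frame of $S$ described above. The paper's hint (``Observe that the lines $y=1$ and $x \cos \omega + y \sin \omega = 1$ intersect properly if $\omega < \pi/2$, and overlap if $\omega = \pi/2$'') captures the geometric content of the linear-independence dichotomy but leaves the existence of the translation and the inclusion $S \subseteq P_\omega$ implicit; these are exactly the two linear equations and the two width bounds treated above.
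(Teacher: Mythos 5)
Your proof is correct and takes essentially the same route as the paper: the paper dispatches the existence/uniqueness by observing that one can translate $S$ to touch the supporting lines $l(\omega,1)$ and $l(\pi/2,1)$ (unique when these lines are distinct), and derives $S \subseteq P_\omega$ from the two width bounds in the proof of \Cref{pro:moving-sofa-common-subset}, exactly the bounds you reproduce. Your linear-system phrasing of the translation conditions is a minor algebraic repackaging of the same geometric fact.
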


\begin{proof}
If \(\omega < \pi/2\), there is a unique translation of \(S\) making contact with the two supporting lines \(l(\omega, 1)\) and \(l(\pi/2, 1)\) from below. If \(\omega = \pi/2\), there is a unique translation of \(S\) making contact with the supporting line \(l(\pi/2, 1)\) from below, up to horizontal translations. We have \(S \subseteq H\) and \(S \subseteq V_\omega\) by the proof of \Cref{pro:moving-sofa-common-subset}.
\end{proof}

Define the intersection \(\mathcal{I}\) in \Cref{eqn:sofa-intersection} of \Cref{sec:_monotone-sofas-and-caps} with the supporting hallways \(L_t := L_S(t)\).

\begin{definition}

Let \(S\) be any moving sofa with rotation angle \(\omega \in (0, \pi/2]\) in standard position. Define the intersection
\[
\mathcal{I}(S) = P_\omega \cap \bigcap_{t \in [0, \omega]} L_S(t).
\]

\label{def:monotonization}
\end{definition}

We will establish that \(\mathcal{I}(S)\) is a moving sofa containing \(S\).

\begin{theorem}

For any moving sofa \(S\) with rotation angle \(\omega \in (0, \pi/2]\) in standard position, \(\mathcal{I}(S)\) is a moving sofa with the same rotation angle \(\omega\) in standard position containing \(S\).

\label{thm:monotonization}
\end{theorem}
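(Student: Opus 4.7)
The plan is to check the four items in the definition of a moving sofa plus the standard-position and rotation-angle claims: (i) $S \subseteq \mathcal{I}(S)$, so in particular $\mathcal{I}(S)$ is nonempty; (ii) $\mathcal{I}(S)$ is closed and connected; (iii) a continuous rigid motion inside $L$ transports $\mathcal{I}(S)$ from $H_L$ to $V_L$ with clockwise rotation $\omega$; (iv) $\mathcal{I}(S)$ itself is in standard position. The containment in (i) reduces to two ingredients already prepared in the excerpt: $S \subseteq P_\omega$ from \Cref{pro:standard-position-shape}, and $S \subseteq L_S(t)$ for each $t \in [0, \omega]$, obtained by combining the intermediate-value argument of \Cref{pro:moving-sofa-common-subset} (which produces some translation of $R_t(L)$ containing $S$) with the minimality of supporting hallways \Cref{pro:tangent-hallway-contains}. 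Closedness in (ii) is automatic as an intersection of closed sets; connectedness I would invoke directly from \Cref{thm:monotonization-is-connected}, which is the preceding lemma carrying the real topological work.

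For (iii), I would write down the motion explicitly as $\Phi_s := f_{S,s}^{-1}$ for $s \in [0, \omega]$, using the rigid transformation from \Cref{def:tangent-hallway}. By construction $\Phi_s(L_S(s)) = L$, and since $\mathcal{I}(S) \subseteq L_S(s)$ for every such $s$, we obtain $\Phi_s(\mathcal{I}(S)) \subseteq L$ throughout the motion. Continuity of $s \mapsto \Phi_s$ as a curve in $\mathrm{SE}(2)$ follows from continuity of $h_S$ on $S^1$ together with the smooth dependence of $u_s, v_s$ on $s$. To identify the two endpoints I would substitute the standard-position identities $h_S(\omega) = h_S(\pi/2) = 1$ into the formula of \Cref{def:tangent-hallway}: the identity $h_S(\pi/2)=1$ collapses $\Phi_0$ to a purely horizontal translation, which preserves $H$, so together with $\mathcal{I}(S) \subseteq H$ we get $\Phi_0(\mathcal{I}(S)) \subseteq H \cap L = H_L$; and $h_S(\omega)=1$ makes $\Phi_\omega$ a clockwise rotation by $\omega$ followed by a translation along $v_0 = (0,1)$, which preserves $V$, so together with $\mathcal{I}(S) \subseteq V_\omega$ we get $\Phi_\omega(\mathcal{I}(S)) \subseteq V \cap L = V_L$. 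The angular difference between $\Phi_0$ and $\Phi_\omega$ is $\omega$ clockwise, yielding the claimed rotation angle.

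For (iv), the sandwich $S \subseteq \mathcal{I}(S) \subseteq P_\omega \subseteq H \cap V_\omega$ together with monotonicity of the support function in \Cref{def:support-function} gives $1 = h_S(\pi/2) \leq h_{\mathcal{I}(S)}(\pi/2) \leq h_H(\pi/2) = 1$, and similarly $h_{\mathcal{I}(S)}(\omega) = 1$, so \Cref{def:standard-position} is satisfied verbatim. The only substantive obstacle in the entire argument is the connectedness statement, which is precisely why the preceding \Cref{thm:monotonization-is-connected} is factored out as a separate lemma; once that is granted, all remaining pieces are direct substitutions into the supporting-hallway formulas assembled in \Cref{sec:supporting-hallway} together with the standard-position hypotheses.
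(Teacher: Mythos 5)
Your proof is correct and takes essentially the same route as the paper: both realize the required motion as the family $\Phi_s = f_{S,s}^{-1}$ built from the supporting hallways, both offload the hard part to \Cref{thm:monotonization-is-connected}, and both settle standard position via the sandwich $S \subseteq \mathcal{I}(S) \subseteq H \cap V_\omega$. You simply make explicit what the paper leaves at the level of ``view the movement of $L_S(t)$ in the perspective of the hallway,'' writing out the endpoint computations $\Phi_0(\mathcal{I}(S)) \subseteq H \cap L = H_L$ and $\Phi_\omega(\mathcal{I}(S)) \subseteq V \cap L = V_L$ that the standard-position identities $h_S(\omega) = h_S(\pi/2) = 1$ make work.
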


With \Cref{thm:monotonization}, we will call any sofa of form \(\mathcal{I}(S)\) a \emph{monotone sofa}, and it suffices to consider monotone sofas for the moving sofa problem.

\begin{definition}

Define a \emph{monotone sofa} as the intersection \(\mathcal{I}(S)\) of some moving sofa \(S\) in standard position.

\label{def:monotone-sofa}
\end{definition}

\subsubsection{\texorpdfstring{Proof of \Cref{thm:monotonization}}{Proof of }}

\begin{proposition}

For any moving sofa \(S\) in standard position, \(S \subseteq \mathcal{I}(S)\).

\label{pro:monotonization-contains-sofa}
\end{proposition}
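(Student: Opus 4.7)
The plan is to unpack the definition $\mathcal{I}(S) = P_\omega \cap \bigcap_{t \in [0, \omega]} L_S(t)$ and verify containment of $S$ in each factor. There are really only two independent claims: $S \subseteq P_\omega$ and $S \subseteq L_S(t)$ for every $t \in [0,\omega]$. Both follow by combining the general containment statement of \Cref{pro:moving-sofa-common-subset} with the supporting-hallway upgrade from \Cref{pro:tangent-hallway-contains}.

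For the first claim, I would simply cite \Cref{pro:standard-position-shape}, which asserts that a moving sofa in standard position satisfies $S \subseteq P_\omega = H \cap V_\omega$. For the second, fix $t \in [0, \omega]$. By \Cref{pro:moving-sofa-common-subset}, there is some translation $L_t$ of $R_t(L)$ with $S \subseteq L_t$ (obtained from the intermediate value theorem applied to the continuous rigid motion taking $S$ from $H_L$ to $V_L$, read in the frame of reference of $S$). Now apply \Cref{pro:tangent-hallway-contains}: since $S$ is contained in some translation of $R_t(L)$, the supporting hallway $L_S(t)$ also contains $S$.

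Intersecting these containments over all $t \in [0, \omega]$ and with $P_\omega$ yields $S \subseteq P_\omega \cap \bigcap_{t \in [0,\omega]} L_S(t) = \mathcal{I}(S)$, which is what we wanted. There is no real obstacle here: the statement is almost a tautology once the supporting hallway has been defined to be the tightest translation of $R_t(L)$ whose outer walls touch $S$. The only subtlety worth flagging is the boundary angles $t = 0$ and $t = \omega$, but these are already covered by the closed interval version of the intermediate-value argument in \Cref{pro:moving-sofa-common-subset}, since the initial and terminal rigid motions themselves place $S$ inside $H_L \subseteq H$ and inside a translate of $R_\omega(V_L) \subseteq V_\omega$ respectively, each of which is itself a translate of $R_t(L)$ for $t = 0, \omega$.
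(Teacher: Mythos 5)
Your proof is correct and takes essentially the same route as the paper: both establish $S \subseteq P_\omega$ via \Cref{pro:standard-position-shape} and $S \subseteq L_S(t)$ by combining part (2) of \Cref{pro:moving-sofa-common-subset} with \Cref{pro:tangent-hallway-contains}, then intersect. The extra remarks about the endpoint angles $t = 0, \omega$ are harmless but not needed, since \Cref{pro:moving-sofa-common-subset} already covers the full closed interval.
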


\begin{proof}
Assume rotation angle \(\omega \in (0, \pi/2]\). We have \(S \subseteq P_\omega\) by \Cref{pro:standard-position-shape}. For all \(t \in [0, \omega]\), we have \(S \subset L_t\) for some hallway \(L_t\) rotated counterclockwise by \(t\) from (2) of \Cref{pro:moving-sofa-common-subset}, and then \(S \subseteq L_S(t)\) by \Cref{pro:tangent-hallway-contains}.
\end{proof}

We prepare the following terminologies.

\begin{definition}

Say that a set \(X \subseteq \mathbb{R}^2\) is \emph{closed in the direction of} vector \(v \in \mathbb{R}^2\) if, for any \(x \in X\) and \(\lambda \geq 0\), we have \(x + \lambda v \in X\).

\label{def:closed-in-direction}
\end{definition}

\begin{definition}

Any line \(l\) of \(\mathbb{R}^2\) divides the plane into two half-planes. Assuming \(l\) is not parallel to the \(y\)-axis, call the \emph{left side} (resp. \emph{right side}) of \(l\) as the closed half-plane with boundary \(l\) containing the point \(- Nu_0\) (resp. \(Nu_0\)) for sufficiently large \(N\).

\label{def:line-half-plane-directions}
\end{definition}

\begin{definition}

Let \(S\) be any moving sofa with rotation angle \(\omega \in (0, \pi/2]\) in standard position. Define the convex set
\[
\mathcal{C}(S) := P_\omega \cap \bigcap_{0 \leq t \leq \omega} Q^+_S(t).
\]

\label{def:cap-sofa}
\end{definition}

We prepare useful lemmas on \(\mathcal{C}(S)\).

\begin{proposition}

For any moving sofa \(S\) in standard position, \(S \subseteq \mathcal{I}(S) \subseteq \mathcal{C}(S)\).

\label{pro:cap-contains-sofa}
\end{proposition}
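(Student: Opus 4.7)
The proposition is a conjunction of two inclusions, and the first one, $S \subseteq \mathcal{I}(S)$, has already been proved as Proposition~\ref{pro:monotonization-contains-sofa}. So my plan is to simply cite that proposition for the first inclusion and focus all effort on the new content, namely $\mathcal{I}(S) \subseteq \mathcal{C}(S)$.

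For the second inclusion, I would compare the two defining intersections term by term. By Proposition~\ref{pro:rotating-hallway-parts}, the supporting hallway decomposes as $L_S(t) = Q_S^+(t) \setminus Q_S^-(t)$, so in particular $L_S(t) \subseteq Q_S^+(t)$ for every $t \in [0, \omega]$. Intersecting this family of inclusions over $t \in [0, \omega]$ and then intersecting both sides with the common set $P_\omega$ preserves the containment, yielding
\[
\mathcal{I}(S) \;=\; P_\omega \cap \bigcap_{t \in [0, \omega]} L_S(t) \;\subseteq\; P_\omega \cap \bigcap_{t \in [0, \omega]} Q_S^+(t) \;=\; \mathcal{C}(S),
\]
which is exactly what is needed. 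The entire argument is a direct application of the structural decomposition $L = Q_L^+ \setminus Q_L^-$ (transported to each angle $t$ by the rigid motion $f_{S,t}$ of Definition~\ref{def:tangent-hallway}).

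I do not anticipate any real obstacle here: no continuity, connectedness, or measurability is being asserted, only a set-theoretic inclusion of intersections. The only mild care needed is to notice that the two expressions for $\mathcal{I}(S)$ and $\mathcal{C}(S)$ already range over the same index set $[0, \omega]$ and share the same ambient factor $P_\omega$, so no reindexing or translation argument is required. Thus the proof should be a three-line citation-plus-inclusion chain, with Proposition~\ref{pro:rotating-hallway-parts} doing essentially all the work.
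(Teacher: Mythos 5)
Your proposal is correct and follows essentially the same route as the paper: cite \Cref{pro:monotonization-contains-sofa} for $S \subseteq \mathcal{I}(S)$ and use $L_S(t) \subseteq Q_S^+(t)$ (from $L_S(t) = Q_S^+(t) \setminus Q_S^-(t)$ in \Cref{pro:rotating-hallway-parts}) to get $\mathcal{I}(S) \subseteq \mathcal{C}(S)$ by intersecting over $t \in [0, \omega]$ and with $P_\omega$. The paper's proof is a one-liner invoking precisely these two facts, so you have matched it exactly.
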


\begin{proof}
By \Cref{pro:monotonization-contains-sofa} and \(L_S(t) \subset Q_S^+(t)\) for all \(t \in [0, \omega]\).
\end{proof}

\begin{definition}

For any \(\omega \in [0, \pi/2]\), define the set \(J_\omega := [0, \omega] \cup [\pi/2, \omega + \pi/2]\).

\label{def:set-j}
\end{definition}

\begin{lemma}

Let \(S\) be any moving sofa with rotation angle \(\omega \in [0, \pi/2]\) in standard position. Then the support functions \(h_X\) of the sets \(X = S, \mathcal{I}(S), \mathcal{C}(S)\) are the same on the set \(J_\omega\). Consequently, for any \(t \in [0, \omega]\), the supporting hallways \(L_X(t)\) on the sets \(X = S, \mathcal{I}(S), \mathcal{C}(S)\) are the same.

\label{lem:cap-same-support-function}
\end{lemma}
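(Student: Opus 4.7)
The plan is to use the chain of inclusions $S \subseteq \mathcal{I}(S) \subseteq \mathcal{C}(S)$ from \Cref{pro:cap-contains-sofa}, which immediately gives $h_S(t) \leq h_{\mathcal{I}(S)}(t) \leq h_{\mathcal{C}(S)}(t)$ for all $t \in S^1$ by the monotonicity of the support function. So it suffices to prove the reverse inequality $h_{\mathcal{C}(S)}(t) \leq h_S(t)$ for every $t \in J_\omega$; then the sandwich forces all three support functions to agree on $J_\omega$.

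For the reverse inequality, I would unpack the definition of $\mathcal{C}(S) = P_\omega \cap \bigcap_{0 \leq s \leq \omega} Q_S^+(s)$ from \Cref{def:cap-sofa} together with the formula $Q_S^+(s) = H_-(s, h_S(s)) \cap H_-(s + \pi/2, h_S(s + \pi/2))$ from \Cref{pro:rotating-hallway-parts}. Fix $t \in J_\omega$. Then either $t \in [0,\omega]$ or $t - \pi/2 \in [0,\omega]$, so in either case $t \in \{s, s + \pi/2\}$ for some $s \in [0,\omega]$. Hence $\mathcal{C}(S) \subseteq H_-(t, h_S(t))$, which by the definition of the support function gives exactly $h_{\mathcal{C}(S)}(t) \leq h_S(t)$. (One may also note that the same conclusion holds at $t = \pi/2$ via $P_\omega \subseteq H_-(\pi/2, 1)$ and the standard-position assumption $h_S(\pi/2) = 1$, though this already falls under the case $s = 0$, $s + \pi/2 = \pi/2$.)

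For the consequence on supporting hallways, I would appeal to the definition $L_X(t) = f_{X,t}(L)$ from \Cref{def:tangent-hallway}, where the rigid motion $f_{X,t}$ depends on $X$ only through the two numbers $h_X(t)$ and $h_X(t + \pi/2)$. For $t \in [0, \omega]$ we have both $t$ and $t + \pi/2$ in $J_\omega$, so by the first part these two values coincide for $X = S, \mathcal{I}(S), \mathcal{C}(S)$, which forces $f_{S,t} = f_{\mathcal{I}(S),t} = f_{\mathcal{C}(S),t}$ and hence $L_S(t) = L_{\mathcal{I}(S)}(t) = L_{\mathcal{C}(S)}(t)$.

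There is no real obstacle here; the statement is essentially a tautology once one observes that the outer walls of the supporting hallways $L_S(s)$ used to define $\mathcal{C}(S)$ are precisely the supporting lines of $S$ with normal angles in $J_\omega$. The only care needed is to keep track of which angles get covered by which half-plane, and to record that taking an intersection with a supporting half-plane of $S$ cannot shrink the support function below that of $S$ itself in the corresponding normal direction.
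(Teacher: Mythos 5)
Your proposal is correct and follows essentially the same route as the paper: the chain of inclusions $S \subseteq \mathcal{I}(S) \subseteq \mathcal{C}(S)$ gives one set of inequalities, the reverse inequality $h_{\mathcal{C}(S)}(t) \leq h_S(t)$ on $J_\omega$ comes from $\mathcal{C}(S)$ being an intersection of supporting half-planes of $S$ with those normal angles, and the hallway statement follows because $L_X(t)$ depends only on $t$, $h_X(t)$, and $h_X(t+\pi/2)$.
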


\begin{proof}
We have \(S \subseteq \mathcal{I}(S) \subseteq \mathcal{C}(S)\) by \Cref{pro:cap-contains-sofa}. So it remains to show \(h_{\mathcal{C}(S)}(t) \leq h_S(t)\) for every \(t\) in \(J_\omega\) to show that \(h_X\)s on \(J_\omega\) are the same. This follows from \Cref{def:cap-sofa} as \(\mathcal{C}(S) \subseteq H_S(t)\) for any \(t \in J_\omega\). To show that the supporting hallways \(L_X(t)\) are the same, observe that \(L_X(t)\) depends solely on the values \(t, h_X(t)\), and \(h_X(t + \pi/2)\) by its \Cref{def:tangent-hallway}.
\end{proof}

We establish the connectedness of \(\mathcal{I}(S)\) which is the hardest part.

\begin{theorem}

For any moving sofa \(S\) with rotation angle \(\omega \in (0, \pi/2]\) in standard position, \(\mathcal{I}(S)\) is connected.

\label{thm:monotonization-is-connected}
\end{theorem}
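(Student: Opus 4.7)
The plan is to realize $\mathcal{I}(S)$ as a downward-directed intersection of compact connected sets, then invoke the standard topological fact that any such intersection in a Hausdorff space is itself connected. For each finite $\Theta \subseteq [0, \omega]$ with $\{0, \omega\} \subseteq \Theta$, I will define
\[
\mathcal{I}_\Theta := P_\omega \cap \bigcap_{t \in \Theta} L_S(t).
\]
Every $t \in [0, \omega]$ belongs to some such $\Theta$, so $\mathcal{I}(S) = \bigcap_\Theta \mathcal{I}_\Theta$, and the family $\{\mathcal{I}_\Theta\}$ is downward directed via $(\Theta_1, \Theta_2) \mapsto \Theta_1 \cup \Theta_2$. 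Each $\mathcal{I}_\Theta$ is closed, and since $\{0, \omega\} \subseteq \Theta$ it is contained in the bounded set $P_\omega \cap Q_S^+(0) \cap Q_S^+(\omega)$; hence each $\mathcal{I}_\Theta$ is compact.

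The heart of the argument will be showing each $\mathcal{I}_\Theta$ connected. From $L_S(t) = Q_S^+(t) \setminus Q_S^-(t)$ and the identity
\[
\mathbb{R}^2 \setminus Q_S^-(t) = H_+(t,\, h_S(t) - 1) \;\cup\; H_+(t + \pi/2,\, h_S(t + \pi/2) - 1),
\]
I set $\mathcal{C}_\Theta := P_\omega \cap \bigcap_{t \in \Theta} Q_S^+(t)$ (convex, as an intersection of convex sets) and distribute the remaining finite intersection over the unions to write
\[
\mathcal{I}_\Theta = \bigcup_{f : \Theta \to \{0, \pi/2\}} \Bigl( \mathcal{C}_\Theta \cap \bigcap_{t \in \Theta} H_+\bigl(t + f(t),\, h_S(t + f(t)) - 1\bigr) \Bigr),
\]
a union of $2^{|\Theta|}$ convex pieces. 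I then claim each piece contains $S$: clearly $S \subseteq \mathcal{C}_\Theta$ (from $S \subseteq P_\omega$ and $S \subseteq Q_S^+(t)$ for every $t \in \Theta$), and for every $s \in \Theta \cup (\Theta + \pi/2)$ the sofa $S$ has width at most one in direction $u_s$---because $S \subseteq L_S(t)$ for the corresponding $t \in \Theta$ and the hallway $L_S(t)$ has unit width in both $u_t$ and $v_t$---so $h_S(s) + h_S(s + \pi) \leq 1$, which is equivalent to $S \subseteq H_+(s,\, h_S(s) - 1)$. Since each of the $2^{|\Theta|}$ convex pieces contains the connected set $S$, their union $\mathcal{I}_\Theta$ is connected.

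Combining the ingredients, $\mathcal{I}(S)$ is a downward-directed intersection of compact connected Hausdorff sets and is therefore connected by the standard lemma: a hypothetical separation of $\mathcal{I}(S)$ by two disjoint open sets would, by compactness, descend to a separation of some $\mathcal{I}_{\Theta_0}$, contradicting the previous step. I expect no serious obstacle in this plan; the only substantive verification is the decomposition in the middle paragraph, after which the width inequality $h_S(s) + h_S(s + \pi) \leq 1$ that certifies $S$ lies in every convex piece is immediate from $S \subseteq L_S(t)$.
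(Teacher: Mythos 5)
Your overall framework—realizing $\mathcal{I}(S)$ as a downward-directed intersection of compact sets and reducing to the connectedness of each finite approximation $\mathcal{I}_\Theta$—is sound, and the boolean distribution $\mathcal{I}_\Theta = \bigcup_f \bigl(\mathcal{C}_\Theta \cap \bigcap_{t\in\Theta} H_+(t+f(t),\, h_S(t+f(t))-1)\bigr)$ is correct. But the step that makes the decomposition useful is wrong. You claim each of the $2^{|\Theta|}$ convex pieces contains $S$, justified by the assertion that $L_S(t)$ has unit width in both $u_t$ and $v_t$, whence $h_S(s) + h_S(s+\pi) \le 1$ for every $s \in \Theta \cup (\Theta + \pi/2)$. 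This is false: the hallway $L = H_L \cup V_L$ is \emph{unbounded}, with $H_L = (-\infty, 1]\times[0,1]$ extending to $-\infty$ in the $x$-direction and $V_L$ likewise in $y$, so $L_S(t)$ has infinite width along both $u_t$ and $v_t$. The containment $S \subseteq L_S(t)$ therefore gives no width bound in those directions, and the conclusion $h_S(s) + h_S(s+\pi)\le 1$ fails badly—for instance any moving sofa of rotation angle $\pi/2$ in standard position, Gerver's sofa included, has width greater than $2$ along $u_0$, while $0 \in \Theta$. The only unit-width bounds available are along $u_\omega$ (from $S \subseteq V_\omega$) and $u_{\pi/2}$ (from $S \subseteq H$). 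Once the width inequality is lost, some of the convex pieces need not contain $S$, the union argument for connectedness of $\mathcal{I}_\Theta$ collapses, and connectedness of $\mathcal{I}_\Theta$ is in fact exactly the hard content you would still have to prove.

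For comparison, the paper avoids the discretization entirely. It fixes $p \in \mathcal{I}(S)$, considers for $\theta \in [\omega, \pi/2]$ the line $l_\theta$ through $p$ in direction $u_\theta$, and shows $l_\theta \cap \mathcal{I}(S)$ is a line segment containing $p$ (using that $\bigcup_{t\in[0,\omega]} Q_S^-(t)$ is closed in direction $-u_\theta$, which holds precisely because $\theta$ is chosen in $[\omega, \pi/2] \subseteq [t, t+\pi/2]$). It then shows by contradiction that some $l_\theta$ must meet $S$: if every $l_\theta$ misses $S$, the union $\bigcup_\theta l_\theta$ separates $S$ into a left and a right component, contradicting connectedness of $S$. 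If you want to salvage your plan, you could prove connectedness of $\mathcal{I}_\Theta$ by the same line-segment argument (which goes through unchanged with $[0,\omega]$ replaced by $\Theta$)—but at that point the directed-intersection scaffolding is unnecessary, since the argument applies directly to $\mathcal{I}(S)$.
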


\begin{proof}
Fix an arbitrary point \(p\) in \(\mathcal{I}(S)\). It suffices to show that \(\mathcal{I}(S)\) is connected by finding a line segment \(s_\theta\) inside \(\mathcal{I}(S)\) that connects \(p\) to the connected subset \(S\) of \(\mathcal{I}(S)\). Here \(\theta \in [\omega, \pi/2]\) is a value that will be fixed later. Letting \(\theta \in [\omega, \pi/2]\) arbitrary as of now, define the line \(l_\theta\) passing through \(p\) in the direction of \(u_\theta\) and let \(s_\theta := l_\theta \cap \mathcal{I}(S)\). Then \(s_\theta\) is a subset of \(\mathcal{I}(S)\) containing \(p\).

Our goal now is to show that \(s_\theta\) is a line segment that overlaps with \(S\) for some \(\theta \in [\omega, \pi/2]\). We first show that \(s_\theta\) is a nonempty line segment. Define the set \(X := \bigcup_{0 \leq t \leq \omega} Q^-_S(t)\). By plugging \(L_S(t) = Q_S^+(t) \setminus Q_S^-(t)\) in \Cref{pro:rotating-hallway-parts} to \Cref{def:monotonization}, we have \(\mathcal{I}(S) = \mathcal{C}(S) \setminus X\). The set \(\mathcal{C}(S)\) is a convex body containing \(S\) by \Cref{pro:standard-position-shape}, and the set \(X\) is closed in the direction of \(-u_\theta\) (\Cref{def:closed-in-direction}) since each \(Q_S^-(t)\) is. Now \(s_\theta\) is a line segment because it is the line segment \(l_\theta \cap \mathcal{C}(S)\) subtracted by the half-line \(l_\theta \setminus X\). Our goal now is to find some \(\theta \in [\omega, \pi/2]\) such that \(l_\theta\) meets \(S\), so that \(s_\theta\) connects \(p\) to \(S\) inside \(\mathcal{I}(S)\).

Assume by contradiction that for every \(\theta \in [\omega, \pi/2]\) the line \(l_\theta\) is disjoint from \(S\). Because the line \(l_\theta\) is disjoint from \(S\) for any \(\theta \in [\omega, \pi/2]\), the set \(S\) is inside the set \(Y = \mathbb{R}^2 \setminus \bigcup_{\theta \in [\omega, \pi/2]} l_\theta\). Note that \(Y\) has exactly two connected components \(Y_L\) and \(Y_R\) on the left and the right side of the lines \(l_\theta\) respectively. We will find a point at each \(S \cap Y_L\) and \(S \cap Y_R\), reaching the contradiction as \(S\) is connected.

By \Cref{lem:cap-same-support-function}, we have \(l_{\mathcal{I}(S)}(t) = l_S(t)\) for every \(t \in J_\omega = [0, \omega] \cup [\pi/2, \omega + \pi/2]\). Because \(p \in \mathcal{I}(S)\), the line \(l_{\pi/2}\) passing through \(p\) is on the left side of \(l_{S}(0)\). So any point of \(e_S(0)\) is on the right side of \(l_{\pi/2}\), and should be in \(S \cap Y_R\). Likewise, as \(p \in \mathcal{I}(S)\), the line \(l_{\omega}\) passing through \(p\) is on the right side of \(l_S(\omega + \pi/2)\). So any point of \(e_S(\omega + \pi/2)\) is on the left side of \(l_\omega\), and should be in \(S \cap Y_L\). This establishes the contradiction we wanted, and we finally prove that \(\mathcal{I}(S)\) is connected.
\end{proof}

We are now ready to prove \Cref{thm:monotonization}.

\begin{proof}[Proof of \Cref{thm:monotonization}]
We first show that \(S' := \mathcal{I}(S)\) is a moving sofa. As \(S'\) is connected by \Cref{thm:monotonization-is-connected}, it suffices to show that \(S'\) can move continuously inside \(L\) from \(H_L\) to \(V_L\) with rotation angle \(\omega\). See the movement of \(L_S(t) = L_{S'}(t)\) containing \(S'\) for \(t \in [0, \omega]\) (\Cref{lem:cap-same-support-function}) in perspective of the hallway, to find a movement of \(S'\) inside \(L\). In particular, \(S' \subseteq H\) so the horizontal side of \(L_{S'}(0)\) (corresponding to \(H_L\) of \(L\)) contains \(S'\). Also, \(S' \subseteq V_\omega\) so the vertical side of \(L_{S'}(\omega)\) (corresponding to \(V_L\) of \(L\)) contains \(S'\). Since \(h_{S'}\) is continuous, this movement of \(S'\) is also continuous.

Because \(L_{S'}(\omega)\) is rotated counterclockwise by \(\omega\), the sofa \(S'\) have rotation angle \(\omega\). By \Cref{pro:monotonization-contains-sofa}, the sofa \(S'\) contains \(S\). The sofa \(S'\) is in standard position because \(S \subseteq S' \subseteq H \cap V_\omega\) and \(S\) is in standard position.
\end{proof}

\section{Cap and Niche}
\label{sec:cap-and-niche}
We show that any intersection \(\mathcal{I}(S)\) of a moving sofa \(S\) is equal to the \emph{cap} \(K\) of \(S\) minus the \emph{niche} \(\mathcal{N}(K)\) of \(K\) (\Cref{thm:monotonization-structure}). We first define the notion of \emph{cap} as a kind of convex body.

\begin{definition}

A \emph{cap} \(K\) with \emph{rotation angle} \(\omega \in (0, \pi/2]\) is a convex body such that the followings hold.

\begin{enumerate}
\def\labelenumi{\arabic{enumi}.}
\tightlist
\item
  \(h_K(\omega) = h_K(\pi/2) = 1\) and \(h_K(\omega + \pi) = h_K(3\pi/2) = 0\).
\item
  \(K\) is an intersection of closed half-planes with normal angles in \(J_\omega \cup \{\omega + \pi, 3\pi/2\}\).
\end{enumerate}

\label{def:cap}
\end{definition}

\begin{definition}

Define the \emph{space of caps} \(\mathcal{K}_\omega^\mathrm{c}\) with the \emph{rotation angle} \(\omega \in (0, \pi/2]\) as the collection of all caps \(K\) with rotation angle \(\omega\).

\label{def:cap-space}
\end{definition}

We now show that the convex set \(\mathcal{C}(S)\) from a moving sofa \(S\) (\Cref{def:cap-sofa}) is a cap, justifying calling \(\mathcal{C}(S)\) \emph{the cap of} \(S\).

\begin{theorem}

For any moving sofa \(S\) with rotation angle \(\omega \in (0, \pi/2]\) in standard position, the set \(\mathcal{C}(S)\) in \Cref{def:cap-sofa} is a cap with rotation angle \(\omega\) as in \Cref{def:cap}.

\label{thm:cap-hallway-intersection}
\end{theorem}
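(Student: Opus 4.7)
The plan is to verify each defining feature of a cap from \Cref{def:cap}: that $\mathcal{C}(S)$ is a convex body, that it is an intersection of closed half-planes with normal angles in $J_\omega \cup \{\omega + \pi, 3\pi/2\}$, and that its support function realizes $h(\omega) = h(\pi/2) = 1$ and $h(\omega + \pi) = h(3\pi/2) = 0$. The structural features are routine; the real work lies in the lower support-function equalities.

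The set $\mathcal{C}(S)$ is closed and convex as an intersection of closed half-planes, and is nonempty since $S \subseteq \mathcal{C}(S)$ by \Cref{pro:cap-contains-sofa}. It is bounded: when $\omega < \pi/2$ it sits inside the bounded parallelogram $P_\omega$, and when $\omega = \pi/2$ the two cones $Q^+_S(0)$ and $Q^+_S(\pi/2)$ bound it horizontally inside the strip $H = P_{\pi/2}$. The half-plane condition is immediate from \Cref{pro:rotating-hallway-parts}: each $Q^+_S(t)$ is cut out by half-planes with normal angles $t, t+\pi/2 \in J_\omega$, while $P_\omega = H \cap V_\omega$ contributes only the four angles $\pi/2, 3\pi/2, \omega, \omega + \pi$. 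For the upper support-function values, \Cref{lem:cap-same-support-function} together with the standard-position equalities $h_S(\omega) = h_S(\pi/2) = 1$ gives $h_{\mathcal{C}(S)}(\omega) = h_{\mathcal{C}(S)}(\pi/2) = 1$, since both angles lie in $J_\omega$. And $\mathcal{C}(S) \subseteq P_\omega \subseteq V_\omega \cap H$ yields the one-sided bounds $h_{\mathcal{C}(S)}(\omega + \pi) \leq 0$ and $h_{\mathcal{C}(S)}(3\pi/2) \leq 0$.

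The main obstacle is the matching lower bound: producing explicit points of $\mathcal{C}(S)$ on each of the lines $\{y = 0\}$ and $\{p \cdot u_\omega = 0\}$. For the line $y = 0$, I would take a contact point $q \in S$ with $q \cdot u_\omega = 1$ (which exists since $h_S(\omega) = 1$) and check that $(q_x, 0) \in \mathcal{C}(S)$: the identity $q_x \cos\omega = 1 - q_y \sin\omega$ with $q_y \in [0, 1]$ places $(q_x, 0)$ in $P_\omega$, while the inequalities $(q_x, 0) \cdot u_t = q \cdot u_t - q_y \sin t \leq h_S(t)$ and $(q_x, 0) \cdot u_{t + \pi/2} = q \cdot u_{t + \pi/2} - q_y \cos t \leq h_S(t + \pi/2)$ for $t \in [0, \omega]$ (using $q_y \geq 0$ and the signs of $\sin t, \cos t$) place it in every $Q^+_S(t)$. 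For the line $p \cdot u_\omega = 0$, I would instead start from $p \in S$ with $p_y = 1$ (existing since $h_S(\pi/2) = 1$) and take the orthogonal projection $r := p - (p \cdot u_\omega) u_\omega$; the verification reduces to the identities $u_\omega \cdot u_t = \cos(\omega - t) \geq 0$ and $u_\omega \cdot u_{t + \pi/2} = \sin(\omega - t) \geq 0$ for $t \in [0, \omega]$, combined with $p \cdot u_\omega \geq 0$ from $p \in V_\omega$, and forces $r \in \mathcal{C}(S)$ while $r \cdot u_\omega = 0$ by construction. In the degenerate case $\omega = \pi/2$ both constructions collapse to the single point $(p_x, 0)$, which simultaneously realizes both equalities.
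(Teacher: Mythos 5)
Your proof is correct and complete. Both you and the paper reduce to producing witness points of $\mathcal{C}(S)$ on the lines $l(\pi/2,0)$ and $l(\omega,0)$ after dispatching the structural conditions routinely, and both start from contact points of $S$ with the two upper supporting lines. Where you diverge is in the construction of the witness points. The paper first establishes that the upper-right corner $o_\omega = l(\omega,1)\cap l(\pi/2,1)$ of $P_\omega$ lies in $\mathcal{C}(S)$ --- the non-obvious step is that any cone $Q_S^+(t)$ of opening $\pi/2$ containing the two contact points $q_\omega$ and $q_{\pi/2}$ must also contain $o_\omega$, since $o_\omega$ is the apex of a wedge of the larger opening $\omega+\pi/2$ through those points --- and then slides $o_\omega$ downward by $v_0$ and by $u_\omega$, invoking closedness of each $Q_S^+(t)$ in those directions. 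You skip $o_\omega$ altogether: you project each contact point directly to its bottom line and verify the defining half-plane inequalities of $\mathcal{C}(S)$ by hand, using only the sign facts $\sin t, \cos t \geq 0$ and $\cos(\omega - t), \sin(\omega - t) \geq 0$ on $[0,\omega]$. Your route is more computational but avoids the geometric wedge-containment argument (the trickiest point in the paper's proof) and collapses the $\omega = \pi/2$ case split into a degenerate instance of the general construction rather than a separate branch. Both are sound; yours trades one geometric observation for two short algebraic verifications.
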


\begin{proof}
The second condition of \Cref{def:cap} on \(\mathcal{C}(S)\) is satisfied by \Cref{def:cap-sofa}. Since \(S\) is in standard position, it suffices to check \(h_{\mathcal{C}(S)}(\omega + \pi) = h_{\mathcal{C}(S)}(3\pi/2) = 0\) in the first condition of \Cref{def:cap}.

We first prove the case \(\omega = \pi/2\). Since \(h_S(\pi/2) = 1\) and \(S \subseteq \mathcal{C}(S)\) by \Cref{pro:cap-contains-sofa}, we can take a point \(q \in S \cap l(\pi/2, 1) \subseteq \mathcal{C}(S)\). For any \(t \in [0, \pi/2]\), each \(Q_S^+(t)\) is closed in the direction of \(-v_0\) (\Cref{def:closed-in-direction}). This with \(q \in \mathcal{C}(S)\) implies that \(q - v_0 \in \mathcal{C}(S)\). Note that \(q - v_0 \in l(\pi/2, 0)\), so \(q - v_0 \in \mathcal{C}(S) \subseteq P_{\pi/2}\) implies \(h_{\mathcal{C}(S)}(3\pi/2) = 0\) as we desired.

We now prove the case \(\omega < \pi/2\). Since \(h_S(\omega) = h_S(\pi/2) = 1\), we can take two points \(q_\omega \in S \cap l(\omega, 1)\) and \(q_{\pi/2} \in S \cap l(\pi/2, 1)\). Observe that the three points \(q_\omega, o_\omega, q_{\pi/2}\) are in monotonically decreasing order of \(x\)-coordinates and form an angle of \(\omega + \pi/2\). Take any supporting hallway \(L_S(t)\) with angle \(t \in [0, \omega]\). Then \(Q_S^+(t)\) should contain both \(q_\omega, q_{\pi/2} \in S\), so we also have \(Q_S^+(t) \ni o_\omega\). This implies \(o_\omega \in \mathcal{C}(S)\). For any \(t \in [0, \omega]\), each \(Q_S^+(t)\) is closed in the directions \(v_0\) and \(u_\omega\) (\Cref{def:closed-in-direction}). So \(o_\omega \in \mathcal{C}(S)\) implies \(o_\omega - v_0, o_\omega - u_\omega \in \mathcal{C}(S)\). This with \(\mathcal{C}(S) \subseteq H \cap V_\omega\) implies \(h_{\mathcal{C}(S)}(\omega + \pi) = h_{\mathcal{C}(S)}(3\pi/2) = 0\) as we desired.
\end{proof}

\begin{definition}

With \Cref{thm:cap-hallway-intersection}, call \(\mathcal{C}(S)\) the \emph{cap of the moving sofa} \(S\).

\label{def:cap-real-definition}
\end{definition}

We now define the niche of a cap. Note that the following fan \(F_\omega\) contains \(H \cap V_\omega\) in particular.

\begin{definition}

For any angle \(\omega \in [0, \pi/2]\), define the \emph{fan} \(F_\omega := H_+(\omega, 0) \cap H_+(\pi/2, 0)\).

\label{def:fan}
\end{definition}

\begin{definition}

Let \(K \in \mathcal{K}_\omega^\mathrm{c}\) be arbitrary. Define the \emph{niche} of \(K\) as
\[
\mathcal{N}(K) := F_{\omega} \cap \bigcup_{t \in (0, \omega)} Q^-_K(t).
\]

\label{def:niche}
\end{definition}

\begin{theorem}

Let \(S\) be a moving sofa with rotation angle \(\omega \in (0, \pi/2]\) in standard position. The monotone sofa \(\mathcal{I}(S)\) from \(S\) is equal to \(K \setminus \mathcal{N}(K)\) where \(K := \mathcal{C}(S)\) is the cap of \(S\).

\label{thm:monotonization-structure}
\end{theorem}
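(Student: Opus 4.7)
The plan is to unpack $\mathcal{I}(S)$ directly from its definition and show, after a set-theoretic manipulation, that it agrees with $K \setminus \mathcal{N}(K)$. The first key ingredient is \Cref{lem:cap-same-support-function}: since $h_S = h_K$ on $J_\omega = [0,\omega] \cup [\pi/2, \omega + \pi/2]$, the supporting hallways satisfy $L_S(t) = L_K(t)$ and hence $Q_S^{\pm}(t) = Q_K^{\pm}(t)$ for every $t \in [0, \omega]$. The second is the decomposition $L_K(t) = Q_K^+(t) \setminus Q_K^-(t)$ from \Cref{pro:rotating-hallway-parts}, which lets me separate the cap-building half-planes from the niche-carving cones.

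Using the elementary identity $\bigcap_t (A_t \setminus B_t) = (\bigcap_t A_t) \setminus (\bigcup_t B_t)$, I would rewrite
\[
\mathcal{I}(S) = P_\omega \cap \bigcap_{t \in [0, \omega]} L_K(t) = \Bigl(P_\omega \cap \bigcap_{t \in [0, \omega]} Q_K^+(t)\Bigr) \setminus \bigcup_{t \in [0, \omega]} Q_K^-(t),
\]
and then recognize the parenthesized set as $\mathcal{C}(S) = K$ by \Cref{def:cap-sofa}. So $\mathcal{I}(S) = K \setminus \bigcup_{t \in [0, \omega]} Q_K^-(t)$, and the remaining task is to compare this with $K \setminus \mathcal{N}(K)$, where $\mathcal{N}(K) = F_\omega \cap \bigcup_{t \in (0, \omega)} Q_K^-(t)$.

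Two discrepancies need to be resolved. First, the factor $F_\omega$ in $\mathcal{N}(K)$: from the cap conditions $h_K(\omega + \pi) = h_K(3\pi/2) = 0$ in \Cref{def:cap}, the body $K$ lies in $H_+(\omega, 0) \cap H_+(\pi/2, 0) = F_\omega$, so $K \cap \mathcal{N}(K) = K \cap \bigcup_{t \in (0, \omega)} Q_K^-(t)$. Second, the endpoints $t = 0$ and $t = \omega$ must contribute nothing when intersected with $K$. By \Cref{pro:rotating-hallway-parts} and $h_K(\pi/2) = 1$, we have $Q_K^-(0) \subseteq H_-^\circ(\pi/2, 0)$, which is disjoint from $K \subseteq H_+(\pi/2, 0)$; symmetrically, $h_K(\omega) = 1$ gives $Q_K^-(\omega) \subseteq H_-^\circ(\omega, 0)$, disjoint from $K \subseteq H_+(\omega, 0)$. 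Hence $K \cap \bigcup_{t \in [0, \omega]} Q_K^-(t) = K \cap \bigcup_{t \in (0, \omega)} Q_K^-(t) = K \cap \mathcal{N}(K)$, and subtracting from $K$ yields the desired equality $\mathcal{I}(S) = K \setminus \mathcal{N}(K)$.

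I do not anticipate a real obstacle: the content is entirely bookkeeping, and the only thing to be careful about is the asymmetry between the closed parameter interval $[0, \omega]$ used to define $\mathcal{I}(S)$ and the open interval $(0, \omega)$ used to define $\mathcal{N}(K)$. The boundary-angle support values $h_K(\pi/2) = h_K(\omega) = 1$ baked into the cap definition are precisely what make the two endpoint contributions vanish on $K$, closing the gap.
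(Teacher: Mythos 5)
Your proposal is correct and takes essentially the same approach as the paper: decompose each $L_K(t)$ as $Q_K^+(t) \setminus Q_K^-(t)$, apply the set identity $\bigcap_t (A_t \setminus B_t) = (\bigcap_t A_t) \setminus (\bigcup_t B_t)$, and invoke \Cref{lem:cap-same-support-function}. Where you diverge is in making explicit the bookkeeping that the paper compresses into the phrase ``by the definitions of $K$ and $\mathcal{N}(K)$'': the paper's \Cref{eqn:monotonization} inserts the factor $F_\omega$ (justified because $P_\omega \subseteq F_\omega$) and runs the union over the closed interval $[0,\omega]$, and then asserts the conclusion; you instead verify by hand that the endpoint cones $Q_K^-(0)$ and $Q_K^-(\omega)$ contribute nothing, using $h_K(\pi/2) = h_K(\omega) = 1$ to show they lie strictly below $l(\pi/2,0)$ and $l(\omega,0)$ respectively, hence disjoint from $K \subseteq F_\omega$. (In the paper's formulation the same cap conditions make $F_\omega \cap Q_K^-(0)$ and $F_\omega \cap Q_K^-(\omega)$ empty, which is the same observation expressed through $F_\omega$ rather than through $K$.) Your version fills in a step that a careful reader of the paper would want to see; the substance is identical.
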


\begin{proof}
By writing each \(L_S(t)\) as \(Q_S^+(t) \setminus Q_S^-(t)\), the set \(\mathcal{I}(S)\) can be represented as follows.
\begin{equation}
\label{eqn:monotonization}
\begin{split}
\mathcal{I}(S) & = P_\omega \cap \bigcap_{t \in [0, \omega]} L_S(t) \\
& = \left( P_\omega \cap \bigcap_{t \in [0, \omega]} Q^+_S(t) \right) \setminus \left( F_\omega \cap \bigcup_{t \in [0, \omega]} Q^-_S(t) \right)
\end{split}
\end{equation}
By \Cref{lem:cap-same-support-function} we have \(Q_S^-(t) = Q_K^-(t)\). So we have \(\mathcal{I}(S) = K \setminus \mathcal{N}(K)\) by the definitions of \(K\) and \(\mathcal{N}(K)\).
\end{proof}

A monotone sofa is its cap subtracted by its niche. Note that unlike \Cref{thm:monotonization-structure} above, the following \Cref{thm:monotone-sofa-structure} does not depend on another moving sofa.

\begin{theorem}

For any monotone sofa \(S\) with cap \(K := \mathcal{C}(S)\), we have \(S = K \setminus \mathcal{N}(K)\).

\label{thm:monotone-sofa-structure}
\end{theorem}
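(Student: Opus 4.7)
The plan is to reduce Theorem~\ref{thm:monotone-sofa-structure} to Theorem~\ref{thm:monotonization-structure} by showing that the cap operation $\mathcal{C}$ is ``idempotent'' in the sense that $\mathcal{C}(\mathcal{I}(S')) = \mathcal{C}(S')$ for any moving sofa $S'$ in standard position. Once this is established, the desired equality follows immediately because $S$ is by definition $\mathcal{I}(S')$ for some such $S'$.

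First I would unfold the definition of monotone sofa: by Definition~\ref{def:monotone-sofa}, we can write $S = \mathcal{I}(S')$ for some moving sofa $S'$ with rotation angle $\omega \in (0, \pi/2]$ in standard position. Theorem~\ref{thm:monotonization} ensures that $S$ itself is a moving sofa with rotation angle $\omega$ in standard position, so both $\mathcal{C}(S)$ and $\mathcal{C}(S')$ are well-defined caps in $\mathcal{K}_\omega^\mathrm{c}$, and Theorem~\ref{thm:monotonization-structure} gives
\[
S = \mathcal{I}(S') = \mathcal{C}(S') \setminus \mathcal{N}(\mathcal{C}(S')).
\]
So it suffices to verify $\mathcal{C}(S) = \mathcal{C}(S')$, which then lets me substitute $K = \mathcal{C}(S)$ on the right.

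For this key step, I would observe that $\mathcal{C}(X)$ depends on $X$ only through the values $h_X(t)$ for $t \in J_\omega$: indeed, Definition~\ref{def:cap-sofa} writes
\[
\mathcal{C}(X) = P_\omega \cap \bigcap_{t \in [0,\omega]} Q_X^+(t),
\]
and by Proposition~\ref{pro:rotating-hallway-parts} each $Q_X^+(t) = H_-(t, h_X(t)) \cap H_-(t + \pi/2, h_X(t+\pi/2))$ is determined by $h_X$ on $[0,\omega] \cup [\pi/2, \omega + \pi/2] = J_\omega$. Now Lemma~\ref{lem:cap-same-support-function} applied to the moving sofa $S'$ yields $h_{S'}(t) = h_{\mathcal{I}(S')}(t) = h_S(t)$ for all $t \in J_\omega$. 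Hence $\mathcal{C}(S) = \mathcal{C}(S')$, and combining with the displayed identity above gives $S = K \setminus \mathcal{N}(K)$ with $K = \mathcal{C}(S)$.

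There is no real obstacle here beyond bookkeeping: the substantive content (connectedness of $\mathcal{I}(S')$, the decomposition $\mathcal{I}(S') = \mathcal{C}(S') \setminus \mathcal{N}(\mathcal{C}(S'))$, and the stability of support functions under taking $\mathcal{I}$) has already been done in Theorems~\ref{thm:monotonization-is-connected}, \ref{thm:monotonization}, \ref{thm:monotonization-structure}, and Lemma~\ref{lem:cap-same-support-function}. The only mildly delicate point is ensuring that the rotation angle of $S$ matches that of $S'$ so that one is applying Definitions~\ref{def:cap-sofa} and \ref{def:niche} with the same $\omega$ on both sides, but this is exactly what Theorem~\ref{thm:monotonization} guarantees.
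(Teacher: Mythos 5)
Your proposal is correct and matches the paper's own proof almost line for line: both unfold $S = \mathcal{I}(S')$, invoke Lemma~\ref{lem:cap-same-support-function} together with the observation that $\mathcal{C}(-)$ depends only on the support function restricted to $J_\omega$ to conclude $\mathcal{C}(S) = \mathcal{C}(S')$, and then apply Theorem~\ref{thm:monotonization-structure}. The only difference is presentational detail (you spell out Proposition~\ref{pro:rotating-hallway-parts} and the role of Theorem~\ref{thm:monotonization}), which the paper leaves implicit.
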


\begin{proof}
Let \(S = \mathcal{I}(S')\) for some moving sofa \(S'\) in standard position. By \Cref{lem:cap-same-support-function} and that \Cref{def:cap-sofa} depends solely on the values of support function on \(J_\omega\), the cap \(K = \mathcal{C}(S)\) of \(S\) is also the cap \(\mathcal{C}(S')\) of \(S'\). So by \Cref{thm:monotonization-structure} we have \(S = \mathcal{I}(S') = K \setminus \mathcal{N}(K)\).
\end{proof}

Taking the intersection \(\mathcal{I}(-)\) enlarges any moving sofa to monotone sofas and fixes monotone sofas.

\begin{theorem}

For any moving sofa \(S'\) in standard position, we have \(\mathcal{I}(\mathcal{I}(S')) = \mathcal{I}(S')\). Consequently, the equality \(S = \mathcal{I}(S)\) holds if and only if \(S\) is a monotone sofa.

\label{thm:monotonization-idempotent}
\end{theorem}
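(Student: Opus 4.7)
The plan is to derive both conclusions directly from \Cref{lem:cap-same-support-function}, which is the only nontrivial ingredient needed. Set $S := \mathcal{I}(S')$. By \Cref{thm:monotonization} the set $S$ is itself a moving sofa in standard position with the same rotation angle $\omega \in (0,\pi/2]$, so $\mathcal{I}(S) = \mathcal{I}(\mathcal{I}(S'))$ is well defined as
\[
\mathcal{I}(S) = P_\omega \cap \bigcap_{t \in [0,\omega]} L_S(t).
\]

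Next, apply \Cref{lem:cap-same-support-function} to the moving sofa $S'$, taking the sets $X = S'$ and $X = \mathcal{I}(S') = S$: their support functions agree on $J_\omega$, and hence $L_{S'}(t) = L_S(t)$ for every $t \in [0,\omega]$. Substituting this identity into the formula above gives
\[
\mathcal{I}(S) = P_\omega \cap \bigcap_{t \in [0,\omega]} L_{S'}(t) = \mathcal{I}(S') = S,
\]
which is exactly $\mathcal{I}(\mathcal{I}(S')) = \mathcal{I}(S')$. For the equivalence claim, one direction is immediate from \Cref{def:monotone-sofa}: if $S = \mathcal{I}(S)$, then $S$ is presented as the intersection $\mathcal{I}$ of a moving sofa in standard position (namely, of itself, which must be such for $\mathcal{I}(S)$ to make sense), so $S$ is monotone. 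Conversely, if $S$ is a monotone sofa, \Cref{def:monotone-sofa} furnishes a moving sofa $S''$ in standard position with $S = \mathcal{I}(S'')$; applying the idempotence just proved to $S''$ yields $\mathcal{I}(S) = \mathcal{I}(\mathcal{I}(S'')) = \mathcal{I}(S'') = S$.

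There is no real obstacle beyond invoking \Cref{lem:cap-same-support-function}: the substantive content, that enlarging $S'$ to $\mathcal{I}(S')$ preserves the support function on $J_\omega$ (and hence the supporting hallways for $t \in [0,\omega]$), was established there by sandwiching $\mathcal{I}(S')$ between $S'$ and the cap $\mathcal{C}(S')$. Once that equality of supporting hallways is in hand, the idempotence reduces to a one-line set-theoretic identity, and the ``if and only if'' follows mechanically from the definition of a monotone sofa.
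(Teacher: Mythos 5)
Your proof is correct, and it takes a genuinely more direct route than the paper. The paper's own proof invokes \Cref{thm:monotonization-structure} and \Cref{thm:monotone-sofa-structure}, writing $\mathcal{I}(S) = K \setminus \mathcal{N}(K) = S$ for $K := \mathcal{C}(S)$, so the idempotence is seen as a consequence of the cap-minus-niche decomposition of monotone sofas. You instead work entirely at the level of the supporting hallways: \Cref{lem:cap-same-support-function} gives $L_{S'}(t) = L_{\mathcal{I}(S')}(t)$ on $[0,\omega]$, and once the two intersections $\mathcal{I}(S)$ and $\mathcal{I}(S')$ are recognized to be $P_\omega$ intersected with the same family of hallways, the equality is immediate. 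This bypasses the cap/niche structure theorems; it is worth noting, though, that those theorems themselves are downstream of \Cref{lem:cap-same-support-function} (the paper's proof of \Cref{thm:monotone-sofa-structure} also calls it), so the two arguments ultimately rest on the same lemma --- yours simply unwinds the dependency directly rather than routing it through the cap-and-niche machinery. The ``if and only if'' half of your argument is handled the same way as the paper's.
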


\begin{proof}
Let \(S := \mathcal{I}(S')\) so that \(S\) is a montone sofa with cap \(K\). Then \(\mathcal{I}(S) = K \setminus \mathcal{N}(K) = S\) by \Cref{thm:monotonization-structure} and \Cref{thm:monotone-sofa-structure}. So \(\mathcal{I}(\mathcal{I}(S')) = \mathcal{I}(S')\) and the equality \(S = \mathcal{I}(S)\) holds for monotone sofa \(S\). On the other hand, any moving sofa \(S\) with equality \(S = \mathcal{I}(S)\) is immediately a monotone sofa.
\end{proof}

\section{Cap Contains Niche}
\label{sec:cap-contains-niche}
We now define the parts of an arbitrary cap \(K\).

\begin{definition}

Let \(K \in \mathcal{K}_\omega^\mathrm{c}\) be arbitrary. For any \(t \in [0, \omega]\), define the \emph{vertices} \(A^+_K(t) = v^+_K(t)\), \(A^-_K(t) = v^-_K(t)\), \(C^+_K(t) = v^+_K(t + \pi/2)\), and \(C^-_K(t) = v^-_K(t + \pi/2)\) of \(K\).

\label{def:cap-vertices}
\end{definition}

Note that the outer wall \(a_K(t)\) (resp. \(c_K(t)\)) of \(L_K(t)\) is in contact with the cap \(K\) at the vertices \(A_K^+(t)\) and \(A_K^-(t)\) (resp. \(C_K^+(t)\) and \(C_K^-(t)\)) respectively. We also define the \emph{upper boundary} of a cap \(K\).

\begin{definition}

Let \(K \in \mathcal{K}_\omega^\mathrm{c}\) be arbitrary. Define the \emph{upper boundary} \(\delta K\) of \(K\) as the set \(\delta K = \bigcup_{t \in [0, \omega + \pi/2]} e_K(t)\).

\label{def:upper-boundary-of-cap}
\end{definition}

For any cap \(K\) with rotation angle \(\omega\), the upper boundary \(\delta K\) is exactly the points of \(K\) making contact with the outer walls \(a_K(t)\) and \(c_K(t)\) of supporting hallways \(L_K(t)\) for every \(t \in [0, \omega]\). In particular, upper boundary \(\delta K\) is a curve from the right endpoint \(A_K^-(0)\) to the left endpoint \(C_K^+(\omega)\). We collect some observations on \(\delta K\).

\begin{proposition}

Let \(K \in \mathcal{K}_\omega^\mathrm{c}\) be arbitrary. The set \(\delta K\) is the boundary of \(K\) in the subset topology of \(F_\omega\).

\label{pro:upper-boundary-interior}
\end{proposition}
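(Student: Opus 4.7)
My plan is to prove the claimed equality as two inclusions. The key preparatory fact, extracted from the two defining conditions of a cap, is the representation $K = F_\omega \cap \bigcap_{t \in J_\omega} H_-(t, h_K(t))$. This holds because every half-plane in the intersection defining $K$ contains $K$, so its constant can be replaced by the support value $h_K$ at its normal angle; and because the two half-planes with normal angles $\omega + \pi$ and $3\pi/2$ and support values $h_K(\omega + \pi) = h_K(3\pi/2) = 0$ are exactly the two half-planes whose intersection defines $F_\omega$. In particular this gives $K \subseteq F_\omega$, which is used silently throughout.

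For the inclusion of $\delta K$ into the relative boundary of $K$ in $F_\omega$, I will take any $p \in e_K(t_0)$ with $t_0 \in [0, \omega + \pi/2]$ and exhibit the approximating family $q_\epsilon := p + \epsilon u_{t_0}$ lying in $F_\omega \setminus K$ for all small $\epsilon > 0$. That $q_\epsilon \notin K$ is immediate from $q_\epsilon \cdot u_{t_0} = h_K(t_0) + \epsilon > h_K(t_0)$. For $q_\epsilon \in F_\omega$, I compute $q_\epsilon \cdot u_{\pi/2} = p \cdot u_{\pi/2} + \epsilon \sin t_0$ and $q_\epsilon \cdot u_\omega = p \cdot u_\omega + \epsilon \cos(t_0 - \omega)$; both base terms are non-negative because $p \in F_\omega$, and the increments are non-negative because $t_0 \in [0, \pi]$ and $t_0 - \omega \in [-\pi/2, \pi/2]$.

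For the reverse inclusion, I argue by contrapositive: suppose $p \in K$ with $p \notin e_K(t)$ for every $t \in [0, \omega + \pi/2]$, so the continuous function $t \mapsto h_K(t) - p \cdot u_t$ is strictly positive on the compact set $J_\omega \subseteq [0, \omega + \pi/2]$ and therefore attains a positive minimum $\delta > 0$ on $J_\omega$. For any $q \in B(p, \delta) \cap F_\omega$, the estimate $q \cdot u_t \leq p \cdot u_t + \lvert q - p \rvert < h_K(t)$ holds for every $t \in J_\omega$, and combined with $q \in F_\omega$ and the key representation of $K$ this forces $q \in K$. Thus a full $F_\omega$-neighborhood of $p$ is contained in $K$, so $p$ lies in the $F_\omega$-interior.

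The only delicate point is in the forward inclusion when $p$ lies on the topological boundary of $F_\omega$ (for instance on the $x$-axis, the line $l(\omega, 0)$, or at the corner $O$ when $\omega < \pi/2$): one must verify that the perturbation $q_\epsilon = p + \epsilon u_{t_0}$ does not leave $F_\omega$. The hypothesis $t_0 \in [0, \omega + \pi/2]$ is precisely tight for the two angle inequalities $\sin t_0 \geq 0$ and $\cos(t_0 - \omega) \geq 0$ that make this work, so no case split on the position of $p$ relative to $\partial F_\omega$ is needed.
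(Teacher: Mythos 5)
Your proof is correct and follows essentially the same two-inclusion strategy as the paper's: perturbing in direction $u_t$ from a point on $e_K(t)$ for the forward inclusion, and compactness of the normal-angle set to get a uniform separation $\delta$ for the reverse inclusion. The one place you are more careful than the paper's own proof is the explicit verification that $p + \epsilon u_{t_0}$ stays in $F_\omega$ (via $\sin t_0 \geq 0$ and $\cos(t_0 - \omega) \geq 0$), a step the paper leaves implicit; this is a genuine though small tightening, not a different route.
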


\begin{proof}
Let \(X\) be the boundary of \(K\) in the subset topology of \(F_\omega\). We first show \(\delta K \subseteq X\). Take any point \(z\) of \(\delta K\). Then \(z \in e_K(t)\) for some \(t \in [0, \omega + \pi/2]\). Since \(K\) is a planar convex body, for any \(\epsilon > 0\) the point \(z' = z + \epsilon u_t\) is not in \(K\). This with \(z \in K\) implies \(z \in X\). We now show \(X \subseteq \delta K\). Assume by contrary that there is a point \(z \in X \setminus \delta K\). Then for every \(t \in [0, \omega + \pi/2]\) we have \(z \not \in e_K(t)\) so that \(z\) is in the interior of \(H_K(t)\). So by compactness of \([0, \omega + \pi/2]\), an open ball \(U\) of radius \(\epsilon\) centered at \(z\) is contained in the half-space \(H_K(t)\) for all \(t \in [0, \omega + \pi/2]\). Now \(U \cap F_\omega \subseteq K\) and so \(z \not\in X\), leading to contradiction.
\end{proof}

\begin{proposition}

For any cap \(K\), its upper boundary \(\delta K\) is connected.

\label{pro:upper-boundary-connected}
\end{proposition}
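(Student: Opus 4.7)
The plan is to split into two cases based on whether $K$ has nonempty interior in $\mathbb{R}^2$. If $K$ has empty interior (a point or a closed line segment), a short case analysis in terms of how $[0, \omega + \pi/2]$ intersects the at most two normal angles of $K$ shows that $\delta K$ is either $K$ itself or a single endpoint, both connected.

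Assume now that $K$ has nonempty interior, so $\partial K$ is a Jordan curve and each edge $e_K(t) \subseteq \partial K$ is a connected line segment. Let $C$ be the connected component of $\delta K$ containing the connected set $e_K(0)$, and set
\[
T := \{\, t \in [0, \omega + \pi/2] : e_K(t) \subseteq C \,\}.
\]
Since $0 \in T$, it suffices to show $T$ is both closed and open in $[0, \omega + \pi/2]$; this forces $T = [0, \omega + \pi/2]$ and hence $\delta K = C$.

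For closedness, suppose $t_n \in T$ converges to $t$. Up to a subsequence we may assume $t_n \to t^+$ (the case $t_n \to t^-$ is symmetric). Then \Cref{thm:limits-converging-to-vertex} gives $v_K^-(t_n) \to v_K^+(t)$, and since each $v_K^-(t_n) \in e_K(t_n) \subseteq C$ and the component $C$ is closed in $\mathbb{R}^2$ (as a connected component of the closed set $\delta K$), we obtain $v_K^+(t) \in C$; the connected segment $e_K(t)$ then lies in $C$, so $t \in T$.

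For openness, fix $t \in T$ and consider $s > t$ close to $t$ (again the case $s < t$ is symmetric). The construction inside the proof of \Cref{thm:limits-converging-to-vertex} supplies, for small $\epsilon > 0$, a small triangle $T_\epsilon$ with one vertex at $v_K^+(t)$ and a closed half-plane $H_T$ whose boundary line passes through $v_K^+(t)$, such that $K \cap H_T \subseteq T_\epsilon$, and $e_K(s) \subseteq T_\epsilon$ for all $s$ sufficiently close to $t$. Convexity of $K$ and of $H_T$ forces $\partial K \cap H_T$ to be a single connected arc of the Jordan curve $\partial K$, and this arc contains both $v_K^+(t)$ and the edge $e_K(s)$. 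The counterclockwise sub-arc from $v_K^+(t)$ to $v_K^-(s)$ consists of points whose supporting normal angles lie in $[t, s] \subseteq [0, \omega + \pi/2]$, so this whole sub-arc lies in $\delta K$; being connected and meeting $C$ at $v_K^+(t)$, it is contained in $C$, which gives $e_K(s) \subseteq C$ and $s \in T$. The main obstacle is this last monotonicity step, namely the planar Gauss-map fact that on a short boundary arc of $\partial K$ between the edges of normal angles $t$ and $s$ the supporting angles stay in $[t, s]$; a self-contained justification requires refining the local triangle analysis of \Cref{thm:limits-converging-to-vertex} to exclude the possibility that some point of the sub-arc supports $K$ at an angle outside $[t, s]$.
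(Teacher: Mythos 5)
Your decomposition into empty-interior and full-dimensional cases is fine, and your closedness argument for $T$ is sound, but you have correctly identified the genuine gap yourself: the openness step hinges on the Gauss-map monotonicity statement that every point of the short counterclockwise boundary arc from $v_K^+(t)$ to $v_K^-(s)$ has a supporting normal in $[t,s]$, and this is not established by \Cref{thm:limits-converging-to-vertex} (which only gives the containment $e_K(t_0) \subseteq T_\epsilon$ for $t_0 \in (t,s)$, i.e.\ that the edges land inside the small triangle, not the converse inclusion that the boundary arc is covered by those edges). Without that converse, you cannot conclude the sub-arc lies in $\delta K$, so $T$ being open is not proved. Filling this in is possible but amounts to proving a nontrivial structural fact about planar convex boundaries (and also requires the Jordan-curve hypothesis you only get in the nonempty-interior case), so as written the argument does not close.

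The paper sidesteps all of this by working with an open separation rather than with a connected component. It supposes $\delta K \cap U$ and $\delta K \cap V$ disconnect $\delta K$ for open sets $U,V$, puts each connected edge $e_K(t)$ wholly in one of them, and sets $I_U,I_V$ accordingly. Openness of $I_U$ is then immediate: if $e_K(t) \subseteq U$, then $v_K^+(t) \in U$, and as $s \to t^+$ both $v_K^\pm(s) \to v_K^+(t)$ by \Cref{thm:limits-converging-to-vertex}, so the whole segment $e_K(s)$ eventually lands in the open set $U$ (and similarly for $s \to t^-$ via $v_K^-(t)$). Since $I=[0,\omega+\pi/2]$ is connected, one of $I_U,I_V$ is all of $I$, contradiction. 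This uses exactly the limit theorem you invoke, but the openness of $U$ and $V$ replaces the Gauss-map monotonicity you were missing. You may want to replace your clopen-component scheme with this two-open-sets scheme.
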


\begin{proof}
Let \(\omega \in (0, \pi/2]\) be the rotation angle of \(K\). Let \(I := [0, \omega + \pi/2]\). Assume contradictory that \(\delta K = \bigcup_{t \in I} e_K(t)\) is disconnected. That is, there are open subsets \(U, V\) of \(\mathbb{R}^2\) such that \(\delta K\) is the disjoint union of nonempty subsets \(\delta K \cap U\) and \(\delta K \cap V\). For any \(t \in I\), each \(e_K(t) \subset \delta K\) is connected so it should be contained in exactly one of \(U\) or \(V\). Now define \(I_U := \left\{ t \in I : e_K(t) \subseteq U \right\}\) and \(I_V := \left\{ t \in I : e_K(t) \subseteq V \right\}\). Then \(I\) is a disjoint union of \(I_U\) and \(I_V\). By \Cref{thm:limits-converging-to-vertex} and that each \(e_K(t)\) is the line segment connecting \(v_K^-(t)\) to \(v_K^+(t)\), the sets \(I_U\) and \(I_V\) are open in the subspace topology of \(I\). Since \(I\) is connected, either \(I_U = I\) or \(I_V = I\), and they imply either \(\delta K \subset U\) or \(\delta K \subset V\), leading to contradiction.
\end{proof}

We also define some notions on the niche \(\mathcal{N}(K)\) of a cap \(K\).

\begin{definition}

For any \(K \in \mathcal{K}_\omega^\mathrm{c}\) and \(t \in (0, \omega)\), define the \emph{wedge} \(T_K(t) := F_\omega \cap Q^-_K(t)\) of \(K\) with angle \(t\).

\label{def:wedge}
\end{definition}

\begin{proposition}

For any \(K \in \mathcal{K}_\omega^\mathrm{c}\), we have \(\mathcal{N}(K) = \cup_{t \in (0, \omega)} T_K(t)\).

\label{pro:wedge}
\end{proposition}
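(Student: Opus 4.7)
The proposition follows immediately from the distributivity of intersection over union. The plan is simply to unfold the definitions and apply this set-theoretic identity.

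By \Cref{def:niche}, we have $\mathcal{N}(K) = F_\omega \cap \bigcup_{t \in (0, \omega)} Q^-_K(t)$. By \Cref{def:wedge}, each wedge is $T_K(t) = F_\omega \cap Q^-_K(t)$. Hence
\[
\bigcup_{t \in (0, \omega)} T_K(t) = \bigcup_{t \in (0, \omega)} \left( F_\omega \cap Q^-_K(t) \right) = F_\omega \cap \bigcup_{t \in (0, \omega)} Q^-_K(t) = \mathcal{N}(K),
\]
where the middle equality is the standard distributive law $A \cap \bigcup_i B_i = \bigcup_i (A \cap B_i)$ applied with $A = F_\omega$ and $B_t = Q^-_K(t)$.

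There is no genuine obstacle here; the statement is a bookkeeping lemma that rewrites the niche as a union of its constituent wedges, which will be convenient in later arguments (for instance, when showing that every point of $\mathcal{N}(K)$ lies in some specific $T_K(t)$ and reasoning about it via the inner walls $\vec{b}_K(t)$ and $\vec{d}_K(t)$ bounding $Q^-_K(t)$). The proof writes itself in one line of set manipulation.
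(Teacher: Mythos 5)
Your proof is correct and matches the paper's approach; the paper's own proof simply says ``Immediate from \Cref{def:niche},'' and the distributive law of intersection over union you invoke is exactly what makes it immediate.
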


\begin{proof}
Immediate from \Cref{def:niche}.
\end{proof}

The following \Cref{def:wedge-endpoints} defines the left and right endpoints of the wedge \(T_K(t)\).

\begin{definition}

For any \(K \in \mathcal{K}_\omega^\mathrm{c}\) and \(t \in (0, \omega)\), define \(W_K(t)\) as the intersection of lines \(b_K(t)\) and \(l(\pi/2, 0)\), and define \(Z_K(t)\) as the intersection of lines \(d_K(t)\) and \(l(\omega, 0)\).

\label{def:wedge-endpoints}
\end{definition}

Note that if the wedge \(T_K(t)\) contains the origin \(O\), then \(T_K(t)\) is a quadrilateral with vertices \(O, W_K(t), Z_K(t)\), and \(\mathbf{x}_K(t)\), and the points \(W_K(t)\) and \(Z_K(t)\) are the leftmost and rightmost point of \(\overline{T_K(t)}\) respectively.

\begin{definition}

For any cap \(K\) with rotation angle \(\omega\) and \(t \in (0, \omega)\), define the \emph{right wedge gap} \(w_K(t) := (A_K^-(0) - W_K(t)) \cdot u_0\) with angle \(t\), which is the signed distance from \(W_K(t)\) to \(A_K^-(0)\) along the line \(l(\pi/2, 0)\) in the direction of \(u_0\). Likewise, define the \emph{left wedge gap} \(z_K(t) = (C_K^+(\omega) - Z_K(t)) \cdot v_\omega\) with angle \(t\), which is the signed length from \(Z_K(t)\) to \(C_K^+(\omega)\) along the line \(l(\omega, 0)\) in the direction of \(v_\omega\).

\label{def:wedge-side-lengths}
\end{definition}

We introduce the notion of reflecting a cap \(K\) that will reduce symmetric arguments without loss of generality.

\begin{definition}

Let \(\omega \in (0, \pi/2]\) be arbitrary. Define \(M_\omega : \mathbb{R}^2 \to \mathbb{R}^2\) as the reflection along the line passing through \(O\) and \(o_\omega\). For any cap \(K\) with rotation angle \(\omega\), define the \emph{mirror reflection} \(K^\mathrm{m} := M_\omega(K)\) of \(K\).

\label{def:mirror-reflection}
\end{definition}

Many definitions on \(K\) are symmetric along the reflection \(M_\omega\).

\begin{proposition}

Let \(K \in \mathcal{K}_\omega^\mathrm{c}\) be arbitrary. The parts of supporting hallway \(L_K(t)\), cap \(K\), and niche \(\mathcal{N}(K)\) are equivariant under \(M_\omega\). That is, for any \(t \in [0, \omega]\):

\begin{itemize}
\tightlist
\item
  \(?_{K^{\mathrm{m}}}(t) = M_\omega(?_K(\omega - t))\) for \(? = L, \mathbf{x}, \mathbf{y}, a, b, c, d, W, Z\).
\item
  \(A^{\pm}_{K^{\mathrm{m}}}(t) = M_\omega(C^{\mp}_{K^{\mathrm{m}}}(\omega - t))\) and \(C^{\pm}_{K^{\mathrm{m}}}(t) = M_\omega(A^{\mp}_{K^{\mathrm{m}}}(\omega - t))\).
\item
  \(w_{K^{\mathrm{m}}}(t) = z_K(\omega - t)\) and \(z_{K^{\mathrm{m}}}(t) = w_K(\omega - t)\).
\item
  \(\delta K^{\mathrm{m}} = M_\omega(\delta K)\), \(T_{K^{\mathrm{m}}}(t) = M_\omega(T_{K}(\omega - t))\), and \(\mathcal{N}(K^{\mathrm{m}}) = M_\omega(\mathcal{N}(K))\).
\item
  \(\sigma_{K^{\mathrm{m}}}(E) = \sigma_{K}(\omega + \pi/2 - E)\) for any \(E \subseteq S^1\).
\end{itemize}

\label{pro:mirror-reflection}
\end{proposition}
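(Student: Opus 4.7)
The plan is to determine how $M_\omega$ acts on directions and on the support function, then push that action through the explicit formulas of \Cref{pro:rotating-hallway-parts} and the subsequent definitions. Each claimed equivariance then reduces to a direct substitution.

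\emph{Action of $M_\omega$.} The axis of $M_\omega$ passes through $O$ at angle $\theta = \pi/4 + \omega/2$, since $o_\omega = (\tan(\pi/4 - \omega/2), 1)$ has slope $\cot(\pi/4 - \omega/2)$. A reflection across a line through the origin at angle $\theta$ sends $u_t$ to $u_{2\theta - t}$, so
\[
M_\omega(u_t) = u_{\omega + \pi/2 - t}, \qquad M_\omega(v_t) = -v_{\omega + \pi/2 - t}.
\]
Because $M_\omega$ is orthogonal and fixes $O$, this yields $h_{K^{\mathrm{m}}}(t) = h_K(\omega + \pi/2 - t)$, and $M_\omega$ sends each line $l(s, h)$ to $l(\omega + \pi/2 - s, h)$. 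Since the paired walls $(a, c)$ and $(b, d)$ have normal angles differing by $\pi/2$, the reparametrization $s \mapsto \omega + \pi/2 - s = (\omega - s) + \pi/2$ matches the right outer wall $a$ of $L_{K^{\mathrm{m}}}$ with the left outer wall $c$ of $L_K$, and similarly $b$ with $d$; this $a \leftrightarrow c$, $b \leftrightarrow d$ exchange is what the first bullet of the proposition encodes.

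\emph{Walls, corners, vertices, wedge endpoints.} With the transformation rules above, each identity for $L, \mathbf{x}, \mathbf{y}, a, b, c, d, Q^\pm$ reduces to direct substitution in \Cref{pro:rotating-hallway-parts}. A representative verification is
\[
M_\omega(c_K(\omega - t)) = M_\omega\bigl(l(\omega + \pi/2 - t,\, h_K(\omega + \pi/2 - t))\bigr) = l(t,\, h_{K^{\mathrm{m}}}(t)) = a_{K^{\mathrm{m}}}(t).
\]
For the vertices, $M_\omega(v_t) = -v_{\omega + \pi/2 - t}$ shows that $M_\omega$ swaps the two endpoints of each edge, so $M_\omega(v_K^\pm(s)) = v_{K^{\mathrm{m}}}^\mp(\omega + \pi/2 - s)$; specializing $s = t$ yields the $A$ identity and $s = t + \pi/2$ yields the $C$ identity. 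The identities for $W_K, Z_K$ then follow by intersecting the already-proved identities for $b_K, d_K$ with the bottom lines $l(\omega, 0), l(\pi/2, 0)$ of $P_\omega$, which are themselves swapped by $M_\omega$, and the identities for $w_K, z_K$ come from unfolding their signed-distance definitions.

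\emph{Upper boundary, wedges, niche, surface area measure.} Since the interval $[0, \omega + \pi/2]$ is preserved by $s \mapsto \omega + \pi/2 - s$ and $M_\omega(e_K(s)) = e_{K^{\mathrm{m}}}(\omega + \pi/2 - s)$, taking the union over $s$ yields $\delta K^{\mathrm{m}} = M_\omega(\delta K)$. The fan $F_\omega$ is fixed by $M_\omega$ because its two bounding half-planes (at normal angles $\omega$ and $\pi/2$) are exchanged, so applying $M_\omega$ to \Cref{def:wedge} and \Cref{pro:wedge} gives the identities for $T_K(t)$ and $\mathcal{N}(K)$. Finally, \Cref{thm:surface-area-measure}, the isometry of $M_\omega$ (which preserves $\mathcal{H}^1$), and the edge identity combine to give $\sigma_{K^{\mathrm{m}}}(E) = \sigma_K(\omega + \pi/2 - E)$. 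The main bookkeeping obstacle is just tracking the orientation-reversal swaps $a \leftrightarrow c$, $b \leftrightarrow d$, $A \leftrightarrow C$, $v^\pm \leftrightarrow v^\mp$; once the two transformation rules in the first paragraph are secured, nothing else is nontrivial.
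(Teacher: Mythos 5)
Your proposal is correct and takes essentially the same approach as the paper, which gives only a one-line ``check the symmetry of each definition'' argument (with a citation to Schneider for the surface area measure); you simply fill in the details by computing $M_\omega(u_t)=u_{\omega+\pi/2-t}$, $M_\omega(v_t)=-v_{\omega+\pi/2-t}$, $h_{K^{\mathrm{m}}}(t)=h_K(\omega+\pi/2-t)$, $M_\omega(l(s,h))=l(\omega+\pi/2-s,h)$ and substituting into \Cref{pro:rotating-hallway-parts}. One observation worth recording: your representative calculation yields $a_{K^{\mathrm{m}}}(t)=M_\omega(c_K(\omega-t))$ (and similarly $b\leftrightarrow d$, $W\leftrightarrow Z$), which is the correct identity, whereas the first bullet of the proposition as printed reads $?_{K^{\mathrm{m}}}(t)=M_\omega(?_K(\omega-t))$ with the same symbol on both sides — a statement that fails for $?=a,b,c,d,W,Z$ (and the subscripts $K^{\mathrm{m}}$ on the right-hand side of the second bullet appear to be similar typos). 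Your reading that the first bullet ``encodes'' the $a\leftrightarrow c$, $b\leftrightarrow d$ exchange is the right one, and your derivation establishes the corrected form; it would be worth noting explicitly in a revision that the stated equivariances need the swapped pairings for $a,b,c,d,W,Z$ even though they hold literally for the symmetric objects $L,\mathbf{x},\mathbf{y}$.
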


\begin{proof}
Check the symmetry of the definition of each. For the surface area measure \(\sigma_{K^{\mathrm{m}}}\), use Equation (4.14), page 215 of \autocite{schneider_2013}.
\end{proof}

\begin{remark}

We use the mirror reflection of a cap in \Cref{def:mirror-reflection} and \Cref{pro:mirror-reflection} extensively to exploit the symmetry without loss of generality. For example, say we want to show both \(w_K(t) > 0\) and \(z_K(t) > 0\) for arbitrary \(K \in \mathcal{K}_\omega^\mathrm{c}\) and \(t \in (0, \omega)\). Then it suffices to show the case \(w_K(t) > 0\), as the symmetric case \(z_K(t) = w_{K^{\mathrm{m}}}(\omega - t) > 0\) follows by \Cref{pro:mirror-reflection}.

\label{rem:mirror-symmetry}
\end{remark}

We now show that for any monotone sofa \(S\), its cap \(K\) contains the niche \(\mathcal{N}(K)\) (\Cref{thm:niche-in-cap}). The positivity of \(w_K(t)\) and \(z_K(t)\) in \Cref{thm:wedge-ends-in-cap} is important in establishing \(\mathcal{N}(K) \subset K\).

\begin{definition}

Say that a point \(p_1\) is \emph{further than} (resp. \emph{strictly further than}) the point \(p_2\) \emph{in the direction} of nonzero vector \(v \in \mathbb{R}^2\) if \(p_1 \cdot v \geq p_2 \cdot v\) (resp. \(p_1 \cdot v > p_2 \cdot v\)).

\label{def:further-in-direction}
\end{definition}

\begin{theorem}

Let \(K \in \mathcal{K}_\omega^\mathrm{c}\) be arbitrary. For any angle \(t \in (0, \omega)\), we have \(w_K(t), z_K(t) > 0\).

\label{thm:wedge-ends-in-cap}
\end{theorem}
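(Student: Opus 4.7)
The plan is to establish $w_K(t) > 0$ by a direct coordinate computation together with the inclusion $K \subseteq P_\omega$, and to obtain $z_K(t) > 0$ from the mirror symmetry recorded in \Cref{pro:mirror-reflection}. Notably, the argument uses only the ``outer'' supporting values $h_K(0)$ and $h_K(\pi/2) = 1$ of the cap; nothing about the niche or the inner walls is needed.

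First I would unpack the definitions to reduce $w_K(t) > 0$ to a numerical inequality on $h_K$. By \Cref{def:cap-vertices}, $A_K^-(0)$ lies on the supporting line $l(0, h_K(0))$, so $A_K^-(0) \cdot u_0 = h_K(0)$. By \Cref{def:wedge-endpoints} together with \Cref{pro:rotating-hallway-parts}, $W_K(t)$ is the intersection of $b_K(t) = l(t, h_K(t) - 1)$ with the $x$-axis $l(\pi/2, 0)$; since $t \in (0, \omega) \subseteq (0, \pi/2)$ we have $\cos t > 0$, and solving gives $W_K(t) \cdot u_0 = (h_K(t) - 1)/\cos t$. Plugging into \Cref{def:wedge-side-lengths}, the claim $w_K(t) > 0$ becomes equivalent to
\[
h_K(t) < h_K(0)\cos t + 1.
\]

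Next I would prove this inequality from the fact that $K$ is contained in the horizontal strip $H$ and in the left half-plane $\{x \leq h_K(0)\}$. Since $K$ is a cap, $h_K(\pi/2) = 1$, so every $p = (x, y) \in K$ satisfies $y \leq 1$; by definition of the support function, every such $p$ also satisfies $x \leq h_K(0)$. For $t \in (0, \pi/2)$ the coefficients $\cos t$ and $\sin t$ are both positive, so
\[
p \cdot u_t = x \cos t + y \sin t \leq h_K(0)\cos t + \sin t.
\]
Taking a supremum over $p \in K$ yields $h_K(t) \leq h_K(0)\cos t + \sin t$. For $t \in (0, \omega) \subseteq (0, \pi/2)$ we have $\sin t < 1$ strictly (since $t < \omega \leq \pi/2$), whence $h_K(t) < h_K(0)\cos t + 1$, as required.

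Finally, for $z_K(t) > 0$ I would invoke \Cref{pro:mirror-reflection}: the mirror reflection $K^{\mathrm{m}} = M_\omega(K)$ is again a cap of rotation angle $\omega$, and the identity $z_K(t) = w_{K^{\mathrm{m}}}(\omega - t)$ holds. Since $\omega - t \in (0, \omega)$ whenever $t \in (0, \omega)$, the argument for $w$ applied to $K^{\mathrm{m}}$ at angle $\omega - t$ gives $w_{K^{\mathrm{m}}}(\omega - t) > 0$, hence $z_K(t) > 0$. I do not anticipate a substantive obstacle here; the only points requiring a line of care are verifying that $\cos t > 0$ throughout, which is automatic from $t < \omega \leq \pi/2$, and noting that the strict inequality $\sin t < 1$ (rather than merely $\leq 1$) is what upgrades the containment of $K$ in $P_\omega$ to the strict positivity of the wedge gap.
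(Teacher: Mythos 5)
Your proof is correct, and it takes a genuinely different route from the paper's. The paper argues synthetically: it introduces the point $q := a_K(t) \cap l(\pi/2, 1)$ and shows that $W_K(t)$, $q$, $A_K^-(t)$, $A_K^-(0)$ are consecutively further in the direction $u_0$ (the first step strictly, by the unit-width gap $(1-\sin t)/\cos t > 0$; the last two by convexity of $K$). You instead bound the support function directly: from $K \subseteq H_K(0) \cap H_K(\pi/2)$ you derive $h_K(t) \leq h_K(0)\cos t + h_K(\pi/2)\sin t = h_K(0)\cos t + \sin t$, and the strict gap $1 - \sin t > 0$ supplies the strict inequality. The two proofs exploit the same quantitative fact (the unit distance between $a_K(t)$ and $b_K(t)$ produces a slack proportional to $1-\sin t$), but you short-circuit the paper's vertex chain by invoking the standard sub-additivity-type bound on $h_K$ in the intermediate direction $u_t$. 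Your argument is arguably more streamlined, while the paper's version makes the geometry of the wedge and the cap more explicit. The mirror-symmetry step is handled the same way in both.

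One small remark for polish: the paper records the reduction to $w_K(t)$ via \Cref{rem:mirror-symmetry} up front and then proves $w_K(t)>0$ for an arbitrary cap; you phrase the reflection step at the end, which is equivalent but requires the (correct and paper-confirmed) fact that $K^{\mathrm{m}}$ is again a cap of rotation angle $\omega$, so citing \Cref{rem:mirror-symmetry} or \Cref{pro:mirror-reflection} explicitly at that point is appropriate.
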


\begin{proof}
By mirror symmetry (\Cref{rem:mirror-symmetry}), it suffices to show \(w_K(t) > 0\). We need to show that the point \(A_K^-(0)\) is strictly further than the point \(W_K(t)\) in the direction of \(u_0\) (\Cref{def:further-in-direction}). The point \(q := a_K(t) \cap l(\pi/2, 1)\) is strictly further than \(W_K(t) = b_K(t) \cap l(\pi/2, 0)\) in the direction of \(u_0\), because the lines \(a_K(t)\) and \(b_K(t)\) form the boundary of a unit-width vertical strip rotated counterclockwise by \(t\). The points \(q = l_K(t) \cap l_K(\pi/2)\), \(A_K^-(t), A_K^-(0)\) are consecutively further in the direction of \(u_0\) because \(K\) is a convex body, completing the proof.
\end{proof}

\begin{lemma}

Fix an arbitrary \(K \in \mathcal{K}_\omega^\mathrm{c}\) and an angle \(t \in (0, \omega)\). If the inner corner \(\mathbf{x}_K(t)\) is in \(K\), then the wedge \(T_K(t)\) is a subset of \(K\).

\label{lem:niche-in-cap}
\end{lemma}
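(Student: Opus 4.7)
The plan is to prove the stronger claim $\overline{T_K(t)} \subseteq K$ via a two-step convex-combination argument, whence $T_K(t) \subseteq \overline{T_K(t)} \subseteq K$. The key observation for the first step is that $\overline{T_K(t)} = F_\omega \cap \overline{Q_K^-(t)}$, where $\overline{Q_K^-(t)}$ is the closed convex cone with apex $\mathbf{x}_K(t)$ opening in the directions $-u_t$ and $-v_t$. For any $p \in \overline{T_K(t)}$, the ray from $\mathbf{x}_K(t)$ through $p$ stays inside the cone $\overline{Q_K^-(t)}$ by the cone property; and because both $-u_t$ and $-v_t$ have negative $y$-component for $t \in (0, \pi/2)$, this ray exits $F_\omega$ at some point $q \in \overline{T_K(t)} \cap \partial F_\omega$. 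Hence $p$ lies on the segment $[\mathbf{x}_K(t), q]$, and by convexity of $K$ combined with the hypothesis $\mathbf{x}_K(t) \in K$, the lemma reduces to proving $\overline{T_K(t)} \cap \partial F_\omega \subseteq K$.

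For the second step, $\partial F_\omega = l(3\pi/2, 0) \cup l(\omega + \pi, 0)$, and both lines are supporting lines of $K$ by the cap axiom $h_K(3\pi/2) = h_K(\omega + \pi) = 0$, so $K$ meets them in its bottom edge $e_K(3\pi/2)$ and its lower-left edge $e_K(\omega + \pi)$ (coinciding when $\omega = \pi/2$). Using the gaps in admissible normal angles from cap axiom (2)---in particular the gap $(\omega + \pi, 3\pi/2)$ for $\omega < \pi/2$, which forces $l_K(\omega + \pi)$ and $l_K(3\pi/2)$ to share the corner $O$---I identify the endpoints of $e_K(3\pi/2)$ as $O$ and $(h_K(0), 0)$, and similarly the endpoints of $e_K(\omega + \pi)$. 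The slice $\overline{T_K(t)} \cap l(3\pi/2, 0)$ is a horizontal segment of non-negative $x$-coordinates whose right endpoint is $W_K(t)$; by \Cref{thm:wedge-ends-in-cap}, $w_K(t) > 0$ gives $W_K(t) \cdot u_0 < h_K(0)$, placing $W_K(t)$ inside $e_K(3\pi/2)$. A symmetric argument using $z_K(t) > 0$ and mirror reflection (\Cref{pro:mirror-reflection}) places the analogous slice along $l(\omega + \pi, 0)$ inside $e_K(\omega + \pi)$.

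The main technical obstacle lies in edge cases where $h_K(t) < 1$ or $h_K(t + \pi/2) < 1$, so that $W_K(t)$ or $Z_K(t)$ may fall outside $F_\omega$ and a ``mixed'' intersection---of $b_K(t)$ with $l(\omega + \pi, 0)$, or of $d_K(t)$ with $l(3\pi/2, 0)$---appears instead as an extreme point of the wedge on $\partial F_\omega$. I expect to show such a mixed point on $l(\omega + \pi, 0)$ lies between $O$ and $Z_K(t)$ along that line by parameterizing points on the line as $\lambda v_\omega$ and translating the inequality $\mathbf{x}_K(t) \cdot u_\omega \geq 0$ (automatic from $\mathbf{x}_K(t) \in F_\omega$) into a parameter comparison, placing the mixed point still inside $e_K(\omega + \pi)$. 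The degenerate $\omega = \pi/2$ case collapses the two boundary lines of $F_\omega$ into one and is treated analogously, with a single bottom edge $[-h_K(\pi), h_K(0)] \times \{0\}$ spanning both sides of $O$.
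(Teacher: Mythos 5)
Your proof takes a genuinely different route from the paper's. The paper proceeds by a case analysis on the position of the origin $O$ relative to the inner walls $b_K(t)$ and $d_K(t)$ (one case for $\omega = \pi/2$, then four sub-cases for $\omega < \pi/2$), in each case enumerating the vertices of the polygon $T_K(t)$ and verifying they lie in $K$. You instead collapse this into one uniform argument: push any wedge point along the ray from the apex $\mathbf{x}_K(t)$ until it exits $F_\omega$, and reduce the lemma to showing that $\overline{Q_K^-(t)} \cap \partial F_\omega \subseteq e_K(3\pi/2) \cup e_K(\omega + \pi)$. This is structurally cleaner and shifts the entire burden onto two elementary half-plane inequalities on $\partial F_\omega$: being below the inner wall bounds the exit point away from $A_K^-(0)$ (resp. $C_K^+(\omega)$) via \Cref{thm:wedge-ends-in-cap}, and being in $F_\omega$ bounds it away from $O$ — the latter is precisely your parameter comparison and does recover $\mathbf{x}_K(t)\cdot u_\omega \geq 0$ as the controlling inequality. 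Both your argument and the paper's ultimately rest on the same implicit fact, which you correctly identify but the paper leaves unspoken: the normal-angle gap in cap axiom (2) forces $v_K^+(\omega+\pi) = v_K^-(3\pi/2)$, hence $O \in K$ (for $\omega < \pi/2$) and the bottom edges meet at the origin, so the case-4 quadrilateral vertex $O$ (paper) and the interval endpoints $[0, h_K(0)]$ on the $x$-axis (you) are both in $K$.

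Two small points worth tightening. The identity $\overline{T_K(t)} = F_\omega \cap \overline{Q_K^-(t)}$ can fail when $T_K(t)$ is empty (e.g.\ $\mathbf{x}_K(t) \in \partial F_\omega$ pushes the open cone entirely below $F_\omega$), but then the lemma is vacuous, so it is cleaner to skip the closure claim and directly show $F_\omega \cap \overline{Q_K^-(t)} \cap \partial F_\omega \subseteq K$, which is what your computations actually establish. Also, the "mixed intersection" is not really a separate obstacle: any exit point $q \in \partial F_\omega$ is simultaneously pinned by $q \in \overline{Q_K^-(t)}$ (giving the bound toward $A_K^-(0)$ or $C_K^+(\omega)$ via the wedge-gap theorem) and $q \in F_\omega$ (giving the bound toward $O$), and both constraints can be read off the same two dot products with no need to single out which inner wall contributes the extreme vertex. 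Writing it this way makes the $\omega = \pi/2$ collapse automatic as well.
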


\begin{proof}
Assume \(\mathbf{x}_K(t) \in K\). If \(\omega = \pi/2\), then by \(\mathbf{x}_K(t) \in K\) the wedge \(T_K(t)\) is the triangle with vertices \(W_K(t)\), \(\mathbf{x}_K(t)\), and \(Z_K(t)\) in counterclockwise order. Also \(W_K(t)\) is further than \(Z_K(t)\) in the direction of \(u_0\) (\Cref{def:further-in-direction}). This with \(w_K(t), z_K(t) > 0\) (\Cref{thm:wedge-ends-in-cap}) implies that all the three vertices of \(T_K(t)\) are in \(K\).

If \(\omega < \pi/2\), we divide the proof into four cases on whether the origin \(O = (0, 0)\) lies strictly below the lines \(b_K(t)\) and \(d_K(t)\) or not respectively.

\begin{itemize}
\tightlist
\item
  If \((0, 0)\) lies on or above both \(b_K(t)\) and \(d_K(t)\), then the corner \(\mathbf{x}_K(t)\) should be outside \(F_\omega^\circ\) and this contradicts \(\mathbf{x}_K(t) \in K\).
\item
  If \((0, 0)\) lies on or above \(b_K(t)\) but lies strictly below \(d_K(t)\), then \(T_K(t)\) is a triangle with vertices \(\mathbf{x}_K(t)\), \(Z_K(t)\) and the intersection \(p := l(\omega, 0) \cap b_K(t)\) in clockwise order, with the point \(p\) on the line segment connecting \(Z_K(t)\) and \((0, 0)\). As \(z_K(t) > 0\) (\Cref{thm:wedge-ends-in-cap}) the point \(Z_K(t)\) lies in the segment connecting \(C^+_K(\omega)\) and the origin \((0, 0)\). So the vertices \(\mathbf{x}_K(t), Z_K(t), p\) of \(T_K(t)\) are in \(K\), showing \(T_K(t) \subseteq K\).
\item
  If \((0, 0)\) lies strictly below \(b_K(t)\) but lies on or above \(d_K(t)\), apply the previous case to the mirror reflection \(K^{\mathrm{m}}\) and reflect back (\Cref{rem:mirror-symmetry}).
\item
  If \((0, 0)\) lies strictly below both \(b_K(t)\) and \(d_K(t)\), then \(T_K(t)\) is a quadrilateral with vertices \(\mathbf{x}_K(t)\), \(Z_K(t)\), \(W_K(t)\) and \((0, 0)\). As \(w_K(t) > 0\) (resp. \(z_K(t) > 0\)) by \Cref{thm:wedge-ends-in-cap}, the point \(W_K(t)\) (resp. \(Z_K(t)\)) is in the line segment connecting \((0, 0)\) and \(A^-_K(0)\) (resp. \(C^+_K(\omega)\)). So all vertices of \(T_K(t)\) are in \(K\) and \(T_K(t) \subseteq K\).
\end{itemize}

\end{proof}

\begin{lemma}

For any \(K \in \mathcal{K}_\omega^\mathrm{c}\), we have \(A^-_K(0), C^+_K(\omega) \in K \setminus \mathcal{N}(K)\).

\label{lem:cap-ends-not-in-niche}
\end{lemma}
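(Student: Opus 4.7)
The plan is to verify $A^-_K(0), C^+_K(\omega) \in K$ — which is immediate from the definition of the vertices $v_K^{\pm}$ — and then to show neither lies in $\mathcal{N}(K)$. By the mirror symmetry gathered in \Cref{pro:mirror-reflection}, the reflection $M_\omega$ sends $A^-_{K^{\mathrm{m}}}(0)$ to $C^+_K(\omega)$ and $\mathcal{N}(K^{\mathrm{m}})$ to $\mathcal{N}(K)$; so (as in \Cref{rem:mirror-symmetry}) it suffices to handle $A^-_K(0)$ for an arbitrary $K \in \mathcal{K}_\omega^\mathrm{c}$, and the case of $C^+_K(\omega)$ follows by applying the same conclusion to $K^{\mathrm{m}}$ and reflecting back.

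To rule out $A^-_K(0) \in \mathcal{N}(K) = F_\omega \cap \bigcup_{t \in (0,\omega)} Q^-_K(t)$, I will show $A^-_K(0) \notin Q^-_K(t)$ for every $t \in (0,\omega)$. Since $Q^-_K(t) \subseteq H^{\circ}_-(t, h_K(t) - 1)$ by \Cref{pro:rotating-hallway-parts}, this reduces to establishing the single inequality
\[
A^-_K(0) \cdot u_t \ge h_K(t) - 1.
\]
Writing $A^-_K(0) = (h_K(0), y^*)$ (using $A^-_K(0) \in l_K(0)$) and observing that $y^* \ge 0$ because $h_K(3\pi/2) = 0$ forces $K \subseteq H$, the left-hand side is
\[
A^-_K(0) \cdot u_t = h_K(0)\cos t + y^* \sin t \ge h_K(0)\cos t,
\]
since $t \in (0,\omega) \subseteq (0, \pi/2]$ makes $\cos t, \sin t \ge 0$. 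On the other side, subadditivity of $\sup$ in the definition of the support function, combined with $h_K(\pi/2) = 1$, gives
\[
h_K(t) \le h_K(0)\cos t + h_K(\pi/2)\sin t = h_K(0)\cos t + \sin t \le h_K(0)\cos t + 1.
\]
Subtracting $1$ from both sides then yields the desired inequality, completing the argument.

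I do not anticipate any serious obstacle: the niche is carved out by the \emph{strict} inequalities defining $Q^-_K(t)$, so the non-strict bound derived above is already enough; and the two ingredients used — the inclusion $K \subseteq H$ and the subadditivity estimate on $h_K$ — are both immediate consequences of the defining properties of a cap in \Cref{def:cap}.
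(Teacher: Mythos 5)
Your proof is correct, but it takes a genuinely different route from the paper. The paper proves the claim as a quick corollary of \Cref{thm:wedge-ends-in-cap}: since $w_K(t) > 0$, the point $A_K^-(0)$ lies on the right side of the line $b_K(t)$, hence outside the wedge $T_K(t)$, for every $t$, and similarly for $z_K(t)$ and $C_K^+(\omega)$. You instead unwind the definition of $Q_K^-(t)$ down to the half-plane $H_-^\circ(t, h_K(t)-1)$ and verify the needed exclusion by a direct support-function computation: writing $A_K^-(0) = (h_K(0), y^*)$ with $y^* \ge 0$ (forced by $h_K(3\pi/2) = 0$), you bound $A_K^-(0)\cdot u_t \ge h_K(0)\cos t$, and then the convexity estimate $h_K(t) \le h_K(0)\cos t + h_K(\pi/2)\sin t \le h_K(0)\cos t + 1$ closes the gap. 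This bypasses the wedge-gap machinery entirely and is more self-contained and elementary; the paper's version buys coherence with the surrounding development, which has already set up wedges and wedge gaps and reuses them repeatedly. Both arguments are sound, and you correctly note that the strict inequalities defining $Q_K^-(t)$ mean the non-strict bound you establish is already enough. The mirror-symmetry reduction at the start is also used exactly as in the paper.
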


\begin{proof}
We only need to show that \(A^-_K(0), C^+_K(\omega)\) are not in \(\mathcal{N}(K)\). That is, for any \(t \in (0, \omega)\), neither points are in \(T_K(t)\). Since \(w_K(t) > 0\) by \Cref{thm:wedge-ends-in-cap}, the point \(A_K^-(0)\) is on the right side of the boundary \(b_K(t)\) of \(T_K(t)\). So \(A_K^-(0) \not\in T_K(t)\). Similarly, \(z_K(t) > 0\) implies \(C_K^+(\omega) \not\in T_K(t)\).
\end{proof}

We identify the exact condition where \(\mathcal{N}(K) \subseteq K\) for a general cap \(K\).

\begin{theorem}

For any \(K \in \mathcal{K}_\omega^\mathrm{c}\), the followings are all equivalent.

\begin{enumerate}
\def\labelenumi{\arabic{enumi}.}
\tightlist
\item
  \(\mathcal{N}(K) \subseteq K\)
\item
  \(\mathcal{N}(K) \subseteq K \setminus \delta K\)
\item
  For every \(t \in (0, \omega)\), either \(\mathbf{x}_K(t) \not\in F_\omega^\circ\) or \(\mathbf{x}_K(t) \in K\).
\item
  The set \(S = K \setminus \mathcal{N}(K)\) is connected.
\end{enumerate}

\label{thm:monotonization-connected-iff}
\end{theorem}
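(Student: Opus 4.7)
The plan is to split the four-way equivalence into three easier implications and one technically delicate one. The implication $(2) \Rightarrow (1)$ is immediate since $K \setminus \delta K \subseteq K$. For $(1) \Rightarrow (2)$, observe that $\bigcup_{t \in (0, \omega)} Q_K^-(t)$ is open in $\mathbb{R}^2$ as a union of open quarter-planes, so $\mathcal{N}(K) = F_\omega \cap \bigcup_t Q_K^-(t)$ is relatively open in $F_\omega$; if $\mathcal{N}(K) \subseteq K$, then $\mathcal{N}(K)$ lies in the relative interior of $K$ in $F_\omega$, which by \Cref{pro:upper-boundary-interior} equals $K \setminus \delta K$.

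For $(1) \Leftrightarrow (3)$, the direction $(1) \Rightarrow (3)$ proceeds by contrapositive: if $\mathbf{x}_K(t) \in F_\omega^\circ \setminus K$ for some $t \in (0, \omega)$, one can approach $\mathbf{x}_K(t)$ through a sequence in $Q_K^-(t) \cap F_\omega^\circ$ lying in $T_K(t) \subseteq \mathcal{N}(K)$, and closedness of $K$ then forces $\mathbf{x}_K(t) \in K$, a contradiction. For $(3) \Rightarrow (1)$, pick $p \in T_K(t) \subseteq \mathcal{N}(K)$ and split on $\mathbf{x}_K(t)$: if $\mathbf{x}_K(t) \in K$, apply \Cref{lem:niche-in-cap} to conclude $T_K(t) \subseteq K$; if $\mathbf{x}_K(t) \notin F_\omega^\circ$, use mirror symmetry (\Cref{pro:mirror-reflection}) to assume WLOG that the $y$-coordinate of $\mathbf{x}_K(t)$ is $\leq 0$, and observe that in the parametrization $Q_K^-(t) = \{\mathbf{x}_K(t) - \lambda_1 u_t - \lambda_2 v_t : \lambda_1, \lambda_2 > 0\}$ both $-u_t$ and $-v_t$ have strictly negative $y$-component for $t \in (0, \pi/2)$, so every point of $Q_K^-(t)$ has $y$-coordinate strictly smaller than that of $\mathbf{x}_K(t)$, and hence $T_K(t) = Q_K^-(t) \cap F_\omega = \emptyset$ vacuously.

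For $(1) \Rightarrow (4)$, combine \Cref{pro:upper-boundary-connected} (connectedness of $\delta K$, which lies in $S$ by the already-proven $(2)$) with the claim that every $p \in S$ is joined to $\delta K$ by a vertical upward segment inside $S$. The same sign computation as above shows each $Q_K^-(t)$, and hence $\mathcal{N}(K)$, is closed under downward motion $(0, -1)$; contrapositively, an upward-moving point starting in $S$ cannot enter $\mathcal{N}(K)$ without having been there originally, so the vertical upward ray from $p$ stays in $S$ until it exits $K$ at a top endpoint $q \in \partial K$. A short case analysis over edges and vertices of $\partial K$ whose outward-normal cone meets $(0, \pi)$ places any such $q$ in $\delta K$: the only candidate vertex not immediately in an edge of $\delta K$ has normal cone $[\omega + \pi/2, \omega + \pi]$, but this vertex is $C_K^+(\omega)$, itself an endpoint of $\delta K$.

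The main obstacle is $(4) \Rightarrow (1)$. The plan is to argue by contrapositive: assume $\mathcal{N}(K) \not\subseteq K$ and use the proven $(1) \Leftrightarrow (3)$ to fix $t^* \in (0, \omega)$ with $\mathbf{x}_K(t^*) \in F_\omega^\circ \setminus K$. By \Cref{thm:wedge-ends-in-cap}, $w_K(t^*), z_K(t^*) > 0$, so the wedge endpoints $W_K(t^*) \in l(\pi/2, 0)$ and $Z_K(t^*) \in l(\omega, 0)$ are well-positioned relative to $A_K^-(0)$ and $C_K^+(\omega)$, which themselves lie in $S$ by \Cref{lem:cap-ends-not-in-niche}. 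The ``inverted V'' polyline $W_K(t^*) \to \mathbf{x}_K(t^*) \to Z_K(t^*)$, together with a suitable path along the two lower sides of $F_\omega$ closing the loop through $O$, forms a Jordan curve in $F_\omega$ bounding a region that contains $\overline{T_K(t^*)}$. The aim is to show this Jordan curve separates $A_K^-(0)$ from $C_K^+(\omega)$ within $K$, so that any path between them in $K$ must pass through the open subset $K \cap Q_K^-(t^*) \subseteq \mathcal{N}(K)$ and thus exit $S$. Making this topological separation rigorous, in particular handling degenerate configurations where the polyline grazes $\partial K$ or where parts of $\overline{T_K(t^*)}$ fail to lie inside $K$, is expected to be the technically hardest part.
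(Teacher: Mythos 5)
Your handling of $(1)\Leftrightarrow(2)$, $(1)\Leftrightarrow(3)$, and $(1)\Rightarrow(4)$ tracks the paper's proof closely and is correct (the paper proves $(2)\Rightarrow(4)$, but since you first establish $(1)\Leftrightarrow(2)$ this is the same thing). Two small remarks on those parts: in $(1)\Rightarrow(4)$, it is not $\mathcal{N}(K)$ itself that is closed under downward motion — moving down eventually leaves $F_\omega$ — but $\mathcal{N}(K)\cup(\mathbb{R}^2\setminus F_\omega)$, as the paper states; the conclusion you draw is still valid because you start from a point of $S\subseteq K\subseteq F_\omega$. And your computation that $-u_t$ and $-v_t$ both have strictly negative $y$-component for $t\in(0,\pi/2)$ is the right observation both there and in $(3)\Rightarrow(1)$.

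The genuine gap is $(4)\Rightarrow(1)$. You propose a Jordan-curve separation using the polyline $W_K(t^*)\to\mathbf{x}_K(t^*)\to Z_K(t^*)$ closed up along $\partial F_\omega$ through $O$, and you explicitly flag the separation step as unresolved. This plan faces a real obstruction beyond ``degenerate configurations'': the two closing legs run along $l(\omega,0)$ and $l(\pi/2,0)$ and will in general pass through points of $e_K(\omega+\pi)$ and $e_K(3\pi/2)$ that lie in $S$, so the candidate curve is \emph{not} disjoint from $S$, and the Jordan-curve theorem then gives no disconnection of $S$. There is also a logical mismatch: blocking every \emph{path} from $A_K^-(0)$ to $C_K^+(\omega)$ inside $S$ would show $S$ is not path-connected, which does not by itself contradict connectedness. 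The paper sidesteps all of this by using a much simpler separating set: the full vertical line $l$ through $\mathbf{x}_K(t^*)$. Its upper ray lies in $F_\omega^\circ\setminus K$ (closed in direction $v_0$, as the cap has no supporting half-planes with normal angle in $(\omega+\pi/2,\pi)$ pointing the wrong way), hence avoids $K$; its lower ray lies in $Q_K^-(t^*)$ (closed in direction $-v_0$ for $t^*\in(0,\pi/2)$), hence avoids $S$. So $l\cap S=\emptyset$, while $w_K(t^*),z_K(t^*)>0$ and $\mathbf{x}_K(t^*)\in F_\omega^\circ$ place $A_K^-(0)$ and $C_K^+(\omega)$ strictly on opposite open sides of $l$; since both lie in $S$ by \Cref{lem:cap-ends-not-in-niche}, the two open half-planes bounded by $l$ give a disconnection of $S$. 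You should replace your Jordan-curve plan with this vertical-line argument.
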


\begin{proof}
(1) and (2) are equivalent because the niche \(\mathcal{N}(K)\) is open in the subset topology of \(F_\omega\) by \Cref{def:niche}, and the set \(K \setminus \delta K\) is the interior of \(K\) in the subset topology of \(F_\omega\) by \Cref{pro:upper-boundary-interior}.

(1 \(\Rightarrow\) 3) We will prove the contraposition and assume \(\mathbf{x}_K(t) \in F_\omega^\circ \setminus K\). Then a neighborhood of \(\mathbf{x}_K(t)\) is inside \(F_\omega\) and disjoint from \(K\), so a subset of \(T_K(t)\) is outside \(K\) and \(\mathcal{N}(K) \not\subseteq K \setminus \delta K\). (3 \(\Rightarrow\) 1) If \(\mathbf{x}_K(t) \not \in F_\omega^\circ\) then \(T_K(t)\) is an empty set. If \(\mathbf{x}_K(t) \in K\) then by \Cref{lem:niche-in-cap} we have \(T_K(t) \subseteq K\).

(2 \(\Rightarrow\) 4) As \(\delta K\) is disjoint from \(\mathcal{N}(K)\), we have \(\delta K \subseteq S\). We show that \(S\) is connected. The set \(\delta K\) is connected by \Cref{pro:upper-boundary-connected}. Take any point \(p \in S\). Take the half-line \(r\) starting from \(p\) in the upward direction \(v_0\). Then \(r\) touches a point in \(\delta K\) as \(p \in K\). Moreover, \(r\) is disjoint from \(\mathcal{N}(K)\) as the set \(\mathcal{N}(K) \cup (\mathbb{R}^2 \setminus F_\omega)\) is closed in the direction \(-v_0\) (\Cref{def:closed-in-direction}). Now \(r \cap K\) is a line segment inside \(S\) connecting arbitrary \(p \in S\) to a point in \(\delta K\). So \(S\) is connected.

(4 \(\Rightarrow\) 3) Assume by contradiction that \(\mathbf{x}_K(t) \in F_\omega^\circ \setminus K\) for some \(t \in (0, \omega)\). The ray with initial point \(\mathbf{x}_K(t)\) and direction \(v_0\) is disjoint from \(K\), as \(F_\omega^\circ \setminus K\) is closed in the direction \(v_0\). The ray with initial point \(\mathbf{x}_K(t)\) and the opposite direction \(-v_0\) is not in \(S\), as \(\mathbf{x}_K(t)\) is the corner of \(Q_K^-(t)\), and \(Q_K^-(t)\) is closed in the direction of \(-v_0\). So the vertical line \(l\) passing through \(\mathbf{x}_K(t)\) in the direction of \(v_0\) is disjoint from \(S\).

By \(z_K(t) > 0\) and \(\mathbf{x}_K(t) \in F_\omega^{\circ}\), the points \(C_K^+(\omega)\), \(Z_K(t)\), \(\mathbf{x}_K(t)\) are consecutively strictly further in the direction of \(u_0\). Likewise, by \(w_K(t) > 0\) and \(\mathbf{x}_K(t) \in F_\omega^{\circ}\), the points \(\mathbf{x}_K(t), W_K(t), A_K^-(0)\) are consecutively strictly further in the direction of \(u_0\). So the point \(C_K^+(\omega)\) lies strictly left to \(l\), and \(A_K^-(0)\) lies strictly right to \(l\). By \Cref{lem:cap-ends-not-in-niche} we have \(C_K^+(\omega), A_K^-(0) \in S\), and this with \(S\) disjoint from \(l\) contradicts that that \(S\) is connected.
\end{proof}

\begin{theorem}

Let \(K \in \mathcal{K}_\omega^\mathrm{c}\) be arbitrary. Then \(K\) is the cap of a monotone sofa if and only if \(K\) contains \(\mathcal{N}(K)\).

\label{thm:niche-in-cap}
\end{theorem}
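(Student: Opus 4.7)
The forward direction is immediate from results already established. If $K = \mathcal{C}(S)$ is the cap of a monotone sofa $S$, then Theorem \ref{thm:monotone-sofa-structure} gives $S = K \setminus \mathcal{N}(K)$, and since $S$ is connected by the definition of moving sofa, the implication (4) $\Rightarrow$ (1) of Theorem \ref{thm:monotonization-connected-iff} yields $\mathcal{N}(K) \subseteq K$.

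For the reverse direction, I plan to build a concrete monotone sofa with cap $K$ by setting $S := K \setminus \mathcal{N}(K)$ and ultimately showing $S = \mathcal{I}(S)$. The hypothesis together with Theorem \ref{thm:monotonization-connected-iff} ((1) $\Leftrightarrow$ (2) and (1) $\Rightarrow$ (4)) gives $\delta K \subseteq S$ and $S$ connected; closedness of $S$ follows because $\mathcal{N}(K)$ is relatively open in $F_\omega$ (being $F_\omega$ intersected with the union of open cones $Q_K^-(t)$) and $K \subseteq F_\omega$. To manufacture the movement, I will check that $S \subseteq L_K(t)$ for every $t \in [0, \omega]$: the containment $S \subseteq K \subseteq Q_K^+(t)$ comes from the definition of cap, while $K \cap Q_K^-(t) \subseteq F_\omega \cap Q_K^-(t) = T_K(t) \subseteq \mathcal{N}(K)$ forces $S \cap Q_K^-(t) = \emptyset$, so $S$ avoids $Q_K^-(t)$. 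The inverse rigid motions $\Phi_t := f_{K, t}^{-1}$ from Definition \ref{def:tangent-hallway} then produce a continuous family with $\Phi_t(S) \subseteq L$, rotating clockwise by $\omega$ in total. The normalization $h_K(\omega) = h_K(\pi/2) = 1$ combined with $S \subseteq K \subseteq P_\omega$ and the explicit form of $f_{K, 0}, f_{K, \omega}$ places $\Phi_0(S) \subseteq H_L$ and $\Phi_\omega(S) \subseteq V_L$, exhibiting $S$ as a moving sofa with rotation angle $\omega$ in standard position.

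It remains to identify $\mathcal{C}(S)$ with $K$. Using Lemma \ref{lem:cap-same-support-function} and unwinding Definitions \ref{def:cap-sofa} and \ref{def:cap}, this reduces to verifying $h_S = h_K$ on $J_\omega$. The inequality $h_S \leq h_K$ comes from $S \subseteq K$, and the reverse from $e_K(t) \subseteq \delta K \subseteq S$ for each $t \in J_\omega$ (Definition \ref{def:upper-boundary-of-cap}). Hence $\mathcal{C}(S) = K$, and Theorem \ref{thm:monotonization-structure} applied to $S$ gives $\mathcal{I}(S) = K \setminus \mathcal{N}(K) = S$, so $S$ is itself a monotone sofa with cap $K$ by Theorem \ref{thm:monotonization-idempotent}.

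The main technical subtlety is the endpoint check of the movement, where the abstract inclusions $S \subseteq L_K(0), L_K(\omega)$ must be translated into the precise placements $\Phi_0(S) \subseteq H_L$ and $\Phi_\omega(S) \subseteq V_L$. This amounts to carefully tracking the translation components of $f_{K, 0}$ and $f_{K, \omega}$ and using the tangency of $K$ with the outer walls $a_K(0), c_K(\omega)$; once this bookkeeping is done, the remaining support-function argument is routine.
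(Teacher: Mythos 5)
Your proof is correct and follows essentially the same route as the paper: both directions use \Cref{thm:monotonization-connected-iff} to control the relationship between $S := K \setminus \mathcal{N}(K)$, its connectedness, and $\delta K$; the reverse direction then identifies $\mathcal{C}(S) = K$ via $\delta K \subseteq S \subseteq K$ and closes by applying \Cref{thm:monotonization-structure} and \Cref{thm:monotonization-idempotent}. The paper's own argument is terser on the movement ("see the movement of $L_K(t)$ in perspective of $S$") while you spell out the containments $S \subseteq Q_K^+(t)$ and $S \cap Q_K^-(t) = \emptyset$ and the rigid motions $\Phi_t = f_{K,t}^{-1}$ explicitly — a more careful bookkeeping of the same construction, not a different approach.
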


\begin{proof}
Assume first that \(K\) is the cap of a monotone sofa \(S\). We have \(S = K \setminus \mathcal{N}(K)\) by \Cref{thm:monotone-sofa-structure}. In particular, \(K \setminus \mathcal{N}(K)\) is a moving sofa so it is connected. Since (4) implies (1) in \Cref{thm:monotonization-connected-iff} we have \(\mathcal{N}(K) \subseteq K\).

Now assume that \(\mathcal{N}(K)\) contains \(K\). Then the set \(S = K \setminus \mathcal{N}(K)\) is connected by (4) of \Cref{thm:monotonization-connected-iff}. We have \(S = K \setminus \mathcal{N}(K) = P_\omega \cap \bigcap_{t \in [0, \omega]} L_K(t)\) as in \Cref{eqn:monotonization}. Now see the movement of \(L_K(t)\) containing \(S\) for \(t \in [0, \omega]\) in persepective of \(S\) to show that \(S\) is a moving sofa (\(S \subseteq P_\omega\) and \(h_K(\omega) = h_K(\pi/2) = 1\) implies that \(S\) moves from \(H_L\) to \(V_L\)). By (2) of \Cref{thm:monotonization-connected-iff} we have \(\delta K \subseteq S \subseteq K\). So \(h_S\) and \(h_K\) agree on \(J_\omega\), and we have \(\mathcal{C}(S) = K\). Now \(S = K \setminus \mathcal{N}(K) = \mathcal{I}(S)\) by \Cref{thm:monotone-sofa-structure} and \(S\) is monotone by \Cref{thm:monotonization-idempotent}.
\end{proof}

\begin{remark}

Not all cap \(K\) contains its niche \(\mathcal{N}(K)\). For example, take a long cap \(K = [0, 100] \times [0, 1]\) with rotation angle \(\omega = \pi/2\). Then \(K\) is very wide and the inner quadrant \(Q_K^-(\pi/4)\) of \(L_K(\pi/4)\) is outside \(K\), so that \(\mathcal{N}(K) \not\subseteq K\). By \Cref{thm:niche-in-cap}, this \(K\) can never be the cap of a monotone sofa with rotation angle \(\pi/2\).

\label{rem:niche-not-in-cap}
\end{remark}

From now on, we will \emph{always} understand a monotone sofa \(S\) with rotation angle \(\omega\) by its cap \(K := \mathcal{C}(S)\) in \(\mathcal{K}_\omega^\mathrm{c}\). By \Cref{thm:monotone-sofa-structure}, the monotone sofa \(S = K \setminus \mathcal{N}(K)\) can be recovered from its cap \(K\). In other words, the collection of all monotone sofas \(S\) with the rotation angle \(\omega \in (0, \pi/2]\) embeds into \(\mathcal{K}_\omega^\mathrm{c}\) by taking the cap \(S \mapsto \mathcal{C}(S)\). That is, the space of caps \(\mathcal{K}_\omega^\mathrm{c}\) extends the space of all montone sofas with rotation angle \(\omega\).

Define the \emph{sofa area functional} \(\mathcal{A}_\omega\) on \(\mathcal{K}_\omega^\mathrm{c}\) extending the area \(|S|\) of a monotone sofa \(S\).

\begin{definition}

For any angle \(\omega \in (0, \pi/2]\), define the \emph{sofa area functional} \(\mathcal{A}_\omega : \mathcal{K}_\omega^\mathrm{c} \to \mathbb{R}\) on the space of caps \(\mathcal{K}_\omega^\mathrm{c}\) as \(\mathcal{A}_\omega(K) = |K| - |\mathcal{N}(K)|\).

\label{def:sofa-area-functional}
\end{definition}

\begin{theorem}

For any cap \(K := \mathcal{C}(S)\) of a monotone sofa \(S\) with rotation angle \(\omega\), we have \(\mathcal{A}_\omega(K) = |S|\).

\label{thm:sofa-area-functional}
\end{theorem}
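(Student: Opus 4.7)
The statement is essentially a direct consequence of the two structural theorems already established. My plan is to unpack the definitions and combine Theorem \ref{thm:monotone-sofa-structure} with Theorem \ref{thm:niche-in-cap}. By the definition of $\mathcal{A}_\omega$ in Definition \ref{def:sofa-area-functional}, proving $\mathcal{A}_\omega(K) = |S|$ amounts to showing $|K \setminus \mathcal{N}(K)| = |K| - |\mathcal{N}(K)|$, which requires only that $\mathcal{N}(K)$ is a Borel subset of $K$.

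First I would invoke Theorem \ref{thm:monotone-sofa-structure}, which gives the set-theoretic identity $S = K \setminus \mathcal{N}(K)$ for any monotone sofa $S$ with cap $K := \mathcal{C}(S)$. Next, since $S$ is a monotone sofa, Theorem \ref{thm:niche-in-cap} applies in the backward direction: the cap $K$ of a monotone sofa contains its niche, so $\mathcal{N}(K) \subseteq K$. Both $K$ and $\mathcal{N}(K)$ are Borel-measurable (the cap is a closed convex body, and $\mathcal{N}(K)$ is the intersection of a closed fan with a countable union of open convex cones — in fact, the union over $t \in (0,\omega)$ can be reduced to a countable dense subset of angles since each $Q_K^-(t)$ varies continuously in $t$, or one can simply note that $\mathcal{N}(K)$ is open in the subspace topology of $F_\omega$ as observed in the proof of Theorem \ref{thm:monotonization-connected-iff}). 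Hence additivity of the Borel measure $|\cdot|$ yields
\[
|S| = |K \setminus \mathcal{N}(K)| = |K| - |\mathcal{N}(K)| = \mathcal{A}_\omega(K).
\]

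There is no real obstacle here; the only point requiring a sentence of care is the measurability of $\mathcal{N}(K)$, which follows from the observation made in the course of Theorem \ref{thm:monotonization-connected-iff} that $\mathcal{N}(K)$ is open in $F_\omega$ (and therefore Borel in $\mathbb{R}^2$). The rest is bookkeeping: the monotone sofa $S$ is by construction a nonempty closed subset of $K$, so finiteness of $|S|$ and $|K|$ is automatic and the subtraction is justified.
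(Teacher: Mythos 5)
Your proof is correct and takes the same route as the paper, which proves the result as an immediate consequence of \Cref{thm:monotone-sofa-structure} and \Cref{thm:niche-in-cap}. You additionally spell out the measurability of $\mathcal{N}(K)$ (via its openness in $F_\omega$), a point the paper leaves implicit; this is a reasonable extra sentence of care but not a different approach.
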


\begin{proof}
An immediate consequence of \Cref{thm:monotone-sofa-structure} and \Cref{thm:niche-in-cap}.
\end{proof}

We will change the moving sofa problem to the maximization of the sofa area functional \(\mathcal{A}_\omega\) on cap space \(\mathcal{K}_\omega^\mathrm{c}\). But recall that as in \Cref{rem:niche-not-in-cap}, not all cap \(K \in \mathcal{K}_\omega^\mathrm{c}\) is the cap \(\mathcal{C}(S)\) of a monotone sofa \(S\). So it is not obvious yet that the maximizer \(K_\omega\) of \(\mathcal{A}_\omega\) will correspond to a monotone sofa \(S_\omega\) of maximum area with a fixed rotation angle \(\omega\). This will be established later in \Cref{thm:limiting-maximum-sofa}.

\chapter{Balanced Maximum Sofas and Caps}
\label{sec:balanced-maximum-sofas-and-caps}
This chapter follows the overview in \Cref{sec:_balanced-maximum-sofas-and-caps} and shows the existence of a \emph{balanced maximum sofa}, a monotone sofa of the maximum area that can be approximated sufficiently close by balanced polygons.

\begin{itemize}
\tightlist
\item
  \Cref{sec:simple-nef-polygon} rigorously define the notion of \emph{simple Nef polygons} and prove \Cref{thm:simple-nef-polygon} that measures the area difference in balancing moves on a simple Nef polygon.
\item
  \Cref{sec:polygon-cap-and-niche} builds the notion of \emph{polygon cap} \(K\) with finite angle set \(\Theta\) and \emph{polygon niche} \(\mathcal{N}_\Theta(K)\).
\item
  \Cref{sec:extensions-of-polygon-cap-space} extends the space \(\mathcal{K}_\Theta^\mathrm{c}\) of all polygon caps with angle set \(\Theta\) to a larger space \(\mathcal{H}_\Theta\) of functions \(h : \Theta^\mathrm{\diamond} \to \mathbb{R}\).
\item
  \Cref{sec:maximum-polygon-cap} defines the \emph{polyline} \(\mathbf{p}_K\) of polygon cap \(K\) that represents the bottom sides of polygon sofa \(S_\Theta := K \setminus \mathcal{N}_\Theta(K)\). \Cref{thm:balanced-polygon-sofa} shows that for a \emph{maximum polygon cap} \(K\) that maximizes \(|K| - |\mathcal{N}_\Theta(K)|\), the upper sides of \(K\) should balance with the sides of polyline \(\mathbf{p}_K\), following the outline in \Cref{sec:balancedness-of-maximum-polygon-cap}. \Cref{thm:balanced-polygon-sofa-connected} shows that \(\mathcal{N}_\Theta(K) \subset K\) for a maximum polygon cap \(K\).
\item
  \Cref{sec:balanced-maximum-sofa} defines the \emph{balanced maximum cap} \(K_\omega\) as the limit of maximum polygon caps \(K = K_\Theta\) with angle set \(\Theta\) and rotation angle \(\omega\), as \(\Theta\) gets denser in \([0, \omega]\). The set \(S_\omega := K_\omega \setminus \mathcal{N}(K_\omega)\) is then a \emph{balanced maximum sofa} which attains the maximum area among all monotone sofas of rotation angle \(\omega \in (0, \pi/2]\).
\end{itemize}
\section{Simple Nef Polygon}
\label{sec:simple-nef-polygon}
A \emph{Nef polygon} is a subset of \(\mathbb{R}^2\) that can be obtained by applying a finite number of boolean operations (e.g.~complement \(\mathbb{R}^2 \setminus X\) of \(X\) or intersection \(X_1 \cap X_2\) of \(X_1\) and \(X_2\)) to open or closed half-planes \autocite{bieriNefPolyhedra1995}. For our purpose, we introduce the notion of \emph{simple Nef polygon}. The polygon cap \(K\) and niche \(\mathcal{N}_\Theta(K)\) that will be defined in the next \Cref{sec:maximum-polygon-cap} are simple Nef polygons. We will establish \Cref{thm:simple-nef-polygon} to balance the sides of \(K\) and \(\mathcal{N}_\Theta(K)\).

\begin{definition}

Let \(\textsf{true}\) and \(\textsf{false}\) be the \emph{boolean values} denoting the truth value of a predicate (e.g., \(1+1=2\) is \(\textsf{true}\), but \(1 + 2 = 4\) is \(\textsf{false}\)). A (\(n\)-ary) \emph{boolean function} \(\mathcal{E}\) is a function from \(\left\{ \textsf{true}, \textsf{false} \right\}^n\) to \(\left\{ \textsf{true}, \textsf{false} \right\}\).

\label{def:boolean-function}
\end{definition}

\begin{definition}

For two boolean values \(P\) and \(Q\), write \(P \Rightarrow Q\) if and only if either \(P\) is \(\textsf{false}\) or both \(P\) and \(Q\) are \(\textsf{true}\). An \(n\)-ary boolean function \(\mathcal{E}\) is \emph{monotone} if for any boolean values \(P_1, P_2, \dots, P_n, Q_1, Q_2, \dots, Q_n\), with \(P_i \Rightarrow Q_i\) for all \(1 \leq i \leq n\), we have
\[
\mathcal{E}(P_1, \dots, P_n) \Rightarrow \mathcal{E}(Q_1, \dots, Q_n).
\]

\label{def:monotone-boolean-function}
\end{definition}

\begin{proposition}

Any \(n\)-ary boolean function \(\mathcal{E}(P_1, \dots, P_n)\) is monotone if it is obtained from variables \(P_1, \dots, P_n\) by applying logical conjunctions \(\land\) (AND) and disjunctions \(\lor\) (OR).

\label{pro:monotone-boolean-function}
\end{proposition}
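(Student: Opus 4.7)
The plan is to proceed by structural induction on the way the boolean function $\mathcal{E}$ is built up from the variables $P_1, \dots, P_n$ using $\land$ and $\lor$. That is, the class of boolean functions obtainable this way is the smallest class containing the projection functions $(P_1, \dots, P_n) \mapsto P_i$ and closed under the two binary operations of conjunction and disjunction; monotonicity will be preserved through this construction.

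For the base case, the projection $\mathcal{E}(P_1, \dots, P_n) = P_i$ is monotone directly from the hypothesis $P_i \Rightarrow Q_i$ in \Cref{def:monotone-boolean-function}. For the inductive step, suppose $\mathcal{E}_1$ and $\mathcal{E}_2$ are $n$-ary monotone boolean functions, and fix boolean tuples $(P_1, \dots, P_n)$ and $(Q_1, \dots, Q_n)$ with $P_i \Rightarrow Q_i$ for every $i$. By the inductive hypothesis, $\mathcal{E}_j(P_1, \dots, P_n) \Rightarrow \mathcal{E}_j(Q_1, \dots, Q_n)$ for $j = 1, 2$. For conjunction, if $(\mathcal{E}_1 \land \mathcal{E}_2)(P_1, \dots, P_n) = \textsf{true}$ then both $\mathcal{E}_j(P_1, \dots, P_n) = \textsf{true}$, hence both $\mathcal{E}_j(Q_1, \dots, Q_n) = \textsf{true}$, hence $(\mathcal{E}_1 \land \mathcal{E}_2)(Q_1, \dots, Q_n) = \textsf{true}$. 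For disjunction, if $(\mathcal{E}_1 \lor \mathcal{E}_2)(P_1, \dots, P_n) = \textsf{true}$ then some $\mathcal{E}_j(P_1, \dots, P_n) = \textsf{true}$, so the same $\mathcal{E}_j(Q_1, \dots, Q_n) = \textsf{true}$, hence the disjunction on the $Q$-tuple is \textsf{true} as well.

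There is essentially no obstacle here: the only thing to be a bit careful about is stating the induction cleanly on the syntactic formation of $\mathcal{E}$ as a finite expression tree with leaves among $P_1, \dots, P_n$ and internal nodes labelled by $\land$ or $\lor$, so that every such $\mathcal{E}$ is covered in finitely many steps. The conclusion then follows by closing the induction.
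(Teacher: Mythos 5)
Your structural induction is exactly the argument the paper gives, just spelled out in full: the base case (projections are monotone) and the inductive step (conjunction and disjunction of monotone functions are monotone) are the two sentences of the paper's proof. Correct, and essentially identical in approach.
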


\begin{proof}
Check that each \(P_i\) is itself monotone as an \(n\)-ary boolean function on the variables \(P_1, \dots, P_n\). Then observe that any logical conjunction or disjunction of monotone functions are monotone.
\end{proof}

\begin{definition}

For any \(n\)-ary boolean function \(\mathcal{E}\) and \(n\) closed or open half-planes \(H_1, \dots, H_n\) of \(\mathbb{R}^2\), define the set
\[
\mathcal{E}(H_1, \dots, H_n) := \left\{ p \in \mathbb{R}^2 : \mathcal{E}(p \in H_1, \dots,p \in H_n) \text{ is } \textsf{true} \right\}.
\]
Call any such set \(X := \mathcal{E}(H_1, \dots, H_n)\) a \emph{Nef polygon}.

\label{def:nef-polygon}
\end{definition}

\begin{definition}

Call \(X\) a \emph{simple Nef polygon} with \emph{defining half-planes} \(H_1, H_2, \dots, H_n\) if \(X = \mathcal{E}(H_1, \dots, H_n)\) for a monotone boolean function \(\mathcal{E}\) and the half-planes \(H_1, \dots, H_n\) have different boundaries \(l_1, \dots, l_n\).

\label{def:simple-nef-polygon}
\end{definition}

A line \(l\) in \(\mathbb{R}^2\) is a Nef polygon since \(l\) is the intersection of two closed half-planes \(H_l^+\) and \(H_l^-\) with boundary \(l\). However, \(l\) is not simple because the half-planes \(H_l^+\) and \(H_l^-\) shares the same boundary. The idea of \Cref{def:simple-nef-polygon} is that a point \(p \in \mathbb{R}^2\) is more likely to be contained in \(X\) if \(p\) is contained in more defining half-planes \(H_1, \dots, H_n\). Recall that \(\mathcal{H}^1\) is the Hausdorff measure of dimension one on \(\mathbb{R}^2\) measuring the length of finite segments.

\begin{definition}

For real-valued expressions \(f\) and \(g\) that may depend on other parameters, write \(f = O(g)\) if and only if there is an absolute constant \(C > 0\) that does \emph{not} depend on any parameters such that \(|f| \leq C g\).

If the inequality \(|f| \leq C g\) holds for a constant \(C > 0\) that depends \emph{only} on certain parameters (e.g.~\(a\) and \(b\)), write these as the subscripts of \(O\) (e.g.~\(f = O_{a, b}(g)\)).

\label{def:o-notation-subscript}
\end{definition}

\begin{theorem}

Let \(X = \mathcal{E}(H_1, \dots, H_n)\) be a simple Nef polygon with monotone boolean function \(\mathcal{E}\) and defining half-planes \(H_i = H_-(t_i, h_i)\) or \(H_-^{\circ}(t_i, h_i)\) of different boundaries \(l_i = l(t_i, h_i)\) for \(1 \leq i \leq n\). Fix an arbitrary index \(1 \leq i \leq n\). There exists a constant \(\epsilon = \epsilon(X, i) > 0\) such that the following holds.

Let \(\delta\) be any real value with \(|\delta| \leq \epsilon\). Define
\[
X'_\delta = \mathcal{E}(H_1, \dots, H_{i-1}, H_{\delta}', H_{i+1}, \dots, H_n)
\]
where \(H_{\delta}' := H_-(t_i, h_i + \delta)\) replaces \(H_i = H_-(t_i, h_i)\) or, \(H_{\delta}' := H_-^\circ(t_i, h_i + \delta)\) replaces \(H_i = H_-^\circ(t_i, h_i)\). Then we have
\[
\left| X_\delta' \right| = |X| + \mathcal{H}^1(\partial X \cap l_i) \cdot \delta + O_{X, i}(\delta^2).
\]

\label{thm:simple-nef-polygon}
\end{theorem}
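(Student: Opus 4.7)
The plan is to show that the symmetric difference $X \triangle X'_\delta$ lies in a thin strip of width $|\delta|$ along $l_i$, and that its area equals $|\delta| \cdot \mathcal{H}^1(\partial X \cap l_i) + O_{X,i}(\delta^2)$, with the correct sign recovering the stated formula. The key observation is that away from the finitely many crossings $l_i \cap l_j$ with $j \neq i$, membership of a point in $X$ or in $X'_\delta$ is determined purely by whether $\mathcal{E}$ \emph{flips} when its $i$-th input toggles.

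First I would reduce to $\delta > 0$; the case $\delta < 0$ would follow by applying the same argument to $X \setminus X'_\delta$ on the opposite side of $l_i$. Since $\mathcal{E}$ is monotone and $H'_\delta \supseteq H_i$, monotonicity gives $X \subseteq X'_\delta$, so $|X'_\delta| - |X| = |X'_\delta \setminus X|$, a set contained in the open strip $S_\delta$ of width $\delta$ between $l_i$ and $l(t_i, h_i+\delta)$. Let $F := l_i \cap \bigcup_{j \neq i} l_j$; this is a finite set of at most $n-1$ points, since each $l_j$ is distinct from $l_i$. For any $p_0 \in l_i \setminus F$ the indicators $P_j := (p_0 \in H_j)$ for $j \neq i$ are locally constant on $l_i$, and remain unchanged along the normal segment of length $\delta$ from $p_0$ into $S_\delta$ provided this segment meets no other $l_j$. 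I would then define the \emph{flip set}
\[
E := \{\, p_0 \in l_i \setminus F : \mathcal{E}(P_1, \ldots, \textsf{true}, \ldots, P_n) \neq \mathcal{E}(P_1, \ldots, \textsf{false}, \ldots, P_n) \,\}.
\]
Because $l_i \setminus F$ is a finite disjoint union of intervals on which all $P_j$ are constant, $E$ is a union of finitely many of these intervals. A case analysis at $p_0 \in l_i \setminus F$ (the two sides of $l_i$ are both in $X$, both out of $X$, or split) then shows $p_0 \in \partial X$ if and only if $p_0 \in E$, giving $\mathcal{H}^1(\partial X \cap l_i) = \mathcal{H}^1(E)$.

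Finally, I would estimate the area. Choose $\epsilon > 0$ so small that for each $q_k \in F$, the open disk of radius $2\epsilon$ around $q_k$ meets no $l_j$ other than those passing through $q_k$; this is possible since $F$ is finite and the $l_j$ are distinct. For $0 < \delta \leq \epsilon$, split $l_i$ into the $|F|$ disks $D_k$ of radius $\delta$ centered at the $q_k$ and their complement. Over each maximal subinterval $I$ of $E$ lying outside $\bigcup_k D_k$, the slab in $S_\delta$ above $I$ lies entirely inside $X'_\delta \setminus X$ by the analysis above, contributing area exactly $\delta \cdot \mathcal{H}^1(I)$; over $(l_i \setminus E)$ outside $\bigcup_k D_k$ nothing is contributed; and over $\bigcup_k D_k$ the total contribution is bounded by the area $|F| \cdot 2\delta \cdot \delta = O_{X,i}(\delta^2)$. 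Since $\mathcal{H}^1(E \cap \bigcup_k D_k) \leq 2|F|\delta$, combining yields $|X'_\delta \setminus X| = \delta \cdot \mathcal{H}^1(E) + O_{X,i}(\delta^2) = \delta \cdot \mathcal{H}^1(\partial X \cap l_i) + O_{X,i}(\delta^2)$ as required. The main obstacle is the bookkeeping near the crossings $q_k \in F$, where the local structure of $X$ can be complicated by several lines meeting simultaneously, but since each crossing affects only a region of diameter $O(\delta)$ inside a strip of width $\delta$, the error is automatically of order $\delta^2$.
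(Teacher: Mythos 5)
Your approach takes a genuinely different route from the paper's. The paper partitions the plane into the cells cut out by the lines $l_j$ for $j \neq i$, notes that on each cell the restriction of $\mathcal{E}$ to the $i$-th input is either constant or the identity, and applies a Cavalieri argument cell by cell; you instead work directly in the strip $S_\delta$ and slab it over the flip set $E$. The reduction to $\delta > 0$ by monotonicity and the identification of $E$ with $\partial X \cap l_i$ modulo the finite set $F$ are both correct.

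There is, however, a genuine gap in the area estimate. You assert that if $p_0 \in l_i$ lies outside the disks $D_k$ of radius $\delta$ about the crossings $q_k$, then the normal segment $\left\{ p_0 + s\,u_{t_i} : 0 < s \leq \delta \right\}$ meets no other $l_j$. This is false when some non-parallel $l_j$ makes a small angle $\theta_j$ with $l_i$: writing $q := l_i \cap l_j$, the segment meets $l_j$ exactly when the distance from $p_0$ to $q$ is at most $\delta\,|\cot\theta_j|$, which can far exceed $\delta$. Hence the slab over a subinterval of $E$ lying outside $\bigcup_k D_k$ need not lie in $X'_\delta \setminus X$, and your bound $|F| \cdot 2\delta \cdot \delta$ for the disk contribution is too small. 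The fix is cheap: take the $D_k$ with radius $C_{X,i}\,\delta$ where $C_{X,i} := 1 + \max_j |\cot\theta_j|$ over the non-parallel $l_j$. This constant depends only on $X$ and $i$, so the disk contribution is still $O_{X,i}(\delta^2)$ and the rest of your argument goes through. You should also require $\epsilon$ to be smaller than the distance from $l_i$ to every parallel $l_j$; your $2\epsilon$-disk condition at the points of $F$ does not guarantee this when $F$ is empty.
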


\begin{proof}
The lines \(l_j\) for all index \(j \neq i\) divide the plane into open polygons \(R_1, R_2, \dots, R_N\) with \(N \leq 2^{n-1}\). Fix an arbitrary region \(R_k\) with \(1 \leq k \leq N\). We will show that there exists some \(\epsilon_{k} = \epsilon_{k}(X, i, k) > 0\) such that
\begin{equation}
\label{eqn:rk}
\left| X'_\delta \cap R_k \right| = |X \cap R_k| + \mathcal{H}^1( \partial X \cap l_i \cap R_k) \cdot \delta + O(\delta^2)
\end{equation}
for any \(\delta\) with \(|\delta| \leq \epsilon_k\). Once \Cref{eqn:rk} is shown, we can take \(\epsilon > 0\) to be the minimum of \(\epsilon_{k}\) over all \(k\) and sum \Cref{eqn:rk} over all \(k\) to complete the proof. Note that the boundary of any region \(R_k\) and the line \(l_i\) intersect in a finite number of points, so adding \(\mathcal{H}^1( \partial X \cap l_i \cap R_k)\) over all \(k\) gives \(\mathcal{H}^1(\partial X \cap l_i)\).

Take an arbitrary point \(p \in R_k\) in an open region \(R_k\). By the definition of \(R_k\), for any index \(j \neq i\), the predicate \(p \in H_j\) is a constant \(\textsf{true}\) or \(\textsf{false}\) no matter which \(p \in R_k\) we take. Define the restriction
\[
\mathcal{F}(Q) :=  \mathcal{E}(p \in H_1, \dots, p \in H_{i-1}, Q, p \in H_{i+1}, \dots, p \in H_n)
\]
of \(\mathcal{E}\) to a single boolean variable \(Q\). Given that \(p \in R_k\), the predicate \(\mathcal{F}(p \in H_i)\) is equivalent to \(p \in X\) and the predicate \(\mathcal{F}(p \in H_{i, \delta}')\) is equivalent to \(p \in X_{\delta}'\). Since \(\mathcal{E}\) is monotone, \(\mathcal{F}\) is also monotone and \(\mathcal{F}(Q)\) cannot be the negation of \(Q\). We now have three cases.

Case 1 (resp. Case 2): \(\mathcal{F}(Q)\) is the constant \(\textsf{false}\) (resp. \(\textsf{true}\)). In this case, for any \(p \in R_k\) the predicates \(\mathcal{F}(p \in H_i)\) and \(\mathcal{F}(p \in H_i')\) are \(\textsf{false}\) (resp. \(\textsf{true}\)) so \(R_k\) is disjoint from (resp. contained in) both \(X\) and \(X'_\delta\). So \(R_k\) is disjoint from \(\partial X\) and \Cref{eqn:rk} holds.

Case 3: \(\mathcal{F}(Q)\) is \(Q\). In this case, for any \(p \in R_k\) the predicate \(p \in H_i\) (resp. \(p \in H_{i, \delta}'\)) is equivalent to \(p \in X\) (resp. \(p \in X_\delta'\)). Consequently, we have \(X \cap R_k = H_i \cap R_k\) and \(X_\delta' \cap R_k = H'_\delta \cap R_k\). In particular, \(\partial X \cap R_k = l_i \cap R_k\). Now \Cref{eqn:rk} becomes
\begin{equation}
\label{eqn:rk-half-plane}
\left| H_\delta' \cap R_k \right| = |H_i \cap R_k| + \mathcal{H}^1( l_i \cap R_k) \cdot \delta + O(\delta^2).
\end{equation}
Define \(g(x) := |H_-(t, x) \cap R_k|\) so that \(|H_\delta' \cap R_k| = g(h_i + \delta)\) and \(|H_i \cap R_k| = g(h_i)\). Let \(f(x) := \mathcal{H}^1(l(t_i, x) \cap R_k)\) then by Cavalieri’s principle we have \(g(x) = \int_{-\infty}^x f(u)\,du\). Note that \(R_k\) is a convex polygon with edges different from \(l_i = l(t_i, h_i)\). So \(f(x)\) is Lipschitz near \(x = h_i\). Now by approximating \(g(x)\) near \(x = h_i\), we get the linear estimate \(g(h + \delta) = g(h) + f(h) \delta + O(\delta^2)\) which is exactly \Cref{eqn:rk-half-plane}. In all three cases, we establish \Cref{eqn:rk} and complete the proof.
\end{proof}

\section{Polygon Cap and Niche}
\label{sec:polygon-cap-and-niche}
Following the outline in \Cref{sec:_balancing-argument-of-gerver}, we define a polygon cap \(K\) and its polygon niche \(\mathcal{N}_\Theta(K)\).

\begin{definition}

Define an \emph{angle set} \(\Theta\) with \emph{rotation angle} \(\omega \in (0, \pi/2]\) as the pair \((\omega, X)\) of \(\omega\) and a nonempty finite subset \(X\) of \((0, \omega)\).

\label{def:angle-set}
\end{definition}

\begin{remark}

The angle set \(\Theta\) is essentially a finite subset \(X\) of \((0, \omega)\) that does not forget the value of \(\omega\). With an abuse of notation, we will treat the angle set \(\Theta\) like the subset \(X \subseteq (0, \omega)\) by, for example, saying \(t \in \Theta\) instead of saying \(t \in X\). Unless specified otherwise, the value \(\omega\) will always denote the rotation angle of \(\Theta\).

\label{rem:angle-set}
\end{remark}

\begin{definition}

For any angle set \(\Theta\) with rotation angle \(\omega\), define \(\Theta^\diamond\) as the set \(\Theta \cup (\Theta + \pi/2) \cup \left\{ \omega, \pi/2 \right\}\).

\label{def:angle-domain}
\end{definition}

\begin{definition}

Define the space \(\mathcal{K}_\Theta^\mathrm{c}\) of \emph{polygon caps with angle set} \(\Theta\) as the set of caps \(K \in \mathcal{K}_\omega^\mathrm{c}\) which is the intersection of closed half-planes with normal angles in \(\Theta^{\diamond} \cup \left\{\omega + \pi, 3\pi/2 \right\}\).

\label{def:angled-cap-space}
\end{definition}

Any cap \(K \in \mathcal{K}_\omega^\mathrm{c}\) can be approximated by the polygon cap \(\mathcal{C}_\Theta(K) \in \mathcal{K}_\Theta^\mathrm{c}\) with angle set \(\Theta\) circumscribing \(K\) with the edges of normal angles in \(\Theta^{\diamond} \cup \left\{\omega + \pi, 3\pi/2 \right\}\). The denser \(\Theta\) is in \((0, \omega)\), the better the approximation \(\mathcal{C}_\Theta(K)\) of \(K\).

\begin{definition}

For any \(K \in \mathcal{K}_\omega^\mathrm{c}\) and angle set \(\Theta\) of rotation angle \(\omega\), define
\[
\mathcal{C}_\Theta(K) = P_\omega \cap \bigcap_{t \in \Theta} Q_K^+(t).
\]

\label{def:angled-cap}
\end{definition}

\begin{proposition}

For any cap \(K \in \mathcal{K}_\omega^\mathrm{c}\), the set \(\mathcal{C}_\Theta(K) \in \mathcal{K}_\Theta^\mathrm{c}\) contains \(K\) as a subset. With this, call \(\mathcal{C}_\Theta(K)\) the \emph{polygon cap} with \emph{angle set} \(\Theta\) \emph{approximating} \(K\). The map \(\mathcal{C}_\Theta : \mathcal{K}_\omega^\mathrm{c} \to \mathcal{K}_\Theta^\mathrm{c}\) is a surjective map fixing the elements of \(\mathcal{K}_\Theta^\mathrm{c} \subset \mathcal{K}_\omega^\mathrm{c}\).

\label{pro:angled-cap}
\end{proposition}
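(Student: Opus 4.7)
The proposition packages three claims: (i) $K\subseteq \mathcal{C}_\Theta(K)$ and $\mathcal{C}_\Theta(K)\in\mathcal{K}_\Theta^\mathrm{c}$, (ii) surjectivity of $\mathcal{C}_\Theta$, and (iii) the fact that $\mathcal{C}_\Theta$ fixes every $K\in\mathcal{K}_\Theta^\mathrm{c}$. My plan is to prove (i), then (iii), and finally note that (ii) is an immediate consequence of (iii).

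For (i), the containment $K\subseteq \mathcal{C}_\Theta(K)$ is a direct unpacking of definitions. Since $K\in\mathcal{K}_\omega^\mathrm{c}$ satisfies $h_K(\omega)=h_K(\pi/2)=1$ and $h_K(\omega+\pi)=h_K(3\pi/2)=0$, we obtain $K\subseteq H\cap V_\omega=P_\omega$. For each $t\in\Theta$, both $H_K(t)$ and $H_K(t+\pi/2)$ are supporting half-planes of $K$, so $K\subseteq H_K(t)\cap H_K(t+\pi/2)=Q_K^+(t)$ by \Cref{pro:rotating-hallway-parts}, yielding $K\subseteq\mathcal{C}_\Theta(K)$. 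To check $\mathcal{C}_\Theta(K)\in\mathcal{K}_\Theta^\mathrm{c}$, observe that $P_\omega$ is the intersection of the four supporting half-planes of $K$ with normal angles in $\{\omega,\pi/2,\omega+\pi,3\pi/2\}$ (their support values match those of $K$ there), and each $Q_K^+(t)$ is an intersection of two closed half-planes with normal angles in $\Theta\cup(\Theta+\pi/2)$. Thus $\mathcal{C}_\Theta(K)$ is an intersection of closed half-planes with normal angles in $\Theta^\diamond\cup\{\omega+\pi,3\pi/2\}$. Sandwiching $K\subseteq\mathcal{C}_\Theta(K)\subseteq P_\omega$ forces the correct support values $h_{\mathcal{C}_\Theta(K)}(s)\in\{0,1\}$ at the four distinguished angles, so $\mathcal{C}_\Theta(K)$ satisfies \Cref{def:cap} with rotation angle $\omega$, placing it in $\mathcal{K}_\Theta^\mathrm{c}$.

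For (iii), fix $K\in\mathcal{K}_\Theta^\mathrm{c}$. By definition, $K$ is the intersection of some closed half-planes whose normal angles lie in $\Theta^\diamond\cup\{\omega+\pi,3\pi/2\}$; since each such half-plane contains $K$ and its boundary has normal angle $s$, it must contain the supporting half-plane $H_K(s)$, so we may replace every defining half-plane by the supporting half-plane with the same normal angle without changing the intersection. Hence $K=\bigcap_{s\in\Theta^\diamond\cup\{\omega+\pi,3\pi/2\}}H_K(s)$. On the other hand, using the cap values $h_K(\omega)=h_K(\pi/2)=1$ and $h_K(\omega+\pi)=h_K(3\pi/2)=0$, the parallelogram $P_\omega$ equals $H_K(\omega)\cap H_K(\pi/2)\cap H_K(\omega+\pi)\cap H_K(3\pi/2)$, and $Q_K^+(t)=H_K(t)\cap H_K(t+\pi/2)$ by \Cref{pro:rotating-hallway-parts}. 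Substituting into \Cref{def:angled-cap} gives exactly the same intersection, so $\mathcal{C}_\Theta(K)=K$. Claim (ii) now follows immediately: for any $K'\in\mathcal{K}_\Theta^\mathrm{c}\subseteq\mathcal{K}_\omega^\mathrm{c}$, the element $K'$ itself preimages $K'$ under $\mathcal{C}_\Theta$.

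There is no real obstacle here; the proposition is essentially a matter of correctly identifying defining half-planes with supporting half-planes. The only point requiring mild care is that \Cref{def:angled-cap-space} does not specify the values at which the half-planes defining $K\in\mathcal{K}_\Theta^\mathrm{c}$ cut, so one must argue (as above) that any such presentation of $K$ can be replaced by the supporting one before matching it with the formula \Cref{def:angled-cap}.
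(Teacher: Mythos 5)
Your proof is correct and follows essentially the same approach as the paper's: unwind the definitions to get $K\subseteq\mathcal{C}_\Theta(K)\in\mathcal{K}_\Theta^\mathrm{c}$, then show $\mathcal{C}_\Theta$ restricted to $\mathcal{K}_\Theta^\mathrm{c}$ is the identity by rewriting a polygon cap as the intersection of its supporting half-planes, from which surjectivity is immediate. You spell out more carefully than the paper why a defining intersection $\bigcap_i H_i$ can be replaced by $\bigcap_{s\in\Pi}H_K(s)$ (each defining half-plane contains the supporting one with the same normal angle), which is the one step the paper's proof states without justification.
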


\begin{proof}
That \(K \subseteq \mathcal{C}_\Theta(K)\) comes from \(K \subseteq P_\omega\) and \(K \subseteq Q_K^+(t) = H_K(t) \cap H_K(t + \pi/2)\). We have \(\mathcal{C}_\Theta(K) \in \mathcal{K}_\omega^\mathrm{c}\) as \(K \subseteq \mathcal{C}_\Theta(K) \subseteq P_\omega\). We have \(\mathcal{C}_\Theta(K) \in \mathcal{K}_\Theta^\mathrm{c}\) as the formula in \Cref{def:angled-cap} is the intersection of closed half-planes with normal angles in \(\Theta^{\diamond} \cup \left\{\omega + \pi, 3\pi/2 \right\}\).

Now it remains to show that for any \(K \in \mathcal{K}_\Theta^\mathrm{c}\), we have \(\mathcal{C}_\Theta(K) = K\). Since \(K \in \mathcal{K}_\Theta^\mathrm{c}\) is an intersection of closed half-planes with normal angles in \(\Pi := \Theta^{\diamond} \cup \left\{\omega + \pi, 3\pi/2 \right\}\), we have \(K = \bigcap_{t \in \Pi} H_K(t)\). As \(K \in \mathcal{K}_\Theta^\mathrm{c} \subset \mathcal{K}_\omega^\mathrm{c}\), we have \(h_K(\omega) = h_K(\pi/2) = 1\) and \(h_K(\omega + \pi) = h_K(3\pi/2) = 0\) so
\[
K = \bigcap_{t \in \Pi} H_K(t) = P_\omega \cap \bigcap_{t \in \Theta} Q_K^+(t) = \mathcal{C}_\Theta(K)
\]
as desired.
\end{proof}

\begin{definition}

For every cap \(K \in \mathcal{K}_\omega^\mathrm{c}\) and finite nonempty \(\Theta \subset (0, \omega)\), define its \emph{polygon niche}
\[
\mathcal{N}_\Theta(K) = P_\omega \cap \bigcup_{t \in \Theta} Q_K^-(t)
\]
with angle set \(\Theta\).

\label{def:angled-niche}
\end{definition}

\begin{proposition}

For any \(K \in \mathcal{K}_\omega^\mathrm{c}\), we have \(\mathcal{N}_\Theta(K) = \mathcal{N}_\Theta(\mathcal{C}_\Theta(K)) \subseteq \mathcal{N}(K)\).

\label{pro:angled-niche-polygon-cap}
\end{proposition}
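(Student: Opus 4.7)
The plan is to prove the two assertions separately. Both reduce to comparing the defining ingredients of the niche formulas, namely the inner quadrants $Q_K^-(t)$ and the ambient sets $P_\omega$ or $F_\omega$. By \Cref{pro:rotating-hallway-parts}, each $Q_K^-(t)$ is determined solely by the two support-function values $h_K(t)$ and $h_K(t+\pi/2)$, so the first task is to control how these values change when $K$ is replaced by its polygon-cap approximation, and the second task is a straightforward monotonicity observation.

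For the equality $\mathcal{N}_\Theta(K) = \mathcal{N}_\Theta(\mathcal{C}_\Theta(K))$, I would first show that the support functions of $K$ and $\mathcal{C}_\Theta(K)$ agree on the angle set $\Theta \cup (\Theta + \pi/2)$. The inclusion $K \subseteq \mathcal{C}_\Theta(K)$ from \Cref{pro:angled-cap} gives $h_K \leq h_{\mathcal{C}_\Theta(K)}$ pointwise. Conversely, for any $t \in \Theta$, the definition $\mathcal{C}_\Theta(K) = P_\omega \cap \bigcap_{s\in\Theta} Q_K^+(s)$ together with $Q_K^+(t) = H_K(t) \cap H_K(t+\pi/2)$ shows $\mathcal{C}_\Theta(K) \subseteq H_-(t, h_K(t)) \cap H_-(t+\pi/2, h_K(t+\pi/2))$, so $h_{\mathcal{C}_\Theta(K)}(t) \leq h_K(t)$ and $h_{\mathcal{C}_\Theta(K)}(t+\pi/2) \leq h_K(t+\pi/2)$. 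Using \Cref{pro:rotating-hallway-parts}, this forces $Q_K^-(t) = Q_{\mathcal{C}_\Theta(K)}^-(t)$ for every $t \in \Theta$, and substituting into \Cref{def:angled-niche} yields the desired equality.

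For the inclusion $\mathcal{N}_\Theta(K) \subseteq \mathcal{N}(K)$, I would observe that $P_\omega = H \cap V_\omega \subseteq F_\omega = H_+(\omega, 0) \cap H_+(\pi/2, 0)$, since $H$ is a subset of the upper half-plane $H_+(\pi/2, 0)$ and $V_\omega$ is a subset of $H_+(\omega, 0)$. Combined with the inclusion of index sets $\Theta \subseteq (0, \omega)$ on the union, we get
\[
\mathcal{N}_\Theta(K) = P_\omega \cap \bigcup_{t \in \Theta} Q_K^-(t) \subseteq F_\omega \cap \bigcup_{t \in (0, \omega)} Q_K^-(t) = \mathcal{N}(K).
\]

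I do not expect any serious obstacle here: the entire proposition is a bookkeeping consequence of the explicit support-function formulas in \Cref{pro:rotating-hallway-parts} together with the defining inclusion $K \subseteq \mathcal{C}_\Theta(K)$. The only mildly delicate point is being careful that the inner quadrant $Q_K^-(t)$ really depends on $h_K$ at the two relevant angles and nothing else, which is exactly what \Cref{pro:rotating-hallway-parts} provides.
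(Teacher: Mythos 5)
Your argument is correct and follows essentially the same route as the paper: both parts reduce to noting that $Q_K^-(t)$ depends only on $h_K(t)$ and $h_K(t+\pi/2)$ (via \Cref{pro:rotating-hallway-parts}), that these values are unchanged by passing to $\mathcal{C}_\Theta(K)$, and that $P_\omega \subseteq F_\omega$ together with $\Theta \subseteq (0,\omega)$ gives the final inclusion. Your write-up just spells out explicitly what the paper cites more tersely to \Cref{def:angled-cap} and ``their respective definitions.''
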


\begin{proof}
Let \(K' := \mathcal{C}_\Theta(K)\). The support functions \(h_X\) of \(X = K, K'\) agree on the set \(\Theta \cup (\Theta + \pi/2)\) by \Cref{def:angled-cap}. Observe that the \Cref{def:angled-niche} of \(\mathcal{N}_\Theta(X)\) only depends on \(\omega\) and the values of \(h_X\) on \(\Theta \cup (\Theta + \pi/2)\). So we have \(\mathcal{N}_\Theta(K) = \mathcal{N}_\Theta(K')\). That \(\mathcal{N}_\Theta(K) \subseteq \mathcal{N}(K)\) follows from their respective definitions.
\end{proof}

\begin{definition}

Let \(\Theta\) be an angle set with rotation angle \(\omega\). Define the \emph{polygon sofa area functional} \(\mathcal{A}_\Theta : \mathcal{K}_\omega^\mathrm{c} \to \mathbb{R}\) as \(\mathcal{A}_\Theta(K) := |\mathcal{C}_\Theta(K)| - |\mathcal{N}_\Theta(K)|\).

\label{def:polygon-upper-bound}
\end{definition}

\begin{theorem}

For any polygon cap \(K \in \mathcal{K}^\mathrm{c}_\Theta\) we have \(\mathcal{A}_\Theta(K) = |K| - |\mathcal{N}_\Theta(K)|\). For every cap \(K \in \mathcal{K}_\omega^\mathrm{c}\), we have \(\mathcal{A}_{\omega}(K) \leq \mathcal{A}_\Theta(K)\).

\label{thm:polygon-upper-bound}
\end{theorem}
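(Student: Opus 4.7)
The first statement is immediate from \Cref{pro:angled-cap}, which shows that the map $\mathcal{C}_\Theta : \mathcal{K}_\omega^\mathrm{c} \to \mathcal{K}_\Theta^\mathrm{c}$ fixes every element of $\mathcal{K}_\Theta^\mathrm{c}$. So for $K \in \mathcal{K}_\Theta^\mathrm{c}$ we have $\mathcal{C}_\Theta(K) = K$, and the definition $\mathcal{A}_\Theta(K) := |\mathcal{C}_\Theta(K)| - |\mathcal{N}_\Theta(K)|$ collapses to $|K| - |\mathcal{N}_\Theta(K)|$ as claimed.

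For the inequality $\mathcal{A}_\omega(K) \leq \mathcal{A}_\Theta(K)$, the direct comparisons $|K| \leq |\mathcal{C}_\Theta(K)|$ and $|\mathcal{N}_\Theta(K)| \leq |\mathcal{N}(K)|$ point in \emph{opposite} directions relative to the signs in $\mathcal{A}_\omega$ and $\mathcal{A}_\Theta$, so they do not combine directly. The plan is instead to rearrange the desired bound as
\[
|K| + |\mathcal{N}_\Theta(K)| \leq |\mathcal{C}_\Theta(K)| + |\mathcal{N}(K)|
\]
and apply the identity $|A| + |B| = |A \cup B| + |A \cap B|$ simultaneously to both sides. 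The two monotonicity inputs are already in hand: the inclusion $K \subseteq \mathcal{C}_\Theta(K)$ from \Cref{pro:angled-cap} and the inclusion $\mathcal{N}_\Theta(K) \subseteq \mathcal{N}(K)$ from \Cref{pro:angled-niche-polygon-cap}. These propagate simultaneously to both $K \cup \mathcal{N}_\Theta(K) \subseteq \mathcal{C}_\Theta(K) \cup \mathcal{N}(K)$ and $K \cap \mathcal{N}_\Theta(K) \subseteq \mathcal{C}_\Theta(K) \cap \mathcal{N}(K)$. Taking the Lebesgue measure of each inclusion and summing the resulting inequalities yields precisely the rearranged bound.

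The only care needed is that all four sets are Borel measurable, so that inclusion-exclusion applies cleanly. The caps $K$ and $\mathcal{C}_\Theta(K)$ are closed as intersections of closed half-planes; $\mathcal{N}_\Theta(K)$ is the intersection of the compact parallelogram $P_\omega$ with a \emph{finite} (hence open) union of open quadrants $Q_K^-(t)$; and $\mathcal{N}(K)$ is the intersection of the closed fan $F_\omega$ with an open union over $t \in (0, \omega)$, which is open because the quadrants $Q_K^-(t)$ depend continuously on $t$ via the half-plane description in \Cref{pro:rotating-hallway-parts}. Thus all four sets are Borel, and the whole proof reduces to a two-line application of inclusion-exclusion on top of the two set-theoretic inclusions established in the preceding propositions. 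There is no serious obstacle; this theorem is essentially a bookkeeping consequence of \Cref{pro:angled-cap} and \Cref{pro:angled-niche-polygon-cap}, which is exactly what allows later sections to approximate a general cap by polygon caps while getting an upper estimate on the area.
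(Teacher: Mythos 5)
Your first paragraph is correct and matches the paper. For the inequality, however, you have a sign-chasing error: the direct comparisons do \emph{not} point in opposite directions — they align perfectly. From $K \subseteq \mathcal{C}_\Theta(K)$ we get $|K| \leq |\mathcal{C}_\Theta(K)|$, and from $\mathcal{N}_\Theta(K) \subseteq \mathcal{N}(K)$ we get $|\mathcal{N}_\Theta(K)| \leq |\mathcal{N}(K)|$, equivalently $-|\mathcal{N}(K)| \leq -|\mathcal{N}_\Theta(K)|$. Both push the left side $\mathcal{A}_\omega(K) = |K| - |\mathcal{N}(K)|$ down relative to $\mathcal{A}_\Theta(K) = |\mathcal{C}_\Theta(K)| - |\mathcal{N}_\Theta(K)|$, so simply adding the two inequalities gives
\[
|K| - |\mathcal{N}(K)| \;\leq\; |\mathcal{C}_\Theta(K)| - |\mathcal{N}(K)| \;\leq\; |\mathcal{C}_\Theta(K)| - |\mathcal{N}_\Theta(K)|,
\]
which is the claim. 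This is precisely the paper's one-line ``consequence of \Cref{pro:angled-cap} and \Cref{pro:angled-niche-polygon-cap}.'' Your inclusion-exclusion detour through $|A| + |B| = |A \cup B| + |A \cap B|$ is logically valid — the inclusions $K \cup \mathcal{N}_\Theta(K) \subseteq \mathcal{C}_\Theta(K) \cup \mathcal{N}(K)$ and $K \cap \mathcal{N}_\Theta(K) \subseteq \mathcal{C}_\Theta(K) \cap \mathcal{N}(K)$ do hold and the measurability discussion is fine — but it is machinery you did not need, and the premise motivating it (``they do not combine directly'') is false. Worth internalizing: when an overestimate grows the positive term and shrinks the subtracted term, both moves help the same direction.
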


\begin{proof}
Consequence of \Cref{pro:angled-cap} and \Cref{pro:angled-niche-polygon-cap}.
\end{proof}

\begin{remark}

The upper bound \(\alpha_{\max} \leq 2.37\) of \autocite{kallusImprovedUpperBounds2018} was essentially established by computing the upper bound \(\mathcal{A}_\Theta\) of \(\mathcal{A}_{\pi/2}\) for a specific set \(\Theta\) of five angles.

\label{rem:polygon-upper-bound}
\end{remark}

\section{Extensions of Polygon Cap Space}
\label{sec:extensions-of-polygon-cap-space}
We extend the space \(\mathcal{K}_\Theta^\mathrm{c}\) of polygon caps to the space \(\mathcal{K}_\Theta^\mathrm{t}\) of all \emph{translates} of polygon caps.

\begin{definition}

Define the space of \emph{polygon cap translates} \(\mathcal{K}_\Theta^\mathrm{t}\) as the collection of all translations \(K'\) of every cap \(K \in \mathcal{K}_\Theta^\mathrm{c}\) with angle set \(\Theta\).

\label{def:cap-trans}
\end{definition}

\begin{proposition}

A convex polygon \(K'\) is in \(\mathcal{K}_\Theta^\mathrm{t}\) if and only if the followings are all true.

\begin{enumerate}
\def\labelenumi{\arabic{enumi}.}
\tightlist
\item
  The widths \(h_{K'}(\omega) + h_{K'}(\omega + \pi)\) and \(h_{K'}(\pi/2) + h_{K'}(3\pi/2)\) of \(K'\) along the angles \(\omega\) and \(\pi/2\) are exactly one.
\item
  \(K'\) is a convex polygon with normal angles in the set \(\Theta^\diamond\).
\end{enumerate}

\label{pro:cap-trans-space}
\end{proposition}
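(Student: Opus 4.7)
The plan is to route both directions of the equivalence through a single observation: under translation $K \mapsto K + v$, the support function shifts as $h_{K+v}(t) = h_K(t) + v \cdot u_t$, while the set of outward edge normal directions of a convex polygon is translation-invariant. The forward direction is then immediate: if $K' = K + v$ with $K \in \mathcal{K}_\Theta^{\mathrm{c}}$, then $u_{\omega+\pi} = -u_\omega$ gives translation-invariance of widths, so $h_{K'}(\omega) + h_{K'}(\omega+\pi) = h_K(\omega) + h_K(\omega+\pi) = 1 + 0 = 1$ by \Cref{def:cap}, and likewise for $\pi/2$; the half-planes defining $K$ retain their normal angles under translation, so condition~(2) carries over as well.

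For the reverse direction, the strategy is to construct $v$ explicitly so that $K'' := K' - v$ lies in $\mathcal{K}_\Theta^{\mathrm{c}}$. Enforcing $h_{K''}(\omega) = h_{K''}(\pi/2) = 1$ yields the linear system $v \cdot u_\omega = h_{K'}(\omega) - 1$ and $v \cdot u_{\pi/2} = h_{K'}(\pi/2) - 1$, which has a unique solution when $\omega < \pi/2$ and a one-parameter family (free in the horizontal component) when $\omega = \pi/2$. Condition~(1) then automatically forces $h_{K''}(\omega+\pi) = h_{K'}(\omega+\pi) + v \cdot u_\omega = h_{K'}(\omega+\pi) + h_{K'}(\omega) - 1 = 0$, and symmetrically $h_{K''}(3\pi/2) = 0$. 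Combining condition~(2) on $K''$ with the inclusion $\Theta^\diamond \subseteq J_\omega$, I would exhibit $K''$ as an intersection of closed half-planes with normal angles in $J_\omega \cup \{\omega + \pi, 3\pi/2\}$, placing it in $\mathcal{K}_\omega^{\mathrm{c}}$ by \Cref{def:cap} and hence in $\mathcal{K}_\Theta^{\mathrm{c}}$ by \Cref{def:angled-cap-space}.

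The only real subtlety will be the degenerate case $\omega = \pi/2$, where the two width equations in~(1) collapse to a single constraint on $v$; the residual horizontal freedom is harmless because $\mathcal{K}_\Theta^{\mathrm{t}}$ is closed under arbitrary translation by definition. I also intend to read condition~(2) as implicitly including the two downward normals $\omega + \pi$ and $3\pi/2$: condition~(1) already forces the lines of those angles to support $K'$, and no bounded convex polygon can have all of its edge normals confined to the half-circle $\Theta^\diamond \subseteq [0, \omega + \pi/2]$, so the characterization must include those two directions among the edge normals.
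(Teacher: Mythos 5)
Your proof is correct and follows essentially the same route as the paper's: the forward direction via translation-invariance of widths and edge normals, and the reverse direction by translating $K'$ to normalize $h(\omega) = h(\pi/2) = 1$ and then using condition~(1) to deduce $h(\omega+\pi) = h(3\pi/2) = 0$. Your observation about reading condition~(2) to implicitly include the bottom normals $\omega+\pi$ and $3\pi/2$ is a reasonable and correct resolution of an informality in the statement that the paper's proof also tacitly assumes, as confirmed by the usage in \Cref{pro:height-space-embedding}.
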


\begin{proof}
Any translate \(K'\) of an arbitrary \(K \in \mathcal{K}_\Theta^\mathrm{c}\) satisfies (1) and (2) immediately. On the other hand, for any \(K'\) satisfying conditions (1) and (2), find a translation \(K\) of \(K'\) so that \(h_K(\omega) = h_K(\pi/2) = 1\) and \(K \in \mathcal{K}_\Theta^\mathrm{c}\). Since the width of \(K\) measured along the angles \(\omega\) and \(\pi/2\) are one, we get \(h_K(\omega + \pi) = h_K(3\pi/2) = 0\).
\end{proof}

Now extend the set of all support functions of \(K \in \mathcal{K}_\Theta^\mathrm{t}\) to the space \(\mathcal{H}_\Theta\) of all functions \(h : \Theta^{\diamond} \to \mathbb{R}\).

\begin{definition}

Define \(\mathcal{H}_\Theta\) as the space of all functions \(h : \Theta^{\diamond} \to \mathbb{R}\).

\label{def:height-space}
\end{definition}

\begin{proposition}

The map \(\mathcal{K}_\Theta^\mathrm{t} \to \mathcal{H}_\Theta\) mapping each \(K' \in \mathcal{K}_\Theta^\mathrm{t}\) to its support function \(h_{K'}\) restricted to the domain \(\Theta^{\diamond}\) is an injection.

\label{pro:height-space-embedding}
\end{proposition}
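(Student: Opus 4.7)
The plan is to exhibit each polygon cap translate $K' \in \mathcal{K}_\Theta^\mathrm{t}$ as an explicit intersection of closed half-planes whose data is entirely encoded by $h_{K'}|_{\Theta^\diamond}$. By \Cref{def:angled-cap-space}, every polygon cap $K \in \mathcal{K}_\Theta^\mathrm{c}$ is an intersection of closed half-planes whose normal angles lie in the finite set $N := \Theta^\diamond \cup \{\omega + \pi, 3\pi/2\}$, and this property is preserved under translation, so every $K' \in \mathcal{K}_\Theta^\mathrm{t}$ has the same structure.

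First I would verify the standard fact that $K' = \bigcap_{t \in N} H_-(t, h_{K'}(t))$. The inclusion $K' \subseteq \bigcap_{t \in N} H_-(t, h_{K'}(t))$ is immediate from the definition of the support function. For the reverse inclusion, write $K'$ as a finite intersection $\bigcap_i H_-(t_i, c_i)$ of half-planes with $t_i \in N$; then $K' \subseteq H_-(t_i, c_i)$ forces $h_{K'}(t_i) \leq c_i$, whence $H_-(t_i, h_{K'}(t_i)) \subseteq H_-(t_i, c_i)$, and intersecting over $N$ gives the desired containment in $K'$.

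Now suppose $K_1', K_2' \in \mathcal{K}_\Theta^\mathrm{t}$ satisfy $h_{K_1'}(t) = h_{K_2'}(t)$ for every $t \in \Theta^\diamond$. Since $\omega, \pi/2 \in \Theta^\diamond$, condition (1) of \Cref{pro:cap-trans-space} forces $h_{K_i'}(\omega + \pi) = 1 - h_{K_i'}(\omega)$ and $h_{K_i'}(3\pi/2) = 1 - h_{K_i'}(\pi/2)$ for $i = 1, 2$, so the two support functions in fact agree on all of $N$. Combined with the representation $K_i' = \bigcap_{t \in N} H_-(t, h_{K_i'}(t))$ from the previous step, this yields $K_1' = K_2'$, giving the desired injectivity.

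No serious obstacle arises: the argument is essentially bookkeeping about recovering a convex polygon from its support function at a finite set of normal angles. The only subtlety is that $\Theta^\diamond$ itself does not contain the ``base'' directions $\omega + \pi$ and $3\pi/2$, and these must be recovered from the width-one condition (1) of \Cref{pro:cap-trans-space} before the polygon representation can be invoked.
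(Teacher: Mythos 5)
Your proof is correct and takes essentially the same route as the paper: represent $K'$ as the intersection of its supporting half-planes at normal angles in $\Theta^\diamond \cup \{\omega+\pi, 3\pi/2\}$, then note that the width-one condition (1) of \Cref{pro:cap-trans-space} determines $h_{K'}$ at $\omega+\pi$ and $3\pi/2$ from its values on $\Theta^\diamond$. You have just written out in more detail the step the paper compresses into one sentence.
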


\begin{proof}
By (2) of \Cref{pro:cap-trans-space}, the values of \(h_{K'}\) on \(\Theta^{\diamond}\) determine
\[
K' = \bigcap_{t \in \Theta^{\diamond} \cup \left\{ \omega + \pi, 3\pi/2 \right\}} H_{K'}(t) \in \mathcal{K}_\Theta^\mathrm{t}
\]
uniquely.
\end{proof}

We now have a series of extensions \(\mathcal{K}_\Theta^\mathrm{c} \subseteq \mathcal{K}_{\Theta}^\mathrm{t} \hookrightarrow \mathcal{H}_\Theta\). We will extend the notion of cap, niche, and the polygon sofa area functional \(\mathcal{A}_\Theta\) from \(\mathcal{K}_\Theta^\mathrm{c}\) to \(\mathcal{H}_\Theta\).

\begin{definition}

For any \(h \in \mathcal{H}_\Theta\), define its \emph{parallelogram}
\[
P_h := \bigcap_{t \in \left\{ \omega, \pi/2 \right\} } H_-(t, h(t)) \cap H_+(t, h(t) - 1)
\]
and \emph{cap}
\[
\mathcal{C}_\Theta(h) := P_h \cap \bigcap_{t \in \Theta \cup (\Theta + \pi/2)} H_-(t, h(t))
\]
and \emph{fan}
\[
F_h := \bigcap_{t \in \left\{ \omega, \pi/2 \right\} } H_+(t, h(t) - 1)
\]
and \emph{niche}
\[
\mathcal{N}_{\Theta}(h) := F_h \cap \bigcup_{t \in \Theta} \left( H_-^\circ(t, h(t) - 1) \cap H_-^\circ(t + \pi/2, h(t + \pi/2) - 1) \right)
\]
and the \emph{polygon sofa area functional}
\[
\mathcal{A}_\Theta(h) := |\mathcal{C}(h)| - |\mathcal{N}_\Theta(h)|.
\]

\label{def:height-extensions}
\end{definition}

The polygons appearing in \Cref{def:height-extensions} are simple Nef polygons (\Cref{def:simple-nef-polygon}).

\begin{proposition}

Take arbitrary \(h \in \mathcal{H}_\Theta\).

\begin{enumerate}
\def\labelenumi{\arabic{enumi}.}
\tightlist
\item
  The sets \(\mathcal{C}_\Theta(h)\) is a simple Nef polygon with the defining half-planes \(H_-(t, h(t))\) for each \(t \in \Theta^{\diamond}\), and \(H_{+}(t, h(t) - 1)\) for each \(t \in \left\{ \omega, \pi/2 \right\}\).
\item
  The set \(\mathcal{N}_\Theta(h)\) is a simple Nef polygon with the defining half-planes \(H_-^{\circ}(t, h(t) - 1)\) for each \(t \in \Theta \cup (\Theta + \pi/2)\), and \(H_{+}(t, h(t) - 1)\) for each \(t \in \left\{ \omega, \pi/2 \right\}\).
\end{enumerate}

\label{pro:cap-niche-nef-polygons}
\end{proposition}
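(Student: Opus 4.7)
The plan is to verify both items by writing each set as a boolean expression in half-planes built from only conjunctions and disjunctions, applying \Cref{pro:monotone-boolean-function} to get monotonicity of the resulting boolean function, and then carrying out a direct check that all the listed defining half-planes have pairwise distinct boundary lines (the ``simple'' condition in \Cref{def:simple-nef-polygon}).

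For (1), I would expand \(P_h\) inside the definition of \(\mathcal{C}_\Theta(h)\) to rewrite it as
\[
\mathcal{C}_\Theta(h) \;=\; \bigcap_{t \in \Theta^{\diamond}} H_-(t, h(t)) \;\cap\; \bigcap_{t \in \{\omega, \pi/2\}} H_+(t, h(t) - 1),
\]
so that the associated boolean function \(\mathcal{E}\) is the pure conjunction of the membership predicates in these half-planes, hence monotone by \Cref{pro:monotone-boolean-function}. For (2), the definition of \(\mathcal{N}_\Theta(h)\) already has the form
\[
\mathcal{N}_\Theta(h) \;=\; \bigcap_{t \in \{\omega, \pi/2\}} H_+(t, h(t) - 1) \;\cap\; \bigcup_{t \in \Theta}\bigl( H_-^{\circ}(t, h(t)-1) \cap H_-^{\circ}(t + \pi/2, h(t + \pi/2) - 1) \bigr),
\]
whose describing boolean function uses only \(\land\) and \(\lor\) applied to the membership variables, so again \Cref{pro:monotone-boolean-function} delivers monotonicity, and the defining half-planes are exactly the ones listed in the statement.

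The remaining step, which is the only place needing some care, is to verify that the boundary lines of all defining half-planes are pairwise distinct. I would first argue that the angles in \(\Theta^{\diamond} = \Theta \cup (\Theta + \pi/2) \cup \{\omega, \pi/2\}\) are pairwise distinct and all lie in \((0, \pi)\): since \(\Theta \subset (0, \omega) \subseteq (0, \pi/2)\) and \(\Theta + \pi/2 \subset (\pi/2, \omega + \pi/2) \subseteq (\pi/2, \pi)\), the three constituent sets are disjoint (here \(\omega \leq \pi/2\) prevents \(\omega\) from coinciding with any \(t + \pi/2\), and \(0 \notin \Theta\) prevents \(\pi/2\) from coinciding with any \(t + \pi/2\)). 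Consequently, for distinct \(s, t \in \Theta^{\diamond}\) the lines \(l(s, \cdot)\) and \(l(t, \cdot)\) have different normal angles within \((0, \pi)\) and so are non-parallel, hence distinct. For (1) the only remaining potential collision is between \(H_-(t, h(t))\) and \(H_+(t, h(t) - 1)\) at the same \(t \in \{\omega, \pi/2\}\); their boundaries are parallel at signed distance \(1 \neq 0\), so distinct. For (2) the half-planes carry the common offset \(h(\cdot) - 1\), but the indices split as \(\Theta \cup (\Theta + \pi/2)\) versus \(\{\omega, \pi/2\}\), which are disjoint subsets of \(\Theta^{\diamond}\), so the same angle-distinctness argument applies.

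I do not anticipate any real obstacle: the proposition is essentially a bookkeeping statement unpacking \Cref{def:height-extensions} into the language of \Cref{def:simple-nef-polygon}, and the distinct-boundary verification rests entirely on the constraints \(\omega \in (0, \pi/2]\) and \(\Theta \subset (0, \omega)\) built into \Cref{def:angle-set} and \Cref{def:angle-domain}.
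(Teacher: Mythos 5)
Your proof is correct and follows essentially the same approach as the paper: unpack \Cref{def:height-extensions} into conjunctions and disjunctions of half-plane membership predicates, invoke \Cref{pro:monotone-boolean-function} for monotonicity, and check that the listed defining half-planes have pairwise distinct boundaries. Your verification of the distinctness of boundary lines is more detailed and explicit than the paper's one-sentence remark, but the underlying observation (angles in \(\Theta^{\diamond}\) are pairwise distinct, and parallel pairs at the same angle differ by a unit offset) is identical.
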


\begin{proof}
Check that \Cref{def:simple-nef-polygon} holds for \(\mathcal{C}_\Theta(h)\) and \(\mathcal{N}_\Theta(h)\). The boundaries of the defining half-planes differ as they differ in its normal angle or distance from the origin. The formulas of \(\mathcal{C}_\Theta(h)\) and \(\mathcal{N}_\Theta(h)\) in \Cref{def:height-extensions} are defined from the mentioned half-planes by applying only union and intersection, so by \Cref{pro:monotone-boolean-function} the defining boolean function is monotone.
\end{proof}

We show that the extended notion of cap, niche, and polygon sofa area functional on \(\mathcal{H}_\Theta\) in \Cref{def:height-extensions} are compatible with that of \(\mathcal{K}_\Theta^\mathrm{c}\).

\begin{proposition}

For any polygon cap translate \(K' \in \mathcal{K}_\Theta^\mathrm{t}\) with \(h := h_K \in \mathcal{H}_\Theta\) we have \(\mathcal{C}_{\Theta}(h) = K'\).

\label{pro:cap-extension-compatible}
\end{proposition}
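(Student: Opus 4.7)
The plan is to show this by directly unpacking both sides against the characterization of $\mathcal{K}_\Theta^\mathrm{t}$ already proved in \Cref{pro:cap-trans-space}, and observing that every half-plane appearing in $\mathcal{C}_\Theta(h)$ either is a supporting half-plane of $K'$ or can be rewritten as one using the width-one condition. Since $\mathcal{C}_\Theta(h)$ is defined as an intersection of explicit half-planes indexed by $\Theta^\diamond$, it suffices to match that intersection with the canonical half-plane representation of $K'$.

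First I would recall from the proof of \Cref{pro:height-space-embedding} that condition (2) of \Cref{pro:cap-trans-space} gives
\[
K' = \bigcap_{t \in \Theta^\diamond \cup \{\omega + \pi,\, 3\pi/2\}} H_{K'}(t),
\]
and that for every $t \in \Theta^\diamond$ we have $H_{K'}(t) = H_-(t, h(t))$ by the definition $h := h_{K'}|_{\Theta^\diamond}$. So the only half-planes in this intersection that are not already in the form demanded by $\mathcal{C}_\Theta(h)$ are $H_{K'}(\omega + \pi)$ and $H_{K'}(3\pi/2)$.

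Next I would use condition (1) of \Cref{pro:cap-trans-space}, the width equalities $h_{K'}(\omega) + h_{K'}(\omega + \pi) = 1$ and $h_{K'}(\pi/2) + h_{K'}(3\pi/2) = 1$, to rewrite these two supporting half-planes in terms of $h$ restricted to $\Theta^\diamond$. Since $u_{\omega+\pi} = -u_\omega$ and $u_{3\pi/2} = -u_{\pi/2}$, we obtain
\[
H_{K'}(\omega + \pi) = H_+(\omega, h(\omega) - 1), \qquad H_{K'}(3\pi/2) = H_+(\pi/2, h(\pi/2) - 1).
\]
Substituting these two identities back into the intersection formula for $K'$ and separating the two angles $\{\omega, \pi/2\}$ from the remaining angles $\Theta \cup (\Theta + \pi/2)$ of $\Theta^\diamond$ reassembles exactly the definition of $\mathcal{C}_\Theta(h) = P_h \cap \bigcap_{t \in \Theta \cup (\Theta + \pi/2)} H_-(t, h(t))$ in \Cref{def:height-extensions}, yielding $K' = \mathcal{C}_\Theta(h)$.

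This is essentially a notational verification rather than a result with a genuine obstacle; the only point requiring care is correctly identifying which two angles $\omega + \pi$ and $3\pi/2$ (absent from $\Theta^\diamond$) contribute the lower sides of $P_h$ via the width-one normalization, since these are the only supporting half-planes of $K'$ whose data is not stored explicitly in $h$.
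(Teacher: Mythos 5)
Your proof is correct and follows exactly the route the paper takes: express $K'$ as the intersection of its supporting half-planes over $\Theta^\diamond \cup \{\omega+\pi, 3\pi/2\}$ via condition (2) of \Cref{pro:cap-trans-space}, then use the width-one equalities from condition (1) to rewrite $H_{K'}(\omega+\pi)$ and $H_{K'}(3\pi/2)$ as $H_+(\omega, h(\omega)-1)$ and $H_+(\pi/2, h(\pi/2)-1)$, matching \Cref{def:height-extensions}. The only difference is that you supply the short algebraic verification of the rewrite that the paper leaves implicit.
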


\begin{proof}
By (2) of \Cref{pro:cap-trans-space}, \(K' = \bigcap_{t \in \Theta^{\diamond} \cup \left\{ \omega + \pi, 3\pi/2 \right\}} H_{K'}(t)\). By (1) of \Cref{pro:cap-trans-space}, the intersection matches the \Cref{def:height-extensions} of \(\mathcal{C}_\Theta(h_K)\).
\end{proof}

\begin{proposition}

For any polygon cap \(K \in \mathcal{K}_\Theta^\mathrm{c}\) with \(h := h_K \in \mathcal{H}_\Theta\) we have \(\mathcal{N}_\Theta(h) = \mathcal{N}_\Theta(K)\). Consequently, we have \(\mathcal{A}_\Theta(h) = \mathcal{A}_\Theta(K)\).

\label{pro:niche-extension-compatible}
\end{proposition}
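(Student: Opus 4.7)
The proof is a direct unpacking of the two niche definitions, reducing to one geometric fact about the inner corner $\mathbf{x}_K(t)$. My plan is to first identify the common ``union of open quadrants'' appearing inside both niches, then match the outer bounding regions, and finally reduce to a vertex bound on the parallelogram $P_\omega$.

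To identify the common inner set, apply \Cref{pro:rotating-hallway-parts} with $S = K$: this gives
\[
Q_K^-(t) \;=\; H_-^\circ(t,\, h(t) - 1) \,\cap\, H_-^\circ(t + \pi/2,\, h(t + \pi/2) - 1),
\]
which matches the union appearing in \Cref{def:height-extensions}. Call the common set $U := \bigcup_{t \in \Theta} Q_K^-(t)$. For the outer regions, the cap conditions in \Cref{def:cap} applied to $K \in \mathcal{K}_\Theta^\mathrm{c} \subseteq \mathcal{K}_\omega^\mathrm{c}$ give $h(\omega) = h(\pi/2) = 1$, so
\[
F_h \;=\; H_+(\omega, 0) \cap H_+(\pi/2, 0) \;=\; F_\omega.
\]
Thus $\mathcal{N}_\Theta(h) = F_\omega \cap U$ while $\mathcal{N}_\Theta(K) = P_\omega \cap U$. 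Because $P_\omega = F_\omega \cap H_-(\omega, 1) \cap H_-(\pi/2, 1) \subseteq F_\omega$, it remains to show that for each $t \in \Theta$ and each $p \in Q_K^-(t)$ we have $p \cdot u_\omega \leq 1$ and $p \cdot u_{\pi/2} \leq 1$.

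The main geometric step is to first control the inner corner $\mathbf{x}_K(t) = (h(t) - 1) u_t + (h(t+\pi/2) - 1) v_t$. Using $K \subseteq P_\omega$, so $h_K(s) \leq h_{P_\omega}(s)$ pointwise, together with the fact that the coefficients $\cos(\omega - t)$ and $\sin(\omega - t)$ appearing in the expansion $\mathbf{y}_K(t) \cdot u_\omega = h(t) \cos(\omega - t) + h(t+\pi/2) \sin(\omega - t)$ are both nonnegative for $t \in (0, \omega)$, monotonicity in $h$ reduces the problem to verifying $\mathbf{x}_{P_\omega}(t) \cdot u_\omega \leq 1$ for the parallelogram itself---a routine trigonometric identity using that $h_{P_\omega}(t)$ is attained at the lower-right vertex $(\sec\omega, 0)$ of $P_\omega$ for $t \in (0, \omega)$. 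The mirror-symmetric argument, applying \Cref{pro:mirror-reflection} to the reflected cap $K^{\mathrm{m}}$, yields $\mathbf{x}_K(t) \cdot u_{\pi/2} \leq 1$. Every $p \in Q_K^-(t)$ has the form $\mathbf{x}_K(t) - s_1 u_t - s_2 u_{t+\pi/2}$ with $s_1, s_2 > 0$, and since the inner products $u_t \cdot u_\omega$, $u_{t+\pi/2} \cdot u_\omega$, $u_t \cdot u_{\pi/2}$, $u_{t+\pi/2} \cdot u_{\pi/2}$ are all nonnegative for $t \in (0, \omega) \subseteq (0, \pi/2]$, the bounds at $\mathbf{x}_K(t)$ propagate to all of $Q_K^-(t)$.

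The main obstacle is the vertex identity $\mathbf{x}_{P_\omega}(t) \cdot u_\omega \leq 1$: a routine trigonometric computation that nevertheless requires some care, especially at the boundary $\omega = \pi/2$ where $P_\omega$ degenerates to a strip. With the niche equality in hand, the consequence $\mathcal{A}_\Theta(h) = \mathcal{A}_\Theta(K)$ is automatic: \Cref{pro:angled-cap} gives $\mathcal{C}_\Theta(K) = K$ and \Cref{pro:cap-extension-compatible} gives $\mathcal{C}_\Theta(h) = K$, so the cap areas agree and
\[
\mathcal{A}_\Theta(h) \;=\; |\mathcal{C}_\Theta(h)| - |\mathcal{N}_\Theta(h)| \;=\; |\mathcal{C}_\Theta(K)| - |\mathcal{N}_\Theta(K)| \;=\; \mathcal{A}_\Theta(K).
\]
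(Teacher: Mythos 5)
You've spotted a genuine inconsistency in the paper's definitions: \Cref{def:angled-niche} writes $\mathcal{N}_\Theta(K) = P_\omega \cap \bigcup_{t\in\Theta} Q_K^-(t)$, while $\mathcal{N}_\Theta(h)$ with $h = h_K$ unpacks (after $F_h = F_\omega$ from the cap conditions) to $F_\omega \cap \bigcup_{t\in\Theta} Q_K^-(t)$. But your attempted bridge --- showing $Q_K^-(t) \cap F_\omega \subseteq P_\omega$, reduced to the vertex bounds $\mathbf{x}_K(t) \cdot u_\omega \leq 1$ and $\mathbf{x}_K(t) \cdot u_{\pi/2} \leq 1$ --- is false, and not just at a degenerate boundary. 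Take $\omega = \pi/2$, $\Theta = \{\pi/4\}$, and $K$ the trapezoid with vertices $(\pm 6, 0)$ and $(\pm 5, 1)$; this is a legitimate element of $\mathcal{K}_\Theta^\mathrm{c}$, yet $\mathbf{x}_K(\pi/4) = (0,\, 6 - \sqrt{2})$ has $y$-coordinate $\approx 4.59 > 1$, so $Q_K^-(\pi/4) \cap F_{\pi/2}$ contains a region of positive area lying strictly above $P_{\pi/2} = H$. Your further reduction via $h_K \leq h_{P_\omega}$ fails independently: at $\omega = \pi/2$ one has $h_{P_\omega}(t) = +\infty$ for $t \notin \{0, \pi/2, \pi, 3\pi/2\}$, making the bound vacuous, and even for $\omega < \pi/2$ the claimed ``routine identity'' $\mathbf{x}_{P_\omega}(t) \cdot u_\omega \leq 1$ is false: at $t = \omega/2$ one computes
\[
\mathbf{x}_{P_\omega}(\omega/2) \cdot u_\omega \;=\; \tfrac{1}{2}\bigl(\sec\omega + \tan\omega + 1\bigr) - \cos(\omega/2) - \sin(\omega/2),
\]
which equals $1$ exactly at $\omega = \pi/3$ and exceeds $1$ for all $\omega > \pi/3$ (blowing up as $\omega \to \pi/2$).

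The actual resolution is that $P_\omega$ in \Cref{def:angled-niche} is a slip for $F_\omega$; this is what the overview in \Cref{sec:limit-of-maximum-polygon-caps} writes, it is what the proof of \Cref{thm:polyline} assumes when it asserts $F_\omega \setminus \mathcal{N}_\Theta(K) = F_\omega \setminus \bigcup_{t\in\Theta} Q_K^-(t)$, and \Cref{lem:polygon-cap-bounded} likewise needs $T_K(t) = F_\omega \cap Q_K^-(t) \subseteq \mathcal{N}_\Theta(K)$. With $F_\omega$ in place the proposition is an immediate unpacking --- the last equation of \Cref{pro:rotating-hallway-parts} identifies $\bigcup_{t\in\Theta} Q_K^-(t)$ with the union inside $\mathcal{N}_\Theta(h)$, and $h(\omega) = h(\pi/2) = 1$ gives $F_h = F_\omega$ --- which is exactly the paper's one-line proof. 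Your handling of the consequence $\mathcal{A}_\Theta(h) = \mathcal{A}_\Theta(K)$ via \Cref{pro:angled-cap} and \Cref{pro:cap-extension-compatible} is correct; the gap is entirely in the geometric step, and it cannot be filled because the inclusion you need simply does not hold.
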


\begin{proof}
That \(\mathcal{N}_\Theta(h) = \mathcal{N}_\Theta(K)\) comes from the last equation of \Cref{pro:rotating-hallway-parts}. Then \(\mathcal{A}_\Theta(h) = \mathcal{A}_\Theta(K)\) comes from \Cref{pro:cap-extension-compatible}.
\end{proof}

Now inherit \Cref{def:height-extensions} from \(\mathcal{H}_\Theta\) to \(\mathcal{K}_\Theta^\mathrm{t}\) under the extension \(\mathcal{K}_\Theta^\mathrm{c} \subseteq \mathcal{K}_{\Theta}^\mathrm{t} \hookrightarrow \mathcal{H}_\Theta\) (\Cref{def:cap-translate-extensions}) and observe that they act equivariantly under translation (\Cref{thm:height-extensions}).

\begin{definition}

For any cap translate \(K' \in \mathcal{K}_\Theta^\mathrm{t}\) with support function \(h := h_{K'} \in \mathcal{H}_\Theta\), define \(\mathcal{N}_\Theta(K') := \mathcal{N}_\Theta(h)\) and \(\mathcal{A}_\Theta(K') := \mathcal{A}_\Theta(h')\).

\label{def:cap-translate-extensions}
\end{definition}

\begin{theorem}

For any translation \(K' := K + \mathbf{v} \in \mathcal{K}_{\Theta}'\) of a polygon cap \(K \in \mathcal{K}_\Theta^\mathrm{c}\) by \(\mathbf{v} \in \mathbb{R}^2\), we have \(\mathcal{N}_\Theta(K') = \mathcal{N}_\Theta(K) + \mathbf{v}\) and \(\mathcal{A}_\Theta(K') = \mathcal{A}_\Theta(K)\). So with \(\mathbf{v} := (0, 0)\), \Cref{def:cap-translate-extensions} is a proper extension of \(\mathcal{N}_\Theta\) and \(\mathcal{A}_\Theta\) from \(\mathcal{K}_\Theta^\mathrm{c}\) to \(\mathcal{K}_\Theta^\mathrm{t}\).

\label{thm:height-extensions}
\end{theorem}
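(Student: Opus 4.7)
The plan is to reduce everything to the translation behavior of the defining half-planes of $\mathcal{N}_\Theta(h)$ under the shift of the support function induced by translating $K$. Let me lay out the steps.

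First I would record the standard identity $h_{K'}(t) = h_K(t) + \mathbf{v}\cdot u_t$ for all $t\in S^1$, so that in particular $h_{K'} \in \mathcal{H}_\Theta$ is obtained from $h:=h_K$ by pointwise addition of the (restriction to $\Theta^\diamond$ of the) linear form $t\mapsto \mathbf{v}\cdot u_t$. Next, for any $t\in S^1$ and $c\in\mathbb{R}$ I would verify the elementary fact that each half-plane translates as
\begin{equation*}
H_{\pm}(t,c)+\mathbf{v} \;=\; H_{\pm}(t,\,c+\mathbf{v}\cdot u_t), \qquad H_{\pm}^\circ(t,c)+\mathbf{v} \;=\; H_{\pm}^\circ(t,\,c+\mathbf{v}\cdot u_t),
\end{equation*}
which follows directly from Definition \ref{def:half-plane}. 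Applying this with $c=h(t)-1$, the half-plane $H_-^\circ(t,h(t)-1)$ appearing in the definition of $\mathcal{N}_\Theta(h)$ becomes, after translation by $\mathbf{v}$, the half-plane $H_-^\circ(t,h_{K'}(t)-1)$ appearing in the definition of $\mathcal{N}_\Theta(h_{K'})$; the same holds for the other three families of defining half-planes of $F_h$ and $F_{h_{K'}}$.

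Then I would invoke the fact that translation by a fixed vector commutes with arbitrary unions, intersections, and (more generally) any boolean combination of sets. Since $\mathcal{N}_\Theta(h)$ is, by \Cref{def:height-extensions}, a fixed boolean combination (depending only on $\Theta$ and $\omega$) of exactly these half-planes, the identity
\begin{equation*}
\mathcal{N}_\Theta(h_{K'}) \;=\; \mathcal{N}_\Theta(h) + \mathbf{v}
\end{equation*}
follows termwise. Combining with \Cref{pro:niche-extension-compatible} (which gives $\mathcal{N}_\Theta(h)=\mathcal{N}_\Theta(K)$ for the polygon cap $K$) and \Cref{def:cap-translate-extensions} (which defines $\mathcal{N}_\Theta(K'):=\mathcal{N}_\Theta(h_{K'})$ for the translate $K'$) yields the desired equality $\mathcal{N}_\Theta(K')=\mathcal{N}_\Theta(K)+\mathbf{v}$.

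For the area statement, translation invariance of the planar Lebesgue measure gives $|\mathcal{N}_\Theta(K')|=|\mathcal{N}_\Theta(K)+\mathbf{v}|=|\mathcal{N}_\Theta(K)|$. Likewise, the same half-plane-translation argument applied to the parallelogram $P_h$ and to $\mathcal{C}_\Theta(h)$ in \Cref{def:height-extensions} shows $\mathcal{C}_\Theta(h_{K'}) = \mathcal{C}_\Theta(h)+\mathbf{v} = K+\mathbf{v}= K'$ (consistent with \Cref{pro:cap-extension-compatible}), so $|\mathcal{C}_\Theta(h_{K'})|=|K|$. Subtracting gives $\mathcal{A}_\Theta(K')=|K|-|\mathcal{N}_\Theta(K)|=\mathcal{A}_\Theta(K)$. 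Setting $\mathbf{v}=(0,0)$ makes \Cref{def:cap-translate-extensions} agree on $\mathcal{K}_\Theta^\mathrm{c}$ with the prior definitions of $\mathcal{N}_\Theta$ and $\mathcal{A}_\Theta$, confirming that it is a genuine extension. There is no real obstacle here; the only thing to be careful about is bookkeeping, namely making sure that \emph{every} defining half-plane of both $\mathcal{C}_\Theta(h)$ and $\mathcal{N}_\Theta(h)$ (including those from $F_h$ with indices $\omega,\pi/2$) is accounted for when applying the half-plane translation identity.
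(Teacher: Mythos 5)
Your proof is correct and follows essentially the same line as the paper's: both rest on the translation-equivariance of the defining half-planes (equivalently, $H_{K+\mathbf{v}}(t) = H_K(t) + \mathbf{v}$), observe that this propagates through the fixed boolean combination defining $\mathcal{N}_\Theta(h)$ and $\mathcal{C}_\Theta(h)$, and finish with translation invariance of Lebesgue measure together with \Cref{pro:cap-extension-compatible} and \Cref{pro:niche-extension-compatible}. Your write-up is a bit more explicit than the paper's (spelling out $h_{K'}(t) = h_K(t)+\mathbf{v}\cdot u_t$ and the half-plane translation identity, and noting the $F_h$ indices $\omega,\pi/2$), but there is no substantive difference in approach.
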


\begin{proof}
By \Cref{pro:cap-niche-nef-polygons} and \Cref{def:cap-translate-extensions}, the polygon \(\mathcal{N}_{\Theta}(K')\) is a simple Nef polygon with defining half-planes \(H_{K'}(t)\) and \(H_{K'}(t) - u_t\) (or their complement and/or interior) for \(t \in \Theta^{\diamond}\). For any convex body \(K\) and fixed \(t \in S^1\), the supporting half-plane \(H_K(t)\) is equivariant under the translation of \(K\) so that \(H_{K + \mathbf{v}}(t) = H_K(t) + \mathbf{v}\). So we have \(\mathcal{N}_\Theta(K') = \mathcal{N}_\Theta(K) + \mathbf{v}\). We then have
\begin{align*}
\mathcal{A}_\Theta(K') & = |\mathcal{C}_\Theta(h_{K'})| - |\mathcal{N}_\Theta(h_{K'})| = |K'| - |\mathcal{N}_\Theta(K')| \\
& = |K| - |\mathcal{N}_\Theta(K)| = \mathcal{A}_\Theta(K)
\end{align*}
by \Cref{pro:cap-extension-compatible} and \Cref{pro:niche-extension-compatible}.
\end{proof}

\begin{proposition}

Assume that \(h^+ \in \mathcal{H}_\Theta\) is taken so that \(K^+ := \mathcal{C}_\Theta(h^+)\) is a polygon cap translate in \(\mathcal{K}_\Theta^\mathrm{t}\).\footnote{Note that we do not require \(h^+ = h_{K^+}\) here. It is possible for the resulting cap \(K^+\) to have support function strictly less than \(h^+\) after intersection, as the lines obtained from \(h^+\) are not guaranteed to be in convex position.} Then \(\mathcal{A}_\Theta(h^+) \leq \mathcal{A}_\Theta(K^+)\).

\label{pro:cap-translate-reduction}
\end{proposition}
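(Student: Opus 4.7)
The plan is to exhibit the restricted support function $h^* := h_{K^+}|_{\Theta^\diamond} \in \mathcal{H}_\Theta$ as a ``canonical tightening'' of $h^+$, with $h^* \leq h^+$ pointwise and equality forced on $\{\omega, \pi/2\}$, and then to observe that under this comparison the cap area stays the same while the niche can only grow. First I would invoke Proposition \ref{pro:cap-extension-compatible} to identify $\mathcal{C}_\Theta(h^*) = K^+ = \mathcal{C}_\Theta(h^+)$, which immediately gives $|\mathcal{C}_\Theta(h^+)| = |\mathcal{C}_\Theta(h^*)|$, together with the identity $\mathcal{A}_\Theta(K^+) = \mathcal{A}_\Theta(h^*)$ built into Definition \ref{def:cap-translate-extensions}.

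Next I would establish the pointwise comparison. For every $t \in \Theta^\diamond$ the inclusion $K^+ \subseteq H_-(t, h^+(t))$ that comes from the definition of $\mathcal{C}_\Theta(h^+)$ yields $h^*(t) \leq h^+(t)$. The crucial refinement is that for $t \in \{\omega, \pi/2\}$ one in fact has $h^*(t) = h^+(t)$: here $K^+$ is also trapped in $H_+(t, h^+(t) - 1)$, hence lives in a strip of width exactly one in direction $u_t$, while Proposition \ref{pro:cap-trans-space} asserts that the width of $K^+ \in \mathcal{K}_\Theta^\mathrm{t}$ along $u_t$ equals one. Therefore $K^+$ must touch both bounding lines of this strip, forcing $h^*(t) = h^+(t)$ at the two ``width-constrained'' angles.

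With this comparison in hand, the niche inclusion $\mathcal{N}_\Theta(h^*) \subseteq \mathcal{N}_\Theta(h^+)$ is a direct reading of Definition \ref{def:height-extensions}. The fan $F_h$ uses only the values $h(\omega)$ and $h(\pi/2)$, which agree for $h^+$ and $h^*$, so $F_{h^+} = F_{h^*}$. Each half-plane $H_-^\circ(t, h(t) - 1)$ appearing in the union is monotone non-decreasing in $h(t)$ (raising $h$ shifts the boundary in the direction of $u_t$ and adds points), so the entire union over $t \in \Theta$ is larger for $h^+$ than for $h^*$; intersecting with the common fan preserves the inclusion. Consequently $|\mathcal{N}_\Theta(h^+)| \geq |\mathcal{N}_\Theta(h^*)|$, and combining with the cap-area equality gives
\[
\mathcal{A}_\Theta(h^+) = |\mathcal{C}_\Theta(h^+)| - |\mathcal{N}_\Theta(h^+)| \leq |\mathcal{C}_\Theta(h^*)| - |\mathcal{N}_\Theta(h^*)| = \mathcal{A}_\Theta(h^*) = \mathcal{A}_\Theta(K^+).
\]

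The argument is essentially a definition unwinding, so I expect no serious obstacles. The only delicate point is the width-one step that upgrades $h^*(t) \leq h^+(t)$ to equality at $t \in \{\omega, \pi/2\}$; this is precisely where the hypothesis $K^+ \in \mathcal{K}_\Theta^\mathrm{t}$ (rather than merely $K^+$ being some convex polygon) is used, because otherwise the fan $F_{h^*}$ could grow relative to $F_{h^+}$ and the monotonicity of the niche in $h$ would no longer be automatic.
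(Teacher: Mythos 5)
Your proof is correct and follows essentially the same route as the paper's: both compare $h_{K^+}$ to $h^+$ pointwise on $\Theta^\diamond$, use the width-one property of a polygon cap translate to force equality at $t = \omega, \pi/2$ (and hence equality of the fans), and deduce $\mathcal{N}_\Theta(h_{K^+}) \subseteq \mathcal{N}_\Theta(h^+)$ together with $\mathcal{C}_\Theta(h_{K^+}) = \mathcal{C}_\Theta(h^+)$ to conclude. The only cosmetic difference is your introduction of the notation $h^*$ for $h_{K^+}|_{\Theta^\diamond}$.
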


\begin{proof}
By the definition of \(K^+ := \mathcal{C}_\Theta(h^+)\), we always have \(h_{K^+}(t) \leq h^+(t)\) on \(t \in \Theta^\diamond\) and \(K^+ \subseteq P_{h^+}\). Since \(K^+\) is a polygon cap translate, it has width 1 in the directions of \(u_\omega\) and \(u_{\pi/2}\), so is circumscribed in \(P_{h^+}\). So we have equality \(h_{K^+}(t) = h^+(t)\) for \(t = \omega, \pi/2\), and \(F_{h_{K^+}} = F_{h^+}\) in particular. From this and \(h_{K^+}(t) \leq h^+(t)\) on \(t \in \Theta^\diamond\), we have \(\mathcal{N}_{\Theta}(h_{K^+}) \subseteq \mathcal{N}_\Theta(h^+)\). By \Cref{pro:cap-extension-compatible} on \(K^+\) we have \(\mathcal{C}_\Theta(h_{K^+}) = K^+ = \mathcal{C}_\Theta(h^+)\). Now
\begin{align*}
\mathcal{A}_\Theta(h^+) & = |\mathcal{C}_\Theta(h^+)| - |\mathcal{N}_\Theta(h^+)| \\
& \leq |\mathcal{C}_\Theta(h_{K^+})| - |\mathcal{N}_\Theta(h_{K^+})| =\mathcal{A}_\Theta(h_{K^+}) = \mathcal{A}_\Theta(K^+)
\end{align*}
where we use \Cref{def:cap-translate-extensions} in the last equality.
\end{proof}

\section{Maximum Polygon Cap}
\label{sec:maximum-polygon-cap}
We now define the polygon cap \(K_\Theta\) attaining the maximum value of \(\mathcal{A}_\Theta\).

\begin{definition}

Call a polygon cap \(K_\Theta \in \mathcal{K}_\Theta^\mathrm{c}\) a \emph{maximum polygon cap} with \emph{angle set} \(\Theta\) if \(o_\omega \in K_\Theta\) and \(K := K_\Theta\) attains the maximum value of \(\mathcal{A}_\Theta : \mathcal{K}_\Theta^\mathrm{c} \to \mathbb{R}\).

\label{def:maximum-polygon-cap}
\end{definition}

\begin{remark}

For any rotation angle \(\omega < \pi/2\), any cap \(K \in K_\Theta\) must contain the point \(o_\omega\). So the condition \(o_\omega \in K_\Theta\) in \Cref{def:maximum-polygon-cap} is only relevant when \(\omega = \pi/2\), and its purpose is to prevent the cap from sliding horizontally far away.

\label{rem:maximum-polygon-cap}
\end{remark}

\begin{lemma}

The mirror reflection of any maximum polygon cap \(K_\Theta \in \mathcal{K}_\Theta^\mathrm{c}\) is a maximum polygon cap with the angle set \(\omega - \Theta\).

\label{lem:maximum-polygon-cap-mirror}
\end{lemma}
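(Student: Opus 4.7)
The plan is to view the mirror reflection $M_\omega$ as an involution that exchanges the spaces $\mathcal{K}_\Theta^\mathrm{c}$ and $\mathcal{K}_{\omega - \Theta}^\mathrm{c}$ while preserving the polygon sofa area functional and fixing the distinguished point $o_\omega$. Once these three facts are in place, the lemma follows because maximality of $\mathcal{A}_\Theta$ on one side is equivalent, under the bijection, to maximality of $\mathcal{A}_{\omega - \Theta}$ on the other.

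First I would verify that $K_\Theta^{\mathrm{m}} := M_\omega(K_\Theta)$ lies in $\mathcal{K}_{\omega - \Theta}^\mathrm{c}$. By \Cref{def:angled-cap-space}, this amounts to checking (i) that $K_\Theta^{\mathrm{m}}$ is still a cap with rotation angle $\omega$, and (ii) that its defining normal angles all lie in $(\omega - \Theta)^\diamond \cup \{\omega + \pi, 3\pi/2\}$. Item (i) follows because $M_\omega$ fixes the parallelogram $P_\omega$ as a set and swaps the two pairs of parallel sides of $P_\omega$, so the contact conditions $h_K(\omega)=h_K(\pi/2)=1$ and $h_K(\omega+\pi)=h_K(3\pi/2)=0$ of \Cref{def:cap} are preserved; item (ii) follows from the fact that $M_\omega$ sends a half-plane with normal angle $\theta$ to one with normal angle $\tfrac{\pi}{2}+\omega-\theta$, which maps the indexing set $\Theta^\diamond\cup\{\omega+\pi,3\pi/2\}$ bijectively onto $(\omega-\Theta)^\diamond\cup\{\omega+\pi,3\pi/2\}$ (the images of $\Theta$ and $\Theta+\pi/2$ swap roles, and $\{\omega,\pi/2\}$, $\{\omega+\pi,3\pi/2\}$ are each permuted internally).

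Next I would compute $\mathcal{A}_{\omega-\Theta}(K_\Theta^{\mathrm{m}})$. Using \Cref{pro:mirror-reflection} directly, $\mathcal{N}_{\omega-\Theta}(K_\Theta^{\mathrm{m}}) = P_\omega \cap \bigcup_{t\in\omega-\Theta} Q_{K_\Theta^{\mathrm{m}}}^-(t) = P_\omega \cap \bigcup_{s\in\Theta} M_\omega(Q_{K_\Theta}^-(s)) = M_\omega\bigl(\mathcal{N}_\Theta(K_\Theta)\bigr)$, where the last equality uses $M_\omega(P_\omega)=P_\omega$. Since $M_\omega$ is an isometry we get $|K_\Theta^{\mathrm{m}}|=|K_\Theta|$ and $|\mathcal{N}_{\omega-\Theta}(K_\Theta^{\mathrm{m}})|=|\mathcal{N}_\Theta(K_\Theta)|$, so $\mathcal{A}_{\omega-\Theta}(K_\Theta^{\mathrm{m}})=\mathcal{A}_\Theta(K_\Theta)$ by \Cref{def:polygon-upper-bound}. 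The condition $o_\omega\in K_\Theta^{\mathrm{m}}$ is immediate because $o_\omega$ lies on the mirror axis and is therefore fixed by $M_\omega$.

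Finally I would close the argument by observing that the reflection $K\mapsto K^{\mathrm{m}}$ is an involution, hence a bijection between $\mathcal{K}_\Theta^\mathrm{c}$ and $\mathcal{K}_{\omega-\Theta}^\mathrm{c}$, and that this bijection intertwines $\mathcal{A}_\Theta$ with $\mathcal{A}_{\omega-\Theta}$ by the same argument applied symmetrically. Therefore $K_\Theta$ attains the maximum of $\mathcal{A}_\Theta$ on $\mathcal{K}_\Theta^\mathrm{c}$ if and only if $K_\Theta^{\mathrm{m}}$ attains the maximum of $\mathcal{A}_{\omega-\Theta}$ on $\mathcal{K}_{\omega-\Theta}^\mathrm{c}$, which together with $o_\omega\in K_\Theta^{\mathrm{m}}$ verifies \Cref{def:maximum-polygon-cap} for $K_\Theta^{\mathrm{m}}$. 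There is no real obstacle here — the work is purely bookkeeping — but the step that most deserves care is the index-set verification in the first paragraph, since $\Theta^\diamond$ is defined asymmetrically (with the offset $+\pi/2$ rather than $+\omega$), and one must confirm that reflection really does interchange the two halves of the domain rather than produce a larger set.
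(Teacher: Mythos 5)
Your proposal is correct and takes essentially the same approach as the paper, whose proof is the single line ``Observe that $\mathcal{A}_\Theta$ and $o_\omega$ are preserved under the mirror reflection $M_\omega$.'' You have simply expanded that observation into a full verification, including the bookkeeping on the index set $\Theta^\diamond$ that the paper leaves implicit.
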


\begin{proof}
Observe that \(\mathcal{A}_\Theta\) and \(o_\omega\) are preserved under the mirror reflection \(M_\omega\).
\end{proof}

\begin{lemma}

Let \(\omega \in (0, \pi/2]\) and \(t \in (0, \omega)\) be arbitrary. There exists a constant \(c_{\omega, t} > 0\) such that the following holds. Let \(\Theta\) be any angle set of rotation angle \(\omega\) containing \(t\). Assume that \(K \in \mathcal{K}_\Theta^\mathrm{c}\) satisfies \(\mathcal{A}_\Theta(K) > 0\). Then the width of \(K\) along the direction \(u_0\) is at most \(c_{\omega, t}\).

\label{lem:polygon-cap-bounded}
\end{lemma}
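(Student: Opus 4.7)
The plan is to split on whether $\omega < \pi/2$ or $\omega = \pi/2$. The first case is a direct geometric consequence of the cap axioms and does not even use the hypothesis $\mathcal{A}_\Theta(K) > 0$: the conditions $h_K(\omega) = 1$ and $h_K(\omega + \pi) = 0$ from \Cref{def:cap} give $K \subseteq V_\omega$, which combined with $K \subseteq H$ forces $K \subseteq P_\omega$. For $\omega \in (0, \pi/2)$ the parallelogram $P_\omega$ has horizontal extent $\sec \omega + \tan \omega$, so one may take $c_{\omega, t} := \sec \omega + \tan \omega$, independently of $\Theta$ and $t$.

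The substantive case is $\omega = \pi/2$, where $K$ is only confined to the strip $H$ and can in principle be arbitrarily wide in direction $u_0$. The plan is to show that the single wedge $T_K(t) \subseteq \mathcal{N}_\Theta(K)$ at the distinguished angle $t \in \Theta$ already has area growing quadratically in $W := h_K(0) + h_K(\pi)$, while $|K|$ grows only linearly (since $K \subseteq \mathbb{R} \times [0, 1]$ gives $|K| \leq W$). Writing $\mathbf{x}_K(t) = \alpha u_t + \beta v_t$ with $\alpha := h_K(t) - 1$ and $\beta := h_K(t + \pi/2) - 1$, and noting that $F_{\pi/2} = \{y \geq 0\}$, a direct computation (slicing the open quarter-plane $Q_K^-(t)$ by horizontal lines) shows that when $Y := \alpha \sin t + \beta \cos t > 0$, the wedge $T_K(t)$ is a triangle with apex $\mathbf{x}_K(t)$ and base on $\{y = 0\}$, of area $Y^2 / (2 \sin t \cos t)$.

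The core estimate is a lower bound on $Y$ in terms of $W$. I would pick $p \in K$ rightmost and $q \in K$ leftmost, so $p_x = h_K(0)$, $q_x = -h_K(\pi)$, and $p_y, q_y \geq 0$ by $K \subseteq H$; then $h_K(t) \geq p \cdot u_t \geq h_K(0) \cos t$ and $h_K(t + \pi/2) \geq q \cdot u_{t+\pi/2} \geq h_K(\pi) \sin t$ combine to give $Y \geq W \sin t \cos t - (\sin t + \cos t)$. Therefore
\[
\mathcal{A}_\Theta(K) \;\leq\; W - \frac{(W \sin t \cos t - \sin t - \cos t)^2}{2 \sin t \cos t},
\]
a downward quadratic in $W$ whose larger root is an explicit constant $c_{\pi/2, t}$ depending only on $t$; the hypothesis $\mathcal{A}_\Theta(K) > 0$ then forces $W < c_{\pi/2, t}$.

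The most delicate point is making sure the bound on $Y$ survives when $h_K(\pi) < 0$, i.e.\ when $K$ lies entirely to the right of the $y$-axis and the inequality $h_K(t+\pi/2) \geq h_K(\pi) \sin t$ has a negative right-hand side. I expect this to cause no trouble because the estimates above invoke only $p_y, q_y \geq 0$ at the two extreme points and never isolate the signs of $h_K(0)$ or $h_K(\pi)$ individually, so the combined bound on $Y$ depends only on the sum $W = h_K(0) + h_K(\pi)$ and remains uniformly valid across all polygon caps.
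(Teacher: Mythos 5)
Your proof is correct and follows the same strategy as the paper's: $K \subseteq P_\omega$ handles $\omega < \pi/2$, and for $\omega = \pi/2$ the single wedge $T_K(t)$ has area quadratic in the width $W$ while $|K| \leq W$ is only linear, forcing $W$ below an explicit $t$-dependent threshold. The only cosmetic difference is that you parameterize the wedge triangle by its apex height $Y$ (bounding it via the extremal points of $K$), whereas the paper parameterizes it by the length of its base on the $x$-axis (using $w_K(t) \leq \sec t$ and $z_K(t) \leq \csc t$); the two bounds are equal up to the factor $\sin t \cos t$ and produce the same quadratic in $W$.
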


\begin{proof}
If \(\omega < \pi/2\), then as \(K \subseteq P_\omega\) it suffices to take \(c_{\omega, t}\) as the width of \(P_\omega\) along the direction \(u_0\). Now assume \(\omega = \pi/2\) and fix \(t\). Take any \(K \in \mathcal{K}_\Theta^\mathrm{c}\) with the angle set \(\Theta\) containing \(t\), and let \(d\) be the width of \(K\) along \(u_0\). Then \(|K| \leq d\). The wedge \(T_K(t)\) is a right triangle of side \(\geq d - \sec t - \csc t\) and the acute angle \(t\). So \(|\mathcal{N}_\Theta(K)| \geq |T_K(t)| \geq Q_t(d)\), where \(Q_t(d)\) is a quadratic polynomial of \(d\) completely determined by \(t\) with positive leading coefficient. Now there exists a constant \(c_{\pi/2, t} > 0\) so that for any \(K \in \mathcal{K}_\Theta^\mathrm{c}\) with width \(d > c_{\pi/2, t}\) we have \(\mathcal{A}_\Theta(K) = |K| - |\mathcal{N}_\Theta(K)| \leq d - Q_t(d) \leq 0\). Take the contraposition to finish the proof.
\end{proof}

\begin{theorem}

For any angle set \(\Theta\), a maximum polygon cap \(K_\Theta\) exists.

\label{thm:maximum-polygon-cap}
\end{theorem}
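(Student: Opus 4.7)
The plan is to apply the Blaschke selection theorem to a maximizing sequence in the set
\[
\mathcal{M} := \left\{ K \in \mathcal{K}_\Theta^\mathrm{c} : o_\omega \in K \right\}.
\]
First I would argue that $M := \sup_{K \in \mathcal{M}} \mathcal{A}_\Theta(K)$ is a finite nonnegative real number. Nonnegativity holds because $\mathcal{M}$ is nonempty: for example, the parallelogram $P_\omega$ lies in $\mathcal{K}_\Theta^\mathrm{c}$ (it is an intersection of four half-planes with normal angles in $\Theta^\diamond \cup \{\omega + \pi, 3\pi/2\}$) and contains $o_\omega$, so $M$ dominates $\mathcal{A}_\Theta(P_\omega)$, which one can check is nonnegative. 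For finiteness, fix any $t_0 \in \Theta$: by \Cref{lem:polygon-cap-bounded}, every $K \in \mathcal{M}$ with $\mathcal{A}_\Theta(K) > 0$ has width at most $c_{\omega, t_0}$ along $u_0$. Since $K \subseteq P_\omega \subseteq H$ also forces the width in direction $v_0$ to be at most $1$, we get $|K| \le c_{\omega, t_0}$, and hence $\mathcal{A}_\Theta(K) \le |K| \le c_{\omega, t_0}$.

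Next I would extract a Hausdorff-convergent subsequence. Take a maximizing sequence $K_n \in \mathcal{M}$ with $\mathcal{A}_\Theta(K_n) \to M$; I may assume $\mathcal{A}_\Theta(K_n) > 0$. The width bounds above, together with the common point $o_\omega \in K_n$, place every $K_n$ inside a single fixed closed disk. The Blaschke selection theorem then yields a subsequence converging in Hausdorff metric to a convex body $K^\ast$. Since support functions converge uniformly under Hausdorff limits, the boundary equalities $h_{K^\ast}(\omega) = h_{K^\ast}(\pi/2) = 1$ and $h_{K^\ast}(\omega + \pi) = h_{K^\ast}(3\pi/2) = 0$ pass to the limit, and the defining intersection of half-planes with normal angles in $\Theta^\diamond \cup \{\omega + \pi, 3\pi/2\}$ is also preserved in the limit, so $K^\ast \in \mathcal{K}_\Theta^\mathrm{c}$. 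The condition $o_\omega \in K^\ast$ persists because each $K_n$ is closed and contains $o_\omega$.

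Finally I would show that $\mathcal{A}_\Theta$ is continuous along this subsequence, yielding $\mathcal{A}_\Theta(K^\ast) = M$ and identifying $K_\Theta := K^\ast$ as a maximum polygon cap. Continuity of $|K|$ under Hausdorff limits of convex bodies is classical. For the niche term, write $\mathcal{N}_\Theta(K_n) = \mathcal{N}_\Theta(h_{K_n})$ as the simple Nef polygon described in \Cref{pro:cap-niche-nef-polygons}, whose defining half-planes depend continuously on the finitely many real numbers $h_{K_n}(t)$ for $t \in \Theta^\diamond$; the distinction between open and closed half-planes is irrelevant for the area. The hard part will be exactly this continuity bookkeeping: I plan to chain together one-parameter-at-a-time Lipschitz estimates supplied by \Cref{thm:simple-nef-polygon}, whose constants depend on $K^\ast$ but can be taken uniform once the $K_n$ are close enough to $K^\ast$ in Hausdorff metric. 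Combining these estimates produces $|\mathcal{N}_\Theta(K_n)| \to |\mathcal{N}_\Theta(K^\ast)|$, which together with $|K_n| \to |K^\ast|$ gives $\mathcal{A}_\Theta(K^\ast) = M$ as required.
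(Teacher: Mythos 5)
Your architecture matches the paper's (bounding lemma, Blaschke selection, continuity of $\mathcal{A}_\Theta$), but there is a genuine gap in the concluding step. You prove that $K^\ast$ attains $M := \sup_{K \in \mathcal{M}} \mathcal{A}_\Theta(K)$ where $\mathcal{M} = \{K \in \mathcal{K}_\Theta^\mathrm{c} : o_\omega \in K\}$, and then declare $K^\ast$ a maximum polygon cap. However, \Cref{def:maximum-polygon-cap} requires $K_\Theta$ to attain the maximum of $\mathcal{A}_\Theta$ over the \emph{entire} space $\mathcal{K}_\Theta^\mathrm{c}$, not just over $\mathcal{M}$. For $\omega < \pi/2$ the two coincide because $o_\omega \in K$ holds automatically for every cap (\Cref{rem:maximum-polygon-cap}), but for $\omega = \pi/2$ there are polygon caps not containing $o_\omega$, and you must argue that restricting to $\mathcal{M}$ does not shrink the supremum. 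The paper's proof handles this explicitly by noting that when $\omega = \pi/2$, every $K' \in \mathcal{K}_\Theta^\mathrm{c}$ admits a horizontal translate $K'' \in \mathcal{M}$ with $\mathcal{A}_\Theta(K'') = \mathcal{A}_\Theta(K')$, so the two suprema agree. This translation-invariance step is missing from your argument and is precisely the point the $o_\omega$ normalization is designed to interact with.

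A secondary, smaller concern is your planned continuity proof for the niche term. \Cref{thm:simple-nef-polygon} gives a first-order expansion whose threshold $\epsilon(X, i)$ and $O_{X,i}(\delta^2)$ error constant depend on the base Nef polygon $X$, and the theorem statement says nothing about how those constants vary as $X$ itself is perturbed. Chaining the estimate one parameter at a time along $K_n \to K^\ast$ therefore passes through a sequence of intermediate Nef polygons with a priori different constants; the uniformization you gesture at ("once the $K_n$ are close enough to $K^\ast$") is plausible but would need to be proved, and it is not what the theorem supplies. The paper takes a more direct route: it writes $\mathcal{N}_\Theta(K)$ as a finite union over $t \in \Theta$ of wedge-type regions and bounds $\bigl||\mathcal{N}_\Theta(K')| - |\mathcal{N}_\Theta(K)|\bigr|$ by the sum of areas of symmetric differences of the corresponding wedges, each of which tends to zero as $h_{K'}(t) \to h_K(t)$ and $h_{K'}(t+\pi/2) \to h_K(t+\pi/2)$. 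That argument avoids the constant-dependence issue entirely and is considerably shorter. Finally, you should verify $\mathcal{A}_\Theta(P_\omega) \geq 0$ rather than leave it to the reader; the paper instead exhibits a cap $K_1$ with support function identically $1$ on $\Theta^\diamond$, for which $\mathcal{N}_\Theta(K_1) = \emptyset$ and hence $\mathcal{A}_\Theta(K_1) = |K_1| > 0$ trivially, which also gives the strict positivity you need to invoke \Cref{lem:polygon-cap-bounded} on a maximizing sequence.
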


\begin{proof}
We first show that \(\mathcal{A}_\Theta(K) = |K| - |\mathcal{N}_\Theta(K)|\) on \(K \in \mathcal{K}_\Theta^\mathrm{c}\) is continuous with respect to the Hausdorff distance \(d_\mathrm{H}\) on \(K \in \mathcal{K}_\Theta^\mathrm{c}\). By Theorem 1.8.20, page 68 of \autocite{schneider_2013}, \(|K|\) is continuous with respect to \(d_\mathrm{H}\). Fix an \(K \in \mathcal{K}_\Theta^\mathrm{c}\) and take any \(K' \in \mathcal{K}_\Theta^\mathrm{c}\) sufficiently close to \(K\) in \(d_\mathrm{H}\). By \Cref{pro:wedge}, the absolute value of \(|\mathcal{N}_\Theta(K')| - |\mathcal{N}_\Theta(K)|\) is at most the sum of the areas of the symmetric difference \(\Delta(t)\) between wedges \(T_K(t)\) and \(T_{K'}(t)\) over all angles \(t \in \Theta\). As \(K' \to K\) in \(d_\mathrm{H}\), we have \(h_{K'}(t) \to h_K(t)\) and \(h_{K'}(t + \pi/2) \to h_K(t)\) so \(|\Delta(t)| \to 0\). This shows that \(|\mathcal{N}_\Theta(K')| \to |\mathcal{N}_\Theta(K)|\) as \(K' \to K\) in \(d_\mathrm{H}\). So \(\mathcal{A}_\Theta(K)\) is continuous in \(K\) with respect to \(d_\mathrm{H}\).

Let \(\mathcal{B}_\Theta\) be the collection of all \(K \in \mathcal{K}_\Theta^\mathrm{c}\) such that \(\mathcal{A}_\Theta(K) \geq 0\) and \(o_\omega \in K\). Let \(K_1 \in \mathcal{K}_\Theta^\mathrm{c}\) be the polygon cap with the support function \(h_{K_1}(t) = 1\) for every \(t \in \Theta^{\diamond}\) (this \(K_1\) is equal to \(\mathcal{C}_\Theta(K)\) where \(K\) is the semicircle of radius 1 and angle \(\omega + \pi/2\) centered at \(O\)). Then \(\mathcal{N}_\Theta(K_1)\) is empty and \(o_\omega \in K_1\) so we have \(K_1 \in \mathcal{B}_\Theta\) and \(\mathcal{B}_\Theta\) is nonempty. By \Cref{lem:polygon-cap-bounded}, the width of any member of \(\mathcal{B}_\Theta\) is bounded by the constant \(c_{\omega, t}\). So by the Blasckhe selection theorem, the domain \(\mathcal{B}_\Theta\) is compact in \(\mathcal{K}_\Theta^\mathrm{c}\). So a maximizer \(K_\Theta\) of \(\mathcal{A}_\Theta(K_\Theta)\) on the compact and nonempty domain \(\mathcal{B}_\Theta\) exists.

We finally show that \(K := K_\Theta\) maximizes \(\mathcal{A}_\Theta(K)\) over the domain \(K \in \mathcal{K}_\Theta^\mathrm{c}\) larger than \(\mathcal{B}_\Theta\). Take any other \(K' \in \mathcal{K}_\Theta^\mathrm{c}\). Our goal is to show that \(\mathcal{A}_\Theta(K') \leq \mathcal{A}_\Theta(K_\Theta)\). If \(\mathcal{A}_\Theta(K') < 0\), then we have \(\mathcal{A}_\Theta(K') < 0 \leq \mathcal{A}_\Theta(K_\Theta)\) since \(K_\Theta \in \mathcal{B}_\Theta\) so the proof is done. So assume \(\mathcal{A}_\Theta(K') \geq 0\). If \(\omega < \pi/2\), then we have \(o_\omega \in K'\) so \(K' \in \mathcal{B}_\Theta\) and again the proof is done. So assume also \(\omega = \pi/2\). Now any horizontal translation of \(K'\) is also a cap \(K'' \in \mathcal{K}_\Theta^\mathrm{c}\) with the same \(\mathcal{A}_\Theta(K'') = \mathcal{A}_\Theta(K')\). Find one translate \(K''\) that contains the point \(o_\omega = (0, 1)\), then \(K'' \in \mathcal{B}_\Theta\) and we have \(\mathcal{A}_\Theta(K') = \mathcal{A}_\Theta(K'') \leq \mathcal{A}_\Theta(K_\Theta)\) also completing the proof.
\end{proof}

\begin{definition}

For any \(n \geq 1\) and a finite sequence \(p_1, p_2, \dots, p_n \in \mathbb{R}^2\) of points on a plane with strictly increasing \(x\)-coordinates, call the union of all closed segments connecting adjacent points \(p_i\) to \(p_{i+1}\) (\(1 \leq i < n\)) an \emph{\(x\)-monotone polyline}.

\label{def:polyline}
\end{definition}

\begin{figure}
\centering
\includegraphics{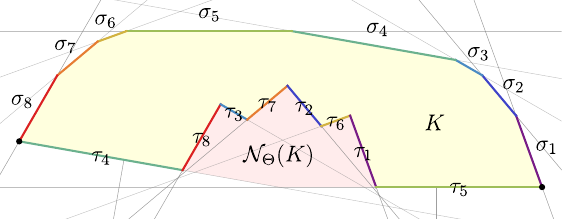}
\caption{A polygon cap \(K \in \mathcal{K}_\Theta^\mathrm{c}\) and polygon niche \(\mathcal{N}_\Theta(K)\) with balanced side lengths. Two black points \(A_K^-(0)\) and \(C_K^+(\omega)\) separate the upper boundary \(\delta K\) of \(K\) and the polyline \(\mathbf{p}_K\). Let \(\theta_1, \dots, \theta_8\) be the elements of \(\Theta\) in increasing order. Each side length \(\sigma_i := \sigma(\theta_i)\) of \(\delta K\) then balance exactly with a corresponding side length \(\tau_i := \tau(\theta_i)\) of \(\mathbf{p}_K\).}
\label{fig:balanced-polygon-sofa-color}
\end{figure}

\begin{theorem}

For any polygon cap \(K \in \mathcal{K}_\Theta^\mathrm{c}\), the boundary of the closed set \(F_\omega \setminus \mathcal{N}_\Theta(K)\) is a disjoint union of the following subsets, from left to right in \(\mathbb{R}^2\).

\begin{enumerate}
\def\labelenumi{\arabic{enumi}.}
\tightlist
\item
  The open half-line \(\vec{l}_K\) from \(C_K^+(\omega)\) extending in the direction \(v_\omega\) but not containing \(C_K^+(\omega)\).
\item
  An \(x\)-monotone polyline \(\mathbf{p}_K\) from left to right connecting \(C_K^+(\omega)\) to \(A_K^-(0)\), with each segment of normal angle \(t \in \Theta^\diamond\).
\item
  The open half-line \(\vec{r}_K\) from \(A_K^-(0)\) extending in the direction \(u_0\) but not containing \(A_K^-(0)\).
\end{enumerate}

\label{thm:polyline}
\end{theorem}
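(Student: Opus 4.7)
The plan is to characterize the topological boundary of $F_\omega \setminus \mathcal{N}_\Theta(K)$ by splitting it into the portion lying on $\partial F_\omega$ (the two bounding rays of the fan) and the portion lying in the interior $F_\omega^\circ$. The set is closed in $\mathbb{R}^2$: writing it as $(F_\omega \setminus P_\omega) \cup (P_\omega \setminus \mathcal{N}_\Theta(K))$ and noting that $\mathcal{N}_\Theta(K)$ is relatively open in $P_\omega$, each piece is closed. I will match the $\partial F_\omega$-portion of the boundary to the two rays $\vec{l}_K, \vec{r}_K$ together with certain $\omega$- and $\pi/2$-normal segments of $\mathbf{p}_K$, and the $F_\omega^\circ$-portion to the remaining segments of $\mathbf{p}_K$ on inner walls.

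For the rays, by mirror symmetry (\Cref{pro:mirror-reflection}) it suffices to handle the right side. Since $K \in \mathcal{K}_\Theta^\mathrm{c}$ has no edge of normal angle $0$ (because $0 \notin \Theta^\diamond \cup \{\omega+\pi, 3\pi/2\}$), the set $l_K(0) \cap K$ degenerates to the vertex $A_K^-(0) = l_K(\min\Theta) \cap l_K(3\pi/2) = (h_K(0), 0)$, which lies on the $x$-axis. By \Cref{thm:wedge-ends-in-cap} the gap $w_K(t) > 0$ for every $t \in \Theta$, so each $W_K(t)$ lies strictly to the left of $A_K^-(0)$ along the $x$-axis. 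Hence no point of $\overline{\mathcal{N}_\Theta(K)}$ lies strictly to the right of $A_K^-(0)$ on the $x$-axis, so $\vec{r}_K = \{A_K^-(0) + s u_0 : s > 0\}$ contributes to the boundary. The symmetric argument gives $\vec{l}_K$ on the other bounding ray of $\partial F_\omega$.

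For the polyline $\mathbf{p}_K$ connecting $C_K^+(\omega)$ to $A_K^-(0)$, I would describe it as the union of: (i) subsegments of the inner walls $b_K(t)$ (normal angle $t \in \Theta$) and $d_K(t)$ (normal angle $t+\pi/2 \in \Theta + \pi/2$) forming the upper envelope of $\mathcal{N}_\Theta(K)$ in $F_\omega^\circ$; together with (ii) subsegments of $\partial F_\omega$ between the two vertices not covered by any wedge's closure (on $\{y = 0\}$ with normal angle $\pi/2$, and on $l(\omega, 0)$ with normal angle $\omega$). All segment normal angles thus lie in $\Theta^\diamond$. For $x$-monotonicity I would orient each segment from left to right and check that its direction has positive $x$-component: $b_K(t)$-segments in direction $-v_t = (\sin t, -\cos t)$ with $t \in (0, \pi/2)$; $d_K(t)$-segments in direction $(\cos t, \sin t)$; $\{y=0\}$-segments in direction $u_0$; and $l(\omega,0)$-segments in direction $-v_\omega = (\sin\omega, -\cos\omega)$ with $\omega \in (0, \pi/2]$. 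Each direction has strictly positive $x$-component, giving the monotonicity.

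The main obstacle, I expect, is showing rigorously that the boundary between $C_K^+(\omega)$ and $A_K^-(0)$ assembles into a \emph{single} connected $x$-monotone polyline rather than a disconnected or branching structure. The wedges $Q_K^-(t)$ may overlap intricately or be pairwise disjoint, so the upper envelope combined with $\partial F_\omega$-bridges in the gaps requires careful combinatorial analysis. I would proceed by induction on $|\Theta|$: in the base case $|\Theta| = 1$, the polyline consists of two inner-wall segments meeting at $\mathbf{x}_K(t)$ plus bridging $\partial F_\omega$-segments reaching $C_K^+(\omega)$ and $A_K^-(0)$; inductively, each newly added convex wedge intersects the current polyline in exactly one connected interval (by convexity), and replaces that portion with two new inner-wall segments meeting at $\mathbf{x}_K(t^*)$, preserving the $x$-monotone polyline structure.
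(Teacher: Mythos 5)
Your proof takes a genuinely different route from the paper, but the inductive step has a gap that you would need to close. The paper's proof is short: it sets $X := \bigcup_{t \in \Theta} Q_K^-(t)$, observes that $X$ is a Nef polygon closed in the direction of $-v_0$ (\Cref{def:closed-in-direction}) with edges of normal angles in $\Theta \cup (\Theta + \pi/2)$, and deduces from that downward-closedness that the boundary of $F_\omega \setminus X$ inside the fan is automatically an $x$-monotone polyline (i.e., a graph over the $x$-axis); it then only has to check, via $w_K(t), z_K(t) > 0$, that the two rays $\vec{l}_K$ and $\vec{r}_K$ are disjoint from $X$ so that the polyline attaches to them at $C_K^+(\omega)$ and $A_K^-(0)$. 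Your proof instead tries to build the polyline one wedge at a time.

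The gap is the claim that ``each newly added convex wedge intersects the current polyline in exactly one connected interval (by convexity).'' Convexity of $Q_K^-(t^*)$ does not give this: the current polyline is not the boundary of a convex region, so a convex set can cut it in several disjoint arcs. Concretely, if you write the current upper envelope as a function $y_n(c)$ of the $x$-coordinate, then $y_n = \max_{s} \Lambda_s$ where each $\Lambda_s$ is the concave $\Lambda$-shaped roof of $Q_K^-(s)$; the set where a new roof $\Lambda_{t^*}$ wins, $\{c : \Lambda_{t^*}(c) > y_n(c)\}$, is the superlevel set of a concave function against a max of concave functions, and this can have two components whenever $y_n$ has a peak higher than $\Lambda_{t^*}$ sitting between two valleys that $\Lambda_{t^*}$ does cover. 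For a general polygon cap the vertices $\mathbf{x}_K(s)$ are not constrained to be in $x$-monotone order (that only holds after \Cref{thm:injectivity}, which you cannot invoke here), so this situation can in principle occur. The cleanest repair is to drop the induction entirely and note, as the paper does, that $X$ is closed in the direction $-v_0$, hence $F_\omega \setminus X$ is an upward-closed set whose boundary inside $F_\omega^\circ$ is literally the graph of the piecewise-linear function $y_n(c) = \max_{s \in \Theta} \Lambda_s(c)$; this is a single $x$-monotone polyline with segment normals in $\Theta \cup (\Theta + \pi/2)$, with $\partial F_\omega$-segments (normal angles $\omega, \pi/2$) filling the gaps where $y_n$ drops below $\partial F_\omega$. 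The rest of your argument (closedness, the ray endpoints via $w_K, z_K > 0$, the $x$-component check on segment directions) is correct and mirrors the paper's ingredients.
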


\begin{proof}
Let \(X := \bigcup_{t \in \Theta} Q_K^-(t)\). Observe that \(X\) is a Nef polygon closed in the direction of \(-v_0\) (\Cref{def:closed-in-direction}) with edges of normal angles in \(\Theta \cup \left( \Theta + \pi/2 \right)\). So since \(F_\omega \setminus \mathcal{N}_\Theta(K) = F_\omega \setminus X\), it suffices to show that \(\vec{l}_K\) and \(\vec{r}_K\) are disjoint from \(X\). Take any \(t \in \Theta\). Since \(w_K(t) > 0\) by \Cref{thm:wedge-ends-in-cap}, the point \(A_K^-(0)\) and thus the half-line \(\mathbf{r}_K\) is on the right side of the line \(b_K(t)\). So \(\mathbf{r}_K\) is disjoint from \(Q_K^-(t)\). Likewise, from \(z_K(t) > 0\) of \Cref{thm:wedge-ends-in-cap}, \(Q_K(t)\) is disjoint from \(Q_K^-(t)\) as well.
\end{proof}

\begin{definition}

Define the polyline \(\mathbf{p}_K\) in \Cref{thm:polyline} from \(C_K^+(\omega)\) to \(A_K^-(0)\) as the \emph{polyline of cap} \(K \in \mathcal{K}_\Theta^\mathrm{c}\).

\label{def:polyline-of-cap}
\end{definition}

\begin{definition}

For any cap \(K \in \mathcal{K}_\Theta^\mathrm{c}\) and angle \(t \in \Theta^\diamond\), define \(\tau_K(t)\) as the sum of the lengths of all edges in the polyline \(\mathbf{p}_K\) with normal angle \(t\).

\label{def:polyline-length}
\end{definition}

\begin{lemma}

Let \(K \in \mathcal{K}_\Theta^\mathrm{c}\) be arbitrary.

\begin{enumerate}
\def\labelenumi{\arabic{enumi}.}
\tightlist
\item
  For any \(t \in \Theta\), we have
  \[
  \mathcal{H}^1 \left( \partial \mathcal{N}_\Theta(K) \cap b_K(t) \right) = \mathcal{H}^1 \left( \partial \mathcal{N}_\Theta(K) \cap \vec{b}_K(t) \right) = \tau_K(t)
  \]
  and
  \[
  \mathcal{H}^1 \left( \partial \mathcal{N}_\Theta(K) \cap d_K(t) \right) = \mathcal{H}^1 \left( \partial \mathcal{N}_\Theta(K) \cap \vec{d}_K(t) \right) = \tau_K(t + \pi/2).
  \]
\item
  For any \(t \in \left\{ \omega, \pi/2 \right\}\), we have
  \[
  \mathcal{H}^1\left( \partial \mathcal{N}_\Theta(K) \cap l(t, 0) \right) = \mathcal{H}^1\left(\mathcal{N}_\Theta(K) \cap l(t, 0) \right) = \sigma_K(t + \pi) - \tau_K(t).
  \]
\end{enumerate}

\label{lem:polyline-length}
\end{lemma}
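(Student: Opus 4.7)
The plan is to decompose $\partial \mathcal{N}_\Theta(K)$ by setting $X := \bigcup_{s \in \Theta} Q_K^-(s)$, so that $\mathcal{N}_\Theta(K) = F_\omega \cap X$, and using the identity
\[
\partial \mathcal{N}_\Theta(K) = (F_\omega \cap \partial X) \cup (\partial F_\omega \cap \overline{X}).
\]
I will then restrict this identity to the line in question and extract Hausdorff lengths via \Cref{thm:polyline}, which identifies the polyline $\mathbf{p}_K$ as the portion of $\partial(F_\omega \setminus \mathcal{N}_\Theta(K))$ lying strictly between the half-lines $\vec{l}_K$ and $\vec{r}_K$.

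For part (1), fix $t \in \Theta$ and work on the line $b_K(t)$. The term $\partial F_\omega \cap \overline{X}$ is supported on the at most two points $b_K(t) \cap \partial F_\omega$, contributing no $\mathcal{H}^1$-mass. For the other term, $\overline{Q_K^-(t)} \cap b_K(t) = \vec{b}_K(t)$ by definition, while for $s \in \Theta$ with $s \neq t$ the boundary $\partial Q_K^-(s) \cap b_K(t)$ is a finite set, because the defining lines $b_K(s), d_K(s)$ of $Q_K^-(s)$ differ from $b_K(t)$ thanks to $s \neq t$ and $s + \pi/2 \neq t$ on $s, t \in \Theta \subset (0, \pi/2)$. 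Hence modulo a finite set, $\partial X \cap b_K(t) \subseteq \vec{b}_K(t)$, and both $\mathcal{H}^1(\partial \mathcal{N}_\Theta(K) \cap b_K(t))$ and $\mathcal{H}^1(\partial \mathcal{N}_\Theta(K) \cap \vec{b}_K(t))$ equal the total length of polyline edges of $\mathbf{p}_K$ lying on $b_K(t)$, which is $\tau_K(t)$ by \Cref{thm:polyline} and \Cref{def:polyline-length}. The claim for $d_K(t)$ with $\tau_K(t+\pi/2)$ follows by replacing $t$ with $t + \pi/2$ throughout.

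For part (2), fix $t \in \{\omega, \pi/2\}$. Since $l(t, 0) \subseteq \partial F_\omega$, the decomposition gives $\partial \mathcal{N}_\Theta(K) \cap l(t, 0) = \overline{X} \cap l(t, 0)$, which differs from $\mathcal{N}_\Theta(K) \cap l(t, 0) = X \cap l(t, 0)$ only on the countable set of interval endpoints in $\partial X \cap l(t, 0)$, proving the first equality. For the second, I will show that the bottom edge $e_K(t+\pi) = K \cap l(t, 0)$ of length $\sigma_K(t+\pi)$ decomposes, up to a finite set, as the disjoint union of $\mathcal{N}_\Theta(K) \cap l(t, 0)$ and $\mathbf{p}_K \cap l(t, 0)$. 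For the inclusion $e_K(t+\pi) \setminus \mathcal{N}_\Theta(K) \subseteq \mathbf{p}_K$, each such point lies on $\partial(F_\omega \setminus \mathcal{N}_\Theta(K)) = \vec{l}_K \cup \mathbf{p}_K \cup \vec{r}_K$ by \Cref{thm:polyline}, and the half-lines $\vec{l}_K, \vec{r}_K$ meet $e_K(t+\pi)$ only at the endpoints $C_K^+(\omega)$ and $A_K^-(0)$. For the converse $\mathbf{p}_K \cap l(t, 0) \subseteq e_K(t+\pi)$, the polygon-cap structure of $K \in \mathcal{K}_\Theta^\mathrm{c}$ (no normal angles strictly between $\omega + \pi/2$ and $3\pi/2$) places $v_K^+(\omega+\pi) = v_K^-(3\pi/2) = O$ when $\omega < \pi/2$, so $e_K(\omega+\pi) \cup e_K(3\pi/2)$ exhausts the portion of $l(\omega, 0) \cup l(\pi/2, 0)$ running from $C_K^+(\omega)$ to $A_K^-(0)$ through $O$; combined with the $x$-monotonicity of $\mathbf{p}_K$ from $C_K^+(\omega)$ to $A_K^-(0)$, this confines the polyline segments on $l(t, 0)$ to $e_K(t+\pi)$, and the degenerate case $\omega = \pi/2$ reduces directly by $x$-monotonicity on a single line. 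Disjointness of $\mathbf{p}_K \cap l(t, 0)$ and $X \cap l(t, 0)$ holds because a point of $X$ on $l(t, 0)$ sits in the interior of $\mathcal{N}_\Theta(K)$ relative to $F_\omega$, hence is not in $\overline{F_\omega \setminus \mathcal{N}_\Theta(K)} \supseteq \mathbf{p}_K$. Since no $b_K(s)$ or $d_K(s)$ with $s \in \Theta$ has normal angle in $\{\omega, \pi/2\}$, polyline edges with normal angle $t$ must lie on $l(t, 0)$, giving $\mathcal{H}^1(\mathbf{p}_K \cap l(t, 0)) = \tau_K(t)$. Combining yields $\sigma_K(t+\pi) = \mathcal{H}^1(\mathcal{N}_\Theta(K) \cap l(t, 0)) + \tau_K(t)$, which rearranges to the claimed identity.

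The main obstacle is the containment $\mathbf{p}_K \cap l(t, 0) \subseteq e_K(t+\pi)$ in part (2); it is where the polygon-cap structure of $K \in \mathcal{K}_\Theta^\mathrm{c}$ must be invoked to identify $v_K^+(\omega+\pi) = v_K^-(3\pi/2) = O$, so that the combined bottom $e_K(\omega+\pi) \cup e_K(3\pi/2)$ exhausts the relevant portion of $\partial F_\omega$ between the $x$-monotone polyline's endpoints.
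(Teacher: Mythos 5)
Your proposal is essentially the same as the paper's argument: both of you set $X := \bigcup_{s \in \Theta} Q_K^-(s)$, decompose $\partial\mathcal{N}_\Theta(K)$ into a part in $F_\omega^\circ$ (where it coincides with $\partial X$, and with $\mathbf{p}_K$ via \Cref{thm:polyline}) and a part on $\partial F_\omega$, and then read off one-dimensional measures on the lines in question, dismissing the finitely many transversal crossing points. The only cosmetic difference is bookkeeping: the paper names the three pieces $M := X \cap \partial F_\omega$, $N := \partial\mathcal{N}_\Theta(K) \cap F_\omega^\circ$, $P := \partial X \cap \partial F_\omega$ and proves $\mathbf{p}_K = N \cup (D \setminus M)$, while you package the same information as $\partial \mathcal{N}_\Theta(K) \subseteq (F_\omega \cap \partial X) \cup (\partial F_\omega \cap \overline{X})$ and for part (2) decompose $e_K(t+\pi)$ directly.

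Two small points deserve attention. First, your parenthetical ``no normal angles strictly between $\omega + \pi/2$ and $3\pi/2$'' is literally false when $\omega < \pi/2$, since $\omega + \pi$ lies in that range; what you actually use is that there are no normal angles strictly between $\omega + \pi$ and $3\pi/2$, which combined with $h_K(\omega+\pi) = h_K(3\pi/2) = 0$ forces $v_K^+(\omega+\pi) = v_K^-(3\pi/2) = O$. Second, to conclude $\sigma_K(t+\pi) = \mathcal{H}^1(\mathcal{N}_\Theta(K) \cap l(t,0)) + \tau_K(t)$ from your decomposition of $e_K(t+\pi)$, you additionally need $\mathcal{N}_\Theta(K) \cap l(t,0) \subseteq e_K(t+\pi)$ (otherwise $e_K(t+\pi) \cap \mathcal{N}_\Theta(K)$ and $\mathcal{N}_\Theta(K) \cap l(t,0)$ could differ in measure). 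This is true, and follows from $w_K(s), z_K(s) > 0$ (\Cref{thm:wedge-ends-in-cap}): each $T_K(s) \cap l(t,0)$ is trapped inside $e_K(t+\pi)$ because its outer endpoint $W_K(s)$ (resp.\ $Z_K(s)$) lies strictly inside the bottom edge. The paper's phrasing avoids making this explicit by first establishing $\mathbf{p}_K = N \cup (D \setminus M)$, which already places the $\partial F_\omega$-part of the polyline inside $D = K \cap \partial F_\omega$; your route just surfaces the same fact slightly differently.
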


\begin{proof}
Define \(X := \bigcup_{t \in \Theta} Q_K^-(t)\), a Nef polygon with edges of normal angles in \(\Theta \cup (\Theta + \pi/2)\), and recall that \(\mathcal{N}_\Theta(K) = F_\omega \cap X\). Define the followings.

\begin{itemize}
\tightlist
\item
  The intersection \(M := X \cap \partial F_\omega\) which is open in the subspace topology of \(\partial F_\omega\).
\item
  The boundary \(N \subseteq F_\omega^\circ\) of \(\mathcal{N}_\Theta(K)\) in the subspace topology of \(F_\omega^\circ\).
\item
  The intersection \(P := \partial X \cap \partial F_\omega\) which is a finite set of points in \(\partial F_\omega\).
\end{itemize}

We check that the boundary \(\partial \mathcal{N}_\Theta(K)\) is a disjoint union of \(M, N, P\). Since \(F_\omega^\circ\) is open, we have \(N = \partial \mathcal{N}_\Theta(K) \cap F_\omega^\circ\). Since \(M, P \subseteq \partial F_\omega\), and they are disjoint because \(X\) is open, it suffices to show that the set \(X' := \partial \mathcal{N}_\Theta(K) \setminus N = \partial \mathcal{N}_\Theta(K) \cap \partial F_\omega\) is a union of \(M\) and \(P\). The union of \(M\) and \(P\) is \(\overline{X} \cap \partial F_\omega\) so it contains \(X'\), completing the check for \(\partial \mathcal{N}_\Theta(K)\).

Let \(Y := \partial(F_\omega \setminus \mathcal{N}_\Theta(K)) = \partial (F_\omega \setminus X)\). We check that \(Y = N \cup (\partial F_\omega \setminus M)\). We have \(Y \cap F_\omega^\circ = N\) as \(N\) is also the boundary of \(F_\omega \setminus \mathcal{N}_\Theta(K)\) in the subspace topology of \(F_\omega^\circ\). It remains to show \(Y \cap \partial F_\omega = \partial F_\omega \setminus M\). This is equivalent to showing that \(\partial F_\omega\) is a disjoint union of \(M = X \cap \partial F_\omega\) and \(\partial(F_\omega \setminus X) \cap \partial F_\omega\). They are disjoint because \(X\) is open. For any \(p \in \partial F_\omega \setminus M = \partial F_\omega \setminus X\), because \(F_\omega \setminus X\) is closed in the direction \(v_0\), there is a point \(p + \epsilon v_0 \in F_\omega \setminus X\) sufficiently close to \(p\), so \(p \in \partial (F_\omega \setminus X)\). This completes the check for \(Y\).

Define \(D := K \cap \partial F_\omega\). Then \(D = e_K(\omega + \pi) \cup e_K(3\pi/2)\) since \(K\) is a cap. By \Cref{thm:polyline}, the set \(\mathbf{p}_K\) is equal to \(Y \setminus \vec{l}_K \setminus \vec{r}_K\). Now, since \(Y = N \cup (\partial F_\omega \setminus M)\) by the check above, and \(D = \partial F_\omega \setminus \mathbf{l}_K \setminus \mathbf{r}_K\), we have \(\mathbf{p}_K = N \cup (D \setminus M)\). The portion \(N\) of \(\mathbf{p}_K\) is contributed by the lines \(b_K(t)\) and \(d_K(t)\) for angles \(t \in \Theta\). The portion \(D \setminus M\) of \(\mathbf{p}_K\) is contributed by the bottom sides \(\partial F_\omega\) of fan \(F_\omega\).

We prove (1). Take any \(t \in \Theta\).
\[
\mathcal{H}^1 \left( \partial \mathcal{N}_\Theta(K) \cap b_K(t) \right) = \mathcal{H}^1 \left( N \cap b_K(t) \right) =  \mathcal{H}^1 \left( \mathbf{p}_K \cap b_K(t) \right) = \tau_K(t)
\]
because \(M, P \subseteq \partial \mathcal{N}_\Theta(K)\) are on \(\partial F_\omega\) and so only intersect \(b_K(t)\) at a finite number of points. The value is also equal to \(\mathcal{H}^1 \left( \partial \mathcal{N}_\Theta(K) \cap \vec{b}_K(t) \right)\) because the sides of \(\mathcal{N}_\Theta(K)\) with normal angle \(t\) is only contributed by the half-line \(\vec{b}_K(t)\). Use mirror symmetry to show the corresponding equality for \(\tau_K(t + \pi/2)\).

We prove (2). Take any \(t \in \left\{ \omega, \pi/2 \right\}\).
\[
\mathcal{H}^1\left( \partial \mathcal{N}_\Theta(K) \cap l(t, 0) \right) = \mathcal{H}^1\left(M \cap l(t, 0) \right) = \mathcal{H}^1\left((D \setminus \mathbf{p}_K) \cap l(t, 0) \right) = \sigma_K(t + \pi) - \tau_K(t)
\]
because \(N\subseteq \partial \mathcal{N}_\Theta(K)\) is disjoint from \(\partial F_\omega\) and \(P\subseteq \partial \mathcal{N}_\Theta(K)\) is a finite number of points.
\end{proof}

Now we define the notion of balancedness on \(K \in \mathcal{K}_\Theta^\mathrm{c}\).

\begin{definition}

(See \Cref{fig:balanced-polygon-sofa-color}) Say that a polygonal cap \(K \in \mathcal{K}_\Theta^\mathrm{c}\) is \emph{balanced} if and only if for any \(t \in \Theta^{\diamond}\), we have \(\sigma_K(t) = \tau_K(t)\).

\label{def:polygon-cap-balanced}
\end{definition}

We now show that any maximum polygon cap \(K_\Theta \in \mathcal{K}_\Theta^\mathrm{c}\) is balanced. The following lemma chooses the right angle \(t \in \Theta^\diamond\) to balance \(K \in \mathcal{K}_\Theta^\mathrm{c}\).

\begin{lemma}

Assume that a polygon cap \(K \in \mathcal{K}_\Theta^\mathrm{c}\) is \emph{not} balanced. Then there exists an angle \(t \in \Theta^\diamond\) such that \(\sigma_K(t) > \tau_K(t)\).

\label{lem:not-balanced-positive}
\end{lemma}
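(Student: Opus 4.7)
I would prove the lemma by establishing a single vector identity,
\[
\sum_{t \in \Theta^\diamond} (\sigma_K(t) - \tau_K(t))\, v_t = 0,
\]
and then projecting onto the $x$-axis to reach a contradiction whenever $\sigma_K(t) \leq \tau_K(t)$ for every $t \in \Theta^\diamond$. The key geometric observation is that $\Theta^\diamond = \Theta \cup (\Theta + \pi/2) \cup \{\omega, \pi/2\}$ is contained in the open interval $(0, \omega + \pi/2) \subseteq (0, \pi)$, so $\sin t > 0$ strictly for every $t \in \Theta^\diamond$, and the $x$-component $-\sin t$ of $v_t$ is correspondingly strictly negative. This strict sign is what makes the projection rigid.

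\textbf{Step 1: the vector identity.} By \Cref{thm:polyline}, both the upper boundary $\delta K$ and the polyline $\mathbf{p}_K$ are polygonal arcs joining $A_K^-(0)$ and $C_K^+(\omega)$, and each of their segments has a normal angle in $\Theta^\diamond$. Traversed counterclockwise around the convex cap $K$, the upper boundary goes from $A_K^-(0)$ to $C_K^+(\omega)$; at each edge with normal angle $t$ the CCW tangent is $v_t$, whose $x$-component $-\sin t$ is strictly negative, consistent with monotone right-to-left motion. Summing length times tangent gives
\[
\sum_{t \in \Theta^\diamond} \sigma_K(t)\, v_t \;=\; C_K^+(\omega) - A_K^-(0).
\]
The polyline $\mathbf{p}_K$ is $x$-monotone from left to right, going from $C_K^+(\omega)$ to $A_K^-(0)$. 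Each segment with normal angle $t$ must have tangent $\pm v_t$, and the left-to-right constraint forces the tangent to be $-v_t$ (positive $x$-component $\sin t > 0$). Summing gives
\[
\sum_{t \in \Theta^\diamond} \tau_K(t)\, (-v_t) \;=\; A_K^-(0) - C_K^+(\omega).
\]
Adding these two equalities produces the desired identity.

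\textbf{Step 2: projection and contradiction.} Suppose for contradiction that $\sigma_K(t) \leq \tau_K(t)$ for every $t \in \Theta^\diamond$. Since $K$ is not balanced (\Cref{def:polygon-cap-balanced}), there exists some $t_0 \in \Theta^\diamond$ with $\sigma_K(t_0) < \tau_K(t_0)$. Projecting the vector identity onto the first coordinate yields
\[
\sum_{t \in \Theta^\diamond} \bigl(\tau_K(t) - \sigma_K(t)\bigr) \sin t \;=\; 0.
\]
Every term on the left is non-negative, since $\tau_K(t) - \sigma_K(t) \geq 0$ by assumption and $\sin t > 0$ because $t \in \Theta^\diamond \subset (0, \pi)$, and the $t_0$ term is strictly positive. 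This contradicts the identity, so some $t \in \Theta^\diamond$ must satisfy $\sigma_K(t) > \tau_K(t)$.

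\textbf{Main obstacle.} The delicate point is the tangent-direction bookkeeping in Step 1: one could try to track outward normals of the sofa along each polyline segment on the inner walls $b_K(t), d_K(t)$ versus the bottom sides $l(\omega, 0), l(\pi/2, 0)$, which would require case analysis. The cleaner route is the one above — using $x$-monotonicity of each arc together with $\sin t > 0$ on $\Theta^\diamond$ to pin down the sign of the tangent uniformly. Once that is handled, the rest of the argument is a one-line projection.
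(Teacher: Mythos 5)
Your proof is correct and follows essentially the same route as the paper: both derive the vector identity $\sum_{t \in \Theta^\diamond}(\sigma_K(t) - \tau_K(t))\,v_t = 0$ by traversing the upper boundary $\delta K$ and the polyline $\mathbf{p}_K$ between the common endpoints $A_K^-(0)$ and $C_K^+(\omega)$, then project onto the $x$-direction and use that $v_t \cdot u_0 = -\sin t$ has a fixed sign on $\Theta^\diamond \subset (0,\pi)$ to force balance once $\sigma_K \leq \tau_K$ pointwise.
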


\begin{proof}
By following the polyline \(\mathbf{p}_K\) from right to left, we have the identity
\[
C_K^+(\omega) - A_K^-(0) = \sum_{t \in \Theta^{\diamond}} \tau_K(t) v_t.
\]
Also, by following the upper boundary \(\delta K\) of \(K\) from right to left, we also have the identity
\[
C_K^+(\omega) - A_K^-(0) = \sum_{t \in \Theta^{\diamond}} \sigma_K(t) v_t.
\]
So we have \(\sum_{t \in \Theta^{\diamond}} (\tau_K(t) - \sigma_K(t)) (v_t \cdot u_0) = 0\) where \(v_t \cdot u_0 < 0\) for all \(t \in \Theta^{\diamond}\). If \(\sigma_K(t) \leq \tau_K(t)\) for all \(t \in \Theta^{\diamond}\), then the equality \(\sigma_K(t) = \tau_K(t)\) should hold and \(K\) should be balanced. Taking the contraposition concludes the proof.
\end{proof}

Execute the balancing step by pusing the edge \(e_K(t)\) in the positive direction of \(u_t\).

\begin{lemma}

Let \(h \in \mathcal{H}_\Theta\) be the support function of some \(K \in \mathcal{K}_\Theta^\mathrm{c}\). Let \(t \in \Theta^{\diamond}\) be arbitrary. Take sufficiently small \(\epsilon > 0\) relative to \(K\) and \(t\). Define \(h^+ \in \mathcal{H}_\Theta\) as \(h^+(t) := h(t) + \epsilon\) and \(h^+(s) := h(s)\) for all \(s \neq t\). Then we have
\[
\mathcal{A}_\Theta(h^+) = \mathcal{A}_\Theta(h) + (\sigma_K(t) - \tau_K(t)) \epsilon + O(\epsilon^2).
\]

\label{lem:balancing}
\end{lemma}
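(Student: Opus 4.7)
The plan is to read off the coefficient of $\epsilon$ in the change of each area by applying \Cref{thm:simple-nef-polygon} to the two simple Nef polygons $\mathcal{C}_\Theta(h)$ and $\mathcal{N}_\Theta(h)$ guaranteed by \Cref{pro:cap-niche-nef-polygons}. Since $h = h_K$, the compatibility \Cref{pro:cap-extension-compatible} and \Cref{pro:niche-extension-compatible} identify $\mathcal{C}_\Theta(h) = K$ and $\mathcal{N}_\Theta(h) = \mathcal{N}_\Theta(K)$, so the Hausdorff-one measures that appear as linear coefficients can be evaluated directly on $K$ and $\mathcal{N}_\Theta(K)$. Cap-side edges will be translated into values of $\sigma_K$ via \Cref{pro:surface-area-measure-side-length}, and niche-side edges into values of $\tau_K$ via \Cref{lem:polyline-length}.

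I would split on where $t$ sits in $\Theta^\diamond$. In the first case $t \in \Theta \cup (\Theta + \pi/2)$, \Cref{def:height-extensions} shows that $h(t)$ controls exactly one defining half-plane of each Nef polygon: $H_-(t, h(t))$ for $\mathcal{C}_\Theta(h)$ and $H_-^\circ(t, h(t) - 1)$ for $\mathcal{N}_\Theta(h)$. A single application of \Cref{thm:simple-nef-polygon} to each then yields $|\mathcal{C}_\Theta(h^+)| - |K| = \sigma_K(t)\epsilon + O(\epsilon^2)$ and $|\mathcal{N}_\Theta(h^+)| - |\mathcal{N}_\Theta(K)| = \tau_K(t)\epsilon + O(\epsilon^2)$, the latter because the relevant boundary line is $b_K(t)$ (when $t \in \Theta$) or $d_K(t - \pi/2)$ (when $t \in \Theta + \pi/2$), and \Cref{lem:polyline-length}(1) evaluates the $\mathcal{H}^1$-measure of the niche boundary on either line to exactly $\tau_K(t)$. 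Subtracting gives the claim.

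The second case $t \in \{\omega, \pi/2\}$ is where the main bookkeeping occurs. Now $h(t)$ controls two defining half-planes of $\mathcal{C}_\Theta(h)$, namely $H_-(t, h(t))$ and $H_+(t, h(t) - 1)$, and one defining half-plane of $\mathcal{N}_\Theta(h)$, namely $H_+(t, h(t) - 1)$. To fit the $H_+$ half-plane into the form required by \Cref{thm:simple-nef-polygon}, I will rewrite $H_+(t, h(t) - 1) = H_-(t + \pi, 1 - h(t))$, so that a $+\epsilon$ perturbation of $h(t)$ corresponds to a $\delta = -\epsilon$ perturbation of this half-plane in the theorem's notation. For the cap, \Cref{thm:simple-nef-polygon} is applied twice in sequence through an intermediate Nef polygon; since the intermediate boundary edges differ from those of $K$ by $O(\epsilon)$, any cross term is absorbed into $O(\epsilon^2)$. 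Using $h_K(t) = 1$ and $h_K(t + \pi) = 0$ (\Cref{def:cap}), the line $l(t, h(t) - 1)$ coincides with $l_K(t + \pi)$, so the $H_+$ contribution to $|\mathcal{C}_\Theta(h^+)| - |K|$ is $-\sigma_K(t + \pi)\epsilon$, while by \Cref{lem:polyline-length}(2) its contribution to $|\mathcal{N}_\Theta(h^+)| - |\mathcal{N}_\Theta(K)|$ is $-(\sigma_K(t + \pi) - \tau_K(t))\epsilon$. Assembling the four pieces, the two $\sigma_K(t + \pi)$ terms cancel and the difference is again $(\sigma_K(t) - \tau_K(t))\epsilon + O(\epsilon^2)$.

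The primary obstacle I anticipate is sign hygiene in the second case: confirming the $\delta = -\epsilon$ conversion for the $H_+$ half-plane, verifying that two sequential cap-side perturbations compose to linear order in $\epsilon$, and checking that the $\sigma_K(t + \pi)$ contributions from the cap and the niche cancel exactly as predicted.
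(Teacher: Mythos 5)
Your proposal is correct and follows the paper's route exactly: apply \Cref{thm:simple-nef-polygon} to the Nef polygons $\mathcal{C}_\Theta(h)$ and $\mathcal{N}_\Theta(h)$, convert the resulting boundary lengths via \Cref{pro:surface-area-measure-side-length} and \Cref{lem:polyline-length}, and split on whether $t \in \Theta \cup (\Theta + \pi/2)$ or $t \in \{\omega, \pi/2\}$. Worth noting: your sign bookkeeping in the second case is actually more careful than the paper's printed proof, whose displayed niche equation reads $|\mathcal{N}_\Theta(h^+)| = |\mathcal{N}_\Theta(h)| + (\sigma_K(t+\pi)-\tau_K(t))\epsilon + O(\epsilon^2)$ when the change should be $-(\sigma_K(t+\pi)-\tau_K(t))\epsilon$ --- precisely your $\delta=-\epsilon$ conversion for the $H_+$ half-plane --- so that the subtraction gives $(\sigma_K(t)-\tau_K(t))\epsilon$ rather than the $(\sigma_K(t)-2\sigma_K(t+\pi)+\tau_K(t))\epsilon$ that would result from the paper's stated signs.
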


\begin{proof}
First consider the case \(t \not\in \left\{ \omega, \pi/2 \right\}\). We have
\[
|\mathcal{C}_\Theta(h^+)| = |\mathcal{C}_\Theta(h)| + \sigma_K(t) \epsilon + O(\epsilon^2)
\]
by applying \Cref{thm:simple-nef-polygon} to the simple Nef polygon \(\mathcal{C}_\Theta(h)\) ((1) of \Cref{pro:cap-niche-nef-polygons}). We also have
\[
|\mathcal{N}_\Theta(h^+)| = |\mathcal{N}_\Theta(h)| + \tau_K(t) \epsilon + O(\epsilon^2)
\]
by applying \Cref{thm:simple-nef-polygon} to the simple Nef polygon \(\mathcal{N}_\Theta(h) = \mathcal{N}_\Theta(K)\) ((2) of \Cref{pro:cap-niche-nef-polygons}) and using (1) of \Cref{lem:polyline-length}. Subtract the two equations above to conclude the proof.

Now consider the case \(t \in \left\{ \omega, \pi/2 \right\}\). Recall that by \Cref{pro:cap-niche-nef-polygons}, the simple Nef polygon \(\mathcal{C}_\Theta(h)\) have defining half-planes including \(H_-(t, h(t))\) and \(H_{+}(t, h(t) - 1)\). We will apply \Cref{thm:simple-nef-polygon} twice to \(C_\Theta(h)\) to get the following.
\[
|\mathcal{C}_\Theta(h^+)| = |\mathcal{C}_\Theta(h)| + (\sigma_K(t) - \sigma_K(t + \pi) ) \epsilon + O(\epsilon^2)
\]
Here, we first push \(H_-(t, h(t)\) to \(H_-(t, h^+(t))\) by \(\epsilon\) while keeping \(H_{+}(t, h(t) - 1)\) intact. Then, we push \(H_{+}(t, h(t) - 1)\) towards \(H_{+}(t, h^+(t) - 1)\) by \(\epsilon\) while keeping \(H_-(t, h^+(t))\) intact. Since the two half-planes have parallel boundaries of distance 1, the two applications of \Cref{thm:simple-nef-polygon} do not intefere with each other. We also have
\[
|\mathcal{N}_\Theta(h^+)| = |\mathcal{N}_\Theta(h)| + (\sigma_K(t + \pi) - \tau_K(t)) \epsilon + O(\epsilon^2)
\]
by applying \Cref{thm:simple-nef-polygon} to \(\mathcal{N}_\Theta(h)\) and using (2) of \Cref{lem:polyline-length}. Subtract the two equations above to conclude the proof.
\end{proof}

Choosing the right angle \(t \in \Theta^\diamond\) in \Cref{lem:not-balanced-positive} guarantees that the new function \(h^+ \in \mathcal{H}_\Theta\) with \(h^+(t) := h(t) + \epsilon\) corresponds to a cap translate \(K^+\).

\begin{lemma}

Let \(K \in \mathcal{K}_\Theta^\mathrm{c}\) be arbitrary with the support function \(h := h_K \in \mathcal{H}_\Theta\), so that \(K = \mathcal{C}_\Theta(h)\) by \Cref{pro:cap-extension-compatible}. Let \(t \in \Theta^\diamond\) be arbitrary such that \(\sigma_K(t) > 0\).

Take any \(\epsilon > 0\) sufficiently small relative to \(K\) and \(t\), and define \(h^+ \in \mathcal{H}_\Theta\) as \(h^+(t) := h(t) + \epsilon\) and \(h^+(s) := h(s)\) on \(s \neq t\). Then the intersection \(K^+ := \mathcal{C}_\Theta(h^+)\) is a polygon cap translate in \(\mathcal{K}_\Theta^\mathrm{t}\).

\label{lem:height-positive-increment}
\end{lemma}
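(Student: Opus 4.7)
The plan is to produce, in each case, a polygon cap $K^0 \in \mathcal{K}_\Theta^\mathrm{c}$ and a translation vector $v$ with $K^+ = K^0 + v$, so that $K^+ \in \mathcal{K}_\Theta^\mathrm{t}$ by \Cref{def:cap-trans}. For $K^0$ to be a cap it must be inscribed in $P_\omega$ (i.e., have support function values $1, 1, 0, 0$ at $\omega, \pi/2, \omega + \pi, 3\pi/2$) and be a polygon with normal angles in $\Theta^\diamond \cup \{\omega + \pi, 3\pi/2\}$. The polygonal structure is automatic from $K^0$ being built as $\mathcal{C}_\Theta(\cdot)$ of some $h^0 \in \mathcal{H}_\Theta$, so the substantive work is the inscription condition. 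I split into two cases by whether $t \in \{\omega, \pi/2\}$.

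Case 1 ($t \notin \{\omega, \pi/2\}$). Take $v := 0$ and $K^0 := K^+$. Since $h^+$ agrees with $h$ on $\{\omega, \pi/2\}$, \Cref{def:height-extensions} gives $P_{h^+} = P_h$. Relaxing the single half-plane $H_-(t, h(t)) \subseteq H_-(t, h(t) + \epsilon)$ produces the sandwich $K \subseteq K^+ \subseteq P_h$. Evaluating support functions at $\omega, \omega + \pi, \pi/2, 3\pi/2$ and using that $h_K$ takes the values $1, 0, 1, 0$ there (because $K$ is a cap) pins $h_{K^+}$ at those angles to the same values; hence $K^+$ is inscribed in $P_\omega$ and $K^+ = K^0 \in \mathcal{K}_\Theta^\mathrm{c} \subseteq \mathcal{K}_\Theta^\mathrm{t}$. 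The hypothesis $\sigma_K(t) > 0$ is not needed here.

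Case 2 ($t \in \{\omega, \pi/2\}$). By the mirror symmetry of \Cref{pro:mirror-reflection}, WLOG $t = \omega$. A direct computation from \Cref{def:height-extensions} shows that $P_{h^+} = P_h + v$ is a pure translate, where $v := (\epsilon/\cos\omega) u_0$ if $\omega < \pi/2$ and $v := \epsilon u_{\pi/2}$ if $\omega = \pi/2$. Take $K^0 := \mathcal{C}_\Theta(h^0)$ with $h^0(s) := h^+(s) - v \cdot u_s$ on $s \in \Theta^\diamond$. Translation equivariance of each defining half-plane in \Cref{def:height-extensions} gives the set equality $K^+ = K^0 + v$, and the definition of $v$ gives $h^0(\omega) = h^0(\pi/2) = 1$, so $P_{h^0} = P_\omega$. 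The remaining condition for $K^0$ being a cap, namely $h_{K^0}(\omega + \pi) = h_{K^0}(3\pi/2) = 0$, translates back after unshifting by $v$ to: $K^+$ touches all four boundary lines of $P_{h^+}$.

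This four-sided contact verification for $K^+$ is the main obstacle, and here the hypothesis $\sigma_K(\omega) > 0$ is essential. It guarantees a positive-length edge $e_K(\omega)$ on the line $l(\omega, 1)$; for $\epsilon$ small the two active edges of $K$ immediately adjacent to angle $\omega$ in $\Theta^\diamond$ continue to intersect the pushed line $l(\omega, 1 + \epsilon)$, producing a positive-length edge of $K^+$ on it, hence contact with the upper $\omega$-side of $P_{h^+}$. For the opposite side $\{p \cdot u_\omega = \epsilon\}$, convexity of $K$ together with $h_K(\omega) + h_K(\omega + \pi) = 1 > 0$ implies every slice $K \cap \{p \cdot u_\omega = s\}$ with $s \in (0, 1)$ is a non-degenerate segment, so $K^+ \cap \{p \cdot u_\omega = \epsilon\}$ is a non-empty segment. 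For the top and bottom sides $y = 1$ and $y = 0$ (unchanged by the perturbation), the removed slab $K \cap \{p \cdot u_\omega < \epsilon\}$ concentrates near the lower-left corner of $P_\omega$ and, for small enough $\epsilon$, neither exhausts $K \cap \{y = 1\}$ (whose extreme vertices sit in the upper-right of $P_\omega$, with $p \cdot u_\omega$ bounded away from $0$) nor $K \cap \{y = 0\}$ (since the active edges of $K$ adjacent to angle $3\pi/2$ force the rightmost point of $K$ on $y = 0$ to have strictly positive $x$-coordinate). Combining these four contacts yields $K^0 \in \mathcal{K}_\Theta^\mathrm{c}$ and therefore $K^+ \in \mathcal{K}_\Theta^\mathrm{t}$.
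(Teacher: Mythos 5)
Your proof is correct and is essentially the paper's argument reorganized. In Case 2 the paper invokes \Cref{pro:cap-trans-space} to reduce the claim directly to two width identities (width exactly one along $u_\omega$ and along $u_{\pi/2}$), which makes the explicit translation vector unnecessary; you compute $v$, translate $K^+$ back to a candidate cap $K^0$, and verify the four-sided contact with $P_\omega$. The two presentations unfold into the same four support-function identities, just in a different order, and both use $\sigma_K(\omega) > 0$ in the same place (contact with the pushed outer line).

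The one spot to tighten is your bottom-contact step. The phrase ``the active edges of $K$ adjacent to angle $3\pi/2$ force the rightmost point of $K$ on $y = 0$ to have strictly positive $x$-coordinate'' asserts the needed conclusion rather than proving it, and this is the only one of the four contacts that is not completely routine. The paper's justification is the quantitative bound $\sigma_K(3\pi/2) \geq o_\omega \cdot u_0 > 0$, which follows from three observations: any polygon cap with $\omega < \pi/2$ contains $o_\omega$ (since there is no normal angle strictly between $\omega$ and $\pi/2$, so $e_K(\omega)$ and $e_K(\pi/2)$ share the vertex $l(\omega,1) \cap l(\pi/2,1) = o_\omega$); the left endpoint $v_K^-(3\pi/2)$ of the bottom edge equals $O$; and the right boundary chain from $A_K^-(0)$ up to $o_\omega$ consists of edges with normal angles in $\Theta \cup \{\omega\} \subset (0, \omega]$, whose directions $v_t$ all have strictly negative $x$-component, so the $x$-coordinate strictly decreases along the chain and $A_K^-(0) \cdot u_0 > o_\omega \cdot u_0 > 0$. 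You should supply some argument of this kind rather than simply state the conclusion.
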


\begin{proof}
We first prove the case \(t \not\in \left\{ \omega, \pi/2 \right\}\). We have \(K = \mathcal{C}_\Theta(h) \subseteq \mathcal{C}_\Theta(h^+) = K^+\) by the definition of \(\mathcal{C}_\Theta\) and \(K^+\). So \(K \subseteq K^+ \subseteq P_\omega\) and \(K^+ \in \mathcal{K}_\Theta^\mathrm{c}\) as we want.

Now assume \(t \in \left\{ \omega, \pi/2 \right\}\). We first show that \(K^+\) is a polygon cap translate. By \Cref{pro:cap-trans-space} it suffices to show that the width of \(K^+\) is one in the angles \(\omega\) and \(\pi/2\).

Since \(\sigma_K\left( t \right) > 0\), we have \(h_{K^+}(t) = h_K(t) + \epsilon\) for sufficiently small \(\epsilon > 0\).\footnote{That \(\sigma_K(t) > 0\) is extremely crucial here. Otherwise, \(K^+\) might have width less than zero in the angle of either \(t = \omega\) or \(t = \pi/2\), which is necessary for \(K^+\) to be a polygon cap translate. This is why we choose \(t\) according to \Cref{lem:not-balanced-positive}.} As \(\epsilon > 0\), we have \(h_{K^+}(t + \pi) = h_K(t + \pi) - \epsilon\) as well. So the width of \(K^+\) along \(u_t\) is one. If \(\omega = \pi/2\) then we are done. If \(\omega < \pi/2\), let \(t'\) be the other value than \(t\) in \(\left\{ \omega, \pi/2 \right\}\). We have \(h_{K^+}(t') = h_K(t')\) as \(l_K(t)\) moves upwards. Also, as \(\sigma_K(t' + \pi) \geq o_\omega \cdot u_0 > 0\), we have \(h_{K^+}(t' + \pi) = h_K(t' + \pi)\) as \(l_K(t + \pi)\) moves upwards. So the width of \(K^+\) along \(u_{t'}\) is also one, completing the proof.
\end{proof}

\begin{remark}

In \Cref{lem:height-positive-increment}, the adjusted function \(h^+\) might not be the supporting function \(h_{K^+}\) of \(K^+\).

\label{rem:height-positive-increment}
\end{remark}

Combine the steps above to show the balancedness of a maximum polygon cap.

\begin{theorem}

Any maximum polygon cap \(K_\Theta \in \mathcal{K}_\Theta^\mathrm{c}\) is balanced.

\label{thm:balanced-polygon-sofa}
\end{theorem}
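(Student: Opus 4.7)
The plan is to prove balancedness by contradiction, following the outline in \Cref{sec:balancedness-of-maximum-polygon-cap}. Assume a maximum polygon cap $K_\Theta \in \mathcal{K}_\Theta^\mathrm{c}$ is not balanced, and let $h := h_{K_\Theta} \in \mathcal{H}_\Theta$ so that $K_\Theta = \mathcal{C}_\Theta(h)$ by \Cref{pro:cap-extension-compatible} and $\mathcal{A}_\Theta(K_\Theta) = \mathcal{A}_\Theta(h)$ by \Cref{pro:niche-extension-compatible}. By \Cref{lem:not-balanced-positive}, choose an angle $t \in \Theta^\diamond$ with $\sigma_{K_\Theta}(t) > \tau_{K_\Theta}(t) \geq 0$; in particular $\sigma_{K_\Theta}(t) > 0$, which is the critical hypothesis needed for the next step.

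Next I would push the edge of $K_\Theta$ at angle $t$ outward. Concretely, take $\epsilon > 0$ sufficiently small relative to $K_\Theta$ and $t$ and define $h^+ \in \mathcal{H}_\Theta$ by $h^+(t) := h(t) + \epsilon$ and $h^+(s) := h(s)$ for $s \neq t$. By \Cref{lem:balancing}, we obtain
\[
\mathcal{A}_\Theta(h^+) = \mathcal{A}_\Theta(h) + (\sigma_{K_\Theta}(t) - \tau_{K_\Theta}(t)) \epsilon + O(\epsilon^2),
\]
so $\mathcal{A}_\Theta(h^+) > \mathcal{A}_\Theta(h)$ once $\epsilon$ is sufficiently small, thanks to the strict inequality supplied by \Cref{lem:not-balanced-positive}. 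Meanwhile, since $\sigma_{K_\Theta}(t) > 0$, \Cref{lem:height-positive-increment} guarantees that $K^+ := \mathcal{C}_\Theta(h^+)$ is a polygon cap translate in $\mathcal{K}_\Theta^\mathrm{t}$.

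To convert $K^+$ back into the space $\mathcal{K}_\Theta^\mathrm{c}$ of polygon caps, I would use \Cref{pro:cap-translate-reduction} to get $\mathcal{A}_\Theta(h^+) \leq \mathcal{A}_\Theta(K^+)$, and then apply \Cref{thm:height-extensions}: since $K^+ \in \mathcal{K}_\Theta^\mathrm{t}$ is a translation of some polygon cap $K^0 \in \mathcal{K}_\Theta^\mathrm{c}$ by a suitable vector $\mathbf{v} \in \mathbb{R}^2$, we have $\mathcal{A}_\Theta(K^+) = \mathcal{A}_\Theta(K^0)$. Chaining the inequalities yields
\[
\mathcal{A}_\Theta(K^0) = \mathcal{A}_\Theta(K^+) \geq \mathcal{A}_\Theta(h^+) > \mathcal{A}_\Theta(h) = \mathcal{A}_\Theta(K_\Theta),
\]
which contradicts the maximality of $K_\Theta$ over $\mathcal{K}_\Theta^\mathrm{c}$ (the extra condition $o_\omega \in K^0$ needed in \Cref{def:maximum-polygon-cap} when $\omega = \pi/2$ can be arranged freely by a horizontal translation, which preserves $\mathcal{A}_\Theta$).

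The main conceptual obstacle has already been handled by the earlier lemmas: the delicate point is ensuring that the perturbation $h \mapsto h^+$ lands in the image of $\mathcal{K}_\Theta^\mathrm{t}$ inside $\mathcal{H}_\Theta$, because an arbitrary increment of a single height could destroy the unit-width conditions of \Cref{pro:cap-trans-space} when $t \in \{\omega, \pi/2\}$. This is exactly what forces the angle choice $\sigma_{K_\Theta}(t) > \tau_{K_\Theta}(t)$ of \Cref{lem:not-balanced-positive}, since it additionally guarantees $\sigma_{K_\Theta}(t) > 0$ so that \Cref{lem:height-positive-increment} applies. Once that compatibility between the balancing move and the translate structure is in place, the contradiction is essentially a transparent chain of the three preparatory lemmas combined with \Cref{pro:cap-translate-reduction} and \Cref{thm:height-extensions}.
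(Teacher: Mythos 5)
Your proof is correct and follows the same route as the paper's own argument: both invoke \Cref{lem:not-balanced-positive} to pick the angle, \Cref{lem:balancing} to estimate the area increment, \Cref{lem:height-positive-increment} to stay in $\mathcal{K}_\Theta^\mathrm{t}$, and then \Cref{pro:cap-translate-reduction} and \Cref{thm:height-extensions} to translate back and derive the contradiction. (Your closing parenthetical about the $o_\omega$ condition is harmless but unnecessary, since \Cref{def:maximum-polygon-cap} requires $K_\Theta$ to maximize $\mathcal{A}_\Theta$ over all of $\mathcal{K}_\Theta^\mathrm{c}$, so any $K^0 \in \mathcal{K}_\Theta^\mathrm{c}$ with strictly larger value already contradicts maximality.)
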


\begin{proof}
Assume by contradiction that \(K := K_\Theta\) is not balanced. Let \(h := h_K \in \mathcal{H}_\Theta\) so that \(\mathcal{A}_\Theta(K) = \mathcal{A}_\Theta(h)\) by \Cref{pro:niche-extension-compatible}. Take \(t \in \Theta^{\diamond}\) with \(\sigma_K(t) > \tau_K(t) \geq 0\) as in \Cref{lem:not-balanced-positive}. Now take sufficiently small \(\epsilon > 0\) and define \(h^+ \in \mathcal{H}_\Theta\) and the polygon cap translate \(K^+ := \mathcal{C}_\Theta(h^+) \in \mathcal{K}_\Theta^\mathrm{c}\) as in \Cref{lem:height-positive-increment}. Then we have \(\mathcal{A}_\Theta(h) < \mathcal{A}_\Theta(h^+)\) by \Cref{lem:balancing}. We also have \(\mathcal{A}_\Theta(h^+) \leq \mathcal{A}_\Theta(K^+)\) by \Cref{pro:cap-translate-reduction}. Since \(K^+\) is a translation of some polygon cap \(K_0 \in \mathcal{K}_\Theta^\mathrm{c}\), we have \(\mathcal{A}_\Theta(K^+) = \mathcal{A}_\Theta(K_0)\). Summing up, we have
\[
\mathcal{A}_\Theta(K) = \mathcal{A}_\Theta(h) < \mathcal{A}_\Theta(h^+) \leq \mathcal{A}_\Theta(K^+) = \mathcal{A}_\Theta(K_0)
\]
where \(K_0 \in \mathcal{K}_\Theta^\mathrm{c}\) is a polygon cap, so \(K\in \mathcal{K}_\Theta^\mathrm{c}\) cannot attain the maximum value of \(\mathcal{A}_\Theta\), leading to contradiction.
\end{proof}

\begin{theorem}

Any maximum polygon cap \(K\) with angle set \(\Theta\) contains its polygon niche \(\mathcal{N}_\Theta(K)\).

\label{thm:balanced-polygon-sofa-connected}
\end{theorem}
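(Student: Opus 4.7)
The plan is to deduce $\mathcal{N}_\Theta(K) \subseteq K$ from the balancedness of the maximum polygon cap $K$ (just established in \Cref{thm:balanced-polygon-sofa}), in two stages. First I show that the polyline $\mathbf{p}_K$ lies inside $K$ by comparing it to the upper boundary $\delta K$ of the cap, then I extend this containment to the entire polygon niche.

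For the first stage, both $\delta K$ and $\mathbf{p}_K$ are $x$-monotone polylines connecting the same endpoints $A_K^-(0)$ and $C_K^+(\omega)$, and by balancedness, for every $t \in \Theta^\diamond$ the total horizontal extent of edges with normal angle $t$ matches: $\sigma_K(t) = \tau_K(t)$. The edges of $\delta K$ appear in order of increasing $t$ as we traverse from $A_K^-(0)$ (right) to $C_K^+(\omega)$ (left), and since the slope $dy/dx = -\cot t$ is strictly increasing in $t \in (0, \pi)$, the graph $y_{\delta K}(x)$ is concave. The polyline $\mathbf{p}_K$ has the same multiset of (slope, horizontal-extent) pairs but arranged arbitrarily. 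A standard rearrangement argument---iteratively swap any two adjacent segments whose slopes violate the concave order (i.e., a larger slope appearing to the right of a smaller one) to raise the intermediate $y$-value, and iterate until reaching the sorted concave arrangement---shows that $y_{\mathbf{p}_K}(x) \leq y_{\delta K}(x)$ for all $x$ in the common domain. Combined with $\mathbf{p}_K \subseteq F_\omega$ (from \Cref{thm:polyline}), and the fact that $K$ is the convex region bounded above by $\delta K$ and below by $\partial F_\omega$ within $P_\omega$, this yields $\mathbf{p}_K \subseteq K$.

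For the second stage, I first observe that the polygon cap $K$ always contains the origin $O$: for $\omega < \pi/2$, no normal angle in $\Theta^\diamond \cup \{\omega + \pi, 3\pi/2\}$ lies strictly between $\omega + \pi$ and $3\pi/2$, so the consecutive bottom edges $e_K(\omega+\pi)$ and $e_K(3\pi/2)$ must share a common vertex, which must be $O$ as the lines they lie on both pass through the origin; for $\omega = \pi/2$, the single bottom edge $e_K(3\pi/2)$ spans a segment of the $x$-axis that contains $O$. The topological boundary of the bounded set $\overline{\mathcal{N}_\Theta(K)}$ splits into (a) pieces of $\mathbf{p}_K$ lying in $F_\omega^\circ$, and (b) segments on $\partial F_\omega$ running either between two points of $\mathbf{p}_K \cap \partial F_\omega$ or between such a point and the origin. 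Pieces of type (a) are in $K$ by the first stage; each piece of type (b) has endpoints in $\mathbf{p}_K \cup \{O\} \subseteq K$, so by convexity of $K$ lies entirely in $K$. Hence $\partial \overline{\mathcal{N}_\Theta(K)} \subseteq K$. Since any bounded planar set is contained in the convex hull of the boundary of its closure, and $K$ is convex, we conclude $\mathcal{N}_\Theta(K) \subseteq \overline{\mathcal{N}_\Theta(K)} \subseteq \mathrm{conv}(\partial \overline{\mathcal{N}_\Theta(K)}) \subseteq K$.

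The main obstacle is making the rearrangement inequality in the first stage fully rigorous. Although the exchange argument is classical, the polyline $\mathbf{p}_K$ may contain several segments sharing the same normal angle at distinct positions, so a careful formulation should treat the slope $f(x) = dy/dx$ as a simple function on $[x_L, x_R]$, identify $f_{\delta K}$ as the decreasing rearrangement of $f_{\mathbf{p}_K}$ (read left-to-right), and invoke the standard Hardy--Littlewood-type bound $\int_{x_L}^x f_{\mathbf{p}_K}(u)\,du \leq \int_{x_L}^x f_{\delta K}(u)\,du$ to deduce $y_{\mathbf{p}_K}(x) \leq y_{\delta K}(x)$ for every $x$.
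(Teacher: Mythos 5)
Your proof is correct in substance and follows the same two-stage plan as the paper: first show the polyline $\mathbf{p}_K$ lies inside $K$ using balancedness, then extend to the whole niche. The interesting difference is in stage 1. The paper fixes a single vertex $p$ of $\mathbf{p}_K$ and a single normal angle $s \in \Theta \cup \{\omega\}$, writes $A_K^-(0) - p = \sum_{t} c_t v_t$ with $c_t \in [0, \tau_K(t)]$, observes that $(p - A_K^-(0)) \cdot u_s$ is maximized by saturating $c_t = \tau_K(t)$ exactly for $t \leq s$, and identifies this maximum with $(A_K^+(s) - A_K^-(0)) \cdot u_s$ via balancedness and $\delta K$ — yielding $p \in H_K(s)$ directly for every $s$. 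You instead sort in a \emph{single} fixed direction (vertical) via the Hardy--Littlewood bound $\int_{x_L}^x f_{\mathbf{p}_K} \leq \int_{x_L}^x f_{\delta K}$ and then appeal to the downward-closedness of $K$ within $F_\omega$ to upgrade $y_{\mathbf{p}_K} \leq y_{\delta K}$ to full containment. These are two packagings of the same rearrangement principle; the paper's version checks every supporting half-plane, yours checks one coordinate and exploits the monotone structure of $K$. Both are valid, and yours is arguably more recognizably a ``sort the slopes'' argument, while the paper's is sharper in that it never appeals to downward-closedness.

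Two small points worth shoring up. First, in stage 1 you need not only $\sigma_K(t) = \tau_K(t)$ (matching Euclidean edge lengths) but matching \emph{horizontal} extents; this follows because the horizontal extent of a normal-$t$ edge of Euclidean length $\sigma$ is $\sigma \sin t$, so the factor $\sin t$ is common and cancels — worth a sentence. Second, in stage 2, the claim that each boundary segment on $\partial F_\omega$ has both endpoints in $\mathbf{p}_K \cup \{O\}$ deserves justification (those endpoints are points of $\partial X \cap \partial F_\omega$ where $X := \bigcup_{t \in \Theta} Q_K^-(t)$, and one should check they indeed fall on the polyline rather than, say, on the half-lines $\vec{l}_K$ or $\vec{r}_K$; \Cref{thm:wedge-ends-in-cap} is the relevant tool). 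Also, for $\omega = \pi/2$, the assertion that $e_K(3\pi/2)$ contains $O$ relies on the maximum-polygon-cap normalization $o_\omega = (0,1) \in K$ together with downward-closedness, which you should cite explicitly rather than merely asserting. Note that the paper itself dispatches stage 2 with a one-line ``By \Cref{thm:polyline}, it suffices to show that any vertex $p$ of the polyline $\mathbf{p}_K$ is in $K$,'' so your more explicit convex-hull argument is, if anything, an improvement in transparency, even if it introduces the (avoidable) dependency on $O \in K$.
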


\begin{proof}
Denote \(K_\Theta\) simply as \(K\). By \Cref{thm:polyline}, it suffices to show that any vertex \(p\) of the polyline \(\mathbf{p}_K\) is contained in \(K\). It suffices to show \(p \in H_K(s)\) for any \(s \in \Theta \cup \left\{ \omega \right\}\). Once this is done, apply a corresponding argument to mirror reflection \(K^\mathrm{m}\) of \(K\) (\Cref{lem:maximum-polygon-cap-mirror}) and reflect back to conclude \(p \in H_K(\omega + \pi/2 - t)\) for any \(t \in (\omega - \Theta) \cup \left\{ \omega \right\}\) as well. Then \(p \in F_\omega \cap \bigcap_{t \in \Theta^{\diamond}} H_K(t) = K\), completing the proof.

\(K\) is balanced by \Cref{thm:balanced-polygon-sofa}. Follow \(\mathbf{p}_K\) from right to left, from the right endpoint \(A_K^-(0)\) to the point \(p\) and stop. By summing up the contribution of edges between \(A_K^-(0)\) and \(p\) in \(\mathbf{p}_K\), we have
\[
A_K^-(0) - p = \sum_{t \in \Theta^{\diamond}} c_t v_t
\]
with coefficients \(c_t \in [0, \tau_K(t)]\) for every \(t \in \Theta^{\diamond}\). Under this constraint on \(c_t\), the value
\[
(p - A_K^-(0)) \cdot u_{s} = \sum_{t \in \Theta^{\diamond}} c_t (v_t \cdot u_{s})
\]
is maximized when \(c_t = \tau_K(t)\) for all \(t \leq s\) and \(c_t = 0\) otherwise. For such \(c_t\) we have the equality
\[
\sum_{t \in \Theta^{\diamond}} c_t v_t = A_K^+(s) - A_K^-(0)
\]
by using the balancedness \(\tau_K(t) = \sigma_K(t)\) and following the upper boundary \(\delta K\) from \(A_K^-(0)\) to \(A_K^+(s)\). So we have
\[
(p - A_K^-(0)) \cdot u_{s} \leq (A_K^+(s) - A_K^-(0)) \cdot u_{s}
\]
which implies \(p \cdot u_{s} \leq A_K^+(s) \cdot u_{s}\). This in turn implies \(p \in H_K(s)\) as desired.
\end{proof}

\section{Balanced Maximum Sofa}
\label{sec:balanced-maximum-sofa}
We now take the limit of the balanced polygon sofas in \Cref{thm:balanced-polygon-sofa} to find a monotone sofa \(S_\omega\).

\begin{definition}

Define the \emph{uniform angle set} \(\Theta_{\omega, n}\) of \(n\) intervals with rotation angle \(\omega \in (0, \pi/2]\) as \(\Theta_{\omega, n} := \left\{ i / n\omega : 1 \leq i < n \right\}\).

\label{def:uniform-angle-set}
\end{definition}

\begin{definition}

For every \(\omega \in (0, \pi/2]\), call a cap \(K_\omega \in \mathcal{K}_\omega^\mathrm{c}\) with the following additional data a \emph{balanced maximum cap} with the rotation angle \(\omega\).

\begin{enumerate}
\def\labelenumi{\arabic{enumi}.}
\tightlist
\item
  There exists a strictly increasing sequence \(1 < n_1 < n_2 < \dots\) of powers of two.
\item
  For each \(i \geq 1\), there exists a maximum polygon cap \(K_i\) with uniform angle set \(\Theta_i := \Theta_{\omega, n_i}\).
\item
  As \(i \to \infty\), the polygon cap \(K_i\) converges to \(K_\omega\) in Hausdorff distance \(d_\mathrm{H}\).
\end{enumerate}

\label{def:balanced-maximum-cap}
\end{definition}

\begin{proposition}

The mirror reflection of any balanced maximum cap \(K_\omega \in \mathcal{K}_\omega^\mathrm{c}\) is also a balanced maximum cap.

\label{pro:balanced-maximum-cap-mirror}
\end{proposition}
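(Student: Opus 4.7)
The plan is to witness that $K_\omega^{\mathrm{m}} := M_\omega(K_\omega)$ is itself a balanced maximum cap using the same defining sequence that witnesses $K_\omega$. Let $(n_i)_{i \geq 1}$ be the strictly increasing sequence of powers of two and $K_i \in \mathcal{K}_{\Theta_i}^\mathrm{c}$ with $\Theta_i := \Theta_{\omega, n_i}$ the maximum polygon caps guaranteed by \Cref{def:balanced-maximum-cap} for $K_\omega$, so that $K_i \to K_\omega$ in $d_\mathrm{H}$. Define $K_i^{\mathrm{m}} := M_\omega(K_i)$ for every $i \geq 1$.

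The key observation is that the uniform angle set $\Theta_{\omega, n} = \{ i\omega/n : 1 \leq i < n \}$ is invariant under the reflection $t \mapsto \omega - t$, since $\omega - i\omega/n = (n-i)\omega/n$ and $(n-i)$ ranges over $\{1, \dots, n-1\}$ as $i$ does. Hence $\omega - \Theta_i = \Theta_i$ for every $i$. By \Cref{lem:maximum-polygon-cap-mirror}, $K_i^{\mathrm{m}}$ is a maximum polygon cap with angle set $\omega - \Theta_i$, which by the invariance above is again $\Theta_i$. So each $K_i^{\mathrm{m}}$ lies in $\mathcal{K}_{\Theta_i}^\mathrm{c}$ and is a maximum polygon cap there.

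It remains to check the Hausdorff convergence $K_i^{\mathrm{m}} \to K_\omega^{\mathrm{m}}$. Since $M_\omega$ is a Euclidean isometry, it preserves the Hausdorff distance on convex bodies, so $d_\mathrm{H}(K_i^{\mathrm{m}}, K_\omega^{\mathrm{m}}) = d_\mathrm{H}(K_i, K_\omega) \to 0$. Finally, $K_\omega^{\mathrm{m}} \in \mathcal{K}_\omega^\mathrm{c}$ because $K_\omega \in \mathcal{K}_\omega^\mathrm{c}$ and the cap space is closed under mirror reflection (the first item of \Cref{pro:mirror-reflection} preserves the structural conditions in \Cref{def:cap}). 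Thus $K_\omega^{\mathrm{m}}$ meets all three requirements of \Cref{def:balanced-maximum-cap}.

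There is essentially no obstacle here: the argument is a direct symmetry bookkeeping, and the only point that needs checking is the invariance $\omega - \Theta_{\omega, n} = \Theta_{\omega, n}$ of the uniform angle set, which is immediate. The proof should be a few lines long once the three witnesses $(n_i), (K_i^{\mathrm{m}}), $ Hausdorff convergence are recorded.
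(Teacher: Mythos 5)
Your proof is correct and follows exactly the paper's approach, which simply says ``Apply \Cref{lem:maximum-polygon-cap-mirror} to the maximum polygon caps $K_i$ converging to $K_\omega$.'' You have merely supplied two details the paper leaves implicit --- that $\omega - \Theta_{\omega,n} = \Theta_{\omega,n}$ so the reflected caps still use the required uniform angle sets, and that $M_\omega$ is an isometry preserving $d_\mathrm{H}$ --- both of which are needed and correct.
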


\begin{proof}
Apply \Cref{lem:maximum-polygon-cap-mirror} to the maximum polygon caps \(K_i\) converging to \(K_\omega\).
\end{proof}

\begin{theorem}

For every \(\omega \in (0, \pi/2]\), there exists a balanced maximum cap \(\mathcal{K}_\omega^\mathrm{c}\) with rotation angle \(\omega\).

\label{thm:balanced-maximum-cap}
\end{theorem}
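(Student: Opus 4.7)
The plan is to construct $K_\omega$ as a Hausdorff limit of a subsequence of maximum polygon caps $K_j \in \mathcal{K}_{\Theta_j}^\mathrm{c}$ for $\Theta_j := \Theta_{\omega, 2^j}$. For each $j \geq 1$, \Cref{thm:maximum-polygon-cap} furnishes such a $K_j$, so the task reduces to extracting a convergent subsequence via the Blaschke selection theorem and then verifying that the limit lies in $\mathcal{K}_\omega^\mathrm{c}$.

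First I would check that $\{K_j\}_{j\geq 1}$ is uniformly bounded. When $\omega < \pi/2$ this is immediate from $K_j \subseteq P_\omega$. When $\omega = \pi/2$, each $K_j \subseteq H$ is pinned horizontally by $o_\omega \in K_j$, and $\pi/4 \in \Theta_j$ for every $j \geq 1$. Comparing $\mathcal{A}_{\Theta_j}(K_j)$ against its value on the polygon cap with support function identically $1$ on $\Theta_j^\diamond$ (whose polygon niche is empty, as used in the proof of \Cref{thm:maximum-polygon-cap}) gives $\mathcal{A}_{\Theta_j}(K_j) > 0$, so \Cref{lem:polygon-cap-bounded} with $t = \pi/4$ bounds the width of $K_j$ along $u_0$ uniformly in $j$. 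Hence all $K_j$ lie in a common compact set, and the Blaschke selection theorem extracts a subsequence $K_{j_i}$ converging to some convex body $K_\omega$ in Hausdorff distance.

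Next I would verify $K_\omega \in \mathcal{K}_\omega^\mathrm{c}$. Condition (1) of \Cref{def:cap} is immediate from continuity of the support function under Hausdorff convergence: each $h_{K_{j_i}}$ takes the required values at $\omega, \pi/2, \omega + \pi, 3\pi/2$, so $h_{K_\omega}$ does as well. Set $I := J_\omega \cup \{\omega + \pi, 3\pi/2\}$ and $K_\omega' := \bigcap_{t \in I} H_{K_\omega}(t) \supseteq K_\omega$. Each $K_{j_i} \in \mathcal{K}_\omega^\mathrm{c}$ satisfies $K_{j_i} = \bigcap_{t \in I} H_{K_{j_i}}(t)$, and Hausdorff convergence is equivalent to uniform convergence of the support functions on $S^1$; since $I$ is compact and the intersections stay inside the fixed ball from the previous step, a standard argument gives $\bigcap_{t \in I} H_{K_{j_i}}(t) \to K_\omega'$ in Hausdorff distance. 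Uniqueness of Hausdorff limits then forces $K_\omega = K_\omega'$, establishing condition (2).

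The step I expect to be the main obstacle is this closure argument for condition (2), because it requires controlling an uncountable intersection indexed by $I$ through the Hausdorff limit rather than using the countable defining-angle set of each individual $K_{j_i}$. Once it is in hand, setting $n_i := 2^{j_i}$ provides the strictly increasing sequence of powers of two, and the maximum polygon caps $K_{j_i} \to K_\omega$ exhibit $K_\omega$ as a balanced maximum cap according to \Cref{def:balanced-maximum-cap}.
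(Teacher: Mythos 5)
Your proposal matches the paper's proof in strategy and detail level (the paper's proof is in fact terser, dismissing the cap verification as "easy"): uniformly bound the maximum polygon caps \(K_j\) via \Cref{lem:polygon-cap-bounded} applied at \(t = \omega/2 \in \Theta_{\omega, 2^j}\) together with the anchor \(o_\omega \in K_j\), apply Blaschke selection, and verify the limit is a cap. The one step you flag yourself — that \(\bigcap_{t\in I} H_{K_{j_i}}(t) \to \bigcap_{t\in I} H_{K_\omega}(t)\), equivalently that the Wulff-shape map \(h \mapsto \bigcap_{t\in I} H_-(t, h(t))\) is Hausdorff-continuous along this sequence — is not quite off-the-shelf and is where a careful reader would want more. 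A clean way to close it without appealing to that continuity: each \(\sigma_{K_{j_i}}\) is supported on \(I := J_\omega \cup \{\omega+\pi, 3\pi/2\}\), and \Cref{thm:surface-area-weak-convergence} plus Portmanteau force \(\sigma_{K_\omega}(S^1\setminus I) = 0\); then if some \(p \in \bigcap_{t\in I} H_{K_\omega}(t)\setminus K_\omega\), the outward normal \(s\) at the nearest point \(q\) lies in a gap \((a,b)\) of \(S^1\setminus I\), which forces \(e_{K_\omega}(t) = \{q\}\) for all \(t\in(a,b)\) and hence (via \Cref{thm:limits-converging-to-vertex}) \(q \in l_{K_\omega}(a)\cap l_{K_\omega}(b)\); since every gap of \(I\) has length \(<\pi\), one of \(s-a\) or \(b-s\) is \(<\pi/2\), giving \(p\cdot u_a > h_{K_\omega}(a)\) or \(p\cdot u_b > h_{K_\omega}(b)\), a contradiction.
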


\begin{proof}
For each \(i \geq 1\), take some maximum polygon cap \(K_i\) with the uniform angle set \(\Theta_{\omega,2^i}\) of \(2^i\) intervals (\Cref{thm:maximum-polygon-cap}). Since \(o_\omega \in K_i\) and every \(K_i\) is uniformly bounded in diameter \(\sqrt{1 + c_{\omega, \omega/2}}\) by \Cref{lem:polygon-cap-bounded}, we can use the Blaschke convergence theorem to find a convex body \(K\) that a subsequence of \(K_1, K_2, \dots\) converges to in \(d_\mathrm{H}\). Checking \(K \in \mathcal{K}_\omega^\mathrm{c}\) is easy.
\end{proof}

\begin{lemma}

Let \(X, Y\) and \(X_i, Y_i\) for all \(i \geq 1\) be bounded nonempty subsets of \(\mathbb{R}^2\). Assume that \(X_i \to X\) and \(Y_i \to Y\) in Hausdorff distance \(d_\mathrm{H}\) as \(i \to \infty\). Assume also that \(Y\) is compact. Then \(X_i \subseteq Y_i\) for all \(i \geq 1\) implies that \(X \subseteq Y\).

\label{lem:hausdorff-distance-containment}
\end{lemma}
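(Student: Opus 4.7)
The plan is to chase a point $x \in X$ through the two converging sequences and land it inside $Y$ using the closedness of $Y$ that comes from its compactness.

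First, take any $x \in X$. Since $X_i \to X$ in Hausdorff distance, in particular $\sup_{x' \in X} \inf_{y \in X_i} d(x', y) \to 0$, so for each $i$ one can choose $x_i \in X_i$ with $d(x, x_i) \leq d_\mathrm{H}(X_i, X) + 1/i$. Thus $x_i \to x$ in $\mathbb{R}^2$. The hypothesis $X_i \subseteq Y_i$ then gives $x_i \in Y_i$ for every $i$.

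Next, use the convergence $Y_i \to Y$. Because $d_\mathrm{H}(Y_i, Y) \to 0$ and $Y$ is compact, for each $i$ we may pick $y_i \in Y$ attaining (or nearly attaining) $\inf_{y \in Y} d(x_i, y)$, so that $d(x_i, y_i) \leq d_\mathrm{H}(Y_i, Y) + 1/i$. The triangle inequality then gives $d(x, y_i) \leq d(x, x_i) + d(x_i, y_i) \to 0$, i.e.\ $y_i \to x$. Since $Y$ is compact, hence closed, the limit $x$ lies in $Y$. As $x \in X$ was arbitrary, $X \subseteq Y$.

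There is no real obstacle here; the only subtlety worth flagging in the write-up is \emph{why compactness of $Y$ is used}. Without it, the argument only yields $x \in \overline{Y}$, which is why the hypothesis is stated on $Y$ rather than on $X$. No analogous assumption is needed on $X$, $X_i$, or $Y_i$ beyond being bounded and nonempty (so that the Hausdorff distance is finite and the selection of $x_i$ and $y_i$ makes sense).
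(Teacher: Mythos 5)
Your proof is correct and follows the same chain of reasoning as the paper: pick $x \in X$, use $X_i \to X$ to extract $x_i \in X_i \subseteq Y_i$ with $x_i \to x$, use $Y_i \to Y$ to show $d(x_i, Y) \to 0$, and conclude $x \in Y$ by closedness of the compact $Y$. The only cosmetic difference is that you extract explicit witnesses $y_i \in Y$ where the paper works directly with $d(p_i, Y) \to 0$, which is the same idea.
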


\begin{proof}
For two points \(p, q \in \mathbb{R}^2\), let \(d(p, q)\) denote the Euclidean distance between them. For any point \(p \in \mathbb{R}^2\) and \(Z \subseteq \mathbb{R}^2\), define \(d(p, Z) := \inf_{q \in Z} d(p, q)\). Recall that \(d_\mathrm{H}(Z_1, Z_2)\) is defined as the maximum of \(\sup_{p \in Z_1} d(p, Z_2)\) and \(\sup_{q \in Z_2} d(Z_1, q)\). Take any \(p \in X\). Then \(X_i \to X\) in \(d_\mathrm{H}\) implies that \(d(p, X_i) \to 0\). So we can take \(p_i \in X_i \subseteq Y_i\) such that \(d(p, p_i) \to 0\) as well. That is, \(p_i \to p\). Because \(Y_i \to Y\) in \(d_\mathrm{H}\), we have \(d(p_i, Y) \to 0\). As \(p_i \to p\) and \(Y\) is compact, we have \(p \in Y\).
\end{proof}

\begin{theorem}

For any balanced maximum cap \(K_\omega \in \mathcal{K}_\omega^\mathrm{c}\) we have \(\mathcal{N}(K_\omega) \subset K_\omega\).

\label{thm:limiting-maximum-cap-connected}
\end{theorem}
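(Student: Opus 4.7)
The plan is to verify condition (3) of \Cref{thm:monotonization-connected-iff} for $K_\omega$: for every $t \in (0, \omega)$, either $\mathbf{x}_{K_\omega}(t) \notin F_\omega^\circ$ or $\mathbf{x}_{K_\omega}(t) \in K_\omega$, which is equivalent to $\mathcal{N}(K_\omega) \subseteq K_\omega$. By \Cref{def:balanced-maximum-cap} and \Cref{thm:balanced-polygon-sofa-connected}, $K_\omega$ is the Hausdorff limit of maximum polygon caps $K_i \in \mathcal{K}_{\Theta_i}^\mathrm{c}$ with uniform angle sets $\Theta_i = \Theta_{\omega, n_i}$ satisfying $\mathcal{N}_{\Theta_i}(K_i) \subseteq K_i$. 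Fix $t \in (0, \omega)$ and pick $t_i \in \Theta_i$ with $|t_i - t| \leq \omega / n_i \to 0$. Hausdorff convergence yields uniform convergence of support functions $h_{K_i} \to h_{K_\omega}$ on $S^1$ (Lemma 1.8.14 of \autocite{schneider_2013}), and combined with continuity of $h_{K_\omega}$ this gives $h_{K_i}(t_i) \to h_{K_\omega}(t)$ and $h_{K_i}(t_i + \pi/2) \to h_{K_\omega}(t + \pi/2)$, hence $\mathbf{x}_{K_i}(t_i) \to \mathbf{x}_{K_\omega}(t)$ via the formula in \Cref{pro:rotating-hallway-parts}.

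Assume now $\mathbf{x}_{K_\omega}(t) \in F_\omega^\circ$, with the goal of showing $\mathbf{x}_{K_\omega}(t) \in K_\omega$. For each index $i$ with $\mathbf{x}_{K_i}(t_i) \in P_\omega^\circ$, the open polygon wedge $P_\omega \cap Q^-_{K_i}(t_i) \subseteq \mathcal{N}_{\Theta_i}(K_i) \subseteq K_i$ accumulates at $\mathbf{x}_{K_i}(t_i)$, so closedness of $K_i$ forces $\mathbf{x}_{K_i}(t_i) \in K_i$. When $\mathbf{x}_{K_\omega}(t) \in P_\omega^\circ$, continuity then gives $\mathbf{x}_{K_i}(t_i) \in P_\omega^\circ$ for all large $i$, hence $\mathbf{x}_{K_i}(t_i) \in K_i$, and Hausdorff convergence with closedness of $K_\omega$ concludes $\mathbf{x}_{K_\omega}(t) \in K_\omega$. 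The borderline case $\mathbf{x}_{K_\omega}(t) \in \partial P_\omega \cap F_\omega^\circ$ reduces to the interior case by perturbing slightly inward into $P_\omega^\circ$ while remaining inside the open quadrant $Q^-_{K_\omega}(t)$, applying the previous argument to the perturbation, and passing to the limit by closedness of $K_\omega$.

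The main obstacle is to rule out the remaining case $\mathbf{x}_{K_\omega}(t) \in F_\omega^\circ \setminus P_\omega$, where inward perturbation into $P_\omega^\circ$ is impossible and $K_\omega \subseteq P_\omega$ would prevent $\mathbf{x}_{K_\omega}(t) \in K_\omega$. I would address this via a polygon sub-lemma: for every maximum polygon cap $K_i$ and every $t_i \in \Theta_i$, the inner corner $\mathbf{x}_{K_i}(t_i)$ lies in $P_\omega$. Such a sub-lemma passes to the limit as $\mathbf{x}_{K_\omega}(t) \in \overline{P_\omega} = P_\omega$, eliminating the case. The sub-lemma should follow from the polyline structure of \Cref{thm:polyline} combined with $\mathbf{p}_{K_i} \subseteq K_i \subseteq P_\omega$ delivered by \Cref{thm:balanced-polygon-sofa-connected}: a vertex of $\mathbf{p}_{K_i}$ formed by adjacent inner-wall segments at $t_i$ coincides with $\mathbf{x}_{K_i}(t_i)$, and the balancedness of \Cref{thm:balanced-polygon-sofa} should preclude the polyline from detouring upward around $\mathbf{x}_{K_i}(t_i)$ outside $P_\omega$. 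With the sub-lemma in hand, the three cases together verify condition (3) of \Cref{thm:monotonization-connected-iff} and deliver $\mathcal{N}(K_\omega) \subseteq K_\omega$.
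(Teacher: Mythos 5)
Your route through condition (3) of \Cref{thm:monotonization-connected-iff} is genuinely different from the paper's proof, which instead shows $\mathcal{N}_{\Theta_j}(K_i) \subseteq K_i$ for each fixed $j$, passes this containment to the limit $\mathcal{N}_{\Theta_j}(K_\omega) \subseteq K_\omega$ via \Cref{lem:hausdorff-distance-containment}, and then uses $\bigcup_j \mathcal{N}_{\Theta_j}(K_\omega) = \mathcal{N}(K_\omega)$. Your pointwise scheme and the convergence $\mathbf{x}_{K_i}(t_i) \to \mathbf{x}_{K_\omega}(t)$ are sound, and Cases 1 and 2 (after unpacking what ``apply the previous argument'' means for an arbitrary $p \in P_\omega^\circ \cap Q^-_{K_\omega}(t)$) work. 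But Case 3 contains a genuine gap.

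The proposed sub-lemma --- that $\mathbf{x}_{K_i}(t_i) \in P_\omega$ for every maximum polygon cap $K_i$ and every $t_i \in \Theta_i$ --- does not follow from \Cref{thm:polyline} and \Cref{thm:balanced-polygon-sofa-connected} in the way you sketch. Those results yield $\mathbf{p}_{K_i} \subseteq K_i \subseteq P_\omega$, but an inner corner $\mathbf{x}_{K_i}(t_i)$ that lies \emph{outside} $P_\omega$ is simply not a vertex of the polyline: the segment of $\mathbf{p}_{K_i}$ with normal angle $t_i$ still sits on $\vec{b}_{K_i}(t_i)$ but is clipped to lie inside $P_\omega$, away from the corner $\mathbf{x}_{K_i}(t_i)$, and the adjacent inner-wall segments meet at some point of $\partial P_\omega$ rather than at $\mathbf{x}_{K_i}(t_i)$. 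So the containment $\mathbf{p}_{K_i} \subseteq P_\omega$ says nothing about whether $\mathbf{x}_{K_i}(t_i)$ itself lies in $P_\omega$; the sub-lemma, if true, requires a new argument that you have not supplied.

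That said, I believe your Case 3 is an artifact of reading \Cref{def:angled-niche} literally, where the polygon niche is defined as $P_\omega \cap \bigcup_{t \in \Theta} Q^-_K(t)$. Everywhere else the paper uses $F_\omega$ in the corresponding role: \Cref{def:height-extensions} defines $\mathcal{N}_\Theta(h)$ with $F_h$ (which equals $F_\omega$ for a cap), \Cref{pro:niche-extension-compatible} asserts $\mathcal{N}_\Theta(h_K) = \mathcal{N}_\Theta(K)$ which only holds under the $F_\omega$ reading, and the proof of \Cref{lem:polyline-length} explicitly writes $\mathcal{N}_\Theta(K) = F_\omega \cap X$. Moreover, the paper's own proof of this theorem relies on $\bigcup_j \mathcal{N}_{\Theta_j}(K) = \mathcal{N}(K)$, which again needs $F_\omega$. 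Under the $F_\omega$ convention, the wedge $T_{K_i}(t_i) = F_\omega \cap Q^-_{K_i}(t_i)$ of \Cref{def:wedge} satisfies $T_{K_i}(t_i) \subseteq \mathcal{N}_{\Theta_i}(K_i) \subseteq K_i$ and accumulates at $\mathbf{x}_{K_i}(t_i)$ \emph{whenever} $\mathbf{x}_{K_i}(t_i) \in F_\omega^\circ$ --- no assumption about $P_\omega$ is needed. Your Case 1 argument then covers the entire hypothesis $\mathbf{x}_{K_\omega}(t) \in F_\omega^\circ$, and Cases 2 and 3 disappear. I would recommend reformulating the proof in those terms; as currently written, the sub-lemma is the missing piece and the derivation you sketch for it does not hold.
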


\begin{proof}
Write \(K_\omega\) as \(K\) to avoid clutterintg. Take the maximum polygon cap \(K_i\) with the uniform angle set \(\Theta_i := \Theta_{\omega, n_i}\) of \(n_i\) intervals, where \(n_i\) is an increasing powers of two, so that \(K_i \to K\) in \(d_\mathrm{H}\). Let \(\Theta = \cup_i \Theta_i\) so that \(\Theta\) is the set of dyadic angles. Note that \(\mathcal{N}_{\Theta_1}(K) \subseteq \mathcal{N}_{\Theta_2}(K) \subseteq \dots\) by definition. We also have \(\bigcup_{j} \mathcal{N}_{\Theta_j}(K) = \mathcal{N}(K)\) as the open set \(Q_K^-(t)\) changes continuously with respect to \(t\). Fix an arbitrary \(j \geq 1\). For any \(i \geq j\), we have \(\mathcal{N}_{\Theta_j}(K_i) \subseteq \mathcal{N}_{\Theta_i}(K_i) \subseteq K_i\) by \Cref{thm:balanced-polygon-sofa-connected}. As \(K_i \to K\) in \(d_\mathrm{H}\), we have \(h_{K_i}\) converging to \(h_K\) uniformly so as \(i \to \infty\) and \(j\) is fixed, either \(\mathcal{N}_{\Theta_j}(K)\) is empty or \(\mathcal{N}_{\Theta_j}(K_i) \to \mathcal{N}_{\Theta_j}(K)\) in \(d_\mathrm{H}\). By \Cref{lem:hausdorff-distance-containment} on \(\mathcal{N}_{\Theta_j}(K_i) \subseteq K_i\), we get \(\mathcal{N}_{\Theta_j}(K) \subseteq K\). Now take \(j \to \infty\) to conclude \(\mathcal{N}(K) \subseteq K\).
\end{proof}

\begin{theorem}

A balanced maximum cap \(K_\omega \in \mathcal{K}_\omega^\mathrm{c}\) attains the maximum value of the sofa area functional \(\mathcal{A}_\omega : \mathcal{K}_\omega^\mathrm{c} \to \mathbb{R}\).

\label{thm:limiting-maximum-cap-max}
\end{theorem}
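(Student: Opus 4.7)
The strategy is to pass the maximality of the approximating polygon caps to the limit. Fix any competing cap $K^* \in \mathcal{K}_\omega^\mathrm{c}$; the goal is $\mathcal{A}_\omega(K^*) \leq \mathcal{A}_\omega(K_\omega)$. By \Cref{def:balanced-maximum-cap}, write $K_\omega$ as the $d_\mathrm{H}$-limit of maximum polygon caps $K_i \in \mathcal{K}_{\Theta_i}^\mathrm{c}$ where $\Theta_i := \Theta_{\omega, n_i}$ and $n_1 < n_2 < \cdots$ are powers of two.

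For each $i$, I would chain three facts to obtain $\mathcal{A}_\omega(K^*) \leq |K_i| - |\mathcal{N}_{\Theta_i}(K_i)|$. First, \Cref{thm:polygon-upper-bound} gives $\mathcal{A}_\omega(K^*) \leq \mathcal{A}_{\Theta_i}(K^*)$. Second, idempotency of $\mathcal{C}_{\Theta_i}$ (\Cref{pro:angled-cap}) together with $\mathcal{N}_{\Theta_i}(K^*) = \mathcal{N}_{\Theta_i}(\mathcal{C}_{\Theta_i}(K^*))$ (\Cref{pro:angled-niche-polygon-cap}) yields $\mathcal{A}_{\Theta_i}(K^*) = \mathcal{A}_{\Theta_i}(\mathcal{C}_{\Theta_i}(K^*))$ with $\mathcal{C}_{\Theta_i}(K^*) \in \mathcal{K}_{\Theta_i}^\mathrm{c}$. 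Third, the final paragraph of the proof of \Cref{thm:maximum-polygon-cap} establishes that $K_i$ maximizes $\mathcal{A}_{\Theta_i}$ over \emph{all} of $\mathcal{K}_{\Theta_i}^\mathrm{c}$ (not merely over caps containing $o_\omega$), so $\mathcal{A}_{\Theta_i}(\mathcal{C}_{\Theta_i}(K^*)) \leq \mathcal{A}_{\Theta_i}(K_i) = |K_i| - |\mathcal{N}_{\Theta_i}(K_i)|$.

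It remains to show $\limsup_i (|K_i| - |\mathcal{N}_{\Theta_i}(K_i)|) \leq |K_\omega| - |\mathcal{N}(K_\omega)|$. Continuity of area in $d_\mathrm{H}$ (Theorem 1.8.20 of \cite{schneider_2013}, already invoked in the proof of \Cref{thm:maximum-polygon-cap}) gives $|K_i| \to |K_\omega|$, so it suffices to show $\liminf_i |\mathcal{N}_{\Theta_i}(K_i)| \geq |\mathcal{N}(K_\omega)|$. Because each $n_i$ is a power of two in an increasing sequence, $\Theta_j \subseteq \Theta_i$ whenever $j \leq i$, giving $\mathcal{N}_{\Theta_j}(K_i) \subseteq \mathcal{N}_{\Theta_i}(K_i)$. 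Fixing $j$ and letting $i \to \infty$, the continuity argument in the first paragraph of the proof of \Cref{thm:maximum-polygon-cap}---which only uses continuity in the finite collection of support-function values at $\Theta_j \cup (\Theta_j + \pi/2)$---shows $|\mathcal{N}_{\Theta_j}(K_i)| \to |\mathcal{N}_{\Theta_j}(K_\omega)|$. Hence $\liminf_i |\mathcal{N}_{\Theta_i}(K_i)| \geq |\mathcal{N}_{\Theta_j}(K_\omega)|$ for every $j$, and taking $j \to \infty$ together with the monotone approximation $\bigcup_j \mathcal{N}_{\Theta_j}(K_\omega) = \mathcal{N}(K_\omega)$ (used in the proof of \Cref{thm:limiting-maximum-cap-connected}) completes the bound.

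The main obstacle is precisely this diagonal limit: both the angle sets $\Theta_i$ and the caps $K_i$ vary with $i$, so one cannot directly appeal to continuity of $|\mathcal{N}(\cdot)|$ or of $|\mathcal{N}_\Theta(\cdot)|$ in isolation. The crucial structural input unlocking the argument is the nesting $\Theta_j \subseteq \Theta_i$ for $j \leq i$, which is guaranteed by $n_i$ being an increasing sequence of powers of two and lets one interpose the intermediate niches $\mathcal{N}_{\Theta_j}(K_i)$, sending $i \to \infty$ with $j$ fixed before letting $j \to \infty$.
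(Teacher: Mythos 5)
Your proof is correct and follows essentially the same route as the paper: the key insight in both is the nesting $\Theta_j \subseteq \Theta_i$ (from increasing powers of two), which lets you interpose $\mathcal{N}_{\Theta_j}(K_i)$ and obtain $\liminf_i |\mathcal{N}_{\Theta_i}(K_i)| \geq |\mathcal{N}(K_\omega)|$. The only (minor) difference is structural: the paper first establishes full two-sided convergence $\mathcal{A}_{\Theta_i}(K_i) \to \mathcal{A}_\omega(K_\omega)$ and then concludes with an $\epsilon$-argument, whereas you chain $\mathcal{A}_\omega(K^*) \leq \mathcal{A}_{\Theta_i}(K_i)$ directly and take a one-sided limit, which skips the $\limsup$ bound on $|\mathcal{N}_{\Theta_i}(K_i)|$ that the paper derives from $\mathcal{A}_{\Theta_i}(K_i) \geq \mathcal{A}_\omega(K_\omega)$.
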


\begin{proof}
Write \(K := K_\omega\) to avoid cluttering, and take \(K_i\) and \(\Theta_i\) as in the proof of \Cref{thm:limiting-maximum-cap-connected} above. We will show \(\mathcal{A}_{\Theta_i}(K_i) \to \mathcal{A}_\omega(K)\) as \(n \to \infty\). We have \(|K_i| \to |K|\) as \(K_i \to K\) in Hausdorff distance and they are convex bodies (Theorem 1.8.20 of \autocite{schneider_2013}). So it remains to show \(|\mathcal{N}_{\Theta_i}(K_i)| \to |\mathcal{N}(K)|\) as \(n \to \infty\). We always have \(\mathcal{A}_{\Theta_i}(K_i) \geq \mathcal{A}_{\Theta_i}(K) \geq \mathcal{A}_\omega(K)\) by \Cref{def:maximum-polygon-cap} and \Cref{thm:polygon-upper-bound}. This with \(|K_i| \to |K|\) establishes \(\lim \sup_n{ |\mathcal{N}_{\Theta_i}(K_i)| } \leq |\mathcal{N}(K)|\). On the other hand, fix any \(m \geq 1\), then
\[
\liminf_{n} |\mathcal{N}_{\Theta_i}(K_i)| \geq \lim_{n \to \infty} |\mathcal{N}_{\Theta_m}(K_i)| =  |\mathcal{N}_{\Theta_m}(K)|
\]
because \(h_{K_i} \to h_K\) uniformly. Now taking \(m \to \infty\) shows \(\lim \inf_{n} |\mathcal{N}_{\Theta_i}(K_i)| \geq |\mathcal{N}(K)|\), completing the proof.

Let \(m_\omega\) be the supremum of \(\mathcal{A}_\omega : \mathcal{K}_\omega^\mathrm{c} \to \mathbb{R}\). Take any \(\epsilon > 0\). There exists some \(K' \in \mathcal{K}_\omega^\mathrm{c}\) such that \(\mathcal{A}_\omega(K') > m_\omega - \epsilon\). Now
\[
m_\omega - \epsilon < \mathcal{A}_\omega(K') \leq \mathcal{A}_{\Theta_i}(K') \leq \mathcal{A}_{\Theta_i}(K_i)
\]
by maximality of \(K_i\) on \(\mathcal{A}_{\Theta_i}\). Let \(n \to \infty\) to have \(m_\omega - \epsilon \leq \mathcal{A}_\omega(K)\). Take \(\epsilon \to 0\) to have \(m_\omega \leq \mathcal{A}_\omega(K)\), so that \(K\) attains the supremum of \(\mathcal{A}_\omega\) as desired.
\end{proof}

\begin{definition}

Call a monotone sofa \(S_\omega\) with the balanced maximum cap \(K_\omega := \mathcal{C}(S_\omega)\) a \emph{balanced maximum sofa}.

\label{def:balanced-maximum-sofa}
\end{definition}

\begin{theorem}

A balanced maximum sofa \(S_\omega := K_\omega \setminus \mathcal{N}(K_\omega)\) with the balanced maximum cap \(K_\omega\) exists, and attains the maximum area among all moving sofas of rotation angle \(\omega \in (0, \pi/2]\).

\label{thm:limiting-maximum-sofa}
\end{theorem}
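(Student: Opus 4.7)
The plan is to assemble the theorem as an immediate corollary of the work already done, with essentially no new argument required. First, for the existence of $S_\omega$, I would invoke \Cref{thm:balanced-maximum-cap} to produce a balanced maximum cap $K_\omega \in \mathcal{K}_\omega^\mathrm{c}$ with rotation angle $\omega$. Then \Cref{thm:limiting-maximum-cap-connected} yields $\mathcal{N}(K_\omega) \subseteq K_\omega$, and \Cref{thm:niche-in-cap} upgrades this to the fact that $K_\omega$ is the cap of a monotone sofa, namely $S_\omega := K_\omega \setminus \mathcal{N}(K_\omega)$, with $\mathcal{C}(S_\omega) = K_\omega$. Matching \Cref{def:balanced-maximum-sofa} then certifies $S_\omega$ as a balanced maximum sofa.

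For the area maximality, I would take an arbitrary moving sofa $S$ with rotation angle $\omega$. After translating $S$ into standard position using \Cref{pro:standard-position-shape}, \Cref{thm:monotonization} produces the monotone sofa $\mathcal{I}(S) \supseteq S$, again with rotation angle $\omega$, so that $|S| \le |\mathcal{I}(S)|$. Setting $K' := \mathcal{C}(\mathcal{I}(S)) \in \mathcal{K}_\omega^\mathrm{c}$, \Cref{thm:sofa-area-functional} gives $|\mathcal{I}(S)| = \mathcal{A}_\omega(K')$, and combining with the global maximality of $K_\omega$ on $\mathcal{A}_\omega$ from \Cref{thm:limiting-maximum-cap-max} yields the chain
\[
|S| \;\leq\; |\mathcal{I}(S)| \;=\; \mathcal{A}_\omega(K') \;\leq\; \mathcal{A}_\omega(K_\omega) \;=\; |S_\omega|,
\]
where the last equality uses \Cref{thm:sofa-area-functional} once more on the cap $\mathcal{C}(S_\omega) = K_\omega$.

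There is no real obstacle, since the two substantive results \Cref{thm:limiting-maximum-cap-connected} and \Cref{thm:limiting-maximum-cap-max} have already handled the hard content. The only mild points to be careful about are: (i) in the $\omega = \pi/2$ case, \Cref{pro:standard-position-shape} only gives uniqueness of standard position up to horizontal translations, but this is harmless because horizontal translation preserves both the area of $S$ and the hypotheses of \Cref{thm:monotonization}; and (ii) one must track that $\mathcal{I}(S)$ inherits \emph{exactly} the rotation angle $\omega$ from $S$, so that $K' \in \mathcal{K}_\omega^\mathrm{c}$ and the comparison against $K_\omega$ via $\mathcal{A}_\omega$ is legitimate. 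Both points are explicitly recorded in \Cref{thm:monotonization}.
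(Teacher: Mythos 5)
Your proof is correct and follows essentially the same route as the paper: existence via \Cref{thm:balanced-maximum-cap}, \Cref{thm:limiting-maximum-cap-connected}, and \Cref{thm:niche-in-cap}, then maximality via \Cref{thm:sofa-area-functional} and \Cref{thm:limiting-maximum-cap-max}. The only difference is that you spell out the intermediate chain through an arbitrary $S$, its monotonization $\mathcal{I}(S)$, and the cap $K' = \mathcal{C}(\mathcal{I}(S))$, which the paper leaves implicit.
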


\begin{proof}
By \Cref{thm:balanced-maximum-cap}, a balanced maximum cap \(K_\omega\) exists. Such \(K_\omega\) is the cap of a monotone sofa \(S_\omega\) by \Cref{thm:limiting-maximum-cap-connected} and \Cref{thm:niche-in-cap}. This \(S_\omega\) attains the maximum area by \Cref{thm:sofa-area-functional} and \Cref{thm:limiting-maximum-cap-max}.
\end{proof}

\chapter{Rotation Angle of Balanced Maximum Sofas}
\label{sec:rotation-angle-of-balanced-maximum-sofas}
This chapter proves the \Cref{thm:angle} that any maximum-area moving sofa admits a movement with rotation angle \(\omega = \pi/2\). Take any \(\omega \in [\sec^{-1}(2.2) , \pi/2)\). \Cref{sec:horizontal-side-lengths} compares the horizontal sides of a balanced maximum sofa \(S_\omega\). \Cref{sec:right-rotation-angle} proves the \Cref{thm:angle} by following the outline in \Cref{sec:proof-outline}.
\section{Horizontal Side Lengths}
\label{sec:horizontal-side-lengths}
We first establish the horizontal side length comparison in \Cref{thm:balanced-maximum-sofa-ineq}, which is a crucial step in the proof of main \Cref{thm:angle}.

\begin{definition}

For any cap \(K \in \mathcal{K}_\omega^\mathrm{c}\), define \(w_K^\circ := \inf_{t \in (0, \omega)} w_K(t)\) and \(z_K^\circ := \inf_{t \in (0, \omega)} z_K(t)\).

\label{def:wedge-gap-infimum}
\end{definition}

\begin{lemma}

Assume that \(\omega < \pi/2\) and the caps \(K, K' \in \mathcal{K}_\omega^\mathrm{c}\) have Hausdorff distance \(\epsilon := d_\mathrm{H}(K, K')\). Then \(|w_K^{\circ} - w_{K'}^{\circ}| \leq (1 + \sec \omega) \epsilon\)

\label{lem:wedge-gap-limit}
\end{lemma}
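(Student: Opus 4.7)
The plan is to reduce the lemma to an explicit formula for $w_K(t)$ in terms of the support function $h_K$, then use the identification of Hausdorff distance with the sup-norm on support functions (\Cref{def:hausdorff-distance}) to get a pointwise bound, and finally pass to the infimum. Since $\omega < \pi/2$, the secant $\sec t$ is well-defined on $t \in (0, \omega)$.

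First I would compute $W_K(t)\cdot u_0$ explicitly. The line $l(\pi/2, 0)$ is the $x$-axis, and $b_K(t) = l(t, h_K(t) - 1) = \{p : p \cdot u_t = h_K(t) - 1\}$, so their intersection $W_K(t)$ is the point with $y = 0$ and $x\cos t = h_K(t) - 1$, giving $W_K(t)\cdot u_0 = (h_K(t) - 1)\sec t$. Since $A_K^-(0)$ lies on the supporting line $l_K(0) = \{x = h_K(0)\}$, its $u_0$-coordinate is $h_K(0)$. Combining these,
\[
w_K(t) = h_K(0) - (h_K(t) - 1)\sec t,
\]
and the same formula holds for $K'$ with $h_{K'}$ in place of $h_K$.

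By \Cref{def:hausdorff-distance}, $\epsilon = d_\mathrm{H}(K, K')$ equals $\sup_{s \in S^1}|h_K(s) - h_{K'}(s)|$. Subtracting the two formulas above and applying the triangle inequality, for every $t \in (0, \omega)$,
\[
|w_K(t) - w_{K'}(t)| \leq |h_K(0) - h_{K'}(0)| + \sec t \cdot |h_K(t) - h_{K'}(t)| \leq (1 + \sec t)\epsilon \leq (1 + \sec\omega)\epsilon,
\]
where the last step uses monotonicity of $\sec$ on $(0, \pi/2)$. Finally, if two bounded functions $f, g : (0, \omega) \to \mathbb{R}$ satisfy $|f(t) - g(t)| \leq C$ pointwise, then $\inf f \leq g(t) + C$ for every $t$, hence $\inf f \leq \inf g + C$, and the symmetric inequality gives $|\inf f - \inf g| \leq C$. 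Applying this with $C = (1 + \sec\omega)\epsilon$ yields the claim.

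There is no real obstacle: the lemma reduces to the explicit trigonometric formula for $w_K(t)$ and a sup-norm triangle inequality. The only mildly subtle point is that $w_K(t)$ need not attain its infimum on the open interval $(0, \omega)$, but this is transparent from the pointwise-to-infimum step and requires no extra argument.
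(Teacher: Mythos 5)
Your proof is correct and follows essentially the same approach as the paper: bound $|W_K(t)\cdot u_0 - W_{K'}(t)\cdot u_0|$ by $\epsilon\sec t\le\epsilon\sec\omega$ and $|A_K^-(0)\cdot u_0 - A_{K'}^-(0)\cdot u_0|$ by $\epsilon$, then combine and pass to the infimum. The paper phrases the first bound geometrically (distance between intersections of parallel lines with the $x$-axis) while you derive the explicit formula $w_K(t)=h_K(0)-(h_K(t)-1)\sec t$, but this is merely a cosmetic difference.
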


\begin{proof}
Since \(d_\mathrm{H}\) is the supremum norm between \(h_K\) and \(h_{K'}\), the distance between \(l_K(t)\) and \(l_{K'}(t)\) is at most \(\epsilon\). So the distance between \(W_K(t) = l_K(t) \cap l(\pi/2, 0)\) and \(W_{K'}(t)\) is at most \(\epsilon \sec \omega\). As \(|h_K(0) - h_{K'}(0)|\leq \epsilon\) the distance between \(A_K^-(0)\) and \(A_{K'}^-(0)\) is at most \(\epsilon\). So \(w_K(t) = (A_K^-(0) - W_K(t)) \cdot u_0\) and \(w_{K'}(t)\) differ by at most \((1 + \sec \omega) \epsilon\).
\end{proof}

\begin{theorem}

For any maximum polygon cap \(K \in \mathcal{K}_\Theta^\mathrm{c}\) with angle set \(\Theta\) of rotation angle \(\omega < \pi/2\), we have \(w_K^{\circ} \leq \sigma_K(\pi/2)\) and \(z_K^{\circ} \leq \sigma_K(\omega)\).

\label{thm:balanced-polygon-sofa-ineq}
\end{theorem}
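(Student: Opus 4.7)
The plan is to combine the balancedness of a maximum polygon cap from \Cref{thm:balanced-polygon-sofa} with the structural description of the polyline $\mathbf{p}_K$ in \Cref{thm:polyline}. Since $K$ is balanced, $\sigma_K(\pi/2)=\tau_K(\pi/2)$, and by \Cref{lem:polyline-length} the quantity $\tau_K(\pi/2)$ is exactly the total length of those edges of $\mathbf{p}_K$ that lie on $l(\pi/2,0)=\{y=0\}$ --- the visible pieces of the cap's bottom edge surviving between niche wedges. So the whole inequality will come from looking at a \emph{single} such horizontal piece, namely the rightmost one, and comparing its length to $w_K^{\circ}$.

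Next I would isolate the rightmost horizontal piece. The polyline is $x$-monotone (\Cref{def:polyline}, \Cref{thm:polyline}) and terminates at $A_K^-(0)=(h_K(0),0)$, so its final edge contains $A_K^-(0)$. A short convexity check --- if $A_K^-(0)\in b_K(t^*)$ for some $t^*\in(0,\omega)$, then $h_K(t^*)=1+h_K(0)\cos t^*$, but subadditivity of $h_K$ together with $h_K(\pi/2)=1$ forces $h_K(t^*)\le h_K(0)\cos t^*+\sin t^*$, giving $\sin t^*\ge 1$ and contradicting $\omega<\pi/2$ --- rules out that this final edge lies on any inner wall, so it must lie on $\{y=0\}$. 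Its left endpoint is then either $W_K(t^*)$ for the $t^*\in\Theta$ whose wedge gives the rightmost cut of $\{y=0\}$, or, in the degenerate subcase where no wedge of $\Theta$ reaches $\{y=0\}$, the origin $O=W_K(\omega)$ via the continuous extension of $W_K$ up to $t=\omega$. In either subcase the length of this terminal polyline edge equals $w_K(s^\ast)$ for some $s^\ast\in(0,\omega]$; since it is just one of the horizontal polyline pieces summing to $\tau_K(\pi/2)$,
\[
w_K^{\circ} \ \le\ w_K(s^\ast) \ \le\ \tau_K(\pi/2) \ =\ \sigma_K(\pi/2),
\]
where $w_K^{\circ}\le w_K(s^\ast)$ follows directly when $s^\ast\in(0,\omega)$, and via continuity $\lim_{t\to\omega^-}w_K(t)=h_K(0)=w_K(\omega)$ in the degenerate subcase.

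The companion inequality $z_K^{\circ}\le \sigma_K(\omega)$ follows from mirror symmetry: by \Cref{lem:maximum-polygon-cap-mirror}, the reflected cap $K^{\mathrm m}$ is itself a maximum polygon cap (with angle set $\omega-\Theta$), and \Cref{pro:mirror-reflection} gives $w_{K^{\mathrm m}}(t)=z_K(\omega-t)$ and $\sigma_{K^{\mathrm m}}(\pi/2)=\sigma_K(\omega)$, so the first inequality applied to $K^{\mathrm m}$ yields the second for $K$. The main obstacle I anticipate is the bookkeeping inside the second paragraph: ruling out that the terminal polyline edge lies on an inner wall rather than on $\{y=0\}$, and then uniformly handling both the generic subcase (some wedge of $\Theta$ reaches $\{y=0\}$, so the terminal edge starts at a genuine $W_K(t^*)$ with $t^*\in\Theta$) and the degenerate subcase (no wedge reaches, so the terminal edge is all of $[O,A_K^-(0)]$) by appealing to the continuous extension of $W_K$ and $w_K$ up to $t=\omega$.
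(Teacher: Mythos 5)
Your proposal is correct and takes essentially the same route as the paper: both rest on balancedness ($\tau_K(\pi/2)=\sigma_K(\pi/2)$) and on exhibiting a segment of $e_K(3\pi/2)$ of length $\geq w_K^\circ$, anchored at $A_K^-(0)$, that is disjoint from the niche. The paper achieves this a bit more cleanly by directly constructing the segment $s:=e_K(3\pi/2)\cap\bigcap_{t\in\Theta}s_t$, checking disjointness from each $T_K(t)$ via \Cref{thm:wedge-ends-in-cap}, and finishing with the $\mathcal{H}^1$-additivity from (2) of \Cref{lem:polyline-length}, whereas you identify the terminal horizontal piece of $\mathbf{p}_K$, which requires the extra bookkeeping you flagged (in particular, you rule out $A_K^-(0)\in b_K(t^*)$ but not explicitly $A_K^-(0)\in d_K(t^*)$ or $l(\omega,0)$, which is also needed to conclude the final edge is horizontal, even though these are easy). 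Note also that your ad-hoc convexity check only reproves $w_K(t^*)>0$, which is already available as \Cref{thm:wedge-ends-in-cap} and is what the paper cites directly.
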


\begin{proof}
By mirror symmetry (\Cref{lem:maximum-polygon-cap-mirror}), we only need to show \(w_K^{\circ} \leq \sigma_K(\pi/2)\).

Take any \(t \in \Theta\). Define the closed segment \(s_t\) of length \(w_K(t)\) connecting \(W_K(t)\) to \(A_K^-(0)\) from left to right. By \Cref{thm:wedge-ends-in-cap}, \(s_t\) is on the right side of the boundary \(b_K(t)\) of the wedge \(T_K(t)\). So \(s_t\) is disjoint from \(T_K(t)\).

Let \(s\) be the intersection of the edge \(e_K(3\pi/2)\) from \(O\) to \(A_K^-(0)\) and the segment \(s_t\) over all \(t \in \Theta\). Since each \(s_t\) is disjoint from \(T_K(t)\), the segment \(s\) is disjoint from \(\mathcal{N}_\Theta(K) = \bigcup_{t \in \Theta} T_K(t)\).

We will also check that \(s\) have length \(\geq w_K^\circ\). Since both \(e_K(3\pi/2)\) and \(s_t\) have right endpoint \(A_K^-(0)\), it suffices to check that both \(e_K(3\pi/2)\) and \(s_t\) have length \(\geq w_K^\circ\). For any \(t \in (0, \omega)\), the point \(W_K(t)\) is the intersection of lines \(l(t, h_K(t) - 1)\) and \(l(\pi/2, 0)\), we have
\[
\lim_{ t \to \omega^- } W_K(t) = l(\omega, h_K(\omega) - 1) \cap l(\pi/2, 0) = O.
\]
So
\[
\lim_{ t \to \omega^- } w_K(t) = \lim_{ t \to \omega^- } (A_K^-(0) - W_K(t)) \cdot u_0 = h_K(0)
\]
which is the length of \(e_K(3\pi/2)\). So \(e_K(3\pi/2)\) have length \(\geq w_K^\circ\). For any \(t \in \Theta\), the segment \(s_t\) have length \(w_K(t)\) so it has length \(\geq w_K^\circ\). Thus \(s\) have length \(\geq w_K^\circ\).

Then \(s\) is a segment of length \(\geq w_K^{\circ}\) in the edge \(e_K(3\pi/2)\) disjoint from \(\mathcal{N}_\Theta(K)\). So we have
\[
\mathcal{H}^1(s) + \mathcal{H}^1(\mathcal{N}_\Theta(K) \cap e_K(3\pi/2)) \leq \mathcal{H}^1(e_K(3\pi/2))
\]
which becomes
\[
w_K^{\circ} + \sigma_K(3\pi/2) - \tau_K(\pi/2) \leq \sigma_K(3\pi/2)
\]
by (2) of \Cref{lem:polyline-length}. So we have \(w_K^{\circ} \leq \tau_K(\pi/2)\). By the balancedness of \(K\) (\Cref{thm:balanced-polygon-sofa}), we have \(\tau_K(\pi/2) = \sigma_K(\pi/2)\) so the result follows.
\end{proof}

\begin{theorem}

(Theorem 4.2.1, page 212 of \cite{schneider_2013}) As \(n \to \infty\) and the convex bodies \(K_n\) converge to \(K\) in the Hausdorff distance \(d_\mathrm{H}\), the surface area measure \(\sigma_{K_n}\) on \(S^1\) converges weakly to \(\sigma_K\).

\label{thm:surface-area-weak-convergence}
\end{theorem}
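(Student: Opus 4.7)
The plan is to deduce weak convergence of $\sigma_{K_n} \to \sigma_K$ from the uniform convergence of support functions $h_{K_n} \to h_K$, which is immediate from $K_n \to K$ in Hausdorff distance (\Cref{def:hausdorff-distance}). The bridge between the two is Brunn--Minkowski duality: for any convex bodies $K, L$ in the plane the mixed area satisfies
\[
V(K, L) = \tfrac{1}{2} \int_{S^1} h_L(t) \, \sigma_K(dt),
\]
and $V(\cdot, L)$ is continuous in Hausdorff distance. I would first verify this identity for convex polygons, where both sides reduce to summing $\tfrac{1}{2} h_L(t_i) \cdot \ell_i$ over edges with outer normal angles $t_i$ and lengths $\ell_i$, and then pass to the general case by polyhedral approximation, using that both sides depend continuously on $K$ and $L$.

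Next I would handle a dense class of test functions. Any $f \in C^2(S^1)$ can be written as a difference $f = h_M - h_{M'}$ of two support functions of smooth planar convex bodies. Indeed, a $2\pi$-periodic $g$ is the support function of a smooth convex body precisely when $g + g'' \geq 0$, and adding a sufficiently large multiple of the support function $h_B \equiv 1$ of the unit disk to $f$ achieves this; then one subtracts the same multiple. For $f$ of this form,
\[
\int_{S^1} f \, d\sigma_{K_n} = 2 V(K_n, M) - 2 V(K_n, M'),
\]
and continuity of mixed area in Hausdorff distance yields $\int f \, d\sigma_{K_n} \to \int f \, d\sigma_K$.

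Finally, I would upgrade from $C^2(S^1)$ to $C(S^1)$ by a three-$\varepsilon$ argument. The hypothesis gives a uniform bound on $\sigma_{K_n}(S^1)$, since this total mass equals the perimeter of $K_n$, and perimeter is continuous in Hausdorff distance on convex bodies (the $L = B$ case of the mixed area identity). Density of $C^2(S^1)$ in $C(S^1)$ under the sup norm then closes the gap.

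The main obstacle, and the reason the paper imports the statement rather than reproving it, is making the mixed area identity and the decomposition $f = h_M - h_{M'}$ fully rigorous for general (possibly degenerate) convex bodies; both are treated carefully in Chapter 4 of \autocite{schneider_2013}, which is the cited source.
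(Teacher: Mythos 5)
The paper imports this result from Schneider (Theorem 4.2.1) as a black box and supplies no proof of its own, so there is no in-paper argument to compare against. Your sketch is a correct reconstruction of the standard proof from the cited reference: the planar mixed-area identity $V(K,L) = \tfrac{1}{2}\int_{S^1} h_L \, d\sigma_K$, continuity of mixed areas in Hausdorff distance, the decomposition of any $f \in C^2(S^1)$ as a difference of support functions via the criterion $h + h'' \geq 0$ (shift by a large multiple of the disk's support function), and finally the passage from $C^2$ test functions to all of $C(S^1)$ using the uniform bound on total mass $\sigma_{K_n}(S^1) = \mathrm{perim}(K_n)$. All steps are sound, and your closing remark correctly identifies why the paper cites rather than reproves: the mixed-area identity and the support-function decomposition need care for degenerate bodies, which is exactly what Chapter 4 of \autocite{schneider_2013} provides.
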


\begin{theorem}

For \(\omega < \pi/2\), any balanced maximum cap \(K := K_\omega\) with rotation angle \(\omega\) satisfies \(\sigma_K(\pi/2) \geq w_K^\circ\) and \(\sigma_K(\omega) \geq z_K^\circ\).\Cref{thm:surface-area-weak-convergence}

\label{thm:balanced-maximum-sofa-ineq}
\end{theorem}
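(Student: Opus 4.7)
The plan is to transfer the polygon-cap inequalities of \Cref{thm:balanced-polygon-sofa-ineq} to the limit, using the defining sequence of a balanced maximum cap. By \Cref{def:balanced-maximum-cap}, fix a sequence of maximum polygon caps $K_i \in \mathcal{K}_{\Theta_i}^\mathrm{c}$ with uniform angle sets $\Theta_i = \Theta_{\omega, n_i}$ such that $K_i \to K := K_\omega$ in the Hausdorff distance $d_\mathrm{H}$. Since each $\Theta_i$ has rotation angle $\omega < \pi/2$, \Cref{thm:balanced-polygon-sofa-ineq} applies to each $K_i$ and gives
\[
w_{K_i}^\circ \leq \sigma_{K_i}(\pi/2) \qquad \text{and} \qquad z_{K_i}^\circ \leq \sigma_{K_i}(\omega).
\]
It will suffice to take an honest limit of the left-hand sides and a $\limsup$ of the right-hand sides, then combine.

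For the left-hand sides, \Cref{lem:wedge-gap-limit} yields
\[
|w_{K_i}^\circ - w_K^\circ| \leq (1 + \sec\omega)\, d_\mathrm{H}(K_i, K) \;\xrightarrow{i \to \infty}\; 0,
\]
and symmetrically $z_{K_i}^\circ \to z_K^\circ$. For the right-hand sides, \Cref{thm:surface-area-weak-convergence} gives the weak convergence $\sigma_{K_i} \to \sigma_K$ of finite Borel measures on the compact circle $S^1$. Since $\{\pi/2\}$ and $\{\omega\}$ are closed singletons in $S^1$, the Portmanteau characterization of weak convergence (which on closed sets is an upper bound) gives $\limsup_i \sigma_{K_i}(\pi/2) \leq \sigma_K(\pi/2)$ and $\limsup_i \sigma_{K_i}(\omega) \leq \sigma_K(\omega)$. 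Chaining these,
\[
w_K^\circ \;=\; \lim_i w_{K_i}^\circ \;\leq\; \limsup_i \sigma_{K_i}(\pi/2) \;\leq\; \sigma_K(\pi/2),
\]
and the analogous chain proves $z_K^\circ \leq \sigma_K(\omega)$.

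The one subtle point is the direction of the weak-convergence estimate. Atomic mass on a closed set can only be created (by concentration) in a weak limit, not destroyed, so Portmanteau provides an upper bound $\limsup \sigma_{K_i}(F) \leq \sigma_K(F)$ for closed $F$. This is exactly the direction needed: the polygon-cap inequalities place the left-hand quantities \emph{below} an atomic mass, and any additional concentration at $\pi/2$ or $\omega$ in the limit only helps. Once this direction is identified, the proof is essentially a direct passage to the limit, with no further ingredient required beyond the three cited results.
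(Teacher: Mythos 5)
Your proof is correct and follows essentially the same route as the paper: pass to the limit along the defining sequence of maximum polygon caps, using \Cref{lem:wedge-gap-limit} for the left-hand side and the Portmanteau upper bound on closed singletons (via \Cref{thm:surface-area-weak-convergence}) for the right-hand side. The only cosmetic difference is that the paper reduces to the $w_K^\circ$ case by mirror symmetry (\Cref{rem:mirror-symmetry}) before taking the limit, whereas you handle both sides in parallel and appeal to symmetry for $z$ — which works, but implicitly uses that \Cref{lem:wedge-gap-limit} transfers to $z$ via the mirror reflection since it is only stated for $w$.
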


\begin{proof}
By mirror symmetry (\Cref{rem:mirror-symmetry}), it suffices to show \(\sigma_K(\pi/2) \geq w_K^\circ\). Take the maximum polygon caps \(K_n\) converging to \(K\). As \(K_n \to K\) in Hausdorff distance, \(w_{K_n} ^{\circ} \to w_K^{\circ}\) by \Cref{lem:wedge-gap-limit}. Because \(\sigma_{K_n} \to \sigma_K\) in weak convergence (\Cref{thm:surface-area-weak-convergence}), we have \(\limsup_{ n } \sigma_{K_n}(\pi/2) \leq \sigma_K(\pi/2)\). This combined with \(w_{K_n}^{\circ} \leq \sigma_{K_n}(\pi/2)\) (\Cref{thm:balanced-polygon-sofa-ineq}) prove \(w_K^\circ \leq \sigma_K(\pi/2)\) as we want.
\end{proof}

\section{Right Rotation Angle}
\label{sec:right-rotation-angle}
We will prove \Cref{thm:balanced-consumed} which is the main step for establishing \Cref{thm:angle} that \(\omega = \pi/2\). We factor out technical calculations in lemmas, and encourage the reader to jump to \Cref{thm:balanced-consumed} right away and refer to them only when needed. The outline in \Cref{sec:proof-outline} and \Cref{fig:proof-outline} should help navigating the proof.

\begin{definition}

Say that a right triangle \(T\) have \emph{base} \(b\) and \emph{angle} \(\theta\), if an edge \(e\) of \(T\) connecting the right-angled vertex to some other vertex \(B\) have length \(b\), and the angle at \(B\) is equal to \(\theta\). We call the length of the edge orthogonal to \(e\) the \emph{height} of \(T\).

\label{def:right-triangle}
\end{definition}

\begin{proposition}

For any \(\omega \in [0, \pi/2)\), define \(c_\omega = \tan((\pi/2 - \omega) / 2)\). Then \(o_\omega - v_0 = c_\omega u_0\) and \(o_\omega - u_\omega = c_\omega v_\omega\). Also, the parallelogram \(P_\omega\) has base \(e_{P_\omega}(3\pi/2)\) connecting \(O = (0, 0)\) to \(l(\pi/2, 0) \cap l(\omega, 1) = (\sec \omega, 0)\), and consequently \(c_\omega = \sec \omega - \tan \omega\).

\label{pro:omega-gap}
\end{proposition}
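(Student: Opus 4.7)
The plan is a direct verification using the explicit formula $o_\omega = (\tan(\pi/4 - \omega/2), 1)$ from \Cref{def:parallelogram} together with the standard tangent half-angle identity
\[
\tan(\pi/4 - \omega/2) = \frac{1-\sin\omega}{\cos\omega} = \frac{\cos\omega}{1+\sin\omega} = \sec\omega - \tan\omega,
\]
which is the half-angle formula applied to $\pi/2 - \omega$. Since $\pi/4 - \omega/2 = (\pi/2 - \omega)/2$, this shows $c_\omega = \tan(\pi/4 - \omega/2)$ is just the $x$-coordinate of $o_\omega$, and already gives the concluding identity $c_\omega = \sec\omega - \tan\omega$.

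With this in hand, the first vector identity $o_\omega - v_0 = c_\omega u_0$ is immediate, since $v_0 = (0,1)$ and $u_0 = (1,0)$ give $o_\omega - v_0 = (c_\omega, 0) = c_\omega u_0$. For $o_\omega - u_\omega = c_\omega v_\omega$, I would plug in $u_\omega = (\cos\omega, \sin\omega)$ and $v_\omega = (-\sin\omega, \cos\omega)$ and reduce the vector equation to the two scalar identities $c_\omega - \cos\omega = -c_\omega \sin\omega$ and $1 - \sin\omega = c_\omega \cos\omega$; these follow from the two half-angle forms $c_\omega = \cos\omega/(1 + \sin\omega)$ and $c_\omega = (1 - \sin\omega)/\cos\omega$ respectively. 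Alternatively, one can observe that the mirror reflection $M_\omega$ of \Cref{def:mirror-reflection} fixes both $O$ and $o_\omega$ and swaps $v_0 \leftrightarrow u_\omega$ and $u_0 \leftrightarrow v_\omega$, so the second identity is obtained from the first by applying $M_\omega$.

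For the edge claim, the supporting line $l_{P_\omega}(3\pi/2)$ of $P_\omega$ with normal $u_{3\pi/2} = (0,-1)$ is $l(3\pi/2, h_{P_\omega}(3\pi/2))$. Since $P_\omega \subseteq H = \mathbb{R} \times [0,1]$ contains $O$, we have $h_{P_\omega}(3\pi/2) = 0$ and the supporting line is $\{y = 0\} = l(\pi/2, 0)$. Intersecting $P_\omega = H \cap V_\omega$ with $\{y=0\}$ yields the segment on $y=0$ constrained by $0 \le x\cos\omega \le 1$, i.e., running from $O = l(\pi/2, 0) \cap l(\omega, 0)$ to $l(\pi/2, 0) \cap l(\omega, 1) = (\sec\omega, 0)$, exactly as stated. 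There is no real obstacle here: the whole proposition is a packaging of routine trigonometric identities to be cited in later geometric arguments of \Cref{sec:right-rotation-angle}, and the only point requiring care is correctly identifying which of the four bounding lines of $P_\omega$ produces each vertex.
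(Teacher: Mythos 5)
Your proposal is correct, and it takes a genuinely more computational route than the paper's. The paper argues synthetically: it observes that $O, o_\omega - v_0, o_\omega$ (resp.\ $O, o_\omega - u_\omega, o_\omega$; the paper's written $v_\omega$ there looks like a typo) form a right triangle with unit base and angle $(\pi/2-\omega)/2$ at $o_\omega$, giving the two vector identities directly from the geometry of \Cref{def:right-triangle}, and then reads off $c_\omega = \sec\omega - \tan\omega$ from a second right triangle $o_\omega, o_\omega - v_0, P$ with angle $\omega$, where $P = (\sec\omega, 0)$ is the other base vertex — in effect deriving the trig identity from the parallelogram rather than quoting it. You instead front-load the tangent half-angle identity $\tan(\pi/4-\omega/2) = (1-\sin\omega)/\cos\omega = \cos\omega/(1+\sin\omega) = \sec\omega - \tan\omega$, reduce everything to the explicit coordinates $o_\omega = (c_\omega, 1)$, $u_\omega = (\cos\omega,\sin\omega)$, $v_\omega = (-\sin\omega,\cos\omega)$, and verify the two scalar equations $c_\omega(1+\sin\omega)=\cos\omega$ and $1-\sin\omega=c_\omega\cos\omega$; your mirror-symmetry shortcut $M_\omega(o_\omega - v_0) = o_\omega - u_\omega$ is also a clean way to get the second identity for free. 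The paper's argument is shorter on paper but leans on the reader visualizing two right triangles inside $P_\omega$; yours is longer but every step is a verifiable algebraic identity, which is arguably safer for a statement that exists mainly to be cited in the numeric estimates of \Cref{sec:right-rotation-angle}. Your handling of the edge $e_{P_\omega}(3\pi/2)$ is also fine and matches what the paper implicitly does when computing $P = l(\pi/2,0) \cap l(\omega,1) = (\sec\omega, 0)$.
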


\begin{proof}
The points \(O, o_\omega - v_0, o_\omega\) (resp. \(O, o_\omega - v_\omega, o_\omega\)) forms a right triangle with base one and angle \((\pi/2 - \omega) / 2\). Let \(P := l(\pi/2, 0) \cap l(\omega, 1)\), then \(P = (\sec \omega, 0)\) by computation, and the points \(o_\omega, o_\omega - v_0, P\) forms a right triangle with base one and angle \(\omega\). This implies \(c_\omega = \sec \omega - \tan \omega\).
\end{proof}

\begin{definition}

Let \(\omega \in [\sec^{-1}(2.2), \pi/2)\) be arbitrary. Define \(d_{\omega, \min}\) as \(1.25\) if \(\omega < \tan^{-1}(2.2)\) and \(1.1\) otherwise.

\label{def:d-min}
\end{definition}

\begin{definition}

Let \(\omega \in [\sec^{-1}(2.2), \pi/2)\) and \(d \in [0, \tan \omega]\) be arbitrary. Define the region
\[
R_{\omega, d} := P_\omega \cap H_-(0, d + c_\omega) \cap H_-(\omega + \pi/2, d + c_\omega).
\]

\label{def:cap-clipped}
\end{definition}

\begin{lemma}

Let \(\omega \in [\sec^{-1}(2.2), \pi/2)\) be arbitrary. Then for \(d = d_{\omega, \min}\), the set \(R_{\omega, d}\) have area \(< 2.2\).

\label{lem:cap-support-elementary-bound}
\end{lemma}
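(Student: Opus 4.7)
The plan is to express $|R_{\omega, d}|$ in closed form by computing how the two clipping half-planes cut the parallelogram $P_\omega$, and then to verify $|R_{\omega, d_{\omega, \min}}| < 2.2$ in the two cases of $d_{\omega, \min}$ separately by elementary trigonometric inequalities.

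First I would observe that $R_{\omega, d}$ is $P_\omega$ clipped by the vertical half-plane $\{x \leq d + c_\omega\}$ and the slanted half-plane $\{-x\sin\omega + y\cos\omega \leq d + c_\omega\}$. Since $d \in [0, \tan\omega]$, each clipping removes a triangular corner of $P_\omega$: the first a right triangle near the bottom-right vertex $(\sec\omega, 0)$ with legs of lengths $\tan\omega - d$ (along $y=0$) and $1 - d\cot\omega = (\tan\omega - d)/\tan\omega$ (along $x = d+c_\omega$), of area $(\tan\omega - d)^2/(2\tan\omega)$. The mirror reflection $M_\omega$ of \Cref{def:mirror-reflection} fixes $P_\omega$ and swaps the normal angles $0$ and $\omega + \pi/2$, hence swaps the two clipping half-planes, so the second removed triangle is congruent to the first. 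For $d > 0$ these two triangles are disjoint: any common point would satisfy $x > d + c_\omega$ and therefore $y > (d + c_\omega)(\sec\omega + \tan\omega) = 1 + d/c_\omega > 1$ (using the identity $c_\omega(\sec\omega + \tan\omega) = \sec^2\omega - \tan^2\omega = 1$), contradicting $y \leq 1$ in $P_\omega$. Summing the two cut areas and using $\sec\omega = c_\omega + \tan\omega$ from \Cref{pro:omega-gap} gives
\[
|R_{\omega, d}| = \sec\omega - \frac{(\tan\omega - d)^2}{\tan\omega} = c_\omega + 2d - \frac{d^2}{\tan\omega}.
\]

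For Case 1 ($\sec^{-1}(2.2) \leq \omega < \tan^{-1}(2.2)$, $d = 1.25$) I would use $\sec\omega \geq 2.2$ and $\tan\omega \in [\sqrt{3.84}, 2.2)$. These yield $c_\omega = 1/(\sec\omega + \tan\omega) \leq 1/(2.2 + \sqrt{3.84}) < 1/4.16$ and $1/\tan\omega \geq 1/2.2$, hence
\[
|R_{\omega, 1.25}| - 2.2 = c_\omega + 0.3 - \frac{1.5625}{\tan\omega} < \frac{1}{4.16} + 0.3 - \frac{1.5625}{2.2} < 0.
\]
For Case 2 ($\tan^{-1}(2.2) \leq \omega < \pi/2$, $d = 1.1$) the identity $c_\omega(\sec\omega + \tan\omega) = 1$ rearranges to $c_\omega \tan\omega = 1 - c_\omega \sec\omega < 1$, so $c_\omega < 1/\tan\omega < 1.21/\tan\omega$, and therefore
\[
|R_{\omega, 1.1}| = c_\omega + 2.2 - \frac{1.21}{\tan\omega} < 2.2.
\]

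The only real obstacle is isolating the correct area formula and confirming that the two clipped corners remain disjoint for $d > 0$; once this is in hand, each case reduces to arithmetic on $\sec\omega$, $\tan\omega$, and $c_\omega$, organized by the single identity $c_\omega(\sec\omega + \tan\omega) = 1$.
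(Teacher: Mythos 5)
Your approach is correct and takes a genuinely different route from the paper. You derive a single closed-form area formula $|R_{\omega,d}| = \sec\omega - (\tan\omega - d)^2/\tan\omega = c_\omega + 2d - d^2/\tan\omega$ and use it uniformly in both subcases, whereas the paper uses this formula only when $\omega < \tan^{-1}(2.2)$ and falls back on a rougher ad-hoc geometric decomposition (via quadrilaterals $Q_1, Q_2$) when $\omega \geq \tan^{-1}(2.2)$. Your uniform approach is arguably cleaner; moreover you supply an explicit disjointness argument for the two clipped corner triangles (via the $M_\omega$ symmetry and the identity $c_\omega(\sec\omega + \tan\omega) = 1$), which the paper's formula-based case implicitly relies on but never verifies. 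Your Case 2 closure via $c_\omega\tan\omega = 1 - c_\omega\sec\omega < 1$ is particularly slick, reducing the bound to a one-line algebraic identity.

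There is one small numerical slip in your Case 1: you write $c_\omega \leq 1/(2.2 + \sqrt{3.84}) < 1/4.16$, but in fact $2.2 + \sqrt{3.84} \approx 4.1596 < 4.16$, so the last inequality goes the wrong way. You need to round the denominator \emph{down}, e.g.\ $\sqrt{3.84} > 1.9$ gives $2.2 + \sqrt{3.84} > 4.1$ and hence $c_\omega < 1/4.1 < 0.244$; then $0.244 + 0.3 - 1.5625/2.2 < 0.544 - 0.71 < 0$ as needed. The conclusion is unaffected (you have roughly $0.17$ of slack), but as written the intermediate step is false.
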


\begin{proof}
We first prove the case \(\omega \geq \tan^{-1}(2.2)\). It suffices to show that the region \(R_{\omega, d}\) with \(d=1.1\) in \Cref{lem:cap-support-elementary-bound} have area \(\leq 2.2\). Define \(Q_1\) as the convex quadrilateral with vertices \(O, o_\omega - v_0, o_\omega, o_\omega - v_\omega\). Define \(Q_2\) as the rectangle with vertices \(A := o_\omega - v_0 + 1.1 u_0\), \(B := o_\omega + 1.1 u_0\), \(o_\omega, o_\omega - v_0\). Then \(|R_{\omega, 1.1}| = |Q_1| + 2|Q_2 \cap R_\omega|\).

Check that \(Q_1\) is contained in the right triangle with vertices \(o_\omega\), \(o_\omega - v_0\), \(o_\omega - v_0 - \cot \omega \cdot u_0\) of base one, height \(\cot \omega\) and angle \(\pi/2 - \omega\). So \(|Q_1| \leq (\cot \omega) / 2\). Check \(1.1 \leq \tan \omega\) by calculating \(\tan(\sec^{-1}(2.2)) > 1.959 > 1.1\). So \(|Q_2 \setminus R_\omega|\) is a right-angled triangle with base \(1.1\) and angle \(\pi/2 - \omega\) of area \((1.21 \cot \omega) / 2\). Now we have
\begin{align*}
|R_{\omega, 1.1}| & = |Q_1| + 2|Q_2 \cap R_\omega| \\
& \leq |Q_1| + 2 (|Q_2| - |Q_2 \setminus R_\omega|) \\
& \leq (\cot \omega)/2 + 2 (1.1 - (1.21 \cot \omega) / 2) < 2.2
\end{align*}
proving the goal.

Now we prove the case \(\omega < \tan^{-1}(2.2)\). We calculate the area of \(R_{\omega, d}\) explicitly. The region \(R_{\omega, d}\) is the parallelogram \(P_\omega\) of base \(\sec \omega\) and height one, subtracted by two right-angled triangles of base \(b := \tan \omega - d\) and height \(b \cot \omega\). So the area of \(R_{\omega, d}\) is \(\sec \omega - (\tan \omega - d)^2 \cot \omega\). Now it suffices to show that the area is \(< 2.2\) for \(d = d_{\omega, \min} = 1.25\). Since \(\omega \in [\sec^{-1}(2.2), \tan^{-1}(2.2)]\), the following estimates hold.
\begin{gather*}
2.2 \leq \sec \omega \leq \sec(\tan^{-1}(2.2)) = \sqrt{146}/5 \\
2.2 \geq \tan \omega \geq \tan(\sec^{-1}(2.2)) = \sqrt{96}/5
\end{gather*}
Now
\begin{align*}
|R_{\omega, 1.25}| & = \sec \omega - (\tan \omega - 1.25)^2 \cot \omega \\
& \leq \sqrt{146}/5 - \left(\sqrt{96}/5 - 1.25\right)^2 / 2.2 \\
& = 2.187736\dots < 2.2
\end{align*}
completing the proof.
\end{proof}

\begin{lemma}

For any constant \(d \geq 1\), the functions \((1 - d \cot \omega)^2\) and \(\cos^2 \omega\) are convex on \(\omega \in [\pi/4, \pi/2]\).

\label{lem:calculation-convex}
\end{lemma}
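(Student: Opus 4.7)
The plan is to reduce both convexity claims to showing that the second derivative is non-negative on $[\pi/4, \pi/2]$, and in each case the verification amounts to a short trigonometric computation.

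For $\cos^2 \omega$, the identity $\cos^2 \omega = (1 + \cos 2\omega)/2$ immediately gives second derivative $-2 \cos 2 \omega$. On $\omega \in [\pi/4, \pi/2]$ we have $2\omega \in [\pi/2, \pi]$, so $\cos 2\omega \leq 0$ and the second derivative is non-negative, proving convexity.

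For $g(\omega) := (1 - d \cot \omega)^2$, write $f(\omega) := 1 - d \cot \omega$, so $f'(\omega) = d \csc^2 \omega$ and $f''(\omega) = -2 d \csc^2 \omega \cot \omega$. Then
\[
g''(\omega) = 2 f'(\omega)^2 + 2 f(\omega) f''(\omega) = 2 d \csc^2 \omega \bigl[ d \csc^2 \omega - 2 (1 - d \cot \omega) \cot \omega \bigr].
\]
Using $\csc^2 \omega = 1 + \cot^2 \omega$ and setting $c := \cot \omega$, the bracket becomes $3 d c^2 - 2 c + d$. For $\omega \in [\pi/4, \pi/2]$ we have $c \in [0, 1]$, but in fact it suffices to verify $3 d c^2 - 2 c + d \geq 0$ for all real $c$. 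Viewed as a quadratic in $c$ with leading coefficient $3d > 0$, its discriminant is $4 - 12 d^2 \leq 4 - 12 = -8 < 0$ whenever $d \geq 1$, so the quadratic is strictly positive everywhere. Combined with the factor $2 d \csc^2 \omega > 0$, this yields $g''(\omega) \geq 0$, proving convexity.

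Neither half has a real obstacle; the mildly tricky part is just organizing the derivative of $(1 - d \cot \omega)^2$ cleanly so that the remaining inequality $3 d c^2 - 2 c + d \geq 0$ can be settled by a one-line discriminant check using $d \geq 1$.
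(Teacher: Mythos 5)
Your proof is correct and follows essentially the same approach as the paper: both compute the second derivative and verify it is nonnegative, with the second derivative of $(1-d\cot\omega)^2$ reducing (after factoring out $2d\csc^2\omega$) to the same quantity, which you write as $3dc^2 - 2c + d$ in $c=\cot\omega$ and the paper writes as $2d + d\cos(2\omega) - \sin(2\omega)$. The only cosmetic difference is that you finish with a discriminant check while the paper leaves the final nonnegativity to the reader.
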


\begin{proof}
To see the convexity of \((1 - d \cot \omega)^2\), compute the second derivative
\begin{align*}
\frac{d^2}{d \omega^2} (1 - d \cot \omega)^2 & = 2 d \csc^2 \omega  \left( d \csc^2 \omega  + 2 d \cot^2 \omega  - 2 \cot  \omega  \right) \\
& = 2 d \csc^4 \omega  \left( d + 2 d \cos^2 \omega  - 2 \sin  \omega  \cos  \omega  \right) \\
& = 2 d \csc^4 \omega \left( 2 d + d \cos (2\omega) - \sin(2 \omega) \right)
\end{align*}
which is nonnegative. It is easy to check that \(\cos^2 \omega = (1 + \cos(2\omega))/2\) is convex.
\end{proof}

\begin{definition}

Let \(\omega \in [\sec^{-1}(2.2), \pi/2)\) and \(d \in [d_{\omega, \min}, \tan \omega]\) be arbitary. Define the values \(r_y, g \in \mathbb{R}\) and \(q_0, q_1 \in \mathbb{R}^2\) depending solely on \(\omega\) and \(d\) as follows.
\begin{gather*}
r_y := 1 - d \cot \omega \\
g := \sqrt{1 - r_y^2} \\
q_0 := o_\omega - v_0 + du_0 \\
q_1 := o_\omega - g u_0 \\
\end{gather*}

\label{def:calculation-variables}
\end{definition}

\begin{lemma}

Let \(\omega \in [\arctan(2.2), \pi/2)\) and \(d \in [d_{\omega, \min}, \tan \omega]\) be arbitary such that \(r_y \geq 0\). Then the following inequalities are true.

\begin{enumerate}
\def\labelenumi{\arabic{enumi}.}
\tightlist
\item
  \((q_0 - (o_\omega - v_0)) \cdot u_{\pi / 2 - \omega} > 1\)
\item
  \((q_1 - (o_\omega - u_\omega)) \cdot v_{\pi/2 - \omega} > 1\)
\end{enumerate}

\label{lem:calculation-inequalities}
\end{lemma}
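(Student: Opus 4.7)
The plan is to compute both inner products directly and reduce each to a trigonometric inequality that can be dispatched by Lemma \ref{lem:calculation-convex}. For (1), I would expand using $u_{\pi/2-\omega} = (\sin\omega, \cos\omega)$ and $q_0 - (o_\omega - v_0) = du_0$ to obtain
\[
(q_0 - (o_\omega - v_0)) \cdot u_{\pi/2-\omega} = d\sin\omega,
\]
so (1) reduces to $d\sin\omega > 1$. Since $\omega \geq \arctan(2.2)$, \Cref{def:d-min} gives $d_{\omega,\min} = 1.1$, so $d \geq 1.1$; and $\sin\omega$ is increasing on $[\arctan(2.2), \pi/2)$. It thus suffices to check at $\omega = \arctan(2.2)$, where $\sin\omega = 2.2/\sqrt{5.84}$, and squaring reduces $1.1 \cdot 2.2/\sqrt{5.84} > 1$ to $5.8564 > 5.84$.

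For (2), with $v_{\pi/2-\omega} = (-\cos\omega, \sin\omega)$ and $q_1 - (o_\omega - u_\omega) = u_\omega - gu_0$, the expansion would give
\[
(q_1 - (o_\omega - u_\omega)) \cdot v_{\pi/2-\omega} = 1 - 2\cos^2\omega + g\cos\omega,
\]
so the target becomes $g > 2\cos\omega$. Both sides are nonnegative, so squaring and rearranging yields the equivalent inequality $(1 - d\cot\omega)^2 + 4\cos^2\omega < 1$. Because the hypothesis $r_y \geq 0$ places $d$ in $[d_{\omega,\min}, \tan\omega]$ with $1 - d\cot\omega \geq 0$, the first summand is monotonically decreasing in $d$ on this range. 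So I can pass to the worst case $d = 1.1$ and reduce to showing $f(\omega) := (1 - 1.1\cot\omega)^2 + 4\cos^2\omega < 1$ on $[\arctan(2.2), \pi/2)$.

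The final dispatch uses Lemma \ref{lem:calculation-convex}, which applies since $1.1 \geq 1$ and $\arctan(2.2) > \pi/4$: the function $f$ is convex on $[\pi/4, \pi/2]$. Evaluating at the endpoints of the relevant subinterval: at $\omega = \arctan(2.2)$, $\cot\omega = 1/2.2$ and $\cos^2\omega = 1/5.84$, so $f = 1/4 + 4/5.84 < 1$; at $\omega = \pi/2$, $f = 1$ exactly. Writing $\omega = (1-t)\arctan(2.2) + t(\pi/2)$ with $t \in [0,1)$, convexity then yields $f(\omega) \leq (1-t)f(\arctan(2.2)) + t < 1$. The main subtlety lies precisely in this boundary behavior: $f(\pi/2) = 1$ saturates the target, and the strict inequality survives only because the lemma's interval is half-open \emph{and} $f(\arctan(2.2)) < 1$ strictly. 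This also clarifies the role of the tighter threshold $d_{\omega,\min} = 1.1$ in \Cref{def:d-min} for $\omega \geq \arctan(2.2)$: it is calibrated precisely to leave a strict margin at the left endpoint, where the looser value $1.25$ from the small-$\omega$ regime would be unnecessarily slack but is not needed here.
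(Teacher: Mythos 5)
Your computations and strategy are correct, and they mirror the paper's own proof closely: the same reductions for (1) to $d\sin\omega > 1$ and for (2) to $g > 2\cos\omega$ (then to $(1-d\cot\omega)^2 + 4\cos^2\omega < 1$), the same monotonicity-in-$d$ argument via $r_y \geq 0$, and the same appeal to \Cref{lem:calculation-convex} to reduce to endpoint checks. Your observation about the half-open interval is also accurate: at $\omega = \pi/2$ the quantity in (2) is exactly $1$, so strictness is lost there.

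There is one discrepancy worth flagging, though it is not a gap in your argument relative to the literal statement. The lemma as printed says $\omega \in [\arctan(2.2), \pi/2)$, which forces $d_{\omega,\min} = 1.1$ by \Cref{def:d-min}, and that is the only case you treat. But the paper's own proof of this lemma treats \emph{both} branches of \Cref{def:d-min} — it checks the four endpoint evaluations $(1 - 1.25\cot\omega)^2 + 4\cos^2\omega$ at $\omega = \sec^{-1}(2.2)$ and $\tan^{-1}(2.2)$, and $(1 - 1.1\cot\omega)^2 + 4\cos^2\omega$ at $\omega = \tan^{-1}(2.2)$ and $\pi/2$ — and likewise verifies both $1.25\sin(\sec^{-1}(2.2)) > 1$ and $1.1\sin(\tan^{-1}(2.2)) > 1$ in part (1). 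This strongly suggests $\arctan(2.2)$ in the statement is a typo for $\sec^{-1}(2.2)$, consistent with \Cref{def:d-min}, \Cref{def:calculation-variables}, and the invocation in \Cref{thm:balanced-consumed}, which all operate on $\omega \in [\sec^{-1}(2.2), \pi/2)$. So to match the lemma's actual role in the paper, your proof would need one more subcase: for $\omega \in [\sec^{-1}(2.2), \tan^{-1}(2.2))$ with $d_{\omega,\min} = 1.25$, check (1) at the left endpoint $\sec^{-1}(2.2)$ and (2) via convexity at both endpoints of that subinterval. The structure is identical to what you already have, so this is a routine addition, not a conceptual repair.
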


\begin{proof}
(1) By the definition of \(q_0\), the inequality to show is \(d u_0 \cdot u_{\pi/2-\omega} > 1\). As \(u_0 \cdot u_{\pi/2 - \omega} = \sin \omega\), we need to check \(d_{\omega, \min} \sin \omega > 1\). Depending on whether \(\omega < \tan^{-1}(2.2)\) or not, computations
\[
1.25 \sin (\sec^{-1}(2.2)) = 1.1134\dots > 1 \qquad  1.1 \sin(\tan^{-1}(2.2)) = 1.0014\dots > 1
\]
prove the result.

(2) By the definition of \(q_1\), the inequality to show is \((u_\omega - g u_0) \cdot v_{\pi/2 - \omega} > 1\). Computing the left-hand side gives \(- \cos(2\omega) + g \cos \omega > 1\). Rearranging the terms, we need to prove \(g > 2 \cos \omega\) as \((1 + \cos(2\omega)) / \cos \omega = 2 \cos \omega\). Both \(g\) and \(2 \cos \omega\) are nonnegative, so we only need to compare their squares and prove \(g^2 = 1 - r_y^2 > 4 \cos^2 \omega\). Since \(0 \leq r_y = 1 - d \cot \omega \leq 1 - d_{\omega, \min} \cot \omega\), it suffices to prove
\begin{equation}
\label{eqn:omega-calc}
1 > (1 - d_{\omega, \min} \cot \omega)^2 + 4 \cos^2 \omega
\end{equation}
on \(\omega \in [\arctan(2.2), \pi/2)\).

As \(d_{\omega, \min}\) is the constant \(1.25\) or \(1.1\) on each interval \(\omega \in (\sec^{-1} (2.2), \tan^{-1} (2.2)]\) or \(\omega \in (\tan^{-1} (2.2), \pi/2]\), both \((1 - d_{\omega, \min} \cot \omega)^2\) and \(4 \cos^2 \omega\) are convex functions of \(\omega\) on each interval by \Cref{lem:calculation-convex}. So it suffices to check \Cref{eqn:omega-calc} at the four endpoints, which are true by following calculations.
\begin{gather*}
(1 - 1.25 \cot(\sec^{-1}(2.2)))^2 + 4 \cos^2 (\sec^{-1}(2.2)) < 0.9576 < 1 \\
(1 - 1.25 \cot(\tan^{-1}(2.2)))^2 + 4 \cos^2 (\tan^{-1}(2.2)) < 0.8714 < 1 \\
(1 - 1.1 \cot(\tan^{-1}(2.2)))^2 + 4 \cos^2 (\tan^{-1}(2.2)) < 0.9350 < 1 \\
(1 - 1.1 \cot(\pi/2))^2 + 4 \cos^2 (\pi/2) = 1
\end{gather*}

\end{proof}

\begin{theorem}

Let \(\omega \in [\sec^{-1}(2.2), \pi/2)\) be arbitrary. Let \(K := K_\omega\) be any balanced maximum cap with rotation angle \(\omega\) in \Cref{thm:balanced-maximum-cap}. Assume also that \(\mathcal{A}_\omega(K) \geq 2.2\). Then there exists some \(t \in (0, \omega)\) such that the three points \(O, o_\omega - v_0, o_\omega - u_\omega\) are in the closure of the quadrant \(Q_K^-(t)\).

\label{thm:balanced-consumed}
\end{theorem}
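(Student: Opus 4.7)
The plan is to take $t := \pi/2 - \omega$, which lies in $(0, \omega)$ because $\omega > \pi/4$ (since $\omega \geq \sec^{-1}(2.2)$). By \Cref{pro:rotating-hallway-parts}, the containment $p \in \overline{Q_K^-(t)}$ amounts to $p \cdot u_t \leq h_K(t) - 1$ and $p \cdot v_t \leq h_K(t + \pi/2) - 1$. I will supply the lower bounds on $h_K(t)$ and $h_K(t + \pi/2)$ needed to verify these for $p \in \{O,\, o_\omega - v_0,\, o_\omega - u_\omega\}$ by anchoring two explicit points $q_0, q_1 \in K$ to which \Cref{lem:calculation-inequalities} applies.

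First I use \Cref{lem:cap-support-elementary-bound} together with $\mathcal{A}_\omega(K) \geq 2.2$ to conclude $K \not\subseteq R_{\omega, d_{\omega, \min}}$; by \Cref{def:cap-clipped} one of $h_K(0) \leq d_{\omega, \min} + c_\omega$ or $h_K(\omega + \pi/2) \leq d_{\omega, \min} + c_\omega$ must fail. The mirror $M_\omega$ interchanges $u_0$ and $u_{\omega + \pi/2}$ and, by \Cref{pro:balanced-maximum-cap-mirror}, sends balanced maximum caps to balanced maximum caps, so after replacing $K$ by $M_\omega(K)$ if needed I may assume $h_K(0) > d_{\omega, \min} + c_\omega$; I then set $d := d_{\omega, \min}$. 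To place $q_0 := o_\omega - v_0 + d u_0 = (c_\omega + d, 0)$ inside $K$, I use that $K$ is a Hausdorff limit of maximum polygon caps $K_n \in \mathcal{K}_{\Theta_n}^\mathrm{c}$ (\Cref{def:balanced-maximum-cap}) whose angle sets do not contain $0$: each $K_n$ has no edge with normal $0$, so its rightmost point is the vertex $(h_{K_n}(0), 0)$ at the right end of the bottom edge, and the limit $(h_K(0), 0) \in K$. Caps admit no edges with normals in $(\omega + \pi, 3\pi/2)$, so $e_K(3\pi/2)$ is the full segment from $O$ to $(h_K(0), 0)$, placing $q_0$ on $e_K(3\pi/2) \subseteq K$.

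For $q_1$, set $r_y := 1 - d \cot\omega \in [0, 1)$, $g := \sqrt{1 - r_y^2}$, and $q_1 := o_\omega - g u_0 = (c_\omega - g, 1)$. Since $e_K(\pi/2)$ is the horizontal segment of length $\sigma_K(\pi/2)$ with right endpoint $o_\omega$, it suffices to prove $\sigma_K(\pi/2) \geq g$; by \Cref{thm:balanced-maximum-sofa-ineq} this reduces to $w_K^\circ \geq g$. I expect to establish $w_K^\circ \geq g$ by the ``green right triangle'' argument: for each $t' \in (0, \omega)$ the intercept $W_K(t') = b_K(t') \cap \{y = 0\}$ cannot be within horizontal distance less than $g$ of $A_K^-(0) = (h_K(0), 0)$, because the right triangle with right angle at $q_0$, horizontal leg $g$, vertical leg $r_y$, and hypotenuse of length $1$ (matching the unit width of the hallway $L_K(t')$), together with $K \subseteq L_K(t')$ and $o_\omega \in K$, would otherwise force $K$ outside $L_K(t')$. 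With $q_0, q_1 \in K$, \Cref{lem:calculation-inequalities} yields $(q_0 - (o_\omega - v_0)) \cdot u_t > 1$ and $(q_1 - (o_\omega - u_\omega)) \cdot v_t > 1$; since $h_K(t) \geq q_0 \cdot u_t$ and $h_K(t + \pi/2) \geq q_1 \cdot v_t$ (using $u_{t+\pi/2} = v_t$), these give
\[
h_K(t) - 1 > (o_\omega - v_0) \cdot u_t, \qquad h_K(t + \pi/2) - 1 > (o_\omega - u_\omega) \cdot v_t,
\]
which are the $b_K(t)$- and $d_K(t)$-side inequalities for $o_\omega - v_0$ and $o_\omega - u_\omega$ respectively. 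The remaining four inequalities are trivial for $\omega \in [\pi/4, \pi/2)$: $(o_\omega - v_0) \cdot v_t = -c_\omega \cos\omega \leq 0$, $(o_\omega - u_\omega) \cdot u_t = c_\omega \cos(2\omega) \leq 0$, and $O \cdot u_t = O \cdot v_t = 0$, each bounded by the positive quantities $h_K(t) - 1$ and $h_K(t + \pi/2) - 1$ already secured.

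The hard part will be the uniform bound $w_K^\circ \geq g$ used to place $q_1$. The earlier steps essentially repackage prior lemmas, but here the single anchor $q_0$ on the bottom edge must control every supporting hallway angle $t' \in (0, \omega)$ simultaneously; the Pythagorean identity $g^2 + r_y^2 = 1$, encoding the unit hallway width, is exactly what lets the three ingredients $K \subseteq L_K(t')$, $q_0 \in K \cap \{y = 0\}$, and $o_\omega \in K \cap \{y = 1\}$ combine to squeeze $W_K(t')$ far enough to the left of $A_K^-(0)$.
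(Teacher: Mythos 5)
Your overall strategy mirrors the paper: anchor $q_0$ on the base and $q_1$ on the top edge, then apply \Cref{lem:calculation-inequalities} to $Q_X^-(t)$ with $t = \pi/2 - \omega$ and $X = \{q_0, q_1\}$. But the proof of $w_K^\circ \geq g$, which you correctly flag as ``the hard part,'' does not go through as proposed, and the breakdown traces to your choice $d := d_{\omega,\min}$.

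The paper sets $d := h_K(0) - c_\omega$, so that the top vertex of the green triangle, $r := (c_\omega + d, r_y)$ with $r_y = 1 - d\cot\omega$, is \emph{exactly} the intersection $l_K(0) \cap l_K(\omega)$ of two supporting lines of $K$. The whole argument hinges on the convexity fact that this apex of the wedge $H_K(0) \cap H_K(\omega)$ satisfies $h_K(t') \leq r \cdot u_{t'}$ for all $t' \in (0, \omega)$; combined with $|r - s| = 1$ it forces $s \cdot u_{t'} \geq h_K(t') - 1$, hence $s$ lies on the non-$Q_K^-(t')$ side of $b_K(t')$. With your $d := d_{\omega,\min}$, the corresponding vertex $r' = (c_\omega + d_{\omega,\min},\, 1 - d_{\omega,\min}\cot\omega)$ lies on $l_K(\omega)$ but strictly to the \emph{left} of $l_K(0)$ (since $h_K(0) > c_\omega + d_{\omega,\min}$), so it is not the apex of the wedge and the inequality $h_K(t') \leq r' \cdot u_{t'}$ can fail. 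Indeed one can build caps $K$ for which $r'$ lies in the interior of $K$ along directions $u_{t'}$ for a range of $t'$ --- any $K$ whose upper boundary bulges past $r'$ --- and then $r' \cdot u_{t'} < h_K(t')$.

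Your claimed ingredients also don't repair this. The inclusion $K \subseteq L_K(t')$ is false for caps in general (the \emph{sofa} $K \setminus \mathcal{N}(K)$ is contained in $L_K(t')$; the cap $K$ itself contains the wedge $T_K(t') \subseteq Q_K^-(t')$ whenever the niche is nonempty), and $o_\omega \in K$ is used in the paper only to place $q_1$ on $e_K(\pi/2)$, not in the $w_K^\circ$ bound. Since your $g$ (built from $d_{\omega,\min}$) is no larger than the paper's $g$ (built from $h_K(0) - c_\omega$), the \emph{conclusion} $w_K^\circ \geq g$ is still true via the paper's stronger estimate --- but your proposed route to it is broken. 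The fix is to take $d := h_K(0) - c_\omega$ as the paper does, so that $r$ is literally the apex of the supporting wedge and the one-line convexity inequality applies.
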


\begin{proof}
We first show that at least one of \(h_K(0)\) or \(h_K(\omega + \pi/2)\) should be large enough. That is, \(h_K(0) \geq d_{\omega, \min} + c_\omega\) or \(h_K(\omega + \pi/2) \geq d_{\omega, \min} + c_\omega\) should be true (see \Cref{pro:omega-gap} and \Cref{def:d-min}). Assume the contrary. Then the cap \(K\) is contained in the set \(R_{\omega, d}\) with \(d = d_{\omega, \min}\) in \Cref{def:cap-clipped}. But by \Cref{lem:cap-support-elementary-bound} the area of \(R_{\omega, d}\) is less than \(2.2\), contradicting the assumption \(|K| \geq \mathcal{A}_\omega(K) \geq 2.2\).

So we have either \(h_K(0) \geq d_{\omega, \min} + c_\omega\) or \(h_K(\omega + \pi/2) \geq d_{\omega, \min} + c_\omega\). Appeal to the mirror reflection\footnote{This argument depends on the fact that the statement of \Cref{thm:balanced-consumed} on \(K\) and its mirror image \(K^\mathrm{m}\) is equivalent. In particular, the angle \(t \in (0, \omega)\) that we take for \(K\) would correspond to the angle \(\omega - t\) for \(K^\mathrm{m}\).} of \(K\) () and assume without loss of generality that \(h_K(0) \geq d_{\omega, \min} + c_\omega\). Now define \(d := h_K(0) - c_\omega\) so that \(d \geq d_{\omega, \min}\) in particular. We now have the point \(A_K^-(0) = q_0\) in \(K\), where \(q_0 := o_\omega - v_0 + d u_0\) is in \Cref{def:calculation-variables}. In particular, \(q_0\) is in \(K\).

Define the intersection \(r := l_K(0) \cap l_K(\omega)\). Then \(r, o_\omega, o_\omega + d u_0\) forms a right-angled triangle. Solving for the coordinates of \(r\), we get \(r = (d + c_\omega, r_y)\) where \(r_y := 1 - d \cot \omega\) is defined in \Cref{def:calculation-variables}. Since \(K\) is a convex subset of \(P_\omega\) we have \(0 \leq r_y \leq 1\). Define \(g := \sqrt{1 - r_y^2}\) as in \Cref{def:calculation-variables} and the point \(s := q_0 - g u_0\), so that the triangle with vertices \(s, q_0, r\) is right-angled at \(q_0\) with base \(g\), height \(r_y\) and side of length 1.

The main idea is that the point \(s\) is not contained in \(Q_K^-(t)\) for any \(t \in (0, \omega)\), so that \(g \leq w_K^{\circ}\). We prove this rigorously. Take any \(t \in (0, \omega)\). Since \(r := l_K(0) \cap l_K(\omega)\) and \(K\) is a convex body, we have \(h_K(t) \leq r \cdot u_t\). So
\[
h_K(t) - 1 \leq r \cdot u_t - 1 \leq r \cdot u_t - (r - s) \cdot u_t = s \cdot u_t
\]
as \(r-s\) is a unit vector. Thus \(s \not\in H_-^{\circ}(h_K(t) - 1, t)\) and \(s\) is further than \(W_K(t) = b_K(t) \cap l(\pi/2, 0)\) in the direction of \(u_0\). This with \(s = q_0 - g u_0\) and \(W_K(t) = q_0 - w_K(t)u_0\) implies \(g \leq w_K(t)\). Thus we get \(g \leq w_K^{\circ}\).

By \Cref{thm:balanced-maximum-sofa-ineq} we have \(g \leq w_K^{\circ} \leq \sigma_K(\pi/2)\). Define \(q_1 := o_\omega - g u_0\) as in \Cref{def:calculation-variables}, then as \(o_\omega\) is the rightmost vertex of \(K\) on the edge \(e_K(\pi/2)\) we also have \(q_1 \in K\).

We now have two points \(q_0, q_1\) in \(K\), and the set \(X = \left\{ q_0, q_1 \right\}\) is a subset of \(K\). From now on, fix the angle \(t = \pi/2 - \omega \in (0, \pi/4) \subset (0, \omega)\) and take the quadrant \(Q_X^-(t)\). By \(X \subset K\) we have \(Q_X^-(t) \subseteq Q_K^-(t)\). So the proof is done if we show that the three points \(O, o_\omega - v_0, o_\omega - u_\omega\) are in the closure of \(Q_X^-(t)\).

We will show that \(q_1 - q_0 = - \alpha u_t + \beta v_t\) for real coefficients \(\alpha, \beta \geq 0\). By definition of \(q_0\) and \(q_1\) we have \(q_1 - q_0 = v_0 - (d + g) u_0\) where \(d + g \geq d \geq 1.1\). So the vector \(q_1 - q_0\) is in the convex cone generated by \(-u_0\) and \(v_0 - u_0\). Because the angle \(t = \pi/2 - \omega\) is in between \(0\) and \(\pi/4\), both \(-u_0\) and \(v_0 - u_0\) are in the convex cone generated by \(-u_t\) and \(v_t\). So the vector \(q_1 - q_0\) is in the convex cone generated by \(-u_t\) and \(v_t\), and we have \(\alpha, \beta \geq 0\).

Now by \(q_1 - q_0 = - \alpha u_t + \beta v_t\) for \(\alpha, \beta \geq 0\), and the definition of \(L_X(t)\), the point \(q_0\) lies in the outer wall \(a_X(t)\) of \(L_X(t)\) and \(q_1\) lies in the outer wall \(c_X(t)\) of \(L_X(t)\). Thus we can write
\begin{equation}
\label{eqn:quadrant}
Q_X^-(t) = H_-^{\circ}(t, q_0 \cdot u_t - 1) \cap H_-^{\circ}(t + \pi/2, q_1 \cdot v_t - 1).
\end{equation}

We now show that the three points \(O, o_\omega - v_0 = c_\omega u_0, o_\omega - u_\omega = c_\omega v_\omega\) (\Cref{pro:omega-gap}) are contained in \(Q_X^-(t)\). By \Cref{eqn:quadrant} and \(c_\omega > 0\), it suffices to show that \(c_\omega u_0 \in H_-^{\circ}(t, q_0 \cdot u_t - 1)\) and \(c_\omega v_\omega \in H_-^{\circ}(t + \pi/2, q_1 \cdot v_t - 1)\), both are true by \Cref{lem:calculation-inequalities}.
\end{proof}

\Cref{thm:angle} is now a consequence of \Cref{thm:balanced-consumed} as described in \Cref{sec:_statement}.

\begin{proof}[Proof of \Cref{thm:angle}]
By \Cref{thm:balanced-consumed}, the triangle \(\Delta_\omega\) formed by \(O, o_\omega - v_0, o_\omega - u_\omega\) is contained in \(\mathcal{N}(K_\omega)\), so is disjoint from \(S_\omega\) (left of \Cref{fig:triangle-full}). Now \(S_\omega \subseteq P_\omega \setminus \Delta_{\omega}\) and observe that the set \(P_\omega \setminus \Delta_\omega\) have width \(\leq 1\) for every direction \(u_t\) with angle \(t \in [\omega, \pi/2]\). So \(S_\omega\) can rotate counterclockwise by \(\pi/2 - \omega\) inside \(H\) (right of \Cref{fig:triangle-full}). Take \(S'\) as a copy of \(R_{\pi/2-\omega}(S_\omega)\) translated horizontally to the left inside \(H_L\). First rotate \(S'\) clockwise by \(\pi/2 - \omega\) inside \(H_L\). Then translate it to the right until it hits the wall \(x=1\) of \(L\), to put it in the initial position of the monotone sofa \(S_\omega\). Then follow the original movement of \(S_\omega\) with the rotation angle \(\omega\). We have found a movement of \(S'\) with rotation angle \(\pi/2\), so \(|S_\omega| = |S'| \leq |S_{\pi/2}|\) and this completes the proof.
\end{proof}

\chapter{Surface Area Measure}
\label{sec:surface-area-measure}
This chapter proves the equality \(\mathrm{d} v_K^+(t) = v_t \, \sigma_K\) described in \Cref{sec:_surface-area-measure}. \Cref{sec:lebesgue--stieltjes-measure} defines the Lebesgue–Stieltjes measure, and \Cref{sec:differential-gauss--minkowski-theorem} proves the equality in \Cref{thm:boundary-measure}.
\section{Lebesgue--Stieltjes Measure}
\label{sec:lebesgue--stieltjes-measure}
All real-valued measurable functions used in this and upcoming chapters will be bounded and defined on some finite interval \(I\) of \(\mathbb{R}\). All measures \(\mu\) used in this and upcoming chapters will be finite signed Borel measures on some finite interval \(I\) of \(\mathbb{R}\). That is, for any Borel subset \(X\) of \(I\), the value \(\mu(X)\) will be real and not \(\pm \infty\).

Here, we will define the \emph{Lebesgue–Stieltjes measure} \(\mathrm{d} f\) of a right-continuous \(f : I \to \mathbb{R}\) of bounded variation (\Cref{def:lebesgue-stieltjes}). We use the following notations.

\begin{definition}

For any bounded measurable function \(f\) and finite signed Borel measure \(\mu\) on a finite interval \(I\) of \(\mathbb{R}\), define the scalar multiplication \(f \, \mu\) of \(f\) and \(\mu\) as the measure on \(I\) defined as \(f \mu(X) := \int_{t \in X} f(t) \, \mu(dt)\). Note that \(f \, \mu\) is also a finite signed Borel measure.

\begin{itemize}
\tightlist
\item
  If \(f\) is a \emph{pair} \((f_1, f_2)\) of bounded measurable function and \(\mu\) is a finite signed Borel measure, the notion \(f \, \mu\) denotes the pair \((f_1 \, \mu, f_2 \, \mu)\).
\item
  If \(f\) is a bounded measurable function and \(\mu = (\mu_1, \mu_2)\) is a pair of finite signed Borel measure, then the notion \(f \, \mu\) denotes \((f \, \mu_1, f \, \mu_2)\).
\item
  If both \(f = (f_1, f_2)\) and \(\mu = (\mu_1, \mu_2)\) are pairs of bounded measurable functions and finite signed Borel measures respectively, then \(f \cdot \mu\) denotes \(f_1 \, \mu_1 + f_2 \, \mu_2\).
\end{itemize}

\label{def:function-measure-mult}
\end{definition}

Recall the following standard real analysis definition.

\begin{definition}

A function \(f : [a, b] \to \mathbb{R}\) is of \emph{bounded variation} if there is an absolute constant \(C\) such that, for any partition \(a = t_0 < t_1 < \dots < t_n = b\) of \(I\), the sum \(\sum_{i=1}^n \left| f(t_i) - f(t_{i-1}) \right|\) is bounded from above by \(C\).

\label{def:bounded-variation}
\end{definition}

We use the notation \(\mathrm{d} f\) to denote the \emph{Lebesgue–Stieltjes measure} of \(f\).

\begin{definition}

(Theorem 4.3, page 5 of \autocite{revuz2013continuous}) For any right-continuous \(f : I \to \mathbb{R}\) on interval \(I := [a, b]\) of bounded variation, define the \emph{Lebesgue–Stieltjes measure} \(\mathrm{d} f\) on \(I\) as the unique finite signed Borel measure such that \(\mathrm{d}f\left( \left\{ a \right\} \right) = 0\) and \(\mathrm{d} f((a, t]) = f(t) - f(a)\) for all \(t \in I\).

If \(f(t) = (f_1(t), f_2(t))\) is a \emph{pair} of such functions, then \(\mathrm{d} f\) denotes the pair \((\mathrm{d} f_1, \mathrm{d} f_2)\) of measures on \(I\).

\label{def:lebesgue-stieltjes}
\end{definition}

\begin{definition}

Fix an interval \(I := [a, b]\) parametrized by \(t\). Let \(f : I \to \mathbb{R}\) be arbitrary right-continuous function of bounded variation. Let \(g : I \to \mathbb{R}\) be bounded and measurable. Let \(X\) be any Borel subset of \(I\). Define the \emph{Lebesgue–Stieltjes integral} of \(g\) on \(X\) with respect to \(f\) as
\[
\int_{t \in X} g(t) \, \mathrm{d} f(dt)
\]
which is the integral of \(g\) on \(X\) with respect to the Lebesgue–Stieltjes measure \(\mathrm{d} f\) of \(f\). We also denote the integral as
\[
\int_{t\in X} g(t) \, df(t) \qquad \text{ or} \qquad \int_{X} g \, df.
\]

\label{def:lebesgue-stieltjes-integral}
\end{definition}

The map \(f \mapsto \mathrm{d} f\) is linear like differentials would do.

\begin{proposition}

For any right-continuous \(f, g : [a, b] \to \mathbb{R}\) of bounded variation and real values \(r, s \in \mathbb{R}\), we have \(\mathrm{d} (r f + s g) = r \, \mathrm{d} f + s \, \mathrm{d} g\) as signed measures on \(I\).

\label{pro:lebesgue-stieltjes-sum}
\end{proposition}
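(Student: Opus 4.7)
The plan is to verify the identity directly from the uniqueness clause in \Cref{def:lebesgue-stieltjes}. The measure $\mathrm{d}(rf+sg)$ is defined as the unique finite signed Borel measure $\mu$ on $I = [a,b]$ satisfying the two normalizing conditions $\mu(\{a\}) = 0$ and $\mu((a,t]) = (rf+sg)(t) - (rf+sg)(a)$ for all $t \in I$. So I would check that the candidate measure $\nu := r\,\mathrm{d}f + s\,\mathrm{d}g$ satisfies these same two conditions, and then invoke uniqueness.

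First I would note that $rf + sg$ is right-continuous (linear combinations of right-continuous functions are right-continuous) and of bounded variation (the total variation functional is subadditive and positively homogeneous), so $\mathrm{d}(rf+sg)$ is well-defined in the sense of \Cref{def:lebesgue-stieltjes}. Next, since $\mathrm{d}f$ and $\mathrm{d}g$ are finite signed Borel measures, their linear combination $\nu$ is also a finite signed Borel measure on $I$. Evaluating at the singleton gives
\[
\nu(\{a\}) = r \, \mathrm{d}f(\{a\}) + s \, \mathrm{d}g(\{a\}) = r \cdot 0 + s \cdot 0 = 0,
\]
and evaluating at a half-open interval $(a,t] \subseteq I$ gives
\[
\nu((a,t]) = r(f(t) - f(a)) + s(g(t) - g(a)) = (rf+sg)(t) - (rf+sg)(a).
\]
Thus $\nu$ satisfies the two defining properties of $\mathrm{d}(rf+sg)$, and by the uniqueness statement in \Cref{def:lebesgue-stieltjes} we conclude $\nu = \mathrm{d}(rf+sg)$, which is exactly the claimed equality.

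There is no real obstacle here: the proof is a direct unfolding of definitions, and the only minor point worth mentioning is the preservation of right-continuity and bounded variation under linear combinations, which is standard. No Carathéodory-style construction needs to be redone because the uniqueness clause of the cited definition does all the work.
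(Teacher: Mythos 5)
Your proof is correct. The paper actually omits a proof of this proposition altogether (treating it as routine), and your argument---checking that $r\,\mathrm{d}f + s\,\mathrm{d}g$ satisfies the two defining conditions in \Cref{def:lebesgue-stieltjes} and then invoking uniqueness---is exactly the natural and standard way to fill that gap.
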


The product rule \(\mathrm{d}(fg) = g \, \mathrm{d}f + f \, \mathrm{d}g\) is more subtle. We need one of \(f\) or \(g\) to be continuous.

\begin{lemma}

(Proposition 4.5, page 6 of \autocite{revuz2013continuous}) For any right-continuous \(f, g : [a, b] \to \mathbb{R}\) of bounded variation, we have
\[
\int_{t \in (a, b]} g(t)\, df(t) + \int_{t \in (a, b]} f(t-) \, dg(t) = f(b) g(b) - f(a) g(a)
\]
where \(f(t-)\) denotes the left limit of \(f\) at \(t\).

\label{lem:integration-by-parts}
\end{lemma}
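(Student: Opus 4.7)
The plan is to reduce the identity to Fubini's theorem for the product signed measure $\mathrm{d} f \otimes \mathrm{d} g$ on the square $(a, b]^2$. Since both $\mathrm{d} f$ and $\mathrm{d} g$ are finite signed Borel measures (by \Cref{def:lebesgue-stieltjes}, using that $f$ and $g$ are right-continuous and of bounded variation), their product is well-defined after the usual Jordan decomposition, and the total mass of the square equals
\[
(\mathrm{d} f \otimes \mathrm{d} g)\bigl((a, b]^2\bigr) = \mathrm{d} f((a, b]) \cdot \mathrm{d} g((a, b]) = \bigl(f(b) - f(a)\bigr)\bigl(g(b) - g(a)\bigr).
\]

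Next, I would partition the square into the two disjoint Borel regions $U := \{(s, t) \in (a, b]^2 : s < t\}$ and $V := \{(s, t) \in (a, b]^2 : s \geq t\}$; the choice of strict versus non-strict inequality is the key design decision and is what forces the left-limit to appear on one factor and not the other. Applying Fubini with $t$ as the outer variable on $U$ and with $s$ as the outer variable on $V$ gives
\[
(\mathrm{d} f \otimes \mathrm{d} g)(U) = \int_{t \in (a, b]} \mathrm{d} f\bigl((a, t)\bigr) \, \mathrm{d} g(t), \qquad (\mathrm{d} f \otimes \mathrm{d} g)(V) = \int_{s \in (a, b]} \mathrm{d} g\bigl((a, s]\bigr) \, \mathrm{d} f(s).
\]
Then I would rewrite the inner measures in closed form: since $f$ is right-continuous, $\mathrm{d} f((a, t)) = f(t-) - f(a)$, while $\mathrm{d} g((a, s]) = g(s) - g(a)$. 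Substituting these yields the two integrals $\int_{(a, b]} (f(t-) - f(a)) \, dg(t)$ and $\int_{(a, b]} (g(s) - g(a)) \, df(s)$.

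Finally, I would sum the two resulting expressions, equate the sum with the total mass $(f(b)-f(a))(g(b)-g(a))$, and expand. The boundary terms $f(a)(g(b)-g(a))$ and $g(a)(f(b)-f(a))$ cancel exactly against the corresponding pieces from the expansion of the product, leaving the desired identity
\[
f(b) g(b) - f(a) g(a) = \int_{t \in (a, b]} g(t) \, df(t) + \int_{t \in (a, b]} f(t-) \, dg(t).
\]

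The main obstacle is bookkeeping the diagonal $\{s = t\}$ correctly: at a common jump of $f$ and $g$ the product measure puts positive mass on the diagonal, so assigning the diagonal to $V$ rather than $U$ is what makes the second integrand $f(t-)$ instead of $f(t)$. Writing $f(t)$ instead would double-count the jumps and break the identity; this asymmetry between the two factors is the only subtle point, and once it is handled, the rest is formal manipulation. A minor technical point is the construction of the signed product measure, which is standard via the Jordan decomposition of $\mathrm{d} f$ and $\mathrm{d} g$ into positive and negative parts and applying the above argument to each of the four products separately.
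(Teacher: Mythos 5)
Your argument is correct, and it is the standard proof of this integration-by-parts identity via Fubini's theorem on the square $(a,b]^2$, which is essentially what the cited reference (Revuz--Yor, Proposition 4.5) does. The paper itself does not prove this lemma but only cites it, so there is no in-text proof to compare against; your Fubini decomposition — assigning the diagonal to the region integrated against $\mathrm{d}f$, which forces the left limit $f(t-)$ to appear in the other factor — correctly captures the one subtle point, namely the handling of common jumps of $f$ and $g$.
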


\begin{lemma}

For any right-continuous \(f, g : [a, b] \to \mathbb{R}\) of bounded variation, if one of \(f\) or \(g\) is continuous then the equality \(\mathrm{d}(fg) = g \, \mathrm{d}f + f \, \mathrm{d}g\) holds.

\label{lem:lebesgue-stieltjes-product}
\end{lemma}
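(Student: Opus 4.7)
The plan is to derive the product rule directly from the integration by parts identity of \Cref{lem:integration-by-parts}, using the hypothesis of continuity to eliminate the left-limit correction. Without loss of generality assume $f$ is continuous (the case where $g$ is continuous is entirely symmetric once we also invoke \Cref{pro:lebesgue-stieltjes-sum}, or just by relabeling). Continuity gives $f(t-) = f(t)$ for every $t \in [a,b]$, so \Cref{lem:integration-by-parts} collapses to the symmetric form
\[
\int_{(a,b]} g(t)\,df(t) + \int_{(a,b]} f(t)\,dg(t) = f(b)g(b) - f(a)g(a).
\]

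Before invoking the formula I would first check that $fg$ is itself right-continuous and of bounded variation, so that $\mathrm{d}(fg)$ is well-defined in the sense of \Cref{def:lebesgue-stieltjes}. Right-continuity of the product is immediate from right-continuity of each factor. For the bounded variation bound, since $f$ and $g$ are bounded on $[a,b]$ (being of bounded variation), the standard telescoping
\[
|f(t_i)g(t_i) - f(t_{i-1})g(t_{i-1})| \leq \|f\|_\infty |g(t_i) - g(t_{i-1})| + \|g\|_\infty |f(t_i) - f(t_{i-1})|
\]
summed over any partition yields a total variation bounded by $\|f\|_\infty V(g) + \|g\|_\infty V(f)$.

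Next I would apply the continuous form of integration by parts not only on $[a,b]$ but on every subinterval $[c,d] \subseteq [a,b]$, obtaining
\[
\bigl(g\,\mathrm{d}f + f\,\mathrm{d}g\bigr)\bigl((c,d]\bigr) = f(d)g(d) - f(c)g(c) = \mathrm{d}(fg)\bigl((c,d]\bigr),
\]
where the right equality is the defining property of $\mathrm{d}(fg)$ in \Cref{def:lebesgue-stieltjes}. Both $\mathrm{d}(fg)$ and $g\,\mathrm{d}f + f\,\mathrm{d}g$ are finite signed Borel measures on $[a,b]$ that vanish on $\{a\}$, and we have just verified they agree on every half-open interval $(c,d]$ with $a \leq c < d \leq b$. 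Since the collection of such intervals together with $\{a\}$ forms a $\pi$-system generating the Borel $\sigma$-algebra on $[a,b]$, uniqueness of finite signed Borel measures (apply the Hahn decomposition to the difference and then the $\pi$-$\lambda$ theorem to each of its positive and negative parts) forces the two measures to coincide.

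The only mild obstacle is the passage from equality on half-open intervals to equality as signed Borel measures; this is purely a measure-theoretic routine and requires no information specific to the moving sofa problem. The continuity hypothesis is used in exactly one place, to replace $f(t-)$ with $f(t)$ in the second integrand of \Cref{lem:integration-by-parts}, which is precisely what allows the integration-by-parts formula to be rearranged into the desired product rule without any jump correction.
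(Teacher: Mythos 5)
Your proof is correct and follows essentially the same route as the paper: reduce via WLOG to $f$ continuous, apply \Cref{lem:integration-by-parts} with the left-limit correction removed, and conclude by comparing the two measures on a generating family of intervals. The paper's version is shorter because it invokes the uniqueness clause built directly into \Cref{def:lebesgue-stieltjes} (checking only on sets of the form $(a,x]$ rather than all $(c,d]$ followed by a $\pi$-$\lambda$ argument), and it leaves implicit the routine check that $fg$ is right-continuous of bounded variation, which you helpfully spell out.
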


\begin{proof}
Assume without loss of generality that \(f\) is continuous. It suffices to show that both sides, as measures on \([a, b]\), agree on the subset \((a, x]\) for all \(x \in (a, b]\). This is true by \Cref{lem:integration-by-parts}.
\end{proof}

Finally, we note the following characterization of absolutely continuous functions \(f\).

\begin{proposition}

For any right-continuous \(f : [a, b] \to \mathbb{R}\) of bounded variation, the followings are equivalent.

\begin{enumerate}
\def\labelenumi{\arabic{enumi}.}
\tightlist
\item
  The function \(f : [a, b] \to \mathbb{R}\) is absolutely continuous.
\item
  We have \(\mathrm{d} f = r \, \mathrm{d} t\) for some measurable and bounded \(r : [a, b] \to \mathbb{R}\).
\end{enumerate}

In such a case, we have \(f'(t)\) equal to \(r(t)\) for almost every \(t \in [a, b]\).

\label{pro:lebesgue-stieltjes-abs-cont}
\end{proposition}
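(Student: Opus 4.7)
The plan is to reduce both directions to the classical Lebesgue fundamental theorem of calculus, using the defining property of the Lebesgue--Stieltjes measure (\Cref{def:lebesgue-stieltjes}): $\mathrm{d}f\bigl((a,t]\bigr) = f(t) - f(a)$ for all $t \in [a,b]$, together with $\mathrm{d}f(\{a\}) = 0$.

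For the direction (2) $\Rightarrow$ (1), I would assume $\mathrm{d}f = r\,\mathrm{d}t$ for some bounded measurable $r$ and evaluate both measures on $(a,t]$ to get
\[
f(t) - f(a) = \mathrm{d}f\bigl((a,t]\bigr) = \int_{(a,t]} r(s)\,\mathrm{d}s = \int_a^t r(s)\,\mathrm{d}s,
\]
which expresses $f$ as the indefinite Lebesgue integral of a bounded function plus a constant. Since $r$ is bounded, $f$ is Lipschitz with constant $\|r\|_\infty$, hence absolutely continuous. The ``in such a case'' claim then follows immediately from the Lebesgue differentiation theorem applied to the right-hand side, giving $f'(t) = r(t)$ for almost every $t \in [a,b]$.

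For the direction (1) $\Rightarrow$ (2), I would invoke the classical fact that an absolutely continuous $f$ on $[a,b]$ has a derivative $f'$ almost everywhere with $f' \in L^1$ and $f(t) - f(a) = \int_a^t f'(s)\,\mathrm{d}s$. Setting $r := f'$ (extended arbitrarily on the null set where $f'$ fails to exist), the finite signed Borel measures $\mathrm{d}f$ and $r\,\mathrm{d}t$ then agree on the $\pi$-system $\{(a,t] : t \in (a,b]\} \cup \{\{a\}\}$, which generates the Borel $\sigma$-algebra of $[a,b]$. A standard Dynkin $\pi$--$\lambda$ argument (applicable since both measures are finite) upgrades the agreement on the generating $\pi$-system to equality as Borel measures, giving $\mathrm{d}f = r\,\mathrm{d}t$.

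The only delicate point, and the main conceptual obstacle, is the boundedness of $r$ in the direction (1) $\Rightarrow$ (2): absolute continuity alone yields $r \in L^1$ but not necessarily $r \in L^\infty$ (e.g.\ $f(t) = \sqrt{t}$ on $[0,1]$ is AC with unbounded derivative). The proposition as stated is therefore implicitly a characterisation of \emph{Lipschitz} functions, i.e.\ AC functions with essentially bounded derivative; in either interpretation the proof skeleton above applies unchanged, and in the intended applications (support functions and vertex maps of convex bodies inscribed in $P_\omega$) the $f$ of interest is indeed Lipschitz, so this subtlety is inert.
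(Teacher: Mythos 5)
Your proof takes essentially the same route as the paper's: both directions reduce to the Lebesgue fundamental theorem of calculus via the defining identity $\mathrm{d}f((a,t]) = f(t)-f(a)$, and (1) $\Rightarrow$ (2) sets $r := f'$ and concludes $\mathrm{d}f = r\,\mathrm{d}t$ by uniqueness. The Dynkin $\pi$--$\lambda$ argument you supply is exactly the machinery underlying the uniqueness clause of \Cref{def:lebesgue-stieltjes} that the paper cites, so there is no real divergence in method.

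The boundedness issue you raise in the direction (1) $\Rightarrow$ (2) is a genuine gap, and the paper's own proof shares it. As you note, $f(t)=\sqrt{t}$ on $[0,1]$ is right-continuous, of bounded variation (it is monotone), and absolutely continuous, yet $f'(t) = 1/(2\sqrt{t})$ is not essentially bounded; by Radon--Nikodym uniqueness any $r$ with $\mathrm{d}f = r\,\mathrm{d}t$ agrees with $f'$ a.e., so no bounded $r$ exists. The paper's proof sets $r := f'$ and appeals to uniqueness of $\mathrm{d}f$ without justifying $r \in L^\infty$, so condition (2) as stated is not a consequence of (1). The clean repair is to drop the word ``bounded'' from (2) (so that (1) $\Leftrightarrow$ (2) becomes the standard characterisation of absolute continuity). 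Your remark that this is harmless in practice is correct, though the precise reason is slightly different from the one you give: the paper invokes the proposition only in the (2) $\Rightarrow$ (1) direction (see the proof of \Cref{thm:leg-length-bounds}, where the density $g_K - r_K$ is bounded by hypothesis), so the unjustified implication is never actually used.
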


\begin{proof}
(2 \(\Rightarrow\) 1) By \Cref{def:lebesgue-stieltjes}, we have \(f(t) = f(a) + \int_a^t r(s)\,ds\), so \(f\) should be absolutely continuous. (1 \(\Rightarrow\) 2) Let \(r : [a, b] \to \mathbb{R}\) be the derivative of \(f(t)\) that exists on almost every \(t \in [a, b]\). Then \(f(t) = f(a) + \int_a^t r(s)\,ds\) by absolute continuity of \(f\), so by the uniqueness of \(\mathrm{d} f\) we should have \(\mathrm{d} f = r \, \mathrm{d} t\).
\end{proof}

\section{Differential Gauss--Minkowski Theorem}
\label{sec:differential-gauss--minkowski-theorem}
Recall that for any \(K \in \mathcal{K}\), the vertex \(v_K^+(t)\) is right-continuous with respect to \(t \in S^1\) (\Cref{thm:limits-converging-to-vertex}). It is also of bounded variation, so the pair \(\mathrm{d} v_K^+(t)\) of Lebesgue–Stieltjes measures of the \(x\)- and \(y\)-coordinates of \(v_K^+(t)\) exists.

\begin{lemma}

For any \(K \in \mathcal{K}\) and any interval \([a, b]\) of \(S^1\), the function \(v_K^+ : [a, b] \to \mathbb{R}^2\) is of bounded variation.

\label{lem:vertex-bounded-variation}
\end{lemma}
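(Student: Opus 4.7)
The plan is to bound the total variation of $v_K^+$ on $[a,b]$ by the perimeter of $K$, which equals $\sigma_K(S^1)$ and is finite because $K$ is compact and convex. Concretely, fix any partition $a = t_0 < t_1 < \dots < t_n = b$. I will show that the quantity
\[
\sum_{i=1}^n \bigl\| v_K^+(t_i) - v_K^+(t_{i-1}) \bigr\|
\]
is bounded by $\sigma_K(S^1)$, uniformly in the partition.

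The key geometric picture is that as $t$ increases from $a$ to $b$, the vertex $v_K^+(t)$ moves counterclockwise along $\partial K$, sweeping out arcs whose total length is at most the perimeter. To make this rigorous, for each index $i$ I split
\[
v_K^+(t_i) - v_K^+(t_{i-1}) \;=\; \bigl[v_K^-(t_i) - v_K^+(t_{i-1})\bigr] \;+\; \bigl[v_K^+(t_i) - v_K^-(t_i)\bigr].
\]
The second bracket has norm exactly $\sigma_K(\{t_i\})$ by \Cref{pro:surface-area-measure-side-length}. For the first bracket, I claim that its norm is bounded by $\sigma_K\bigl((t_{i-1}, t_i)\bigr)$, i.e.\ the Hausdorff length of the union of edges $e_K(t)$ for $t \in (t_{i-1}, t_i)$. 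Granting this, summing over $i$ gives
\[
\sum_{i=1}^n \bigl\| v_K^+(t_i) - v_K^+(t_{i-1}) \bigr\| \;\leq\; \sum_{i=1}^n \Bigl[\sigma_K\bigl((t_{i-1}, t_i)\bigr) + \sigma_K(\{t_i\})\Bigr] \;\leq\; \sigma_K(S^1),
\]
which is finite by \Cref{thm:surface-area-measure}.

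The main obstacle is justifying the chord bound $\bigl\|v_K^-(t_i) - v_K^+(t_{i-1})\bigr\| \leq \sigma_K\bigl((t_{i-1}, t_i)\bigr)$. The plan is to use \Cref{thm:limits-converging-to-vertex}: as $s \to t_{i-1}^+$, we have $v_K^-(s) \to v_K^+(t_{i-1})$, and as $s \to t_i^-$, we have $v_K^+(s) \to v_K^-(t_i)$. Thus $v_K^+(t_{i-1})$ and $v_K^-(t_i)$ are endpoints of the arc $\bigcup_{t \in (t_{i-1}, t_i)} e_K(t) \subseteq \partial K$ (in the sense of limits from the open side). By \Cref{thm:surface-area-measure} this arc has Hausdorff $1$-measure equal to $\sigma_K\bigl((t_{i-1}, t_i)\bigr)$. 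Since $\partial K$ is a convex curve, any chord it subtends has length at most the arc length along $\partial K$ joining its endpoints, and because the arc sits inside $\partial K$ and is rectifiable, this gives the needed bound. A clean way to write the chord-vs-arc comparison is to approximate the arc by an inscribed polyline $p_0, p_1, \dots, p_m$ with $p_0 = v_K^+(t_{i-1})$, $p_m = v_K^-(t_i)$, and each $p_j \in \partial K$ in counterclockwise order; the triangle inequality gives $\|p_m - p_0\| \leq \sum_j \|p_j - p_{j-1}\|$, and the right-hand side is bounded by the total edge length $\sigma_K\bigl((t_{i-1}, t_i)\bigr)$ in the limit.

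I expect the only real subtlety will be bookkeeping the endpoint atoms $\sigma_K(\{t_{i-1}\})$ and $\sigma_K(\{t_i\})$ correctly, so that the arcs on $\partial K$ used for different indices $i$ are essentially disjoint and their $\sigma_K$-measures telescope to at most $\sigma_K\bigl((a, b]\bigr)$. Everything else is a direct consequence of \Cref{thm:limits-converging-to-vertex}, \Cref{pro:surface-area-measure-side-length}, and \Cref{thm:surface-area-measure}.
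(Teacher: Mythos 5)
Your proof is correct, but it takes a genuinely different route from the paper's. The paper's argument is shorter and more elementary: it splits $[a,b]$ into sub-intervals of angular length at most $\pi/4$, observes that on each such interval both coordinates of $v_K^+(t)$ are monotone (since the tangent direction $v_t$ stays in a single coordinate quadrant over any interval of length $\pi/4$), and uses the fact that a function whose coordinates are piecewise monotone has bounded variation. Your argument is more quantitative: by decomposing each increment $v_K^+(t_i) - v_K^+(t_{i-1})$ into the atom $v_K^+(t_i) - v_K^-(t_i)$, of norm exactly $\sigma_K(\{t_i\})$ by \Cref{pro:surface-area-measure-side-length}, and a chord $v_K^-(t_i) - v_K^+(t_{i-1})$ bounded by the arc length $\sigma_K\bigl((t_{i-1},t_i)\bigr)$, you obtain the sharper conclusion that the total variation of $v_K^+$ on $[a,b]$ is at most $\sigma_K\bigl((a,b]\bigr) \leq \sigma_K(S^1)$, i.e., at most the perimeter of $K$. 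The price is the chord-versus-arc comparison on the rectifiable convex curve $\partial K$; this is standard and you sketch a correct inscribed-polyline justification, with the limit identities from \Cref{thm:limits-converging-to-vertex} and the Hausdorff-measure identification from \Cref{thm:surface-area-measure} supplying what is needed. Both proofs are valid; the paper's is the economical choice when only finiteness of the variation is required, while yours would be preferable if an explicit variation bound were ever needed downstream (it is not used in the paper).
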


\begin{proof}
For the interval \(t \in [0, \pi/4]\), observe that the \(x\)-coordinate (resp. \(y\)-coordinate) of \(v_K^+(t)\) monotonically decreases (resp. increases) with respect to \(t\), so \(v_K^+\) is of bounded variation on \([0, \pi/4]\). A similar logic can be used to angles \([\pi/4, \pi/2]\), \([\pi/2, 3\pi/4]\), and \([3\pi/4, 2\pi]\). Any larger domain \([a, b]\) of \(t\) can be divided into such intervals or their subintervals.
\end{proof}

\Cref{thm:boundary-measure} evaluates \(\mathrm{d} v_K^+(t)\) in terms of the surface area measure \(\sigma_K\) of \(K\). This will be used frequently.

\begin{theorem}

Let \(a, b \in \mathbb{R}\) be arbitrary such that \(a < b \leq a + 2\pi\). Let \(K\) be any planar convex body. Then the equality
\[
\mathrm{d} v_K^{+}(t) = v_t \, \sigma_K
\]
of pairs of measures on the \emph{half-open interval} \(I := (a, b]\) holds,\footnote{The equality does \emph{not} hold in general on the left endpoint \(\left\{ a \right\}\), as it is (somewhat artificially) defined in \Cref{def:lebesgue-stieltjes} that the Lebesgue-Stieltjes measure is zero on the left endpoint.} where \(v_K^+(t) : \overline{I} \to \mathbb{R}^2\) and \(v_t : \overline{I} \to \mathbb{R}^2\) are taken as functions of \(t \in \overline{I} = [a, b]\).

\label{thm:boundary-measure}
\end{theorem}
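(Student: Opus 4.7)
The plan is to reduce the measure identity to the scalar identity
\[
v_K^+(t) - v_K^+(a) = \int_{(a,t]} v_s \,\sigma_K(ds) \qquad \text{for all } t \in (a,b],
\]
prove this first for convex polygons by direct computation, and extend to general convex bodies by Hausdorff approximation. The reduction uses that both $\mathrm{d} v_K^+$ (which exists because $v_K^+$ is right-continuous by \Cref{thm:limits-converging-to-vertex} and of bounded variation by \Cref{lem:vertex-bounded-variation}) and $v_t \, \sigma_K$ (finite since $\sigma_K$ is a finite Borel measure) are signed Borel measures on $(a,b]$ uniquely determined by their values on the half-open intervals $(a,t]$, by the defining property of the Lebesgue--Stieltjes measure in \Cref{def:lebesgue-stieltjes}.

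The polygon case is a direct computation. If $K$ is a convex polygon, the surface area measure $\sigma_K$ is supported on the finite set $\{t_1, \ldots, t_n\}$ of outer normal angles of its edges, with $\sigma_K(\{t_i\})$ equal to the length of the corresponding edge by \Cref{pro:surface-area-measure-side-length}. For $t$ strictly between consecutive $t_i$'s, the edge $e_K(t)$ is a single polygon vertex, so $v_K^+$ is piecewise constant and jumps only at the $t_i$. Using $v_K^+(t) = v_K^-(t) + \sigma_K(\{t\}) v_t$ from \Cref{pro:surface-area-measure-side-length}, each jump equals $\sigma_K(\{t_i\}) v_{t_i}$, and summing over $t_i \in (a,t]$ telescopes to $v_K^+(t) - v_K^+(a) = \int_{(a,t]} v_s \,\sigma_K(ds)$.

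For a general convex body $K$, take a sequence of convex polygons $K_n \to K$ in Hausdorff distance, so that $\sigma_{K_n} \to \sigma_K$ weakly by \Cref{thm:surface-area-weak-convergence}. At any angle $t$ with $\sigma_K(\{t\}) = 0$, the edge $e_K(t)$ reduces to the single point $v_K^+(t)$; since $h_{K_n}(t) \to h_K(t)$ forces $l_{K_n}(t) \to l_K(t)$, any cluster point of the sequence $v_{K_n}^+(t)$ lies in $K \cap l_K(t) = e_K(t) = \{v_K^+(t)\}$, yielding $v_{K_n}^+(t) \to v_K^+(t)$. Since the atoms of $\sigma_K$ are countable, its continuity points are dense in $[a,b]$; for any two such continuity points $c < d$ in $(a,b]$, applying the polygon identity to $K_n$ and passing to the limit (using weak convergence on the right and pointwise convergence of vertices on the left) gives $v_K^+(d) - v_K^+(c) = \int_{(c,d]} v_s \, \sigma_K(ds)$. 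Letting $c \to a^+$ through continuity points and using the right-continuity of both sides at $a$ extends this to $c = a$ for every continuity point $t = d \in (a,b]$; for arbitrary $t \in (a,b]$, approximating $t$ from the left by continuity points matches the left limits of both sides, and matching the jumps at $t$ (each equal to $\sigma_K(\{t\}) v_t$) completes the proof.

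The main technical obstacle is the pointwise convergence $v_{K_n}^+(t) \to v_K^+(t)$ at continuity points of $\sigma_K$, which fails at atoms of $\sigma_K$; this forces the entire approximation argument to route through the dense set of continuity points, with the final extension to all $t \in (a,b]$ relying separately on right-continuity and on matching the jumps of the two sides.
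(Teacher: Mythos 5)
Your proposal is correct, and it takes a genuinely different route from the paper at the key approximation step. Both you and the paper reduce the measure identity to the scalar telescoping identity $v_K^+(x) - v_K^+(a) = \int_{(a,x]} v_s\,\sigma_K(ds)$ and verify it directly for polygons. The paper then invokes a special polygon approximation from Schneider (Theorem~8.3.3) that pins down the edges at the two endpoints, $e_{K_n}(a) = e_K(a)$ and $e_{K_n}(x) = e_K(x)$, and passes to the limit by carefully restricting the surface area measures to $S^1 \setminus \{a,x\}$ and to $(a,x)$ and checking weak convergence of each restriction; the fixed endpoints absorb whatever atoms $\sigma_K$ may carry at $a$ and $x$. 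You instead allow an arbitrary Hausdorff approximation, establish pointwise convergence $v_{K_n}^+(t) \to v_K^+(t)$ at the (dense, co-countable) continuity points of $\sigma_K$ by a compactness-cum-uniqueness argument, pass to the limit only between pairs of continuity points, and then recover the general statement by sending the left endpoint to $a^+$ (right-continuity of $v_K^+$, increasing limits of measures) and, for an arbitrary right endpoint $t$, by taking the left limit through continuity points and adding back the jump $\sigma_K(\{t\})\,v_t$, which matches on both sides via \Cref{pro:surface-area-measure-side-length}. Your version is somewhat more self-contained — it avoids the specific Schneider approximation theorem and the restricted-measure bookkeeping — at the cost of a more delicate density-and-limits argument at the end; both are about the same length. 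The one step worth being explicit about if you write it up is that $\int_{(c,d]} v_s\,\sigma_{K_n}(ds) \to \int_{(c,d]} v_s\,\sigma_K(ds)$ at continuity points $c,d$ follows from the Portmanteau theorem applied to the bounded function $\mathbf{1}_{(c,d]}(s)\,v_s$, whose discontinuity set $\{c,d\}$ is $\sigma_K$-null by choice of $c,d$.
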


Note that the notations \(v_K^+(t)\) and \(v_t\) denote different things: \(v_K^+(t)\) is a vertex of the convex body \(K\), while \(v_t\) is the direction \((- \sin t, \cos t)\) independent of \(K\). Integrating \Cref{thm:boundary-measure} on any bounded measurable function \(p : I \to \mathbb{R}^2\), we get
\[
\int_{t \in I} p(t) \cdot d v_K^+(t) = \int_{t \in I} (p(t) \cdot v_t) \,\sigma(dt).
\]

\begin{proof}[Proof of \Cref{thm:boundary-measure}]
It suffices to check that the pairs of measures \(\mathrm{d} v_K^{+}(t)\) and \(u_t \, \sigma\) agree on the subset \((a, x]\) of \(I\) for any \(x \in I\). That is, we only need to check
\begin{equation}
\label{eqn:boundary-to-check}
v_K^+(x) - v_K^+(a) = \int_{t \in (a, x]} v_t\,\sigma_K(dt).
\end{equation}

We first show \Cref{eqn:boundary-to-check} for polygon \(K\). For polygon \(K\), the measure \(\sigma_K\) is a discrete measure where each proper edge \(e_K(t)\) of \(K\) with normal angle \(t\) corresponds to a point mass of \(\sigma_K\) concentrated at \(t\) with the weight \(\sigma_K\left( \left\{ t \right\} \right)\) which is the length of \(e_K(t)\). So the right-hand side of \Cref{eqn:boundary-to-check} is the sum of all vectors \(v_t \sigma_K\left( \left\{ t \right\} \right) = v_K^+(t) - v_K^-(t)\) over all normal angles \(t \in (a, x]\) of proper edges of \(K\). The telescopic sum is the left-hand side \(v_K^+(x) - v_K^+(a)\) as we want.

Now we prove \Cref{eqn:boundary-to-check} for general convex body \(K\). As in the proof of Theorem 8.3.3, page 466 of \cite{schneider_2013}, we can take a series \(K_1, K_2, \dots\) of polygons converging to \(K\) in the Hausdorff distance \(d_\mathrm{H}\) such that \(e_{K_n}(a) = e_{K}(a)\) and \(e_{K_n}(x) = e_{K}(x)\) for all \(n \geq 1\). By \Cref{thm:surface-area-weak-convergence}, the measure \(\sigma_{K_n}\) on \(S^1\) converges to \(\sigma_K\) weakly as \(n \to \infty\).

For any measure \(\sigma\) on \(S^1\), define the \emph{restriction} \(\sigma|_A\) of \(\sigma\) to a Borel subset \(A \subseteq S^1\) as the measure on \(S^1\) such that \(\sigma|_A(X) = \sigma(A \cap X)\) for all Borel subset \(X \subseteq S^1\). Define \(U\) as the open set \(S^1 \setminus \left\{ a, x \right\}\) of \(S^1\), and \(V\) as the open interval \((a, x)\) of \(S^1\). Define \(u_n\) and \(u\) as the restriction of \(\sigma_{K_n}\) and \(\sigma_K\) to \(U\), then \(u_n\) converges to \(u\) weakly as \(n \to \infty\) because \(\sigma_{K_n}(\{a\}) = \sigma_{K}(\{a\})\) and \(\sigma_{K_n}(\{x\}) = \sigma_{K}(\{x\})\).

Define \(\lambda_n\) and \(\lambda\) as the restriction of \(\sigma_{K_n}\) and \(\sigma_K\) to \(V\). We will prove that \(\lambda_n \to \lambda\) weakly as \(n \to \infty\). Take any continuity set \(X \subseteq S^1\) of \(\lambda\) so that \(\lambda(\partial X) = 0\). By the Portmanteau theorem on finite measures, it suffices to show \(\lambda_n(X) \to \lambda(X)\). Because \(\partial(X \cap V) \subseteq (\partial X \cap V) \cup \partial V\), and both \(u(\partial X \cap V) = \lambda(\partial X)\) and \(u(\partial V)\) are zero, the set \(X \cap V\) is a continuity set of \(u\). So \(u_n(X \cap V) \to u(X \cap V)\) and thus \(\lambda_n(X) \to \lambda(X)\) as \(n \to \infty\). This completes the proof that \(\lambda_n \to \lambda\) weakly as \(n \to \infty\).

Now take the limit \(n \to \infty\) to the \Cref{eqn:boundary-to-check} for polygons \(K_n\):
\[
v_{K_n}^+(x) - v_{K_n}^+(a) = \int_{t \in (a, x]} v_t \, \sigma_{K_n}(dt).
\]
The left-hand side is equal to \(v_K^+(x) - v_K^+(a)\) by \(e_{K_n}(a) = e_{K}(a)\) and \(e_{K_n}(x) = e_{K}(x)\). The right-hand side is equal to
\[
(v_{K_n}^+(x) - v_{K_n}^-(x)) + \int_{t \in S^1} v_t \, \lambda_n(dt)
\]
and by \(e_{K_n}(x) = e_{K}(x)\) and the weak convergence \(\lambda_n \to \lambda\), the expression converges to
\[
(v_{K}^+(x) - v_{K}^-(x)) + \int_{t \in S^1} v_t \, \lambda(dt) = \int_{t \in (a, x]} v_t\, \sigma_{K}(dt)
\]
thus completing the proof of \Cref{eqn:boundary-to-check} for general convex body \(K\).
\end{proof}

\begin{remark}

The \emph{Gauss–Minkowski correspondence} maps any planar convex body \(K\), up to translation, bijectively to the Borel measure \(\sigma := \sigma_K\) on \(S^1\) satisfying \(\int_{t \in S^1} v_t\,\sigma(dt) = 0\) (\autocite{marckert2014compact} or Theorem 8.3.1 of \autocite{schneider_2013}). \Cref{thm:boundary-measure} can be seen as a differential version of this correspondence, as we can recover it from integrating \Cref{thm:boundary-measure}.

\begin{itemize}
\tightlist
\item
  By integrating both sides of \Cref{thm:boundary-measure} over all \(t \in S^1\), we immediately the equality \(\int_{t \in S^1} v_t\,\sigma_K(dt) = 0\) which is one direction of the correspondence.
\item
  For any Borel measure \(\sigma\) on \(S^1\) such that \(\int_{t \in S^1} v_t\,\sigma(dt) = 0\), we can recover \(K\) with \(\sigma_K = \sigma\) by taking \(K\) as the convex hull of the partial integrals \(v^-(s) := \int_{(0, s)} v_{t} \, \sigma(dt)\) and \(v^+(s) := \int_{(0, s]} v_t\,\sigma(dt)\) of \Cref{thm:boundary-measure} for all \(s \in (0, 2\pi]\). While we omit the details, it is easy to see that such points are in convex position and \(v^{\pm}(s) = v_K^{\pm}(s)\), so that \(\sigma_K = \sigma\).
\end{itemize}

\label{rem:gauss-minkowski}
\end{remark}

\chapter{Injectivity Condition}
\label{sec:injectivity-condition}
This chapter follows the sketch in \Cref{sec:_injectivity-condition} and proves the injectivity condition on any balanced maximum sofa \(S\). The proof is centered around the differential inequality \Cref{eqn:ineq-example}.

\begin{itemize}
\tightlist
\item
  \Cref{sec:statement} states the full injectivity condition.
\item
  \Cref{sec:arm-lengths} defines the \emph{arm lengths} \(f_K\), \(g_K\) of a cap \(K\) and makes calculations related to it.
\item
  \Cref{sec:inequality-on-maximum-polygon-caps} establishes a discrete version of \Cref{eqn:ineq-example} on maximum polygon sofas (\Cref{thm:balanced-discrete-ineq}). The proof follows the sketch in \Cref{sec:a-differential-inequality}.
\item
  \Cref{sec:inequality-on-balanced-maximum-caps} takes limit on the inequality on maximum polygon sofas, and establish the continuous version of \Cref{eqn:ineq-example} (\Cref{thm:balanced-ineq-limit}) by taking the limit in the discrete version.
\end{itemize}
\section{Statement}
\label{sec:statement}
We showed in the previous \Cref{sec:rotation-angle-of-balanced-maximum-sofas} that we can assume the rotation angle \(\omega = \pi/2\) for the moving sofa problem (\Cref{thm:angle}). So we will omit \(\omega\) in the subscript to denote \(\omega = \pi/2\).

\begin{definition}

Define \(\mathcal{K}^\mathrm{c}\) as the space of caps \(\mathcal{K}_{\pi/2}^\mathrm{c}\) with rotation angle \(\pi/2\). Define \(\mathcal{A} : \mathcal{K}^\mathrm{c} \to \mathbb{R}\) as the sofa area functional \(\mathcal{A}_{\pi/2}\) with rotation angle \(\pi/2\).

\label{def:cap-space-right-angle}
\end{definition}

We fully state the main \Cref{thm:injectivity} of this paper. Recall that a \emph{balanced maximum cap} \(K\) is the cap of a \emph{balanced maximum sofa} \(S\) with rotation angle \(\pi/2\) attaining the maximum area \(\alpha_{\max}\) (\Cref{thm:limiting-maximum-sofa}). Note that the following \Cref{def:injectivity-condition} also defines the notions \(r_K\) and \(s_K\) for \(K\) satisfying the injectivity condition.

\begin{definition}

Say that a cap \(K \in \mathcal{K}^\mathrm{c}\) satisfies the \emph{injectivity condition} if the followings are true.

\begin{enumerate}
\def\labelenumi{\arabic{enumi}.}
\tightlist
\item
  There exists measurable functions \(r_K, s_K : [0, \pi/2] \to \mathbb{R}_{\geq 0}\), unique up to measure zero, such that \(\sigma_K = r_K(t) \mathrm{d} t\) on the interval \(t \in [0, \pi/2)\) and \(\sigma_K = s_K(t - \pi/2) \mathrm{d} t\) on the interval \(t \in (\pi/2, \pi]\).
\item
  The inner corner \(\mathbf{x}_K : [0, \pi/2] \to \mathbb{R}^2\) is continuously differentiable.
\item
  For all \(t \in (0, \pi/2)\), we have \(\mathbf{x}_K'(t) \cdot u_t < 0\) and \(\mathbf{x}_K'(t) \cdot v_t > 0\).
\end{enumerate}

\label{def:injectivity-condition}
\end{definition}

\begin{theorem}

Any balanced maximum cap \(K \in \mathcal{K}^\mathrm{c}\) satisfies the injectivity condition.

\label{thm:injectivity}
\end{theorem}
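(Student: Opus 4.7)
The plan is to execute the strategy sketched in \Cref{sec:_injectivity-condition}, centered on the differential inequality \Cref{eqn:ineq-example} and its measure-theoretic reformulation \Cref{eqn:ineq-actual}. First I would reduce the injectivity condition to pointwise bounds on the arm lengths: the identity $\mathbf{x}_K'(t) = -(f(t) - 1)\,u_t + (g(t) - 1)\,v_t$ (a computation in the planned \Cref{sec:arm-lengths}) shows that condition (3) is equivalent to $f(t), g(t) > 1$ on $(0, \pi/2)$. Condition (1) will follow from an absolute continuity estimate $\sigma_K \leq k_0(g(t))\,\mathrm{d}t$ on $[0, \pi/2)$, together with the mirror-symmetric version on $(\pi/2, \pi]$ obtained via \Cref{pro:mirror-reflection}. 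Condition (2) is then immediate from (1), the formula for $\mathbf{x}_K'$, and the continuity of the arm lengths on $[0, \pi/2]$.

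Next, I would establish the discrete analogue of \Cref{eqn:ineq-example} for a maximum polygon cap $K_\Theta$ approximating $K$. Picking three adjacent angles $t - \delta, t, t + \delta$ in $\Theta$, the balancedness from \Cref{thm:balanced-polygon-sofa} forces the side of the polygon sofa $S_\Theta$ on the outer wall $a(t)$ to balance against the sides on the inner wall $b(t)$. Bounding the latter: the portion inside the union $R$ of the half-planes determined by the inner walls $d(s)$ for $s = t - \delta, t, t + \delta$ has length $\delta|\langle \mathbf{x}'(t), v_t\rangle| + O(\delta^2)$ by a direct leg computation, while the portion outside $R$ is contained in a segment cut out by the three lines $b(s)$ and has length $\delta \max(\langle -\mathbf{B}'(t), v_t\rangle, 0) + O(\delta^2)$ since $S_\Theta$ avoids each inner quadrant $Q^-_s$. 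Combining yields the anticipated \Cref{thm:balanced-discrete-ineq}.

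Passing to the limit as the uniform angle set $\Theta_{\pi/2, n}$ becomes dense gives the continuous inequality $\sigma_K \leq k_0(g(t))\,\mathrm{d}t$ on $[0, \pi/2)$, which requires the weak convergence $\sigma_{K_n} \to \sigma_K$ from \Cref{thm:surface-area-weak-convergence} together with pointwise convergence of the arm length $g_{K_n} \to g_K$ off a negligible set of angles. The absolute continuity of $\sigma_K$ on $[0, \pi/2)$ then yields the density $r_K$ via \Cref{pro:lebesgue-stieltjes-abs-cont}; applying the same argument to the mirror image $K^\mathrm{m}$ (\Cref{pro:balanced-maximum-cap-mirror}) produces $s_K$. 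Then \Cref{thm:arm-length-differentiation} differentiates $f(t) = \langle \mathbf{y}(t) - \mathbf{A}(t), v_t\rangle$ using $\langle \mathbf{y}'(t), v_t\rangle = g(t)$, rewriting the measure inequality as the scalar inequality $f'(t) \geq g(t) - k_0(g(t)) = m_0(g(t))$ almost everywhere, where $m_0$ is monotone increasing.

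Finally I would iterate the lower bound. Start with the trivial $f_0 \equiv 0$ and the boundary value $f(0) = 1$ forced by $\mathbf{A}(0)$ lying on the $x$-axis. Define
\[
f_{i+1}(t) := \max\!\left(f_i(t),\; 1 + \int_0^t m_0(f_i(\pi/2 - u))\,du\right),
\]
using mirror symmetry to bootstrap $g(t) \geq f_i(\pi/2 - t)$ at each stage. The anticipated \Cref{lem:lower-bound-j-iter} will give $f_i \geq (i - 1)/12$ for $i \leq 10$, and the eleventh iteration then pushes $f(t) = f_{11}(t) > 1$ strictly on the open interval $(0, \pi/2)$. The main obstacle I expect is the bookkeeping in this iteration: although three iterations suffice numerically (see \Cref{fig:arm-length-lower-bounds}), proving a strict inequality by hand without computer assistance requires explicit, piecewise bounds on the iterated integrals of $m_0$ applied to simple affine functions, which is combinatorially delicate. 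A secondary subtlety is that $\mathbf{A}(t)$ need not be a single point in general, which is exactly why the inequality must be phrased in terms of the measure $\sigma_K$ and the signed arm lengths $f_K^{\pm}, g_K^{\pm}$ rather than naively as $\langle \mathbf{A}'(t), v_t \rangle$.
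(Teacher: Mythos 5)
Your proposal is correct and mirrors the paper's proof almost exactly: the same reduction of condition (3) to $f_K, g_K > 1$ via \Cref{thm:inner-corner-deriv}, the same discrete-to-continuous passage (\Cref{thm:balanced-discrete-ineq} to \Cref{thm:balanced-ineq-limit}) using balancedness and weak convergence, the same differential inequality $f'_K \geq m_0(g_K)$ from \Cref{thm:arm-length-differentiation}, and the same eleven-step iteration through \Cref{lem:lower-bound-j-iter} and \Cref{lem:lower-bound-threshold}. The only minor slip is that you should write $f(t) \geq f_{11}(t) > 1$ rather than $f(t) = f_{11}(t) > 1$, since $f_{11}$ is a lower bound for the arm length, not equal to it.
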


We will establish \Cref{thm:injectivity} in the last \Cref{sec:bounding-arm-lengths} of this chapter. Once this is done, we can also say the following.

\begin{theorem}

The cap \(K := \mathcal{C}(G)\) of Gerver’s sofa satisfies the injectivity condition.

\label{thm:injectivity-gerver}
\end{theorem}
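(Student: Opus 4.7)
The plan is to verify each of the three clauses of \Cref{def:injectivity-condition} directly, using the explicit parametrization of Gerver's sofa $G$ due to Romik \autocite{romikDifferentialEquationsExact2018}. Recall that the boundary of $G$ decomposes into 18 analytic arcs, governed by closed-form expressions for the contact-point curves $\mathbf{A}_G, \mathbf{B}_G, \mathbf{C}_G, \mathbf{D}_G$, and $\mathbf{x}_G$ on five distinguished subintervals of $[0, \pi/2]$, one per contact mode of $G$ with the supporting hallways. Because this is a check on a single concrete example, the argument is a direct calculation rather than an appeal to \Cref{thm:injectivity}, whose hypothesis (that the cap is a balanced maximum cap) is not yet known to hold for $G$ at this stage of the paper.

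For clause (1), I would observe that the upper boundary of the cap $K := \mathcal{C}(G)$ is swept, as $t$ ranges over $[0, \pi/2]$, by the single contact points $\mathbf{A}_G(t)$ on the right outer wall $a_K(t)$ and $\mathbf{C}_G(t)$ on the left outer wall $c_K(t)$. In particular, for $t$ in either of the open intervals $(0, \pi/2)$ or $(\pi/2, \pi)$ the edge $e_K(t)$ is a single point, so by \Cref{thm:boundary-measure} the measure $\sigma_K$ has densities $r_K(t) = \mathbf{A}_G'(t) \cdot v_t$ on $(0, \pi/2)$ and $s_K(t) = \mathbf{C}_G'(t-\pi/2) \cdot v_{t-\pi/2}$ on $(\pi/2, \pi)$; both are nonnegative because $K$ is convex. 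For clause (3), I would use the identity $\mathbf{x}_K'(t) = -(f_K(t) - 1) u_t + (g_K(t) - 1) v_t$ (the formula cited from \Cref{sec:solving-the-differential-inequality}) to recast the required sign conditions as the strict inequalities $f_G(t) > 1$ and $g_G(t) > 1$ for $t \in (0, \pi/2)$. These are visible from Gerver's closed-form arm-length expressions together with the tabulated numerical constants in \autocite{romikDifferentialEquationsExact2018}: on each of the five phases the arm lengths are analytic in $t$ and bounded strictly above $1$, and their one-sided limits at the phase boundaries also exceed $1$.

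The main obstacle is clause (2): the curve $\mathbf{x}_G$ must be continuously differentiable, not merely piecewise analytic, across the four transition angles separating the five phases. Inside each phase $\mathbf{x}_G$ is $C^\infty$, so what remains is matching one-sided values and derivatives at each transition. I would match values by invoking Romik's construction, in which the ODE systems on consecutive phases are glued with common initial conditions at the transition angles; this is how Gerver's sofa is defined in the first place. For derivatives, I would again apply the formula $\mathbf{x}_G'(t) = -(f_G(t) - 1) u_t + (g_G(t) - 1) v_t$: at each transition only one of the auxiliary contacts ($\mathbf{B}_G$ or $\mathbf{D}_G$) enters or leaves, while the remaining tangency data $\mathbf{A}_G, \mathbf{C}_G$ extend analytically, so $f_G$ and $g_G$ are continuous across the transition, and therefore the one-sided limits of $\mathbf{x}_G'$ agree. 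This pieces together a $C^1$ curve $\mathbf{x}_G : [0, \pi/2] \to \mathbb{R}^2$ with the claimed sign properties, completing all three clauses of \Cref{def:injectivity-condition}.
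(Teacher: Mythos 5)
Your proposal takes a genuinely different route from the paper, and your stated reason for doing so rests on a misreading of the paper's logical structure. You write that the hypothesis of \Cref{thm:injectivity} (that $K$ is a balanced maximum cap) ``is not yet known to hold for $G$ at this stage of the paper,'' and use this to justify a direct verification. But that hypothesis \emph{is} established: Theorem~2 of \cite{gerverMovingSofaCorner1992} explicitly constructs a sequence of maximum polygon sofas converging to $G$, which is precisely what \Cref{def:balanced-maximum-cap} asks for. The paper's actual proof is a one-liner — cite Gerver's Theorem~2, then apply \Cref{thm:injectivity}. Your premise that this shortcut is unavailable is incorrect.

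That said, the direct route you propose is not wrong as a route; in fact \Cref{rem:injectivity-gerver} explicitly acknowledges that ``the equations determining $G$ as provided in \cite{romikDifferentialEquationsExact2018} should be sufficient to verify \Cref{thm:injectivity-gerver} independently of Theorem~2.'' Your decomposition into the three clauses is sensible, your identification of $f_G(t), g_G(t) > 1$ as the crux of clause~(3) is correct, and for clause~(2) you could simply invoke \Cref{pro:cap-nondegenerate-continuity}, which derives $C^1$-smoothness of $\mathbf{x}_K$ directly from clause~(1), rather than re-arguing it phase by phase. The genuine gap is in the heart of clause~(3): you assert that $f_G(t), g_G(t) > 1$ is ``visible from Gerver's closed-form arm-length expressions together with the tabulated numerical constants.'' This is a numerical plausibility claim, not a proof. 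The arm lengths are defined by a coupled system of transcendental equations across five phases, and nothing in the proposal shows how to obtain a strict uniform lower bound of $1$ on the open interval $(0, \pi/2)$ short of an explicit symbolic or interval-arithmetic analysis. The paper's remark makes exactly this point — that the red graph in \Cref{fig:arm-length-lower-bounds} only verifies the inequality numerically — which is why the paper's own proof routes through Gerver's Theorem~2 and \Cref{thm:injectivity} instead of attempting the direct verification. If you want to complete your alternative proof, you would need to supply rigorous bounds on $f_G$ and $g_G$ on each of the five phases, including at the phase boundaries, in a way that does not reduce to reading off a graph.
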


\begin{proof}
Theorem 2 of \autocite{gerverMovingSofaCorner1992} explicitly constructs a sequence of maximum polygon sofas converging to Gerver’s sofa \(G\). So \(G\) is a balanced maximum sofa, and \Cref{thm:injectivity} proves the claim.
\end{proof}

\begin{remark}

Note that for the cap \(K := \mathcal{C}(G)\) of Gerver’s sofa, a slightly weaker version \(\mathbf{x}_K'(t) \cdot u_t \leq 0\) and \(\mathbf{x}_K'(t) \cdot v_t \geq 0\) of (3) of \Cref{def:injectivity-condition} is already assumed by Romik \autocite{romikDifferentialEquationsExact2018} in order to derive Gerver’s sofa \(G\).

Although Romik does not explicitly put the derived \(G\) back and verify this starting assumption, the equations determining \(G\) as provided in \autocite{romikDifferentialEquationsExact2018} should be sufficient to verify \Cref{thm:injectivity-gerver} independently of Theorem 2 by \autocite{gerverMovingSofaCorner1992}. In particular, the red graph of \Cref{fig:arm-length-lower-bounds} depicts the numerical values of \(f_K(t) = 1 - \mathbf{x}_K'(t) \cdot u_t\) (\Cref{pro:cap-nondegenerate-continuity}), verifying one inequality of (3) of \Cref{def:injectivity-condition} numerically.

\label{rem:injectivity-gerver}
\end{remark}

\section{Arm Lengths}
\label{sec:arm-lengths}
Define the \emph{arm lengths} \(f_K^{\pm}(t)\) and \(g_K^{\pm}(t)\) of supporting hallways of a cap \(K\) as the following.

\begin{definition}

Let \(K \in \mathcal{K}^\mathrm{c}\) and \(t \in [0, \pi/2]\) be arbitrary. Define
\[
f_K^+(t) = \left( \mathbf{y}_K(t) - A_K^+(t) \right) \cdot v_t \qquad f_K^-(t) = \left( \mathbf{y}_K - A_K^-(t) \right) \cdot v_t
\]
and
\[
g_K^+(t) = \left( \mathbf{y}_K(t) - C_K^+(t) \right)  \cdot u_t \qquad g_K^-(t) = \left( \mathbf{y}_K(t) - C_K^-(t) \right)  \cdot u_t.
\]

\label{def:cap-tangent-arm-length}
\end{definition}

\begin{proposition}

Let \(K \in \mathcal{K}^\mathrm{c}\) and \(t \in [0, \pi/2]\) be arbitrary. Then we have \(\mathbf{y}_K(t) = A^{\pm}_K(t) + f_K^{\pm}(t) v_t\) and \(\mathbf{y}_K(t) = C^{\pm}_K(t) + g_K^{\pm}(t) u_t\).

\label{pro:cap-tangent-arm-length}
\end{proposition}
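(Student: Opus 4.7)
The plan is to verify the two claimed decompositions by projecting both sides onto the orthonormal basis $\{u_t, v_t\}$. Only the $v_t$-component holds by definition of $f_K^\pm(t)$, so the real content is showing that the $u_t$-component of $\mathbf{y}_K(t) - A_K^\pm(t)$ vanishes; and symmetrically that the $v_t$-component of $\mathbf{y}_K(t) - C_K^\pm(t)$ vanishes.

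First I would recall from \Cref{pro:rotating-hallway-parts} the explicit formula $\mathbf{y}_K(t) = h_K(t) u_t + h_K(t+\pi/2) v_t$, which immediately yields $\mathbf{y}_K(t) \cdot u_t = h_K(t)$ and $\mathbf{y}_K(t) \cdot v_t = h_K(t+\pi/2)$. Next, the vertices $A_K^\pm(t) = v_K^\pm(t)$ lie on the edge $e_K(t) = K \cap l_K(t)$, where $l_K(t) = l(t, h_K(t))$, so by \Cref{def:line} every point $p$ on $l_K(t)$ satisfies $p \cdot u_t = h_K(t)$; in particular $A_K^\pm(t) \cdot u_t = h_K(t)$. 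Subtracting gives $(\mathbf{y}_K(t) - A_K^\pm(t)) \cdot u_t = 0$, so the difference is a scalar multiple of $v_t$, and that scalar is $f_K^\pm(t)$ by \Cref{def:cap-tangent-arm-length}.

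For the second identity, the analogous step is to note $C_K^\pm(t) = v_K^\pm(t + \pi/2) \in e_K(t + \pi/2) \subseteq l_K(t + \pi/2)$, so $C_K^\pm(t) \cdot u_{t+\pi/2} = h_K(t+\pi/2)$. Since $u_{t+\pi/2} = v_t$, this means $C_K^\pm(t) \cdot v_t = h_K(t+\pi/2) = \mathbf{y}_K(t) \cdot v_t$, hence $(\mathbf{y}_K(t) - C_K^\pm(t)) \cdot v_t = 0$. Therefore $\mathbf{y}_K(t) - C_K^\pm(t)$ is a scalar multiple of $u_t$, and the scalar is $g_K^\pm(t)$ by \Cref{def:cap-tangent-arm-length}.

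There is no genuine obstacle here: the statement is essentially an unpacking of definitions, and both identities follow from the single geometric fact that the outer corner $\mathbf{y}_K(t)$ of the supporting hallway $L_K(t)$ lies on both supporting lines $l_K(t)$ and $l_K(t+\pi/2)$ of $K$, so it has the same $u_t$-projection as $A_K^\pm(t)$ and the same $v_t$-projection as $C_K^\pm(t)$. The proof should be only a few lines.
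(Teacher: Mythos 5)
Your proof is correct and follows the same underlying idea as the paper's one-line argument: both $\mathbf{y}_K(t)$ and $A_K^{\pm}(t)$ lie on the supporting line $l_K(t)$ (whose direction is $v_t$), so their difference is a multiple of $v_t$, and that scalar is $f_K^{\pm}(t)$ by \Cref{def:cap-tangent-arm-length}; likewise for $C_K^{\pm}(t)$. You merely make the collinearity explicit by computing $u_t$-projections via \Cref{pro:rotating-hallway-parts}, which is fine but not a different route.
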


\begin{proof}
The point \(\mathbf{y}_K(t)\) and the vertices \(A_K^{\pm}(t)\) are on the tangent line \(a_K(t)\) in the direction of \(v_t\). Likewise \(\mathbf{y}_K(t)\) and \(C_K^{\pm}(t)\) are on the tangent line \(c_K(t)\) in the direction of \(u_t\).
\end{proof}

\begin{proposition}

For any \(K \in \mathcal{K}^\mathrm{c}\) and \(t \in [0, \pi/2]\), we have \(f_{K^\mathrm{m}}^\pm (t) = g_K^{\mp}(t)\) and \(g_{K^\mathrm{m}}^\pm (t) = f_K^{\mp}(t)\).

\label{pro:cap-tangent-arm-mirror}
\end{proposition}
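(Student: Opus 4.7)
The plan is a direct symbolic computation using the mirror-equivariance identities of Proposition~\ref{pro:mirror-reflection} and the decomposition of Proposition~\ref{pro:cap-tangent-arm-length}. The argument is a short three-line chase; the $\pm\leftrightarrow\mp$ swap is forced by $M_\omega$ reversing orientation, and the $A\leftrightarrow C$ swap comes from the mirror symmetry exchanging the right and left outer walls of each supporting hallway.

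Starting from the definition $f_{K^\mathrm{m}}^\pm(t) = (\mathbf{y}_{K^\mathrm{m}}(t) - A^\pm_{K^\mathrm{m}}(t)) \cdot v_t$, Proposition~\ref{pro:mirror-reflection} supplies $\mathbf{y}_{K^\mathrm{m}}(t) = M_\omega(\mathbf{y}_K(\omega - t))$ and $A^\pm_{K^\mathrm{m}}(t) = M_\omega(C^\mp_K(\omega - t))$, with $\omega = \pi/2$. Since $M_\omega$ is a linear isometry fixing the origin, it commutes with vector subtraction, and Proposition~\ref{pro:cap-tangent-arm-length} applied to $K$ at angle $\omega - t$ rewrites the remaining difference as $g_K^\mp(\omega - t)\, u_{\omega - t}$, giving
\[
f_{K^\mathrm{m}}^\pm(t) = g_K^\mp(\omega - t)\, \bigl(M_\omega(u_{\omega - t}) \cdot v_t\bigr).
\]

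The key small geometric fact is $M_\omega(u_{\omega - t}) = v_t$. The reflecting line $O\,o_\omega$ makes angle $\pi/4 + \omega/2$ with the $x$-axis, so $M_\omega$ sends $u_s$ to $u_{\omega + \pi/2 - s}$; taking $s = \omega - t$ produces $u_{\pi/2 + t} = v_t$, and the scalar collapses to $v_t \cdot v_t = 1$. This yields the first claimed identity (with the argument of $g_K^\mp$ read at the mirror-reflected parameter, following the convention used throughout Proposition~\ref{pro:mirror-reflection}).

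The companion identity $g_{K^\mathrm{m}}^\pm(t) = f_K^\mp(t)$ follows either by the identical three-line computation with $(A, u)$ and $(C, v)$ interchanged, or more compactly by applying the first identity to $K^\mathrm{m}$ in place of $K$ and invoking the involution $(K^\mathrm{m})^\mathrm{m} = K$ together with the mirror-equivariance of the outer corner. No analytic obstacle arises; the only step requiring care is the $\pm\leftrightarrow\mp$ bookkeeping, which tracks that the orientation-reversing $M_\omega$ swaps the two endpoints of every edge, sending the endpoint of an edge farthest in direction $v_s$ to the endpoint of the mirrored edge farthest in the opposite direction.
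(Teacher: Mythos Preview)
Your proof is correct and follows the same route as the paper's one-line proof (``By \Cref{pro:mirror-reflection} and \Cref{def:cap-tangent-arm-length}''), just with the details spelled out. Your computation yields $f_{K^\mathrm{m}}^{\pm}(t) = g_K^{\mp}(\omega - t)$ with $\omega = \pi/2$, and you rightly flag in the parenthetical that the argument of $g_K^{\mp}$ should be at the mirror-reflected parameter; this is indeed the version used later in the paper (e.g.\ in the proof of \Cref{lem:lower-bound-sequence}, where $f_{K^\mathrm{m}}(t)$ is equated with $g_K(\pi/2 - t)$), so the stated identity as literally written appears to carry a typo that you have correctly diagnosed.
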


\begin{proof}
By \Cref{pro:mirror-reflection} and \Cref{def:cap-tangent-arm-length}.
\end{proof}

It turns out that the condition (3) of \Cref{def:injectivity-condition} is equivalent to stating that \(f_K^{\pm}(t) > 1\) and \(g_K^{\pm}(t) > 1\), because the derivative of \(\mathbf{x}_K\) can be expressed in terms of the arm lengths of \(K\) (\Cref{thm:inner-corner-deriv}).

\begin{definition}

For any function \(f\) from interval \(I \subseteq \mathbb{R}\) to \(\mathbb{R}\), denote its left (resp. right) derivative at \(t \in I\) as \(\partial^+ f\) (resp. \(\partial^- f\)).

\label{def:left-right-derivative}
\end{definition}

\begin{theorem}

For any \(K \in \mathcal{K}^\mathrm{c}\) and \(t \in [0, \pi/2)\), the right derivatives of the outer corner \(\mathbf{y}_K(t)\) and inner corner \(\mathbf{x}_K(t)\) exists for all \(0 \leq t < \pi/2\) and is equal to the following.
\begin{align*}
\partial^+ \mathbf{y}_K(t) = -f_K^+(t) u_t + g_K^+(t) v_t \qquad \partial^+ \mathbf{x}_K(t) = -(f_K^+(t) - 1) u_t + (g_K^+(t) - 1) v_t
\end{align*}
Likewise, the left derivatives of \(\mathbf{y}_K(t)\) and \(\mathbf{x}_K(t)\) exists for all \(0 < t \leq \pi/2\) and is equal to the following.
\begin{align*}
\partial^- \mathbf{y}_K(t) = -f_K^-(t) u_t + g_K^-(t) v_t \qquad \partial^+ \mathbf{x}_K(t) = -(f_K^-(t) - 1) u_t + (g_K^-(t) - 1) v_t
\end{align*}

\label{thm:inner-corner-deriv}
\end{theorem}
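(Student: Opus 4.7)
The plan is to reduce the statement to a computation of one-sided derivatives of the support function $h_K$, via the explicit formula $\mathbf{y}_K(t) = h_K(t) u_t + h_K(t+\pi/2) v_t$ from \Cref{pro:rotating-hallway-parts}. Since $\mathbf{x}_K(t) = \mathbf{y}_K(t) - u_t - v_t$ by the same proposition, the formulas for $\mathbf{x}_K$ will follow by subtracting the trivial derivatives $\partial^+ u_t = v_t$ and $\partial^+ v_t = -u_t$. So the real content is to prove
\[
\partial^+ h_K(t) = A_K^+(t) \cdot v_t, \qquad \partial^- h_K(t) = A_K^-(t) \cdot v_t,
\]
together with the analogous identities at $t+\pi/2$ (which express right/left derivatives of $h_K(\,\cdot\,+\pi/2)$ through $C_K^\pm(t)$, using $v_{t+\pi/2} = -u_t$).

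For the first identity, I would sandwich the difference quotient from both sides. For the lower bound, $v_K^+(t) \in K$ gives $h_K(s) \geq v_K^+(t) \cdot u_s$ for every $s$, so dividing $h_K(s) - h_K(t) \geq v_K^+(t) \cdot (u_s - u_t)$ by $s-t > 0$ and letting $s \to t^+$ gives $\liminf \geq v_K^+(t) \cdot v_t$. For the upper bound, use $h_K(s) = v_K^+(s) \cdot u_s$ and $h_K(t) \geq v_K^+(s) \cdot u_t$ to obtain $h_K(s) - h_K(t) \leq v_K^+(s) \cdot (u_s - u_t)$; dividing and sending $s \to t^+$, the right-continuity $\lim_{s \to t^+} v_K^+(s) = v_K^+(t)$ from \Cref{thm:limits-converging-to-vertex} yields $\limsup \leq v_K^+(t) \cdot v_t$. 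The left-derivative formula follows by the symmetric argument using $\lim_{s \to t^-} v_K^+(s) = v_K^-(t)$, also from \Cref{thm:limits-converging-to-vertex}.

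Armed with these, the product rule for one-sided derivatives gives
\[
\partial^+ \mathbf{y}_K(t) = \bigl(\partial^+ h_K(t) - h_K(t+\pi/2)\bigr) u_t + \bigl(h_K(t) + \partial^+ h_K(t+\pi/2)\bigr) v_t.
\]
Substituting $\partial^+ h_K(t) = A_K^+(t) \cdot v_t$ and $\partial^+ h_K(t+\pi/2) = C_K^+(t) \cdot v_{t+\pi/2} = -C_K^+(t) \cdot u_t$, together with $h_K(t) = \mathbf{y}_K(t) \cdot u_t$ and $h_K(t+\pi/2) = \mathbf{y}_K(t) \cdot v_t$, the coefficient of $u_t$ becomes $(A_K^+(t) - \mathbf{y}_K(t)) \cdot v_t = -f_K^+(t)$ and the coefficient of $v_t$ becomes $(\mathbf{y}_K(t) - C_K^+(t)) \cdot u_t = g_K^+(t)$, by \Cref{def:cap-tangent-arm-length}. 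This gives $\partial^+ \mathbf{y}_K(t) = -f_K^+(t) u_t + g_K^+(t) v_t$, and then $\partial^+ \mathbf{x}_K(t) = \partial^+\mathbf{y}_K(t) - v_t + u_t = -(f_K^+(t) - 1) u_t + (g_K^+(t) - 1) v_t$. The left-derivative formulas follow identically after replacing every $A_K^+, C_K^+, f_K^+, g_K^+$ by $A_K^-, C_K^-, f_K^-, g_K^-$.

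The main delicate point is the one-sided differentiation of $h_K$; once that is in hand, everything else is bookkeeping. Classically this is a standard envelope-theorem computation, but in the setting of this paper I would derive it directly from \Cref{thm:limits-converging-to-vertex} as above, so that no outside machinery is invoked.
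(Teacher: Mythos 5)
Your proof is correct, but it takes a genuinely different route from the paper's argument. The paper works directly with the outer corner: it introduces the intersection point $A_{t,s} := a_K(t) \cap a_K(s)$ of two adjacent supporting lines, writes down the two linear equations $A_{t,s}\cdot u_t = \mathbf{y}_K(t)\cdot u_t$ and $A_{t,s}\cdot u_s = \mathbf{y}_K(s)\cdot u_s$, and algebraically eliminates to isolate the difference quotient $(\mathbf{y}_K(s)\cdot u_t - \mathbf{y}_K(t)\cdot u_t)/\delta$; the limit $A_{t,s}\to A_K^+(t)$ from \Cref{thm:limits-converging-to-vertex} then closes the computation. Your proof instead passes through the support function $h_K$: it reduces everything, via $\mathbf{y}_K(t) = h_K(t)u_t + h_K(t+\pi/2)v_t$ and the product rule, to the one-sided differentiability $\partial^\pm h_K(t) = v_K^\pm(t)\cdot v_t$, which you establish by a clean sub/superdifferential sandwich using the right-continuity and left limits of $v_K^\pm$ from the same \Cref{thm:limits-converging-to-vertex}. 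Your approach isolates a classical, reusable fact about support functions and makes the bookkeeping transparent, at the cost of slightly more setup; the paper's approach is a more compact one-shot geometric computation but relies on a denser algebraic step and a slightly looser notation ($\partial^+(\mathbf{y}(t)\cdot u_t)$ there means $\partial^+\mathbf{y}(t)\cdot u_t$, the component, since $u_t$ is frozen during the limit). Both are sound, and the dependence on the same limit theorem is the common ingredient.
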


\begin{proof}
Fix an arbitrary cap \(K\) and omit the subscript \(K\) in the arm lengths \(f_K^{\pm}(t), g_K^{\pm}(t)\), vertices \(\mathbf{y}_K(t)\), \(\mathbf{x}_K(t)\) and the tangent line \(a_K(t)\). Take any \(0 \leq t < \pi/2\) and set \(s = t + \delta\) for sufficiently small and arbitrary \(\delta > 0\). We evaluate \(\partial^+ \mathbf{y}(t) = \lim_{\delta \rightarrow 0^+}(\mathbf{y}(s) - \mathbf{y}(t)) / \delta\). Define \(A_{t, s} = a(t) \cap a(s)\). Since \(A_{t, s}\) is on the lines \(a(t)\) and \(a(s)\), it satisfies both \(A_{t, s} \cdot u_t = \mathbf{y}(t) \cdot u_t\) and \(A_{t, s} \cdot u_s = \mathbf{y}(s) \cdot u_s\). Rewrite \(u_s = (\cos \delta) u_t + (\sin \delta) v_t\) on the second equation and we have
\begin{align*}
	(\cos \delta) A_{t, s} \cdot u_t + (\sin \delta) A_{t, s} \cdot v_t =  	\cos \delta (\mathbf{y}(s) \cdot u_t) + \sin \delta (\mathbf{y}(s) \cdot v_t).
\end{align*}
Group by \(\cos \delta\) and \(\sin \delta\) and substitute \(A_{t, s} \cdot u_t\) with \(\mathbf{y}(t) \cdot u_t\), then
\[ \cos \delta (\mathbf{y}(s) \cdot u_t - \mathbf{y}(t) \cdot u_t)
	= \sin \delta (A_{t, s}  (s) \cdot v_t - \mathbf{y}(s) \cdot v_t) .
	\]
Divide by \(\delta\) and send \(\delta \to 0^+\). We get the following limit as \(A_{t, s} \to A^+(t)\) (\Cref{thm:limits-converging-to-vertex}).
\[ \partial^+ (\mathbf{y}(t) \cdot u_t)  = (A^+(t) - \mathbf{y}(t)) \cdot v_t = - f^+(t)\]
A similar argument can be applied to show \(\partial^+ (\mathbf{y}(t) \cdot v_t) = g^+(t)\) and thus the first equation of the theorem. The right derivative of \(\mathbf{x}_K(t)\) comes from \(\mathbf{x}_K(t) = \mathbf{y}_K(t) - u_t - v_t\). A mirror-symmetric argument calculates the left derivative of \(\mathbf{y}_K\) and \(\mathbf{x}_K\).
\end{proof}

\begin{remark}

The resulting equation \(\partial^+ \mathbf{x}_K(t) = -(f_K^+(t) - 1) u_t + (g_K^+(t) - 1) v_t\) in \Cref{thm:inner-corner-deriv} can be interpreted intuitively as the following. Imagine moving the hallway \(L_K(t)\) slightly by incrementing \(t\) by small \(\epsilon > 0\). The wall \(c_K(t)\) rotates with the pivot \(C_K^+(t)\) as center. So the \(v_t\) component \(g_K^+(t) - 1\) of the derivative \(\partial^+ \mathbf{x}_K(t)\) is the distance from the pivot \(C_K^+(t)\) to \(\mathbf{x}_K(t)\) measured in the direction of \(u_t\). The \(u_t\) component \(-(f_K^+(t) - 1)\) of \(\partial^+ \mathbf{x}_K(t)\) can be interpreted similarly as the distance from the pivot \(A^+_K(t)\) to \(\mathbf{x}_K(t)\) along the direction \(v_t\).

\label{rem:inner-corner-deriv}
\end{remark}

We prove some lemmas that compute the arm lengths.

\begin{lemma}

For any \(K \in \mathcal{K}^\mathrm{c}\) and \(t \in [0, \pi/2]\), we have
\[
g_K^+(t) = \int_{u \in (t, t + \pi/2]} \sin(u - t) \, \sigma_K(du).
\]

\label{lem:arm-length-convolution}
\end{lemma}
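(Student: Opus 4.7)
The plan is to reduce the computation of $g_K^+(t)$ to an application of the differential Gauss–Minkowski formula \Cref{thm:boundary-measure} on the interval $(t, t+\pi/2]$, followed by a trigonometric simplification of the integrand.

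First, I would rewrite $g_K^+(t)$ in terms of the two vertices $A_K^+(t) = v_K^+(t)$ and $C_K^+(t) = v_K^+(t + \pi/2)$ alone. By \Cref{pro:rotating-hallway-parts}, $\mathbf{y}_K(t) = h_K(t) u_t + h_K(t+\pi/2) v_t$, so $\mathbf{y}_K(t) \cdot u_t = h_K(t)$. On the other hand $A_K^+(t)$ lies on the supporting line $l_K(t) = l(t, h_K(t))$, so $A_K^+(t) \cdot u_t = h_K(t)$ as well. From \Cref{def:cap-tangent-arm-length} this gives
\[
g_K^+(t) = \bigl(\mathbf{y}_K(t) - C_K^+(t)\bigr) \cdot u_t = \bigl(A_K^+(t) - C_K^+(t)\bigr) \cdot u_t = \bigl(v_K^+(t) - v_K^+(t+\pi/2)\bigr) \cdot u_t.
\]

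Next I would apply \Cref{thm:boundary-measure} with $a = t$ and $b = t + \pi/2$, which yields the vector identity
\[
v_K^+(t+\pi/2) - v_K^+(t) = \int_{u \in (t, t+\pi/2]} v_u \, \sigma_K(du)
\]
as a pair of Lebesgue–Stieltjes equalities. Taking the dot product with $-u_t$ and combining with the previous display produces
\[
g_K^+(t) = -\int_{u \in (t, t+\pi/2]} (v_u \cdot u_t) \, \sigma_K(du).
\]

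Finally, the straightforward trigonometric identity $v_u \cdot u_t = -\sin u \cos t + \cos u \sin t = -\sin(u-t)$ converts the integrand to $\sin(u-t)$ and finishes the proof. There is no real obstacle here: the content is entirely in recognizing that the horizontal displacement from $A_K^+(t)$ to $C_K^+(t)$ along $u_t$ coincides with the arm length $g_K^+(t)$, after which \Cref{thm:boundary-measure} supplies the integral representation directly.
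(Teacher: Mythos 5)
Your proof is correct and follows essentially the same route as the paper's: rewrite $g_K^+(t)$ as $(v_K^+(t) - v_K^+(t+\pi/2)) \cdot u_t$ using that $\mathbf{y}_K(t)$ and $A_K^+(t)$ share the same $u_t$-component, then apply \Cref{thm:boundary-measure} on $(t, t+\pi/2]$ and evaluate the resulting dot product $-u_t \cdot v_s = \sin(s-t)$. The only difference is that you spell out the computation $\mathbf{y}_K(t)\cdot u_t = h_K(t) = A_K^+(t)\cdot u_t$ where the paper simply notes both points lie on $l_K(t)$.
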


\begin{proof}
Since \(\mathbf{y}_K(t), A_K^+(t) \in l_K(t)\), we have
\[
g_K^+(t) = (A_K^+(t) - C_K^+(t)) \cdot u_t = - u_t \cdot (v_K^+(t + \pi/2) - v_K^+(t)).
\]
Parametrize the interval \((t, t + \pi/2]\) by \(s\), then we have \(\mathrm{d} v_K^+(s) = v_{s} \sigma_K\) by \Cref{thm:boundary-measure} so
\begin{align*}
g_K^+(t) & = - u_t \cdot (v_K^+(t + \pi/2) - v_K^+(t)) \\
& = - u_t \cdot \int_{s \in (t, t + \pi/2]} d v_K^+(t') = - u_t \cdot \int_{s \in (t, t + \pi/2]} v_{s} \, \sigma_K(ds) \\
& = \int_{s \in (t, t + \pi/2]} (- u_t \cdot v_{s}) \, \sigma_K(ds) \\
& = \int_{s \in (t, t + \pi/2]} \sin(s - t) \, \sigma_K(d s)
\end{align*}
is proved.
\end{proof}

We calculate the differentiation of arm length \(f_K^+(t)\).

\begin{theorem}

For any \(K \in \mathcal{K}\), the function \(f^+_K(t)\) on \(t \in [0, \pi/2]\) is right-continuous and of bounded variation. Moreover, its Lebesgue–Stieltjes measure on the half-open interval \(t \in (0, \pi/2]\) is
\[
\mathrm{d} f_K^+(t) = g_K^+(t)\, \mathrm{d} t - \sigma_K
\]
where \(f_K^+(t), g_K^+(t), t\) are functions of \(t \in [0, \pi/2]\), so that \(\mathrm{d} t\) denotes the usual Borel measure of \((0, \pi/2]\).

\label{thm:arm-length-differentiation}
\end{theorem}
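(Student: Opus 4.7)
The plan is to decompose $f_K^+$ into pieces whose Lebesgue--Stieltjes behavior is already controlled. Using $A_K^+(t) = v_K^+(t)$ (\Cref{def:cap-vertices}) and $\mathbf{y}_K(t) \cdot v_t = h_K(t+\pi/2)$ (\Cref{pro:rotating-hallway-parts}), I would rewrite
\[
f_K^+(t) = h_K(t+\pi/2) - v_K^+(t) \cdot v_t.
\]
The support function $h_K$ is Lipschitz on $S^1$ (with constant at most the diameter of $K$), hence continuous and of bounded variation; the vertex $v_K^+(t)$ is right-continuous by \Cref{thm:limits-converging-to-vertex} and of bounded variation by \Cref{lem:vertex-bounded-variation}; and $v_t$ is smooth. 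Componentwise products and sums preserve both properties, so $f_K^+$ is right-continuous and of bounded variation.

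For the measure, I would apply the product rule (\Cref{lem:lebesgue-stieltjes-product}) componentwise to the inner product $v_K^+(t) \cdot v_t$, with $v_t$ playing the continuous factor, giving
\[
\mathrm{d}(v_K^+(t) \cdot v_t) = v_t \cdot \mathrm{d} v_K^+(t) + v_K^+(t) \cdot \mathrm{d} v_t.
\]
On $(0, \pi/2]$, \Cref{thm:boundary-measure} gives $\mathrm{d} v_K^+ = v_t \, \sigma_K$, so the first term reduces to $|v_t|^2 \, \sigma_K = \sigma_K$. Since $v_t' = -u_t$ and $v_K^+(t) \cdot u_t = h_K(t)$, the second term equals $-h_K(t) \, \mathrm{d} t$. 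For the other summand $h_K(t+\pi/2)$, I would invoke the convex-geometric identity $\partial^+ h_K(t) = v_K^+(t) \cdot v_t$ together with the Lipschitz property of $h_K$ to conclude via \Cref{pro:lebesgue-stieltjes-abs-cont} that $\mathrm{d} h_K(t+\pi/2) = (v_K^+(t+\pi/2) \cdot v_{t+\pi/2}) \, \mathrm{d} t$. Unwinding with $v_{t+\pi/2} = -u_t$, $v_K^+(t+\pi/2) = C_K^+(t)$, and $C_K^+(t) \cdot u_t = h_K(t) - g_K^+(t)$ (\Cref{pro:cap-tangent-arm-length}), this becomes $(g_K^+(t) - h_K(t)) \, \mathrm{d} t$. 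The $h_K(t) \, \mathrm{d} t$ terms cancel in the subtraction, leaving the claimed $\mathrm{d} f_K^+(t) = g_K^+(t) \, \mathrm{d} t - \sigma_K$.

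The main obstacle is establishing $\partial^+ h_K(t) = v_K^+(t) \cdot v_t$, a standard fact in convex geometry not stated earlier in the paper. I would sketch its proof by sandwiching the right difference quotient $(h_K(s) - h_K(t))/(s - t)$ between $v_K^+(t) \cdot (u_s - u_t)/(s - t)$ (lower bound, from $h_K(s) \geq v_K^+(t) \cdot u_s$) and $v_K^+(s) \cdot (u_s - u_t)/(s - t)$ (upper bound, from $h_K(s) = v_K^+(s) \cdot u_s$ and $h_K(t) \geq v_K^+(s) \cdot u_t$), then letting $s \to t^+$ using right-continuity of $v_K^+$ (\Cref{thm:limits-converging-to-vertex}) and $u_t' = v_t$. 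Everything else is careful bookkeeping between the Lebesgue--Stieltjes calculus and geometric identities already established in the paper, and the restriction to the half-open interval $(0, \pi/2]$ mirrors that of \Cref{thm:boundary-measure}, so no extra care is needed at the left endpoint.
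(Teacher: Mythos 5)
Your decomposition $f_K^+(t) = h_K(t+\pi/2) - v_K^+(t)\cdot v_t$, the product-rule computation of $\mathrm{d}(v_K^+(t)\cdot v_t)$ via \Cref{lem:lebesgue-stieltjes-product} and \Cref{thm:boundary-measure}, the bounded-variation and right-continuity argument, and the final cancellation of $h_K(t)\,\mathrm{d}t$ are all essentially what the paper does. The one place you diverge is the treatment of $\mathrm{d}h_K(t+\pi/2)$: you establish the right-derivative identity $\partial^+ h_K(t) = v_K^+(t)\cdot v_t$ from scratch by sandwiching difference quotients between the supporting-hyperplane inequalities, then pass to the Lebesgue--Stieltjes measure via Lipschitzness and \Cref{pro:lebesgue-stieltjes-abs-cont}. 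The paper instead writes $h_K(t) = v_K^+(t)\cdot u_t$ and applies the same product rule plus \Cref{thm:boundary-measure} once more, where the term $u_t \cdot (v_t\,\sigma_K)$ drops out because $u_t \cdot v_t = 0$. Both are correct. The paper's route is more uniform: it derives $\mathrm{d}h_K(t) = (v_K^+(t)\cdot v_t)\,\mathrm{d}t$ for free from machinery already in play, so the whole proof is two applications of the same product-rule-plus-\Cref{thm:boundary-measure} pattern. Your sandwich argument is more self-contained and avoids a second appeal to \Cref{thm:boundary-measure}, at the price of adding a first-principles lemma the paper does not need and a careful a.e.\ identification of the full derivative with the right derivative. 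You also justify the bounded-variation and right-continuity claims more explicitly than the paper, which simply proceeds to the measure computation.
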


\begin{proof}
First evaluate
\begin{align*}
\mathrm{d} h_K(t) & = \mathrm{d} (v_K^+(t) \cdot u_t) = u_t \cdot \mathrm{d} v_K^+(t) + v_K^+(t) \cdot \mathrm{d} u_t \\
& = u_t \cdot (v_t \sigma_K) + (v_K^+(t) \cdot v_t) \mathrm{d} t = (v_K^+(t) \cdot v_t) \mathrm{d} t
\end{align*}
for all \(t \in S^1\) using \Cref{lem:lebesgue-stieltjes-product} and \Cref{thm:boundary-measure}. Now take the Lebesgue–Stieltjes measure of
\[
f_K^+(t) = \left( \mathbf{y}_K(t) - A_K^+(t) \right) \cdot v_t = h_K(\pi/2 + t) - v_K^+(t) \cdot v_t
\]
for all \(t \in (0, \pi/2]\) using \Cref{lem:lebesgue-stieltjes-product} and \Cref{thm:boundary-measure} to get
\begin{align*}
\mathrm{d} f_K^+(t) & = \mathrm{d} h_K(\pi/2 + t) - v_t \cdot \mathrm{d} v_K^+(t) - v_K^+(t) \cdot \mathrm{d} v_t \\
& = (v_K^+(\pi/2 + t) \cdot v_{\pi/2 + t}) \mathrm{d} t - v_t \cdot (v_t \sigma_K) + v_K^+(t) \cdot u_t \mathrm{d} t \\
& = - \sigma_K + (- v_K^+(\pi/2 + t) \cdot u_t + h_K(t)) \mathrm{d} t
= g_K^+(t) \mathrm{d} t - \sigma_K
\end{align*}
which completes the proof.
\end{proof}

\section{Inequality on Maximum Polygon Caps}
\label{sec:inequality-on-maximum-polygon-caps}
In this \Cref{sec:inequality-on-maximum-polygon-caps}, we prove \Cref{thm:balanced-discrete-ineq} that bounds the side lengths of a maximum polygon cap \(K\).

Recall that a balanced maximum cap \(K_\infty\) defined as the limit of \emph{maximum polygon caps} \(K_1, K_2, \dots\) converging to \(K\) in Hausdorff distance \(d_\text{H}\) (\Cref{def:balanced-maximum-cap}). Each maximum polygon cap \(K_i\) have angle set \(\Theta_{n_i} := \left\{ (\pi/2) j / n_i : 1 \leq j < n_i \right\}\) where \(1 < n_1 < n_2 < \dots\) are increasing powers of 2. To refer each polygon cap \(K_i\) more easily, we introduce the following notion.

\begin{definition}

For any \(n \geq 1\), define the angle set \(\Theta_n := \Theta_{\pi/2, n}\) with rotation angle \(\pi/2\) so that \(\Theta_n = \left\{ (\pi/2) i / n : 1 \leq i < n \right\}\).

\label{def:right-angle-set}
\end{definition}

\begin{definition}

Say that \(K \in \mathcal{K}\) is a \emph{maximum polygon cap} with \(n\) \emph{steps} of \emph{step size} \(\delta := (\pi/2) / n\), if \(n > 1\) is a power of two, and \(K\) is a maximum polygon cap with the angle set \(\Theta_n\).

\label{def:simple-notation-balanced-cap}
\end{definition}

The main \Cref{thm:balanced-polygon-sofa} of \Cref{sec:balanced-maximum-sofa} is that each maximum polygon cap \(K = K_i\) is \emph{balanced} in the side lengths of \(K_i\) and its polygon niche. Using this balancedness condition, we will establish an upper bound of the surface area measure \(\sigma_{K}\) of a maximum polygon cap \(K\) with \(n\) steps of step size \(\delta = (\pi/2) / n\) in \Cref{thm:balanced-discrete-ineq}. Then we will take the limit \(n \to \infty\) and accordingly \(\delta \to 0\), so that the polygon cap \(K\) converges to the balanced maximum cap \(K_{\infty}\) with its surface area measure \(\sigma_{K_\infty}\) bounded from above.

\begin{lemma}

Any maximum polygon cap \(K\) with \(n\) steps have the diameter at most \(5\). Consequently, the functions \(f_K^{\pm}\) and \(g_K^{\pm}\) are bounded from above by 5.

\label{lem:leg-bounded}
\end{lemma}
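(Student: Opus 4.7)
The plan is to combine two structural facts about a maximum polygon cap $K$: \Cref{thm:balanced-polygon-sofa-connected} gives $\mathcal{N}_{\Theta_n}(K)\subseteq K$, while $K\subseteq P_{\pi/2}=H$ pins $K$ inside a horizontal strip of width one. Since $n\geq 2$ is a power of two, $\pi/4\in\Theta_n$, so the wedge $T_K(\pi/4)$ is part of the niche and its closure (which contains the apex $\mathbf{x}_K(\pi/4)$) is therefore contained in the closed set $K\subseteq H$. Expanding the definition of $\mathbf{x}_K(\pi/4)$ from \Cref{pro:rotating-hallway-parts} I will compute
\[
\mathbf{x}_K(\pi/4)=(h_K(\pi/4)-1)u_{\pi/4}+(h_K(3\pi/4)-1)v_{\pi/4}=\frac{1}{\sqrt{2}}\bigl(h_K(\pi/4)-h_K(3\pi/4),\;h_K(\pi/4)+h_K(3\pi/4)-2\bigr).
\]
Either the wedge is empty and $h_K(\pi/4)+h_K(3\pi/4)\leq 2$, or the apex has $y$-coordinate in $(0,1]$, forcing $h_K(\pi/4)+h_K(3\pi/4)\leq 2+\sqrt{2}$; in both cases, $h_K(\pi/4)+h_K(3\pi/4)\leq 2+\sqrt{2}$.

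For a matching lower bound in terms of the width $d:=h_K(0)+h_K(\pi)$, I will evaluate the support function at the two cap vertices $A_K^{+}(0)=v_K^{+}(0)$ and $C_K^{+}(\pi/2)=v_K^{+}(\pi)$. Since $K\subseteq H$ and $h_K(3\pi/2)=0$, both vertices have non-negative $y$-coordinate, while their $x$-coordinates are $h_K(0)$ and $-h_K(\pi)$ respectively. Plugging them into $h_K(\pi/4)=\max_{p\in K}(p_x+p_y)/\sqrt{2}$ and $h_K(3\pi/4)=\max_{p\in K}(p_y-p_x)/\sqrt{2}$ yields
\[
h_K(\pi/4)\geq h_K(0)/\sqrt{2},\qquad h_K(3\pi/4)\geq h_K(\pi)/\sqrt{2},
\]
so $h_K(\pi/4)+h_K(3\pi/4)\geq d/\sqrt{2}$. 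Combining with the upper bound gives $d\leq 2+2\sqrt{2}$, and since $K$ fits inside a strip of height one,
\[
\operatorname{diam}(K)\leq\sqrt{d^{2}+1}\leq\sqrt{(2+2\sqrt{2})^{2}+1}=\sqrt{13+8\sqrt{2}}<5.
\]

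For the arm-length consequence, subtracting the two equalities in \Cref{pro:cap-tangent-arm-length} gives $v_K^{+}(t+\pi/2)-v_K^{+}(t)=-g_K^{+}(t)u_t+f_K^{+}(t)v_t$, so orthonormality of $u_t,v_t$ yields $\sqrt{f_K^{+}(t)^{2}+g_K^{+}(t)^{2}}=|v_K^{+}(t+\pi/2)-v_K^{+}(t)|\leq\operatorname{diam}(K)\leq 5$, and the analogous identity obtained with $A_K^{-},C_K^{-}$ in place of $A_K^{+},C_K^{+}$ controls $f_K^{-},g_K^{-}$. The only non-routine step is the opening move — using balancedness (via \Cref{thm:balanced-polygon-sofa-connected}) to trap the wedge apex inside the strip $H$; everything else is elementary support-function bookkeeping.
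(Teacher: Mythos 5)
Your proposal is correct and takes essentially the same approach as the paper: both use $\pi/4 \in \Theta_n$ (since $n$ is a power of two) together with $\mathcal{N}_{\Theta_n}(K) \subseteq K \subseteq H$ from \Cref{thm:balanced-polygon-sofa-connected} to pin the $y$-coordinate of $\mathbf{x}_K(\pi/4)$ at most $1$, and then deduce the arm-length bound from the orthonormal decomposition $v_K^{\pm}(t+\pi/2) - v_K^{\pm}(t) = -g_K^{\pm}(t)u_t + f_K^{\pm}(t)v_t$. The only difference is the final diameter computation: the paper bounds the diameter of $H \cap Q_K^+(\pi/4)$ directly at $2 + 2\sqrt{2}$, while you bound the horizontal width $d = h_K(0)+h_K(\pi)$ by support-function estimates and pass to $\sqrt{d^2+1} = \sqrt{13 + 8\sqrt{2}}$; both are below $5$, so the difference is cosmetic.
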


\begin{proof}
Since \(n > 1\) is a power of two by \Cref{def:simple-notation-balanced-cap}, the angle set \(\Theta_n\) of \(K\) contains the angle \(\pi/4\). By \Cref{thm:limiting-maximum-cap-connected}, the cap \(K\) satisfies the condition (1) of \Cref{thm:monotonization-connected-iff}. So the cap \(K\) also satisfies the condition (3) of \Cref{thm:monotonization-connected-iff}, and the \(y\)-coordinate of \(\mathbf{x}_K(\pi/4)\) is at most one. Now \(K \subseteq H \cap Q_K^+(\pi/4)\) and the polygon \(H \cap Q_K^+(\pi/4)\) have diameter at most \(2 + 2\sqrt{2} < 5\). So \(K\) have the diameter at most 5.

By \Cref{def:cap-tangent-arm-length}, for any \(t \in [0, \pi/2]\) we have \(A_K^{\pm}(t) - C_K^{\pm}(t) = - f_K^{\pm}(t) v_t + g_K^{\pm}(t) u_t\) and since \(A_K^{\pm}(t), C_K^{\pm}(t)\) are in \(K\), the values \(f_K^{\pm}(t)\) and \(g_K^{\pm}(t)\) are also bounded by the diameter of \(K\) which is at most 5.
\end{proof}

\begin{definition}

For any cap \(K \in \mathcal{K}^\mathrm{c}\) and angle \(t \in [0, \pi/2]\), define
\[
H_K^\mathrm{b}(t) := H_+(t, h_K(t) - 1)
\]
which is the closed half-plane with normal angle \(t + \pi\) bounded from below by \(b_K(t)\). Likewise, define
\[
H_K^\mathrm{d}(t) := H_+(t + \pi/2, h_K(t + \pi/2) - 1)
\]
which is the closed half-plane with normal angle \(t + 3\pi/2\) bounded from below by \(d_K(t)\).

\label{def:upper-half-planes}
\end{definition}

\begin{figure}
\centering
\includegraphics{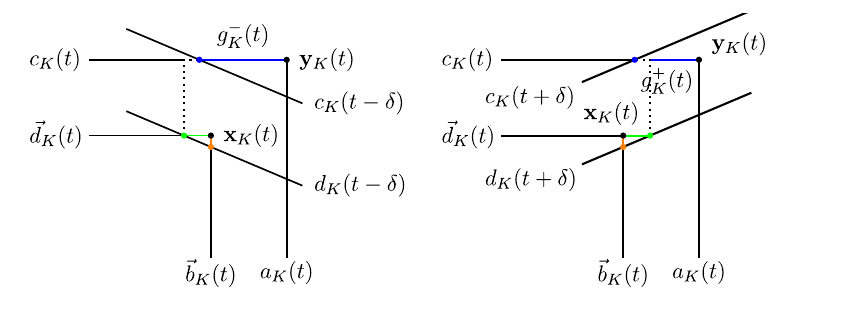}
\caption{The hallway \(L_K(t)\) depicted in upright position with sides \(c_K(t \pm \delta)\) and \(d_K(t \pm \delta)\) for the proof of \Cref{lem:leg-computation}. The points \(p, q, r\) are colored as orange, green, blue respectively.}
\label{fig:leg-computation}
\end{figure}

\begin{lemma}

For any maximum polygon cap \(K\) with the angle set \(\Theta := \Theta_n\) of step size \(\delta := (\pi/2)/n\), and any angle \(t \in \Theta\), we have the following calculations.

\begin{enumerate}
\def\labelenumi{\arabic{enumi}.}
\tightlist
\item
  \(\mathcal{H}^1\left( \vec{b}_K(t) \cap H_K^\mathrm{d}(t - \delta) \right) = \tan \delta \cdot \max(0, g_K^-(t) - 1 + \tan(\delta / 2))\)
\item
  \(\mathcal{H}^1 \left( \vec{b}_K(t) \cap H_K^\mathrm{d}(t + \delta) \right) = \tan \delta \cdot \max(0, 1 - g_K^+(t) + \tan(\delta / 2))\)
\end{enumerate}

\label{lem:leg-computation}
\end{lemma}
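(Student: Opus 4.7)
The plan is to work in the \emph{upright frame} of $L_K(t)$ obtained via the rigid motion $p \mapsto R_{-t}(p - \mathbf{x}_K(t))$. In this frame $\mathbf{x}_K(t) = (0,0)$, the outer walls $a_K(t), c_K(t)$ become the lines $x = 1, y = 1$, the inner walls $b_K(t), d_K(t)$ become the coordinate axes, and $\vec{b}_K(t)$ becomes the ray $\{(0,y) : y \le 0\}$. The problem then reduces to locating where the inner walls $d_K(t \pm \delta)$ cross the $y$-axis and identifying which side of each is $H_K^{\mathrm{d}}(t \pm \delta)$; each of (1) and (2) then falls out of such a crossing computation.

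The key geometric input I will use is the \emph{pivot identity} $C_K^+(t) = C_K^-(t + \delta)$ and $C_K^-(t) = C_K^+(t - \delta)$: between any two consecutive proper-edge normal angles of a polygon, the supporting line pivots around a single vertex that is simultaneously $v_K^+$ of the earlier angle and $v_K^-$ of the later. Since $K \in \mathcal{K}_\Theta^{\mathrm{c}}$ has edge-normals in $\Theta^{\diamond} \cup \{3\pi/2\}$ and $t \pm \delta \in \Theta \cup \{0, \pi/2\}$, the angles $t + \pi/2$ and $t \pm \delta + \pi/2$ are either consecutive in this set or, in the boundary cases $t + \delta = \pi/2$ or $t - \delta = 0$, straddled by the arc between $\pi/2$ and $3\pi/2$ on which the supporting line is constant-vertex (since $\pi \notin \Theta^{\diamond} \cup \{3\pi/2\}$). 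Either way the pivot identity holds, so in particular $c_K(t + \delta)$ passes through $C_K^+(t)$, whose upright-frame coordinates are $(1 - g_K^+(t), 1)$ by \Cref{def:cap-tangent-arm-length}.

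From here I will compute directly. The supporting line $c_K(t + \delta)$ has outward normal $R_{-t}(u_{t + \delta + \pi/2}) = (-\sin\delta, \cos\delta)$ in the upright frame, so $c_K(t + \delta)$ is the locus $-x\sin\delta + y\cos\delta = -(1 - g_K^+(t))\sin\delta + \cos\delta$, and the parallel inner wall $d_K(t + \delta)$ satisfies the same equation with $-1$ added to the right-hand side. Setting $x = 0$ gives the $y$-axis crossing $y_0 = 1 - \sec\delta - (1 - g_K^+(t))\tan\delta$. Because $H_K^{\mathrm{d}}(t + \delta)$ lies on the side of $d_K(t + \delta)$ pointed to by that normal, its intersection with $\vec{b}_K(t)$ has length $\max(0, -y_0)$. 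The identity $\sec\delta - 1 = \tan\delta \cdot \tan(\delta/2)$ (both sides equal $2\sin^2(\delta/2)/\cos\delta$) then lets me factor out $\tan\delta$ to recover exactly $\tan\delta \cdot \max(0, 1 - g_K^+(t) + \tan(\delta/2))$, which is (2). Part (1) follows from the mirror-symmetric calculation, with $C_K^-(t) = (1 - g_K^-(t), 1)$ and normal $(\sin\delta, \cos\delta)$ for $c_K(t - \delta)$. The only delicate point I anticipate is verifying the pivot identity in the two boundary step cases, which I would handle by invoking the constant-vertex property of the support line on the arc $[\pi/2, 3\pi/2]$.
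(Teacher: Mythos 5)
Your proof is correct. Both you and the paper rest on the same two ingredients: (i) the pivot identity that the outer wall $c_K(\cdot)$ of a polygon cap pivots about the vertex $C_K^+(t) = C_K^-(t+\delta)$ (resp.\ $C_K^-(t) = C_K^+(t-\delta)$) between adjacent angles in $\Theta \cup \{0, \pi/2\}$, including the two boundary cases which you handle correctly; and (ii) the relation $C_K^{\pm}(t) = \mathbf{y}_K(t) - g_K^{\pm}(t)\,u_t$ from \Cref{pro:cap-tangent-arm-length}. The difference is purely in execution. The paper reasons synthetically: it introduces the three intersection points $p = d_K(t-\delta) \cap b_K(t)$, $q = d_K(t-\delta) \cap d_K(t)$, $r = c_K(t-\delta) \cap c_K(t)$, extracts $\alpha = \tan\delta\cdot\beta$ from the right triangle $p\,q\,\mathbf{x}_K(t)$, and obtains $\tan(\delta/2)$ from the geometric relation $r = q + v_t + \tan(\delta/2)\,u_t$ that describes how the parallel strips of width one separate as they open through angle $\delta$. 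You instead pass to the upright coordinate frame of $L_K(t)$, write the equations of $c_K(t+\delta)$ and $d_K(t+\delta)$ directly from the pivot vertex and its normal $(-\sin\delta, \cos\delta)$, solve for the $y$-axis crossing, and replace the paper's geometric derivation of $\tan(\delta/2)$ with the trigonometric identity $\sec\delta - 1 = \tan\delta\tan(\delta/2)$. Your coordinate computation is the more mechanical and arguably cleaner; the paper's version avoids introducing a frame and reads off the picture, at the cost of tracking auxiliary points. Both correctly reduce the length to $\max(0, -y_0)$ (equivalently $\max(\alpha, 0)$), and the boundary cases $t - \delta = 0$ and $t + \delta = \pi/2$ that you flag are indeed covered because $\pi$ is not a normal angle of $K$ while $\pi/2$ is.
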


\begin{proof}
(1) See the left side of \Cref{fig:leg-computation}. Let \(p\) be the intersection (orange) of \(d_K(t - \delta)\) and \(b_K(t)\). Since \(p \in b_K(t)\), there is a unique real value \(\alpha\) such that \(p + \alpha v_t = \mathbf{x}_K(t)\). As \(\vec{b}_K(t)\) is the half-line from \(\mathbf{x}_K(t)\) extending in the direction of \(-v_t\), we have \(\mathcal{H}^1\left( \vec{b}_K(t) \cap H_K^\mathrm{d}(t - \delta) \right) = \max(\alpha, 0)\) and it remains to evaluate \(\alpha\). Define \(q\) as the intersection of \(d_K(t + \delta)\) and \(d_K(t)\), depicted green in the figure, and \(\beta\) as the unique real value such that \(q + \beta u_t = \mathbf{x}_K(t)\). Since the points \(p\), \(q\), \(\mathbf{x}_K(t)\) form a right-angled triangle of angle \(\delta\) at \(q\), we have \(\alpha = \tan \delta \cdot \beta\). It remains to compute \(\beta\).

Let \(r\) be the intersection of \(c_K(t - \delta)\) and \(c_K(t)\), depicted blue in the figure. Since the lines \(c_K(t)\) and \(d_K(t)\) are parallel of distance one, and so are \(c_K(t - \delta)\) and \(d_K(t - \delta)\), and the two pairs of lines make an angle of \(\delta\), follow the dashed lines in the figure and we get \(r = q + v_t + \tan(\delta / 2) u_t\). As \(K\) have angle set \(\Theta\) and the angles \(t\) and \(t + \delta\) are adjacent, we have \(r = C_K^-(t)\). By \Cref{pro:cap-tangent-arm-length} we have \(r + g_K^-(t) u_t = \mathbf{y}_K(t)\). Summing up, we now have
\begin{align*}
\beta & = (\mathbf{x}_K(t) - q) \cdot u_t = (\mathbf{x}_K(t) + v_t + \tan(\delta/2) u_t - r) \cdot u_t \\
& = (\mathbf{x}_K(t) + v_t + \tan(\delta / 2) u_t - \mathbf{y}_K(t) + g_K^-(t)u_t) \cdot u_t \\
& = g_K^-(t) + \tan(\delta/2) - 1
\end{align*}
and the result follows from \(\mathcal{H}^1\left( \vec{b}_K(t) \cap H_K^\mathrm{d}(t - \delta) \right) = \max(\alpha, 0) = \tan \delta \cdot \max(0, \beta)\).

(2) The proof is analogous to that of (1). See the right side of \Cref{fig:leg-computation}. Let \(p\) be the intersection of \(d_K(t + \delta)\) and \(b_K(t)\). Since \(p \in b_K(t)\) there is a unique real value \(\alpha\) such that \(p + \alpha v_t = \mathbf{x}_K(t)\). As \(\vec{b}_K(t)\) is the half-line from \(\mathbf{x}_K(t)\) extending in the direction of \(-v_t\), we have \(\mathcal{H}^1 \left( \vec{b}_K(t) \cap H_K^\mathrm{d}(t + \delta) \right) = \max(\alpha, 0)\) and it remains to evaluate \(\alpha\). Define \(q\) as the intersection of \(d_K(t + \delta)\) and \(d_K(t)\), and \(\beta\) as the unique real value such that \(\mathbf{x}_K(t) + \beta u_t = q\). Since the points \(p\), \(q\), \(\mathbf{x}_K(t)\) form a right-angled triangle of angle \(\delta\) at \(q\), we have \(\alpha = \tan \delta \cdot \beta\). It remains to compute \(\beta\).

Let \(r\) be the intersection of \(c_K(t + \delta)\) and \(c_K(t)\). Since the lines \(c_K(t)\) and \(d_K(t)\) are parallel of distance one, and so are \(c_K(t + \delta)\) and \(d_K(t + \delta)\), and the two pairs of lines make an angle of \(\delta\), we have \(r = q + v_t - \tan(\delta / 2) u_t\). As \(K\) have angle set \(\Theta\) and the angles \(t\) and \(t + \delta\) are adjacent, we have \(r = C_K^+(t)\). By \Cref{pro:cap-tangent-arm-length} we have \(r + g_K^+(t) u_t = \mathbf{y}_K(t)\). Summing up, we now have
\begin{align*}
\beta & = (q - \mathbf{x}_K(t)) \cdot u_t = (- \mathbf{x}_K(t) - v_t + \tan(\delta/2) u_t + r) \cdot u_t \\
& = (-\mathbf{x}_K(t) - v_t + \tan(\delta / 2) u_t + \mathbf{y}_K(t) - g_K^+(t)u_t) \cdot u_t \\
& = 1 - g_K^+(t) + \tan(\delta/2)
\end{align*}
and the result follows from \(\mathcal{H}^1 \left( \vec{b}_K(t) \cap H_K^\mathrm{d}(t + \delta) \right) = \max(\alpha, 0) = \tan \delta \cdot \max(0, \beta)\).
\end{proof}

We introduce the following auxiliary real functions. Note that \(m_0\) is monotonically increasing, and is piecewise linear on the intervals \([0, 1]\), \([1, 2]\), and \([2, \infty)\) with values \(m_0(0) = -1\), \(m_0(1) = 1/2\), \(m_0(x) = 1\) for \(x \geq 2\).

\begin{definition}

Define \(k_0, m_0 : \mathbb{R}_{\geq0} \to \mathbb{R}\) as \(k_0(x) := \max\left( |x - 1|, (|x - 1| + 1) / 2 \right)\) and \(m_0(x) = x - k_0(x)\).

\label{def:magic-function}
\end{definition}

We now bound the side lengths of a maximum polygon cap using balancedness. This is the most important part of the analysis.

\begin{theorem}

Let \(K \in \mathcal{K}_\Theta\) be any maximum polygon cap of \(n\) steps with step size \(\delta = (\pi/2) / n\) and uniform angle set \(\Theta := \Theta_n\). Take any \(t \in \left\{ 0 \right\} \cup \Theta\). We have the following upper bound of side length \(\sigma_K(t)\).
\[
\sigma_K(t) \leq k_0(g_K^+(t)) \cdot \delta + O(\delta^2)
\]

\label{thm:balanced-discrete-ineq}
\end{theorem}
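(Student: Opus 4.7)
The plan is to follow the sketch in \Cref{sec:a-differential-inequality} by combining the balancedness of $K$ with a geometric decomposition of the half-line $\vec{b}_K(t)$. The case $t = 0$ is immediate: $\vec{b}_K(0) \cap F_{\pi/2}$ degenerates to the single point $\mathbf{x}_K(0)$, so $\tau_K(0) = 0$ and thus $\sigma_K(0) = 0$ by \Cref{thm:balanced-polygon-sofa}. For $t \in \Theta$, \Cref{thm:balanced-polygon-sofa} gives the balancedness $\sigma_K(t) = \tau_K(t)$, and \Cref{lem:polyline-length} identifies the latter with $\mathcal{H}^1(\partial \mathcal{N}_\Theta(K) \cap \vec{b}_K(t))$. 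The plan is then to split this measure across the ``danger region'' $R_t := H_K^\mathrm{d}(t-\delta) \cup H_K^\mathrm{d}(t+\delta)$ and its complement on $\vec{b}_K(t)$, and estimate each piece separately.

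On $\vec{b}_K(t) \cap R_t$, summing the two formulas of \Cref{lem:leg-computation} and invoking $g_K^-(t) \leq g_K^+(t)$ (which follows from the chain identity $g_K^+(t) - g_K^-(t) = \sigma_K(\{t+\pi/2\}) \geq 0$ obtained from $v_K^\pm(t+\pi/2)$ differing by $\sigma_K(\{t+\pi/2\}) v_{t+\pi/2}$) bounds the contribution by $\delta \cdot |g_K^+(t) - 1| + O(\delta^2)$. On $\vec{b}_K(t) \setminus R_t$, the key observation is that $S_\Theta$ is disjoint from each inner quadrant $Q_K^-(s)$: this holds for $s \in \Theta$ by construction, and also for $s \in \{0, \pi/2\}$ because $Q_K^-(0)$ and $Q_K^-(\pi/2)$ are subsets of $\{y < 0\}$, excluded by $S_\Theta \subseteq H$. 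Consequently any polyline point $p$ in $\vec{b}_K(t) \setminus R_t$ lies in $H_K^\mathrm{b}(t-\delta) \cap H_K^\mathrm{b}(t+\delta)$. A computation analogous to \Cref{lem:leg-computation} that locates the endpoints $b_K(t) \cap b_K(t \pm \delta)$ shows the length of this band on $\vec{b}_K(t)$ is at most $2\tan(\delta/2) - \sigma_K(\{t\})$, where I use the companion identity $f_K^-(t) - f_K^+(t) = \sigma_K(\{t\})$. This yields a contribution of $\max(0, \delta - \sigma_K(t)) + O(\delta^2)$.

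Combining the two estimates gives the self-referential inequality
\[
\sigma_K(t) \leq \delta \, |g_K^+(t) - 1| + \max(0, \delta - \sigma_K(t)) + O(\delta^2).
\]
A two-case analysis splitting on whether $\sigma_K(t) \geq \delta$ or $\sigma_K(t) < \delta$ collapses this to $\sigma_K(t) \leq \delta \cdot k_0(g_K^+(t)) + O(\delta^2)$, since $k_0(x) = \max(|x-1|, (|x-1|+1)/2)$ by \Cref{def:magic-function}. The main technical hurdles are (i) the self-reference in the intermediate inequality, resolved by the case split; (ii) ensuring the $O(\delta^2)$ constant is uniform across all maximum polygon caps of all step counts, which follows from the uniform diameter bound in \Cref{lem:leg-bounded}; and (iii) the boundary angles $t = \delta$ and $t = (n{-}1)\delta$, where one of $t \pm \delta$ lies outside $\Theta$, handled by the strip argument above. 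I expect step (iii) to require the most care but no new ideas.
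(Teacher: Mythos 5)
Your proposal follows essentially the same route as the paper's proof: decompose $X := \vec{b}_K(t) \cap \partial\mathcal{N}_\Theta(K)$ across the union $R$ of lower-half-planes $H_K^{\mathrm{d}}(\cdot)$, bound $\mathcal{H}^1(X \cap R)$ via \Cref{lem:leg-computation}, bound $\mathcal{H}^1(X \setminus R)$ via the strip $\bigcap H_K^{\mathrm{b}}(\cdot)$, and close with the two-case split on $\sigma_K(t) \lessgtr \delta$. The minor cosmetic differences (sum versus max in the $X \cap R$ estimate, two versus three half-planes in $R$) do not affect correctness: as you observe, at most one of $g_K^-(t) - 1$ and $1 - g_K^+(t)$ can be strictly positive, so the sum and the max differ by $O(\delta)$ and this absorbs into the error term. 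Your treatment of the boundary angles $t = \delta$ and $t = (n{-}1)\delta$ is correct and makes explicit a detail the paper handles implicitly.

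The one real slip is the $t = 0$ case. You invoke $\tau_K(0)$ and \Cref{thm:balanced-polygon-sofa}, but $\tau_K$ is only defined on $\Theta^\diamond$ (\Cref{def:polyline-length}), and $0 \notin \Theta^\diamond$; balancedness is likewise only stated for $t \in \Theta^\diamond$. So neither quantity nor the balancedness identity is available at $t = 0$. The fix is the paper's direct argument: a maximum polygon cap $K$ with angle set $\Theta$ and rotation angle $\pi/2$ is an intersection of closed half-planes with normal angles in $\Theta^\diamond \cup \{3\pi/2\}$, a set that omits $0$, so $e_K(0)$ is a single point and $\sigma_K(0) = 0$ by \Cref{thm:surface-area-measure}. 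With that substitution the rest of your argument goes through.
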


\begin{proof}
If \(t = 0\), then as \(K\) have angle set \(\Theta\) and rotation angle \(\omega = \pi/2\), it is an intersection of a finite number of half-planes with normal angles \(\neq 0 \in S^1\) (see \Cref{def:angled-cap-space}). So we have \(\sigma_K(t) = 0\) and the bound holds trivially. Now assume \(t \in \Theta\).

Write \(s := \sigma_K(t)\). Because \(K\) is balanced by \Cref{thm:balanced-polygon-sofa}, we have \(s = \tau_K(t)\). By (1) of \Cref{lem:polyline-length}, the value \(\tau_K(t) = \mathcal{H}^1(X)\) is the length of the set \(X := \vec{b}_K(t) \cap \partial \mathcal{N}_\Theta(K)\), the side(s) of the polygon niche \(\mathcal{N}_\Theta(K)\) contributed by the half-line \(\vec{b}_K(t)\) form \(\mathbf{x}_K(t)\). We will bound \(\mathcal{H}^1(X)\) from above, thus bounding \(s\).

Define the set \(U := \left\{ t - \delta, t, t + \delta \right\}\) and \(R := \bigcup_{u \in U} H_K^\mathrm{d} (u)\) as the union of three half-planes. Divide \(X\) into \(X \cap R\) and \(X \setminus R\). We bound \(\mathcal{H}^1(X \cap R)\) and \(\mathcal{H}^1(X \setminus R)\) from above separately.

We first bound \(\mathcal{H}^1(X \cap R)\) from above. As \(X \subseteq \vec{b}_K(t)\), we have
\[
\mathcal{H}^1(X \cap R) \leq \mathcal{H}^1\left(\vec{b}_K(t) \cap R\right) = \max\left(\vec{b}_K(t) \cap H_K^\mathrm{d}(t - \delta), \vec{b}_K(t) \cap H_K^\mathrm{d}(t + \delta)\right)
\]
and by computations in \Cref{lem:leg-computation} and \Cref{lem:leg-bounded}, we have
\begin{align*}
\mathcal{H}^1(X \cap R) & \leq \delta \cdot \max(0, g_K^-(t) - 1, 1 - g_K^+(t)) + O(\delta^2).
\end{align*}
Let \(M := \max(0, g_K^-(t) - 1, 1 - g_K^+(t))\). We have \(g_K^-(t) \leq g_K^+(t)\) by definition. So if \(g_K^-(t) \leq 1\) then \(M = \max(0, 1 - g_K^+(t)) \leq |1 - g_K^+(t)|\). If \(g_K^-(t) > 1\) then we have \(M = \max(0, g_K^-(t) - 1) \leq |1 - g_K^+(t)|\). Either way, we have \(M \leq |1 - g_K^+(t)|\) and
\begin{equation}
\label{eqn:bound-x-r}
\mathcal{H}^1(X \cap R) \leq \left| g_K^+(t) - 1 \right| \cdot \delta + O(\delta^2).
\end{equation}

Now we bound \(\mathcal{H}^1(X \setminus R)\) from above. Define the closed region \(S := \bigcap_{u \in U} H_K^\mathrm{b} (u)\). Our intermediate goal is to show \(\mathcal{H}^1(X \setminus R) \leq \mathcal{H}^1(b_K(t) \cap S)\). Define \(H_0 := H_+(\pi/2, 0)\) as the closed half-plane bounded from below by the line \(y=0\). Let \(p_0\) be the intersection of \(b_K(t)\) and the line \(y = 0\). Because \(\mathcal{N}_\Theta(K)\) is open in the subspace topology of \(H_0\), we have \(X \setminus \left\{ p_0 \right\} \subseteq H_0 \setminus \mathcal{N}_\Theta(K)\). For each \(u \in U\), we have \(Q_K^-(u) \cup H_K^\mathrm{b}(u) \cup H_K^\mathrm{d}(u) = \mathbb{R}^2\) so
\begin{align*}
X \setminus \left\{ p_0 \right\} & \subseteq H_0 \setminus \mathcal{N}_\Theta(K) \subseteq H_0 \setminus \bigcup_{u \in U} Q_K^-(u) \\
& = H_0 \cap \bigcap_{u \in U} (H_K^\mathrm{b}(u) \cup H_K^\mathrm{d}(u)) \\
& \subseteq R \cup S. 
\end{align*}
So \(X \setminus \left\{ p_0 \right\} \setminus R \subseteq S\) and we have \(\mathcal{H}^1(X \setminus R) \leq \mathcal{H}^1(b_K(t) \cap S)\).

Define the intersections \(B_+ := b_K(t) \cap b_K(t + \delta)\) and \(B_- := b_K(t) \cap b_K(t - \delta)\). There is a unique real value \(\beta\) such that \(B_+ = B_- - \beta v_t\). Elementary geometry shows that the side \(b_K(t) \cap S\) of \(S\) is empty if \(\beta < 0\), or is a finite segment of length \(\beta\) if \(\beta \geq 0\). Another elementary geometry shows that \(\beta = 2 \tan (\delta / 2) - \sigma_K(t)\). So
\begin{equation}
\label{eqn:bound-x-not-r}
\mathcal{H}^1(X \setminus R) \leq \max(0, 2 \tan (\delta / 2) - \sigma_K(t)) = \max(0, \delta - s) + O(\delta^2).
\end{equation}

Recall that \(s = \nu_K(t) = \mathcal{H}^1(X)\). We divide the proof into cases on whether \(s \leq \delta\) or not. If \(s \leq \delta\), then adding \Cref{eqn:bound-x-r} and \Cref{eqn:bound-x-not-r} gives
\[
s \leq \left| g_K^+(t) - 1 \right| \cdot \delta + \delta - s + O(\delta^2)
\]
so rearranging gives
\[
s \leq (\left| g_K^+(t) - 1 \right| + 1)/2 \cdot \delta + O(\delta^2) \leq k_0(g_K^+(t)) \cdot \delta + O(\delta^2).
\]
On the other hand, if \(s > \delta\), then adding \Cref{eqn:bound-x-r} and \Cref{eqn:bound-x-not-r} gives
\[
s \leq \left| g_K^+(t) - 1 \right| \cdot \delta + O(\delta^2) \leq k_0(g_K^+(t)) \cdot \delta + O(\delta^2).
\]
Either way, the claimed inequality holds.
\end{proof}

\section{Inequality on Balanced Maximum Caps}
\label{sec:inequality-on-balanced-maximum-caps}
In this \Cref{sec:inequality-on-balanced-maximum-caps}, we prove the inequality \(\sigma_K \leq k_0(g_K(t)) \, \mathrm{d} t\) on the interval \(t \in [0, \pi/2)\) for balanced maximum caps \(K\) in \Cref{thm:balanced-ineq-limit}. This is done by taking the limit of \Cref{thm:balanced-discrete-ineq} on maximum polygon caps. Using this, we prove the inequality \(f'_K(t) \geq m_0(g_K(t))\) on the arm lengths of \(K\) in \Cref{thm:leg-length-bounds}.

\begin{lemma}

Let \(K\) be any maximum polygon cap with \(n\) steps of step size \(\delta := (\pi/2) / n\) and angle set \(\Theta := \Theta_n\). For any angle \(t \in \left\{ 0 \right\} \cup \Theta\) and open interval \(I := (t,t + \delta)\), the followings are true.

\begin{enumerate}
\def\labelenumi{\arabic{enumi}.}
\tightlist
\item
  For any \(t' \in I\), we have \(g_K^+(t) \geq g_K^{+}(t') = g_K^-(t') \geq g_K^-(t + \delta)\).
\item
  Moreover, the gap \(g_K^+(t) - g_K^-(t + \delta)\) between the upper and the lower bound is at most \(5 \delta\).
\end{enumerate}

\label{lem:arm-length-discrete-bound}
\end{lemma}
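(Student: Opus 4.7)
The plan is to exploit the polygonal structure of $K$: because $K \in \mathcal{K}_{\Theta_n}^{\mathrm{c}}$, the surface area measure $\sigma_K$ is supported on $\Theta_n^{\diamond} \cup \{3\pi/2\}$, so neither the open interval $(t, t+\delta)$ nor $(t+\pi/2, t+\pi/2+\delta)$ contains any normal angle of $K$. Consequently, for every $s$ in $(t, t+\delta)$ the edges $e_K(s)$ and $e_K(s+\pi/2)$ are each a single point, and since $K$ is a convex polygon whose boundary consists of finitely many edges with distinct normal angles, these single-point contacts are a fixed vertex of $K$ throughout the open interval. Call them $A^{\ast}$ and $C^{\ast}$.

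First I would extend this pivot structure to the endpoints. By \Cref{thm:limits-converging-to-vertex} applied from the right at $t$ and from the left at $t+\delta$, we obtain $A_K^{+}(t) = A_K^{-}(t+\delta) = A^{\ast}$ and $C_K^{+}(t) = C_K^{-}(t+\delta) = C^{\ast}$. Moreover, for $s \in (t, t+\delta)$ the absence of $\sigma_K$-mass at $s$ and at $s+\pi/2$ gives $v_K^{+}(s) = v_K^{-}(s)$ and $v_K^{+}(s+\pi/2) = v_K^{-}(s+\pi/2)$, hence $A_K^{+}(s) = A_K^{-}(s)$ and $C_K^{+}(s) = C_K^{-}(s)$, so $g_K^{+}(s) = g_K^{-}(s)$; this is the middle equality in part~(1).

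Next I would derive a single closed-form expression for $g_K^{\pm}(s)$ on $s \in [t, t+\delta]$. Since $\mathbf{y}_K(s) \in l_K(s)$ together with the pivot $A^{\ast}$, the definition of $g_K$ reduces to $g_K^{+}(s) = (A^{\ast} - C^{\ast}) \cdot u_s$ for $s \in [t, t+\delta)$, and similarly $g_K^{-}(t+\delta) = (A^{\ast} - C^{\ast}) \cdot u_{t+\delta}$. Using \Cref{pro:cap-tangent-arm-length} at $s = t$ to write $A^{\ast} - C^{\ast} = g_K^{+}(t)\, u_t - f_K^{+}(t)\, v_t$, a short trigonometric computation yields
\[
g_K^{\pm}(s) = g_K^{+}(t)\cos(s-t) - f_K^{+}(t)\sin(s-t)
\]
on the whole closed interval. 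Since $f_K^{+}(t), g_K^{+}(t) \geq 0$ (both are distances measured along a line through the outermost point $\mathbf{y}_K(t)$ of the supporting hallway), and since $\delta = (\pi/2)/n \leq \pi/4$, part~(1) follows: as $\phi = s - t$ runs through $[0, \delta]$, the map $\phi \mapsto g_K^{+}(t)\cos \phi - f_K^{+}(t)\sin \phi$ is monotonically decreasing.

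For part~(2), evaluating the closed form at $\phi = 0$ and $\phi = \delta$ gives
\[
g_K^{+}(t) - g_K^{-}(t+\delta) = g_K^{+}(t)(1 - \cos \delta) + f_K^{+}(t)\sin \delta,
\]
and Cauchy--Schwarz bounds the right-hand side by
\[
\sqrt{(g_K^{+}(t))^{2} + (f_K^{+}(t))^{2}} \cdot \sqrt{(1 - \cos \delta)^{2} + \sin^{2}\delta} = |A^{\ast} - C^{\ast}|\cdot 2\sin(\delta/2).
\]
Since $A^{\ast}, C^{\ast} \in K$ and the diameter of $K$ is at most $5$ by \Cref{lem:leg-bounded}, while $2\sin(\delta/2) \leq \delta$, this product is at most $5\delta$. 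The main technical obstacle is genuinely only the one-sided bookkeeping at the endpoints $t$ and $t+\delta$; once $A^{\ast}$ and $C^{\ast}$ are identified as simultaneous pivots across the closed interval, the rest is a routine trigonometric estimate.
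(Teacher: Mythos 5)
Your proof is correct, and it rests on the same key observation as the paper's: because $t$ and $t+\delta$ are consecutive angles of $\{0,\pi/2\}\cup\Theta_n$ (and of $\{\pi/2, \pi\}\cup(\Theta_n + \pi/2)$ after shifting by $\pi/2$), the outer walls $a_K(s)$ and $c_K(s)$ pivot about the \emph{fixed} vertices $A^\ast$ and $C^\ast$ throughout $[t, t+\delta]$. After that, the two arguments diverge in packaging but not in substance. The paper runs a synthetic geometry argument: since $a_K(s) \perp c_K(s)$ at $\mathbf{y}_K(s)$, the trajectory of $\mathbf{y}_K$ is an arc of the Thales circle with diameter $A^\ast C^\ast$; it bounds the arc length by (diameter)$\cdot\delta \le 5\delta$ and invokes the (reverse) triangle inequality on $g_K^\pm(s) = \lVert \mathbf{y}_K(s) - C^\ast\rVert$. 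You instead unwind the same identity analytically as $g_K^\pm(s) = (A^\ast - C^\ast)\cdot u_s = g_K^+(t)\cos(s-t) - f_K^+(t)\sin(s-t)$, differentiate to get part~(1), and bound the increment by Cauchy--Schwarz, which gives $\lVert A^\ast - C^\ast\rVert \cdot 2\sin(\delta/2) \le 5\delta$ — a chord-length rather than arc-length bound, but the same constant. Your version is arguably more explicit on part~(1), where the paper's ``so (1) holds'' leaves the monotonicity of the distance along the arc to the reader; your derivative computation makes it immediate. One small point you should check but which does hold: the derivative argument needs $f_K^+(t), g_K^+(t)\ge 0$ and $\delta \le \pi/2$, both of which are available ($f_K^+, g_K^+$ are signed distances to points of $K$ from the support lines and are automatically nonnegative; $n\ge 2$ gives $\delta \le \pi/4$).
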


\begin{proof}
Since the angles \(t\) and \(t + \delta\) are two adjacent angles of the finite set \(\left\{ 0, \pi/2 \right\} \cup \Theta\), we have
\[
A_K^+(t) = A_K^-(t + \delta) = A_K^+(t') = A_K^-(t')
\]
for all \(t' \in I\). Call the common point \(A\). Similarly, we have
\[
C_K^+(t) = C_K^-(t + \delta) = C_K^{+}(t') = C_K^{-}(t')
\]
for all \(t' \in I\). Call the common point \(C\). Take any \(t' \in \overline{I} = [t, t + \delta]\). The lines \(a_K(t')\) and \(c_K(t')\) meet orthogonally at the point \(\mathbf{y}_K(t')\). The line \(a_K(t')\) passes through \(A\), and the line \(c_K(t')\) passes through \(C\). So by elementary geometry, the trajectory of the point \(\mathbf{y}_K(t')\) over all \(t' \in \overline{I}\) forms an arc \(\Delta\) of the circle \(\Gamma\) with the diameter of length \(\leq 5\) (\Cref{lem:leg-bounded}) connecting \(A\) and \(C\). As the lines \(a_K(t)\) and \(a_K(t + \delta)\) makes an angle of \(\delta\) at point \(A\), the arc \(\Delta\) have central angle \(2\delta\) in \(\Gamma\). The value \(g_K^+(t)\) is the distance from \(C\) to \(\mathbf{y}_K(t)\), and \(g_K^-(t + \delta)\) is the distance from \(C\) to \(\mathbf{y}_K(t + \delta)\). So (1) holds. The arc \(\Delta\) connecting \(\mathbf{y}_K(t)\) to \(\mathbf{y}_K(t + \delta)\) have length at most \(5\delta\). So (2) holds by triangle inequality.
\end{proof}

\begin{lemma}

Assume that a sequence of polygon caps \(K_n\) converge to a cap \(K \in \mathcal{K}^\mathrm{c}\) in Hausdorff distance as \(n \to \infty\). Then we have
\[
\lim_{ n \to \infty } \int_0^{\pi/2} \left| g_{K_n}^+(t) - g_K^+(t) \right| \, dt = 0. 
\]

\label{lem:leg-convergence}
\end{lemma}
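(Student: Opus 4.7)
The plan is to convert the arm-length difference into a difference of integrals against the surface area measures via \Cref{lem:arm-length-convolution}, and then combine the weak convergence of those measures with a uniform diameter bound to invoke dominated convergence. By \Cref{lem:arm-length-convolution},
\[
g_K^+(t) = \int_{u \in (t, t+\pi/2]} \sin(u - t) \, \sigma_K(du),
\]
and likewise with $K_n$ in place of $K$, so the difference $g_{K_n}^+(t) - g_K^+(t)$ reduces to the difference of integrals of a continuous function on a common half-open interval of integration. This representation is the right one because \Cref{thm:surface-area-weak-convergence} gives weak convergence $\sigma_{K_n} \to \sigma_K$ on $S^1$.

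For the dominating bound, note that since $K_n \to K$ in Hausdorff distance, the diameters $\operatorname{diam}(K_n)$ are eventually uniformly bounded by some constant $D$. Because $A_K^+(t), C_K^+(t) \in K$ and $u_t$ is a unit vector, $0 \leq g_K^+(t) = (A_K^+(t) - C_K^+(t)) \cdot u_t \leq \operatorname{diam}(K)$, and the same applies to $K_n$; hence $|g_{K_n}^+(t) - g_K^+(t)| \leq 2D$ uniformly on $[0, \pi/2]$. For the pointwise convergence, the finite measure $\sigma_K$ has at most countably many atoms, so the set $N \subseteq [0, \pi/2]$ of values $t$ for which either $t$ or $t + \pi/2$ is a $\sigma_K$-atom is Lebesgue-null. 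For every $t \in [0, \pi/2] \setminus N$, the bounded function $u \mapsto \sin(u-t) \mathbf{1}_{(t, t+\pi/2]}(u)$ has its discontinuity set contained in $\{t, t + \pi/2\}$, which is $\sigma_K$-null; hence by the Portmanteau theorem,
\[
g_{K_n}^+(t) = \int_{(t, t+\pi/2]} \sin(u-t) \, \sigma_{K_n}(du) \longrightarrow \int_{(t, t+\pi/2]} \sin(u-t) \, \sigma_K(du) = g_K^+(t).
\]
Applying the dominated convergence theorem on $[0, \pi/2]$ with the bound $2D$ then delivers the desired $L^1$ convergence.

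The only mildly subtle point in the argument is that weak convergence cannot be applied directly to the indicator of the half-open interval $(t, t+\pi/2]$ for an arbitrary $t$; one must exclude those (at most countably many) $t$ for which $\sigma_K$ has an atom at either endpoint, and this is handled by restricting to the complement of the null set $N$. Everything else is a routine combination of the integral formula, the Hausdorff-distance diameter bound, and standard weak-convergence and dominated-convergence machinery.
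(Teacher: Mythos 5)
Your proposal is correct and follows the same overall strategy as the paper (the integral representation of \Cref{lem:arm-length-convolution}, weak convergence of surface area measures from \Cref{thm:surface-area-weak-convergence}, pointwise a.e.\ convergence via Portmanteau, then dominated convergence). The one place where you diverge is in the Portmanteau step. The paper makes two separate semicontinuity applications: it treats $u \mapsto \sin(u-t)\mathbf{1}_{(t,t+\pi/2]}(u)$ as upper semicontinuous to get $\limsup_n g_{K_n}^+(t) \leq g_K^+(t)$, and the open-interval variant as lower semicontinuous to get $\liminf_n g_{K_n}^-(t) \geq g_K^-(t)$; it then needs the assumption that $\sigma_{K_n}$ (for every $n$) as well as $\sigma_K$ has no atom at the relevant endpoint, so that $g^\pm$ coincide and the two one-sided bounds pin the limit. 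You instead invoke the continuity-set form of Portmanteau once: the discontinuity set of the integrand is contained in $\{t, t+\pi/2\}$ (in fact it is just $\{t+\pi/2\}$, since the integrand is continuous at $u=t$), and this set has $\sigma_K$-measure zero for $t$ outside a countable exceptional set. This is slightly cleaner and slightly more economical, since it only requires conditions on the limiting measure $\sigma_K$, not on the approximating measures $\sigma_{K_n}$. Both yield the needed a.e.\ pointwise convergence, and your domination bound by twice a uniform diameter is also fine and matches the paper's.
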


\begin{proof}
The function \(g_{K_n}^+\) is nonnegative and bounded from above by the diameter of \(K_n\). The diameter of \(K_n\) converges to the diameter of \(K\). So by dominated convergence theorem, it suffices to show that for almost every \(t \in (0, \pi/2)\), we have \(g_{K_n}^+(t) \to g_K^+(t)\) as \(n \to \infty\).

Note that the set of all \(t \in (0, \pi/2)\) such that either \(\sigma_{K}\left( \left\{ t \right\} \right) > 0\) or \(\sigma_{K_n}\left( \left\{ t \right\} \right) > 0\) for some \(n\) is countable; otherwise, it contradicts that the measures \(\sigma_{K_n}\) and \(\sigma_K\) are finite. So we can exclude such measure zero case and assume that \(\sigma_{K}\left( \left\{ t \right\} \right) = 0\) and \(\sigma_{K_n}\left( \left\{ t \right\} \right) = 0\) for all \(n \geq 1\). By our choice of \(t\), we have \(g_{K_n}^-(t) = g_{K_n}^+(t)\) and \(g_K^-(t) = g_K^+(t)\). We now show \(g_{K_n}^+(t) \to g_K^+(t)\) as \(n \to \infty\) for such \(t\).

The function \(s : S^1 \to \mathbb{R}\), defined as \(s(u) := \sin(u - t)\) for \(u \in (t, t + \pi/2]\) and zero otherwise, is upper semicontinuous. By \Cref{lem:arm-length-convolution}, the value \(g_{K_n}^+(t)\) (resp. \(g_K^+(t)\)) is the integral of \(s\) over \(\sigma_{K_n}\) (resp. \(\sigma_K\)). Because the measure \(\sigma_{K_n}\) converges weakly to \(\sigma_K\) as \(n \to \infty\) (\Cref{thm:surface-area-weak-convergence}), by the Portmanteau theorem on finite measures we have \(\limsup_{ n } g_{K_n}^+(t) \leq g_K^+(t)\).

Similarly, define \(s^- : S^1 \to \mathbb{R}\) as \(s(u) := \sin(u - t)\) for \(u \in (t, t + \pi/2)\) and zero otherwise. Then \(s^-\) is lower semicontinuous. By \Cref{lem:arm-length-convolution} and \Cref{pro:surface-area-measure-side-length}, the value \(g_{K_n}^-(t)\) (resp. \(g_K^-(t)\)) is the integral of \(s^-\) over \(\sigma_{K_n}\) (resp. \(\sigma_K\)). Now by the Portmanteau theorem on finite measures we have \(\liminf_{ n } g_{K_n}^-(t) \geq g_K^-(t)\). Because \(g_{K_n}^-(t) = g_{K_n}^+(t)\) and \(g_K^-(t) = g_K^+(t)\), we get \(g_{K_n}^+(t) \to g_K^+(t)\) as \(n \to \infty\), completing the proof.
\end{proof}

We now take the limit of \Cref{thm:balanced-discrete-ineq} to show a corresponding inequality for the balanced maximum cup \(K\).

\begin{theorem}

Let \(K\) be any balanced maximum cap. Then on the interval \(t \in [0, \pi/2)\), we have \(\sigma_K \leq k_0(g_K^+(t)) \, \mathrm{d} t\).

\label{thm:balanced-ineq-limit}
\end{theorem}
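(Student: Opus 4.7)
The plan is to obtain the inequality by passing to the limit the discrete bound of \Cref{thm:balanced-discrete-ineq} along the approximating sequence guaranteed by \Cref{def:balanced-maximum-cap}. Fix a sequence of maximum polygon caps $K_n$ with uniform angle set $\Theta_n$ and step size $\delta_n := (\pi/2)/n$ such that $K_n \to K$ in Hausdorff distance. Since $K_n$ has angle set $\Theta_n$, its surface area measure $\sigma_{K_n}$ restricted to $[0, \pi/2)$ is supported on $\{0\} \cup \Theta_n$, and \Cref{thm:balanced-discrete-ineq} gives
\[
\sigma_{K_n}(\{t\}) \;\leq\; k_0(g_{K_n}^+(t)) \, \delta_n + O(\delta_n^2)
\]
for each $t \in \{0\} \cup \Theta_n$, with an implicit constant that can be checked to be uniform in $t$ (using the uniform diameter bound from \Cref{lem:leg-bounded}).

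Next, I would test against an arbitrary nonnegative continuous $\phi : [0, \pi/2] \to \mathbb{R}$ vanishing on a neighbourhood of $\pi/2$. Multiplying the pointwise bound by $\phi(t)$ and summing over $t \in \{0\} \cup \Theta_n$ yields
\[
\int_{[0, \pi/2)} \phi \, d\sigma_{K_n} \;\leq\; \sum_{t \in \{0\} \cup \Theta_n} \phi(t)\, k_0(g_{K_n}^+(t))\, \delta_n \;+\; O(\delta_n),
\]
because the cumulative error from at most $O(\delta_n^{-1})$ nonzero terms in the support of $\phi$ is $O(\delta_n)$. As $n \to \infty$, the left side tends to $\int \phi \, d\sigma_K$ by the weak convergence $\sigma_{K_n} \to \sigma_K$ from \Cref{thm:surface-area-weak-convergence} (the choice of $\phi$ avoids the potential atom of $\sigma_K$ at $\pi/2$).

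The main obstacle is the limit of the right-hand Riemann-type sum; here I combine two inputs. Define the step approximations
\[
\tilde{g}_n(t) := g_{K_n}^+(\lfloor t / \delta_n \rfloor \cdot \delta_n), \qquad \tilde{\phi}_n(t) := \phi(\lfloor t / \delta_n \rfloor \cdot \delta_n),
\]
so that the sum equals $\int_0^{\pi/2} \tilde{\phi}_n(t) \, k_0(\tilde{g}_n(t)) \, dt$ up to an $O(\delta_n)$ boundary term. By \Cref{lem:arm-length-discrete-bound}, $|\tilde{g}_n(t) - g_{K_n}^+(t)| \leq 5 \delta_n$ almost everywhere, and $k_0$ is Lipschitz (it is piecewise linear and continuous on $\mathbb{R}_{\geq 0}$), so $\int |k_0(\tilde{g}_n) - k_0(g_{K_n}^+)| \, dt \to 0$. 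By \Cref{lem:leg-convergence} and Lipschitzness of $k_0$, $\int |k_0(g_{K_n}^+) - k_0(g_K^+)| \, dt \to 0$. Uniform continuity of $\phi$ gives $\tilde{\phi}_n \to \phi$ uniformly. Together with uniform boundedness of the integrands (via \Cref{lem:leg-bounded}), these yield
\[
\sum_{t \in \{0\} \cup \Theta_n} \phi(t)\, k_0(g_{K_n}^+(t))\, \delta_n \;\longrightarrow\; \int_0^{\pi/2} \phi(t)\, k_0(g_K^+(t))\, dt.
\]

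Therefore $\int \phi \, d\sigma_K \leq \int \phi \cdot k_0(g_K^+) \, dt$ for every nonnegative $\phi \in C([0, \pi/2])$ vanishing near $\pi/2$. Since such $\phi$ form a dense class for testing Borel measures on $[0, \pi/2)$, this gives $\sigma_K \leq k_0(g_K^+(t)) \, \mathrm{d}t$ as signed measures on $[0, \pi/2)$, completing the proof.
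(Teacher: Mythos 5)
Your approach is essentially the paper's: pass the discrete bound of \Cref{thm:balanced-discrete-ineq} to the limit along the approximating sequence, using weak convergence of $\sigma_{K_n}$, \Cref{lem:leg-convergence}, and \Cref{lem:arm-length-discrete-bound}. The test-function formulation versus the paper's interval-plus-regularity formulation is a cosmetic difference, and your Riemann-sum analysis and closing density argument are correct.

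There is, however, a gap at $t = 0$ that mirrors exactly the issue you flag at $\pi/2$ but do not address. Weak convergence $\sigma_{K_n} \to \sigma_K$ is on $S^1$, so to conclude $\int \phi\,d\sigma_{K_n} \to \int \phi\,d\sigma_K$ you must view $\phi$ as a function on $S^1$. Extending your $\phi$ by zero off $[0, \pi/2]$ produces a discontinuity at $t=0$ whenever $\phi(0) > 0$, and you cannot invoke Portmanteau there because you have not yet established $\sigma_K(\{0\}) = 0$ --- that is part of what this theorem is used to prove (\Cref{cor:cap-nondegenerate}). Since the statement is on $[0, \pi/2)$ including $0$, you cannot simply restrict to $\phi$ with $\phi(0)=0$ either, as that would only give the inequality on $(0, \pi/2)$. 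The repair is the observation the paper makes explicitly in its handling of $I = [0, b)$: because $K$ and each $K_n$ are caps, $\sigma_K$ and $\sigma_{K_n}$ vanish on $(-\pi/2, 0)$, so one may extend $\phi$ continuously onto a small left-neighborhood of $0$ without altering any of the integrals involved, after which weak convergence applies to a genuinely continuous test function on $S^1$. Without this step your claim that ``the left side tends to $\int \phi\,d\sigma_K$'' is unjustified as written.
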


\begin{proof}
Let \(\mu_K\) be the measure \(k_0(g_K^+(t)) \, \mathrm{d} t\) on \(t \in [0, \pi/2)\). It suffices to show
\begin{equation}
\label{eqn:to-show-interval}
\sigma_K(I) \leq \mu_K(I) = \int_{t \in I} k_0(g_K^+(t)) \, d t
\end{equation}
for any open interval \(I = (a, b) \subseteq [0, \pi/2)\) and half-open interval \(I = [0, b) \subseteq [0, \pi/2)\). Then for any set \(U\) open in the subspace topology of \([0, \pi/2)\) is a countable union of such intervals, so we have \(\sigma_K(U) \leq \int_{t \in U} k_0(g_K^+(t))\,dt\). Since \(k_0(g_K^+(t)) \mathrm{d} t\) is outer regular, by letting \(U\) to converge to arbitrary Borel subset of \([0, \pi/2)\) from above, we have \(\sigma_K \leq k_0(g_K^+(t)) \, \mathrm{d} t\) on \([0,\pi/2)\).

By \Cref{def:balanced-maximum-cap}, \(K\) is the limit of maximum polygon caps \(K_n\) of \(n\) steps with step size \(\delta = (\pi/2) / n\) as \(n \to \infty\), where \(n = n_1, n_2, \dots\) takes values in a strictly increasing powers of two. Let \(\Theta := \bigcup_n \Theta_n\) be the union of angle sets \(\Theta_n\) for powers of two \(n = n_1, n_2, \dots\), so that \(\Theta\) is the set of dyadic angles in \((0, \pi/2)\). As \(\Theta\) is dense in \([0, \pi/2)\), it suffices to show \Cref{eqn:to-show-interval} for \(I = (a, b)\) and \(I = [0, b)\) for \(a, b \in \Theta\) with \(a < b\) by taking limits.

Take sufficiently large \(n\) so that we can assume \(a, b \in \Theta_n\). For every \(t \in \left\{ 0 \right\} \cup \Theta_n\), we have
\begin{equation}
\label{eqn:single-step-integral}
\sigma_{K_n}\left( [t, t + \delta) \right) = \sigma_{K_n} (t) \leq k_0(g_{K_n}^+(t)) \cdot \delta + O(\delta^2) =  \int_t^{t + \delta} k_0(g_{K_n}^+(u)) \, du + O(\delta^2)
\end{equation}
by \Cref{thm:balanced-discrete-ineq} and \Cref{lem:arm-length-discrete-bound}. For the open interval \(I := (a, b)\), sum up \Cref{eqn:single-step-integral} for all \(t \in [a, b) \cap \Theta_{n}\) to get
\begin{equation}
\label{eqn:polygon-integral}
\begin{split}
\sigma_{K_n}(I) & \leq \sum_{t \in [a, b) \cap \Theta_n} \sigma_{K_n}\left( [t, t+ \delta) \right) = \sum_{t \in [a, b) \cap \Theta_n} \sigma_{K_n}\left( \left\{ t \right\}  \right) \\
& \leq \left( \sum_{t \in [a, b) \cap \Theta_n} \int_t^{t + \delta} k_0(g_{K_n}^+(u)) \, du \right)  + O_{\delta}(\delta) \\
& = \int_a^b k_0(g_{K_n}^+(u))\,du + O_{\delta}(\delta) = \mu_{K_n}(I) + O(\delta).
\end{split}
\end{equation}
Here we use that the size of \([a, b) \cap \Theta_n\) is \(\leq n = O(1/\delta)\).

We will now take \(n \to \infty\) in \Cref{eqn:polygon-integral}, first at the left-hand side. As \(K_n \to K\) in Hausdorff distance, \(\sigma_{K_n} \to \sigma_K\) in the weak convergence of measures (\Cref{thm:surface-area-weak-convergence}). Since \(I\) is open in \(S^1\), we have \(\sigma_K(I) \leq \liminf_{ n } \sigma_{K_n}(I)\). Now we take \(n \to \infty\) in the right-hand side. By \Cref{lem:leg-convergence} and that \(k_0\) is 1-Lipschitz, we have
\begin{equation}
\label{eqn:mu-limit}
\begin{split}
\lim_{ n } \left| \mu_{K_n}(I) - \mu_K(I) \right| & \leq \lim_{ n }  \int_{t \in I} \left| k_0(g_{K_n}^+(t)) - k_0(g_{K_n}^+(t)) \right|  \, d t \\
& \leq \lim_{ n } \int_{t \in I} \left|g_{K_n}^+(t) - g_{K_n}^+(t) \right| = 0.
\end{split}
\end{equation}
So by taking \(n \to \infty\) in \Cref{eqn:polygon-integral}, we get \(\sigma_K(I) \leq \mu_K(I)\) for \(I = (a, b)\) as desired.

Similarly, for the interval \(I := [0, b)\), set \(a = 0\) and observe that \Cref{eqn:polygon-integral} still holds. Because \(K_n\) and \(K\) are caps, if we set \(U = (-\pi/2, b)\) then we have \(\sigma_{K_n}(I) = \sigma_{K_n}(U)\) and \(\sigma_K(I) = \sigma_K(U)\). So we still have \(\sigma_K(I) \leq \liminf_{ n } \sigma_{K_n}(I)\). We also have \(\lim_{ n } \mu_{K_n}(I) = \mu_K(I)\) by following \Cref{eqn:mu-limit}. So by taking \(n \to \infty\) in \Cref{eqn:polygon-integral}, we have \(\sigma_K(I) \leq \mu_K(I)\) for \(I = [0, b)\) as desired.
\end{proof}

\begin{corollary}

Any balanced maximum cap \(K\) satisfies the condition (1) of \Cref{def:injectivity-condition}.

\label{cor:cap-nondegenerate}
\end{corollary}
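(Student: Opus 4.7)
The plan is to read off the corollary as a direct application of the differential inequality \Cref{thm:balanced-ineq-limit}, followed by the Radon--Nikodym theorem and a mirror-symmetry transfer to the second interval.

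First, I would check that the arm length $g_K^+(t)$ is uniformly bounded on $t \in [0, \pi/2]$. By \Cref{def:balanced-maximum-cap} the cap $K$ is the Hausdorff limit of maximum polygon caps $K_n$, each of diameter at most $5$ by \Cref{lem:leg-bounded}; since the diameter is continuous under Hausdorff convergence, $\mathrm{diam}(K) \leq 5$. The calculation in the last paragraph of the proof of \Cref{lem:leg-bounded} then applies verbatim to $K$ and yields $0 \leq g_K^+(t) \leq 5$. Because $k_0$ from \Cref{def:magic-function} is continuous on $\mathbb{R}_{\geq 0}$, the function $k_0 \circ g_K^+$ is bounded and Borel measurable on $[0, \pi/2)$, so the nonnegative measure $k_0(g_K^+(t))\,\mathrm{d}t$ is absolutely continuous with respect to $\mathrm{d}t$.

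Next, \Cref{thm:balanced-ineq-limit} gives $\sigma_K \leq k_0(g_K^+(t))\,\mathrm{d}t$ as an inequality of nonnegative measures on $[0, \pi/2)$. Domination by an absolutely continuous measure forces $\sigma_K$ restricted to $[0, \pi/2)$ to be absolutely continuous with respect to the Lebesgue measure as well, and the Radon--Nikodym theorem supplies a nonnegative Borel measurable density $r_K$, unique up to measure zero, with $\sigma_K = r_K(t)\,\mathrm{d}t$ on $[0, \pi/2)$. Extending $r_K$ to $[0, \pi/2]$ by $r_K(\pi/2) := 0$ completes the first identity of condition (1).

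For the second identity on $(\pi/2, \pi]$, I would apply the same argument to the mirror reflection $K^{\mathrm{m}}$, which is again a balanced maximum cap by \Cref{pro:balanced-maximum-cap-mirror}, to obtain a nonnegative Borel measurable density $r_{K^{\mathrm{m}}}$ with $\sigma_{K^{\mathrm{m}}} = r_{K^{\mathrm{m}}}(s)\,\mathrm{d}s$ on $[0, \pi/2)$. The last identity of \Cref{pro:mirror-reflection} with $\omega = \pi/2$ reads $\sigma_{K^{\mathrm{m}}}(E) = \sigma_K(\pi - E)$ for every Borel $E \subseteq S^1$. For any Borel $F \subseteq (\pi/2, \pi]$, setting $E := \pi - F \subseteq [0, \pi/2)$ and changing variables via $t = \pi - s$ gives
\[
\sigma_K(F) = \sigma_{K^{\mathrm{m}}}(\pi - F) = \int_{\pi - F} r_{K^{\mathrm{m}}}(s)\,ds = \int_F r_{K^{\mathrm{m}}}(\pi - t)\,dt,
\]
so defining $s_K(u) := r_{K^{\mathrm{m}}}(\pi/2 - u)$ for $u \in (0, \pi/2]$ and $s_K(0) := 0$ yields $\sigma_K = s_K(t - \pi/2)\,\mathrm{d}t$ on $(\pi/2, \pi]$. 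Uniqueness up to measure zero is automatic from Radon--Nikodym. No essential obstacle is anticipated, since the corollary is essentially a repackaging of \Cref{thm:balanced-ineq-limit} combined with mirror symmetry; all the substance has already been absorbed into the differential inequality and the boundedness of $g_K^+$.
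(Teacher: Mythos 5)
Your proposal is correct and follows essentially the same route as the paper's proof: apply \Cref{thm:balanced-ineq-limit} together with the Radon--Nikodym theorem on $[0, \pi/2)$, then transfer to $(\pi/2, \pi]$ via the mirror reflection (\Cref{pro:balanced-maximum-cap-mirror}). The additional verification that $g_K^+$ is bounded is a reasonable sanity check (it ensures $k_0(g_K^+(t))\,\mathrm{d}t$ is a finite measure absolutely continuous w.r.t.\ Lebesgue), though one could also note directly that $g_K^+(t) = (A_K^+(t) - C_K^+(t)) \cdot u_t$ is bounded by the diameter of $K$ for any convex body.
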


\begin{proof}
For \(t \in [0, \pi/2)\), use \Cref{thm:balanced-ineq-limit} and the Radon-Nikodym theorem to prove the condition (4) and thus (1). For \(t \in (\pi/2, \pi]\), use a mirror-symmetric argument (\Cref{pro:balanced-maximum-cap-mirror}).
\end{proof}

Note that the condition (1) of \Cref{def:injectivity-condition} implies that \(A_K^+(t) = A_K^-(t)\) and \(f_K^+(t) = f_K^-(t)\) for all \(t \in [0, \pi/2)\). In the rest of this paper and the upcoming work, we will denote the common value as \(A_K(t)\) and \(f_K(t)\) respectively.

\begin{proposition}

Assume that a cap \(K \in \mathcal{K}^\mathrm{c}\) satisfies the condition (1) of \Cref{def:injectivity-condition}. Then the followings are true.

\begin{enumerate}
\def\labelenumi{\arabic{enumi}.}
\tightlist
\item
  For any \(t \in [0, \pi/2)\), we have \(A_K^+(t) = A_K^-(t)\) and \(f_K^+(t) = f_K^-(t)\).
\item
  For any \(t \in (0, \pi/2]\), we have \(C_K^+(t) = C_K^-(t)\) and \(g_K^+(t) = g_K^-(t)\).
\end{enumerate}

\label{pro:cap-nondegenerate}
\end{proposition}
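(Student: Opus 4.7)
The plan is to show that the absolute continuity condition on $\sigma_K$ forces the edges of $K$ with normal angles in the relevant intervals to be degenerate (single points), from which the equalities of vertices and arm lengths are immediate.

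First I would unpack condition (1) of \Cref{def:injectivity-condition}. It asserts that on $[0,\pi/2)$ the measure $\sigma_K$ has a measurable density $r_K(t)$ with respect to Lebesgue measure, and on $(\pi/2,\pi]$ it has density $s_K(t-\pi/2)$. In particular, for any single angle $t$ in $[0,\pi/2)\cup(\pi/2,\pi]$, the singleton $\{t\}$ has Lebesgue measure zero, hence $\sigma_K(\{t\}) = 0$. By \Cref{pro:surface-area-measure-side-length}, the value $\sigma_K(\{t\})$ is precisely the length of the edge $e_K(t)$, and we have the identity $v_K^+(t) = v_K^-(t) + \sigma_K(\{t\})\,v_t$. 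Therefore $v_K^+(t) = v_K^-(t)$ whenever $t \in [0,\pi/2)\cup(\pi/2,\pi]$.

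For (1), fix any $t \in [0,\pi/2)$. By \Cref{def:cap-vertices} we have $A_K^\pm(t) = v_K^\pm(t)$, and the previous paragraph gives $A_K^+(t) = A_K^-(t)$. The equality $f_K^+(t) = f_K^-(t)$ then follows directly from the definition $f_K^\pm(t) = (\mathbf{y}_K(t) - A_K^\pm(t))\cdot v_t$ in \Cref{def:cap-tangent-arm-length}. For (2), fix $t \in (0,\pi/2]$; then $t + \pi/2 \in (\pi/2,\pi]$ falls in the second interval of condition (1), so the same reasoning gives $v_K^+(t+\pi/2) = v_K^-(t+\pi/2)$. Since $C_K^\pm(t) = v_K^\pm(t+\pi/2)$, we obtain $C_K^+(t) = C_K^-(t)$, and $g_K^+(t) = g_K^-(t)$ follows from $g_K^\pm(t) = (\mathbf{y}_K(t) - C_K^\pm(t))\cdot u_t$.

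There is no real obstacle here: the whole content of the proposition is the translation between ``$\sigma_K$ has no atoms in the specified intervals'' and ``the corresponding edges of $K$ degenerate to points.'' The only mild subtlety worth remarking on is that the endpoints $t = 0$ and $t = \pi/2$ are included in the respective statements; this is consistent because $0 \in [0,\pi/2)$ for (1) and $\pi/2 + \pi/2 = \pi \in (\pi/2,\pi]$ for (2), so the required atomlessness at these points is genuinely supplied by condition (1).
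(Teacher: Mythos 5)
Your proof is correct and takes the same approach as the paper: translate condition (1) into the absence of atoms of $\sigma_K$ on $[0,\pi/2)\cup(\pi/2,\pi]$, apply \Cref{pro:surface-area-measure-side-length} to conclude $v_K^+(t)=v_K^-(t)$ there, and read off the equalities from \Cref{def:cap-vertices} and \Cref{def:cap-tangent-arm-length}. The paper states this as a one-liner; your write-up simply spells out the same reasoning, including the correct handling of the endpoints $t=0$ and $t=\pi/2$.
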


\begin{proof}
We have \(\sigma_K\left( \left\{ t \right\} \right) = 0\) for any \(t \in [0, \pi] \setminus \left\{ \pi/2 \right\}\). Now use \Cref{pro:surface-area-measure-side-length}.
\end{proof}

\begin{definition}

For any cap \(K \in \mathcal{K}^\mathrm{c}\) satisfying (1) of \Cref{def:injectivity-condition}, define \(A_K, C_K : [0, \pi/2] \to \mathbb{R}^2\) and \(f_K, g_K : [0, \pi/2] \to \mathbb{R}^2\) as below.

\begin{itemize}
\tightlist
\item
  For any \(t \in [0, \pi/2)\), denote the common values in (1) of \Cref{pro:cap-nondegenerate} as \(A_K(t) := A_K^{\pm}(t)\) and \(f_K(t) = f_K^{\pm}(t)\).
\item
  Also, define \(A_K(\pi/2) := A_K^-(\pi/2)\) and \(f_K(\pi/2) := f_K^-(\pi/2)\).
\item
  For any \(t \in (0, \pi/2]\), denote the common values in (2) of \Cref{pro:cap-nondegenerate} as \(C_K(t) := C_K^{\pm}(t)\) and \(g_K(t) = g_K^{\pm}(t)\).
\item
  Also, define \(C_K(0) := C_K^+(0)\) and \(g_K(0) := g_K^+(0)\).
\end{itemize}

\label{def:cap-nondegenerate}
\end{definition}

\begin{proposition}

For any cap \(K \in \mathcal{K}^\mathrm{c}\) satisfying the condition (1) of \Cref{def:injectivity-condition}, the followings are true.

\begin{enumerate}
\def\labelenumi{\arabic{enumi}.}
\tightlist
\item
  The functions \(A_K, C_K, f_K, g_K\) in \Cref{def:cap-nondegenerate} are continuous on \([0, \pi/2]\).
\item
  The corners \(\mathbf{x}_K, \mathbf{y}_K : [0, \pi/2] \to \mathbb{R}^2\) are continously differentiable, and we have \(\mathbf{x}_K'(t) = -(f_K(t) - 1) u_t + (g_K(t) - 1) v_t\) and \(\mathbf{y}_K'(t) = -f_K(t) u_t + g_K(t) v_t\).
\end{enumerate}

\label{pro:cap-nondegenerate-continuity}
\end{proposition}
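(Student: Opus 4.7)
The plan is to deduce both parts from the atomlessness of $\sigma_K$ on the intervals $[0,\pi/2)$ and $(\pi/2,\pi]$ guaranteed by condition (1) of \Cref{def:injectivity-condition}, combined with the one-sided continuity of vertices from \Cref{thm:limits-converging-to-vertex} and the one-sided derivative formulas from \Cref{thm:inner-corner-deriv}. No new ideas are needed; the proof is essentially a bookkeeping check.

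First I would prove (1). Since $\sigma_K$ has a density on $[0,\pi/2)$, we have $\sigma_K(\{t\}) = 0$ for every $t \in [0,\pi/2)$, and \Cref{pro:surface-area-measure-side-length} then gives $v_K^+(t) = v_K^-(t) = A_K(t)$. By \Cref{thm:limits-converging-to-vertex}, $v_K^+$ is right-continuous and $v_K^-$ is left-continuous on $S^1$, so the common value $A_K$ is two-sided continuous on $[0,\pi/2)$. At $t = \pi/2$ the value $A_K(\pi/2) := A_K^-(\pi/2)$ equals $\lim_{s \to (\pi/2)^-} A_K(s)$ by \Cref{thm:limits-converging-to-vertex}, so $A_K$ extends continuously to $[0,\pi/2]$. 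The same argument, shifted by $\pi/2$ and using the density of $\sigma_K$ on $(\pi/2,\pi]$, yields continuity of $C_K$ on $[0,\pi/2]$. For $f_K$ and $g_K$, note that $\mathbf{y}_K(t) = h_K(t) u_t + h_K(t+\pi/2) v_t$ by \Cref{pro:rotating-hallway-parts}, and the support function $h_K$ is continuous (being the supremum norm of a convex body), so $\mathbf{y}_K$ is continuous. Then $f_K(t) = (\mathbf{y}_K(t) - A_K(t)) \cdot v_t$ and $g_K(t) = (\mathbf{y}_K(t) - C_K(t)) \cdot u_t$ are continuous as well.

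For (2), \Cref{thm:inner-corner-deriv} gives that the right derivative of $\mathbf{y}_K$ at every $t \in [0, \pi/2)$ equals $-f_K^+(t) u_t + g_K^+(t) v_t$, and the left derivative at every $t \in (0, \pi/2]$ equals $-f_K^-(t) u_t + g_K^-(t) v_t$. \Cref{pro:cap-nondegenerate} identifies $f_K^+ = f_K^- = f_K$ on $[0,\pi/2)$ and $g_K^+ = g_K^- = g_K$ on $(0,\pi/2]$, so at every interior point the two one-sided derivatives of $\mathbf{y}_K$ coincide and equal $-f_K(t) u_t + g_K(t) v_t$. By part (1) this common value is a continuous function of $t$, and a standard fact from real analysis (a function with equal left and right derivatives that vary continuously is continuously differentiable) gives $\mathbf{y}_K \in C^1([0,\pi/2])$ with the stated derivative; the one-sided derivatives at the endpoints $0$ and $\pi/2$ match the appropriate one-sided expressions from \Cref{thm:inner-corner-deriv}. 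Finally, \Cref{pro:rotating-hallway-parts} yields $\mathbf{x}_K(t) = \mathbf{y}_K(t) - u_t - v_t$, and since $u_t' = v_t$ and $v_t' = -u_t$, differentiating gives $\mathbf{x}_K'(t) = \mathbf{y}_K'(t) - v_t + u_t = -(f_K(t) - 1) u_t + (g_K(t) - 1) v_t$, which is continuous.

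The main (minor) obstacle is endpoint bookkeeping: ensuring that the boundary definitions $A_K(\pi/2) := A_K^-(\pi/2)$ and $C_K(0) := C_K^+(0)$ in \Cref{def:cap-nondegenerate} connect continuously to the values on the open interval, and that only the appropriate one-sided derivative of $\mathbf{y}_K$ is needed at each endpoint. Both are immediate from \Cref{thm:limits-converging-to-vertex} and \Cref{thm:inner-corner-deriv}, so no substantive difficulty arises.
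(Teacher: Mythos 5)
Your proposal is correct and follows exactly the same route as the paper's proof, which (in its terse form) simply cites \Cref{thm:limits-converging-to-vertex} for part (1) and \Cref{thm:inner-corner-deriv} for part (2). You have merely supplied the bookkeeping that the paper leaves implicit — matching one-sided limits of $v_K^\pm$ via atomlessness of $\sigma_K$, handling the endpoint conventions of \Cref{def:cap-nondegenerate}, and observing that the coinciding one-sided derivatives from \Cref{thm:inner-corner-deriv} together with the continuity from part (1) yield $C^1$ regularity — all of which is sound.
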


\begin{proof}
(1) is by \Cref{thm:limits-converging-to-vertex}. (2) is by \Cref{thm:inner-corner-deriv}.
\end{proof}

Now it makes sense to use the notations \(f_K(t) = f_K^{\pm}(t)\), \(g_K(t) = g_K^{\pm}(t)\), \(r_K\) and \(s_K\) for balanced maximum caps \(K\) by \Cref{cor:cap-nondegenerate} and \Cref{def:cap-nondegenerate}.

\section{Bounding Arm Lengths}
\label{sec:bounding-arm-lengths}
We state \Cref{thm:balanced-ineq-limit} purely in terms of arm lengths of \(K\), following \Cref{sec:solving-the-differential-inequality}.

\begin{theorem}

Let \(K\) be a balanced maximum cap. Then \(f_K : [0, \pi/2] \to \mathbb{R}\) is absolutely continuous, and we have \(f'_K(t) \geq m_0(g_K(t))\) for almost every \(t \in [0, \pi/2]\) except measure zero set.

\label{thm:leg-length-bounds}
\end{theorem}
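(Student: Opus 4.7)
The plan is to combine \Cref{thm:arm-length-differentiation}, which evaluates $\mathrm{d} f_K^+(t)$ as $g_K^+(t)\,\mathrm{d} t - \sigma_K$ on $(0,\pi/2]$, with \Cref{thm:balanced-ineq-limit}, which bounds $\sigma_K$ by $k_0(g_K^+(t))\,\mathrm{d} t$ on $[0,\pi/2)$. Every tool needed is already in place; the remaining work is bookkeeping with Lebesgue--Stieltjes measures, together with a small endpoint argument at $t = \pi/2$.

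First I would use \Cref{cor:cap-nondegenerate} to confirm that the balanced maximum cap $K$ satisfies condition (1) of \Cref{def:injectivity-condition}, so by \Cref{pro:cap-nondegenerate} and \Cref{def:cap-nondegenerate} we have $f_K = f_K^+$ on $[0,\pi/2)$ and $g_K = g_K^+$ on $(0,\pi/2]$. Then \Cref{thm:arm-length-differentiation} rewrites on $(0,\pi/2)$ as $\mathrm{d} f_K(t) = g_K(t)\,\mathrm{d} t - \sigma_K$. Condition (1) simultaneously produces a measurable $r_K : [0,\pi/2) \to \mathbb{R}_{\geq 0}$ with $\sigma_K = r_K(t)\,\mathrm{d} t$, and \Cref{thm:balanced-ineq-limit} forces $r_K(t) \leq k_0(g_K(t))$ for almost every $t \in [0,\pi/2)$. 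Since $g_K$ is continuous on the compact interval $[0,\pi/2]$ by \Cref{pro:cap-nondegenerate-continuity} and $k_0$ is continuous, $g_K - r_K$ is bounded and measurable; \Cref{pro:lebesgue-stieltjes-abs-cont} then yields absolute continuity of $f_K$ on every compact subinterval of $[0,\pi/2)$, with $f_K'(t) = g_K(t) - r_K(t)$ almost everywhere.

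To promote absolute continuity to the full closed interval $[0,\pi/2]$, note that $f_K(\pi/2) = f_K^-(\pi/2)$ by \Cref{def:cap-nondegenerate}, and \Cref{thm:limits-converging-to-vertex} gives $\lim_{s \to \pi/2^-} A_K(s) = A_K^-(\pi/2)$, so $f_K$ is left-continuous at $\pi/2$; the identity $f_K(b) - f_K(a) = \int_a^b (g_K - r_K)\,dt$ for $0 \leq a \leq b < \pi/2$ therefore extends to $b = \pi/2$ by continuity. For almost every $t$, the inequality $r_K(t) \leq k_0(g_K(t))$ then gives $f_K'(t) = g_K(t) - r_K(t) \geq g_K(t) - k_0(g_K(t)) = m_0(g_K(t))$ by the defining identity $m_0 = \mathrm{id} - k_0$, completing the proof. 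The only delicate point is the endpoint $\pi/2$, where $\sigma_K$ may carry an atom (corresponding to a flat top edge of $K$, as in Gerver's sofa); this atom makes $f_K^+$ genuinely jump at $\pi/2$, but $f_K(\pi/2)$ is defined as the left limit $f_K^-(\pi/2)$ rather than $f_K^+(\pi/2)$, so the jump is invisible to $f_K$ and absolute continuity is preserved.
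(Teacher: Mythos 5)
Your proposal is correct and follows essentially the same route as the paper: apply \Cref{thm:arm-length-differentiation} to express $\mathrm{d} f_K$, feed in \Cref{thm:balanced-ineq-limit} to bound $r_K$ by $k_0(g_K)$, and invoke \Cref{pro:lebesgue-stieltjes-abs-cont}. Your explicit treatment of the endpoint $t=\pi/2$ --- where $\sigma_K$ may have an atom but $f_K(\pi/2)$ is defined as the left limit $f_K^-(\pi/2)$ so the jump is invisible --- is more careful than the paper's terse parenthetical, and correctly isolates the one spot where the Lebesgue--Stieltjes bookkeeping requires attention.
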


\begin{proof}
By \Cref{cor:cap-nondegenerate}, we have \(\sigma_K = r_K(t) \mathrm{d} t\) on the domain \(t \in [0, \pi/2)\) for some measurable function \(r_K : [0, \pi/2) \to \mathbb{R}_{\geq 0}\) that is unique up to null set. In particular, by \Cref{thm:balanced-ineq-limit} we can assume \(r_K(t) \leq k_0(g_K(t))\) on \(t \in [0, \pi/2)\). Now by \Cref{thm:arm-length-differentiation} we have \(\mathrm{d} f_K(t) = \left( g_K(t) - r_K(t) \right) \mathrm{d} t\) on \(t \in [0, \pi/2]\) (check the case \(t=0\) separately). So by \Cref{pro:lebesgue-stieltjes-abs-cont}, the function \(f_K(t)\) is absolutely continuous and its derivative is \(f_K'(t) = g_K(t) - r_K(t)\) for almost every \(t \in [0, \pi/2]\). Now check
\[
g_K(t) - r_K(t) \geq g_K(t) - k_0(g_K(t)) = m_0(g_K(t)).
\]
to conclude \(f'_K(t) \geq m_0(g_K(t))\).
\end{proof}

Using \Cref{thm:leg-length-bounds}, we will iteratively obtain better lower bounds of \(f_K\) and \(g_K\) for balanced maximum caps \(K\), and show the lower bound \(f_K(t), g_K(t) > 1\) in \Cref{thm:lower-bound-one}. Then we use the equation \(\mathbf{x}_K'(t) = -(f_K(t) - 1) u_t + (g_K(t) - 1) v_t\) of \Cref{pro:cap-nondegenerate-continuity} to prove the main \Cref{thm:injectivity}.

\begin{definition}

Definte the operator \(\mathcal{F}\) mapping any continuous function \(f : [0, \pi/2] \to \mathbb{R}\), to another continuous function \(\mathcal{F} f : [0, \pi/2] \to \mathbb{R}\) as
\[
\mathcal{F} f(x) = 1 + \int_0^x m_0(f(\pi/2 - u))\,du.
\]

\label{def:integral-operator}
\end{definition}

\begin{definition}

Define the continuous functions \(f_n : [0, \pi/2] \to \mathbb{R}\) for all integers \(n \geq 0\) as the following.

\begin{enumerate}
\def\labelenumi{\arabic{enumi}.}
\tightlist
\item
  \(f_0(x) := 0\) for all \(x \in [0, \pi/2]\).
\item
  \(f_{n+1}(x) := \max\left( f_n(x), \mathcal{F} f_n(x) \right)\) for all \(x \in [0, \pi/2]\).
\end{enumerate}

\label{def:lower-bound-sequence}
\end{definition}

\begin{lemma}

The following holds for any \(n \geq 0\). Let \(K \in \mathcal{K}\) be any balanced maximum cap. Then we have \(f_K(t) \geq f_n(t)\) for all \(t \in [0, \pi/2)\) and \(g_K(t) \geq f_n(\pi/2 - t)\) for all \(t \in (0, \pi/2]\).

\label{lem:lower-bound-sequence}
\end{lemma}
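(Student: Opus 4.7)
The plan is to prove the lemma by induction on $n$, using Theorem \ref{thm:leg-length-bounds} as the driving engine and mirror symmetry (Proposition \ref{pro:balanced-maximum-cap-mirror}, Proposition \ref{pro:cap-tangent-arm-mirror}) to convert the bound on $f_K$ into a bound on $g_K$.

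For the base case $n=0$, since $f_0 \equiv 0$, I only need nonnegativity of the arm lengths. By Definition \ref{def:cap-tangent-arm-length}, $f_K(t) = h_K(t + \pi/2) - A_K(t)\cdot v_t$, which is $\geq 0$ because $A_K(t) \in K$ and $l_K(t+\pi/2)$ is the supporting line of $K$ with outward normal $u_{t+\pi/2} = v_t$; symmetrically for $g_K$.

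For the inductive step, assume the claim holds at stage $n$. I first pin down the initial value $f_K(0) = 1$. By Corollary \ref{cor:cap-nondegenerate}, the measure $\sigma_K$ has a density on $[0, \pi/2)$, so in particular $\sigma_K(\{0\}) = 0$, and Proposition \ref{pro:surface-area-measure-side-length} forces $e_K(0)$ to be a single point. Since $h_K(3\pi/2) = 0$, this point sits at the intersection of the lines $x = h_K(0)$ and $y = 0$, giving $A_K(0) = (h_K(0), 0)$. A direct computation using $\mathbf{y}_K(0) = (h_K(0), 1)$ and $v_0 = (0,1)$ then yields $f_K(0) = 1$. Next, Theorem \ref{thm:leg-length-bounds} gives that $f_K$ is absolutely continuous with $f_K'(u) \geq m_0(g_K(u))$ for almost every $u$. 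The inductive hypothesis gives $g_K(u) \geq f_n(\pi/2 - u)$ on $(0, \pi/2]$, and since $m_0$ is monotonically increasing, I can integrate:
\[
f_K(t) \;=\; 1 + \int_0^t f_K'(u)\,du \;\geq\; 1 + \int_0^t m_0(f_n(\pi/2 - u))\,du \;=\; \mathcal{F} f_n(t).
\]
Combining this with the inductive hypothesis $f_K(t) \geq f_n(t)$ gives $f_K(t) \geq \max(f_n(t), \mathcal{F} f_n(t)) = f_{n+1}(t)$, as desired.

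For the bound on $g_K$, I apply the result just proved to the mirror reflection $K^{\mathrm{m}}$, which is itself a balanced maximum cap by Proposition \ref{pro:balanced-maximum-cap-mirror}. This gives $f_{K^{\mathrm{m}}}(t) \geq f_{n+1}(t)$. By Proposition \ref{pro:cap-tangent-arm-mirror} (specialized to $\omega = \pi/2$ and using the non-degeneracy of $K$ from Corollary \ref{cor:cap-nondegenerate}), $f_{K^{\mathrm{m}}}(t) = g_K(\pi/2 - t)$. Substituting $s = \pi/2 - t$ yields $g_K(s) \geq f_{n+1}(\pi/2 - s)$ on $s \in (0, \pi/2]$, completing the induction. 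There is no genuine obstacle here — the argument is a clean induction, and all inputs (nonnegativity of arm lengths, identification of $f_K(0)$, monotonicity of $m_0$, and mirror symmetry of the balanced maximum cap) have been established earlier in the chapter.
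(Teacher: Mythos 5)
Your proof is correct and follows essentially the same inductive scheme as the paper's: integrate the differential inequality from \Cref{thm:leg-length-bounds}, then use mirror symmetry on $K^{\mathrm m}$ to obtain the bound on $g_K$. Where you genuinely add value is in making two steps explicit that the paper leaves implicit in this proof: the base case, and more importantly the initial value $f_K(0)=1$. The paper's proof writes $f_K(t) = f_K(0) + \int_0^t f_K'(u)\,du$ and then concludes $f_K(t) \geq f_{n+1}(t)$ without justifying $f_K(0)\geq 1$ anywhere in the detailed chapters (it appears only as an assertion in the overview \Cref{sec:solving-the-differential-inequality}). Your derivation of $f_K(0)=1$ is the right supplement.

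One small caveat: your step ``Since $h_K(3\pi/2)=0$, this point sits at the intersection of the lines $x=h_K(0)$ and $y=0$'' is slightly faster than what the hypotheses you invoke actually give you. Knowing that $e_K(0)$ is a singleton and that $h_K(3\pi/2)=0$ does not by itself locate $A_K(0)$ at $y=0$; one also needs that the corner $(h_K(0),0)$ lies in $K$. This follows because $K$ is a cap, hence an intersection of half-planes with normal angles in $[0,\pi]\cup\{3\pi/2\}$, and $(h_K(0),0)\cdot u_t = h_K(0)\cos t \leq h_K(t)$ for each such $t$ (seen by testing the point $v_K^-(0)\in K$, whose $y$-coordinate is $\geq 0$). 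Alternatively, $v_K^+(0)=v_K^+(3\pi/2)$ follows from \Cref{thm:boundary-measure} applied on $(3\pi/2,2\pi]$ where $\sigma_K$ vanishes. Also note that \Cref{pro:cap-tangent-arm-mirror} as printed reads $f_{K^{\mathrm m}}^\pm(t)=g_K^{\mp}(t)$, but this is a typo: unwinding \Cref{pro:mirror-reflection} gives $f_{K^{\mathrm m}}^{\pm}(t)=g_K^{\mp}(\pi/2-t)$, which is the relation you correctly use.
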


\begin{proof}
Induct on \(n\). The base case \(n = 0\) holds trivially. Now assume the inductive hypothesis that for any balanced maximum cap \(K'\), we have \(f_{K'}(t) \geq f_n(t)\) for all \(t \in [0, \pi/2)\) and \(g_{K'}(t) \geq f_n(\pi/2 - t)\) for all \(t \in (0, \pi/2]\).

Fix an arbitrary balanced maximum cap \(K\). For any \(t \in [0, \pi/2)\), we have
\begin{align*}
f_K(t) & = f_K(0) + \int_0^t f_K'(u) \,du \\
& \geq f_K(0) + \int_0^t m_0(g_K(u)) \,du \\
& \geq f_K(0) + \int_0^t m_0(f_K(\pi/2 - u)) \,du 
\end{align*}
by the inductive hypothesis on \(K' := K\) and \Cref{thm:leg-length-bounds}. This combined with the inductive hypothesis \(f_{K}(t) \geq f_n(t)\) on \(K' := K\) implies that \(f_K(t) \geq f_{n+1}(t)\).

Next, apply the inductive hypothesis on the mirror image \(K' := K^{\mathrm{m}}\). Then we also have \(f_{K^\mathrm{m}}(t) \geq f_{n+1}(t)\) for all \(t \in [0, \pi/2)\). This, by mirror symmetry, is equivalent to the other inequality \(g_K(t) \geq f_{n+1}(\pi/2 - t)\) for all \(t \in (0, \pi/2]\).
\end{proof}

\begin{definition}

For any constant \(c \in [0, 1]\), define the function \(j_c : [0, \pi/2] \to \mathbb{R}\) as \(j_c(x) := \max(1 - x, c)\).

\label{def:lower-bound-j}
\end{definition}

\begin{lemma}

Define the constant \(d_0 := 1/12\). Then for any \(c \in [0, 2/3]\), we have \(\mathcal{F} j_c(x) \geq j_{c + d_0}(x)\) on \(x \in [0, \pi/2]\).

\label{lem:lower-bound-j-iter}
\end{lemma}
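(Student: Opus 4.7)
The plan is to establish the two pointwise inequalities $\mathcal{F} j_c(x) \geq 1 - x$ and $\mathcal{F} j_c(x) \geq c + d_0$ separately for every $x \in [0, \pi/2]$; since $j_{c+d_0}(x) = \max(1 - x,\, c + d_0)$, together they yield $\mathcal{F} j_c(x) \geq j_{c+d_0}(x)$. Substituting $s = \pi/2 - u$ in \Cref{def:integral-operator} gives $\mathcal{F} j_c(x) = 1 + \int_{\pi/2 - x}^{\pi/2} m_0(j_c(s))\,ds$. Since $c \in [0, 2/3]$, the function $j_c$ takes values in $[c, 1] \subseteq [0, 1]$, and $m_0$ restricted to $[0,1]$ is given by the explicit formula $m_0(y) = 3y/2 - 1$. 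Writing $a := 1 - c \in [1/3, 1]$, the integrand $m_0(j_c(s))$ equals $1/2 - 3s/2$ on $s \in [0, a]$ and equals the non-positive constant $3c/2 - 1$ on $s \in [a, \pi/2]$.

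The first inequality $\mathcal{F} j_c(x) \geq 1 - x$ is immediate: the minimum of $m_0$ on $[0, \infty)$ is attained at $0$ with $m_0(0) = -1$, so the integral is bounded below by $-x$. For the second inequality, I would compute $\mathcal{F} j_c(x)$ explicitly and minimize it over $x$. When $x \leq \pi/2 - a$ only the constant piece contributes, giving $\mathcal{F} j_c(x) = 1 + x(3c/2 - 1)$, which is linear and non-increasing in $x$. When $x \geq \pi/2 - a$, set $y := \pi/2 - x \in [0, a]$ and obtain
\begin{equation*}
\mathcal{F} j_c(\pi/2 - y) = 1 + \int_{y}^{a} \bigl(1/2 - 3s/2\bigr)\, ds + (\pi/2 - a)(3c/2 - 1).
\end{equation*}
Differentiating in $y$ yields $3y/2 - 1/2$, which vanishes at $y = 1/3$; since $a \geq 1/3$, this critical point lies in $[0, a]$ and produces the global minimum of $\mathcal{F} j_c$ on $[0, \pi/2]$ at $x = \pi/2 - 1/3$.

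The main (and essentially only) obstacle is the algebraic verification that this minimum exceeds $c + 1/12$. Plugging $a = 1 - c$ and $y = 1/3$ into the formula above and expanding reduces the inequality to the quadratic statement
\begin{equation*}
\tfrac{3}{4} c^2 + \Bigl(\tfrac{3\pi}{4} - \tfrac{5}{2}\Bigr) c + \Bigl(\tfrac{19}{12} - \tfrac{\pi}{2}\Bigr) \geq 0 \qquad \text{on } c \in [0, 2/3].
\end{equation*}
Since the leading coefficient is positive, the quadratic attains its minimum on $\mathbb{R}$ at $c_\ast = \tfrac{2}{3}\bigl(\tfrac{5}{2} - \tfrac{3\pi}{4}\bigr) \approx 0.0959 \in [0, 2/3]$, with value $\bigl(19/12 - \pi/2\bigr) - \bigl(3\pi/4 - 5/2\bigr)^2/3 \approx 0.0056 > 0$. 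This narrow positivity is what pins the constant down to $d_0 = 1/12$: the critical-point contribution equals $\inf_{y}\bigl(3y^2/4 - y/2\bigr) = -1/12$, and this deficit is just barely absorbed by the constants $19/12 - \pi/2$ together with the tails coming from the flat piece of the integrand.
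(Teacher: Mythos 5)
Your proof is correct and follows essentially the same route as the paper's: both use the explicit formula $m_0(y) = \tfrac{3}{2}y - 1$ on $[0,1]$, split at the kink $x = \pi/2 - (1-c)$, and reduce the inequality $\mathcal{F}j_c(x) \geq c + \tfrac{1}{12}$ to a single-variable quadratic in $c$. The one organizational difference is minor but clean: you observe that on $x \leq \pi/2 - a$ the function $\mathcal{F}j_c$ is monotone non-increasing (since $3c/2 - 1 \leq 0$), so its infimum equals the boundary value shared with the other piece, and the global minimizer over $[0,\pi/2]$ is the interior critical point $x = \pi/2 - 1/3$. The paper instead handles the two cases separately, verifying the boundary of the first case by an additional minimization over $c$ (giving $-1/8 + 3\pi/8 - 3\pi^2/32 > 0$), whereas your argument shows that check is subsumed. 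Both then arrive at the same critical computation: the minimum of $\tfrac{3}{4}c^2 + (\tfrac{3\pi}{4} - \tfrac{5}{2})c + (\tfrac{19}{12} - \tfrac{\pi}{2})$ over $c$, attained at $c = \tfrac{5}{3} - \tfrac{\pi}{2} \approx 0.096$ with value $-\tfrac{1}{2} + \tfrac{3\pi}{4} - \tfrac{3\pi^2}{16} \approx 0.0056 > 0$, which matches the paper exactly.
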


\begin{proof}
Because \(m_0\) is bounded from below by \(-1\), it is always guaranteed that \(\mathcal{F} j_c (x) \geq 1 - x\). So to show that \(\mathcal{F} j_c(x) \geq j_{c + d_0}(x)\) on \(x \in [0, 1]\), it suffices to show \(\mathcal{F} j_c (x) \geq c + d_0\).

We show this by simply computing \(\mathcal{F} j_c(x)\). Since the range of \(j_c\) is in \([0, 1]\), we have
\begin{equation}
\label{eqn:jc-principal}
\begin{split}
\mathcal{F} j_c(x) & = 1 + \int_0^x m_0(j_c(\pi/2 - u)) \, du \\
& = 1 - x + \int_0^x \frac{3}{2} j_c (\pi/2 - u) \,du
\end{split}
\end{equation}
and it suffices to show that the value is \(\geq c + d_0\).

First assume the case \(x \leq \pi/2 - 1 + c\). Then \(\pi/2 - u \geq \pi/2 - x \geq 1 - c\) in the integral of \Cref{eqn:jc-principal} so
\[
\begin{split}
\mathcal{F} j_c(x) & = 1 - x + \frac{3}{2} c x = 1 - (1 - 3/2 \cdot c)x \\
& \geq 1 - (1 - 3/2 \cdot c)(\pi/2 - 1 + c) \\
& > c + d_0.
\end{split}
\]
In the last inequality, the difference is minimized at \(c = 7/6 - \pi/4\) with the value \(-1/8 + 3\pi/8 - 3\pi^2/32 > 0\).

It remains to check the case \(x > \pi/2 - 1 + c\). Then \Cref{eqn:jc-principal} evaluates to
\begin{align*}
\mathcal{F} j_c(x) & = 1 - x + \frac{3}{2} c (\pi/2 - 1 + c) + \int_{\pi/2 - 1 + c}^x \frac{3}{2}(1 - \pi/2 + u) \, du \\
& = 1 - x + \frac{3}{2} c (\pi/2 - 1 + c) + \frac{3}{4} \left( (x - \pi/2 + 1)^2 - c^2 \right).
\end{align*}
Minimize the quadratic polynomial over \(x \in \mathbb{R}\), then we have
\[
\mathcal{F} j_c (x) \geq \frac{5}{3} - \frac{\pi}{2} + \frac{3}{4} c (c + \pi - 2)
\]
where the equality holds at \(x = \pi/2 - 1/3\). We also have
\[
\frac{5}{3} - \frac{\pi}{2} + \frac{3}{4} c (c + \pi - 2) >  c + \frac{1}{12}
\]
because the difference of both sides is minimized at \(c = 5/3-\pi/2\) with the value \(-1/2 + 3\pi/4 - 3\pi^2/16 > 0\). This completes the proof.
\end{proof}

\begin{lemma}

For any functions \(f, g : [0, \pi/2] \to \mathbb{R}_{\geq 0}\) such that \(f \leq g\) on \([0, \pi/2]\), we also have \(\mathcal{F} f \leq \mathcal{F} g\).

\label{lem:operator-monotonicity}
\end{lemma}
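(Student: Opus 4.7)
The plan is to reduce monotonicity of $\mathcal{F}$ to monotonicity of the scalar function $m_0$ under the pointwise order, and then apply monotonicity of the integral. The key ingredient has already been noted in the paper, right after \Cref{def:magic-function}: the function $m_0(x) = x - k_0(x)$ is piecewise linear and monotonically increasing on $\mathbb{R}_{\geq 0}$, with $m_0(0) = -1$, $m_0(1) = 1/2$, and $m_0(x) = 1$ for $x \geq 2$. If this had not already been stated, I would verify it directly from the definition $k_0(x) = \max(|x-1|, (|x-1|+1)/2)$ by computing $m_0$ on each of the three intervals $[0,1]$, $[1,2]$, $[2, \infty)$ and checking that the slopes $3/2$, $1/2$, $0$ are all nonnegative and that the pieces match at the endpoints.

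Granting that $m_0$ is nondecreasing, the rest is a one-line argument. Given the hypothesis $f \leq g$ pointwise on $[0, \pi/2]$, for every $u \in [0, x]$ we have $f(\pi/2 - u) \leq g(\pi/2 - u)$ (since $\pi/2 - u \in [0, \pi/2]$), and applying the monotone function $m_0$ yields
\[
m_0(f(\pi/2 - u)) \leq m_0(g(\pi/2 - u)).
\]
Both integrands are bounded (values of $m_0$ on $\mathbb{R}_{\geq 0}$ lie in $[-1, 1]$), hence integrable on $[0, x]$. Integrating from $0$ to $x$ and adding the constant $1$ to both sides gives
\[
\mathcal{F} f(x) = 1 + \int_0^x m_0(f(\pi/2 - u))\,du \leq 1 + \int_0^x m_0(g(\pi/2 - u))\,du = \mathcal{F} g(x),
\]
for every $x \in [0, \pi/2]$, which is the desired inequality.

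There is no main obstacle here; the lemma is essentially a packaging result that isolates a property of $\mathcal{F}$ needed for the iterative lower bound argument. In the subsequent application one starts from $f_0 = 0 \leq j_0$ and wants to deduce $f_n \geq j_{n d_0}$ for small $n$ by combining \Cref{lem:lower-bound-j-iter} (which gives $\mathcal{F} j_c \geq j_{c + d_0}$) with this monotonicity property, so the only thing worth double-checking in the write-up is that $f$ and $g$ need not themselves take values in $[0, 2/3]$ — monotonicity of $\mathcal{F}$ holds for all nonnegative inputs, which is exactly what the statement asserts.
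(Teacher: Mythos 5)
Your argument is correct and is exactly the one the paper gives, merely spelled out in full: the paper's proof is the single sentence ``the function $m_0$ is monotonically increasing; now check the definition of $\mathcal{F}$,'' and you have unpacked that by applying monotone $m_0$ inside the integrand and then integrating. No differences of substance.
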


\begin{proof}
The function \(m_0\) is monotonically increasing; now check the \Cref{def:integral-operator} of \(\mathcal{F}\).
\end{proof}

\begin{lemma}

We have \(f_{11}(x) > 1\) on \(x \in (0, 1]\).

\label{lem:lower-bound-threshold}
\end{lemma}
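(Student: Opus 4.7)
The plan is to show by induction that $f_n \geq j_{(n-1)d_0}$ on $[0,\pi/2]$ for $n = 1, 2, \ldots, 10$, and then extract one extra improvement from a direct computation of $\mathcal{F} j_{3/4}$ on the interval $(0,1]$.

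For the base case, I would compute $\mathcal{F} f_0(x) = 1 + \int_0^x m_0(0)\,du = 1 - x$, since $m_0(0) = -1$. Then $f_1(x) = \max(0, 1-x) = j_0(x) = j_{0\cdot d_0}(x)$, giving the base case. For the inductive step, assume $f_n \geq j_{(n-1)d_0}$ for some $1 \leq n \leq 9$. Since $(n-1)d_0 \leq 8/12 = 2/3$, Lemma \ref{lem:lower-bound-j-iter} applies and gives $\mathcal{F} j_{(n-1)d_0} \geq j_{n d_0}$. By the monotonicity of $\mathcal{F}$ (Lemma \ref{lem:operator-monotonicity}), this yields $\mathcal{F} f_n \geq \mathcal{F} j_{(n-1)d_0} \geq j_{n d_0}$, so $f_{n+1} = \max(f_n, \mathcal{F} f_n) \geq j_{n d_0}$. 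Iterating from $n=1$ up to $n=9$ gives $f_{10} \geq j_{9 d_0} = j_{3/4}$.

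For the final step, I cannot directly invoke Lemma \ref{lem:lower-bound-j-iter} with $c = 3/4$ since that is outside the permitted range $[0,2/3]$. Instead I will compute $\mathcal{F} j_{3/4}(x)$ explicitly for $x \in (0,1]$. Since $u + 1 - \pi/2 \leq 2 - \pi/2 < 3/4$ for all $u \in [0,1]$, the maximum defining $j_{3/4}(\pi/2 - u)$ is attained by the constant $3/4$ throughout the integration. Using $m_0(3/4) = 3 \cdot (3/4)/2 - 1 = 1/8$, I obtain
\begin{equation*}
\mathcal{F} j_{3/4}(x) = 1 + \int_0^x m_0(3/4)\,du = 1 + \tfrac{x}{8}.
\end{equation*}
By monotonicity of $\mathcal{F}$ and the bound $f_{10} \geq j_{3/4}$, this gives $f_{11}(x) \geq \mathcal{F} f_{10}(x) \geq \mathcal{F} j_{3/4}(x) = 1 + x/8 > 1$ for all $x \in (0,1]$, proving the lemma.

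The only delicate point is choosing the right iteration count, which is dictated by $d_0 = 1/12$: nine applications of Lemma \ref{lem:lower-bound-j-iter} bring the floor of $f_n$ up to $3/4$, which is precisely the threshold past which the composition $\mathcal{F}$ produces a function with \emph{strict} slope exceeding $1-x$ rather than just matching it. No obstacle is expected in the argument itself; the only care required is to verify, when evaluating $\mathcal{F} j_{3/4}$, that the input $\pi/2 - u$ stays in the regime where $j_{3/4}$ equals the constant $3/4$ throughout $u \in [0, x]$ with $x \leq 1$, which is the reason the conclusion is restricted to the interval $(0, 1]$.
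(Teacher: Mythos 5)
Your proof is correct and follows essentially the same route as the paper: induct via \Cref{lem:lower-bound-j-iter} and \Cref{lem:operator-monotonicity} to get $f_{10} \geq j_{9/12} = j_{3/4}$, then apply $\mathcal{F}$ once more. The only cosmetic difference is in the last step: the paper observes that $m_0(y) > 0$ whenever $y > 2/3$, so that $\mathcal{F}f_{10}(x) = 1 + \int_0^x m_0(f_{10}(\pi/2-u))\,du > 1$ immediately for all $x \in (0,\pi/2]$, while you instead compute $\mathcal{F}j_{3/4}(x) = 1 + x/8$ explicitly on $[0,1]$ and invoke monotonicity. Both work; the paper's version gives the strict inequality on the full interval $(0,\pi/2]$ for free without having to track where the $\max$ inside $j_{3/4}(\pi/2-u)$ switches, whereas your version is more concrete but only as stated covers $(0,1]$, which matches the lemma's statement but is weaker than what the argument actually delivers.
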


\begin{proof}
For any integer \(1 \leq m \leq 10\), we will show that \(f_m \geq j_{(m - 1)/12}\). The base case \(f_1 = j_0\) is done by simple computation. Assuming the inductive hypothesis \(f_m \geq j_{(m-1)/12}\) for \(m \leq 9\), the inductive step
\[
f_{m+1} \geq \mathcal{F} f_m \geq \mathcal{F} j_{(m-1)/12} \geq j_{m/12}
\]
can be done using \Cref{lem:operator-monotonicity} and \Cref{lem:lower-bound-j-iter}. So \(f_{10} \geq j_{9/12} > 2/3\) in particular.

Because \(m_0(y) > 0\) for all \(y > 2/3\), we now have
\[
f_{11}(x) \geq \mathcal{F} f_{10} (x) = 1 + \int_0^x m_0(f_{10}(\pi/2 - u)) \, du > 1
\]
for all \(x \in (0, \pi/2]\).
\end{proof}

\begin{theorem}

For any balanced maximum cap \(K\), we have \(f_K(t) > 1\) on \(t \in (0, \pi/2]\) and \(g_K(t) > 1\) on \(t \in [0, \pi/2)\).

\label{thm:lower-bound-one}
\end{theorem}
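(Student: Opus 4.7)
The plan is to combine the iterative lower bound Lemma~\ref{lem:lower-bound-sequence} with the threshold estimate of Lemma~\ref{lem:lower-bound-threshold}, and then extend to the endpoints by continuity. Since this theorem is the final step of the chapter's strategy sketched in Section~\ref{sec:solving-the-differential-inequality}, the heavy lifting has already been done in the earlier lemmas; what remains is an essentially bookkeeping argument.

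For the interior of the interval, I would fix an arbitrary balanced maximum cap $K$ and take $n = 11$ in Lemma~\ref{lem:lower-bound-sequence}. This gives $f_K(t) \geq f_{11}(t)$ for all $t \in [0, \pi/2)$ and $g_K(t) \geq f_{11}(\pi/2 - t)$ for all $t \in (0, \pi/2]$. Lemma~\ref{lem:lower-bound-threshold} yields $f_{11}(x) > 1$ on $x \in (0, \pi/2]$. Combining these, for any $t \in (0, \pi/2)$ we get $f_K(t) \geq f_{11}(t) > 1$, and for any $t \in (0, \pi/2)$ we also get $g_K(t) \geq f_{11}(\pi/2 - t) > 1$ since $\pi/2 - t \in (0, \pi/2)$ lies in the range where $f_{11}$ exceeds $1$.

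To reach the endpoints $t = \pi/2$ for $f_K$ and $t = 0$ for $g_K$, I would invoke continuity. By Corollary~\ref{cor:cap-nondegenerate}, any balanced maximum cap $K$ satisfies condition (1) of Definition~\ref{def:injectivity-condition}, so Proposition~\ref{pro:cap-nondegenerate-continuity} applies and the functions $f_K, g_K$ are continuous on $[0, \pi/2]$. Since $f_{11}$ is continuous on $[0, \pi/2]$ and the explicit computation at the end of the proof of Lemma~\ref{lem:lower-bound-threshold} (bounding $f_{11}(x) \geq \mathcal{F} f_{10}(x) = 1 + \int_0^x m_0(f_{10}(\pi/2-u))\,du > 1$) already holds at $x = \pi/2$, we obtain
\[
f_K(\pi/2) = \lim_{t \to (\pi/2)^-} f_K(t) \geq \lim_{t \to (\pi/2)^-} f_{11}(t) = f_{11}(\pi/2) > 1,
\]
and symmetrically $g_K(0) = \lim_{t \to 0^+} g_K(t) \geq f_{11}(\pi/2) > 1$.

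There is no real obstacle here, because the hard analytic work was carried out in Theorems~\ref{thm:balanced-ineq-limit} and~\ref{thm:leg-length-bounds} and in the iteration bookkeeping of Lemmas~\ref{lem:lower-bound-sequence}--\ref{lem:lower-bound-threshold}. The only minor subtlety worth flagging is that the endpoint $t = \pi/2$ for $f_K$ and $t = 0$ for $g_K$ lie \emph{outside} the domain in which Lemma~\ref{lem:lower-bound-sequence} applies directly (which is why the lemma is stated on half-open intervals matching Definition~\ref{def:cap-nondegenerate}); but this is resolved by the continuity established in Proposition~\ref{pro:cap-nondegenerate-continuity}, and by noting that the strict inequality $f_{11}(\pi/2) > 1$ survives because the integrand $m_0(f_{10}(\pi/2 - u))$ is strictly positive on a set of positive measure in $[0, \pi/2]$.
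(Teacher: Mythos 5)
Your proposal is correct and follows the same route the paper takes (which, verbatim, is just ``By \Cref{lem:lower-bound-sequence} and \Cref{lem:lower-bound-threshold}''). The one substantive addition you make — and it is a genuine improvement — is the explicit continuity argument at the endpoints. As you observe, combining \Cref{lem:lower-bound-sequence} (which gives $f_K \geq f_{11}$ only on $[0,\pi/2)$ and $g_K(t) \geq f_{11}(\pi/2-t)$ only on $(0,\pi/2]$) with \Cref{lem:lower-bound-threshold} ($f_{11} > 1$ on $(0,\pi/2]$) directly covers only $t\in(0,\pi/2)$ for both $f_K$ and $g_K$; the theorem's claim at $t=\pi/2$ for $f_K$ and $t=0$ for $g_K$ requires continuity of $f_K, g_K$ (\Cref{pro:cap-nondegenerate-continuity}) and of $f_{11}$, exactly as you argue. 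The paper leaves this step implicit. One small thing worth noting: the statement of \Cref{lem:lower-bound-threshold} as printed says $x\in(0,1]$, but its proof actually establishes the bound on $(0,\pi/2]$; you implicitly read it as the latter (correctly, since $\pi/2>1$ and the endpoint value $f_{11}(\pi/2)>1$ is exactly what your continuity step needs).
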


\begin{proof}
By \Cref{lem:lower-bound-sequence} and \Cref{lem:lower-bound-threshold}.
\end{proof}

Now we prove the injectivity condition for any balanced maximum cap.

\begin{proof}[Proof of \Cref{thm:injectivity}]
We check conditions (1)-(3) of \Cref{def:injectivity-condition} for balanced maximum caps \(K\). Condition (1) holds by \Cref{cor:cap-nondegenerate}. Condition (2) holds by (2) of \Cref{pro:cap-nondegenerate-continuity}. Finally, condition (3) holds because
\[
\mathbf{x}_K'(t) = -(f_K(t) - 1) u_t + (g_K(t) - 1) v_t
\]
by (2) of \Cref{pro:cap-nondegenerate-continuity} and \Cref{thm:lower-bound-one}.
\end{proof}

\chapter{Convex Domain and Convex Curves}
\label{sec:convex-domain-and-convex-curves}
This section prepares a minimal amount of technology needed for the next \Cref{sec:optimality-of-gerver's-sofa}.

\begin{itemize}
\tightlist
\item
  \Cref{sec:convex-domain} defines the general notion of \emph{convex domain} \(\mathcal{V}\) and linear/quadratic functionals on \(\mathcal{V}\). We show that if the directional derivative of a concave quadratic functional \(f\) is non-negative at \(V \in f\), then \(f\) attains the global maximum value at \(V\).
\item
  \Cref{sec:curve-area-functional} reviews the notion of a \emph{Jordan curve} \(\mathbf{x}\) that encloses a region \(R\) of the plane, then defines the \emph{curve area functional} \(\mathcal{J}(\mathbf{x})\) that measures the area of \(R\) using Green’s theorem. This will work for any continuous \(\mathbf{x} : [a, b] \to \mathbb{R}\) of bounded variation.
\item
  \Cref{sec:convex-curve} defines a \emph{convex curve} segment \(\mathbf{u}_K^{a, b}\) of the boundary of a planar convex body \(K\), and calculate its curve area functional.
\item
  \Cref{sec:mamikon's-theorem} establishes the \emph{Mamikon’s theorem} on a general convex body \(K\) that may have a non-differentiable boundary.
\end{itemize}
\section{Convex Domain}
\label{sec:convex-domain}
\begin{definition}

A \emph{convex domain} \(\mathcal{V}\) is a space with the barycentric operation \(c_\lambda : \mathcal{V} \times \mathcal{V} \to \mathcal{V}\) for all \(\lambda \in [0, 1]\), such that there is an embedding \(e : \mathcal{V} \to V\) to a convex subspace of a vector space \(V\) preserving \(c_\lambda\). That is, \(e\) is injective and \(e(c_{\lambda}(v_1, v_2)) = (1 - \lambda) e(v_1) + \lambda e(v_2)\).

\label{def:convex-spaces}
\end{definition}

\begin{remark}

Although we will not use this notion in our work, \Cref{def:convex-spaces} is equivalent to saying that \((\mathcal{V}, c_\lambda)\) is a \emph{cancellative convex space} (Theorem 2 of \autocite{stonePostulatesBarycentricCalculus1949}).

\label{rem:convex-spaces}
\end{remark}

\begin{definition}

Call a function \(f : \mathcal{V}_1 \to \mathcal{V}_2\) between convex domains \emph{convex-linear} if it preserves the barycentric operation \(c_\lambda\).

\label{def:convex-linear}
\end{definition}

The composition \(f \circ g\) of two convex-linear maps \(f, g\) would also be convex-linear, as it preserves \(c_\lambda\).

\begin{definition}

Call a function \(g : \mathcal{V}_1 \times \mathcal{V}_2 \to \mathcal{V}_3\) \emph{convex-bilinear} if the maps \(v \mapsto g(v_1, v)\) and \(v \mapsto g(v, v_2)\) are convex-linear for any fixed \(v_1 \in \mathcal{V}_1\) and \(v_2 \in \mathcal{V}_2\).

\label{def:convex-bilinear}
\end{definition}

\begin{definition}

Call \(h : \mathcal{V} \to \mathbb{R}\) a \emph{quadratic functional} on a convex domain \(\mathcal{V}\) if \(h(K) = g(K, K)\) for some convex-bilinear \(g : \mathcal{V} \times \mathcal{V} \to \mathbb{R}\).

\label{def:convex-space-quadratic}
\end{definition}

\begin{theorem}

The space \(\mathcal{K}\) of all planar convex bodies is a convex domain with the barycentric operation
\begin{align*}
c_\lambda(K_1, K_2) & := (1 - \lambda)K_1 + \lambda K_2 \\
& = \left\{ (1 - \lambda) p_1 + \lambda p_2 : p_1 \in K_1, p_2 \in K_2 \right\} 
\end{align*}
given by the Minkowski sum of convex bodies.

\label{thm:convex-body-space}
\end{theorem}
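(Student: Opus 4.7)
The plan is to realize $\mathcal{K}$ as a convex-operation-preserving subspace of the real vector space $C(S^1)$ of continuous functions on $S^1$ via the support function map $K \mapsto h_K$. The four things to check are: (i) $c_\lambda(K_1,K_2) \in \mathcal{K}$ whenever $K_1, K_2 \in \mathcal{K}$, so the barycentric operation is well-defined; (ii) the support function map $e : \mathcal{K} \to C(S^1)$ is injective; (iii) its image lies inside a convex subset of $C(S^1)$; (iv) $e$ intertwines $c_\lambda$ with the ordinary convex combination in $C(S^1)$.

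For (i) I would note that the Minkowski sum of two nonempty compact convex subsets of $\mathbb{R}^2$ is nonempty, compact and convex (compactness follows because the sum is the continuous image of the compact product $K_1 \times K_2$ under addition, and convexity is elementary), and dilation by a nonnegative scalar clearly preserves nonemptyness, compactness and convexity. Hence $c_\lambda(K_1,K_2) = (1-\lambda)K_1 + \lambda K_2$ is again a planar convex body. For (ii) the standard recovery formula $K = \bigcap_{t \in S^1} H_-(t, h_K(t))$ shows that $K$ is determined by $h_K$, so $e$ is injective. For (iii), the support functions are precisely the continuous sublinear positively homogeneous functions on $\mathbb{R}^2$ (viewed through restriction to $S^1$), and this set is closed under nonnegative linear combinations, so in particular it is a convex subset of $C(S^1)$.

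The key identity, which is the content of (iv), is
\[
h_{(1-\lambda)K_1 + \lambda K_2}(t) = (1-\lambda)\, h_{K_1}(t) + \lambda\, h_{K_2}(t)
\]
for every $t \in S^1$ and $\lambda \in [0,1]$. This reduces, via $h_{aK}(t) = a\, h_K(t)$ for $a \geq 0$ (from \Cref{def:support-function}), to the additivity $h_{K_1+K_2}(t) = h_{K_1}(t) + h_{K_2}(t)$. The latter is immediate from the definition: for any $t$,
\[
h_{K_1+K_2}(t) = \sup\{(p_1+p_2)\cdot u_t : p_1 \in K_1,\ p_2 \in K_2\} = h_{K_1}(t) + h_{K_2}(t),
\]
since the supremum of a sum over a product set decouples. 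Combining this with the scaling identity gives (iv).

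There is no serious obstacle here; the only subtle point is that \Cref{def:convex-spaces} requires the ambient vector space together with the embedding, not merely that the barycentric operation satisfies the usual axioms, so I want to explicitly name $V := C(S^1)$ and check that $e(\mathcal{K})$ lies in a convex subset (rather than, say, all of $C(S^1)$). Once these four items are verified, $(\mathcal{K}, c_\lambda)$ fits \Cref{def:convex-spaces} verbatim and the theorem follows.
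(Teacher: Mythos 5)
Your proposal is correct and follows essentially the same route as the paper: embed $\mathcal{K}$ into the vector space of continuous functions on $S^1$ via the support function $K \mapsto h_K$ and verify that this map is injective, lands in a convex subset, and intertwines the Minkowski barycentric operation with pointwise convex combination. The paper simply cites Remark 1.7.7 of Schneider for the support function facts where you spell them out directly.
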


\begin{proof}
By Remark 1.7.7 of \autocite{schneider_2013}, the map \(K \mapsto h_K\) embeds the space \(\mathcal{K}\) to a convex cone of the space of all continuous functions from \(S^1\) to \(\mathbb{R}\), preserving the barycentric operations of each space.
\end{proof}

The values on convex body \(K \in \mathcal{K}\) appearing in \Cref{sec:planar-convex-body} are convex-linear in \(K\).

\begin{theorem}

The following values are convex-linear in \(K \in \mathcal{K}\).

\begin{enumerate}
\def\labelenumi{\arabic{enumi}.}
\tightlist
\item
  Support function \(h_K\)
\item
  For fixed constants \(a, b \in \mathbb{R}\) so that \(a < b < a + \pi\), the vertices \(v_K^\pm(a)\) and \(v_K(a, b)\).
\item
  Surface area measure \(\sigma_K\)
\end{enumerate}

\label{thm:convex-body-linear}
\end{theorem}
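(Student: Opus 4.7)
The plan is to reduce everything to the convex-linearity of the support function $h_K$ in $K$, which is built into the convex-domain structure on $\mathcal{K}$ given by Theorem~\ref{thm:convex-body-space}. For (1) this is immediate: the embedding $K \mapsto h_K$ used to define $c_\lambda$ on $\mathcal{K}$ preserves the barycentric operation by construction, so $h_{c_\lambda(K_1,K_2)}(t) = (1-\lambda)h_{K_1}(t) + \lambda h_{K_2}(t)$ pointwise in $t$, which is exactly convex-linearity of $h_K$.

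For the intersection-of-support-lines vertex $v_K(a,b) = l_K(a) \cap l_K(b)$ in (2), the hypothesis $a < b < a+\pi$ makes $u_a$ and $u_b$ linearly independent, so $v_K(a,b)$ is the unique solution of the $2\times 2$ linear system $p \cdot u_a = h_K(a)$, $p \cdot u_b = h_K(b)$. Cramer's rule writes $v_K(a,b) = \alpha\, h_K(a) + \beta\, h_K(b)$ for fixed vectors $\alpha, \beta \in \mathbb{R}^2$ depending only on $a$ and $b$, so convex-linearity of $h_K$ in $K$ transfers directly to $v_K(a,b)$. For $v_K^\pm(a)$, the main step is the edge-additivity $e_{K_1+K_2}(a) = e_{K_1}(a) + e_{K_2}(a)$ under Minkowski sum, which follows from $h_{K_1+K_2}(a) = h_{K_1}(a) + h_{K_2}(a)$ together with the observation that $p = p_1+p_2 \in K_1+K_2$ attains the supremum $p \cdot u_a$ if and only if each $p_i$ individually attains the supremum on $K_i$. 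Since each edge is a segment parallel to $v_a$ (or a single point), the $\pm v_a$-extreme endpoint of the Minkowski sum of two such parallel segments is the sum of the individual $\pm v_a$-extreme endpoints, giving $v_{K_1+K_2}^\pm(a) = v_{K_1}^\pm(a) + v_{K_2}^\pm(a)$; combined with the obvious scaling $v_{\lambda K}^\pm(a) = \lambda v_K^\pm(a)$, this yields convex-linearity.

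For (3), the cleanest route is the Differential Gauss--Minkowski equality of Theorem~\ref{thm:boundary-measure}: on any half-open interval $(a,b] \subseteq \mathbb{R}$ with $b \leq a+2\pi$,
\[
\mathrm{d}v_K^+(t) = v_t\, \sigma_K.
\]
Dotting both sides with the fixed unit vector $v_t$ and using $\|v_t\| = 1$ rearranges this into $\sigma_K = v_t \cdot \mathrm{d}v_K^+(t)$ as a scalar measure on $(a,b]$. By part (2), the right-continuous map $t \mapsto v_K^+(t)$ is convex-linear in $K$ for each fixed $t$, hence its Lebesgue--Stieltjes measure $\mathrm{d}v_K^+$ is convex-linear in $K$ by Proposition~\ref{pro:lebesgue-stieltjes-sum}, and scalar-multiplying by the fixed vector-valued function $v_t$ preserves convex-linearity in the sense of Definition~\ref{def:function-measure-mult}. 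Since $S^1$ is covered by finitely many such half-open intervals, we conclude that $\sigma_K(X)$ is convex-linear in $K$ for every Borel $X \subseteq S^1$, which is the correct meaning of convex-linearity for a measure-valued functional on $\mathcal{K}$. The only conceptual point to settle is this last interpretation of convex-linearity for measures; once pinned down pointwise on Borel sets, every step is an immediate corollary of Theorem~\ref{thm:boundary-measure} combined with parts (1) and (2), so I do not expect any real obstacle beyond bookkeeping.
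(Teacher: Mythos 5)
Your proposal is correct. Parts (1), (2) for \(v_K(a,b)\), and (3) take essentially the same route as the paper: convex-linearity of \(h_K\) is built into the convex-domain structure (or Schneider's Theorem 1.7.5(a)); \(v_K(a,b)\) is recovered from \(h_K(a), h_K(b)\) by solving a \(2\times 2\) linear system; and \(\sigma_K = v_t \cdot \mathrm{d} v_K^+(t)\) from \Cref{thm:boundary-measure} transfers convex-linearity from \(v_K^+(t)\) to \(\sigma_K\).

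The one genuine divergence is your treatment of \(v_K^\pm(a)\). The paper obtains convex-linearity of \(v_K^\pm(a)\) ``for free'' by taking one-sided limits \(b \to a^\pm\) of the already-convex-linear \(v_K(a,b)\), citing \Cref{thm:limits-converging-to-vertex}. You instead prove directly that the edge map is Minkowski-additive, \(e_{K_1+K_2}(a) = e_{K_1}(a) + e_{K_2}(a)\), via the standard ``sum of suprema attained iff each supremum is attained'' argument, and then observe that the \(\pm v_a\)-extreme endpoint of a Minkowski sum of parallel segments is the sum of the individual extreme endpoints; together with the obvious dilation behavior this gives \(v^\pm_{c_\lambda(K_1,K_2)}(a) = (1-\lambda)v^\pm_{K_1}(a) + \lambda v^\pm_{K_2}(a)\). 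Both arguments are correct. The paper's is shorter because it reuses a lemma already proved for other purposes; yours is more self-contained and gives the additional structural fact of edge additivity, which is not stated elsewhere in the paper. Either is a legitimate route, and neither has a gap.
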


\begin{proof}
(1) follows from Theorem 1.7.5 (a) of \autocite{schneider_2013} that \(h_{K_1 + K_2} = h_{K_1} + h_{K_2}\), and that \(h_{a K } = a h_K\) for any \(a \geq 0\) which is easy to check. For (2), note that \(p := v_K(a, b)\) is the unique solution satisfying the equations \(p \cdot u_a = h_K(a)\) and \(p \cdot u_b = h_K(b)\). Let \(U\) be the constant \(2 \times 2\) matrix with column vectors \(u_a\) and \(u_b\). Then we have \(p = U^{-1} [h_K(a), h_K(b)]^T\) convex-linear in \(K\) by (1). Use \Cref{thm:limits-converging-to-vertex} to see that \(v_K^{\pm}(a)\), the limit of \(v_K(a, b)\) as \(b \to a\) in either direction, is linear too. (3) comes from (2) and \(\sigma_K = v_t \cdot \mathrm{d} v_K^+(t)\) which follows from \Cref{thm:boundary-measure}.
\end{proof}

The area \(|K|\) of \(K \in \mathcal{K}\) is a quadratic functional.

\begin{theorem}

(Remark 5.1.2, page 276 of \autocite{schneider_2013}) For any \(K \in \mathcal{K}\), we have
\[
|K| = \frac{1}{2} \int_{t \in S^1} h_K(t)\, \sigma_K(dt)
\]
which is quadratic in \(K\) by \Cref{thm:convex-body-linear}.

\label{thm:area-quadratic-expression}
\end{theorem}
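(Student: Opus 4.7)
The plan is to first verify the integral formula $|K| = \frac{1}{2}\int_{S^1} h_K\,\sigma_K(dt)$ for convex polygons by direct triangulation, then extend to arbitrary convex bodies by polygonal approximation, and finally exhibit an explicit convex-bilinear form witnessing the quadraticity.

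For a convex polygon $K$ with edges $e_1,\dots,e_n$ having outward normal angles $t_1,\dots,t_n\in S^1$, I would first translate $K$ so that the origin lies in its interior (this translation only shifts $h_K$ by a linear functional in $t$, and I will come back to removing this assumption). Then $K$ decomposes as the disjoint union (up to measure zero) of the triangles $T_i$ with apex at the origin and base $e_i$. Each such triangle has height $h_K(t_i)$ from the origin to the line through $e_i$ and base $|e_i|=\sigma_K(\{t_i\})$, so $|T_i|=\tfrac12 h_K(t_i)\,\sigma_K(\{t_i\})$. Summing and recognizing that $\sigma_K$ is the discrete measure with mass $|e_i|$ at $t_i$ yields the formula
\[
|K|=\frac{1}{2}\sum_{i=1}^n h_K(t_i)\,\sigma_K(\{t_i\})=\frac{1}{2}\int_{t\in S^1}h_K(t)\,\sigma_K(dt).
\]
To remove the assumption that the origin is interior, I observe that both sides transform identically under translation: if $K'=K+p$, then $h_{K'}(t)=h_K(t)+p\cdot u_t$ and $\sigma_{K'}=\sigma_K$, and the extra term $\tfrac12\int(p\cdot u_t)\,\sigma_K(dt)=\tfrac12\, p\cdot\int u_t\,\sigma_K(dt)$ vanishes because $\int_{S^1}u_t\,\sigma_K(dt)=0$ (this is the closing condition from the Gauss--Minkowski correspondence, equivalently obtained by integrating $\mathrm{d}v_K^+(t)=v_t\,\sigma_K$ of \Cref{thm:boundary-measure} over all of $S^1$).

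For a general convex body $K$, I approximate $K$ by a sequence of convex polygons $K_n\to K$ in Hausdorff distance $d_{\mathrm{H}}$. Then $|K_n|\to|K|$ by Theorem 1.8.20 of Schneider, $h_{K_n}\to h_K$ uniformly on $S^1$ (the supremum norm being $d_{\mathrm{H}}$), and $\sigma_{K_n}\to\sigma_K$ weakly by \Cref{thm:surface-area-weak-convergence}. Since $h_K$ is continuous on the compact $S^1$ and $h_{K_n}\to h_K$ uniformly while $\sigma_{K_n}(S^1)$ remains bounded, the pairing $\int h_{K_n}\,\sigma_{K_n}$ converges to $\int h_K\,\sigma_K$ by a standard triangle-inequality split
\[
\Bigl|\int h_{K_n}\sigma_{K_n}-\int h_K\sigma_K\Bigr|\le \|h_{K_n}-h_K\|_\infty\sigma_{K_n}(S^1)+\Bigl|\int h_K\,\sigma_{K_n}-\int h_K\,\sigma_K\Bigr|,
\]
with the first term tending to $0$ by uniform convergence and the second by weak convergence applied to the continuous test function $h_K$. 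This establishes the integral formula for all $K\in\mathcal{K}$.

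For quadraticity, I define $g:\mathcal{K}\times\mathcal{K}\to\mathbb{R}$ by $g(K_1,K_2):=\tfrac12\int_{S^1}h_{K_1}(t)\,\sigma_{K_2}(dt)$ and verify $g(K,K)=|K|$ by the formula just established. By \Cref{thm:convex-body-linear}, $K_1\mapsto h_{K_1}$ and $K_2\mapsto\sigma_{K_2}$ are both convex-linear into their respective function/measure spaces, and the integration pairing is bilinear, so $g$ is convex-bilinear in the sense of \Cref{def:convex-bilinear}. Hence $|K|=g(K,K)$ is a quadratic functional on $\mathcal{K}$ per \Cref{def:convex-space-quadratic}. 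The only nontrivial step is the polygonal-approximation convergence, which is routine once weak convergence of the $\sigma_{K_n}$ is in hand; the removal of the origin-interior assumption is the one place where one needs the closing relation $\int u_t\,\sigma_K(dt)=0$, which is why I flag it explicitly rather than burying it in the triangulation.
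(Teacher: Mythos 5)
The paper does not prove this statement; it takes it as a cited fact from Schneider (Remark 5.1.2), with quadraticity observed directly from \Cref{thm:convex-body-linear}. You instead supply a self-contained proof: triangulation of polygons from an interior base point, translation invariance via the closing condition, polygonal approximation with Hausdorff convergence of areas and support functions and weak convergence of surface area measures, and then an explicit convex-bilinear form $g(K_1,K_2)=\tfrac12\int h_{K_1}\,\sigma_{K_2}$ witnessing quadraticity. This is the standard textbook derivation, and your argument is correct; what it buys is independence from the black-box citation, at the cost of repeating material Schneider already establishes. One small point worth spelling out: you invoke $\int_{S^1} u_t\,\sigma_K(dt)=0$, whereas the paper's \Cref{rem:gauss-minkowski} records the closing relation in the form $\int_{S^1} v_t\,\sigma_K(dt)=0$. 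These are equivalent, since the latter says $\int\cos t\,\sigma_K(dt)=\int\sin t\,\sigma_K(dt)=0$ and $u_t=(\cos t,\sin t)$; it is worth noting the equivalence explicitly so the reader does not think a different identity is being used. The remainder of the argument (uniform convergence of $h_{K_n}$, boundedness of $\sigma_{K_n}(S^1)$, and the triangle-inequality split to pass to the limit in the pairing) is exactly right.
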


\begin{definition}

For any quadratic functional \(f : \mathcal{V} \to \mathbb{R}\) on a convex space \(\mathcal{V}\) and \(K, K' \in \mathcal{V}\), define the \emph{directional derivative}
\[
Df(K; K') := \left. \frac{d}{d \lambda} \right|_{\lambda = 0} f(c_\lambda(K, K'))
\]
of \(f\) at \(K\) in the direction towards \(K'\).

\label{def:convex-space-directional-derivative}
\end{definition}

For any quadratic functional \(f\) and a fixed \(K \in \mathcal{V}\), the value \(Df(K; K')\) is well-defined and always a linear functional of \(K'\).

\begin{lemma}

Let \(f\) be a quadratic functional on a convex domain \(\mathcal{V}\), so that \(f(K) = h(K, K)\) for a convex-bilinear map \(h : \mathcal{V} \times \mathcal{V} \to \mathbb{R}\). Then we have the following for any \(K, K' \in \mathcal{V}\).
\[
Df(K; K') = h(K, K') + h(K', K) - 2 h (K, K)
\]
So the map \(Df(K; -) : \mathcal{V} \to \mathbb{R}\) is always well-defined and a linear functional.

\label{lem:derivative-calculation}
\end{lemma}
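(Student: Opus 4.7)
The plan is to expand $f(c_\lambda(K, K')) = h(c_\lambda(K, K'), c_\lambda(K, K'))$ using the convex-bilinearity of $h$, so that it becomes a polynomial in $\lambda$ of degree at most two; the directional derivative then exists automatically and is read off by differentiating at $\lambda = 0$.

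Concretely, I would first apply convex-linearity of $h(-, L)$ with $L := c_\lambda(K, K')$ fixed to obtain
\[
f(c_\lambda(K, K')) = (1-\lambda)\, h(K, c_\lambda(K, K')) + \lambda\, h(K', c_\lambda(K, K')),
\]
and then apply convex-linearity in the second argument of each of the two terms to get
\[
f(c_\lambda(K, K')) = (1-\lambda)^2 h(K, K) + (1-\lambda)\lambda\bigl[h(K, K') + h(K', K)\bigr] + \lambda^2 h(K', K').
\]
The right-hand side is a polynomial in $\lambda$, so in particular it is smooth at $\lambda = 0$; this already shows that the directional derivative in \Cref{def:convex-space-directional-derivative} is well-defined regardless of the choice of $K, K' \in \mathcal{V}$. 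Differentiating each term at $\lambda = 0$ yields $-2 h(K, K) + h(K, K') + h(K', K)$, which matches the claimed formula.

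For the linearity claim, fix $K$ and regard $Df(K; K') = h(K, K') + h(K', K) - 2 h(K, K)$ as a function of $K'$. The first two summands are convex-linear in $K'$ by the convex-bilinearity of $h$, and subtracting the constant $2 h(K, K)$ preserves convex-linearity, so $Df(K; -) : \mathcal{V} \to \mathbb{R}$ is a linear functional in the sense of \Cref{def:convex-linear}. There is no real obstacle here: the argument is essentially the standard polarization-identity expansion of a bilinear form, transplanted to the setting of an abstract convex domain, and the only thing one must be careful about is applying convex-linearity in each argument separately rather than trying to manipulate $c_\lambda$ as a vector-space operation directly.
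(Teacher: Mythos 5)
Your argument is the same as the paper's: expand $f(c_\lambda(K, K'))$ via convex-bilinearity into a polynomial in $\lambda$ and differentiate at $\lambda = 0$, then observe linearity in $K'$ from the bilinearity of $h$. You merely spell out the two-stage application of convex-linearity and the well-definedness/linearity remarks slightly more explicitly than the paper does, but the content is identical.
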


\begin{proof}
We have
\begin{equation}
\label{eqn:quadratic-functional}
\begin{split}
f(c_\lambda(K, K')) & = h(c_\lambda(K, K'), c_\lambda(K, K')) \\
& = (1 - \lambda)^2 h(K, K) + \lambda (1 - \lambda) \left( h(K, K') + h (K', K) \right) + \lambda^2 h(K', K')
\end{split}
\end{equation}
by bilinearity of \(h\). Take the derivative at \(\lambda = 0\).
\end{proof}

\begin{definition}

A functional \(f : \mathcal{V} \to \mathbb{R}\) on a convex domain \(\mathcal{V}\) is \emph{concave} (resp. \emph{convex}) if \(f(c_\lambda(K_1, K_2)) \geq (1 - \lambda) f(K_1) + \lambda f(K_2)\) (resp. \(f(c_\lambda(K_1, K_2)) \leq (1 - \lambda) f(K_1) + \lambda f(K_2)\)) for all \(K_1, K_2 \in \mathcal{V}\) and \(\lambda \in [0, 1]\).

\label{def:convex-space-concavity}
\end{definition}

To prove that \(K\) maximizes a concave quadratic functional \(f(K)\) on \(\mathcal{V}\), we only need to prove that \(Df(K; -)\) is a nonpositive linear functional on \(\mathcal{V}\).

\begin{theorem}

For any concave quadratic functional \(f\) on a convex domain \(\mathcal{V}\), the value \(K \in \mathcal{V}\) maximizes \(f(K)\) if and only if the convex-linear functional \(Df(K; -)\) is nonpositive.

\label{thm:quadratic-variation}
\end{theorem}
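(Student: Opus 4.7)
The plan is to prove both implications using only the quadratic expansion from \Cref{lem:derivative-calculation} and the definition of concavity, via the standard "concave function lies below its tangent" principle applied to the one-variable restriction of $f$ to the segment $c_\lambda(K, K')$.

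For the forward direction, I would fix any $K' \in \mathcal{V}$ and consider $g(\lambda) := f(c_\lambda(K, K'))$ for $\lambda \in [0, 1]$. By \Cref{lem:derivative-calculation} (or rather the computation inside its proof) the function $g$ is a polynomial of degree at most two in $\lambda$, and in particular differentiable, with $g'(0) = Df(K; K')$. If $K$ maximizes $f$ on $\mathcal{V}$, then $g(\lambda) \leq g(0)$ for every $\lambda \in [0, 1]$, so $g'(0) \leq 0$, giving $Df(K; K') \leq 0$ for every $K' \in \mathcal{V}$.

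For the backward direction, assume $Df(K; K') \leq 0$ for every $K' \in \mathcal{V}$, and again let $g(\lambda) := f(c_\lambda(K, K'))$. Concavity of $f$ gives $g(\lambda) \geq (1-\lambda) g(0) + \lambda g(1)$ for all $\lambda \in [0, 1]$. Rearranging yields
\[
\frac{g(\lambda) - g(0)}{\lambda} \geq g(1) - g(0)
\]
for every $\lambda \in (0, 1]$. Letting $\lambda \to 0^+$, the left-hand side tends to $g'(0) = Df(K; K')$, so
\[
f(K') - f(K) = g(1) - g(0) \leq Df(K; K') \leq 0,
\]
and thus $f(K') \leq f(K)$. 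Since $K' \in \mathcal{V}$ was arbitrary, $K$ maximizes $f$.

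The only subtlety is justifying that $g$ is differentiable at $0$ and that $g'(0) = Df(K; K')$ in the sense of \Cref{def:convex-space-directional-derivative}; this is where I would explicitly appeal to the quadratic expansion derived in the proof of \Cref{lem:derivative-calculation}, which shows that $g(\lambda)$ equals $(1-\lambda)^2 h(K,K) + \lambda(1-\lambda)(h(K,K') + h(K',K)) + \lambda^2 h(K',K')$, hence is smooth in $\lambda$ with the claimed derivative at zero. Apart from this bookkeeping, both directions are short, so no step is really a genuine obstacle.
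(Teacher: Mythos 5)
Your proof is correct, and the forward direction coincides with the paper's. For the backward direction you take a minor but genuine variant: the paper observes that $p(\lambda) := f(c_\lambda(K, K'))$ is a quadratic with nonpositive leading coefficient (by concavity) and nonpositive linear coefficient (by hypothesis, since the linear coefficient is exactly $Df(K;K')$), hence monotonically decreasing on $[0,1]$; you instead apply the chord inequality from concavity, rearrange to $\tfrac{g(\lambda)-g(0)}{\lambda} \geq g(1)-g(0)$, and let $\lambda\to 0^+$ to get $f(K')-f(K)\leq Df(K;K')\leq 0$. Both arguments lean on the same quadratic expansion from the derivative-calculation lemma to justify differentiability at $\lambda=0$. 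Your version has the small advantage of not needing to reason about polynomial coefficients explicitly and would apply to any concave functional whose one-dimensional restrictions are differentiable at $0$; the paper's version is a hair more self-contained since it reads the coefficients directly off the displayed expansion.
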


\begin{proof}
Assume that \(K\) is the maximizer of \(f(K)\). Then for any \(K' \in \mathcal{V}\), the value \(f(c_\lambda(K, K'))\) over all \(\lambda \in [0, 1]\) is maximized at \(\lambda = 0\). So taking the derivative at \(\lambda = 0\), we should have \(Df(K; K') \leq 0\).

Now assume on the other hand that \(K \in \mathcal{V}\) is chosen such that \(Df(K; -)\) is always nonpositive. Fix an arbitrary \(K' \in \mathcal{V}\). Observe that \(f(c_\lambda(K, K'))\) is a polynomial \(p(\lambda)\) of \(\lambda \in [0, 1]\) by \Cref{eqn:quadratic-functional}. Because \(f\) is concave, the polynomial \(p(\lambda)\) is also concave with respect to \(\lambda\) and the quadratic coefficient of \(p(\lambda)\) is nonpositive. The linear coefficient of \(p(\lambda)\) is \(Df(K; K')\) and this is nonpositive as well. So \(p(\lambda)\) is monotonically decreasing with respect to \(\lambda\) and we have \(f(K) \geq f(K')\) as desired.
\end{proof}

\begin{definition}

For any real-valued functionals \(f(V)\) and \(g(V)\) on the convex domain \(V \in \mathcal{V}\), write \(f(V) \equiv_V g(V)\) if and only if \(f(V) - g(V)\) is convex-linear in \(V \in \mathcal{V}\).

\label{def:modulo-linear}
\end{definition}

It is easy to check that the relation \(\equiv_V\) in \Cref{def:modulo-linear} is an equivalence relation on the real-valued functionals on convex domain \(\mathcal{V}\).

\begin{lemma}

Let \(h : \mathcal{V} \times \mathcal{V} \to \mathbb{R}\) be a convex-bilinear map on a convex domain \(\mathcal{V}\). Fix constants \(c_1, c_2 \in \mathcal{V}\). Define quadratic functionals \(f(K) = h(K, K)\) and \(g(K) = h(K + c_1, K + c_2)\), then we have \(f(K) \equiv_K g(K)\).

\label{lem:modulo-linear-const}
\end{lemma}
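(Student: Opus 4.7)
The plan is to simply expand $g(K)$ using the bilinearity of $h$ and read off the correction term. Using the embedding $e : \mathcal{V} \to V$ into the ambient vector space from \Cref{def:convex-spaces}, the sum $K + c_i$ is interpreted through $e$ exactly as in the motivating case $\mathcal{V} = \mathcal{K}$ where $+$ is the Minkowski sum, and the convex-bilinearity of $h$ extends to genuine bilinearity on the linear span of $e(\mathcal{V})$ in $V$. A direct bilinear expansion then yields
\[
g(K) = h(K + c_1, K + c_2) = h(K, K) + h(K, c_2) + h(c_1, K) + h(c_1, c_2),
\]
so that $g(K) - f(K) = h(K, c_2) + h(c_1, K) + h(c_1, c_2)$.

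The key observation is that each of the three resulting terms is convex-linear in $K$. The maps $K \mapsto h(K, c_2)$ and $K \mapsto h(c_1, K)$ are convex-linear directly from \Cref{def:convex-bilinear}, since fixing one argument of a convex-bilinear map produces a convex-linear map in the other argument. The constant $h(c_1, c_2)$ is trivially convex-linear, as any constant map preserves barycentric combinations. Sums of convex-linear maps are again convex-linear, so $g(K) - f(K)$ is convex-linear in $K$, which by \Cref{def:modulo-linear} is precisely the assertion $f(K) \equiv_K g(K)$.

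The only subtlety worth flagging, and the main thing I would want to justify carefully, is making sense of the bilinear expansion within the abstract convex domain framework, since $\mathcal{V}$ is not itself assumed to be a vector space and the expression $K + c_i$ requires interpretation. I expect this to be handled either by appealing to the embedding $e$ into $V$ and checking that a convex-bilinear $h$ extends to a genuine bilinear form on the linear span of $e(\mathcal{V})$, or by treating the additive structure as part of the ambient setup in the intended application, exactly as for $\mathcal{K}$ under Minkowski sum. Once this interpretation is fixed, the remainder of the proof is purely formal bilinear manipulation and should present no further difficulty.
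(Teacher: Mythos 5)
Your proof is correct and follows exactly the same computation as the paper's one-line argument, which simply writes $g(K) - f(K) = h(c_1, K) + h(K, c_2) + h(c_1, c_2)$ and observes this is linear in $K$. The subtlety you flag --- that the four-term bilinear expansion presupposes genuine bilinearity of $h$ on the ambient span, not merely convex-bilinearity --- is left implicit in the paper as well; in every application of this lemma (\Cref{lem:linvals}) the ambient domain is a vector space ($\mathbb{R}^2$ or $C^{\mathrm{BV}}[a,b]$) and $h$ is the honestly bilinear cross product or its integral form, so the expansion is unproblematic there.
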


\begin{proof}
We have \(g(K) - f(K) = h(c_1, K) + h(K, c_2) + h(c_1, c_2)\) which is linear in \(K\).
\end{proof}

\section{Curve Area Functional}
\label{sec:curve-area-functional}
We first recall the notion of Jordan arc and curve following Chapter 8 of \autocite{apostolMathematicalAnalysisModern}. See the reference for details.

\begin{definition}

A \emph{Jordan arc} \(\Gamma\) is the image of a continuous and injective function \(\mathbf{x} : [a, b] \to \mathbb{R}^2\). We call \(\mathbf{x}\) a \emph{parametrization} of \(\Gamma\). A \emph{Jordan curve} \(\Gamma\) is the image of a continuous and injective function \(\mathbf{x} : S^1 \to \mathbb{R}^2\).

\label{def:jordan-curve}
\end{definition}

Equivalently, a Jordan curve \(\Gamma\) is the image of a continuous function \(\mathbf{x} : [a, b] \to \mathbb{R}^2\) with \(a < b\) which is injective on \([a, b)\) and \(\mathbf{x}(a) = \mathbf{x}(b)\). We allow \(a=b\) for Jordan arcs, but we require \(a \neq b\) for Jordan curves in order for the following famous theorem to hold.

\begin{theorem}

(Jordan curve theorem) The complement \(\mathbb{R}^2 \setminus \Gamma\) of a Jordan curve \(\Gamma\) is a disjoint union of two connected components \(U\) and \(V\) where \(U\) is bounded and \(V\) is unbounded. Moreover, \(U\) and \(V\) are open and \(\partial U = \partial V = X\). Say that \(U\) is the \emph{region enclosed by \(\Gamma\)}.

\label{thm:jordan-curve}
\end{theorem}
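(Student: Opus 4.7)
The plan is to prove this classical result in two stages: first handle polygonal Jordan curves by a combinatorial parity argument, then extend to general continuous Jordan curves. I would begin with the easy observations: since $\Gamma$ is the continuous image of the compact set $S^1$ it is compact, hence closed, and $\mathbb{R}^2 \setminus \Gamma$ is open. Any connected component of an open subset of $\mathbb{R}^2$ is open, so all components are open. Because $\Gamma$ is bounded, the exterior of a large disk containing $\Gamma$ is a connected unbounded subset of $\mathbb{R}^2 \setminus \Gamma$, contained in a single unbounded component $V$. The remaining task is to show there is exactly one other component $U$ and that $\partial U = \partial V = \Gamma$.

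For the polygonal case, suppose $\Gamma$ is a simple closed polygon. For each point $q \in \mathbb{R}^2 \setminus \Gamma$, shoot a ray $r_q$ from $q$ in a generic direction avoiding all vertices of $\Gamma$, and define the parity $\pi(q) \in \mathbb{Z}/2\mathbb{Z}$ to be the number of transverse intersections of $r_q$ with $\Gamma$ modulo $2$. One checks that $\pi(q)$ is independent of the generic direction (crossings come in pairs as the direction rotates past a vertex) and is locally constant on $\mathbb{R}^2 \setminus \Gamma$. Points immediately on opposite sides of an edge of $\Gamma$ have opposite parities, so both values occur; the unbounded component has parity $0$. This shows $\mathbb{R}^2 \setminus \Gamma$ has at least two components, and a direct path-construction argument, sliding polygonal paths along $\Gamma$ without crossing it, shows that any two same-parity points are connected, giving exactly two components. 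Every point of $\Gamma$ has neighbors of both parities, so $\partial U = \partial V = \Gamma$.

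For a general Jordan curve, I would approximate $\Gamma$ uniformly by a sequence of polygonal Jordan curves $\Gamma_n$ converging to $\Gamma$ in Hausdorff distance, and pass to the limit, using that inner and outer components are stable under small uniform perturbations of a simple curve. A cleaner but less elementary route is to invoke Alexander duality in the one-point compactification: applied to $\Gamma \subseteq S^2 = \mathbb{R}^2 \cup \{\infty\}$ with $\Gamma \cong S^1$, it yields $\tilde{H}_0(S^2 \setminus \Gamma) \cong \check{H}^1(S^1) \cong \mathbb{Z}$, so $S^2 \setminus \Gamma$ has exactly two connected components; removing the one containing $\infty$ gives the two components of $\mathbb{R}^2 \setminus \Gamma$, one bounded and one unbounded.

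The main obstacle is the approximation step in the elementary route: one must verify that the polygonal approximants can be chosen to be simple (not self-intersecting) and that the enclosed bounded regions behave continuously in the limit; both are standard but technical. For the purposes of the present paper the statement is classical, so I would simply cite a standard reference such as Apostol, Chapter 8 of \autocite{apostolMathematicalAnalysisModern}, rather than redeveloping the proof; the theorem is invoked here only to make rigorous the notion of the region enclosed by a Jordan curve, which is what is needed for the curve area functional defined in the next subsection.
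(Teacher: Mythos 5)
Your final paragraph is exactly what the paper does: \Cref{thm:jordan-curve} is stated without proof as a classical result, and the surrounding text defers to Chapter 8 of \autocite{apostolMathematicalAnalysisModern} for the definitions and the theorem itself. Your sketched proofs (parity argument for polygons plus approximation, or Alexander duality) are correct classical routes but go beyond what the paper provides; since you correctly identify that citing a standard reference is the appropriate move here, the proposal matches the paper's approach. One small note: the paper's statement writes ``$\partial U = \partial V = X$'' where $X$ is undefined and should read $\Gamma$; you implicitly corrected this by writing $\partial U = \partial V = \Gamma$.
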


We give orientations to Jordan arcs and curves.

\begin{definition}

Let \(\Gamma\) be any Jordan arc. Take any parametrization \(\mathbf{x} : [a, b] \to \mathbb{R}^2\) of \(\Gamma\) and call the points \(\mathbf{x}(a)\) and \(\mathbf{x}(b)\) the \emph{endpoints} of \(\Gamma\). Note that the endpoints of \(\Gamma\) are independent of the choice of \(\mathbf{x}\). An \emph{oriented Jordan arc} \(\overrightarrow{\Gamma}\) is a Jordan arc \(\Gamma\) with one endpoint \(p\) marked as the \emph{starting point} and the other endpoint \(q\) marked as the \emph{ending point}. A \emph{parametrization} \(\mathbf{x} : [a, b]\to\mathbb{R}^2\) of an oriented Jordan arc \(\overrightarrow{\Gamma}\) is a parametrization of \(\Gamma\) that respects the order \(\mathbf{x}(a) = p\) and \(\mathbf{x}(b) = q\) of the endpoints of \(\overrightarrow{\Gamma}\).

\label{def:jordan-arc-orientation}
\end{definition}

\begin{definition}

Let \(\Gamma\) be any Jordan curve. An \emph{oriented Jordan curve} \(\overrightarrow{\Gamma}\) is the unoriented curve \(\Gamma\) with a \emph{clockwise} or \emph{counterclockwise} direction assigned. A \emph{parametrization} \(\mathbf{x} : S^1 \to \mathbb{R}^2\) of an oriented Jordan curve \(\overrightarrow{\Gamma}\) is a parametrization of \(\Gamma\) such that, for any point \(p\) inside the region enclosed by \(\Gamma\), the parametrization \(\mathbf{x}\) rotates around \(p\) in the specified direction.

\label{def:jordan-curve-orientation}
\end{definition}

We now define the \emph{curve area functional} \(\mathcal{J}(\mathbf{x})\) of \(\mathbf{x} : [a, b] \to \mathbb{R}^2\), so that if \(\mathbf{x}\) parametrizes a counterclockwise Jordan curve, then \(\mathcal{J}(\mathbf{x})\) is the area of the region enclosed by \(\mathbf{x}\).

\begin{definition}

For two vectors \(p = (a, b)\) and \(q = (c, d)\) in \(\mathbb{R}^2\), define their cross product \(p \times q := ad - b c \in \mathbb{R}\). For a pair \(p = (p_1, p_2)\) of bounded measurable functions and a pair \(\mu = (\mu_1, \mu_2)\) of finite signed measures on a set \(X\), define the finite signed measure \(p \times \mu := p_1 \, \mu_2 - p_2 \, \mu_1\) on \(X\).

\label{def:plane-cross-product}
\end{definition}

\begin{definition}

Let \(C^\mathrm{BV}[a, b]\) be the real vector space of all continuous maps of bounded variation from \([a, b]\) to \(\mathbb{R}^2\).

\label{def:bounded-variation-space}
\end{definition}

\begin{definition}

For any \(\mathbf{x} \in C^{\mathrm{BV}}[a, b]\), define its \emph{curve area functional} \(\mathcal{J}(\mathbf{x})\) as
\[
\mathcal{J}(\mathbf{x}) := \frac{1}{2} \int_a^b \mathbf{x}(t) \times d\mathbf{x}(t).
\]

\label{def:curve-area-functional}
\end{definition}

\begin{proposition}

\(\mathcal{J}(\mathbf{x})\) is quadratic on \(\mathbf{x} \in C^{\mathrm{BV}}[a, b]\).

\label{pro:curve-area-functional-quadratic}
\end{proposition}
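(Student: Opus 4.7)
The plan is to exhibit an explicit convex-bilinear form $h$ on $C^{\mathrm{BV}}[a,b] \times C^{\mathrm{BV}}[a,b]$ such that $\mathcal{J}(\mathbf{x}) = h(\mathbf{x}, \mathbf{x})$, which by Definition \ref{def:convex-space-quadratic} will establish that $\mathcal{J}$ is a quadratic functional. Motivated by the integrand of $\mathcal{J}$, the natural candidate is
\[
h(\mathbf{x}, \mathbf{y}) := \frac{1}{2} \int_a^b \mathbf{x}(t) \times d\mathbf{y}(t).
\]
The diagonal identity $h(\mathbf{x}, \mathbf{x}) = \mathcal{J}(\mathbf{x})$ is then immediate from Definition \ref{def:curve-area-functional}.

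Before verifying bilinearity I would first confirm that $h$ is well-defined. Each component of $\mathbf{y} \in C^{\mathrm{BV}}[a,b]$ is continuous and of bounded variation, so by Definition \ref{def:lebesgue-stieltjes} its Lebesgue--Stieltjes measure $d\mathbf{y}$ exists as a pair of finite signed Borel measures on $[a,b]$. Since $\mathbf{x}$ is continuous on the compact interval $[a,b]$ it is bounded and Borel measurable, and hence the Lebesgue--Stieltjes integral defining $h(\mathbf{x}, \mathbf{y})$ is finite; the cross product just bundles the two scalar integrals per Definition \ref{def:plane-cross-product}.

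The main step is then bilinearity. Linearity in the first argument $\mathbf{x}$ follows directly from linearity of the integral together with bilinearity of the cross product. Linearity in the second argument $\mathbf{y}$ requires one more ingredient, namely $d(r \mathbf{y}_1 + s \mathbf{y}_2) = r \, d\mathbf{y}_1 + s \, d\mathbf{y}_2$, which is Proposition \ref{pro:lebesgue-stieltjes-sum} applied componentwise. Since $C^{\mathrm{BV}}[a,b]$ is a real vector space, it is a convex domain under the identity embedding and convex combinations $c_\lambda$ coincide with ordinary affine combinations; thus bilinearity in the vector-space sense immediately upgrades to convex-bilinearity in the sense of Definition \ref{def:convex-bilinear}.

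There is no genuine obstacle in this argument: everything reduces to linearity of the Lebesgue--Stieltjes measure in its integrator and of the integral in its integrand, both of which are already recorded in Section \ref{sec:lebesgue--stieltjes-measure}. The only point that merits care in the write-up is distinguishing the two roles of $\mathbf{x}$ and $\mathbf{y}$ in $\mathbf{x} \times d\mathbf{y}$, since linearity in each uses a different tool.
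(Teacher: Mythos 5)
Your proposal is correct and takes essentially the same approach as the paper: you exhibit the bilinear form $\mathcal{B}(\mathbf{x}_1, \mathbf{x}_2) = \frac{1}{2}\int_a^b \mathbf{x}_1 \times d\mathbf{x}_2$ with $\mathcal{J}(\mathbf{x}) = \mathcal{B}(\mathbf{x},\mathbf{x})$, which is exactly what the paper does. You simply spell out the well-definedness and the bilinearity (via \Cref{pro:lebesgue-stieltjes-sum} in the second slot) that the paper leaves to the reader.
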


\begin{proof}
\(\mathcal{J}(\mathbf{x}) = \mathcal{B}(\mathbf{x}, \mathbf{x})\) where \(\mathcal{B}(\mathbf{x}_1, \mathbf{x}_2) := \frac{1}{2} \int_a^b \mathbf{x}_1(t) \times d \mathbf{x}_2(t)\) is bilinear in \(\mathbf{x}_1, \mathbf{x}_2 \in C^{\mathrm{BV}}[a, b]\).
\end{proof}

Note that the integral in \Cref{def:curve-area-functional} is on the Lebesgue–Stieltjes measure \(\mathrm{d} \mathbf{x}(t)\) of \(\mathbf{x}\) taken the cross product with \(\mathbf{x}(t)\) as in \Cref{def:plane-cross-product}. Writing the coordinates of \(\mathbf{x}(t) = (x(t), y(t))\), we can write \(\mathrm{d} \mathbf{x} = (\mathrm{d} x, \mathrm{d} y)\) and \(\mathcal{J}(\mathbf{x})\) more explicitly as
\begin{equation}
\label{eqn:line-integral}
\mathcal{J}(\mathbf{x}) = \frac{1}{2} \int_a^b R_{\pi/2}(\mathbf{x}(t)) \cdot d\mathbf{x}(t) = \frac{1}{2} \int_a^b x(t)\, dy(t) - y(t)\, dx(t)
\end{equation}
instead, where \(R_{\pi/2}(x, y) = (-y, x)\) is the rotation of \(\mathbb{R}^2\) along the origin by \(\pi/2\).

\begin{theorem}

If \(\mathbf{x}\) is a rectifiable parametrization of a Jordan curve oriented counterclockwise, then \(\mathcal{J}(\mathbf{x})\) is the area of the region enclosed by \(\mathbf{x}\).

\label{thm:curve-area-functional-area}
\end{theorem}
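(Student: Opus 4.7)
My plan is a two-step approximation: verify the identity for simple polygons via the shoelace formula, then approximate a general rectifiable Jordan curve by simple polygons and pass to the limit.

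For the polygonal case, let $\mathbf{x}$ traverse a simple counterclockwise polygon with vertices $P_0, P_1, \dots, P_n = P_0$ at affine speed on each edge. Then $\int_{t_i}^{t_{i+1}} \mathbf{x} \times d\mathbf{x} = P_i \times P_{i+1}$, so $\mathcal{J}(\mathbf{x}) = \frac{1}{2}\sum_i P_i \times P_{i+1}$, the classical shoelace formula. An induction on $n$ — cutting the polygon along an interior diagonal into two simple polygons of fewer vertices — shows this sum equals the enclosed area.

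Next, given a rectifiable parametrization $\mathbf{x} \in C^{\mathrm{BV}}[a,b]$ of a Jordan curve, choose partitions $a = t_0^{(n)} < \dots < t_{k_n}^{(n)} = b$ of mesh tending to $0$ and let $\mathbf{x}_n$ be the piecewise linear interpolation through the vertices $\mathbf{x}(t_i^{(n)})$. The main obstacle is to ensure $\mathbf{x}_n$ is a simple polygon for large $n$. Since $\mathbf{x}$ is a continuous bijection from the compact domain (with $a \sim b$) onto the Jordan curve, it is a homeomorphism and $\mathbf{x}^{-1}$ is uniformly continuous: there is a modulus $\eta(\epsilon) \downarrow 0$ such that $|\mathbf{x}(s) - \mathbf{x}(t)| \leq \eta(\epsilon)$ forces $s, t$ to be within arc-distance $\epsilon$. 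For fine enough mesh, any two non-adjacent chords of $\mathbf{x}_n$ have endpoints on disjoint arcs separated in $\mathbb{R}^2$ by a positive amount and thus cannot cross, while adjacent chords share only their common vertex. Hence $\mathbf{x}_n$ is a simple counterclockwise polygon.

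Finally, pass to the limit on both sides. Closedness of $\mathbf{x}$ gives $\mathcal{J}(\mathbf{x}) = \int x\,dy$, and the polygonal approximations satisfy $V(\mathbf{x}_n) \leq V(\mathbf{x})$ with $\mathbf{x}_n \to \mathbf{x}$ uniformly. The splitting
\[
\int x_n\,dy_n - \int x\,dy = \int (x_n - x)\,dy_n + \int x\,(dy_n - dy)
\]
then yields $\mathcal{J}(\mathbf{x}_n) \to \mathcal{J}(\mathbf{x})$: the first term is bounded by $\|x_n - x\|_\infty V(y_n) \to 0$, and the second vanishes by Helly's selection theorem combined with weak convergence of $dy_n$ to $dy$. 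For the enclosed regions, the symmetric difference $R_n \triangle R$ lies in a tubular neighborhood of the Jordan curve of width $\|\mathbf{x}_n - \mathbf{x}\|_\infty$, whose Lebesgue measure tends to $0$ because the curve has finite length. Combining $\mathcal{J}(\mathbf{x}_n) = |R_n|$ from the polygonal step with these two convergences gives $\mathcal{J}(\mathbf{x}) = |R|$.
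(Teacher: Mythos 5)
The paper proves this by a one-line citation of Green's theorem for rectifiable Jordan curves (Theorem 10.43 of Apostol), whereas you attempt to reprove that theorem from scratch via inscribed polygons. The two-step outline (shoelace for simple polygons, then pass to the limit) is the natural elementary route, and most of your limit passage is fine: the bound $\lVert x_n - x \rVert_\infty V(y_n)$, the weak-$*$ convergence $dy_n \to dy$ for uniformly convergent inscribed interpolants of a BV curve, and the estimate $|R_n \triangle R| \to 0$ via a tubular neighborhood of a finite-length curve are all sound.

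The gap is in the crucial claim that the inscribed polygon $\mathbf{x}_n$ is simple for fine enough mesh. Your justification is that any two non-adjacent chords ``have endpoints on disjoint arcs separated in $\mathbb{R}^2$ by a positive amount and thus cannot cross.'' But that separation is \emph{not} uniformly positive over all pairs of non-adjacent chords: two chords whose indices differ by exactly $2$ live on arcs whose parametric gap is a single mesh step, so the $\mathbb{R}^2$-distance between those arcs is at most the diameter of one mesh interval, which tends to $0$ as the mesh is refined. The uniform continuity of $\mathbf{x}^{-1}$ gives a lower bound of the form ``if $|\mathbf{x}(s)-\mathbf{x}(t)|$ is small then $s,t$ are parametrically close''; it does \emph{not} give a mesh-independent lower bound on the distance between parametrically close but non-adjacent chords, which is exactly what you would need. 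Indeed, for a merely rectifiable Jordan curve the inscribed polygon with a fine uniform mesh is not automatically simple: the curve can have arbitrarily small-scale near-self-intersections (two arcs passing within distance comparable to the mesh), and in that regime short chords from the two arcs can cross. Making the polygonal approximation both inscribed and simple requires an adaptive choice of partition points or a different approximation scheme; as written, your argument proves simplicity only for pairs of chords whose parametric gap is bounded below independently of $n$, which is not all non-adjacent pairs. This step is the genuine content of the classical theorem you are reproving, and it is where your proposal currently breaks down.
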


\begin{proof}
Apply Green’s theorem (Theorem 10.43, page 289 of \autocite{apostolMathematicalAnalysisModern}) on the curve \(\mathbf{x}\) and vector field \((P, Q) = (-y, x)\).
\end{proof}

\begin{remark}

If \(\mathbf{x}\) is not closed (that is, \(\mathbf{x}(a) \neq \mathbf{x}(b)\)), the sofa area functional \(\mathcal{J}(\mathbf{x})\) measures the signed area of the region bounded by the curve \(\mathbf{x}\), and two line segments connecting the origin to \(\mathbf{x}(a)\) and \(\mathbf{x}(b)\) respectively.

\label{rem:curve-area-functional-segment}
\end{remark}

Fix an oriented Jordan arc \(\Gamma\) and take any parametrization \(\mathbf{x}\) of \(\Gamma\). Then the value \(\mathcal{J}(\mathbf{x})\) is a line integral on \(\Gamma\) (\Cref{eqn:line-integral}), so the value of \(\mathcal{J}(\mathbf{x})\) is independent of the choice of \(\mathbf{x}\). Similarly, any parametrization of an oriented Jordan curve \(\Gamma\) are circular shifts of each other, so the value \(\mathcal{J}(\mathbf{x})\) is fixed.

\begin{definition}

For any oriented Jordan arc or curve \(\Gamma\), the value of \(\mathcal{J}(\mathbf{x})\) is independent of the choice of parametrization \(\mathbf{x}\) of \(\Gamma\); call this value \(\mathcal{J}(\Gamma)\).

\label{def:curve-area-jordan-arc}
\end{definition}

In particular, the curve area functional of the oriented line segment from \(p\) to \(q\) is the following.

\begin{definition}

For any two points \(p, q \in \mathbb{R}^2\), define \(\mathcal{J}(p, q) := (p \times q) / 2\).

\label{def:curve-area-line-segment}
\end{definition}

\begin{proposition}

The curve area functional of the oriented line segment from point \(p\) to \(q\) is \(\mathcal{J}(p, q)\). Moreover, if there is some \(t \in S^1\) and \(h \in \mathbb{R}\) so that \(p, q \in l(t, h)\) and \(q - p = d v_t\) for some \(d \in \mathbb{R}\), then \(\mathcal{J}(p, q) = hd/2\).

\label{pro:curve-area-line-segment}
\end{proposition}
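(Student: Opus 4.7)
The plan is to verify both claims by direct computation using the explicit parametrization of the segment. For the first claim, I would parametrize the oriented segment from \(p\) to \(q\) by \(\mathbf{x}(s) := (1-s)p + sq\) for \(s \in [0, 1]\). This \(\mathbf{x}\) is continuous and of bounded variation (being affine), so \(\mathrm{d}\mathbf{x}(s) = (q - p)\,\mathrm{d} s\) in the Lebesgue--Stieltjes sense (\Cref{pro:lebesgue-stieltjes-abs-cont}), and \(\mathbf{x}\) is a valid parametrization of the oriented segment, so \Cref{def:curve-area-jordan-arc} applies.

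Substituting into \Cref{def:curve-area-functional} and using bilinearity of the cross product together with \(p \times p = q \times q = 0\) and \(q \times p = -p \times q\), the integrand simplifies as
\[
((1-s)p + sq) \times (q - p) = (1-s)(p \times q) + s(p \times q) = p \times q,
\]
which is independent of \(s\). Integration over \(s \in [0, 1]\) then yields \(\mathcal{J}(\mathbf{x}) = (p \times q)/2 = \mathcal{J}(p, q)\) as defined in \Cref{def:curve-area-line-segment}.

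For the second claim, since \(p \in l(t, h)\) means \(p \cdot u_t = h\), and since \(\{u_t, v_t\}\) is an orthonormal basis, I can write \(p = h u_t + \alpha v_t\) for some \(\alpha \in \mathbb{R}\). By hypothesis \(q = p + d v_t = h u_t + (\alpha + d) v_t\). Using \(u_t \times u_t = v_t \times v_t = 0\) and \(u_t \times v_t = \cos^2 t + \sin^2 t = 1\) (so \(v_t \times u_t = -1\)), bilinearity gives
\[
p \times q = h(\alpha + d)(u_t \times v_t) + h\alpha (v_t \times u_t) = h(\alpha + d) - h\alpha = hd,
\]
whence \(\mathcal{J}(p, q) = hd/2\). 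No real obstacle is anticipated here; both parts are routine once the segment is parametrized affinely and the alternating property of \(\times\) is used to collapse the integrand to a constant.
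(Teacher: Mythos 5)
Your proof is correct and takes essentially the same approach as the paper: parametrize the segment affinely on \([0,1]\), reduce the integrand to the constant \(p \times q\), and integrate. For the second claim the paper computes \(p \times q = p \times (q-p) = p \times (dv_t) = d(p\cdot u_t) = hd\) directly, whereas you expand \(p\) and \(q\) in the \(\{u_t, v_t\}\) basis — the same calculation with slightly more bookkeeping, so there is no substantive difference.
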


\begin{proof}
Parametrize the line segment from \(p\) to \(q\) as \(\mathbf{x}(t) = p + (q - p) t\) for \(t \in [0, 1]\). Then \(\mathcal{J}(\mathbf{x})\) evaluates to \((p \times q) / 2\) which is \(\mathcal{J}(p, q)\). Moreover,
\[
p \times q = p \times (q - p) = p \times (d v_t) = s (p \cdot u_t) = hd
\]
so \(\mathcal{J}(p, q) = hd/2\).
\end{proof}

\begin{proposition}

If two points \(p, q \in \mathbb{R}^2\) and the origin \(O = (0, 0)\) are on a common line, then \(\mathcal{J}(p, q) = 0\).

\label{pro:curve-area-line-segment-colinear}
\end{proposition}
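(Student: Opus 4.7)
The plan is to derive this directly from the preceding \Cref{pro:curve-area-line-segment}, which already contains the substance: if $p, q \in l(t, h)$ and $q - p = d v_t$, then $\mathcal{J}(p, q) = hd/2$. Under the hypothesis that $p$, $q$, and the origin $O = (0,0)$ are collinear, there is a line $l$ containing all three points. Writing $l = l(t, h)$ for some normal angle $t \in S^1$ and signed distance $h \in \mathbb{R}$, the fact that $O \in l$ forces $O \cdot u_t = h$, hence $h = 0$. Since $p, q \in l(t, 0)$, the difference $q - p$ is parallel to the line and can be written $q - p = d v_t$ for some $d \in \mathbb{R}$. Applying the second half of \Cref{pro:curve-area-line-segment} with $h = 0$ immediately yields $\mathcal{J}(p, q) = hd/2 = 0$.

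Alternatively, and even more directly, one can argue from the definition $\mathcal{J}(p, q) = (p \times q)/2$ in \Cref{def:curve-area-line-segment}. Collinearity of $O$, $p$, and $q$ means that $p$ and $q$, viewed as vectors from the origin, are linearly dependent in $\mathbb{R}^2$; writing $p = (a, b)$ and $q = (c, d)$, linear dependence gives $ad - bc = 0$, i.e., $p \times q = 0$. Either argument is a one-liner, so there is no real obstacle: the only thing to watch is the degenerate case where $p$ or $q$ equals $O$, but in that case $p \times q = 0$ holds trivially and the collinearity hypothesis is vacuous. I would present the proof in the first form, since it reuses the previous proposition cleanly and fits the surrounding narrative about line segments lying on supporting lines.
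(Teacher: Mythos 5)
Your first argument is exactly the paper's proof — observe that the common line passes through the origin so its signed distance $h$ is $0$, then apply \Cref{pro:curve-area-line-segment}. The alternative cross-product computation is also fine, but the primary route you give matches the paper.
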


\begin{proof}
Set \(h=0\) in \Cref{pro:curve-area-line-segment}.
\end{proof}

We will often form a Jordan curve by concatenating multiple Jordan arcs.

\begin{definition}

Say that an oriented Jordan arc or curve \(\Gamma\) is the \emph{concatenation} of the Jordan arcs \(\Gamma_1, \Gamma_2, \dots, \Gamma_n\) in order, if the ending point of \(\Gamma_{i}\) matches with the starting point of \(\Gamma_{i+1}\) for all \(1 \leq i \leq n - 1\), and \(\Gamma\) is obtained by following \(\Gamma_1, \Gamma_2, \dots\) in order.

\label{def:concatenation}
\end{definition}

\begin{proposition}

If \(\Gamma\) is the concatenation of the Jordan arcs \(\Gamma_1, \Gamma_2, \dots, \Gamma_n\) in order, then \(\mathcal{J}(\Gamma) = \sum_{i=1}^n \mathcal{J}(\Gamma_i)\).

\label{pro:curve-area-functional-additive}
\end{proposition}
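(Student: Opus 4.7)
The plan is to reduce everything to additivity of the Lebesgue–Stieltjes integral over adjacent intervals. First I would choose parametrizations $\mathbf{x}_i : [a_i, b_i] \to \mathbb{R}^2$ of each oriented Jordan arc $\Gamma_i$, then reparametrize each one by an affine change of variable so that the domains become adjacent closed intervals $[c_{i-1}, c_i]$ with $c_{i-1} < c_i$ and $c_i$ common to the end of $\mathbf{x}_i$'s domain and the start of $\mathbf{x}_{i+1}$'s domain. This is allowed because $\mathcal{J}(\Gamma_i)$ is independent of the choice of parametrization by \Cref{def:curve-area-jordan-arc}, and an affine reparametrization clearly preserves membership in $C^{\mathrm{BV}}$ and the value of $\mathcal{J}$.

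Next, I would define $\mathbf{x} : [c_0, c_n] \to \mathbb{R}^2$ by $\mathbf{x}(t) := \mathbf{x}_i(t)$ for $t \in [c_{i-1}, c_i]$; this is well-defined at each junction $c_i$ because by the definition of concatenation the ending point of $\Gamma_i$ equals the starting point of $\Gamma_{i+1}$. The map $\mathbf{x}$ is continuous (each piece is continuous and they agree at the junctions), and it is of bounded variation since its variation is bounded by the sum of the variations of the $\mathbf{x}_i$. Moreover $\mathbf{x}$ is a parametrization of $\Gamma$ respecting the orientation, so by \Cref{def:curve-area-jordan-arc} we have $\mathcal{J}(\Gamma) = \mathcal{J}(\mathbf{x})$.

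It then remains to split the integral. Writing $\mathbf{x} = (x, y)$ and $\mathbf{x}_i = (x_i, y_i)$, the Lebesgue–Stieltjes measure $\mathrm{d}\mathbf{x}$ restricted to the half-open interval $(c_{i-1}, c_i]$ coincides with $\mathrm{d}\mathbf{x}_i$ restricted to that interval, since both are determined by the same right-continuous function of bounded variation on that interval (\Cref{def:lebesgue-stieltjes}). Hence
\[
\mathcal{J}(\mathbf{x}) = \frac{1}{2}\int_{c_0}^{c_n} \mathbf{x}(t) \times d\mathbf{x}(t) = \frac{1}{2}\sum_{i=1}^{n} \int_{(c_{i-1}, c_i]} \mathbf{x}_i(t) \times d\mathbf{x}_i(t) = \sum_{i=1}^n \mathcal{J}(\mathbf{x}_i),
\]
where adding the single point $\{c_{i-1}\}$ back to each integral contributes zero since the Lebesgue–Stieltjes measure assigns zero to the left endpoint. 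Since $\mathcal{J}(\mathbf{x}_i) = \mathcal{J}(\Gamma_i)$, this gives $\mathcal{J}(\Gamma) = \sum_{i=1}^n \mathcal{J}(\Gamma_i)$ as claimed.

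I do not expect any real obstacle here; the only points requiring minor care are (a) that the concatenated $\mathbf{x}$ is a legitimate parametrization of the oriented Jordan arc or curve $\Gamma$ in the sense of \Cref{def:jordan-arc-orientation} and \Cref{def:jordan-curve-orientation}, and (b) the bookkeeping with the left endpoints of the Lebesgue–Stieltjes measure at each junction, which is handled by splitting $[c_0, c_n]$ into the half-open pieces $(c_{i-1}, c_i]$ together with the singleton $\{c_0\}$ of measure zero.
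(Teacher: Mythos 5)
Your proof is correct and follows essentially the same approach as the paper (whose own proof is truncated after the first sentence — ``Take the parametrization $\mathbf{x}$ of $\Gamma$, and integrate \Cref{def:curve-area-functional} into'' — but clearly intends the same decomposition of the Lebesgue–Stieltjes integral over adjacent subintervals). Your version supplies the details the paper omits: the affine reparametrization to glue the domains, the observation that $\mathrm{d}\mathbf{x}$ restricted to $(c_{i-1}, c_i]$ agrees with $\mathrm{d}\mathbf{x}_i$ there (both are determined by their values on half-open subintervals, by the uniqueness clause of \Cref{def:lebesgue-stieltjes}), and the bookkeeping at the left endpoints where the Lebesgue–Stieltjes measure vanishes.
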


\begin{proof}
Take the parametrization \(\mathbf{x}\) of \(\Gamma\), and integrate \Cref{def:curve-area-functional} into
\end{proof}

\begin{proposition}

Let \(\Gamma\) be a Jordan curve which is the concatenation of the Jordan arcs \(\Gamma_1, \Gamma_2, \dots, \Gamma_n\) in order. Assume that there is a half-plane \(H'\) containing \(\Gamma\) with the boundary \(l'\) and normal angle \(t \in S^1\) (and thus the normal vector \(u_t\)). Assume that some arc \(\Gamma_i\) of \(\Gamma\) is an oriented line segment \(s\) of length \(> 0\) on \(l'\) in the positive direction of \(v_t\). Then \(\Gamma\) is oriented counterclockwise.

\label{pro:jordan-curve-orientation}
\end{proposition}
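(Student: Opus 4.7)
The proof will show that $\Gamma$ is oriented counterclockwise by locating a point $q$ in the bounded component of $\mathbb{R}^2 \setminus \Gamma$ and verifying directly that any parametrization respecting the concatenation winds once counterclockwise around $q$.

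First I would apply a rigid motion---which preserves Jordan curves, concatenations, and the notion of counterclockwise orientation---to reduce to the case $t=0$, so that $u_t = (1,0)$, $v_t = (0,1)$, the line $l'$ equals the $y$-axis, and $H' = \{x \leq 0\}$. Fix a parametrization $\mathbf{x} : [a,b] \to \mathbb{R}^2$ of $\Gamma$ respecting the order of the concatenation, and let $[c,d] \subset [a,b]$ be the subinterval on which $\mathbf{x}$ traces the segment $s = \Gamma_i$, so that $\mathbf{x}(d) - \mathbf{x}(c) = L v_t$ with $L > 0$.

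The next step is to pick $q$ inside the bounded component $U$. Choose $e \in (c,d)$ and put $p := \mathbf{x}(e)$; since $\mathbf{x}$ is injective on $[a,b)$ and $p$ is not the basepoint $\mathbf{x}(a) = \mathbf{x}(b)$, there is an open disk $D$ around $p$ with $\mathbf{x}^{-1}(D) \subset (c,d)$, so $\Gamma \cap D = s \cap D$ is a vertical diameter of $D$. For $\epsilon > 0$ smaller than the radius of $D$, set $q := p - \epsilon u_t$. Then $p + \epsilon u_t$ lies in $\mathbb{R}^2 \setminus H'$, and the half-line from $p + \epsilon u_t$ in direction $u_t$ escapes to infinity within $\mathbb{R}^2 \setminus \Gamma$, placing $p + \epsilon u_t$ in the unbounded component $V$; the straight segment from $q$ to $p + \epsilon u_t$ lies in $D$ and meets $\Gamma$ only at $p$, so by the Jordan curve theorem $q \in U$.

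The final step is to show that $\mathbf{x}$ winds once counterclockwise around $q$. Consider the ray $R := \{q + r u_t : r > 0\}$. For $r \in (0, \epsilon)$ the ray stays inside $D$ off the $y$-axis, hence off $\Gamma$; at $r = \epsilon$ it hits $\Gamma$ at $p$; for $r > \epsilon$ it lies in $\{x > 0\}$ and so is disjoint from $\Gamma \subset H'$. Thus $R \cap \Gamma = \{p\}$, the intersection is transversal, and $\mathbf{x}$ moves through $p$ in direction $v_t$, which is the ray direction $u_t$ rotated by $+\pi/2$, i.e., $\mathbf{x}$ crosses $R$ from right to left. By the standard ray-crossing count of the winding number of a Jordan curve, this gives winding number $+1$ around $q$; by Definition~\ref{def:jordan-curve-orientation}, $\Gamma$ is oriented counterclockwise.

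The main obstacle is the invocation of the winding-number-via-signed-ray-crossings formula at the end, which is classical but not explicitly developed inside the excerpt. A fully self-contained version would instead track a continuous lift $\theta : [a,b] \to \mathbb{R}$ of $\arg(\mathbf{x}(\tau) - q)$: over $[c,d]$ the vector $\mathbf{x}(\tau) - q$ stays in the right half-plane and its angle sweeps through $+\pi + o(1)$ as $\epsilon \to 0$, while the remaining arcs, being confined to $H'$, cannot contribute an excess $\pm 2\pi$ of winding, forcing $\theta(b) - \theta(a) = +2\pi$.
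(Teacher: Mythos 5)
Your proof is correct but takes a genuinely different route from the paper. The paper's argument is a homotopy-free total-turning computation: it fixes the endpoints of the straight arc $\Gamma_i$, deforms $\Gamma_i$ slightly outside $H'$ so the rest of the curve is on one side, then measures the total angular sweep of the curve around the midpoint $r := (p+q)/2$ of $\Gamma_i$, which sits on $l'$. The portion of $\Gamma$ other than $\Gamma_i$ lies in $H'$, so its direction from $r$ sweeps $\pi$ counterclockwise; the bulged-out $\Gamma_i$ also sweeps $\pi$ counterclockwise; the sum $2\pi$ gives the orientation. You instead construct an explicit point $q$ in the bounded component by normalizing coordinates, taking a disk $D$ around an interior point $p$ of $s$ so that $\Gamma \cap D$ is a diameter, placing $q$ just inside $H'$, and invoking the Jordan curve theorem to see $q$ and $p + \epsilon u_t$ lie in opposite components; then you count the single transversal crossing of the ray $\{q + r u_t : r > 0\}$ with $\Gamma$ and read off winding number $+1$. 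The tradeoffs: your argument avoids the paper's deformation and the unstated continuity claim that a small perturbation of the segment preserves orientation, and it pins down the interior point cleanly. On the other hand it leans on the signed ray-crossing characterization of winding number, which you correctly flag as being outside what the paper develops; the paper's version, while terser, invokes only the direct angular-sweep computation that \Cref{def:jordan-curve-orientation} calls for. Your proposed fallback via a continuous lift $\theta$ of $\arg(\mathbf{x}(\tau) - q)$ would close that gap and is actually quite close in spirit to what the paper does around $r$. Both proofs are sound.
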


\begin{proof}
Say that \(\Gamma_i\) is the line segment from \(p\) to \(q\) in \(l'\). Fix the endponits \(p\) and \(q\), and deform \(\Gamma_i\) slightly towards outside \(H'\) so that \(\Gamma_i \setminus \left\{ p, q \right\}\) is strictly outside \(H'\). Now take the point \(r := (p + q) / 2\). The segment \(\Gamma \setminus \Gamma_i \cup \left\{ p, q \right\}\) from \(q\) to \(p\) is inside \(H'\), so it rotates around the point \(r\) in the counterclockwise angle of \(\pi\). Similarly, the deformed curve \(\Gamma_i\) also rotates around the point \(r\) in the counterclockwise angle of \(\pi\). So the total angle is \(2\pi\) clockwise, and \(\Gamma\) is oriented counterclockwise.
\end{proof}

\section{Convex Curve}
\label{sec:convex-curve}
Define the following convex curve segment of the boundary of a convex body \(K\).

\begin{definition}

For any planar convex body \(K\) and \(a, b \in \mathbb{R}\) be arbitrary such that \(a < b < a + \pi\), define the segment
\[
\mathbf{u}_K^{a, b} := \left\{ v_K^+(a) \right\} \cup \bigcup_{t \in (a, b)} e_K(t) \cup \left\{ v_K^-(b) \right\}.
\]
of the boundary of \(K\).

\label{def:convex-curve}
\end{definition}

The goal of this \Cref{sec:convex-curve} is to show that \(\mathbf{u}_K^{a, b}\) is a rectifiable curve and evaluate its curve area functional.

\begin{lemma}

Assume arbitrary \(K \in \mathcal{K}\) and \(a, b \in \mathbb{R}\) such that \(a < b < a + \pi\). If \(v_K^+(a) = v_K^-(b)\), then \(v_K(a, b) = v_K^+(a) = v_K^-(b)\) and \(\mathbf{u}_K^{a, b}\) is the single point \(\left\{ v_K^+(a) \right\}\). If \(v_K^+(a) \neq v_K^-(b)\), then the followings are true.

\begin{enumerate}
\def\labelenumi{\arabic{enumi}.}
\tightlist
\item
  The point \(v_K(a, b)\) is not on the line \(l'\) connecting \(v_K^+(a)\) and \(v_K^-(b)\).
\item
  The closed half-plane \(H'\) with the boundary \(l'\) containing \(v_K(a, b)\) have normal angle \(t' + \pi\) for some \(t' \in (a, b)\).
\item
  The intersection \(K' := K \cap H'\) is a planar convex body satisfying the followings.

  \begin{enumerate}
  \def\labelenumii{\roman{enumii}.}
  \tightlist
  \item
    For any \(t \in (t' - \pi, a]\), we have \(e_{K'}(t) = \left\{ v_K^+(a) \right\}\).
  \item
    For any \(t \in (a, b)\), we have \(e_{K'}(t) = e_K(t)\).
  \item
    For any \(t \in [b, t' + \pi)\), we have \(e_{K'}(t) = \left\{ v_K^-(b) \right\}\).
  \item
    The edge \(e_{K'}(t' + \pi)\) is the line segment from \(v_K^-(b)\) to \(v_K^+(a)\).
  \end{enumerate}
\end{enumerate}

\label{lem:convex-curve-cut}
\end{lemma}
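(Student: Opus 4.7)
The proof splits along the two cases stated. When $p := v_K^+(a) = v_K^-(b)$, the common point $p$ lies on both $l_K(a)$ and $l_K(b)$, which are distinct lines meeting uniquely in $v_K(a, b)$ (using $b - a \in (0, \pi)$), so $v_K(a, b) = p$. For any $t \in (a, b)$, the line $l_K(t)$ also passes through $p$, and a short sign computation shows $v_t$ points strictly out of the wedge $H_K(a) \cap H_K(b)$ for such $t$; hence $l_K(t)$ meets $K$ (which is contained in the wedge) only at $\{p\}$, giving $e_K(t) = \{p\}$.

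For the substantive second case $p \neq q := v_K^-(b)$, I parametrize $p = v_K(a, b) + \alpha v_a$ and $q = v_K(a, b) + \beta v_b$, which is valid since $\{v_a, v_b\}$ spans $\mathbb{R}^2$. The wedge containment $K \subseteq H_K(a) \cap H_K(b)$ combined with $v_a \cdot u_b = \sin(b - a) > 0$ and $v_b \cdot u_a = -\sin(b - a) < 0$ forces $\alpha \leq 0$ and $\beta \geq 0$; the equality $\alpha = 0$ would place $p = v_K(a, b)$ on $e_K(b)$ with $(p - q) \cdot (-v_b) = \beta \geq 0$, violating the extremality of $q = v_K^-(b)$ unless $p = q$, and symmetrically for $\beta = 0$, so $\alpha < 0$ and $\beta > 0$. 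For (1), if $v_K(a, b) \in l'$, the three collinear points $p, q, v_K(a, b)$ lie on the distinct tangent lines $l_K(a), l_K(b)$, forcing $l' = l_K(a) = l_K(b)$ and contradicting $u_a \neq u_b$. For (2), the direction $q - p = |\alpha| v_a + \beta v_b$ is a strict positive combination of $v_a, v_b$, so rotating by $-\pi/2$ (which sends $v_t$ to $u_t$) yields a strict positive combination of $u_a, u_b$, proportional to $u_{t'}$ for a unique $t' \in (a, b)$. The sign $(v_K(a, b) - p) \cdot u_{t'} = |\alpha| \sin(t' - a) > 0$ places $v_K(a, b)$ on the $+u_{t'}$ side of $l'$, so $H'$ has outward normal $u_{t' + \pi}$.

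For (3), the key preliminary is the identification $K \cap l' = [p, q]$: for $s > 1$, the point $p + s(q - p)$ has $u_b$-projection $h_K(b) + (s - 1)|\alpha|\sin(b - a) > h_K(b)$ and lies outside $K$, and symmetrically for $s < 0$ using $u_a$. Combined with a supporting-line argument, this yields $\mathbf{u}_K^{a, b} \subseteq \overline{H'}$: if the arc crossed $l'$ at an interior point of $[p, q]$, then $l'$ would be a supporting line of $K$ at that point, forcing $K$ onto one closed side of $l'$ and reducing $\mathbf{u}_K^{a, b}$ to the chord itself (still in $\overline{H'}$). Given this, the edge identifications follow: (iv) from $e_{K'}(t' + \pi) \subseteq K' \cap l' = [p, q]$ with the reverse containment immediate; (ii) from $e_K(t) \subseteq \mathbf{u}_K^{a, b} \subseteq \overline{H'}$, which gives $h_{K'}(t) = h_K(t)$; and (i), (iii) because for $t$ in the stated ranges the unique maximizer of $x \cdot u_t$ on $K' \subseteq K$ is $p$ (respectively $q$), following from the extreme-of-edge property of $v_K^+(a)$ (resp.~$v_K^-(b)$) combined with containment in $\overline{H'}$. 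The main obstacle is the containment $\mathbf{u}_K^{a, b} \subseteq \overline{H'}$; the rest is careful bookkeeping on normal-angle intervals.
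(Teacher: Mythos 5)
Your proposal tracks the paper's proof closely in Case~1, in (1) and (2), and in spirit for (3): the same parametrization by $\alpha,\beta$ at the cone vertex $v_K(a,b)$, the same derivation of $t' \in (a, b)$, and the same observation $K \cap l' = [p, q]$. The genuine gap is exactly where you flag it: the containment $\mathbf{u}_K^{a,b} \subseteq H'$. Your supporting-line sketch is both incomplete and circular. It treats $\mathbf{u}_K^{a,b}$ as a curve that ``crosses'' $l'$, but it is precisely item (3) of this lemma that \Cref{thm:convex-curve-area-functional} later invokes to prove that $\mathbf{u}_K^{a,b}$ is a Jordan arc, so you cannot presuppose that structure here. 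Even granting it, the claim that a crossing must occur at an \emph{interior} point of $[p,q]$ is unjustified (the arc could meet $l'$ only at $p$ and $q$, entering tangentially), and the contradiction you draw from a supporting line through an interior point of $[p,q]$ silently assumes $K$ has nonempty interior, which the lemma does not.

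The paper sidesteps the topology with a pointwise cone inequality. Let $p := v_K^+(a)$. For $x \in K \setminus H'$ and $t \in (a, t']$, write $u_t = c_1 u_a + c_2 u_{t'}$ with $c_1 \geq 0$, $c_2 > 0$; then
\[
x \cdot u_t \leq c_1 h_K(a) + c_2\,(x \cdot u_{t'}) < c_1\,(p \cdot u_a) + c_2\,(p \cdot u_{t'}) = p \cdot u_t \leq h_K(t),
\]
so $x \notin e_K(t)$. The case $t \in [t', b)$ is the mirror argument via the cone $H_K(b) \setminus H'$. This gives $e_K(t) \subseteq H'$ for every $t \in (a, b)$ with no reference to curve structure at all. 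Items (i), (iii), (iv) then fall out because $K' = K \cap H'$ is sandwiched between the segment $[p, q]$ and the triangle $T := H_K(a) \cap H_K(b) \cap H'$, whose normal angles are exactly $a$, $b$, and $t' + \pi$. Your ``unique maximizer on $K'$'' argument for (i) and (iii) is the right idea, but to make it precise you should name $T$ and the containment $K' \subseteq T$ explicitly. Your preliminary computation $K \cap l' = [p, q]$, while correct, becomes unnecessary once $T$ is in play.
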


\begin{proof}
Define the closed cone \(X := H_K(a) \cap H_K(b)\) with vertex \(v_K(a, b)\) containing \(K\). The boundary of \(X\) is the union of two half-lines \(X \cap l_K(a)\) and \(X \cap l_K(b)\), each containing \(v_K^+(a)\) and \(v_K^-(b)\) respectively, and both meeting at \(v_K(a, b)\) with an angle of \(b-a \in (0, \pi)\). So we have \(v_K(a, b) = v_K^+(a) + \alpha v_a\) for some \(\alpha \geq 0\) and \(v_K^-(b) = v_K(a, b) + \beta v_b\) for some \(\beta \geq 0\).

First assume \(v_K(a, b) \in K\). Then since \(v_K(a, b) \in K \subseteq X\), we have \(v_K^+(a) = v_K^-(b) = v_K(a, b)\) and \(e_K(t) = \left\{ v_K(a, b) \right\}\) for all \(t \in (a, b)\). So \(v_K^+(a) = v_K^-(b)\) and \(\mathbf{u}_K^{a, b}\) degenerates to the single point \(v_K^+(a)\) as claimed.

Now assume \(v_K(a, b) \not\in K\). Then since \(v_K^+(a), v_K^-(b) \in K\) but \(v_K(a, b) \not\in K\), we have \(\alpha, \beta > 0\) and the points \(v_K^+(a), v_K^-(b), v_K(a, b)\) are not on the same line, showing (1). Also, the vector \(v_K^+(a) - v_K^-(b) = \alpha v_a + \beta v_b\) is equal to \(\tau v_{t'}\) for some \(\tau > 0\) and \(t' \in (a, b)\). So (2) holds. Define \(T := X \cap H'\), which is the triangle with vertices \(v_K^+(a), v_K^-(b), v_K(a, b)\).

Since \(e_T(t' + \pi)\) is the segment connecting \(v_K^+(a)\) and \(v_K^-(b)\), we have \(e_T(t' + \pi) \subseteq K' \subseteq T\). This, with that \(T\) have normal angles \(a, b, t' + \pi\), implies (i), (iii), (iv) of (3) except \(t = a, b\). For the case \(t=a\) in (i), use the definition of \(v_K^+(a)\) to see that the edge \(e_K(a)\) only intersects \(K\) at the single point \(v_K^+(a)\). Handle the case \(t=b\) in (iii) similarly.

It remains to show (ii) of (3). It suffices to show that for all \(t \in (a, b)\), we have \(e_K(t) \subseteq T\) as this implies \(e_K(t) \subseteq K' = K \cap T\). To prove \(e_K(t) \subseteq T\), we will show that for any \(p \in K \setminus T\) we have \(p \not\in e_K(t)\). If \(t \leq t'\), then the point \(p\) is in the convex cone \(H_K(a) \setminus H'\) with normal angles \(a\) and \(t\) that does not contain the vertex \(v_K^+(a)\). So we have \(p \cdot u_t < v_K^+(a) \cdot u_t \leq h_K(t)\) and \(p \not\in e_K(t)\) as desired. For the case \(t \geq t'\), we can do a similar argument using the cone \(H_K(b) \setminus H'\) with normal angles \(b\) and \(t\).
\end{proof}

\begin{theorem}

Let \(K \in \mathcal{K}\) and \(a, b \in \mathbb{R}\) be arbitrary such that \(a < b < a + \pi\). Then the set \(\mathbf{u}_K^{a, b}\) in \Cref{def:convex-curve} is a rectifiable oriented Jordan arc from \(v_K^+(a)\) to \(v_K^-(b)\), and its curve area functional is
\[
\mathcal{J}\left( \mathbf{u}_K^{a, b} \right) = \frac{1}{2} \int_{t \in (a, b)}h_K(t)\, \sigma_K(dt)
\]
which is quadratic in \(K \in \mathcal{K}\).

\label{thm:convex-curve-area-functional}
\end{theorem}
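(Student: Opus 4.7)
The plan is to reduce to the convex body $K' := K \cap H'$ furnished by Lemma \ref{lem:convex-curve-cut} and apply Green's theorem. First dispose of the degenerate case $v_K^+(a) = v_K^-(b)$: here $\mathbf{u}_K^{a,b}$ is a single point, a trivial Jordan arc with curve area zero; and since every edge $e_K(t)$ for $t \in (a, b)$ lies in the cone $H_K(a) \cap H_K(b)$ whose apex $v_K(a,b) = v_K^+(a)$ uniquely maximizes $u_t$ for each $t \in (a,b)$, each $e_K(t)$ is a single point, forcing $\sigma_K((a,b)) = 0$ by Theorem \ref{thm:surface-area-measure}. Both sides of the claimed identity are then zero.

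Assume $v_K^+(a) \neq v_K^-(b)$. Items (3)(i)--(iv) of Lemma \ref{lem:convex-curve-cut} say every edge of $K'$ is a single point except $e_{K'}(t) = e_K(t)$ for $t \in (a, b)$ and the line segment $s := e_{K'}(t'+\pi)$ from $v_K^-(b)$ to $v_K^+(a)$. Consequently $\partial K'$ is the Jordan curve obtained by concatenating $\mathbf{u}_K^{a,b}$ with $s$, which simultaneously shows that $\mathbf{u}_K^{a,b}$ is a Jordan arc from $v_K^+(a)$ to $v_K^-(b)$ and that it is rectifiable because $\partial K'$, the boundary of a convex body, has finite length. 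Since $K' \subseteq H'$ and $s$ lies on the boundary line $l'$ of $H'$ traversed in the positive direction of $v_{t'+\pi}$, Proposition \ref{pro:jordan-curve-orientation} gives that $\partial K'$ is oriented counterclockwise.

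Now combine Theorem \ref{thm:curve-area-functional-area} with Proposition \ref{pro:curve-area-functional-additive} to obtain
\[
|K'| = \mathcal{J}(\partial K') = \mathcal{J}\bigl(\mathbf{u}_K^{a,b}\bigr) + \mathcal{J}\bigl(v_K^-(b),\, v_K^+(a)\bigr).
\]
By Theorem \ref{thm:area-quadratic-expression}, $|K'| = \tfrac{1}{2}\int_{S^1} h_{K'}\, d\sigma_{K'}$; the degenerate edges of $K'$ outside $(a, b) \cup \{t'+\pi\}$ contribute nothing to $\sigma_{K'}$ by Proposition \ref{pro:surface-area-measure-side-length}, while on $(a,b)$ the measure $\sigma_{K'}$ coincides with $\sigma_K$ and $\sigma_{K'}(\{t'+\pi\}) = d$ where $d := |v_K^+(a) - v_K^-(b)|$ is the length of $s$. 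This splits the area as
\[
|K'| = \frac{1}{2}\int_{(a,b)} h_K(t)\, \sigma_K(dt) + \frac{1}{2} h_{K'}(t'+\pi)\, d.
\]
Since both endpoints of $s$ lie on $l' = l(t'+\pi, h_{K'}(t'+\pi))$ with $v_K^+(a) - v_K^-(b) = d\, v_{t'+\pi}$, Proposition \ref{pro:curve-area-line-segment} computes $\mathcal{J}(v_K^-(b), v_K^+(a)) = \tfrac{1}{2} h_{K'}(t'+\pi)\, d$, which cancels the second term and yields the desired integral formula. Quadraticity in $K$ is then immediate from the convex-linearity of both $h_K$ and $\sigma_K$ (Theorem \ref{thm:convex-body-linear}). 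The main subtlety is the orientation check for $\partial K'$ together with confirming that the boundary contribution from $s$ cancels exactly the excess area term; both follow cleanly from Lemma \ref{lem:convex-curve-cut} once one notices that the signed distance $h_{K'}(t'+\pi)$ of $l'$ appears in both the area formula and the curve area of the segment.
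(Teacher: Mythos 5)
Your argument tracks the paper's almost step for step: the degenerate case $v_K^+(a) = v_K^-(b)$ is disposed of identically, and the main case runs through $K' := K \cap H'$, Green's theorem, the area formula $|K'| = \tfrac12 \int h_{K'}\, d\sigma_{K'}$, and the exact cancellation of the chord contribution at $t' + \pi$. Your appeal to Proposition \ref{pro:jordan-curve-orientation} to pin down the counterclockwise orientation is a welcome piece of explicitness that the paper passes over silently.

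There is one genuine gap. After ruling out $v_K^+(a) = v_K^-(b)$, you assert that $\partial K'$ is a Jordan curve obtained by concatenating $\mathbf{u}_K^{a,b}$ with $s = e_{K'}(t' + \pi)$ and then apply the Jordan curve theorem, the orientation proposition, and Theorem \ref{thm:curve-area-functional-area}. But all of these require $K'$ to have nonempty interior, and that can fail even when $v_K^+(a) \neq v_K^-(b)$: if the entire arc between $v_K^+(a)$ and $v_K^-(b)$ is a single flat edge $e_K(t')$ for some $t' \in (a,b)$ (for instance $K$ a polygon with that edge and $a, b$ chosen so the arc sits entirely on it), then $K' = K \cap H'$ collapses to that line segment. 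In that case $\mathbf{u}_K^{a,b}$ and $s$ are the \emph{same} segment, $\partial K' = K'$ is not a Jordan curve, and the sentence "$\partial K'$ is the Jordan curve obtained by concatenating $\mathbf{u}_K^{a,b}$ with $s$" is false, so Green's theorem does not apply as stated. The conclusion is still true here — $\mathbf{u}_K^{a,b}$ is a line segment, $\sigma_K|_{(a,b)}$ is a point mass of weight $d := |v_K^+(a) - v_K^-(b)|$ at $t'$, and Proposition \ref{pro:curve-area-line-segment} gives $\mathcal{J}(v_K^+(a), v_K^-(b)) = h_K(t') d / 2$ directly — but you need to treat it as its own sub-case rather than folding it into the Jordan-curve argument. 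The paper does exactly this, splitting on whether $K'$ has empty interior. Adding that one-paragraph sub-case closes the gap.
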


\begin{proof}
If \(v_K^+(a) = v_K^-(b)\), then by \Cref{lem:convex-curve-cut} the set \(\mathbf{u}_K^{a, b}\) degenerates to the single point \(p= v_K^+(a) = v_K^-(b)\). So for all \(t \in (a, b)\), the set \(e_K(t)\) is equal to \(p\), and by \Cref{thm:surface-area-measure} the measure \(\sigma_K\) is zero on the interval \((a, b)\). So the claimed equality holds.

Now assume the case \(v_K^+(a) \neq v_K^-(b)\). Define the convex body \(K'\) containing \(v_K^+(a)\) and \(v_K^-(b)\) as in \Cref{lem:convex-curve-cut}. Then by (3) of \Cref{lem:convex-curve-cut}, the supporting functions \(h_K(t)\) and \(h_{K'}(t)\) agree on \(t \in [a, b]\), and the surface area measures \(\sigma_K\) and \(\sigma_{K'}\) agree on \((a, b)\) by \Cref{thm:surface-area-measure}.

If \(K'\) has empty interior, then by (iv) of \Cref{lem:convex-curve-cut} \(K'\) should be the line segment connecting \(v_K^+(a)\) and \(v_K^-(b)\). So \(\mathbf{u}_K^{a, b}\) is the line segment from \(v_K^+(a)\) to \(v_K^-(b)\), and by \Cref{pro:curve-area-line-segment} we have \(\mathcal{J}(v_K^+(a), v_K^-(b)) = h_K(t') \sigma_K(t') / 2\), verifying the equality.

Now assume that \(K'\) have nonempty interior. It is a known fact that the boundary \(\partial K'\) of \(K'\) is a rectifiable Jordan curve.\footnote{We outline a proof from \autocite{hagenvoneitzenAnswerConnectednessBoundary2015}. Translate \(K'\) so that it has \((0, 0)\) in the interior of \(K'\). Identifying \(S^1\) with the unit vectors of \(\mathbb{R}^2\), define \(f : \partial K' \to S^1\) as the map \(\mathbf{v} \mapsto \mathbf{v} / \left| \mathbf{v} \right|\). Then the map is continuous. It is bijective by the convexity of \(K\). As a continuous bijection between compact sets, \(f\) is a homeomorphism and the inverse \(f^{-1} : S^1 \to \partial K'\) is continuous. Rectifiability of \(f^{-1}\) follows from that the \(x\)-coordinate (resp. the \(y\)-coordinate) of the point \(f^{-1}(t)\) is monotonically decreasing in one interval \(I\) of \(S^1\) and monotonically increasing in the complement \(S^1 \setminus I\).} By (3) of \Cref{lem:convex-curve-cut}, the boundary \(\partial K'\) of \(K'\) is the disjoint union of \(\mathbf{u}_K^{a, b}\) and the open line segment \(s\) from \(v_K^-(b)\) to \(v_K^+(a)\) excluding endpoints. Since \(s\) is a bounded open interval of \(\partial K' \simeq S^1\), the curve \(\mathbf{u}_K^{a, b}\) is an oriented Jordan arc from \(v_K^+(a)\) to \(v_K^-(b)\) inheriting the parametrization from \(\partial K'\). By \Cref{thm:curve-area-functional-area} we have
\begin{equation}
\label{eqn:uab-one}
|K'| = \mathcal{J}\left( \mathbf{u}_K^{a, b} \right) + \mathcal{J}\left( v_K^-(b), v_K^+(a) \right) .
\end{equation}
On the other hand, by \Cref{thm:area-quadratic-expression} we have
\[
|K'| = \frac{1}{2} \int_{t \in S^1} h_{K'}(t)\,\sigma_{K'}(dt).
\]
By (3) of \Cref{lem:convex-curve-cut} and \Cref{pro:curve-area-line-segment}, this evaluates to
\begin{equation}
\label{eqn:uab-two}
|K'| = \frac{1}{2} \int_{t \in (a, b)} h_{K}(t)\,\sigma_{K}(dt) + \mathcal{J}\left( v_K^-(b), v_K^+(a) \right).
\end{equation}
By comparing \Cref{eqn:uab-one} to \Cref{eqn:uab-two}, we get the desired equality. Quadraticity of \(\mathcal{J}\left( \mathbf{u}_K^{a, b} \right)\) comes from \Cref{thm:convex-body-linear}.
\end{proof}

\begin{lemma}

Fix \(a, b \in \mathbb{R}\) such that \(a < b < a + \pi\). The bilinear form \(\mathcal{B} : \mathcal{K} \times \mathcal{K} \to \mathbb{R}\) on \(\mathcal{K}\) defined as
\[
\mathcal{B}(K_1, K_2) := \frac{1}{2} \int_{t \in (a, b)} h_{K_1}(t) \, \sigma_{K_2}(dt)
\]
can also be expressed as
\[
\mathcal{B}(K_1, K_2) = \frac{1}{2} \int_{t \in (a, b)} v_{K_1}^+(t) \times d v_{K_2}^+(t) = \frac{1}{2} \int_{t \in (a, b)} v_{K_1}^-(t) \times d v_{K_2}^+(t).
\]
In particular, for any \(K \in \mathcal{K}\) we have
\[
\mathcal{J}\left( \mathbf{u}_K^{a, b} \right) = \frac{1}{2} \int_{t \in (a, b)} v_K^+(t) \times d v_K^+(t).
\]

\label{lem:convex-curve-bilinear-computation}
\end{lemma}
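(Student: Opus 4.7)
The plan is to derive both equalities by combining the differential Gauss--Minkowski identity $\mathrm{d} v_{K_2}^+(t) = v_t \, \sigma_{K_2}$ of \Cref{thm:boundary-measure} with a pointwise identity that converts the cross product $v_{K_1}^{\pm}(t) \times v_t$ into the support function value $h_{K_1}(t)$. The key pointwise identity is that for any $p = (x, y) \in \mathbb{R}^2$, direct computation with $v_t = (-\sin t, \cos t)$ and $u_t = (\cos t, \sin t)$ gives $p \times v_t = x \cos t + y \sin t = p \cdot u_t$. Since both vertices $v_{K_1}^{\pm}(t)$ lie on the supporting line $l_{K_1}(t) = \{p : p \cdot u_t = h_{K_1}(t)\}$, it follows that $v_{K_1}^{\pm}(t) \times v_t = h_{K_1}(t)$ for every $t$.

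Next I would apply \Cref{thm:boundary-measure} to $K_2$, which gives the equality of pairs of measures $\mathrm{d} v_{K_2}^+ = v_t \, \sigma_{K_2}$ on every half-open subinterval $(a', b'] \subseteq (a, b]$. Taking the cross product with the bounded measurable function $v_{K_1}^{\pm}$ (in the sense of \Cref{def:plane-cross-product}) and applying the pointwise identity above produces the equality of signed measures
\[
v_{K_1}^{\pm}(t) \times d v_{K_2}^+(t) = h_{K_1}(t) \, \sigma_{K_2}(dt)
\]
on every such half-open subinterval. Exhausting the open interval $(a, b)$ by an increasing union of half-open subintervals $(a + 1/n, b - 1/n]$ and passing to the limit (the integrands are bounded on the compact set $[a, b]$ and both measures are finite, so dominated convergence applies) yields the two integral representations for $\mathcal{B}(K_1, K_2)$. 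Measurability of $v_{K_1}^{\pm}$ is immediate from its coordinatewise monotonicity on each of the four intervals described in the proof of \Cref{lem:vertex-bounded-variation}.

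For the ``in particular'' conclusion, I would set $K_1 = K_2 = K$ in the first equality and then invoke \Cref{thm:convex-curve-area-functional}, which identifies $\mathcal{J}(\mathbf{u}_K^{a,b})$ with $\tfrac{1}{2}\int_{(a,b)} h_K(t)\,\sigma_K(dt) = \mathcal{B}(K, K)$. The only real subtlety --- hardly an obstacle --- is the endpoint handling: \Cref{thm:boundary-measure} only guarantees the measure identity on half-open intervals of the form $(a', b']$, so the left endpoint $a$ must be approached from the right. Any atoms of $\sigma_K$ or $\mathrm{d} v_K^+$ located exactly at $a$ or $b$ do not contribute to integration over the \emph{open} interval $(a, b)$, so the exhaustion argument is clean and no case analysis on the endpoints is needed.
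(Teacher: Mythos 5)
Your proof is correct and follows essentially the same route as the paper's: apply \Cref{thm:boundary-measure} to $K_2$, multiply by $v_{K_1}^{\pm}$ via the cross product and the pointwise identity $v_{K_1}^{\pm}(t) \times v_t = v_{K_1}^{\pm}(t) \cdot u_t = h_{K_1}(t)$, integrate over $(a,b)$, and then specialize $K_1 = K_2 = K$ using \Cref{thm:convex-curve-area-functional}. The exhaustion by half-open subintervals is unnecessary, since \Cref{thm:boundary-measure} asserts an equality of measures on $(a,b]$, which restricts directly to the Borel subset $(a,b)$ without any limiting argument.
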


\begin{proof}
Using \Cref{thm:boundary-measure}, check
\[
v_{K_1}^{\pm}(t) \times \mathrm{d} v_{K_2}(t) = v_{K_1}^{\pm}(t) \times (v_t \sigma_{K_2}) = (v_{K_1}^{\pm}(t) \times v_t) \sigma_{K_2} = h_{K_1}(t) \sigma_{K_2}
\]
as pairs of measures on \(t \in (a, b)\). Integrate this on \(t \in (a, b)\) to check the equalities for \(\mathcal{B}(K_1, K_2)\). Now use \(\mathcal{J}\left( \mathbf{u}_K^{a, b} \right) = \mathcal{B}(K, K)\).
\end{proof}

\begin{lemma}

Let \(K \in \mathcal{K}\) and \(a, b, c \in \mathbb{R}\) be arbitrary such that \(a < b < c < a + \pi\). Then the oriented Jordan curve \(\mathbf{u}_K^{a, c}\) is the concatenation of \(\mathbf{u}_K^{a, b}\), \(e_K(b)\), \(\mathbf{u}_K^{b, c}\) in order.

\label{lem:convex-curve-concat}
\end{lemma}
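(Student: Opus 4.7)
The plan is to prove this by first verifying the set-theoretic decomposition, then matching endpoints, and finally confirming the orientation is consistent. First I would expand the definition of $\mathbf{u}_K^{a,c}$ from \Cref{def:convex-curve} and split the union of edges at $t = b$:
\[
\mathbf{u}_K^{a,c} = \{v_K^+(a)\} \cup \bigcup_{t \in (a,b)} e_K(t) \cup e_K(b) \cup \bigcup_{t \in (b,c)} e_K(t) \cup \{v_K^-(c)\}.
\]
Using that $v_K^-(b), v_K^+(b) \in e_K(b)$, the right-hand side equals $\mathbf{u}_K^{a,b} \cup e_K(b) \cup \mathbf{u}_K^{b,c}$ as sets. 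Next I would check endpoint compatibility: by \Cref{thm:convex-curve-area-functional}, $\mathbf{u}_K^{a,b}$ is an oriented arc from $v_K^+(a)$ to $v_K^-(b)$, and $\mathbf{u}_K^{b,c}$ is an oriented arc from $v_K^+(b)$ to $v_K^-(c)$; and $e_K(b)$ is the (possibly degenerate) line segment from $v_K^-(b)$ to $v_K^+(b)$ by \Cref{def:convex-body-vertex}. So the ending point of each piece matches the starting point of the next.

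To verify the orientation, I would exploit the concrete parametrization of $\mathbf{u}_K^{a,c}$ constructed in the proof of \Cref{thm:convex-curve-area-functional}. There, after applying \Cref{lem:convex-curve-cut} to the pair $(a,c)$, the arc $\mathbf{u}_K^{a,c}$ inherits its parametrization from a segment of $\partial K'$ where $K' := K \cap H'$ is obtained by cutting $K$ along the chord from $v_K^-(c)$ to $v_K^+(a)$. Along $\partial K'$ the outward normal rotates counterclockwise, so a point on the arc lies on $e_{K'}(t) = e_K(t)$ with $t$ monotonically increasing from $a$ to $c$, and within a single edge the parametrization traverses it from $v_K^-(t)$ to $v_K^+(t)$. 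In particular, the portion of $\mathbf{u}_K^{a,c}$ with parameter in $[a,b]$ (respectively $\{b\}$, $[b,c]$) coincides with $\mathbf{u}_K^{a,b}$ (respectively $e_K(b)$, $\mathbf{u}_K^{b,c}$) as oriented arcs.

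The only subtle point — and the main obstacle — is to handle the degenerate cases where one of $v_K^+(a) = v_K^-(b)$, $v_K^+(b) = v_K^-(c)$, or $v_K^+(a) = v_K^-(c)$ may hold (so some $\mathbf{u}_K^{?,?}$ degenerates to a point, or the chord in \Cref{lem:convex-curve-cut} collapses). In each such case, \Cref{lem:convex-curve-cut} tells us exactly which pieces degenerate to a single point, and \Cref{def:concatenation} still applies with those degenerate arcs acting as trivial inserts; verifying this casework is straightforward once the nondegenerate case is established. With the set equality, endpoint matching, and orientation check in hand, \Cref{def:concatenation} then gives the claimed concatenation.
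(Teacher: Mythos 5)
Your proof is correct and follows essentially the same route as the paper's: the paper's proof is a two-sentence argument observing that the union of $\mathbf{u}_K^{a,b}$, $e_K(b)$, $\mathbf{u}_K^{b,c}$ is $\mathbf{u}_K^{a,c}$ and that the three pieces only meet at the respective endpoints $v_K^-(b)$ and $v_K^+(b)$. Your version simply makes explicit the orientation check (via the $\partial K'$ parametrization from the proof of \Cref{thm:convex-curve-area-functional}) and the degenerate sub-cases, both of which the paper leaves implicit.
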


\begin{proof}
Use that the union of Jordan curves \(\mathbf{u}_K^{a, b}\), \(e_K(b)\), \(\mathbf{u}_K^{b, c}\) is \(\mathbf{u}_K^{a, c}\), and the three curves only meet at respective endpoints \(v_K^-(b)\) and \(v_K^+(b)\).
\end{proof}

\begin{lemma}

Take any \(K \in \mathcal{K}\) and \(a, b \in \mathbb{R}\) such that \(a < b < a + \pi\). Assume \(v_K^+(a) \neq v_K^-(b)\). Define \(\Gamma\) as the closed curve formed by following the segment from \(v_K^+(a)\) to \(v_K(a, b)\), then the segment from \(v_K(a, b)\) to \(v_K^-(b)\), then the curve \(\mathbf{u}_K^{a, b}\) in reverse direction. Then the followings are true.

\begin{enumerate}
\def\labelenumi{\arabic{enumi}.}
\tightlist
\item
  \(\Gamma\) is a counterclockwise Jordan curve.
\item
  The region \(R\) enclosed by \(\Gamma\) is contained in the interior of \(H_K(a) \cap H_K(b)\).
\item
  The region \(R\) is disjoint from the set \(\cap_{t \in [a, b]} H_K(t)\).
\end{enumerate}

\label{lem:convex-curve-jordan-curve}
\end{lemma}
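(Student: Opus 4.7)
The plan is to reduce the lemma to elementary geometry using the triangle $T$ with vertices $v_K^+(a), v_K(a,b), v_K^-(b)$ and the convex body $K' = K \cap H'$ supplied by \Cref{lem:convex-curve-cut}. Write $s_1$ for the segment from $v_K^+(a)$ to $v_K(a,b)$ (on $l_K(a)$) and $s_2$ for the segment from $v_K(a,b)$ to $v_K^-(b)$ (on $l_K(b)$); then $\Gamma$ is the concatenation of $s_1$, $s_2$, and $\mathbf{u}_K^{a,b}$ traversed in reverse. The decisive computation I will record at the outset is: \emph{for every $t \in [a,b]$, $v_K(a,b) \cdot u_t \geq h_K(t)$, with strict inequality for $t \in (a,b)$.} Since $b - a < \pi$, I can write $u_t = \alpha u_a + \beta u_b$ with $\alpha,\beta \geq 0$; then $v_K(a,b) \cdot u_t = \alpha h_K(a) + \beta h_K(b)$ (using $v_K(a,b) \in l_K(a) \cap l_K(b)$), while $h_K(t) \leq \alpha h_K(a) + \beta h_K(b)$ by convexity of $h_K$, with equality only if some $q \in K$ lies in $e_K(a) \cap e_K(b)$; but $e_K(a) \cap e_K(b) \subseteq \{v_K(a,b)\} \cap K = \emptyset$ by our assumption $v_K^+(a) \neq v_K^-(b)$ combined with \Cref{lem:convex-curve-cut}.

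For Part 1, I will check pairwise intersections: $s_1 \cap s_2 = \{v_K(a,b)\}$ since $l_K(a) \neq l_K(b)$ when $b-a\in(0,\pi)$; the interior of $s_1$ consists of points $v_K^+(a) + \gamma v_a$ with $\gamma \in (0,\alpha)$ (notation from \Cref{lem:convex-curve-cut}), which are strictly outside $K$ because the only points of $l_K(a)$ in $K$ lie on $e_K(a)$ and the $v_a$-extreme endpoint of $e_K(a)$ is $v_K^+(a)$; hence $s_1 \cap \mathbf{u}_K^{a,b} = \{v_K^+(a)\}$, and symmetrically $s_2 \cap \mathbf{u}_K^{a,b} = \{v_K^-(b)\}$. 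For the orientation, note that $\Gamma \subseteq H_K(a)$ (each piece is: $s_1 \subseteq l_K(a)$, the endpoints of $s_2$ lie in $H_K(a)$ which is convex, and $\mathbf{u}_K^{a,b} \subseteq K \subseteq H_K(a)$), and $s_1$ lies on the boundary $l_K(a)$ going in the positive $v_a$-direction, so \Cref{pro:jordan-curve-orientation} applies.

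Part 2 is then a short topological argument: the open half-plane complement $\mathbb{R}^2 \setminus H_K(a)$ is connected, unbounded, and disjoint from $\Gamma$, so it lies in the unbounded component of $\mathbb{R}^2 \setminus \Gamma$; hence $R \subseteq H_K(a)$, and since $R$ is open, $R \subseteq H_K(a)^\circ$. The same argument works for $H_K(b)$.

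The main obstacle is Part 3. Given $p \in R$, I will trace the ray from $v_K(a,b)$ through $p$. Part 2 forces $p \notin l_K(a) \cup l_K(b)$, so this ray is neither along $s_1$ nor along $s_2$; hence it enters the vertex cone of $T$ at $v_K(a,b)$ strictly, and so exits $T$ through the relative interior of $s_3$, at some point $r_T$. Now $r_T$ lies in $K^\circ \subseteq K'$ (interior of the chord $s_3$ is in $K^\circ$ as $l'$ is not a supporting line of $K$), whereas $v_K(a,b) \notin K'$, so the segment from $v_K(a,b)$ to $r_T$ enters $K'$ at some first point $r$. This $r$ must lie on $\partial K' = \mathbf{u}_K^{a,b} \cup s_3$, and since we enter $K'$ strictly before reaching $s_3$, $r \in \mathbf{u}_K^{a,b}$; since the ray avoids $l_K(a)$ and $l_K(b)$ except at $v_K(a,b)$, we get $r \neq v_K^+(a), v_K^-(b)$, whence $r \in e_K(t_r)$ for some $t_r \in (a,b)$. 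The open segment $(v_K(a,b), r)$ lies in $T^\circ \setminus K' \subseteq R$, so $p$ lies strictly between $v_K(a,b)$ and $r$. Evaluating $\cdot\,u_{t_r}$ along this segment, and using $v_K(a,b) \cdot u_{t_r} > h_K(t_r) = r \cdot u_{t_r}$ from the key computation, yields $p \cdot u_{t_r} > h_K(t_r)$, i.e., $p \notin H_K(t_r) \supseteq \bigcap_{t \in [a,b]} H_K(t)$. The subtle point requiring care is ensuring that $r$ lies in the \emph{open} arc $\mathbf{u}_K^{a,b} \setminus \{v_K^+(a),v_K^-(b)\}$, which is exactly where Part 2 intervenes.
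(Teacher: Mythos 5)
Your Part 3 takes a genuinely different route from the paper: the paper observes that $\Gamma$ lies in the closed set $\mathbb{R}^2 \setminus X^\circ$ where $X := \bigcap_{t\in[a,b]} H_K(t)$, notes this set is simply connected (since $b-a<\pi$), and concludes $R \subseteq \mathbb{R}^2 \setminus X^\circ$ topologically; you instead trace a ray from $v_K(a,b)$ through each $p\in R$ and produce an explicit $t_r\in(a,b)$ with $p\notin H_K(t_r)$, a more quantitative conclusion. Your key computation and Parts 1--2 are correct and essentially agree with the paper.

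There is a genuine gap in Part 3, however. The assertion ``$l'$ is not a supporting line of $K$'' (used to justify ``interior of the chord $s_3$ is in $K^\circ$'' and hence $r<r_T$) is unproved and can be false: nothing in the hypotheses prevents $l'$ from supporting $K$. Take $K$ a triangle with $s_3$ as one edge. If the third vertex is on the far side of $l'$ from $v_K(a,b)$, then $K \subseteq \mathbb{R}^2 \setminus H'^\circ$, so $K' = s_3 = \mathbf{u}_K^{a,b}$ degenerates, the segment meets $K'$ only at $r=r_T$, and ``we enter $K'$ strictly before reaching $s_3$'' fails outright. If the third vertex is on the near side, then $K \subseteq H'$ with $K^\circ \neq \emptyset$ yet $s_3 = e_K(t'+\pi) \subseteq \partial K$, so $r_T \notin K^\circ$ and the stated justification for $r<r_T$ again does not apply. (The side claim $K^\circ \subseteq K'$ is also false in general; what you actually need is just $r_T \in s_3 \subseteq K'$, which holds by (iv) of \Cref{lem:convex-curve-cut}.) The conclusion $r\in\mathbf{u}_K^{a,b}$ does hold in every case --- trivially when $\mathbf{u}_K^{a,b}=s_3$, and otherwise because $r_T$ lies in the relative interior of the edge $e_{K'}(t'+\pi)$, $K'^\circ \neq \emptyset$, and $v_K(a,b)\in H'^\circ$, so the segment must enter $K'^\circ$ strictly before $r_T$ --- but you must argue the dichotomy explicitly rather than via the false claim you wrote.
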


\begin{proof}
Let \(s_a\) be the line segment from \(v_K^+(a)\) to \(v_K(a, b)\). Let \(s_b\) be the line segment from \(v_K(a, b)\) to \(v_K^-(b)\). By 3-(i) of \Cref{lem:convex-curve-cut} at \(t = a\), the segment \(s_a\) intersects with \(\mathbf{u}_K^{a, b}\) at the single point \(v_K^+(a)\). Likewise, by 3-(iii) of \Cref{lem:convex-curve-cut} at \(t=b\), the segment \(s_b\) intersects with \(\mathbf{u}_K^{a, b}\) at the single point \(v_K^-(b)\). By (1) of \Cref{lem:convex-curve-cut}, the segments \(s_a\) and \(s_b\) overlap at the single point \(v_K(a, b)\). So the closed curve \(\Gamma\) is indeed a Jordan curve. Use \Cref{pro:jordan-curve-orientation} to decide the orientation of \(\Gamma\), proving (1).

Because the curves \(\mathbf{u}_K^{a, b}\), \(s_a\), \(s_b\) are contained in \(H_K(a)\) and \(H_K(b)\), the region \(R\) is contained in \(H_K(a) \cap H_K(b)\). As \(R\) is open, this proves (2).

Let \(X := \cap_{t \in [a, b]} H_K(t)\). Since \(\mathbf{u}_K^{a, b}\), \(s_a\), \(s_b\) are disjoint from the interior of \(X\), the curve \(\Gamma\) is contained in the region \(\mathbb{R}^2 \setminus X^\circ\). Because \(\mathbb{R}^2 \setminus X^\circ\) is simply connected, the region \(R\) enclosed by \(\Gamma\) is contained in \(\mathbb{R}^2 \setminus X^\circ\). Now the open set \(R\) is disjoint from \(X^\circ\), so is disjoint from \(X\), showing (3).
\end{proof}

\section{Mamikon's Theorem}
\label{sec:mamikon's-theorem}
We prove a generalized version of \emph{Mamikon’s theorem} \autocite{mnatsakanianAnnularRingsEqual1997} that works for general convex bodies with non-differentiable boundaries.

\begin{definition}

Fix an arbitrary \(K \in \mathcal{K}\) and \(a, b \in \mathbb{R}\) such that \(a < b < a + \pi\). Fix some measurable \(\alpha : [a, b] \to \mathbb{R}\). Define the \emph{Mamikon region} of such parameters as the region swept out by the tangent segment from \(v_K^+(t)\) to \(\mathbf{z}(t) := v_K^+(t) + \alpha(t) v_t\) over all \(t \in [a, b]\).

\label{def:mamikon-region}
\end{definition}

Follow the boundaries of the Mamikon region, and we can define its area as follows.

\begin{definition}

Fix an arbitrary \(K \in \mathcal{K}\) and \(a, b \in \mathbb{R}\) such that \(a < b < a + \pi\). For any continuous function \(\mathbf{z} : [a, b] \to \mathbb{R}^2\) of bounded variation such that \(\mathbf{z}(t) \in l_K(t)\) for all \(t \in [a, b]\), define the expression
\[
\mathcal{M}_K(a, b; \mathbf{z}) = \mathcal{J}\left( v_K^+(a), \mathbf{z}(a) \right) + \mathcal{J}\left( \mathbf{z}|_{[a, b]} \right)  + \mathcal{J}\left( \mathbf{z}(b), v_K^-(b) \right) - \mathcal{J}\left( \mathbf{u}_K^{a, b} \right).
\]

\label{def:mamikon}
\end{definition}

The expression \(\mathcal{M}_K(a, b; \mathbf{z})\) in \Cref{def:mamikon} is the area of the Mamikon region bounded by the two curves \(\mathbf{u}_K^{a, b}\) and \(\mathbf{z}\). Mamikon’s theorem states that the area is equal to \(\frac{1}{2} \int_a^b \,\alpha(t)^2 dt\) (see \Cref{fig:mamikon}).

\begin{figure}
\centering
\includegraphics{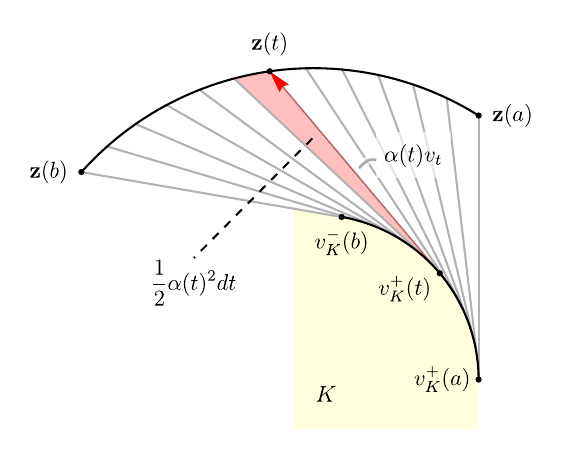}
\caption{A Mamikon region divided into differential fans of angle \(dt\) and radius \(\alpha(t)\).}
\label{fig:mamikon}
\end{figure}

\begin{theorem}

(Mamikon’s theorem, generalized) Assume the notations in \Cref{def:mamikon}. Then the function \(\alpha : [a, b] \to \mathbb{R}\) defined as \(\alpha(t) := \left( \mathbf{z}(t) - v_K^+(t) \right) \cdot v_t\) is bounded, measurable, and satisfies \(\mathbf{z}(t) = v_K^+(t) + \alpha(t) v_t\). Also, we have
\[
\mathcal{M}_K(a, b; \mathbf{z}) =  \frac{1}{2}\int_{a}^{b} \alpha(t) ^2 \, dt.
\]

\label{thm:mamikon}
\end{theorem}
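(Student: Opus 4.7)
The plan is to first handle parts (1) and (2) of the statement as quick checks, and then to grind through the area identity in (3) via direct integration using the product and chain rules for Lebesgue--Stieltjes calculus together with Theorem \ref{thm:boundary-measure}.

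For (1) and (2): Since $\mathbf{z}(t) \in l_K(t)$ I have $\mathbf{z}(t) \cdot u_t = h_K(t) = v_K^+(t) \cdot u_t$, so the vector $\mathbf{z}(t) - v_K^+(t)$ is orthogonal to $u_t$ and hence a real multiple of $v_t$; taking the inner product with $v_t$ recovers $\alpha(t)$ and proves $\mathbf{z}(t) = v_K^+(t) + \alpha(t) v_t$. Boundedness of $\alpha$ follows from compactness of $[a,b]$ and of $K$. Measurability (and in fact right-continuity) follows because $\mathbf{z}$ is continuous, $v_K^+$ is right-continuous (Theorem \ref{thm:limits-converging-to-vertex}), and $v_t$ is smooth. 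By Lemma \ref{lem:vertex-bounded-variation}, $v_K^+$ is of bounded variation on $[a,b]$, and combined with the assumption that $\mathbf{z}$ is of bounded variation, $\alpha v_t = \mathbf{z} - v_K^+$ is of bounded variation, hence so is $\alpha$. This will be needed so that $d\alpha$ makes sense as a Lebesgue--Stieltjes measure.

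For (3): I will substitute $\mathbf{z} = v_K^+ + \alpha v_t$ everywhere and expand $\mathbf{z} \times d\mathbf{z}$. Using the product rule (Lemma \ref{lem:lebesgue-stieltjes-product}) applied to $\alpha v_t$ and the identity $dv_t = -u_t \, dt$, I get $d\mathbf{z} = dv_K^+ + v_t\,d\alpha - \alpha u_t\,dt$. Using Theorem \ref{thm:boundary-measure} to write $dv_K^+ = v_t\,\sigma_K$ on $(a,b]$, the cross products $v_t \times v_t = 0$ and $v_t \times u_t = -1$ reduce the bilinear expansion of $\mathbf{z}\times d\mathbf{z}$ to four terms: the convex-curve term $v_K^+\times dv_K^+$, a term $h_K\,d\alpha$ (using $v_K^+\times v_t = h_K$), a term $\alpha(v_K^+\cdot v_t)\,dt$, and the Mamikon term $\alpha^2\,dt$. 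Combined with Lemma \ref{lem:convex-curve-bilinear-computation} this already cancels $\mathcal{J}(\mathbf{u}_K^{a,b})$ against $\frac{1}{2}\int_{(a,b]} v_K^+\times dv_K^+$ up to a point-mass correction $\sigma_K(b) h_K(b)$ at $t=b$.

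The remaining work is then integration by parts on $\int_{(a,b]} h_K\,d\alpha$: using Lemma \ref{lem:integration-by-parts} together with the identity $dh_K = (v_K^+\cdot v_t)\,dt$ (which follows by applying the product rule to $h_K = v_K^+\cdot u_t$ and Theorem \ref{thm:boundary-measure}), the integral equals $h_K(b)\alpha(b) - h_K(a)\alpha(a) - \int_a^b \alpha (v_K^+\cdot v_t)\,dt$, exactly canceling the middle $\alpha(v_K^+\cdot v_t)\,dt$ term. Finally the boundary terms of $\mathcal{M}_K$, namely $\mathcal{J}(v_K^+(a),\mathbf{z}(a))$ and $\mathcal{J}(\mathbf{z}(b), v_K^-(b))$, evaluate to $\tfrac12 \alpha(a) h_K(a)$ and $-\tfrac12 (\alpha(b)+\sigma_K(b)) h_K(b)$ respectively after writing $\mathbf{z}(b) = v_K^-(b) + (\alpha(b)+\sigma_K(b))v_b$, and these exactly cancel the surviving $\alpha(a)h_K(a)$, $\alpha(b)h_K(b)$, and $\sigma_K(b)h_K(b)$ contributions, leaving $2\mathcal{M}_K = \int_a^b \alpha^2\,dt$. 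The main obstacle is the careful endpoint/point-mass bookkeeping: $v_K^+$ is only right-continuous, so $\alpha$ has jumps $-\sigma_K(t)$ wherever $\sigma_K$ has atoms, and all the $\int_{(a,b)}$ versus $\int_{(a,b]}$ discrepancies must be tracked so that the final cancellation is exact.
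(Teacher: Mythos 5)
Your proof is correct, but it takes a more pedestrian route than the paper's. The paper collapses the whole computation into the single algebraic identity
\[
\mathbf{z} \times \mathrm{d}\mathbf{z} - v_K^+ \times \mathrm{d} v_K^+ + \mathrm{d}\left( \mathbf{z} \times v_K^+ \right) = \left( \mathbf{z} - v_K^+ \right) \times \mathrm{d}\left( \mathbf{z} - v_K^+ \right) = \alpha^2\,\mathrm{d}t,
\]
where the middle step uses only \Cref{lem:lebesgue-stieltjes-product}, antisymmetry of the cross product, and the orthogonality $\left( \mathbf{z} - v_K^+ \right) \times \mathrm{d} v_K^+ = \alpha v_t \times v_t\,\sigma_K = 0$; integrating over $(a, b)$ immediately produces $2\mathcal{J}(\mathbf{z})$, $2\mathcal{J}(\mathbf{u}_K^{a,b})$, and the two boundary terms of $\mathcal{M}_K$, with no separate integration by parts on $\int h_K\,d\alpha$ and no explicit use of $\mathrm{d}h_K = (v_K^+ \cdot v_t)\,\mathrm{d}t$. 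Your version instead substitutes $\mathbf{z} = v_K^+ + \alpha v_t$ before expanding $\mathbf{z} \times \mathrm{d}\mathbf{z}$ into four terms and reassembles them via \Cref{lem:integration-by-parts}; the point-mass bookkeeping (the $\sigma_K(b)h_K(b)$ corrections at $t = b$, the jump $\alpha(t) - \alpha(t^-) = -\sigma_K(t)$) is handled correctly and the final cancellation is exact. Two minor merits of your writeup over the paper's: you explicitly justify why $\alpha$ has bounded variation (needed for $\mathrm{d}\alpha$ to make sense), which the paper leaves tacit, and you avoid the paper's typo that writes $\alpha u_t$ where $\alpha v_t$ is meant in the last line. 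The trade-off is that your route needs the auxiliary identity $\mathrm{d}h_K = (v_K^+ \cdot v_t)\,\mathrm{d}t$ and one more integration by parts, so it is longer but perhaps easier to verify mechanically.
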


\begin{proof}
It is easy to check the claimed conditions on \(\alpha\). We prove the equality on \(\mathcal{M}_K(a, b; \mathbf{z})\). Write \(\mathbf{v}(t) := v_K^+(t)\) for all \(t \in (a, b]\). Note that \(\mathbf{v}\) is right-continuous by \Cref{thm:limits-converging-to-vertex} and of bounded variation by \Cref{lem:vertex-bounded-variation}. So the Lebesgue–Stieltjes measure \(\mathrm{d}(\mathbf{z} \times \mathbf{v})\) on \([a, b]\) is well-defined, and we have the chain of equalities
\begin{align*}
& \phantom{{} = .} \mathbf{z} \times \mathrm{d} \mathbf{z} - \mathbf{v} \times \mathrm{d} \mathbf{v} + \mathrm{d} \left( \mathbf{z} \times \mathbf{v} \right)  \\
& = \mathbf{z} \times \mathrm{d} \mathbf{z} - \mathbf{v} \times \mathrm{d} \mathbf{v} + \left( \mathrm{d} \mathbf{z} \times \mathbf{v} + \mathbf{z} \times \mathrm{d} \mathbf{v}  \right)  \\
& = \left( \mathbf{z} - \mathbf{v} \right) \times \mathrm{d}\left( \mathbf{z} + \mathbf{v} \right)  \\
& = \left( \mathbf{z} - \mathbf{v} \right) \times \mathrm{d}\left( \mathbf{z} - \mathbf{v} \right)  \\
& = \alpha u_t \times \mathrm{d}\left( \alpha u_t \right) = \alpha u_t \times ( u_t \mathrm{d}\alpha + \alpha v_t \mathrm{d}t) = \alpha^2 \mathrm{d} t
\end{align*}
of measures and functions on \(t \in (a, b]\). The first equality uses \Cref{lem:lebesgue-stieltjes-product} and continuity of \(\mathbf{z}\). The second equality is bilinearity and antisymmetry of \(\times\). As we have \(\mathrm{d} \mathbf{v}(t) = v_t \sigma\) by \Cref{thm:boundary-measure} and \(\mathbf{z}(t) - \mathbf{v}(t) = \alpha_K(t)v_t\), they are parallel and we get \((\mathbf{z}(t) - \mathbf{v}(t)) \times d \mathbf{v}(t) = 0\) which is used in the third equality. The last chain of equalities are basic calculations.

If we integrate the formula above on the whole interval \((a, b)\), the terms \(\mathbf{z}(t) \times d \mathbf{z}(t)\) and \(\mathbf{v}(t) \times d \mathbf{v}(t)\) becomes \(2 \mathcal{J}(\mathbf{z})\) and \(2 \mathcal{J}(\mathbf{v})\) by \Cref{def:curve-area-functional} and the last equality of \Cref{lem:convex-curve-bilinear-computation} respectively. The measure \(d(\mathbf{z}(t) \times \mathbf{v}(t))\) integrates to the difference \(2 \mathcal{J} \left( \mathbf{z}(b), v_K^-(b) \right) - 2 \mathcal{J} \left( \mathbf{z}(a), v_K^+(a) \right)\). So the integral matches twice the \Cref{def:mamikon}, completing the proof.
\end{proof}

\begin{theorem}

Fix \(a, b \in \mathbb{R}\) such that \(a < b < a + \pi\). Assume that the function \(\mathbf{z}_K \in C^{\mathrm{BV}}[a, b]\) is determined by \(K \in \mathcal{K}\) so that (i) \(\mathbf{z}_K(t)\) is on the line \(l_K(t)\) for all \(t \in [a, b]\), and (ii) the map \(K \mapsto \mathbf{z}_K\) is convex-linear in \(K\). Then the expression \(\mathcal{M}_K(a, b; \mathbf{z}_K)\) is quadratic and convex as a functional on \(K \in \mathcal{K}\).

\label{thm:mamikon-convex}
\end{theorem}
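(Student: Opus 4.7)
The plan is to reduce to the identity of Mamikon's theorem (\Cref{thm:mamikon}) and then exploit convexity of $x \mapsto x^2$ on $\mathbb{R}$. First I would define $\alpha_K(t) := (\mathbf{z}_K(t) - v_K^+(t)) \cdot v_t$ as in \Cref{thm:mamikon}, so that Mamikon's theorem gives
\[
\mathcal{M}_K(a, b; \mathbf{z}_K) = \frac{1}{2} \int_a^b \alpha_K(t)^2 \, dt.
\]
The key observation is that for each fixed $t \in [a, b]$, the scalar $\alpha_K(t)$ is convex-linear in $K \in \mathcal{K}$: the vertex $v_K^+(t)$ is convex-linear in $K$ by \Cref{thm:convex-body-linear}, and $\mathbf{z}_K(t)$ is convex-linear in $K$ by hypothesis (ii), so their difference, and hence its $v_t$-component, is convex-linear in $K$.

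For quadraticity, I would then define the bilinear form
\[
h(K_1, K_2) := \frac{1}{2} \int_a^b \alpha_{K_1}(t)\, \alpha_{K_2}(t) \, dt.
\]
Fixing $K_2$, the integrand $\alpha_{K_1}(t)\,\alpha_{K_2}(t)$ is convex-linear in $K_1$ for each $t$, and uniformly bounded on $[a, b]$ (since $\alpha_K$ is bounded by \Cref{thm:mamikon}), so linearity of integration gives convex-linearity of $K_1 \mapsto h(K_1, K_2)$. By symmetry $h$ is convex-bilinear, and Mamikon's identity gives $\mathcal{M}_K(a, b; \mathbf{z}_K) = h(K, K)$, establishing that $\mathcal{M}_K(a, b; \mathbf{z}_K)$ is quadratic on $\mathcal{K}$ in the sense of \Cref{def:convex-space-quadratic}.

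For convexity, fix $K_1, K_2 \in \mathcal{K}$ and $\lambda \in [0, 1]$, and let $K := c_\lambda(K_1, K_2)$. By convex-linearity of $\alpha_K(t)$ in $K$, we have the pointwise identity
\[
\alpha_K(t) = (1 - \lambda)\, \alpha_{K_1}(t) + \lambda\, \alpha_{K_2}(t)
\]
for each $t \in [a, b]$. Since $x \mapsto x^2$ is convex on $\mathbb{R}$, Jensen's inequality at two points yields
\[
\alpha_K(t)^2 \leq (1 - \lambda)\, \alpha_{K_1}(t)^2 + \lambda\, \alpha_{K_2}(t)^2,
\]
and integrating over $[a, b]$ and multiplying by $1/2$ gives
\[
\mathcal{M}_K(a, b; \mathbf{z}_K) \leq (1 - \lambda)\, \mathcal{M}_{K_1}(a, b; \mathbf{z}_{K_1}) + \lambda\, \mathcal{M}_{K_2}(a, b; \mathbf{z}_{K_2}),
\]
which is the convexity of the functional (\Cref{def:convex-space-concavity}).

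There is no real obstacle here; the main thing to be careful about is that hypothesis (i) (namely $\mathbf{z}_K(t) \in l_K(t)$) is exactly what is needed to invoke \Cref{thm:mamikon}, and hypothesis (ii) provides precisely the convex-linearity of $\mathbf{z}_K$ that, combined with the already-known convex-linearity of $v_K^+(t)$, makes $\alpha_K(t)$ convex-linear in $K$. The measurability and boundedness needed to define $h$ come for free from \Cref{thm:mamikon}.
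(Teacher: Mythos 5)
Your proposal is correct and takes essentially the same approach as the paper: reduce via Mamikon's theorem (\Cref{thm:mamikon}) to $\frac{1}{2}\int_a^b \alpha_K(t)^2\,dt$, note that $\alpha_K(t)$ is convex-linear in $K$ by hypothesis (ii) together with \Cref{thm:convex-body-linear}, and conclude quadraticity and convexity. The paper states this in two sentences; you have simply spelled out the bilinear form and the pointwise Jensen step that the paper leaves implicit.
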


\begin{proof}
Let \(\alpha_K(t) := \left( \mathbf{z}_K(t) - v_K^+(t) \right) \cdot v_t\), then \(\alpha_K\) is convex-linear in \(K\) by convex-linearity of \(\mathbf{z}_K\) and \(v_K^+(t)\) (\Cref{thm:convex-body-linear}). So \(\mathcal{M}_K(a, b; \mathbf{z}_K)\) which is \(\frac{1}{2} \int_a^b \alpha_K(t)^2 \, dt\) by \Cref{thm:mamikon} is quadratic and convex in \(K\).
\end{proof}

\chapter{Optimality of Gerver's Sofa}
\label{sec:optimality-of-gerver's-sofa}
This chapter proves the optimality of Gerver’s sofa (\Cref{thm:main}). Each section of this chapter establishes the following portion of the overview in \Cref{sec:_optimality-of-gerver's-sofa}. Recall that the previous \Cref{sec:convex-domain-and-convex-curves} prepares necessary technical lemmas for this chapter.

\begin{itemize}
\tightlist
\item
  \Cref{sec:domain-of-q} defines the domain \(\mathcal{L}\) of \(\mathcal{Q}\), which is the tuple \((K, B, D)\) of three convex bodies as described in \Cref{sec:quadraticity-of-q}, and build the embedding \(K \mapsto (K, B_K, D_K)\) from caps \(K \in \mathcal{K}^\mathrm{i}\) with injectivity condition to a subset of \(\mathcal{L}\).
\item
  \Cref{sec:definition-of-q} formally defines \(\mathcal{Q}\) using quadratic expressions on \(K, B\), and \(D\). The formula of \(\mathcal{Q}\) traces the boundary of the overestimated region \(R\) described in \Cref{sec:_definition-of-q} and \Cref{sec:quadraticity-of-q}. We use Jordan curve theorem to rigorously show that \(\mathcal{Q}(K, B_K, D_K)\) is an upper bound of sofa area functional \(\mathcal{A}(K)\) on \(K \in \mathcal{K}^\mathrm{i}\).
\item
  \Cref{sec:concavity-of-q} establishes the concavity of \(\mathcal{Q}\) on \(\mathcal{L}\) using Mamikon’s theorem.
\item
  \Cref{sec:gerver's-sofa} translates the local optimality conditions of Gerver’s sofa \(G\) derived by Romik in \autocite{romikDifferentialEquationsExact2018} to equalities of surface area measures of \(K, B_K\), and \(D_K\) corresponding to \(G\).
\item
  \Cref{sec:directional-derivative-of-q} calculates the directional derivative of \(\mathcal{Q}\) at \((K, B_K, D_K) \in \mathcal{L}\) arising from Gerver’s sofa \(G\). The conditions on \(G\) in \Cref{sec:gerver's-sofa} show that
\end{itemize}
\section{Domain of $\texorpdfstring{\mathcal{Q}}{Q}$}
\label{sec:domain-of-q}
In this \Cref{sec:domain-of-q}, we define the domain \(\mathcal{L}\) of the soon-to-be-established upper bound \(\mathcal{Q}\) which extends the collection \(\mathcal{K}^\mathrm{i}\) of caps satisfying the injectivity condition.

\begin{definition}

Define \(\mathcal{K}^\mathrm{i}\) as the subset of caps \(K \in \mathcal{K}^\mathrm{c}\) that (i) satisfies the injectivity condition (\Cref{def:injectivity-condition}), and (ii) have area \(|K| \geq 2.2\).

\label{def:cap-space-special}
\end{definition}

Remind that for any \(K \in \mathcal{K}^\mathrm{i}\), we can talk about the density function \(r_K\) and \(s_K\) of the surface area measure \(\sigma_K\) on \([0, \pi/2)\) and \((\pi/2, \pi]\) respectively by (1) of \Cref{def:injectivity-condition}. Likewise, the walls \(a_K(t)\) and \(c_K(t)\) makes contact with \(K\) at unique points \(A_K(t)\) and \(C_K(t)\) respectively (see \Cref{def:cap-nondegenerate}), and we can talk about the arm lengths \(f_K(t) = f_K^{\pm}(t)\) and \(g_K(t) = g_K^{\pm}(t)\).

\begin{theorem}

The space \(\mathcal{K}^\mathrm{i}\) is a convex subspace of \(\mathcal{K}^\mathrm{c}\) containing all balanced maximum caps \(K \in \mathcal{K}^\mathrm{c}\) and the cap \(\mathcal{C}(G)\) of Gerver’s sofa.

\label{thm:cap-space-special}
\end{theorem}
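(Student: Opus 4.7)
The plan is to split the theorem into three claims and dispatch them separately: (a) every balanced maximum cap lies in $\mathcal{K}^{\mathrm{i}}$; (b) $\mathcal{C}(G) \in \mathcal{K}^{\mathrm{i}}$; and (c) $\mathcal{K}^{\mathrm{i}}$ is a convex subspace of $\mathcal{K}^{\mathrm{c}}$. Claims (a) and (b) are essentially citations. For (a), \Cref{thm:injectivity} gives the injectivity condition for any balanced maximum cap $K$, and \Cref{thm:limiting-maximum-sofa} together with Gerver's construction $|G| = 2.2195\cdots$ gives $|K| = \mathcal{A}(K) \ge |G| > 2.2$. For (b), \Cref{thm:injectivity-gerver} gives the injectivity condition for $\mathcal{C}(G)$, and $|\mathcal{C}(G)| \geq |G| > 2.2$ by \Cref{thm:sofa-area-functional} (and the trivial bound $|\mathcal{C}(G)| \geq |G|$ coming from $G = \mathcal{C}(G) \setminus \mathcal{N}(\mathcal{C}(G))$).

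The interesting part is claim (c). Fix $K_1, K_2 \in \mathcal{K}^{\mathrm{i}}$ and $\lambda \in [0,1]$, and let $K := c_\lambda(K_1, K_2) = (1-\lambda) K_1 + \lambda K_2$. First I would check $K \in \mathcal{K}^{\mathrm{c}}$: by convex-linearity of $h_K$ in $K$ (\Cref{thm:convex-body-linear}) the defining conditions of \Cref{def:cap} on support function values at $\{\omega, \pi/2, \omega+\pi, 3\pi/2\}$ pass to $K$, and $K \in \mathcal{K}$ automatically. Next I would verify the three bullets of \Cref{def:injectivity-condition} for $K$. Convex-linearity of $\sigma_K$ in $K$ (\Cref{thm:convex-body-linear}) yields
\[
\sigma_K = (1-\lambda) \sigma_{K_1} + \lambda \sigma_{K_2},
\]
so the densities from (1) of \Cref{def:injectivity-condition} for $K_1, K_2$ combine into nonnegative measurable densities $r_K = (1-\lambda) r_{K_1} + \lambda r_{K_2}$ and $s_K = (1-\lambda) s_{K_1} + \lambda s_{K_2}$. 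For (2) and (3), \Cref{pro:rotating-hallway-parts} gives $\mathbf{x}_K(t) = (h_K(t) - 1) u_t + (h_K(t+\pi/2) - 1) v_t$, so convex-linearity of $h_K$ in $K$ yields $\mathbf{x}_K = (1-\lambda) \mathbf{x}_{K_1} + \lambda \mathbf{x}_{K_2}$, which is therefore $C^1$ with derivative a convex combination of the two $C^1$ derivatives; the strict inequalities $\mathbf{x}_{K_i}'(t) \cdot u_t < 0$ and $\mathbf{x}_{K_i}'(t) \cdot v_t > 0$ are preserved under nonnegative convex combinations (handling the degenerate cases $\lambda \in \{0,1\}$ trivially).

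The one non-obvious point is the area bound $|K| \ge 2.2$, since area is quadratic rather than linear in $K$. The hard step here is actually just an invocation: the planar Brunn-Minkowski inequality
\[
|K|^{1/2} \ge (1-\lambda)|K_1|^{1/2} + \lambda|K_2|^{1/2} \ge \sqrt{2.2}
\]
gives $|K| \ge 2.2$ immediately. This completes (c). The only place the argument could hide a subtlety is in confirming that the convex combination in the ambient space $\mathcal{K}$ really coincides with the barycentric operation in $\mathcal{K}^{\mathrm{c}}$, but this is guaranteed by \Cref{thm:convex-body-space} and the linearity of the half-plane defining relations for caps in \Cref{def:cap}.
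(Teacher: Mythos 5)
Your proof is correct and takes essentially the same approach as the paper. The paper invokes the multiplicative (log-concave) form of Brunn--Minkowski, $|(1-\lambda)K_1 + \lambda K_2| \geq |K_1|^{1-\lambda}|K_2|^{\lambda}$, while you use the additive square-root form, and the paper condenses your per-condition verification of \Cref{def:injectivity-condition} into the single assertion that the injectivity condition is made of linear constraints; the substance is identical.
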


\begin{proof}
We first show that \(\mathcal{K}^\mathrm{i} \subset \mathcal{K}^\mathrm{c}\) is a convex subset. The \Cref{def:injectivity-condition} of injectivity condition on \(K\) is made of linear constraints in \(K\), and so is preserved under the barycentric operation of \(\mathcal{K}^\mathrm{c}\). For any \(\lambda \in [0, 1]\), by the Brunn-Minkowski theorem
\[
|(1 - \lambda)K_1 + \lambda K_2| \geq |K_1|^{1 - \lambda} |K_2|^{\lambda}
\]
we have \(|c_\lambda(K_1, K_2)| \geq 2.2\) if \(|K_1|, |K_2| \geq 2.2\). This shows that the condition \(|K| \geq 2.2\) is also closed under \(c_\lambda\). So \(\mathcal{K}^\mathrm{i}\) defines a convex subset of \(\mathcal{K}^\mathrm{c}\).

By \Cref{thm:injectivity} and \Cref{thm:injectivity-gerver}, both the balanced maximum caps \(K\) and the cap \(\mathcal{C}(G)\) of Gerver’s sofa satisfy the injectivity condition. It is known that \(G\) have area \(2.2195\dots \geq 2.2\), and any balanced maximum cap \(K\) attains the maximum value of sofa area functional \(\mathcal{A}(K)\), so it should have area \(|K| \geq \mathcal{A}(K) \geq \mathcal{A}(G) = 2.2195\dots \geq 2.2\) again.
\end{proof}

Define the angle constants determining Gerver’s sofa \(G\).

\begin{definition}

Define \(\varphi\) and \(\theta\) as the angles satisfying \(0 < \varphi < \theta < \pi/4\) and the Equations 27 to 44 in \autocite{romikDifferentialEquationsExact2018}. Note that \(\varphi \in [0.039, 0.040]\) in particular (see Table 1 of \autocite{romikDifferentialEquationsExact2018}). Define the constants \(\varphi^{\mathrm{R}} := \varphi\) and \(\varphi^{\text{L}} := \pi/2 - \varphi\).

\label{def:gerver-constants}
\end{definition}

\begin{remark}

The angles \(\varphi\) and \(\theta\) are used in the definition of Gerver’s sofa \(G\); see \Cref{sec:gerver's-sofa} for the details. In particular, the blue core of \(G\) in \Cref{fig:gerver} is the trajectory of the inner corner \(\mathbf{x}_G(t)\) on the interval \(t \in [\varphi^\mathrm{R}, \varphi^\mathrm{L}]\).

\label{rem:gerver-constants}
\end{remark}

\begin{definition}

Define \(\mathcal{L}\) as the space of all tuples \((K, B, D)\) of convex bodies such that the followings are true.

\begin{enumerate}
\def\labelenumi{\arabic{enumi}.}
\tightlist
\item
  \(K \in \mathcal{K}^\mathrm{i}\), \(B \subseteq K\), and \(D \subseteq K\).
\item
  For every \(t \in [\varphi^\mathrm{R}, \pi/2]\), we have \(h_K(t) + h_B(\pi + t) \leq 1\).
\item
  Equality holds in (2) at \(t = \varphi^\mathrm{R}, \pi/2\).
\item
  For every \(t \in [0, \varphi^\mathrm{L}]\), we have \(h_K(\pi/2 + t) + h_D(3\pi/2 + t) \leq 1\).
\item
  Equality holds in (4) at \(t = 0, \varphi^\mathrm{L}\).
\end{enumerate}

\label{def:cap-tail-space}
\end{definition}

\begin{proposition}

The space \(\mathcal{L}\) is a convex domain with the barycentric operation
\[
c_\lambda((K_1, B_1, D_1), (K_2, B_2, D_2)) := (c_\lambda(K_1, K_2), c_\lambda(B_1, B_2), c_\lambda(D_1, D_2)). 
\]

\label{pro:cap-tail-space}
\end{proposition}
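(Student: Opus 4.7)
The plan is to verify the two requirements of Definition \ref{def:convex-spaces}: (i) closure of the tuple-wise barycentric operation $c_\lambda$ on $\mathcal{L}$, and (ii) existence of a convex-linearity-preserving embedding of $\mathcal{L}$ into a convex subset of a vector space.

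For (i), fix $(K_1, B_1, D_1), (K_2, B_2, D_2) \in \mathcal{L}$ and $\lambda \in [0,1]$, and write $K := c_\lambda(K_1, K_2)$, $B := c_\lambda(B_1, B_2)$, $D := c_\lambda(D_1, D_2)$. I need to check the five conditions of Definition \ref{def:cap-tail-space} for $(K, B, D)$. The condition $K \in \mathcal{K}^\mathrm{i}$ is immediate from convexity of $\mathcal{K}^\mathrm{i}$ (Theorem \ref{thm:cap-space-special}). The inclusions $B \subseteq K$ and $D \subseteq K$ follow from the basic fact that Minkowski combinations respect set inclusion: $(1-\lambda) B_i + \lambda B_i \subseteq (1-\lambda) K_1 + \lambda K_2$. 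For conditions (2) and (4), recall that support functions are linear under Minkowski combination (Theorem \ref{thm:convex-body-linear}), so $h_K = (1-\lambda) h_{K_1} + \lambda h_{K_2}$ and similarly for $B$ and $D$. Consequently, for any $t \in [\varphi^\mathrm{R}, \pi/2]$,
\[
h_K(t) + h_B(\pi + t) = (1-\lambda)\bigl(h_{K_1}(t) + h_{B_1}(\pi+t)\bigr) + \lambda \bigl(h_{K_2}(t) + h_{B_2}(\pi+t)\bigr) \leq 1,
\]
and the analogous estimate gives condition (4). Finally, conditions (3) and (5) follow by the same linear identity: if equality $h_{K_i}(t) + h_{B_i}(\pi + t) = 1$ holds for $i = 1, 2$ at $t = \varphi^\mathrm{R}, \pi/2$, then the convex combination also equals $1$ at those points, and similarly for the $D$-component.

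For (ii), I will use support functions as in the proof of Theorem \ref{thm:convex-body-space}. The map $\mathcal{K} \to C(S^1, \mathbb{R})$ sending $K \mapsto h_K$ embeds $\mathcal{K}$ as a convex subset preserving $c_\lambda$ (Remark 1.7.7 of \autocite{schneider_2013}). Taking the product embedding
\[
e : \mathcal{L} \to C(S^1, \mathbb{R})^3, \qquad e(K, B, D) := (h_K, h_B, h_D),
\]
yields an injection into a vector space. Injectivity of $e$ holds since each coordinate map $K \mapsto h_K$ is injective on $\mathcal{K}$. The image of $e$ lies in a convex subset of $C(S^1, \mathbb{R})^3$ cut out by the pointwise linear constraints of Definition \ref{def:cap-tail-space}, and $e$ sends the coordinate-wise barycentric operation on $\mathcal{L}$ to the coordinate-wise convex combination in $C(S^1, \mathbb{R})^3$. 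This supplies the required embedding and completes the verification that $\mathcal{L}$ is a convex domain.

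There is no substantive obstacle here: everything reduces to linearity of the support function under Minkowski combinations and the stability of linear inequalities and equalities under convex combinations. The only mild care is in noting that the ``equality at endpoints'' conditions (3) and (5) are preserved precisely because equality at $t$ for \emph{both} $i = 1, 2$ forces equality for the combination at $t$.
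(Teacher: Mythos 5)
Your proof is correct and follows essentially the same approach as the paper: verify that each defining condition of $\mathcal{L}$ in Definition \ref{def:cap-tail-space} is closed under convex combination using linearity of support functions, and inherit the convex-domain structure from $\mathcal{K}^3$ via the support-function embedding. The only superficial difference is that you argue $B \subseteq K$ directly from monotonicity of Minkowski sums under inclusion, whereas the paper rewrites it as the linear constraint $h_B(t) \leq h_K(t)$; both are the same observation in different clothing, and you also spell out the embedding $e(K, B, D) = (h_K, h_B, h_D)$ that the paper leaves implicit.
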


\begin{proof}
The product \(\mathcal{K} \times \mathcal{K} \times \mathcal{K}\) of convex domains \(\mathcal{K}\) is naturally a convex domain. Recall that \(\mathcal{K}^\mathrm{i}\) is a convex subspace of \(\mathcal{K}\). That \(B \subseteq K\) (resp. \(D \subseteq K\)) can be written as linear constraints \(h_B(t) \leq h_K(t)\) (resp. \(h_D(t) \leq h_K(t)\)) over all \(t \in S^1\). So the condition (1) of \Cref{def:cap-tail-space} is preserved under barycentric operations. By (1) of \Cref{thm:convex-body-linear}, the conditions (2) to (5) of \Cref{def:cap-tail-space} are linear constraints.
\end{proof}

We now build the injection \(\mathcal{K}^\mathrm{i} \to \mathcal{L}\) mapping \(K \in \mathcal{K}^\mathrm{i}\) to \((K, B_K, D_K) \in \mathcal{L}\). That \((K, B_K, D_K) \in \mathcal{L}\) is not too difficult to show using injectivity condition, but requires bookkeeping of many definitions.

\begin{definition}

Take arbitrary cap \(K \in \mathcal{K}^\mathrm{i}\). Define the convex body
\[
B_K := K \cap \bigcap_{t \in [\varphi_R, \pi/2]} H_K^\mathrm{b}(t).
\]
Similarly, define the convex body
\[
D_K := K \cap \bigcap_{t \in [0, \varphi_L]} H_K^\mathrm{d}(t).
\]

\label{def:right-left-body}
\end{definition}

\begin{figure}
\centering
\includegraphics{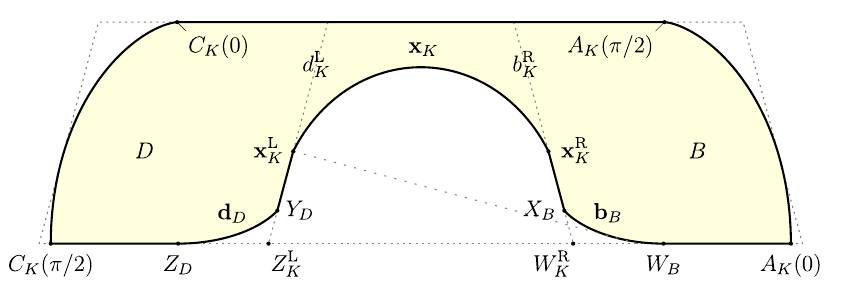}
\caption{Diagram for the upper bound \(\mathcal{Q}(K, B, D)\) in \Cref{def:upper-bound-q}.}
\label{fig:upper-bound-full}
\end{figure}

\begin{definition}

(See \Cref{fig:upper-bound-full}) For any \(K \in \mathcal{K}^\mathrm{c}\), define the line \(b_K^\mathrm{R} := b_K(\varphi^\mathrm{R})\) and the half-plane \(\breve{H}_K^\mathrm{R} := H_K^\mathrm{b}(\varphi^\mathrm{R})\) bounded from below by the line \(b_K^\mathrm{R}\). Define the points \(W_K^\mathrm{R} := W_K(\varphi^\mathrm{R})\) and \(\mathbf{x}_K^\mathrm{R} := \mathbf{x}_K(\varphi^\mathrm{R})\) on the line \(b_K^\mathrm{R}\).

Similarly, define the line \(d_K^\mathrm{L} := d_K(\varphi^\mathrm{L})\) and the half-plane \(\breve{H}_K^\mathrm{L} := H_K^\mathrm{d}(\varphi^\mathrm{L})\) bounded from below by the line \(d_K^\mathrm{L}\). Define the point \(Z_K^\mathrm{L} := Z_K(\varphi^\mathrm{L})\) and \(\mathbf{x}_K^\mathrm{L} := \mathbf{x}_K(\varphi^\mathrm{L})\) on the line \(d_K^\mathrm{L}\).

\label{def:cap-left-right}
\end{definition}

In \Cref{def:cap-left-right}, the superscripts \(\mathbf{L}\) and \(\mathbf{R}\) denote that we are essentially plugging in the values \(\varphi^\mathrm{L}\) and \(\varphi^\mathrm{R}\) respectively. The only exceptions are half-planes \(\breve{H}_K^\mathrm{R}\) and \(\breve{H}_K^\mathrm{L}\), where the accent \(\breve{H}\) means that we are using the half-planes \(H^\mathrm{b}_K\) and \(H^\mathrm{d}_K\) bounding the sofa from below, not the supporting planes \(H_K\) bounding \(K\) from above.

\begin{definition}

For any \(K \in \mathcal{K}^\mathrm{i}\) with \(|K| \geq 2.2\), define the following parallelograms.
\[
P_K^\mathrm{R} := H \cap H_K(\varphi^\mathrm{R}) \cap \breve{H}_K^\mathrm{R} \qquad P_K^\mathrm{L} := H \cap H_K(\pi/2 + \varphi^\mathrm{L}) \cap \breve{H}_K^\mathrm{L}
\]

\label{def:cap-right-left-parallelogram}
\end{definition}

\begin{lemma}

The parallelogram \(P_K^\mathrm{R}\) is bounded by the lines \(l(\pi/2, 0)\), \(l(\pi/2, 1)\), \(a_K(\varphi^\mathrm{R})\) and \(b_K(\varphi^\mathrm{R})\). It has base \(\sec \varphi\), height 1, and angle of \(\pi/2 \pm \varphi\) at each vertices. The lower-left corner of \(P_K^\mathrm{R}\) is \(W_K^\mathrm{R}\).

\label{lem:cap-right-left-parallelogram}
\end{lemma}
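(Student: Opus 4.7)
The plan is to unpack the definition $P_K^\mathrm{R} := H \cap H_K(\varphi^\mathrm{R}) \cap \breve{H}_K^\mathrm{R}$ of \Cref{def:cap-right-left-parallelogram} and identify each of the four bounding lines. The strip $H = \mathbb{R} \times [0,1]$ contributes its bottom $l(\pi/2, 0)$ and top $l(\pi/2, 1)$; the supporting half-plane $H_K(\varphi^\mathrm{R}) = H_-(\varphi^\mathrm{R}, h_K(\varphi^\mathrm{R}))$ contributes $a_K(\varphi^\mathrm{R})$ (recall \Cref{pro:rotating-hallway-parts}); and $\breve{H}_K^\mathrm{R} = H_K^\mathrm{b}(\varphi^\mathrm{R}) = H_+(\varphi^\mathrm{R}, h_K(\varphi^\mathrm{R}) - 1)$ contributes $b_K(\varphi^\mathrm{R}) = b_K^\mathrm{R}$. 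This gives the list of four bounding lines.

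Next I would verify that these four half-planes genuinely cut out a nondegenerate bounded parallelogram, not an unbounded or empty region. The lines $a_K(\varphi^\mathrm{R})$ and $b_K(\varphi^\mathrm{R})$ are parallel with common normal direction $u_{\varphi^\mathrm{R}}$ at perpendicular distance one, and the lines $l(\pi/2, 0)$, $l(\pi/2, 1)$ are parallel with normal direction $u_{\pi/2}$ at perpendicular distance one. Since $\varphi^\mathrm{R} = \varphi \in [0.039, 0.040] \subset (0, \pi/2)$ by \Cref{def:gerver-constants}, the two normal directions $u_{\varphi^\mathrm{R}}$ and $u_{\pi/2}$ are not parallel, so the two strips are transverse and their intersection is a bounded parallelogram.

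The dimensions and angles then come from elementary planar trigonometry. The height (vertical distance between $l(\pi/2, 0)$ and $l(\pi/2, 1)$) is $1$. For the base, I compute the length of the side of the parallelogram lying on $l(\pi/2, 0)$: the lines $a_K(\varphi^\mathrm{R})$ and $b_K(\varphi^\mathrm{R})$ have direction vector $v_{\varphi^\mathrm{R}}$, which makes acute angle $\pi/2 - \varphi$ with the horizontal, so their intersections with $y = 0$ are horizontally separated by $1/\sin(\pi/2 - \varphi) = \sec \varphi$. For the angles, at each vertex of the parallelogram one of the lines has normal $u_{\pi/2}$ and the other has normal $u_{\varphi^\mathrm{R}}$; these normals differ in direction by $\pi/2 - \varphi$, so the two lines meet at an angle of $\pi/2 - \varphi$, and the interior angles of the parallelogram are $\pi/2 - \varphi$ and $\pi/2 + \varphi$ at alternating vertices.

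Finally, I would identify $W_K^\mathrm{R}$ as the lower-left corner. By \Cref{def:wedge-endpoints} we have $W_K(\varphi^\mathrm{R}) = b_K(\varphi^\mathrm{R}) \cap l(\pi/2, 0)$, which is exactly the intersection of the left side and the bottom side of $P_K^\mathrm{R}$: the line $b_K^\mathrm{R}$ bounds $\breve{H}_K^\mathrm{R} = H_+(\varphi^\mathrm{R}, \cdot)$ on the $-u_{\varphi^\mathrm{R}}$ side, which is the left side of the parallelogram since $u_{\varphi^\mathrm{R}}$ has positive $x$-component (as $\varphi$ is small and positive), and $l(\pi/2, 0)$ is the bottom side. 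There is no real obstacle here beyond careful bookkeeping of the definitions; the only subtlety worth stating explicitly is the transversality assertion $\varphi^\mathrm{R} \neq \pi/2$ that guarantees $P_K^\mathrm{R}$ is a genuine parallelogram rather than a degenerate strip.
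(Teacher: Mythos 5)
Your proof is correct and follows essentially the same route as the paper's: unpack the definition of $P_K^\mathrm{R}$, identify the four bounding lines, and compute the dimensions by elementary planar geometry. The paper's own proof is considerably terser (it treats the identification of the four lines, the base $\sec\varphi$, the height, and the angles as immediate from the definitions and only spells out why $W_K^\mathrm{R}$ is the lower-left corner); your version simply makes the same bookkeeping explicit, including the transversality remark that $\varphi^\mathrm{R} \in (0, \pi/2)$.
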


\begin{proof}
Recall that the point \(W_K^\mathrm{R} = W_K(\varphi^\mathrm{R})\) is the intersection \(b_K^\mathrm{R} \cap l(\pi/2, 0)\) by definition. So \(W_K(\varphi^\mathrm{R})\) is the lower-left corner of the parallelogram \(P_K^\mathrm{R}\).
\end{proof}

\begin{lemma}

For any \(K \in \mathcal{K}^\mathrm{i}\), the sets \(K \cap \breve{H}_K^\mathrm{R}\) and \(K \cap \breve{H}_K^\mathrm{L}\) are disjoint.

\label{lem:cap-left-right-disjoint}
\end{lemma}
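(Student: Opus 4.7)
The plan is to prove the stronger geometric statement that $\breve{H}_K^\mathrm{R} \cap \breve{H}_K^\mathrm{L}$ lies entirely in the open half-plane $\{y > 1\}$; since $K \subseteq H = \mathbb{R} \times [0,1]$ because $K$ is a cap with $\omega = \pi/2$, this immediately forces $K \cap \breve{H}_K^\mathrm{R} \cap \breve{H}_K^\mathrm{L} = \emptyset$, which is exactly the disjointness claim. Unwinding \Cref{def:upper-half-planes} and \Cref{def:cap-left-right}, and using $\varphi^\mathrm{R} = \varphi$ together with $\varphi^\mathrm{L} + \pi/2 = \pi - \varphi$, a point $p$ lies in the intersection iff $p \cdot u_\varphi \geq h_K(\varphi) - 1$ and $p \cdot u_{\pi - \varphi} \geq h_K(\pi - \varphi) - 1$. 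Adding these two inequalities and observing that $u_\varphi + u_{\pi - \varphi} = (0, 2\sin\varphi)$, I obtain
\[
p_y \;\geq\; \frac{h_K(\varphi) + h_K(\pi - \varphi) - 2}{2\sin \varphi},
\]
so the whole lemma reduces to establishing $h_K(\varphi) + h_K(\pi - \varphi) > 2 + 2\sin \varphi$.

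To bound the left-hand side from below, I will exploit both features of $\mathcal{K}^\mathrm{i}$ beyond being a cap: the inclusion $K \subseteq H$ (from $\omega = \pi/2$) and the area bound $|K| \geq 2.2$ (from \Cref{def:cap-space-special}). Pick any point $p^\ast \in K$ realizing $p^\ast_x = h_K(0)$; since $K \subseteq H$ its $y$-coordinate is nonnegative, so $h_K(\varphi) \geq p^\ast \cdot u_\varphi = h_K(0)\cos \varphi + p^\ast_y \sin\varphi \geq h_K(0)\cos\varphi$, and symmetrically $h_K(\pi - \varphi) \geq h_K(\pi)\cos\varphi$. Meanwhile the containment $K \subseteq [-h_K(\pi), h_K(0)] \times [0,1]$ yields $|K| \leq h_K(0) + h_K(\pi)$, so $h_K(0) + h_K(\pi) \geq 2.2$. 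Combining,
\[
h_K(\varphi) + h_K(\pi - \varphi) \;\geq\; 2.2\cos\varphi.
\]

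It therefore suffices to verify the trigonometric inequality $2.2\cos\varphi > 2 + 2\sin\varphi$, equivalently $1.1\cos\varphi - \sin\varphi > 1$, for every $\varphi \in [0.039, 0.040]$. The function $\varphi \mapsto 1.1\cos\varphi - \sin\varphi$ has value $1.1$ at $\varphi = 0$ and derivative $-1.1\sin\varphi - \cos\varphi$, which is strictly negative on a neighborhood of $0$, so it decreases monotonically; a direct numerical evaluation at $\varphi = 0.04$ gives roughly $1.1 \cdot 0.9992 - 0.04 > 1.05$, comfortably above the threshold. I do not foresee a real obstacle: the argument is elementary, and the only subtle point worth flagging is that the proof uses exactly the two hypotheses packaged into $\mathcal{K}^\mathrm{i}$ on top of being a cap, so the statement would genuinely fail for skinny caps without the area bound.
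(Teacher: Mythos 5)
Your argument is correct, and it takes a genuinely different — and arguably more self-contained — route than the paper's. The paper contains each of $K \cap \breve{H}_K^\mathrm{R}$, $K \cap \breve{H}_K^\mathrm{L}$ inside a unit-height parallelogram of base $\sec\varphi$ (namely $P_K^\mathrm{R}$, $P_K^\mathrm{L}$ from \Cref{def:cap-right-left-parallelogram}), then argues by contradiction: if the parallelograms overlap, the trapezoid $R := H \cap H_K(\varphi^\mathrm{R}) \cap H_K(\pi/2 + \varphi^\mathrm{L})$ containing $K$ has base at most $2\sec\varphi + 2\tan\varphi$, so $|K| \leq |R| < 2.2$. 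You instead bypass those auxiliary regions entirely: adding the two defining half-plane inequalities and noting $u_\varphi + u_{\pi - \varphi} = (0, 2\sin\varphi)$ gives a direct lower bound on the $y$-coordinate of any point of $\breve{H}_K^\mathrm{R} \cap \breve{H}_K^\mathrm{L}$, and the lower bound $h_K(\varphi) + h_K(\pi - \varphi) \geq (h_K(0) + h_K(\pi))\cos\varphi \geq |K|\cos\varphi$, coming from $K \subseteq H$ and the crude rectangle estimate $|K| \leq h_K(0) + h_K(\pi)$, pushes that bound above $1$. Both arguments use exactly the two non-trivial features of $\mathcal{K}^\mathrm{i}$ (namely $K \subseteq H$ and $|K| \geq 2.2$), and amusingly they reduce to \emph{literally} the same numerical threshold $2.2\cos\varphi > 2 + 2\sin\varphi$, i.e.\ $\sec\varphi + \tan\varphi < 1.1$ — so neither is sharper; the difference is in presentation, with yours being a clean three-line algebraic computation and the paper's being a picture-driven contradiction via containing parallelograms. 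One small point worth making explicit when writing this up: $p_y^* \geq 0$ holds because $h_K(3\pi/2) = 0$ for a cap with $\omega = \pi/2$, so $K \subseteq \{y \geq 0\}$, and that sign is what lets you drop the $p_y^*\sin\varphi$ term.
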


\begin{proof}
Fix an \(K \in \mathcal{K}^\mathrm{i}\). Because \(K\) is in the horizontal strip \(H\) and the supporting half-plane \(H_K(\varphi^\mathrm{R})\), the set \(K \cap \breve{H}_K^\mathrm{R}\) is in the paralleogram \(P_K^\mathrm{R}\) in \Cref{def:cap-right-left-parallelogram}. Likewise, the set \(K \cap \breve{H}_K^\mathrm{L}\) is contained in the parallelogram \(P_K^\mathrm{L}\). Now assume by contradictory that \(K \cap \breve{H}_K^\mathrm{R}\) and \(K \cap \breve{H}_K^\mathrm{L}\) intersect. Then \(P_K^\mathrm{R}\) and \(P_K^\mathrm{L}\) should intersect as well. The bases of \(P_K^\mathrm{R}\) and \(P_K^\mathrm{L}\) on the line \(l(\pi/2, 1)\) have length \(\sec \varphi\). Note that \(K\) is contained in the trapezoid \(R := H \cap H_K(\varphi^\mathrm{R}) \cap H_K(\pi/2 + \varphi^\mathrm{L})\) containing \(P_K^\mathrm{R}\) and \(P_K^\mathrm{L}\). Since \(P_K^\mathrm{R}\) and \(P_K^\mathrm{L}\) overlaps, the side length of \(R\) on the line \(l(\pi/2, 1)\) is at most \(2 \sec \varphi\), and so the base of \(R\) on the line \(l(\pi/2, 0)\) is at most \(2 \sec \varphi + 2 \tan \varphi = 2.08\cdots < 2.2\) which can be checked by computation. Now we have \(|K| \leq |R| < 2.2\) and get contradiction.
\end{proof}

\begin{lemma}

For any \(K \in \mathcal{K}^\mathrm{i}\), the points \(W_K^\mathrm{R}, Z_K^\mathrm{L}\) are in the edge \(e_K(3\pi/2)\) excluding the endpoints \(A_K(0)\) and \(C_K(\pi/2)\).

\label{lem:cap-wz-in-edge}
\end{lemma}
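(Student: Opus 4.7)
The plan is to place both $W_K^\mathrm{R}$ and $Z_K^\mathrm{L}$ on the line $y=0$ strictly between the bottom corners of $K$—namely $C_K(\pi/2)$ on the left and $A_K(0)$ on the right—and then invoke convexity of $K$ to conclude both points lie in the relative interior of $e_K(3\pi/2)$.

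I would first check that $A_K(0) = (h_K(0), 0)$ and $C_K(\pi/2) = (-h_K(\pi), 0)$. The injectivity condition makes $e_K(0)$ a single point, so $A_K(0) = e_K(0)$. The candidate $p := (h_K(0), 0)$ trivially satisfies $p \cdot u_s \leq h_K(s)$ for $s \in [\pi/2, \pi]$ (both sides have the appropriate sign), and for $s \in [0, \pi/2]$ the estimate $p \cdot u_s = h_K(0)\cos s \leq h_K(s)$ follows by evaluating $h_K(s)$ on the point $A_K(0) \in K$ whose $x$-coordinate is $h_K(0)$ and whose $y$-coordinate is $\geq 0$ (since $K \subset H$). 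Hence $p \in K$, and since $p$ attains the maximum $x$-coordinate of $K$ while lying on $l_K(0)$, it must equal $A_K(0)$. A mirror-symmetric argument gives $C_K(\pi/2) = (-h_K(\pi), 0)$.

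Second, I would verify the two strict inequalities for $W_K^\mathrm{R}$, which lies on the $x$-axis by construction. The bound $W_K^\mathrm{R} \cdot u_0 < A_K(0) \cdot u_0 = h_K(0)$ is precisely $w_K(\varphi^\mathrm{R}) > 0$ from \Cref{thm:wedge-ends-in-cap}. For the opposite bound $W_K^\mathrm{R} \cdot u_0 > -h_K(\pi)$, solving the equation of $b_K(\varphi^\mathrm{R}) = l(\varphi^\mathrm{R}, h_K(\varphi^\mathrm{R}) - 1)$ at $y = 0$ gives $W_K^\mathrm{R} \cdot u_0 = (h_K(\varphi^\mathrm{R}) - 1)/\cos\varphi^\mathrm{R}$, and the desired inequality rewrites as
\[
h_K(\varphi^\mathrm{R}) + h_K(\pi)\cos\varphi^\mathrm{R} > 1.
\]
Evaluating $h_K(\varphi^\mathrm{R})$ at the point $A_K(0) \in K$ yields $h_K(\varphi^\mathrm{R}) \geq h_K(0)\cos\varphi^\mathrm{R}$, so it suffices to prove $(h_K(0) + h_K(\pi))\cos\varphi^\mathrm{R} > 1$. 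The inclusion $K \subset H$ gives $|K| \leq h_K(0) + h_K(\pi)$, which combined with the area bound $|K| \geq 2.2$ built into $\mathcal{K}^\mathrm{i}$ and the smallness $\varphi^\mathrm{R} \leq 0.04$ from \Cref{def:gerver-constants} reduces the claim to the numerical check $2.2\cos(0.04) > 1$.

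For $Z_K^\mathrm{L}$ the argument is mirror-symmetric: the left bound is now $z_K(\varphi^\mathrm{L}) > 0$ from \Cref{thm:wedge-ends-in-cap}, while the right bound comes from the analogous estimate $h_K(\pi - \varphi^\mathrm{R}) \geq h_K(\pi)\cos\varphi^\mathrm{R}$ together with the same width inequality. Having placed both points strictly inside the open segment from $C_K(\pi/2)$ to $A_K(0)$ on $y=0$, convexity of $K$ completes the proof. I do not foresee a genuine obstacle—the lemma is a width estimate married to the already-proved wedge-gap positivity; the only point requiring care is the initial identification of the bottom corners of $K$, which hinges on the injectivity condition eliminating atoms of $\sigma_K$ at $\{0, \pi\}$.
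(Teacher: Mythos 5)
Your proof is correct and follows essentially the same approach as the paper: the left/right bounds for $W_K^\mathrm{R}$ (and mirror-symmetrically for $Z_K^\mathrm{L}$) come from the wedge-gap positivity of \Cref{thm:wedge-ends-in-cap} on one side, and from comparing the bottom-edge length $h_K(0) + h_K(\pi) \geq |K| \geq 2.2$ against $\sec\varphi^\mathrm{R} < 1.1$ on the other, with the paper citing \Cref{lem:cap-right-left-parallelogram} where you compute in coordinates. One small imprecision worth tightening: for $s \in [\pi/2, \pi]$ the sign of $h_K(s)$ is not automatic (nothing in $\mathcal{K}^\mathrm{i}$ forces the cap to contain the origin), but the estimate you already give via $A_K(0) \in K$, namely $h_K(s) \geq A_K(0)\cdot u_s \geq h_K(0)\cos s$, works uniformly on all of $[0,\pi]$ since $\sin s \geq 0$ there, so the identification $A_K(0) = (h_K(0),0)$ goes through without the separate sign argument.
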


\begin{proof}
By \Cref{thm:wedge-ends-in-cap}, the point \(W_K^\mathrm{R}\) is strictly on the left side of the right endpoint \(A_K(0)\) of \(e_K(3\pi/2)\). So the base of the parallelogram \(P_K^\mathrm{R}\) contains \(A_K(0)\). Because \(|K| \geq 2.2\), the edge \(e_K(3\pi/2)\) have length \(\geq 2.2\), which is strictly larget than the base \(\sec \varphi < 1.1\) of \(P_K^\mathrm{R}\). So the point \(W_K^\mathrm{R}\) is also strictly on the right side of the left endpoint \(C_K(\pi/2)\) as well, completing the proof for \(W_K^\mathrm{R}\). Use a mirror-symmetric argument for \(Z_K^\mathrm{L}\).
\end{proof}

The following lemma shows that the region \(\breve{H}_K^\mathrm{R}\) cannot distinguish the wedge \(T_K(t)\) from the complement of \(H_K^\mathrm{b}(t)\).

\begin{lemma}

Fix an arbitrary \(K \in \mathcal{K}^\mathrm{i}\).

\begin{enumerate}
\def\labelenumi{\arabic{enumi}.}
\tightlist
\item
  Take any \(t \in (\varphi^\mathrm{R}, \pi/2]\). Then the point \(\mathbf{x}_K(t)\) is outside the half-plane \(\breve{H}_K^\mathrm{R}\), and we have \(\breve{H}_K^\mathrm{R} \cap Q_K^-(t) = \breve{H}_K^\mathrm{R} \setminus H_K^\mathrm{b}(t)\). So we also have \(\breve{H}_K^\mathrm{R} \cap T_K(t) = \breve{H}_K^\mathrm{R} \cap H_+(\pi/2, 0) \setminus H_K^\mathrm{b}(t)\).
\item
  Take any \(t \in [0, \varphi^\mathrm{L})\). The point \(\mathbf{x}_K(t)\) is outside the half-plane \(\breve{H}_K^\mathrm{L}\), and we have \(\breve{H}_K^\mathrm{L} \cap Q_K^-(t) = \breve{H}_K^\mathrm{L} \setminus H_K^\mathrm{d}(t)\). So we also have \(\breve{H}_K^\mathrm{L} \cap T_K(t) = \breve{H}_K^\mathrm{L} \cap H_+(\pi/2, 0) \setminus H_K^\mathrm{d}(t)\).
\end{enumerate}

\label{lem:monotonicity-intervals}
\end{lemma}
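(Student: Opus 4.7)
The plan is to derive both parts from the injectivity condition; part (2) then follows from (1) by mirror symmetry (\Cref{pro:mirror-reflection}) via $K \leadsto K^\mathrm{m}$, which swaps $b_K$ and $d_K$ and maps $[\varphi^\mathrm{R}, \pi/2]$ onto $[0, \varphi^\mathrm{L}]$ since $\varphi^\mathrm{L} = \pi/2 - \varphi^\mathrm{R}$. So I focus on (1).

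For the first claim—$\mathbf{x}_K(t) \notin \breve{H}_K^\mathrm{R}$ for $t \in (\varphi^\mathrm{R}, \pi/2]$—I track the signed distance
\[
\eta(s) := \mathbf{x}_K(s) \cdot u_{\varphi^\mathrm{R}} - (h_K(\varphi^\mathrm{R}) - 1)
\]
of $\mathbf{x}_K(s)$ from the line $b_K(\varphi^\mathrm{R})$, noting $\eta(\varphi^\mathrm{R}) = 0$ since $\mathbf{x}_K(\varphi^\mathrm{R}) \in b_K(\varphi^\mathrm{R})$. By \Cref{pro:cap-nondegenerate-continuity}, $\mathbf{x}_K'(s) = -(f_K(s) - 1)u_s + (g_K(s) - 1)v_s$, so
\[
\eta'(s) = -(f_K(s) - 1)\cos(s - \varphi^\mathrm{R}) - (g_K(s) - 1)\sin(s - \varphi^\mathrm{R}).
\]
On $s \in (\varphi^\mathrm{R}, \pi/2]$ the argument $s - \varphi^\mathrm{R}$ lies in $(0, \pi/2)$, so both trigonometric factors are strictly positive, and $f_K(s), g_K(s) > 1$ by the injectivity condition together with \Cref{thm:lower-bound-one}; hence $\eta' < 0$ throughout, yielding $\eta(t) < 0$ and $\mathbf{x}_K(t) \notin \breve{H}_K^\mathrm{R}$.

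For the second equality, using $Q_K^-(t) = (\mathbb{R}^2 \setminus H_K^\mathrm{b}(t)) \cap (\mathbb{R}^2 \setminus H_K^\mathrm{d}(t))$ from \Cref{pro:rotating-hallway-parts}, the inclusion $\breve{H}_K^\mathrm{R} \setminus H_K^\mathrm{b}(t) \supseteq \breve{H}_K^\mathrm{R} \cap Q_K^-(t)$ is immediate, while the reverse reduces to showing the closed wedge $\Omega := \breve{H}_K^\mathrm{R} \cap H_K^\mathrm{d}(t)$ sits inside $H_K^\mathrm{b}(t)$. Since $\varphi^\mathrm{R} < t < t + \pi/2$ with spread under $\pi$, the two inward normals $u_{\varphi^\mathrm{R}}, u_{t + \pi/2}$ of $\Omega$ express $u_t$ as a strictly positive combination, so $\min_{p \in \Omega} p \cdot u_t$ is attained at the vertex $q := b_K(\varphi^\mathrm{R}) \cap d_K(t)$. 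Writing $q = \mathbf{x}_K(t) + \lambda u_t$ (valid because $q, \mathbf{x}_K(t) \in d_K(t)$ and $d_K(t)$ has direction $u_t$), the constraint $q \in b_K(\varphi^\mathrm{R})$ gives $\lambda = -\eta(t)/\cos(t - \varphi^\mathrm{R}) > 0$ by the first claim, so $q \cdot u_t = (h_K(t) - 1) + \lambda > h_K(t) - 1$ and $\Omega \subseteq H_K^\mathrm{b}(t)$. The third equation is immediate: rotation angle $\pi/2$ gives $F_{\pi/2} = H_+(\pi/2, 0)$, hence $T_K(t) = H_+(\pi/2, 0) \cap Q_K^-(t)$ by \Cref{def:wedge}, and intersecting with $\breve{H}_K^\mathrm{R}$ and applying the equality just proved yields the stated form. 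I do not anticipate any serious obstacle: the heavy lifting—the strict bound $f_K, g_K > 1$ obtained via the injectivity condition—has already been done in the previous chapter, and the remaining argument is a monotonicity check plus elementary wedge geometry.
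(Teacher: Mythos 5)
Your proof is correct and follows essentially the same route as the paper: show $\mathbf{x}_K(t)$ drifts strictly outside $\breve{H}_K^\mathrm{R}$ by integrating the injectivity condition from $\varphi^\mathrm{R}$, then use elementary wedge geometry to conclude $\breve{H}_K^\mathrm{R}\cap H_K^\mathrm{d}(t)\subseteq H_K^\mathrm{b}(t)$, which is logically equivalent to the paper's statement that the cone $C := H_K^\mathrm{d}(t)\setminus H_K^\mathrm{b}(t)$ is disjoint from $\breve{H}_K^\mathrm{R}$. The mirror-symmetry reduction of (2) to (1) and the final step using $F_{\pi/2}=H_+(\pi/2,0)$ also match.

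One citation to fix: you invoke $f_K(s),g_K(s)>1$ "by the injectivity condition together with \Cref{thm:lower-bound-one}," but \Cref{thm:lower-bound-one} applies to \emph{balanced maximum caps}, not an arbitrary $K\in\mathcal{K}^\mathrm{i}$. Fortunately it isn't needed: for $K\in\mathcal{K}^\mathrm{i}$ the injectivity condition holds by definition of $\mathcal{K}^\mathrm{i}$, and combining part (3) of \Cref{def:injectivity-condition} with the formula $\mathbf{x}_K'(t)=-(f_K(t)-1)u_t+(g_K(t)-1)v_t$ from \Cref{pro:cap-nondegenerate-continuity} gives $f_K(t)>1$ and $g_K(t)>1$ directly on $(0,\pi/2)$. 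Drop the reference to \Cref{thm:lower-bound-one} and cite \Cref{def:injectivity-condition}(3) with \Cref{pro:cap-nondegenerate-continuity} instead; the remainder of the argument is sound.
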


\begin{proof}
We prove (1). The point \(\mathbf{x}_K(\varphi^\mathrm{R})\) is on the boundary \(b_K^\mathrm{R}\) of \(\breve{H}_K^\mathrm{R}\) by definition. As \(K \in \mathcal{K}^\mathrm{i}\) satisfies the injectivity condition by definition, it satisfies (3) of \Cref{def:injectivity-condition} and we have \(\mathbf{x}'_K(t) \cdot u_{\varphi^\mathrm{R}} < 0\) for all \(t \in (\varphi^\mathrm{R}, \pi/2)\). By integrating, we have \(\mathbf{x}_K(t) \cdot u_{\varphi^\mathrm{R}} < \mathbf{x}_K(\varphi^\mathrm{R}) \cdot u_{\varphi^\mathrm{R}}\) for all \(t \in (\varphi^\mathrm{R}, \pi/2]\), so we have \(\mathbf{x}_K(t) \not\in \breve{H}_K^\mathrm{R}\).

We now show \(\breve{H}_K^\mathrm{R} \cap Q_K^-(t) = \breve{H}_K^\mathrm{R} \setminus H_K^\mathrm{b}(t)\). Recall that the quadrant \(Q_K^-(t)\) have half-lines \(\vec{b}_K(t)\) and \(\vec{d}_K(t)\) as boundary, so we have \(Q_K^-(t) = \mathbb{R}^2 \setminus H_K^\mathrm{b}(t) \setminus H_K^\mathrm{d}(t)\). Take the cone \(C := H_K^\mathrm{d}(t) \setminus H_K^\mathrm{b}(t)\). Then the vertex \(\mathbf{x}_K(t)\) of \(C\) is outside the half-plane \(\breve{H}_K^\mathrm{R}\) of normal angle \(\varphi^\mathrm{R}\). Also, as \(t \in (\varphi^{\mathrm{R}}, \pi/2]\)the spanning directions \(u_t\) and \(-v_t\) of \(C\) are in the positive direction of the normal angle \(-u_{\varphi^\mathrm{R}}\) of \(\breve{H}_K^\mathrm{R}\). So \(C\) and \(\breve{H}_K^\mathrm{R}\) are disjoint, and we have
\[
\breve{H}_K^\mathrm{R} \cap Q_K^-(t) = \breve{H}_K^\mathrm{R} \cap \left( Q_K^-(t) \cup C  \right) = \breve{H}_K^\mathrm{R} \cap (\mathbb{R}^2 \setminus H_K^\mathrm{b}(t)).
\]
Intersect above with \(H_+(\pi/2, 0)\) to get the second equation of (1). The proof of (2) is a mirror-symmetric argument.
\end{proof}

\begin{lemma}

For any cap \(K \in \mathcal{K}^\mathrm{i}\), let \(B := B_K\) and \(D := D_K\). Then the followings are true.

\begin{enumerate}
\def\labelenumi{\arabic{enumi}.}
\tightlist
\item
  For every \(t \in [\varphi^\mathrm{R}, \pi/2]\), we have \(h_K(t) + h_B(\pi + t) \leq 1\).
\item
  Equality holds in (1) at \(t = \varphi^\mathrm{R}, \pi/2\). So \(l_B(3\pi/2) = l(\pi/2, 0)\) and \(l_B(\pi + \varphi^\mathrm{R}) = b_K^\mathrm{R}\).
\item
  For every \(t \in [0, \varphi^\mathrm{L}]\), we have \(h_K(\pi/2 + t) + h_D(3\pi/2 + t) \leq 1\).
\item
  Equality holds in (3) at \(t = 0, \varphi^\mathrm{R}\). So \(l_D(3\pi/2) = l(\pi/2, 0)\) and \(l_D(3\pi/2 + \varphi^\mathrm{L}) = d_K^\mathrm{L}\).
\end{enumerate}

\label{lem:right-left-body}
\end{lemma}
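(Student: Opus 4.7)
Parts (1) and (3) are immediate from the defining intersections. Since $B \subseteq H_K^\mathrm{b}(t) = H_+(t, h_K(t)-1)$ for every $t \in [\varphi^\mathrm{R}, \pi/2]$, reading off the support value of the defining half-plane in direction $u_{\pi+t}$ yields $h_B(\pi+t) \leq 1 - h_K(t)$; the argument for $D$ is identical after substituting $(\varphi^\mathrm{R}, \pi/2) \to (0, \varphi^\mathrm{L})$ and shifting $t$ by $\pi/2$.

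For the equalities in (2) at the two endpoints $t = \varphi^\mathrm{R}$ and $t = \pi/2$, the plan is to produce explicit witness points of $B$ lying on the lines $b_K^\mathrm{R}$ and $l(\pi/2, 0)$ respectively. A natural candidate for the first is the inner corner $\mathbf{x}_K^\mathrm{R}$, which lies on $b_K^\mathrm{R}$ by definition and in $K$ under the standing hypothesis that $K$ is the cap of a monotone sofa (so that $\mathcal{N}(K) \subseteq K$). Verifying $\mathbf{x}_K^\mathrm{R} \in H_K^\mathrm{b}(t)$ for every $t \in [\varphi^\mathrm{R}, \pi/2]$ combines \Cref{lem:monotonicity-intervals}(1), which inside $\breve{H}_K^\mathrm{R}$ equates $H_K^\mathrm{b}(t)$-membership with non-membership in the open cone $Q_K^-(t)$, with an analysis of how the rotating cone $Q_K^-(t)$ migrates away from $\mathbf{x}_K^\mathrm{R}$ as $t$ increases, driven by the injectivity condition. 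For the equality at $t = \pi/2$, a witness on the bottom edge $e_K(3\pi/2) \subseteq K$ is produced by the same mechanism, using that the injectivity of $\mathbf{x}_K$ prevents the wedges $T_K(t)$ from reaching the relevant portion of the edge.

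The parallel statements in (4) for $D$ follow by applying (2) to the mirror reflection $K^\mathrm{m}$ of \Cref{def:mirror-reflection} and converting back via the equivariance relations in \Cref{pro:mirror-reflection}, noting $\varphi^\mathrm{L} + \varphi^\mathrm{R} = \pi/2$. The remaining bookkeeping—that the equalities $h_B(\pi + \varphi^\mathrm{R}) = 1 - h_K(\varphi^\mathrm{R})$ and $h_B(3\pi/2) = 0$ translate into the line identifications $l_B(\pi + \varphi^\mathrm{R}) = b_K^\mathrm{R}$ and $l_B(3\pi/2) = l(\pi/2, 0)$—is immediate from the definition of a supporting line and the symmetry $l(\pi + s, -h) = l(s, h)$.

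The main obstacle is sustaining the witness inside every $H_K^\mathrm{b}(t)$ across the full interval $[\varphi^\mathrm{R}, \pi/2]$. Writing $F(t) := (\mathbf{x}_K^\mathrm{R} - \mathbf{x}_K(t)) \cdot u_t$, one has $F(\varphi^\mathrm{R}) = 0$, and using $\mathbf{x}_K'(s) = -(f_K(s)-1) u_s + (g_K(s)-1) v_s$ from \Cref{pro:cap-nondegenerate-continuity} one computes $F'(\varphi^\mathrm{R}) = f_K(\varphi^\mathrm{R}) - 1 > 0$ by injectivity. However, $F$ obeys a coupled first-order linear system with the companion $G(t) := (\mathbf{x}_K^\mathrm{R} - \mathbf{x}_K(t)) \cdot v_t$ whose solutions are oscillatory in general, so ensuring $F(t) \geq 0$ throughout $[\varphi^\mathrm{R}, \pi/2]$ requires genuine global input: specifically, the strict inequalities $f_K, g_K > 1$ supplied by the injectivity condition together with the geometric content of \Cref{lem:monotonicity-intervals} excluding the cone $Q_K^-(t)$ from intersecting $\breve{H}_K^\mathrm{R}$ in the wrong way.
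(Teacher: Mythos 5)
Your parts (1) and (3) and the mirror-symmetry reduction for (4) are fine, and your final bookkeeping remark is correct. The gap is in part (2) at $t = \varphi^\mathrm{R}$, and you have essentially diagnosed it yourself: the witness $\mathbf{x}_K^\mathrm{R}$ forces you into the coupled system $F' = (f_K - 1) + G$, $G' = -(g_K - 1) - F$ with $F(\varphi^\mathrm{R}) = G(\varphi^\mathrm{R}) = 0$, and since $G$ dives negative immediately ($G'(\varphi^\mathrm{R}) = -(g_K(\varphi^\mathrm{R})-1) < 0$), the sign of $F$ is not controlled by pointwise bounds $f_K, g_K > 1$ alone. Saying the argument ``requires genuine global input'' and re-citing \Cref{lem:monotonicity-intervals} does not supply that input; as written, the claim that $\mathbf{x}_K^\mathrm{R} \in H_K^\mathrm{b}(t)$ for all $t$ is not established. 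There is also a subsidiary problem: $\mathbf{x}_K^\mathrm{R} \in K$ is something you attach to an extraneous ``standing hypothesis'' ($\mathcal{N}(K) \subseteq K$) rather than deriving from the actual hypothesis $K \in \mathcal{K}^\mathrm{i}$.

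The paper avoids this entirely by choosing a better witness: the unique point $p$ where the upper boundary $\delta K$ of the cap crosses the line $b_K^\mathrm{R}$. Existence and uniqueness of $p$ follow from \Cref{lem:cap-wz-in-edge} (the line $b_K^\mathrm{R}$ enters $K$ through the interior of the bottom edge $e_K(3\pi/2)$, so it must exit through exactly one point of $\delta K$). Then $p \in \delta K$ together with \Cref{thm:monotonization-connected-iff}(2) gives $p \notin \mathcal{N}(K)$, hence $p \notin Q_K^-(t)$ for every $t \in [0,\pi/2]$, and \Cref{lem:monotonicity-intervals}(1) converts that directly into $p \in H_K^\mathrm{b}(t)$ for all $t \in [\varphi^\mathrm{R}, \pi/2]$. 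No ODE, no sign-tracking, and $p \in K$ is automatic. For $t = \pi/2$, the paper does not invoke ``the same mechanism'' but rather uses $A_K(0)$ together with \Cref{thm:wedge-ends-in-cap}: $w_K(t) > 0$ says exactly that $A_K(0)$ lies on the closed side of $b_K(t)$ away from the inner corner, i.e., $A_K(0) \in H_K^\mathrm{b}(t)$ for every $t$, and $A_K(0)$ sits on $l(\pi/2, 0)$. You should replace your witness at $\varphi^\mathrm{R}$ with the boundary point $p$, and make the $t = \pi/2$ witness $A_K(0)$ explicit via \Cref{thm:wedge-ends-in-cap} rather than gesturing at a ``mechanism.''
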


\begin{proof}
By \Cref{def:right-left-body}, we have \(B \subseteq H_K^\mathrm{b}(t) = H_-(\pi + t, 1 - h_K(t))\) for all \(t \in [\varphi_R, \pi/2]\), and this completes the proof of (1). We now prove (2).

By \Cref{thm:wedge-ends-in-cap}, we have \(A_K(0) \in H_K^\mathrm{b}(t)\) for all \(t \in [\varphi^\mathrm{R}, \pi/2]\) and so \(A_K(0) \in B_K\). This with \(B_K \subseteq K\) implies that \(l_B(3\pi/2) = l(\pi/2, 0)\). It remains to prove \(l_B(\pi + \varphi^\mathrm{R}) = b_K^\mathrm{R}\). Since \(B \subseteq H_K^\mathrm{b} (\varphi^\mathrm{R})\), and \(b_K^\mathrm{R}\) is the boundary of \(\breve{H}_K^\mathrm{R} = H_K^\mathrm{b}(\varphi^\mathrm{R})\), it suffices to show that there exists some point \(p \in B \cap b_K^\mathrm{R}\).

We will show that the upper boundary \(\delta K\) of \(K\) and the line \(b_K^\mathrm{R}\) intersect at a single point \(p\). By \Cref{lem:cap-wz-in-edge}, the line \(b_K^\mathrm{R}\) passes through \(W_K^\mathrm{R}\) which is in the edge \(e_K(3\pi/2) \setminus \left\{ A_K(0), C_K(\pi/2) \right\}\) of \(K\). So it should pass through exactly one another point \(p\) of the boundary \(\partial K\) of \(K\). As \(\delta K = \partial K \setminus e_K(3\pi/2) \cup \left\{ A_K(0), C_K(\pi/2) \right\}\), the point \(p\) is in \(\delta K\) and is the unique intersection of \(\delta K\) and \(b_K^\mathrm{R}\).

Now take \(p\) as the unique intersection of \(\delta K\) and \(b_K^\mathrm{R}\). Recall that our goal is to show \(p \in B \cap b_K^\mathrm{R}\). Since \(p \in K \cap b_K^\mathrm{R}\), it suffices to show that \(p \in H_K^\mathrm{b}(t)\) for all \(t \in [\varphi^\mathrm{R}, \pi/2]\). By (2) of \Cref{thm:monotonization-connected-iff} and that \(p \in \delta K\), the point \(p\) is outside the niche \(\mathcal{N}(K)\). So \(p\) is outside all \(Q_K^-(t)\) over \(t \in [0, \pi/2]\). By (1) of \Cref{lem:monotonicity-intervals}, the point \(p\) should be inside \(H_K^\mathrm{b}(t)\) for all \(t \in [\varphi^\mathrm{R}, \pi/2]\). This completes the goal.

The proofs of (3) and (4) can be done using mirror-symmetric arguments.
\end{proof}

We now make the extension \(\mathcal{K}^\mathrm{i} \to \mathcal{L}\).

\begin{theorem}

For any \(K \in \mathcal{K}^\mathrm{i}\), the triple \((K, B_K, D_K)\) is in \(\mathcal{L}\).

\label{thm:cap-tail-extension}
\end{theorem}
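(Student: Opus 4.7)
The plan is to observe that \Cref{thm:cap-tail-extension} is essentially a repackaging of the preceding \Cref{lem:right-left-body} together with the defining properties of $B_K$ and $D_K$, so the proof reduces to checking the five conditions of \Cref{def:cap-tail-space} one by one. First I would verify condition (1). By hypothesis $K \in \mathcal{K}^\mathrm{i}$. The inclusions $B_K \subseteq K$ and $D_K \subseteq K$ are immediate from \Cref{def:right-left-body}, since both $B_K$ and $D_K$ are defined as $K$ intersected with a collection of closed half-planes.

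Next I would address conditions (2) and (3). Condition (2), the inequality $h_K(t) + h_{B_K}(\pi+t) \leq 1$ for $t \in [\varphi^\mathrm{R}, \pi/2]$, is exactly statement (1) of \Cref{lem:right-left-body}. Condition (3), that equality holds at the two endpoints $t = \varphi^\mathrm{R}$ and $t = \pi/2$, is precisely statement (2) of the same lemma, since the equalities $l_{B_K}(3\pi/2) = l(\pi/2, 0)$ and $l_{B_K}(\pi + \varphi^\mathrm{R}) = b_K^\mathrm{R}$ translate directly to $h_{B_K}(3\pi/2) = 0 = 1 - h_K(\pi/2)$ and $h_{B_K}(\pi + \varphi^\mathrm{R}) = 1 - h_K(\varphi^\mathrm{R})$ using \Cref{def:cap-left-right} (recall $b_K^\mathrm{R} = l(\varphi^\mathrm{R}, h_K(\varphi^\mathrm{R}) - 1)$ so it has signed distance $-(1 - h_K(\varphi^\mathrm{R}))$ from the origin with outward normal $u_{\varphi^\mathrm{R}}$, equivalently signed distance $1 - h_K(\varphi^\mathrm{R})$ with outward normal $u_{\pi + \varphi^\mathrm{R}}$).

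Conditions (4) and (5) are then obtained by the mirror-symmetric argument: they are statements (3) and (4) of \Cref{lem:right-left-body} applied to $D_K$, with the corresponding translations between the identities $l_{D_K}(3\pi/2) = l(\pi/2, 0)$ and $l_{D_K}(3\pi/2 + \varphi^\mathrm{L}) = d_K^\mathrm{L}$ and the equalities $h_K(\pi/2) + h_{D_K}(3\pi/2) = 1$ and $h_K(\pi/2 + \varphi^\mathrm{L}) + h_{D_K}(3\pi/2 + \varphi^\mathrm{L}) = 1$.

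Since no new technical work is required, there is no real obstacle: the bookkeeping lemmas \Cref{lem:cap-left-right-disjoint}, \Cref{lem:cap-wz-in-edge}, and \Cref{lem:monotonicity-intervals} have already done the geometric work needed to ensure that the right and left tails of $B_K$ and $D_K$ touch the prescribed lines. The only mild care needed is to make sure the sign conventions for support functions on opposite normal angles match the formulation of \Cref{def:cap-tail-space}; this is handled by the explicit identifications $l_{B_K}(3\pi/2) = l(\pi/2, 0)$ and $l_{B_K}(\pi + \varphi^\mathrm{R}) = b_K^\mathrm{R}$ (and their mirror counterparts) from \Cref{lem:right-left-body}, from which the endpoint equalities in \Cref{def:cap-tail-space} follow directly.
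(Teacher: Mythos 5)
Your proposal matches the paper's proof: condition (1) is immediate from \Cref{def:right-left-body}, and conditions (2)--(5) are precisely statements (1)--(4) of \Cref{lem:right-left-body}. The extra work you do translating the line identities into support-function equalities is just the unpacking the paper leaves implicit; the argument is the same.
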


\begin{proof}
We need to check the conditions of \Cref{def:cap-tail-space} for \(B := B_K\) and \(D := D_K\). Condition (1) is evident from the definitions. Conditions (2) to (5) are exactly \Cref{lem:right-left-body}.
\end{proof}

\begin{remark}

Even though both \(\mathcal{K}^\mathrm{i}\) and \(\mathcal{L}\) are convex domains, the injection \(\mathcal{K}^\mathrm{i} \to \mathcal{L}\) defined as map \(K \mapsto (K, B_K, D_K)\) in \Cref{thm:cap-tail-extension} is \emph{not} convex-linear. The lower left portion of the convex body \(B_K\) is the \emph{Aleksandrov body} (or the \emph{Wulff shape}) \(\bigcap_{t \in [\varphi^\mathrm{R}, \pi/2]} H_+(t, f(t))\) of the function \(f(t) := 1 - h_K(\pi + t)\) over \(t \in [\varphi^\mathrm{R}, \pi/2]\), which is hard to understand in terms of \(f\) directly. This makes the injection \(\mathcal{K}^\mathrm{i} \to \mathcal{L}\) very important; it ‘irons out’ the sofa area functional \(\mathcal{A} : \mathcal{K}^\mathrm{i} \to \mathbb{R}\), which is not quadratic, to a quadratic functional \(\mathcal{Q} : \mathcal{L} \to \mathbb{R}\) (\Cref{thm:upper-bound-q}).

\label{rem:cap-tail-extension}
\end{remark}

\section{Definition of $\texorpdfstring{\mathcal{Q}}{Q}$}
\label{sec:definition-of-q}
In this \Cref{sec:definition-of-q}, we give the full \Cref{def:upper-bound-q} of the upper bound \(\mathcal{Q} : \mathcal{L} \to \mathbb{R}\) of area (see \Cref{fig:upper-bound-full}). For the cap \(K := \mathcal{C}(G)\) of Gerver’s sofa \(G\), the inner corner \(\mathbf{x}_K(t)\) draws a main ‘core’ part of the niche \(\mathcal{N}(K)\) at the interval \(t \in [\varphi^\mathrm{R}, \varphi^\mathrm{L}]\). Following this, we break the niche \(\mathcal{N}(K)\) into three parts for the construction of the upper bound \(\mathcal{Q}\).

\begin{definition}

(See \Cref{fig:upper-bound-full}) For any convex body \(B\), define the convex curve
\[
\mathbf{b}_B := \mathbf{u}_{B}^{\pi + \varphi^\mathrm{R}, 3\pi/2}
\]
which is from the vertex \(X_B := v_B^+(\pi + \varphi^\mathrm{R})\) to the vertex \(W_B := v_B^-(3\pi/2)\). Likewise, for any convex body \(D\), define the convex curve
\[
\textbf{d}_D := \mathbf{u}_D^{3\pi/2, 3\pi/2 + \varphi^\mathrm{L}}
\]
which is from the vertex \(Z_D := v_D^+(3\pi/2)\) to the vertex \(Y_D := v_D^-(3\pi/2 + \varphi^\mathrm{L})\).

\label{def:right-left-tails}
\end{definition}

\begin{definition}

Define the function \(\mathcal{Q} : \mathcal{L} \to \mathbb{R}\) as \(\mathcal{Q}(K, B, D) :=\)
\[
|K| + \mathcal{J}\left( \mathbf{d}_D \right) + \mathcal{J} \left( Y_D,   \mathbf{x}_K^\mathrm{L} \right) - \mathcal{J}\left( \mathbf{x}_K|_{[\varphi^\mathrm{R}, \varphi^\mathrm{L}]} \right) + \mathcal{J} \left( \mathbf{x}_K^\mathrm{R}, X_B \right)  + \mathcal{J}\left( \mathbf{b}_B \right) .
\]

\label{def:upper-bound-q}
\end{definition}

\begin{proposition}

\(\mathcal{Q}\) is a quadratic functional on \(\mathcal{L}\).

\label{pro:upper-bound-q-quadratic}
\end{proposition}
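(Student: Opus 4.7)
The plan is to verify that each of the six summands defining $\mathcal{Q}(K,B,D)$ is individually either convex-linear or quadratic as a functional on the convex domain $\mathcal{L}$, since the sum of quadratic functionals is again quadratic. I would group the terms according to which convex bodies they depend on: the area term $|K|$ depends only on $K$; the boundary-segment terms $\mathcal{J}(\mathbf{b}_B)$ and $\mathcal{J}(\mathbf{d}_D)$ depend only on $B$ and $D$ respectively; the rotation-path term $\mathcal{J}(\mathbf{x}_K|_{[\varphi^\mathrm{R},\varphi^\mathrm{L}]})$ depends only on $K$; and the two line-segment terms $\mathcal{J}(Y_D,\mathbf{x}_K^\mathrm{L})$ and $\mathcal{J}(\mathbf{x}_K^\mathrm{R},X_B)$ are bilinear in the indicated endpoints.

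For $|K|$, I would cite \Cref{thm:area-quadratic-expression} directly. For $\mathcal{J}(\mathbf{b}_B) = \mathcal{J}(\mathbf{u}_B^{\pi+\varphi^\mathrm{R},3\pi/2})$ and $\mathcal{J}(\mathbf{d}_D) = \mathcal{J}(\mathbf{u}_D^{3\pi/2,3\pi/2+\varphi^\mathrm{L}})$, I would invoke \Cref{thm:convex-curve-area-functional}, which asserts that $\mathcal{J}(\mathbf{u}_X^{a,b})$ is quadratic in $X \in \mathcal{K}$. For the rotation-path term I would argue that $\mathbf{x}_K(t) = v_K(t,t+\pi/2) - u_t - v_t$ (or equivalently $\mathbf{x}_K(t) = (h_K(t)-1)u_t + (h_K(t+\pi/2)-1)v_t$ by \Cref{pro:rotating-hallway-parts}), so by (2) of \Cref{thm:convex-body-linear} and (1) of the same theorem, $\mathbf{x}_K(t)$ is convex-linear in $K$ for each fixed $t$. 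Since the injectivity condition (\Cref{def:injectivity-condition}) guarantees $\mathbf{x}_K \in C^{\mathrm{BV}}[\varphi^\mathrm{R},\varphi^\mathrm{L}]$, \Cref{pro:curve-area-functional-quadratic} then yields that $\mathcal{J}(\mathbf{x}_K|_{[\varphi^\mathrm{R},\varphi^\mathrm{L}]})$ is quadratic in $K$.

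For the two line-segment terms, I would recall \Cref{def:curve-area-line-segment}: $\mathcal{J}(p,q) = (p\times q)/2$ is bilinear in $(p,q)\in\mathbb{R}^2\times\mathbb{R}^2$. The endpoints $\mathbf{x}_K^\mathrm{L} = \mathbf{x}_K(\varphi^\mathrm{L})$ and $\mathbf{x}_K^\mathrm{R} = \mathbf{x}_K(\varphi^\mathrm{R})$ are convex-linear in $K$ as just noted, and $X_B = v_B^+(\pi+\varphi^\mathrm{R})$, $Y_D = v_D^-(3\pi/2+\varphi^\mathrm{L})$ are convex-linear in $B$ and $D$ respectively by (2) of \Cref{thm:convex-body-linear}. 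Composing bilinearity of $\mathcal{J}(\cdot,\cdot)$ with the convex-linearity of the endpoints shows that both $\mathcal{J}(Y_D,\mathbf{x}_K^\mathrm{L})$ and $\mathcal{J}(\mathbf{x}_K^\mathrm{R},X_B)$ are quadratic functionals of $(K,B,D)\in\mathcal{L}$.

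I do not anticipate a real obstacle here; the proposition is essentially a bookkeeping exercise that assembles the quadraticity results already established in \Cref{sec:planar-convex-body} and \Cref{sec:convex-curve}. The only step requiring a moment's care is checking that $\mathbf{x}_K$ restricted to $[\varphi^\mathrm{R},\varphi^\mathrm{L}]$ lies in $C^{\mathrm{BV}}$ so that \Cref{def:curve-area-functional} applies, but this is immediate from (2) and (3) of \Cref{def:injectivity-condition} since $K \in \mathcal{K}^\mathrm{i}$ forces $\mathbf{x}_K$ to be continuously differentiable on $[0,\pi/2]$.
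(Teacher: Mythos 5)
Your proposal is correct and follows essentially the same route as the paper: decompose $\mathcal{Q}$ term by term, cite \Cref{thm:area-quadratic-expression} for $|K|$, \Cref{thm:convex-curve-area-functional} for the two tail terms, \Cref{pro:curve-area-functional-quadratic} together with convex-linearity of $\mathbf{x}_K$ for the core term, and bilinearity of $\mathcal{J}(p,q)$ together with \Cref{thm:convex-body-linear} for the two line-segment terms. The extra remark that the injectivity condition puts $\mathbf{x}_K$ in $C^{\mathrm{BV}}$ is a small but welcome piece of care that the paper leaves implicit.
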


\begin{proof}
The term \(\mathcal{J}\left( \mathbf{x}_K|_{[\varphi^\mathrm{R}, \varphi^\mathrm{L}]} \right)\) is quadratic in \(K\) by \Cref{pro:curve-area-functional-quadratic} and that \(\mathbf{x}_K(t) = (h_K(t) - 1) u_t + (h_K(t + \pi/2) - 1) v_t\) is convex-linear in \(K\) (\Cref{pro:rotating-hallway-parts} and (1) of \Cref{thm:convex-body-linear}). The terms \(\mathcal{J}\left( \mathbf{d}_D \right)\) and \(\mathcal{J}\left( \mathbf{b}_B \right)\) are quadratic in \(D\) and \(B\) by \Cref{thm:convex-curve-area-functional}. The terms \(\mathcal{J} \left( Y_D, \mathbf{x}_K^\mathrm{L} \right)\) and \(\mathcal{J} \left( \mathbf{x}_K^\mathrm{R}, X_B \right)\) are quadratic because \(X_B\), \(\mathbf{x}_K^\mathrm{R}\) and \(\mathbf{x}_K^\mathrm{L}\), \(Y_D\) are linear in \(B\), \(K\), \(D\) respectively by (2) of \Cref{thm:convex-body-linear}.
\end{proof}

Now we bound the right and left part of \(\mathcal{N}(K)\).

\begin{lemma}

Fix an arbitrary \(K \in \mathcal{K}^\mathrm{i}\) and take \(B := B_K\). We have
\[
\left| \mathcal{N}(K) \cap \breve{H}_K^\mathrm{R} \right| \geq \mathcal{J}\left( X_B, W_K^\mathrm{R} \right) -  \mathcal{J}\left( \mathbf{b}_B \right) 
\]
and similarly for \(D := D_K\), we have
\[
\left| \mathcal{N}(K) \cap \breve{H}_K^\mathrm{L} \right| \geq \mathcal{J}\left( Z_K^\mathrm{L}, Y_D \right)  -  \mathcal{J}\left( \mathbf{d}_D \right).
\]

\label{lem:cap-left-right-tail}
\end{lemma}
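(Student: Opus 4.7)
The plan is to prove only the first inequality; the second follows by applying this to the mirror-reflected cap $K^\mathrm{m}$ via \Cref{pro:mirror-reflection}. The idea is to construct a Jordan region $R^\mathrm{R}$ whose area equals $\mathcal{J}(X_B, W_K^\mathrm{R}) - \mathcal{J}(\mathbf{b}_B)$ and which lies (up to a null set) inside $\mathcal{N}(K) \cap \breve{H}_K^\mathrm{R}$. The first step is to apply \Cref{lem:convex-curve-jordan-curve} to the convex body $B = B_K$ with angles $a = \pi + \varphi^\mathrm{R}$ and $b = 3\pi/2$. By \Cref{lem:right-left-body}(2), $l_B(\pi + \varphi^\mathrm{R}) = b_K^\mathrm{R}$ and $l_B(3\pi/2) = l(\pi/2, 0)$, hence $v_B(\pi + \varphi^\mathrm{R}, 3\pi/2) = b_K^\mathrm{R} \cap l(\pi/2, 0) = W_K^\mathrm{R}$. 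In the non-degenerate case $X_B \neq W_B$ the lemma delivers a counterclockwise Jordan curve $\Gamma^\mathrm{R}$ concatenating the segment $X_B \to W_K^\mathrm{R}$ on $b_K^\mathrm{R}$, the segment $W_K^\mathrm{R} \to W_B$ on $l(\pi/2, 0)$, and the reverse of $\mathbf{b}_B$ from $W_B$ back to $X_B$; the degenerate case $X_B = W_B$ makes both sides of the target inequality vanish and can be handled separately.

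Let $R^\mathrm{R}$ denote the region enclosed by $\Gamma^\mathrm{R}$. By Green's theorem (\Cref{thm:curve-area-functional-area}) and additivity of $\mathcal{J}$ (\Cref{pro:curve-area-functional-additive}), together with $\mathcal{J}(W_K^\mathrm{R}, W_B) = 0$ (both endpoints lie on $l(\pi/2, 0)$, which contains the origin, see \Cref{pro:curve-area-line-segment-colinear}),
\[
|R^\mathrm{R}| = \mathcal{J}(X_B, W_K^\mathrm{R}) + \mathcal{J}(W_K^\mathrm{R}, W_B) - \mathcal{J}(\mathbf{b}_B) = \mathcal{J}(X_B, W_K^\mathrm{R}) - \mathcal{J}(\mathbf{b}_B).
\]
From \Cref{lem:convex-curve-jordan-curve}(2) combined with \Cref{lem:right-left-body}(2), $R^\mathrm{R}$ lies in the interior of $H_B(\pi + \varphi^\mathrm{R}) \cap H_B(3\pi/2) = \breve{H}_K^\mathrm{R} \cap H_+(\pi/2, 0)$, and by part (3) of the same lemma $R^\mathrm{R}$ is disjoint from $\bigcap_{t \in [\pi + \varphi^\mathrm{R}, 3\pi/2]} H_B(t) \supseteq B$. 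Moreover $R^\mathrm{R} \subseteq K$: each arc of $\Gamma^\mathrm{R}$ lies in $K$ (the curve $\mathbf{b}_B \subseteq B \subseteq K$; the segment $W_K^\mathrm{R} \to W_B$ lies in the edge $e_K(3\pi/2)$ by \Cref{lem:cap-wz-in-edge} and $e_B(3\pi/2) \subseteq e_K(3\pi/2)$; and $X_B \to W_K^\mathrm{R}$ lies in $K$ by convexity applied to its endpoints), so the bounded set $R^\mathrm{R}$ must itself lie in $K$ since $\mathbb{R}^2 \setminus K$ is connected.

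To finish, we combine $R^\mathrm{R} \subseteq K \cap \breve{H}_K^\mathrm{R}$, $R^\mathrm{R} \cap B = \emptyset$, and $B = K \cap \bigcap_{t \in [\varphi^\mathrm{R}, \pi/2]} H_K^\mathrm{b}(t)$ (from \Cref{def:right-left-body}) to conclude that every $p \in R^\mathrm{R}$ fails to lie in $H_K^\mathrm{b}(t_0)$ for some $t_0 \in [\varphi^\mathrm{R}, \pi/2]$; since $p \in \breve{H}_K^\mathrm{R} = H_K^\mathrm{b}(\varphi^\mathrm{R})$, we must have $t_0 \in (\varphi^\mathrm{R}, \pi/2]$. \Cref{lem:monotonicity-intervals}(1) then places $p$ in $T_K(t_0) \cap \breve{H}_K^\mathrm{R} \subseteq \mathcal{N}(K) \cap \breve{H}_K^\mathrm{R}$ whenever $t_0 < \pi/2$, while the boundary case $t_0 = \pi/2$ confines $p$ to the line $b_K(\pi/2)$, a null set that does not affect the area bound. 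Monotonicity of area now yields $|R^\mathrm{R}| \leq |\mathcal{N}(K) \cap \breve{H}_K^\mathrm{R}|$, which is the desired inequality. The main technical obstacle will be bookkeeping the degenerate endpoint configurations of $\Gamma^\mathrm{R}$ (where $X_B$, $W_K^\mathrm{R}$, or $W_B$ may coincide) and the measure-zero $t_0 = \pi/2$ boundary case; no conceptually new machinery beyond \Cref{sec:domain-of-q} appears necessary.
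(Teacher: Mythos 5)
Your proposal follows the paper's proof essentially line for line: the same Jordan curve built from the reverse of $\mathbf{b}_B$ and two segments, the same area computation via $\mathcal{J}(W_K^\mathrm{R}, W_B) = 0$, and the same chain $R^\mathrm{R} \subseteq K$, $R^\mathrm{R} \cap B = \emptyset$, $R^\mathrm{R} \subseteq \breve{H}_K^\mathrm{R} \cap H_+(\pi/2, 0)$, followed by the definition of $B_K$ and \Cref{lem:monotonicity-intervals}. The one slip is the last sentence: if $p \notin H_K^\mathrm{b}(\pi/2) = H_+(\pi/2, 0)$ then $p$ lies strictly below $l(\pi/2, 0)$, not on it, so the ``null set'' justification is both incorrect and unnecessary — in fact \Cref{lem:convex-curve-jordan-curve}(2) already places $R^\mathrm{R}$ in the interior of $H_B(3\pi/2) = H_+(\pi/2, 0) = H_K^\mathrm{b}(\pi/2)$, so $t_0 = \pi/2$ cannot occur at all (and likewise $t_0 = \varphi^\mathrm{R}$ is ruled out by the interior of $H_B(\pi + \varphi^\mathrm{R})$); this is precisely how the paper excludes both endpoints.
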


\begin{proof}
We show the first inequality. The second inequality can be proven by a mirror-symmetric argument.

Recall the \Cref{def:right-left-tails} that the curve \(\mathbf{b}_B := \mathbf{u}_{B}^{\pi + \varphi^\mathrm{R}, 3\pi/2}\) is the segment of \(\partial B\) from the vertex \(X_B := v_B^+(\varphi^\mathrm{R} + \pi)\) of \(B\) to the vertex \(W_B := v_B^-(3\pi/2)\) of \(B\). Also, we have \(W_K^\mathrm{R} = v_B\left( \pi + \varphi^\mathrm{R}, 3\pi/2 \right)\) by (2) of \Cref{lem:right-left-body}. Because \(W_K^\mathrm{R}\) and \(W_B\) are on the line \(l(\pi/2, 0)\), we have \(\mathcal{J}\left( W_K^\mathrm{R}, W_B \right) = 0\), which we will use implicitly.

If \(X_B = W_B\), then by \Cref{lem:convex-curve-cut} the curve \(\mathbf{b}_B\) and the points \(X_B, W_B, W_K^\mathrm{R}\) degenerate to a single point. So the lower bound becomes zero if \(X_B = W_B\). Now assume otherwise that \(X_B \neq W_B\).

Define the closed curve \(\Gamma\) obtained by following \(\mathbf{b}_B\) in the reverse direction, and then the line segments from \(X_B\) to \(W_K^\mathrm{R}\) and from \(W_K^\mathrm{R}\) to \(W_B\) respectively. Then by (1) of \Cref{lem:convex-curve-jordan-curve}, the curve \(\Gamma\) is a counterclockwise Jordan curve enclosing a region \(R\). By \Cref{thm:curve-area-functional-area} and \Cref{pro:curve-area-functional-additive}, the area of \(R\) is equal to \(\mathcal{J}\left( X_B, W_K^\mathrm{R} \right) - \mathcal{J}\left( \mathbf{b}_B \right)\). So it remains to show that the region \(R\) is contained in the set \(\mathcal{N}(K) \cap \breve{H}_K^\mathrm{R}\).

Take any point \(p \in R\). Our goal is to show that \(p \in \mathcal{N}(K) \cap \breve{H}_K^\mathrm{R}\). By (2) of \Cref{lem:convex-curve-jordan-curve} and (2) of \Cref{lem:right-left-body}, we have
\[
p \in H_B(\pi + \varphi^\mathrm{R}) \cap H_B(3\pi/2) = \breve{H}_K^\mathrm{R} \cap H_+(\pi/2, 0).
\]
Because \(B \subseteq K\) by definition, we have \(\mathbf{b}_B \subseteq K\). Also by \Cref{lem:cap-wz-in-edge}, we have \(W_K^\mathrm{R} \subseteq K\) so the closed curve \(\Gamma\) is contained in \(K\). Now \(R \subseteq K\) as \(K\) is contractible. We also have \(R\) disjoint from \(B\) by (3) of \Cref{lem:convex-curve-jordan-curve}. By \(p \in R \subseteq K\) and the \Cref{def:right-left-body} of \(B\), we have \(p \not\in H_K^\mathrm{b}(t)\) for some \(t \in [\varphi^\mathrm{R}, \pi/2]\). By (2) of \Cref{lem:convex-curve-jordan-curve}, we have \(t \neq \varphi^\mathrm{R}, \pi/2\). By (1) of \Cref{lem:monotonicity-intervals}, we now have \(p \in \breve{H}_K^\mathrm{R} \cap T_K(t) \subseteq \breve{H}_K^\mathrm{R} \cap \mathcal{N}(K)\).
\end{proof}

We now approximate the middle part of the niche traced out by the core \(\mathbf{x}_K\) restricted to the interval \([\varphi^\mathrm{R}, \varphi^\mathrm{L}]\).

\begin{lemma}

For any \(K \in \mathcal{K}^\mathrm{i}\), we have
\[
\left| \mathcal{N}(K) \setminus \breve{H}_K^\mathrm{R} \setminus \breve{H}_K^\mathrm{L} \right|  \geq \mathcal{J}(W_K^\mathrm{R}, \mathbf{x}_K^\mathrm{R}) + \mathcal{J}\left( \mathbf{x}_K|_{[\varphi^\mathrm{R}, \varphi^\mathrm{L}]} \right)  + \mathcal{J}(\mathbf{x}_K^\mathrm{L}, Z_K^\mathrm{L}).
\]

\label{lem:cap-middle-lower-estimate}
\end{lemma}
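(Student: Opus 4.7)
The plan is to mirror the strategy of \Cref{lem:cap-left-right-tail}: build a counterclockwise Jordan curve $\Gamma$ enclosing exactly the middle part of the niche $\mathcal{N}(K)$, then apply Green's theorem to recover the right-hand side as its enclosed area.

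Define $\Gamma$ as the concatenation, in order, of the following four oriented pieces: (i) the line segment from $W_K^\mathrm{R}$ to $\mathbf{x}_K^\mathrm{R}$ along $b_K^\mathrm{R}$; (ii) the arc $\mathbf{x}_K|_{[\varphi^\mathrm{R}, \varphi^\mathrm{L}]}$ from $\mathbf{x}_K^\mathrm{R}$ to $\mathbf{x}_K^\mathrm{L}$; (iii) the line segment from $\mathbf{x}_K^\mathrm{L}$ to $Z_K^\mathrm{L}$ along $d_K^\mathrm{L}$; (iv) the line segment from $Z_K^\mathrm{L}$ back to $W_K^\mathrm{R}$ along the line $y = 0$. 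By the injectivity condition, $\mathbf{x}_K'(t) \cdot (1, 0) = \cos t\,(\mathbf{x}_K'(t) \cdot u_t) - \sin t\,(\mathbf{x}_K'(t) \cdot v_t) < 0$ on $(0, \pi/2)$, so the middle arc is a rectifiable Jordan arc with strictly decreasing $x$-coordinate. Combined with \Cref{lem:cap-left-right-disjoint} (which forces $Z_K^\mathrm{L}$ to lie strictly to the left of $W_K^\mathrm{R}$ on $y = 0$) and \Cref{lem:cap-wz-in-edge}, one verifies routinely that the four pieces meet only at consecutive endpoints, so $\Gamma$ is a Jordan curve. Because $\Gamma$ lies in the closed upper half-plane $F_{\pi/2} = H_+(\pi/2, 0)$ while its bottom piece traverses $y = 0$ in the positive $v_{3\pi/2} = u_0$ direction, \Cref{pro:jordan-curve-orientation} makes $\Gamma$ counterclockwise. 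Green's theorem (\Cref{thm:curve-area-functional-area}) together with \Cref{pro:curve-area-functional-additive} then expresses the area of the enclosed region $R$ as the sum of $\mathcal{J}$ over the four pieces; the bottom piece contributes zero by \Cref{pro:curve-area-line-segment-colinear}, leaving exactly the right-hand side of the lemma.

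It remains to show $R \subseteq \mathcal{N}(K) \setminus \breve{H}_K^\mathrm{R} \setminus \breve{H}_K^\mathrm{L}$. Containment in $F_{\pi/2}$ and disjointness from $\breve{H}_K^\mathrm{R}, \breve{H}_K^\mathrm{L}$ are routine from the bounding pieces of $\Gamma$: each relevant side of $R$ lies on a single supporting line, and $R$ is connected and sits on the correct side. The main technical step is $R \subseteq \bigcup_{t \in (0, \pi/2)} Q_K^-(t)$. The plan here is a vertical-ray argument: for $p \in R$, the upward ray $\{p + s(0, 1) : s > 0\}$ first exits $R$ at some point $q^* = p + s^*(0, 1)$ with $s^* > 0$, and $q^*$ must lie on one of the three non-bottom pieces of $\Gamma$ (the bottom lies below $p$). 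The $x$-monotonicity of $\mathbf{x}_K$ makes the three cases essentially disjoint in $p_x$. When $q^* = \mathbf{x}_K(t^*)$ with $t^* \in [\varphi^\mathrm{R}, \varphi^\mathrm{L}] \subset (0, \pi/2)$, direct computation gives $(p - q^*) \cdot u_{t^*} = -s^* \sin t^* < 0$ and $(p - q^*) \cdot v_{t^*} = -s^* \cos t^* < 0$, hence $p \in Q_K^-(t^*)$. When $q^*$ lies on $[W_K^\mathrm{R}, \mathbf{x}_K^\mathrm{R}] \subset b_K^\mathrm{R}$, writing $q^* = \mathbf{x}_K^\mathrm{R} - c\,v_{\varphi^\mathrm{R}}$ for some $c \geq 0$ yields $q^* \cdot v_{\varphi^\mathrm{R}} \leq \mathbf{x}_K^\mathrm{R} \cdot v_{\varphi^\mathrm{R}} = h_K(\varphi^\mathrm{R} + \pi/2) - 1$, and subtracting $s^*(0, 1)$ (whose inner products with both $u_{\varphi^\mathrm{R}}$ and $v_{\varphi^\mathrm{R}}$ are strictly positive) places $p$ strictly inside both half-planes defining $Q_K^-(\varphi^\mathrm{R})$. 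The left-side case is symmetric. I expect the delicate step to be handling the side-segment sub-cases cleanly, in particular verifying $p$ is strictly below $d_K(\varphi^\mathrm{R})$ and not merely below $b_K(\varphi^\mathrm{R})$, but the geometric picture is simple and the required inequalities fall out of the parametrization.
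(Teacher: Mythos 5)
Your $\Gamma$ is the natural first attempt, but it rests on an unjustified (and in general false) assumption. You claim that $\Gamma$ lies in the closed upper half-plane $F_{\pi/2}$ and that "one verifies routinely that the four pieces meet only at consecutive endpoints." Neither holds for arbitrary $K \in \mathcal{K}^\mathrm{i}$. The injectivity condition (\Cref{def:injectivity-condition}) only controls the signs of $\mathbf{x}_K'(t) \cdot u_t$ and $\mathbf{x}_K'(t) \cdot v_t$; it gives strict $x$-monotonicity but says nothing about the sign of $\mathbf{x}_K(t) \cdot (0,1) = (h_K(t)-1)\sin t + (h_K(t+\pi/2)-1)\cos t$. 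The space $\mathcal{K}^\mathrm{i}$ does not require $K$ to be the cap of a monotone sofa (equivalently $\mathcal{N}(K) \subseteq K$), so the arc $\mathbf{x}_K|_{[\varphi^\mathrm{R},\varphi^\mathrm{L}]}$ and its endpoints $\mathbf{x}_K^\mathrm{R}, \mathbf{x}_K^\mathrm{L}$ can dip below $y = 0$. When they do, your piece (ii) meets the bottom segment (iv) in its interior, $\Gamma$ is not a Jordan curve, the hypothesis of \Cref{pro:jordan-curve-orientation} fails, and the whole Green's-theorem step is unfounded.

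The paper's proof handles precisely this issue. Rather than closing the curve along $y = 0$, it drops down along $b_K^\mathrm{R}$ and $d_K^\mathrm{L}$ to a line $l_h = \{y = -h\}$ chosen so that $\mathbf{x}$ is strictly above $l_h$, closing the curve there; by \Cref{lem:monotonicity-intervals} the resulting $\Gamma$ is always a Jordan curve bounding a region $G$. The right-hand side of the lemma is then computed as $|G| - |R|$, where $R$ is the trapezoid between $y=0$ and $l_h$; the needed inequality only requires $G \setminus R \subseteq \mathcal{N}(K) \cap Y$ and not $R \subseteq G$, which is why the correction is harmless even when $G$ and $R$ overlap only partially. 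Your vertical-ray argument is essentially the paper's and would go through unchanged once applied to $G \cap H_+(\pi/2, 0) = G \setminus R$ rather than to the (possibly ill-defined) region your $\Gamma$ bounds. In short: the decomposition and the final containment argument are sound, but the Jordan-curve construction needs the "drop to $l_h$ and subtract a trapezoid" trick to be robust on all of $\mathcal{K}^\mathrm{i}$.
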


\begin{proof}
Denote \(\mathbf{x}_K|_{[\varphi^\mathrm{R}, \varphi^\mathrm{L}]}\), \(b_K^\mathrm{R}\), \(d_K^\mathrm{L}\) simply as \(\mathbf{x}\), \(b\), \(d\). Define \(Y := \mathbb{R}^2 \setminus \breve{H}_K^\mathrm{R} \setminus \breve{H}_K^\mathrm{L}\) which is the open cone bounded by the lines \(b\) and \(d\). The curve \(\mathbf{x}(t)\) on \(t \in [\varphi^\mathrm{R}, \varphi^\mathrm{L}]\) starts at \(b\) and ends at \(d\) by definition. Also, by \Cref{lem:monotonicity-intervals}, the middle parts \(\mathbf{x}(t)\) for \(t \in (\varphi^\mathrm{R}, \varphi^\mathrm{L})\) are inside the open cone \(Y\).

Take the horizontal line \(l_h\) described by the equation \(y = -h\) for sufficiently large \(h > 0\), so that the trajectory of \(\mathbf{x}\) is strictly above \(l_h\). We construct a closed curve \(\Gamma\) which is the concatenation of the following four curves in order.

\begin{enumerate}
\def\labelenumi{\arabic{enumi}.}
\tightlist
\item
  \(\mathbf{x} : [\varphi^\mathrm{R}, \varphi^\mathrm{L}] \to \mathbb{R}^2\).
\item
  The line segment \(s_L\) from \(\mathbf{x}(\varphi^\mathrm{L})\) to \(d \cap l_h\).
\item
  The line segment \(s_M\) from \(d \cap l_h\) to \(b \cap l_h\).
\item
  The line segment \(s_R\) from \(b \cap l_h\) to \(\mathbf{x}(\varphi^\mathrm{R})\).
\end{enumerate}

Then \(\Gamma\) is a Jordan curve since the interior of \(\mathbf{x}\) is inside the open cone \(Y\), and \(h\) is taken so that \(s_M\) is disjoint from \(\mathbf{x}\). Let \(G\) be the open region enclosed by the Jordan curve \(\Gamma\). Similarly, let \(R\) be the closed trapezoid right below the line \(l(\pi/2, 0)\) with the vertices \(Z_K^\mathrm{L}, d \cap l_h, b \cap l_h, W_K^\mathrm{R}\) in counterclockwise order. Then the lower bound of the stated inequality is \(|G| - |R|\) by following the boundaries of \(G\) and \(R\) in order and using \Cref{thm:curve-area-functional-area} twice.

So it suffices to show \(G \setminus R \subseteq \mathcal{N}(K) \cap Y\), as this will imply
\[
\left| \mathcal{N}(K) \setminus \breve{H}_K^\mathrm{R} \setminus \breve{H}_K^\mathrm{L} \right| = |\mathcal{N}(K) \cap Y| \geq |G| - |R|
\]
and proving the theorem. Let \(H_h\) be the closed half-plane with boundary \(l_h\) above the line \(l_h\). As \(G\) is contained in the cone \(Y\) and strictly above the line \(l_h\), we have \(G \subseteq Y \cap H_h\). On the other hand, by the definition of \(R\) we have \(Y \cap H_h \setminus H_+(\pi/2, 0) \subseteq R\). Thus we have \(G \setminus R \subseteq H_+(\pi/2, 0) \cap Y\). So it remains to show \(G \cap H_+(\pi/2, 0) \subseteq \mathcal{N}(K)\).

Take any point \(p \in G \cap H_+(\pi/2, 0)\). Let \(r\) be the half-line from \(p\) in the direction of \(v_0\), then since \(p \in G\) the half-line \(r\) intersects \(\Gamma\) at a point \(q \neq p\). As \(p \in H_+(\pi/2, 0)\), the point \(q\) is not in \(s_M\). Thus \(q\) is in one of the curves \(\mathbf{x}\), \(s_L\), or \(s_R\). If \(q = \mathbf{x}(t)\) for some \(t \in [\varphi^\mathrm{R}, \varphi^\mathrm{L}]\), then \(p \in Q_K^-(t)\). If \(q \in s_R\), then \(p \in Q_K^-(\varphi^\mathrm{R})\). If \(q \in s_L\), then \(p \in Q_K^-(\varphi^\mathrm{L})\). In any case, we have \(p \in \mathcal{N}(K)\), completing the proof.
\end{proof}

\begin{theorem}

For any \(K \in \mathcal{K}^\mathrm{i}\), we have \(\mathcal{A}(K) \leq \mathcal{Q}(K, B_K, D_K)\).

\label{thm:upper-bound-q}
\end{theorem}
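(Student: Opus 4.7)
The plan is to recast the target inequality $\mathcal{A}(K) \le \mathcal{Q}(K, B_K, D_K)$ as a lower bound on $|\mathcal{N}(K)|$, and to prove it by decomposing $\mathcal{N}(K)$ into three essentially disjoint pieces mirroring the right tail, core, and left tail of the region $R$ shown in \Cref{fig:upper-bound-full}. The three preparatory lemmas of this section already supply matching lower bounds on these three pieces, so the argument will reduce to summing them and checking that a residual algebraic term vanishes.

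The first step is to introduce the subsets $T^\mathrm{R} := \mathcal{N}(K) \cap K \cap \breve{H}_K^\mathrm{R}$, $T^\mathrm{L} := \mathcal{N}(K) \cap K \cap \breve{H}_K^\mathrm{L}$, and $C := \mathcal{N}(K) \setminus \breve{H}_K^\mathrm{R} \setminus \breve{H}_K^\mathrm{L}$. Disjointness of $T^\mathrm{R}$ and $T^\mathrm{L}$ is exactly \Cref{lem:cap-left-right-disjoint}, and $C$ is trivially disjoint from both, so $|\mathcal{N}(K)| \ge |T^\mathrm{R}| + |T^\mathrm{L}| + |C|$. A brief inspection of the proof of \Cref{lem:cap-left-right-tail} shows that the Jordan domain it builds already lies inside $K \cap \breve{H}_K^\mathrm{R}$ (respectively $K \cap \breve{H}_K^\mathrm{L}$), so its lower bound applies verbatim to $|T^\mathrm{R}|$ (respectively $|T^\mathrm{L}|$); \Cref{lem:cap-middle-lower-estimate} supplies the bound on $|C|$. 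Summing the three inequalities and comparing with the definition of $\mathcal{Q}$ reduces the claim to showing $\Sigma^\mathrm{R} + \Sigma^\mathrm{L} = 0$, where $\Sigma^\mathrm{R} := \mathcal{J}(X_B, W_K^\mathrm{R}) + \mathcal{J}(W_K^\mathrm{R}, \mathbf{x}_K^\mathrm{R}) + \mathcal{J}(\mathbf{x}_K^\mathrm{R}, X_B)$ and $\Sigma^\mathrm{L}$ is the mirror expression in $Y_D, Z_K^\mathrm{L}, \mathbf{x}_K^\mathrm{L}$.

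Both sums vanish by collinearity. For $\Sigma^\mathrm{R}$, the point $X_B = v_B^+(\pi + \varphi^\mathrm{R})$ lies on $l_B(\pi + \varphi^\mathrm{R}) = b_K^\mathrm{R}$ by \Cref{lem:right-left-body}, while $W_K^\mathrm{R}$ and $\mathbf{x}_K^\mathrm{R}$ lie on $b_K^\mathrm{R}$ by \Cref{def:cap-left-right}. Parametrizing the three points as $p_0 + c_i v_{\varphi^\mathrm{R}}$ on this line and applying \Cref{pro:curve-area-line-segment} evaluates each term to $(c_j - c_i)(h_K(\varphi^\mathrm{R}) - 1)/2$, so the three contributions telescope to zero; the symmetric argument on $d_K^\mathrm{L}$ gives $\Sigma^\mathrm{L} = 0$. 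The main obstacle is conceptual rather than technical: the heavy lifting lives in the three preparatory lemmas, which invoke the injectivity hypothesis via \Cref{lem:monotonicity-intervals} to keep the rotation path $\mathbf{x}_K$ on the correct side of $b_K^\mathrm{R}$ and $d_K^\mathrm{L}$; once those bounds are in hand, the present theorem is a short bookkeeping step that glues them together along the shared cut lines $b_K^\mathrm{R}$ and $d_K^\mathrm{L}$.
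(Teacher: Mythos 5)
Your proposal is correct and takes essentially the same route as the paper: decompose $|\mathcal{N}(K)|$ along the two cut half-planes, invoke \Cref{lem:cap-left-right-tail} and \Cref{lem:cap-middle-lower-estimate}, and cancel the residual $\mathcal{J}$-terms using the collinearity of $X_B, W_K^\mathrm{R}, \mathbf{x}_K^\mathrm{R}$ on $b_K^\mathrm{R}$ (and the mirror triple on $d_K^\mathrm{L}$). The paper simply asserts the final equality ``$=\mathcal{Q}(K,B,D)$'' without spelling out this collinearity cancellation, whereas you make it explicit via \Cref{pro:curve-area-line-segment}; the only superfluous move on your side is inserting $\cap K$ into $T^\mathrm{R}$, $T^\mathrm{L}$ and re-inspecting the lemma proofs, since $\mathcal{N}(K)\subseteq K$ already holds by \Cref{thm:niche-in-cap} and makes that intersection vacuous.
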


\begin{proof}
Set \(B := B_K\) and \(D := D_K\). Add all the inequalities in \Cref{lem:cap-left-right-tail} and \Cref{lem:cap-middle-lower-estimate}. We get
\begin{align*}
\mathcal{A}(K) & = |K| - |\mathcal{N}(K)| \\
& = (|K| - |\mathcal{N}(K) \setminus \breve{H}_K^\mathrm{R} \setminus \breve{H}_K^\mathrm{L}|) - |\mathcal{N}(K) \cap \breve{H}_K^\mathrm{R}| - |\mathcal{N}(K) \cap \breve{H}_K^\mathrm{L}| \\
& \leq |K| - \mathcal{J}(W_K^\mathrm{R}, \mathbf{x}_K^\mathrm{R}) - \mathcal{J}\left( \mathbf{x}_K|_{[\varphi^\mathrm{R}, \varphi^\mathrm{L}]} \right) - \mathcal{J}(\mathbf{x}_K^\mathrm{L}, Z_K^\mathrm{L}) \\
& \phantom{{}={}} - \mathcal{J}\left( X_B, W_K^\mathrm{R} \right) + \mathcal{J}\left( \mathbf{b}_B \right) - \mathcal{J}\left( Z_K^\mathrm{L}, Y_D \right) +  \mathcal{J}\left( \mathbf{d}_D \right) \\ 
& = \mathcal{Q}(K, B, D)
\end{align*}
proving the theorem.
\end{proof}

\section{Concavity of $\texorpdfstring{\mathcal{Q}}{Q}$}
\label{sec:concavity-of-q}
We prove the concavity of \(\mathcal{Q}\) (\Cref{thm:upper-bound-concave}). The main idea is depicted in \Cref{fig:mamikon-sofa}. Each grey Mamikon region in the figure have an area that is \emph{convex} in the domain \(\mathcal{K}^\mathrm{i}\) or \(\mathcal{L}\) by \Cref{thm:mamikon-convex}. The function \(\mathcal{Q}\) plus the area of such Mamikon regions turns out to be linear in \(K \in \mathcal{K}^\mathrm{i}\), so the function \(\mathcal{Q}\) should be concave.

\begin{figure}
\centering
\includegraphics{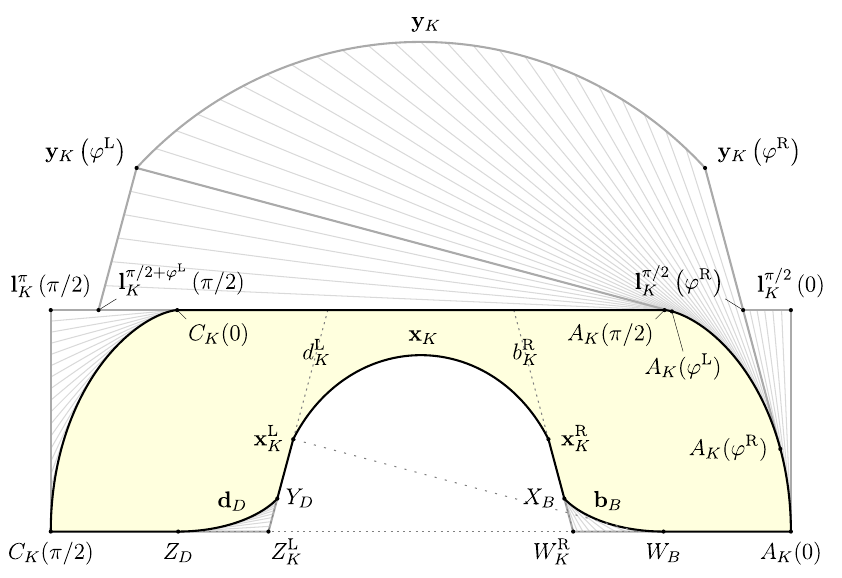}
\caption{The Mamikon regions used in the proof of \Cref{thm:upper-bound-concave} (compare with \Cref{fig:upper-bound-full}).}
\label{fig:mamikon-sofa}
\end{figure}

We first give a particular parametrization of the supporting line \(l_K(t)\) of a convex body \(K\).

\begin{definition}

Let \(K \in \mathcal{K}\) and \(t \in \mathbb{R}\) be arbitrary. Define \(\mathbf{l}^t_K : (t - \pi, t] \to \mathbb{R}^2\) as \(\mathbf{l}^t_K(s) = v_K(s, t)\) for \(s < t\) and \(\mathbf{l}^t_K(t) = v_K^-(t)\).

\label{def:tangent-line-parametrization}
\end{definition}

\begin{theorem}

Fix arbitrary \(K \in \mathcal{K}\) and \(t \in \mathbb{R}\). For any \(a, b \in (t - \pi, t]\) such that \(a \leq b\), the function \(\mathbf{l}_K^t\) restricted to \([a, b]\) is a continuous parametrization of the closed segment in \(l_K(t)\) from \(\mathbf{l}^t_K(a)\) to \(\mathbf{l}^t_K(b)\). Consequently, we have \(\mathcal{J}\left( \mathbf{l}_K^t|_{[a, b]} \right) = \mathcal{J}(\mathbf{l}_K^t(a), \mathbf{l}_K^t(b))\).

\label{thm:tangent-line-parametrization}
\end{theorem}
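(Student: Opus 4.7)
The theorem splits into two claims: (a) $\mathbf{l}_K^t$ is continuous on $(t-\pi,t]$ and its restriction to $[a,b]$ has image equal to the closed segment on $l_K(t)$ from $\mathbf{l}_K^t(a)$ to $\mathbf{l}_K^t(b)$; and (b) the curve area functional of this parametrization coincides with $\mathcal{J}(\mathbf{l}_K^t(a),\mathbf{l}_K^t(b))$. Since every $\mathbf{l}_K^t(s)$ lies on $l_K(t)$ by definition, I would immediately write
\[
\mathbf{l}_K^t(s) = h_K(t)\,u_t + \rho(s)\,v_t
\]
for a scalar function $\rho\colon (t-\pi,t]\to\mathbb{R}$. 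Once (a) is in place, $\rho$ is continuous and monotone on $[a,b]$, hence of bounded variation, and Definition \ref{def:curve-area-functional} collapses on the single-line curve to $\tfrac{1}{2}(h_K(t)\,u_t\times v_t)(\rho(b)-\rho(a))$, which matches $\mathcal{J}(\mathbf{l}_K^t(a),\mathbf{l}_K^t(b))$ via Proposition \ref{pro:curve-area-line-segment}, yielding (b).

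For continuity in (a), solving the two defining equations $\mathbf{l}_K^t(s)\cdot u_s = h_K(s)$ and $\mathbf{l}_K^t(s)\cdot u_t = h_K(t)$ as a $2\times 2$ linear system (whose determinant $\sin(t-s)$ is nonzero on the open interval) yields
\[
\rho(s) = \frac{h_K(t)\cos(t-s) - h_K(s)}{\sin(t-s)}, \qquad s\in(t-\pi,t),
\]
which is continuous in $s$ since $h_K$ is. Continuity at the right endpoint $s=t$ is precisely the statement $v_K(s,t)\to v_K^-(t)$, furnished by Theorem \ref{thm:limits-converging-to-vertex}.

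The substantive step is monotonicity of $\rho$ on $(t-\pi,t]$: once $\rho$ is nondecreasing, the continuous image of the compact interval $[a,b]$ is forced to be the closed subsegment of $l_K(t)$ with endpoints $\mathbf{l}_K^t(a)$ and $\mathbf{l}_K^t(b)$. After clearing positive denominators, monotonicity reduces to the three-term inequality
\[
h_K(t)\sin(s_1-s_2) + h_K(s_2)\sin(t-s_1) - h_K(s_1)\sin(t-s_2) \geq 0
\]
for all $s_2<s_1<t$ in $(t-\pi,t]$. The key observation is that under $t-s_2<\pi$, the unit vector $u_{s_1}$ lies in the cone generated by $u_{s_2}$ and $u_t$; solving $u_{s_1}=\alpha u_{s_2}+\beta u_t$ gives the nonnegative coefficients $\alpha=\sin(t-s_1)/\sin(t-s_2)$ and $\beta=\sin(s_1-s_2)/\sin(t-s_2)$. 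Extending $h_K$ positively homogeneously to $\mathbb{R}^2$ (so that it agrees on $S^1$ with Definition \ref{def:support-function} while being sublinear on $\mathbb{R}^2$), sublinearity gives $h_K(u_{s_1})\leq\alpha h_K(u_{s_2})+\beta h_K(u_t)$, which after clearing $\sin(t-s_2)>0$ is exactly the required inequality.

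The main obstacle is precisely this three-term inequality and its reduction to sublinearity of the support function; the remaining steps (continuity at the endpoint via Theorem \ref{thm:limits-converging-to-vertex}, the topological image-is-segment conclusion from monotonicity plus continuity, and the direct curve-area computation on a fixed line) are routine.
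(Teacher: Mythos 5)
Your proposal is correct and takes essentially the same approach as the paper: both reduce to (i) continuity of $v_K(s,t)$ in $s$, (ii) containment of the image in (and equality with) the segment between the two endpoints, and (iii) the right-endpoint limit furnished by \Cref{thm:limits-converging-to-vertex}. Where the paper cites (ii) as a ``geometric fact,'' you supply a genuine proof via the explicit scalar coordinate $\rho(s)$ and sublinearity of the positively homogeneous extension of the support function, which is a welcome strengthening but not a different route.
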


\begin{proof}
If \(b < t\), then the result follows from the geometric fact that for all \(s \in [a, b]\), the intersection \(v_K(s, t) = l_K(s) \cap l_K(t)\) is continuous in \(s\) and contained in the line segment connecting \(v_K(a, t)\) and \(v_K(b, t)\). Use the limit \(\lim_{ b \to t^- } \mathbf{l}_K^t(b) = \mathbf{l}_K^t(t)\) in \Cref{thm:limits-converging-to-vertex} to extend this to \(b = t\).
\end{proof}

\begin{theorem}

For fixed \(t \in \mathbb{R}\) and \(a, b \in (t - \pi, t]\), the value \(\mathbf{l}_K^t|_{[a, b]} \in C^{\mathrm{BV}}[a, b]\) is linear in \(K \in \mathcal{K}\).

\label{thm:tangent-line-param-linear}
\end{theorem}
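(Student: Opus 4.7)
The plan is to reduce the statement to the pointwise convex-linearity of the vertex maps $K \mapsto v_K(s,t)$ and $K \mapsto v_K^-(t)$ established in part (2) of \Cref{thm:convex-body-linear}, and then observe that the vector space structure on $C^{\mathrm{BV}}[a,b]$ is defined pointwise.

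First I will verify that $\mathbf{l}_K^t(s)$ falls under the hypotheses of \Cref{thm:convex-body-linear} (2) for each $s \in [a, b)$. Since $a, b \in (t-\pi, t]$ and $s \in [a, b)$, we have $s \in (t-\pi, t)$, so $s < t < s + \pi$, which is precisely the condition $a < b < a + \pi$ in that theorem (with $(a, b)$ there playing the role of $(s, t)$ here). Hence $\mathbf{l}_K^t(s) = v_K(s, t)$ is convex-linear in $K$. If moreover $b = t$, the endpoint value $\mathbf{l}_K^t(t) = v_K^-(t)$ is likewise convex-linear in $K$ by the vertex part of the same theorem.

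Next I will lift pointwise linearity to linearity in $C^{\mathrm{BV}}[a,b]$. For arbitrary $K_1, K_2 \in \mathcal{K}$ and $\lambda \in [0, 1]$, the pointwise identities from Steps above give
\[
\mathbf{l}_{c_\lambda(K_1, K_2)}^t(s) = (1 - \lambda)\, \mathbf{l}_{K_1}^t(s) + \lambda \,\mathbf{l}_{K_2}^t(s)
\]
for every $s \in [a, b]$. Since addition and scalar multiplication in $C^{\mathrm{BV}}[a,b]$ are defined pointwise, this identity is exactly the equality $\mathbf{l}_{c_\lambda(K_1, K_2)}^t|_{[a,b]} = (1-\lambda)\,\mathbf{l}_{K_1}^t|_{[a,b]} + \lambda\,\mathbf{l}_{K_2}^t|_{[a,b]}$ as elements of $C^{\mathrm{BV}}[a,b]$, which is the desired convex-linearity.

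There is no serious obstacle here: everything is immediate once one invokes \Cref{thm:convex-body-linear}. The only minor point to be careful about is that the definition of $v_K(s,t)$ in \Cref{def:convex-body-tangent-lines-intersection} requires $s \neq t, t+\pi$, which is automatic on $[a, b) \subseteq (t-\pi, t)$, and the endpoint $s = t$ is handled separately through the vertex $v_K^-(t)$; the two cases agree by the right-limit $\lim_{s \to t^-} v_K(s, t) = v_K^-(t)$ of \Cref{thm:limits-converging-to-vertex}, which also ensures the parametrization is continuous as required for membership in $C^{\mathrm{BV}}[a,b]$ (already guaranteed by \Cref{thm:tangent-line-parametrization}).
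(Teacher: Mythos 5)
Your proof is correct and takes the same approach as the paper, which simply cites part (2) of \Cref{thm:convex-body-linear}; you have filled in the (straightforward) details of verifying the hypotheses and lifting the pointwise identity to an identity in $C^{\mathrm{BV}}[a,b]$. The only slip is a naming one: you call $\lim_{s \to t^-} v_K(s,t) = v_K^-(t)$ a ``right-limit,'' but the notation $s \to t^-$ denotes a left limit, consistent with the second display in \Cref{thm:limits-converging-to-vertex}.
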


\begin{proof}
By (2) of \Cref{thm:convex-body-linear}.
\end{proof}

We give names to the Mamikon regions in \Cref{fig:mamikon-sofa}.

\begin{definition}

For any \(K \in \mathcal{K}^\mathrm{i}\), define \(\mathcal{S}_K :=\)
\[
\mathcal{M}_K\left( 0, \varphi^\mathrm{R}; \mathbf{l}_K^{\pi/2} \right) + 
\mathcal{M}_K\left( \varphi^\mathrm{R}, \varphi^\mathrm{L}; \mathbf{y}_K \right) + 
\mathcal{M}_K\left( \varphi^\mathrm{L}, \pi/2; \mathbf{l}_K^{\pi/2 + \varphi^\mathrm{L}} \right) + 
\mathcal{M}_K\left( \pi/2, \pi; \mathbf{l}_K^{\pi} \right).
\]

\label{def:mamikon-middle}
\end{definition}

\begin{definition}

For convex bodies \(B\) and \(D\), define
\[
\mathcal{R}_B := \mathcal{M}_B\left( \pi/2 + \varphi^\mathrm{R}, 3\pi/2; \mathbf{l}_B^{3\pi/2} \right) \quad \text{ and } \quad \mathcal{L}_D := \mathcal{M}_D\left( 3\pi/2, 3\pi/2 + \varphi^\mathrm{L}; \mathbf{l}_D^{3\pi/2 + \varphi^\mathrm{L}} \right).
\]

\label{def:mamikon-right-left}
\end{definition}

Note that \Cref{def:mamikon-middle} and \Cref{def:mamikon-right-left} only uses Mamikon areas in \Cref{def:mamikon}. So by \Cref{thm:mamikon-convex}, these are convex and quadratic.

\begin{lemma}

The values \(\mathcal{S}_K\), \(\mathcal{L}_D\), \(\mathcal{R}_B\) are convex and quadratic as functionals on the convex bodies \(K, B, D\) respectively.

\label{lem:mamikon-sofa-convex}
\end{lemma}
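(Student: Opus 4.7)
The plan is to verify the hypotheses of Theorem \ref{thm:mamikon-convex} for each Mamikon term appearing in Definitions \ref{def:mamikon-middle} and \ref{def:mamikon-right-left}, then observe that a finite sum of convex quadratic functionals on a convex domain is again convex and quadratic. Recall that Theorem \ref{thm:mamikon-convex} requires, for each term $\mathcal{M}_X(a, b; \mathbf{z}_X)$, that (i) $\mathbf{z}_X(t) \in l_X(t)$ for all $t \in [a, b]$, and (ii) the map $X \mapsto \mathbf{z}_X$ is convex-linear into $C^{\mathrm{BV}}[a, b]$.

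The five Mamikon terms of the form $\mathcal{M}_X(a, b; \mathbf{l}_X^s)$ (the three tangent-line terms in $\mathcal{S}_K$ together with $\mathcal{R}_B$ and $\mathcal{L}_D$) can be handled uniformly. Condition (i) follows from Definition \ref{def:tangent-line-parametrization}, since $\mathbf{l}_X^s(t) = v_X(t, s) \in l_X(t) \cap l_X(s)$ for $t < s$ and $\mathbf{l}_X^s(s) = v_X^-(s) \in l_X(s)$; in either case the output lies on $l_X(t)$. Convex-linearity of the map $X \mapsto \mathbf{l}_X^s|_{[a,b]}$ is precisely Theorem \ref{thm:tangent-line-param-linear}, while continuity and bounded variation follow from Theorem \ref{thm:tangent-line-parametrization}, which asserts that $\mathbf{l}_X^s|_{[a,b]}$ is a continuous parametrization of a line segment (so its total variation is finite and bounded by the segment's length).

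The remaining middle term $\mathcal{M}_K(\varphi^\mathrm{R}, \varphi^\mathrm{L}; \mathbf{y}_K)$ in $\mathcal{S}_K$ uses the outer corner $\mathbf{y}_K$. By Proposition \ref{pro:rotating-hallway-parts}, $\mathbf{y}_K(t) = h_K(t)\, u_t + h_K(t + \pi/2)\, v_t$, so $\mathbf{y}_K(t) \cdot u_t = h_K(t)$ and thus $\mathbf{y}_K(t) \in l_K(t)$, giving (i). Convex-linearity in $K$ follows from convex-linearity of the support function (Theorem \ref{thm:convex-body-linear}(1)). Continuity and bounded variation on $[\varphi^\mathrm{R}, \varphi^\mathrm{L}] \subset (0, \pi/2)$ are supplied by Proposition \ref{pro:cap-nondegenerate-continuity}, which gives $\mathbf{y}_K$ continuously differentiable whenever $K \in \mathcal{K}^\mathrm{i}$ satisfies the injectivity condition.

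No step here is substantive; the proof is pure bookkeeping together with the single nontrivial input (Theorem \ref{thm:mamikon-convex}). The mildest subtlety is that the outer-corner term requires $C^1$ regularity of $\mathbf{y}_K$ on the relevant interval, which is exactly why the domain of $\mathcal{S}_K$ must be $\mathcal{K}^\mathrm{i}$ rather than a larger cap space — but this regularity is already built into the injectivity condition via Proposition \ref{pro:cap-nondegenerate-continuity}.
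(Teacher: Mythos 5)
Your proof is correct and takes the same route as the paper: cite Theorem \ref{thm:mamikon-convex} and verify convex-linearity of $\mathbf{y}_K$ and $\mathbf{l}_X^s$ via Theorem \ref{thm:convex-body-linear} and Theorem \ref{thm:tangent-line-param-linear}. One minor note: for the $\mathbf{y}_K$ term you only need $\mathbf{y}_K \in C^{\mathrm{BV}}$, not $C^1$, and this already holds for any cap since $h_K$ is Lipschitz; invoking Proposition \ref{pro:cap-nondegenerate-continuity} is harmless but slightly stronger than required.
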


\begin{proof}
By \Cref{thm:mamikon-convex}, and the linearity of \(\mathbf{y}_K(t) = h_K(t) u_t + h_K(\pi/2 + t) v_t\) and \(\mathbf{l}_K(t)\) in \(K\) (\Cref{thm:convex-body-linear} and \Cref{thm:tangent-line-param-linear}).
\end{proof}

\begin{definition}

For any \(K \in \mathcal{K}^\mathrm{i}\), define the functional
\[
\mathcal{P}_K := |K| + \mathcal{J}\left( Z_K^\mathrm{L}, \mathbf{x}_K^\mathrm{L} \right) -  \mathcal{J}\left( \mathbf{x}_K|_{[\varphi^\mathrm{R}, \varphi^\mathrm{L}]} \right) + \mathcal{J}\left( \mathbf{x}_K^\mathrm{R}, W_K^\mathrm{R} \right).
\]

\label{def:upper-bound-middle}
\end{definition}

\begin{lemma}

For any \((K, B, D) \in \mathcal{L}\), we have
\[
\mathcal{Q}(K, B, D) = \mathcal{P}_K - \mathcal{R}_B - \mathcal{L}_D.
\]

\label{lem:upper-bound-decomposition}
\end{lemma}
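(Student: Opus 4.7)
The plan is a direct expansion of the two Mamikon terms followed by a cancellation using colinearity. First I will unfold $\mathcal{R}_B$ using the definition of $\mathcal{M}_B$. The starting vertex is $v_B^+(\pi+\varphi^{\mathrm{R}})=X_B$ (from \Cref{def:right-left-tails}); the ending vertex is $v_B^-(3\pi/2)=W_B$. Using \Cref{def:tangent-line-parametrization}, the value $\mathbf{l}_B^{3\pi/2}(\pi+\varphi^{\mathrm{R}})$ equals the intersection $v_B(\pi+\varphi^{\mathrm{R}},3\pi/2)$, which by (2) of \Cref{lem:right-left-body} is $l_B(\pi+\varphi^{\mathrm{R}})\cap l_B(3\pi/2)=b_K^{\mathrm{R}}\cap l(\pi/2,0)=W_K^{\mathrm{R}}$; the other endpoint $\mathbf{l}_B^{3\pi/2}(3\pi/2)=v_B^-(3\pi/2)=W_B$. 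Finally $\mathbf{u}_B^{\pi+\varphi^{\mathrm{R}},3\pi/2}$ is, by definition, the curve $\mathbf{b}_B$.

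Next I observe that the middle two terms of $\mathcal{M}_B(\pi+\varphi^{\mathrm{R}},3\pi/2;\mathbf{l}_B^{3\pi/2})$ vanish. The restriction $\mathbf{l}_B^{3\pi/2}|_{[\pi+\varphi^{\mathrm{R}},3\pi/2]}$ is, by \Cref{thm:tangent-line-parametrization}, a parametrization of the closed segment from $W_K^{\mathrm{R}}$ to $W_B$ inside the line $l_B(3\pi/2)=l(\pi/2,0)$, hence $\mathcal{J}(\mathbf{l}_B^{3\pi/2}|_{[\pi+\varphi^{\mathrm{R}},3\pi/2]})=\mathcal{J}(W_K^{\mathrm{R}},W_B)=0$ by \Cref{pro:curve-area-line-segment-colinear} (the line passes through the origin); and $\mathcal{J}(\mathbf{l}_B^{3\pi/2}(3\pi/2),v_B^-(3\pi/2))=\mathcal{J}(W_B,W_B)=0$ trivially. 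So $\mathcal{R}_B=\mathcal{J}(X_B,W_K^{\mathrm{R}})-\mathcal{J}(\mathbf{b}_B)$. A completely symmetric computation — with the x-axis $l(\pi/2,0)=l_D(3\pi/2)$ again going through the origin and using part (4) of \Cref{lem:right-left-body} together with \Cref{def:right-left-tails} — yields $\mathcal{L}_D=\mathcal{J}(Z_K^{\mathrm{L}},Y_D)-\mathcal{J}(\mathbf{d}_D)$.

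Substituting these two expressions into $\mathcal{P}_K-\mathcal{R}_B-\mathcal{L}_D$ and comparing with $\mathcal{Q}(K,B,D)$, the remaining task is to verify
\begin{align*}
\mathcal{J}(Z_K^{\mathrm{L}},\mathbf{x}_K^{\mathrm{L}})-\mathcal{J}(Z_K^{\mathrm{L}},Y_D)&=\mathcal{J}(Y_D,\mathbf{x}_K^{\mathrm{L}}),\\
\mathcal{J}(\mathbf{x}_K^{\mathrm{R}},W_K^{\mathrm{R}})-\mathcal{J}(X_B,W_K^{\mathrm{R}})&=\mathcal{J}(\mathbf{x}_K^{\mathrm{R}},X_B).
\end{align*}
Each identity is an instance of the collinearity relation $\mathcal{J}(p,q)+\mathcal{J}(q,r)=\mathcal{J}(p,r)$, which follows immediately from $\mathcal{J}(p,q)=(p\times q)/2$ and the vanishing of $p\times q+q\times r+r\times p$ for three collinear points. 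I will note that $Z_K^{\mathrm{L}}$, $Y_D$, $\mathbf{x}_K^{\mathrm{L}}$ all lie on $d_K^{\mathrm{L}}$ (the first by \Cref{def:cap-left-right}, the second since $l_D(3\pi/2+\varphi^{\mathrm{L}})=d_K^{\mathrm{L}}$ by (4) of \Cref{lem:right-left-body}, the third by definition), and that $\mathbf{x}_K^{\mathrm{R}}$, $X_B$, $W_K^{\mathrm{R}}$ all lie on $b_K^{\mathrm{R}}$ by the analogous facts from (2) of \Cref{lem:right-left-body} and \Cref{def:cap-left-right}.

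There is no real obstacle to this proof: it is a pure bookkeeping exercise combining Mamikon's decomposition, the tangent-segment identification of \Cref{thm:tangent-line-parametrization}, and the collinearity cancellation. The one spot that requires a little care is making sure the two endpoint contributions in each $\mathcal{M}$-expansion are correctly identified (the first endpoint pair lies on $b_K^{\mathrm{R}}$ or $d_K^{\mathrm{L}}$ and does \emph{not} generally give zero — that piece is precisely what survives as $\mathcal{J}(X_B,W_K^{\mathrm{R}})$ and $\mathcal{J}(Z_K^{\mathrm{L}},Y_D)$), whereas the segments on the x-axis $l(\pi/2,0)$ and the self-joining endpoints \emph{do} vanish.
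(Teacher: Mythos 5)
Your proof is correct and follows essentially the same approach as the paper's: unfold the two Mamikon expressions via \Cref{def:mamikon}, identify each vertex and tangent endpoint using \Cref{thm:tangent-line-parametrization} and \Cref{lem:right-left-body}, observe that the terms lying on $l(\pi/2,0)$ (which passes through the origin) vanish, and close with the collinearity relation $\mathcal{J}(p,q)+\mathcal{J}(q,r)=\mathcal{J}(p,r)$ on $b_K^\mathrm{R}$ and $d_K^\mathrm{L}$. The only cosmetic difference is that you eliminate the vanishing terms before substituting while the paper carries them into the displayed identity and cancels everything at once; note also that your description of the $\mathcal{L}_D$ expansion as ``symmetric'' is slightly loose (there the surviving piece is the middle tangent-segment term $\mathcal{J}(Z_K^\mathrm{L}, Y_D)$ rather than the first endpoint pair, which vanishes on the axis), but your computed value $\mathcal{L}_D=\mathcal{J}(Z_K^{\mathrm{L}},Y_D)-\mathcal{J}(\mathbf{d}_D)$ is right.
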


\begin{proof}
Unfold \Cref{def:upper-bound-q} of \(\mathcal{Q}\), \Cref{def:upper-bound-middle} of \(\mathcal{S}_K\), and \Cref{def:mamikon-right-left} of \(\mathcal{R}_B\) and \(\mathcal{L}_D\). It remains to verify
\begin{align*}
& |K| + \mathcal{J}\left( \mathbf{d}_D \right) + \mathcal{J} \left( Y_D,   \mathbf{x}_K^\mathrm{L} \right) - \mathcal{J}\left( \mathbf{x}_K|_{[\varphi^\mathrm{R}, \varphi^\mathrm{L}]} \right) + \mathcal{J} \left( \mathbf{x}_K^\mathrm{R}, X_B \right)  + \mathcal{J}\left( \mathbf{b}_B \right) \\
= \phantom{.} & |K| + \mathcal{J}\left( Z_K^\mathrm{L}, \mathbf{x}_K^\mathrm{L} \right) -  \mathcal{J}\left( \mathbf{x}_K|_{[\varphi^\mathrm{R}, \varphi^\mathrm{L}]} \right) + \mathcal{J}\left( \mathbf{x}_K^\mathrm{R}, W_K^\mathrm{R} \right) \\
\phantom{{}={}} & - \mathcal{J}\left( Z_D, Z_K^\mathrm{L} \right) - \mathcal{J}\left( Z_K^\mathrm{L}, Y_D \right) + \mathcal{J}\left( \mathbf{d}_D \right)  \\
\phantom{{}={}} & - \mathcal{J}\left( X_B, W_K^\mathrm{R}) - \mathcal{J}(W_K^\mathrm{R},  W_B \right) + \mathcal{J}\left( \mathbf{b}_B \right) 
\end{align*}
which holds because the four points \(Z_D, Z_K^\mathrm{L}, W_K^\mathrm{R}, W_B\) are colinear with \(O\), the three points \(Z_K^\mathrm{L}, \mathbf{x}_K^\mathrm{L}, Y_D\) are colinear, and the three points \(W_K^\mathrm{R}, \mathbf{x}_K^\mathrm{R}, X_B\) are colinear.
\end{proof}

Recall the \Cref{def:modulo-linear} that we write \(f(K) \equiv_K g(K)\) for functionals \(f, g\) on cap \(K\) if the difference \(f(K) - g(K)\) is linear in \(K\).

\begin{lemma}

For any \(K \in \mathcal{K}^\mathrm{i}\), we have
\[
|K| \equiv_K \mathcal{J}\left( \mathbf{u}_K^{0, \varphi^\mathrm{R}} \right)  + \mathcal{J}\left( \mathbf{u}_K^{\varphi^\mathrm{R}, \varphi^\mathrm{L}} \right) +
\mathcal{J}\left( \mathbf{u}_K^{\varphi^\mathrm{L}, \pi/2} \right) +
\mathcal{J}\left( \mathbf{u}_K^{\pi/2, \pi} \right) .
\]

\label{lem:upper-boundary-tracing}
\end{lemma}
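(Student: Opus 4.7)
The plan is to apply Green's theorem to the full boundary $\partial K$ of the cap $K$, decompose it into the four convex curve segments on the right-hand side together with a few auxiliary line segments, and verify that each auxiliary segment either contributes zero to the curve area functional or contributes a term linear in $K$.

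Concretely, I would first observe that $K \in \mathcal{K}^\mathrm{i}$ is a cap with rotation angle $\pi/2$, so $\partial K$ traversed counterclockwise can be broken (by \Cref{lem:convex-curve-concat} applied at the cut angles $\varphi^\mathrm{R}, \varphi^\mathrm{L}, \pi/2$) into the concatenation of
\[
\mathbf{u}_K^{0, \varphi^\mathrm{R}},\; e_K(\varphi^\mathrm{R}),\; \mathbf{u}_K^{\varphi^\mathrm{R}, \varphi^\mathrm{L}},\; e_K(\varphi^\mathrm{L}),\; \mathbf{u}_K^{\varphi^\mathrm{L}, \pi/2},\; e_K(\pi/2),\; \mathbf{u}_K^{\pi/2, \pi},\; e_K(\pi),\; e_K(3\pi/2),\; e_K(0),
\]
where the last three edges close up the bottom and the (possibly degenerate) right edge. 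Since $K$ is bounded, \Cref{thm:curve-area-functional-area} gives $|K| = \mathcal{J}(\partial K)$, and additivity of $\mathcal{J}$ under concatenation (\Cref{pro:curve-area-functional-additive}) then writes $|K|$ as the sum of the ten line integrals listed above.

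Next I would eliminate every $e_K$-term in the decomposition. Because $K \in \mathcal{K}^\mathrm{i}$ satisfies condition (1) of \Cref{def:injectivity-condition}, the measure $\sigma_K$ is absolutely continuous on $[0, \pi/2) \cup (\pi/2, \pi]$, so $\sigma_K(\{t\}) = 0$ for each $t \in \{0, \varphi^\mathrm{R}, \varphi^\mathrm{L}, \pi\}$; by \Cref{pro:surface-area-measure-side-length}, each of the four corresponding edges $e_K(t)$ is a single point, and $\mathcal{J}(e_K(t)) = 0$. The bottom edge $e_K(3\pi/2)$ lies on the line $y=0$, which passes through the origin, so \Cref{pro:curve-area-line-segment-colinear} gives $\mathcal{J}(e_K(3\pi/2)) = 0$. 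The only surviving auxiliary term is $\mathcal{J}(e_K(\pi/2))$, which lies on the line $y=1 = l(\pi/2, 1)$: by \Cref{pro:curve-area-line-segment} and $h_K(\pi/2) = 1$ we get $\mathcal{J}(e_K(\pi/2)) = \tfrac{1}{2} \sigma_K(\{\pi/2\})$.

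Finally, $\sigma_K(\{\pi/2\})$ is convex-linear in $K$ by (3) of \Cref{thm:convex-body-linear}, so the difference between $|K|$ and the four-term sum on the right-hand side of the lemma is $\tfrac{1}{2}\sigma_K(\{\pi/2\})$, a linear functional of $K$; this is exactly the relation $\equiv_K$ from \Cref{def:modulo-linear}. The only subtlety I anticipate is the bookkeeping of orientations and endpoint identifications (e.g.\ $v_K^-(\pi/2)$ versus $v_K^+(\pi/2)$ across the top edge, and the match $v_K^+(\pi) = v_K^-(3\pi/2)$) to ensure that concatenation really produces $\partial K$ with counterclockwise orientation, so that \Cref{thm:curve-area-functional-area} applies; this is routine but must be written carefully.
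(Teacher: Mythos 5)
Your proposal is correct and essentially mirrors the paper's proof: both decompose the boundary at the cut angles $\varphi^\mathrm{R}, \varphi^\mathrm{L}, \pi/2$, observe that $e_K(\varphi^\mathrm{R}), e_K(\varphi^\mathrm{L})$ degenerate to points by injectivity, and identify the surviving contribution as the linear term $\tfrac{1}{2}\sigma_K(\{\pi/2\})$. The only cosmetic difference is your starting point: you apply Green's theorem directly to $\partial K$ (via \Cref{thm:curve-area-functional-area}) and then prune the bottom edge $e_K(3\pi/2)$ and the degenerate edges $e_K(0), e_K(\pi)$, whereas the paper starts from the quadratic area formula $|K| = \tfrac{1}{2}\int_{S^1} h_K\,\sigma_K$, restricts the integration domain to $(0,\pi)$ using the cap and injectivity conditions, and only then converts to curve area functionals via \Cref{thm:convex-curve-area-functional} — this avoids ever mentioning the bottom/right edges explicitly.
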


\begin{proof}
We first break the value \(|K|\) into a sum of convex curve area functionals. By \Cref{thm:area-quadratic-expression}, we have \(|K| = \frac{1}{2} \int_{t \in S^1} h_K(t) \, \sigma_K(dt)\). As \(K\) is a cap with rotation angle \(\pi/2\), the measure \(\sigma_K\) is zero on the set \((\pi, 2\pi) \setminus \left\{ 3\pi/2 \right\}\) and \(h_K(3\pi/2) = 0\). As \(K \in \mathcal{K}^\mathrm{i}\), we have \(\sigma_K(0) = \sigma_K(\pi) = 0\) too by (1) of \Cref{def:injectivity-condition}. So
\[
|K| = \frac{1}{2} \int_{t \in (0, \pi)} h_K(t) \, \sigma_K(dt).
\]
This with \Cref{thm:convex-curve-area-functional} implies \(|K| = \mathcal{J}\left( \mathbf{u}_K^{0, \pi} \right)\). Now use \Cref{lem:convex-curve-concat} multiple times to obtain
\begin{align*}
|K| = \mathcal{J}\left( \mathbf{u}_K^{0, \varphi^\mathrm{R}} \right)  + \mathcal{J}\left( \mathbf{u}_K^{\varphi^\mathrm{R}, \varphi^\mathrm{L}} \right) +
\mathcal{J}\left( \mathbf{u}_K^{\varphi^\mathrm{L}, \pi/2} \right) +
\mathcal{J}(A_K(\pi/2), C_K(0)) + 
\mathcal{J}\left( \mathbf{u}_K^{\pi/2, \pi} \right) .
\end{align*}
Here we use that each of \(e_K(\varphi^\mathrm{R})\) and \(e_K(\varphi^\mathrm{L})\) is a single point, because \(K \in \mathcal{K}^\mathrm{i}\) and so (1) of \Cref{def:injectivity-condition} holds. The expression \(\mathcal{J}(A_K(\pi/2), C_K(0))\) is equal to \(\sigma_K \left( \left\{ \pi/2 \right\} \right) / 2\) by \Cref{pro:surface-area-measure-side-length} and \Cref{pro:curve-area-line-segment}, so is linear in \(K\). This completes the proof.
\end{proof}

\begin{lemma}

For any \(K \in \mathcal{K}^\mathrm{i}\), we have the followings.

\begin{enumerate}
\def\labelenumi{\arabic{enumi}.}
\tightlist
\item
  \(\mathcal{J}\left( \mathbf{y}_K|_{[\varphi^\mathrm{R}, \varphi^\mathrm{L}]} \right) \equiv_K \mathcal{J}\left( \mathbf{x}_K|_{[\varphi^\mathrm{R}, \varphi^\mathrm{L}]} \right)\)
\item
  \(\mathcal{J}\left( \mathbf{l}_K^{\pi/2}(\varphi^\mathrm{R}), \mathbf{y}_K(\varphi^\mathrm{R}) \right) \equiv_K \mathcal{J}\left( W_K^\mathrm{R}, \mathbf{x}_K^\mathrm{R} \right)\)
\item
  \(\mathcal{J}\left( \mathbf{l}_K^{\pi/2 + \varphi^\mathrm{L}}(\varphi^\mathrm{L}), \mathbf{y}_K(\varphi^\mathrm{L}) \right) \equiv_K \mathcal{J}\left( Z_K^\mathrm{L}, \mathbf{x}_K^\mathrm{L} \right)\)
\end{enumerate}

\label{lem:linvals}
\end{lemma}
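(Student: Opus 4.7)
All three identities follow from the same mechanism — bilinearity of $\mathcal{J}$ combined with isolating a $K$-independent part — and I would prove them in the order (1) $\to$ (2) $\to$ (3).

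For (1), the key observation is that $\mathbf{y}_K(t) - \mathbf{x}_K(t) = u_t + v_t$ is independent of $K$ (immediate from \Cref{pro:rotating-hallway-parts}, since $\mathbf{y}_K = \mathbf{x}_K + u_t + v_t$). Writing $\mathbf{c}(t) := u_t + v_t$ and viewing both $\mathbf{x}_K$ and $\mathbf{y}_K$ as elements of $C^{\mathrm{BV}}[\varphi^\mathrm{R}, \varphi^\mathrm{L}]$, the map $K \mapsto \mathbf{x}_K$ is convex-linear (by \Cref{pro:rotating-hallway-parts} and (1) of \Cref{thm:convex-body-linear}). Recognising $\mathcal{J}$ as the diagonal restriction of the bilinear form $\mathcal{B}(\mathbf{x}_1, \mathbf{x}_2) = \tfrac{1}{2}\int \mathbf{x}_1 \times d\mathbf{x}_2$ (as in the proof of \Cref{pro:curve-area-functional-quadratic}), \Cref{lem:modulo-linear-const} applied in the convex domain $C^{\mathrm{BV}}[\varphi^\mathrm{R}, \varphi^\mathrm{L}]$ gives $\mathcal{J}(\mathbf{y}_K) = \mathcal{J}(\mathbf{x}_K + \mathbf{c}) \equiv_K \mathcal{J}(\mathbf{x}_K)$: expanding bilinearly produces $\mathcal{J}(\mathbf{x}_K)$, two cross terms $\mathcal{B}(\mathbf{x}_K, \mathbf{c}) + \mathcal{B}(\mathbf{c}, \mathbf{x}_K)$ linear in $K$, and a constant $\mathcal{J}(\mathbf{c})$.

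For (2), I would run the same argument pointwise, using bilinearity of $\mathcal{J}(p, q) = (p \times q)/2$. A short coordinate computation gives
\[
\mathbf{l}_K^{\pi/2}(\varphi^\mathrm{R}) - W_K^\mathrm{R} \;=\; \left( \tfrac{1 - \sin\varphi^\mathrm{R}}{\cos\varphi^\mathrm{R}},\, 1 \right) =: \mathbf{c}_1,
\]
a $K$-independent vector; this uses that $l_K(\pi/2) = \{y = 1\}$ is fixed (since $h_K(\pi/2) = 1$ for any cap), that $\{y = 0\}$ is fixed, and that $a_K(\varphi^\mathrm{R})$ and $b_K(\varphi^\mathrm{R})$ are parallel lines at distance one. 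Likewise $\mathbf{y}_K(\varphi^\mathrm{R}) - \mathbf{x}_K^\mathrm{R} = u_{\varphi^\mathrm{R}} + v_{\varphi^\mathrm{R}} =: \mathbf{c}_2$. Since both $W_K^\mathrm{R}$ and $\mathbf{x}_K^\mathrm{R}$ are convex-linear in $K$ (\Cref{thm:convex-body-linear}, \Cref{pro:rotating-hallway-parts}), expanding
\[
\mathcal{J}(W_K^\mathrm{R} + \mathbf{c}_1,\, \mathbf{x}_K^\mathrm{R} + \mathbf{c}_2) = \mathcal{J}(W_K^\mathrm{R}, \mathbf{x}_K^\mathrm{R}) + \mathcal{J}(W_K^\mathrm{R}, \mathbf{c}_2) + \mathcal{J}(\mathbf{c}_1, \mathbf{x}_K^\mathrm{R}) + \mathcal{J}(\mathbf{c}_1, \mathbf{c}_2)
\]
places the last three terms into the convex-linear-in-$K$ bucket, which is exactly the content of $\equiv_K$.

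For (3), my plan is to deduce the claim from (2) by the mirror reflection of \Cref{pro:mirror-reflection} with $\omega = \pi/2$: replacing $K$ by its mirror image $K^\mathrm{m}$ permutes the geometric data of (2) (sending $W_K^\mathrm{R}$ to a point on the $d$-wall, $\mathbf{x}_K^\mathrm{R}$ to $\mathbf{x}_K^\mathrm{L}$, etc.), while the identity $\mathcal{J}(M(p), M(q)) = -\mathcal{J}(p, q)$ for the reflection $M$ along the $y$-axis flips both sides of the equivalence by the same sign, preserving $\equiv_K$ (using that $K \mapsto K^\mathrm{m}$ is itself convex-linear). A direct coordinate computation analogous to (2), exploiting that $Z_K^\mathrm{L}$ and $\mathbf{x}_K^\mathrm{L}$ both lie on the common line $d_K^\mathrm{L}$ of normal angle $\pi/2 + \varphi^\mathrm{L}$, is available as a backup. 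The whole lemma is algebraic bookkeeping — no step is genuinely hard — and the only place any real work happens is the short coordinate calculation that isolates the $K$-independent vector $\mathbf{c}_1$ in (2), which is precisely where the normalisations $h_K(\omega) = h_K(\pi/2) = 1$ of a cap pay off.
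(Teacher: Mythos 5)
Your treatment of (1) and (2) matches the paper's own proof: isolate the $K$-independent constants $\mathbf{y}_K - \mathbf{x}_K = u_t + v_t$, $\mathbf{y}_K(\varphi^\mathrm{R}) - \mathbf{x}_K^\mathrm{R}$, and $\mathbf{l}_K^{\pi/2}(\varphi^\mathrm{R}) - W_K^\mathrm{R}$, then invoke bilinearity. Routing this through \Cref{lem:modulo-linear-const} in the convex domain $C^{\mathrm{BV}}$ and then pulling back along the convex-linear map $K \mapsto \mathbf{x}_K$ is a harmless repackaging, and your explicit value of $\mathbf{c}_1$ agrees with the paper's $c_2 = \left(\sec\varphi^\mathrm{R}(1-\sin\varphi^\mathrm{R}),\,1\right)$.

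For (3), though, you should have paused before declaring the whole lemma ``algebraic bookkeeping.'' By \Cref{def:tangent-line-parametrization}, $\mathbf{l}_K^{\pi/2 + \varphi^\mathrm{L}}(\varphi^\mathrm{L}) = v_K(\varphi^\mathrm{L}, \pi/2 + \varphi^\mathrm{L})$, and since $\mathbf{y}_K(t)$ is precisely the intersection $l_K(t) \cap l_K(t + \pi/2)$, this point \emph{equals} $\mathbf{y}_K(\varphi^\mathrm{L})$. The left-hand side of (3) as printed is therefore $\mathcal{J}(p,p) = 0$, while the right-hand side $\mathcal{J}(Z_K^\mathrm{L}, \mathbf{x}_K^\mathrm{L})$ is genuinely quadratic in $K$ (it equals $-\tfrac{1}{2}\bigl(ab + b^2 \cot\varphi^\mathrm{L}\bigr)$ with $a := h_K(\varphi^\mathrm{L}) - 1$, $b := h_K(\pi/2 + \varphi^\mathrm{L}) - 1$, both linear in $K$), so the stated equivalence is false. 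Cross-referencing with the term $J_{32}$ in the proof of \Cref{lem:mamikon-middle-eq} shows the intended identity has $\mathbf{l}_K^{\pi/2 + \varphi^\mathrm{L}}(\pi/2)$, not $\mathbf{l}_K^{\pi/2+\varphi^\mathrm{L}}(\varphi^\mathrm{L})$: the point is the intersection of $l(\pi/2,1)$ and $c_K(\varphi^\mathrm{L})$, which is the opposite vertex of $Z_K^\mathrm{L}$ in the parallelogram $P_K^\mathrm{L}$, mirroring the role of $\mathbf{l}_K^{\pi/2}(\varphi^\mathrm{R})$ and $W_K^\mathrm{R}$ in (2). With that correction your mirror-reflection plan is sound and is a legitimate alternative to the paper's ``similar as (2)'': under $M_{\pi/2}$ normal angles transform as $t \mapsto \pi - t$, so $\mathbf{l}_{K^\mathrm{m}}^{\pi/2}(\varphi^\mathrm{R}) = M\bigl(v_K(\pi/2, \pi - \varphi^\mathrm{R})\bigr) = M\bigl(\mathbf{l}_K^{\pi/2 + \varphi^\mathrm{L}}(\pi/2)\bigr)$, while $W$ and $Z$ swap (as the $w$--$z$ bullet of \Cref{pro:mirror-reflection} indicates), and the sign flip $\mathcal{J}(Mp, Mq) = -\mathcal{J}(p,q)$ cancels on both sides of $\equiv_K$. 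Your direct-computation backup would also have exposed the degeneracy in the statement, so you should indeed run it.
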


\begin{proof}
For (1), we have \(\mathbf{y}_K(t) = \mathbf{x}_K(t) + u_t + v_t\). So with \(I := \left[ \varphi^\mathrm{R}, \varphi^\mathrm{L} \right]\) and \(c_t := u_t + v_t\), we have
\[
\begin{split}
\mathcal{J}(\mathbf{y}_K|_I) & = \frac{1}{2} \int_{\varphi^\mathrm{R}}^{\varphi^\mathrm{L}} \mathbf{y}_K(t) \times d \mathbf{y}_K(t) \\
& = \frac{1}{2} \int_{\varphi^\mathrm{R}}^{\varphi^\mathrm{L}} (\mathbf{x}_K(t) + c_t) \times d (\mathbf{x}_K(t) + c_t)  \\
& = \mathcal{J}(\mathbf{x}_K) + \frac{1}{2} \left( \int_{0}^\omega c_t \times d \mathbf{x}_K(t) 
+ \int_{0}^\omega \mathbf{x}_K(t) \times d c_t + \int_{0}^\omega c_t \times d c_t \right) 
\end{split}
\]
and the term in large bracket is convex-linear in \(K\).

For (2), observe that \(\mathbf{y}_K(\varphi^\mathrm{R}) - \mathbf{x}_K^\mathrm{R} = u_{\varphi^\mathrm{R}} + v_{\varphi^\mathrm{R}}\) is a constant \(c_1\) independent of \(K\). Likewise, the points \(\mathbf{l}_K^{\pi/2}(\varphi^\mathrm{R})\) and \(W_K^\mathrm{R}\) are the vertices of the parallelogram \(P_K^\mathrm{R}\), so their difference is a constant \(c_2 := \left(\sec(\varphi^\mathrm{R})(1 - \sin (\varphi^\mathrm{R})), 1\right)\) independent of \(K\). Now \(\mathcal{J}\left( W_K^\mathrm{R}, \mathbf{x}_K^\mathrm{R} \right) \equiv_K \mathcal{J}\left( W_K^\mathrm{R} + c_2, \mathbf{x}_K^\mathrm{R} + c_1 \right)\) by bilinearity of \(\mathcal{J}\). Proof of (3) is similar as (2).
\end{proof}

\begin{lemma}

For any \(K \in \mathcal{K}^\mathrm{i}\), we have \(\mathcal{S}_K \equiv_K - \mathcal{P}_K\).

\label{lem:mamikon-middle-eq}
\end{lemma}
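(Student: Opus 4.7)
The plan is a direct algebraic cancellation combining the Mamikon expansions in $\mathcal{S}_K$ with the pieces of $\mathcal{P}_K$. Set $(t_0, t_1, t_2, t_3, t_4) := (0, \varphi^\mathrm{R}, \varphi^\mathrm{L}, \pi/2, \pi)$ with target curves $(\mathbf{z}_1, \mathbf{z}_2, \mathbf{z}_3, \mathbf{z}_4) := (\mathbf{l}_K^{\pi/2}, \mathbf{y}_K, \mathbf{l}_K^{\pi/2 + \varphi^\mathrm{L}}, \mathbf{l}_K^{\pi})$. First, expand each $\mathcal{M}_K(t_{i-1}, t_i; \mathbf{z}_i)$ via \Cref{def:mamikon} into four subterms, and apply \Cref{lem:upper-boundary-tracing} to replace $|K|$ inside $\mathcal{P}_K$ by $\sum_{i=1}^4 \mathcal{J}(\mathbf{u}_K^{t_{i-1}, t_i})$ modulo a $K$-linear remainder. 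The four $-\mathcal{J}(\mathbf{u}_K^{t_{i-1}, t_i})$ contributions from the Mamikon expansions then cancel exactly against these.

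Next I simplify the remaining subterms. For $i \in \{1, 3, 4\}$ the target $\mathbf{z}_i$ traces a straight segment on a supporting line of $K$, so \Cref{thm:tangent-line-parametrization} collapses $\mathcal{J}(\mathbf{z}_i|_{[t_{i-1}, t_i]})$ to the single line-segment functional $\mathcal{J}(\mathbf{z}_i(t_{i-1}), \mathbf{z}_i(t_i))$; for $i = 2$, \Cref{lem:linvals}(1) pairs $\mathcal{J}(\mathbf{y}_K|_{[\varphi^\mathrm{R}, \varphi^\mathrm{L}]})$ with the $-\mathcal{J}(\mathbf{x}_K|_{[\varphi^\mathrm{R}, \varphi^\mathrm{L}]})$ term of $\mathcal{P}_K$, with difference linear in $K$. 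At the interior angles $\varphi^\mathrm{R}, \varphi^\mathrm{L}$, injectivity (\Cref{def:cap-nondegenerate}) forces $v_K^+(t) = v_K^-(t) = A_K(t)$, so each abutting pair of Mamikon junction terms lives on the common supporting line $l_K(t)$ and concatenates via the collinear-additivity identity $\mathcal{J}(p_1, p_2) + \mathcal{J}(p_2, p_3) = \mathcal{J}(p_1, p_3)$ — a direct consequence of writing $p_i = q + s_i u$ on the line and using bilinearity and antisymmetry of $\times$. The concatenated segments, $\mathcal{J}(v_K(\varphi^\mathrm{R}, \pi/2), \mathbf{y}_K(\varphi^\mathrm{R}))$ and $\mathcal{J}(\mathbf{y}_K(\varphi^\mathrm{L}), v_K(\varphi^\mathrm{L}, \pi - \varphi^\mathrm{R}))$, are then matched by \Cref{lem:linvals}(2), (3) against $-\mathcal{J}(\mathbf{x}_K^\mathrm{R}, W_K^\mathrm{R})$ and $-\mathcal{J}(Z_K^\mathrm{L}, \mathbf{x}_K^\mathrm{L})$ respectively, exactly killing the explicit line-segment terms carried by $\mathcal{P}_K$. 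At the junction $t = \pi/2$, where the top edge $e_K(\pi/2)$ may be nondegenerate, bridging $v_K^-(\pi/2)$ and $v_K^+(\pi/2)$ produces the correction $\mathcal{J}(v_K^-(\pi/2), v_K^+(\pi/2)) = \sigma_K(\pi/2)/2$, linear in $K$ by \Cref{pro:surface-area-measure-side-length}, \Cref{pro:curve-area-line-segment}, and \Cref{thm:convex-body-linear}(3). The closing Mamikon term at $t = \pi$ vanishes since $\mathbf{l}_K^{\pi}(\pi) = v_K^-(\pi)$.

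What remains is the signed-area contribution of a polygonal path on the supporting lines $l_K(0), l_K(\pi/2), l_K(\pi - \varphi^\mathrm{R}), l_K(\pi)$, closable along the bottom edge $e_K(3\pi/2)$ on which $h_K(3\pi/2) = 0$ forces the closing contribution to vanish by \Cref{pro:curve-area-line-segment-colinear}. To establish linearity of this residue, I intend to write each interior vertex $v_K(a, b)$ as an explicit linear combination of $h_K(a), h_K(b)$ using \Cref{thm:convex-body-linear}(2), and express the two boundary vertices $v_K^+(0)$ and $v_K^-(\pi)$ through the integrated form of \Cref{thm:boundary-measure}, $v_K^-(\pi) - v_K^+(0) = \int_{(0, \pi]} v_s \, \sigma_K(ds)$, so that the $K$-quadratic cross-terms collect by the identity $u_t \times v_t = 1$ applied pairwise along consecutive supporting edges and telescope to a purely $K$-linear remainder. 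The main obstacle I anticipate is the bookkeeping of all signs and orientations across the three junctions — confirming in particular that the orientations coming from \Cref{lem:linvals}(2), (3) match those produced by the collinear concatenation and that the $\sigma_K(\pi/2)/2$ top-edge correction combines consistently with the $\equiv_K$ error from \Cref{lem:linvals}(1); this is in spirit the same computation that appears in the proof of \Cref{lem:upper-bound-decomposition} and can be checked by careful accounting modeled on it.
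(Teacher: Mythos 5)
Your overall blueprint matches the paper's: expand each $\mathcal{M}_K(t_{i-1}, t_i; \mathbf{z}_i)$ via \Cref{def:mamikon}, use \Cref{lem:upper-boundary-tracing} to cancel $\lvert K \rvert$ against the four $-\mathcal{J}(\mathbf{u}_K^{t_{i-1}, t_i})$ pieces, invoke \Cref{lem:linvals} to match the surviving nonlinear pieces against those of $\mathcal{P}_K$, and observe the rest is linear. However, there is a concrete bookkeeping slip at the $\varphi^\mathrm{L}$ junction that breaks the matching. The abutting pair there is $\mathcal{J}(\mathbf{z}_2(\varphi^\mathrm{L}), A_K(\varphi^\mathrm{L})) + \mathcal{J}(A_K(\varphi^\mathrm{L}), \mathbf{z}_3(\varphi^\mathrm{L}))$, and since $\mathbf{z}_2(\varphi^\mathrm{L}) = \mathbf{y}_K(\varphi^\mathrm{L})$ while $\mathbf{z}_3(\varphi^\mathrm{L}) = \mathbf{l}_K^{\pi/2 + \varphi^\mathrm{L}}(\varphi^\mathrm{L}) = v_K(\varphi^\mathrm{L}, \pi/2 + \varphi^\mathrm{L}) = \mathbf{y}_K(\varphi^\mathrm{L})$, the two target points coincide; the pair concatenates to $\mathcal{J}(\mathbf{y}_K(\varphi^\mathrm{L}), \mathbf{y}_K(\varphi^\mathrm{L})) = 0$ identically. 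Your written expression $\mathcal{J}(\mathbf{y}_K(\varphi^\mathrm{L}), v_K(\varphi^\mathrm{L}, \pi - \varphi^\mathrm{R}))$ is precisely this degenerate zero (note $\pi - \varphi^\mathrm{R} = \pi/2 + \varphi^\mathrm{L}$), and it therefore cannot match $-\mathcal{J}(Z_K^\mathrm{L}, \mathbf{x}_K^\mathrm{L})$, which is genuinely quadratic in $K$. The piece that actually does the work is the \emph{traversal} term of the third Mamikon region, $\mathcal{J}(\mathbf{z}_3|_{[\varphi^\mathrm{L}, \pi/2]}) = \mathcal{J}\bigl(\mathbf{y}_K(\varphi^\mathrm{L}), v_K(\pi/2, \pi/2 + \varphi^\mathrm{L})\bigr)$, whose far endpoint is $v_K(\pi/2, \pi - \varphi^\mathrm{R})$ --- not $v_K(\varphi^\mathrm{L}, \pi - \varphi^\mathrm{R})$ --- and it is this standalone piece that is matched by \Cref{lem:linvals}(3). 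So the $\varphi^\mathrm{L}$ side is asymmetric with the $\varphi^\mathrm{R}$ side precisely because the target curve switches from $\mathbf{y}_K$ to a fixed tangent line of $K$ at $\varphi^\mathrm{L}$; your account treats both junctions as if they behaved the same.

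A secondary remark: the closing ``polygonal path telescoping'' step you propose, involving the integrated form of \Cref{thm:boundary-measure} and pairwise $u_t \times v_t$ cancellations, is heavier than what is needed. Each residual segment $\mathcal{J}(p, q)$ that remains sits on a single supporting line of $K$, and on each such line either the support height is fixed ($h_K(\pi/2) = 1$ on $l_K(\pi/2)$) with $K$-linear endpoints, or the signed length is the constant $1$ (for the vertical segments on $l_K(0)$ and $l_K(\pi)$, whose endpoints have $y$-coordinates $0$ and $1$) while the support height $h_K(0)$ or $h_K(\pi)$ is $K$-linear. By \Cref{pro:curve-area-line-segment}, each such $\mathcal{J}(p,q) = hd/2$ is then manifestly $K$-linear, so the residue is $\equiv_K 0$ term-by-term with no telescoping required.
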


\begin{proof}
Expand each term \(\mathcal{M}_K(-)\) in the \Cref{def:mamikon-middle} of \(\mathcal{S}_K\) using \Cref{def:mamikon}. Then \(\mathcal{S}_K\) is equal to the sum of all terms in the matrix below; each row sums up to a single expression of form \(\mathcal{M}_K(-)\). It is easiest to verify this by following the bold boundaries of four Mamikon regions (colored grey) in the upper part of \Cref{fig:mamikon-sofa} from right to left.
\begin{equation*}
\begin{alignedat}{8}
&
\mathcal{J}\left( A_K(0), \mathbf{l}_K^{\pi/2}(0) \right) & & \;
\mathcal{J}\left( \mathbf{l}_K^{\pi/2}(0), \mathbf{l}_K^{\pi/2}(\varphi^\mathrm{R}) \right) & & \;
\mathcal{J}\left( \mathbf{l}_K^{\pi/2}(\varphi^\mathrm{R}), A_K(\varphi^\mathrm{R}) \right) & & \; -
\mathcal{J}\left( \mathbf{u}_K^{0, \varphi^\mathrm{R}} \right)
\\
&
\mathcal{J}\left( A_K(\varphi^\mathrm{R}), \mathbf{y}_K(\varphi^\mathrm{R}) \right) & & \; 
\mathcal{J}\left( \mathbf{y}_K|_{[\varphi^\mathrm{R}, \varphi^\mathrm{L}]} \right) & & \;
\mathcal{J}\left( \mathbf{y}_K(\varphi^\mathrm{L}), A_K(\varphi^\mathrm{L}) \right) & & \; -
\mathcal{J}\left( \mathbf{u}_K^{\varphi^\mathrm{R}, \varphi^\mathrm{L}} \right)
\\
&
\mathcal{J}\left( A_K(\varphi^\mathrm{L}), \mathbf{y}_K(\varphi^\mathrm{L}) \right) & & \;
\mathcal{J}\left( \mathbf{y}_K(\varphi^\mathrm{L} ), \mathbf{l}_K^{\pi/2 + \varphi^\mathrm{L}}(\pi/2) \right) & & \;
\mathcal{J}\left( \mathbf{l}_K^{\pi/2 + \varphi^\mathrm{L}}(\pi/2), A_K(\pi/2) \right) & & \; - 
\mathcal{J}\left( \mathbf{u}_K^{\varphi^\mathrm{L}, \pi/2} \right)
\\
&
\mathcal{J}\left( C_K(0), \mathbf{l}_K^{\pi}(\pi/2) \right) & & \;
\mathcal{J}( \mathbf{l}_K^{\pi}(\pi/2), C_K(\pi/2)) & & & & \; - 
\mathcal{J}\left( \mathbf{u}_K^{\pi/2, \pi} \right)
\end{alignedat}
\end{equation*}
Call the term in \(i\)’th row and \(j\)’th column, including the signs, as simply \(J_{ij}\). Now check the following calculations.

\begin{itemize}
\tightlist
\item
  \(\sum_{i=1}^4 J_{i4} \equiv_K -|K|\) by \Cref{lem:upper-boundary-tracing}.
\item
  \(J_{11} = h_K(0) / 2 \equiv_K 0\) by (1) of \Cref{thm:convex-body-linear}.
\item
  \(J_{12} = \left( \mathbf{l}_K^{\pi/2}(0) - \mathbf{l}_K^{\pi/2}(\varphi^\mathrm{R}) \right) \cdot u_0 \equiv_K 0\) by (2) of \Cref{thm:convex-body-linear}.
\item
  \(J_{13} + J_{21} = \mathcal{J}\left( \mathbf{l}_K^{\pi/2}(\varphi^\mathrm{R}), \mathbf{y}_K(\varphi^\mathrm{R}) \right) \equiv_K \mathcal{J}\left( W_K^\mathrm{R}, \mathbf{x}_K^\mathrm{R} \right)\) by (2) of \Cref{lem:linvals}.
\item
  \(J_{22} = \mathcal{J}\left( \mathbf{y}_K|_{[\varphi^\mathrm{R}, \varphi^\mathrm{L}]} \right) \equiv_K \mathcal{J}\left( \mathbf{x}_K|_{[\varphi^\mathrm{R}, \varphi^\mathrm{L}]} \right)\) by (1) of \Cref{lem:linvals}.
\item
  \(J_{23} + J_{31} = 0\).
\item
  \(J_{32} \equiv_K \mathcal{J}(\mathbf{x}_K^\mathrm{L}, Z_K^\mathrm{L})\) by (3) of \Cref{lem:linvals}.
\item
  \(J_{33} \equiv_K J_{41} \equiv_K 0\) by (2) of \Cref{thm:convex-body-linear} and that the points are on \(l_K(\pi/2) = l(\pi/2, 1)\).
\item
  \(J_{42} = h_K(\pi/2)/2 \equiv_K 0\) by (1) of \Cref{thm:convex-body-linear}.
\end{itemize}

Add all the calculations in the list above to conclude \(\mathcal{S}_K \equiv_K - \mathcal{P}_K\).
\end{proof}

We finally assemble all the lemmas to prove the concavity of \(\mathcal{Q}\).

\begin{theorem}

The functional \(\mathcal{Q} : \mathcal{L} \to \mathbb{R}\) is concave.

\label{thm:upper-bound-concave}
\end{theorem}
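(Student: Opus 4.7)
The plan is to combine the decomposition in Lemma \ref{lem:upper-bound-decomposition} with the Mamikon identity in Lemma \ref{lem:mamikon-middle-eq} and the convexity statement in Lemma \ref{lem:mamikon-sofa-convex}. First I would recall that, for any $(K,B,D) \in \mathcal{L}$,
\[
\mathcal{Q}(K,B,D) = \mathcal{P}_K - \mathcal{R}_B - \mathcal{L}_D.
\]
Since the barycentric operation on $\mathcal{L}$ acts componentwise, it suffices to show that each of $\mathcal{P}_K$, $-\mathcal{R}_B$, $-\mathcal{L}_D$ is concave in its respective argument: their sum will then be concave on $\mathcal{L}$.

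Next I would handle the two tail terms $\mathcal{R}_B$ and $\mathcal{L}_D$ directly. By Lemma \ref{lem:mamikon-sofa-convex}, $\mathcal{R}_B$ is a convex and quadratic functional of $B \in \mathcal{K}$ and $\mathcal{L}_D$ is a convex and quadratic functional of $D \in \mathcal{K}$. Hence $-\mathcal{R}_B$ and $-\mathcal{L}_D$ are concave on $\mathcal{L}$ (they ignore the $K$ and $D$, or $K$ and $B$ coordinates respectively, and concavity in any single factor implies concavity on the product convex domain).

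For the core term $\mathcal{P}_K$, I would use Lemma \ref{lem:mamikon-middle-eq}, which gives $\mathcal{S}_K \equiv_K -\mathcal{P}_K$, i.e.\ there is a convex-linear functional $L : \mathcal{K}^{\mathrm{i}} \to \mathbb{R}$ such that
\[
\mathcal{P}_K = -\mathcal{S}_K + L(K).
\]
By Lemma \ref{lem:mamikon-sofa-convex}, $\mathcal{S}_K$ is convex and quadratic in $K$, so $-\mathcal{S}_K$ is concave in $K$. Adding the convex-linear (and hence concave) functional $L(K)$ preserves concavity, so $\mathcal{P}_K$ is concave in $K$, and therefore concave as a functional on $\mathcal{L}$ that depends only on the first coordinate.

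Assembling the three pieces, $\mathcal{Q} = \mathcal{P}_K + (-\mathcal{R}_B) + (-\mathcal{L}_D)$ is a sum of concave functionals on the convex domain $\mathcal{L}$, hence concave. The main conceptual content was already done: the geometric identification of $\mathcal{P}_K + \mathcal{S}_K$ as a linear functional in $K$ (Lemma \ref{lem:mamikon-middle-eq}), achieved by attaching the four Mamikon regions of Definition \ref{def:mamikon-middle} to the region of area $\mathcal{P}_K$ so that their union traces out the cap $K$ up to linear corrections; once that identity is in hand, the concavity statement reduces to the trivial observation that minus a convex quadratic is concave. No step here should pose any real obstacle.
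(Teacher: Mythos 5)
Your proposal is correct and follows essentially the same route as the paper: both use Lemma \ref{lem:upper-bound-decomposition} to write $\mathcal{Q} = \mathcal{P}_K - \mathcal{R}_B - \mathcal{L}_D$, invoke Lemma \ref{lem:mamikon-middle-eq} to replace $\mathcal{P}_K$ by $-\mathcal{S}_K$ modulo a convex-linear term, and then apply Lemma \ref{lem:mamikon-sofa-convex} to conclude that $-\mathcal{S}_K - \mathcal{R}_B - \mathcal{L}_D$ is concave. The paper states this more compactly, whereas you unpack it into three coordinate-wise concavity claims summed over the product domain, but there is no substantive difference in the argument.
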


\begin{proof}
We need to show that the value \(\mathcal{Q}(K, B, D)\) is quadratic and concave on \((K, B, D) \in \mathcal{L}\). By \Cref{lem:upper-bound-decomposition} and \Cref{lem:mamikon-middle-eq}, we have
\[
\mathcal{Q}(K, B, D) = \mathcal{P}_K - \mathcal{R}_B - \mathcal{L}_D \equiv_K - \mathcal{S}_K - \mathcal{R}_B - \mathcal{L}_D.
\]
By \Cref{lem:mamikon-sofa-convex}, the right-hand side is quadratic and concave on \((K, B, D) \in \mathcal{L}\).
\end{proof}

\section{Gerver's Sofa}
\label{sec:gerver's-sofa}
In this \Cref{sec:gerver's-sofa}, we extract the properties of Gerver’s sofa \(G\) we need for proving that \(G\) is a global optimum. We follow the derivation of \(G\) by Romik in Section 4 of \autocite{romikDifferentialEquationsExact2018}.

\subsubsection{\texorpdfstring{Boundary Curves of \(G\)}{Boundary Curves of G}}

In Romik’s derivation, Gerver’s sofa \(G\) is a monotone sofa with the boundary parametrized by five oriented curves \(\mathbf{A}, \mathbf{B}, \mathbf{C}, \mathbf{D}\), and \(\mathbf{x}\) (see \Cref{fig:gerver-curves}).

\begin{figure}
\centering
\includegraphics{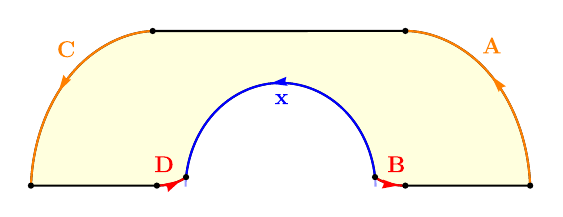}
\caption{Gerver’s sofa \(G\) is determined by five oriented curves \(\mathbf{A}, \mathbf{B}, \mathbf{C}, \mathbf{D}\), and \(\mathbf{x}\) solved in Section 4 of \autocite{romikDifferentialEquationsExact2018}.}
\label{fig:gerver-curves}
\end{figure}

The five curves are parametrized differently in each of the five intervals \(I_1, \dots, I_5\) in the following \Cref{def:gerver-intervals}.

\begin{definition}

Define
\[
(t_0, t_1, t_2, t_3, t_4, t_5) := (0, \varphi, \theta, \pi/2 - \theta, \pi/2 - \varphi, \pi/2)
\]
so that \(0 = t_0 < t_1 < \dots < t_5 = \pi/2\) forms a partition of the interval \([0, \pi/2]\). For each \(1 \leq i \leq 5\), define the interval \(I_i := [t_{i-1}, t_i]\).

\label{def:gerver-intervals}
\end{definition}

\begin{definition}

Define \(\mathbf{x} : [0, \pi/2] \to \mathbb{R}^2\) as the continuously differentiable function satisfying Equations 25 to 44 of \autocite{romikDifferentialEquationsExact2018} as solved in the Section 4 of \autocite{romikDifferentialEquationsExact2018}. In particular, by Equation 25 of \autocite{romikDifferentialEquationsExact2018}, for every \(1 \leq i \leq 5\) the restriction \(\mathbf{x}_i := \mathbf{x}|_{I_i}\) at the \(i\)’th interval \(I_i\) is smooth and satisfies the \(i\)’th ODE in Theorem 2 in \autocite{romikDifferentialEquationsExact2018}.

\label{def:gerver-x}
\end{definition}

\begin{definition}

Define the continuous, piecewise smooth curves \(\mathbf{A} : [0, \pi/2] \to \mathbb{R}^2\), \(\mathbf{B} : [t_3, t_5] \to \mathbb{R}^2\), \(\mathbf{C} : [0, \pi/2] \to \mathbb{R}^2\), and \(\mathbf{D} : [t_0, t_2] \to \mathbb{R}^2\) determined by \(\mathbf{x}\) in \Cref{def:gerver-x} according to Equations 9 to 12 of \autocite{romikDifferentialEquationsExact2018} respectively.

\label{def:gerver-abcd}
\end{definition}

The boundary of \(G\) is traced out by the four curves \(\mathbf{A}, \mathbf{B}, \mathbf{C}, \mathbf{D}\) in their full domains and the curve \(\mathbf{x}\) restricted to the interval \([t_1, t_4] = [\varphi^\mathrm{R}, \varphi^\mathrm{L}]\) (see \Cref{fig:gerver-curves} and the table below \Cref{thm:gerver-odes}).

\begin{theorem}

Gerver’s sofa \(G\) is a monotone sofa. Let \(K := \mathcal{C}(G)\) be the cap of \(G\). Then the followings are true.

\begin{enumerate}
\def\labelenumi{\arabic{enumi}.}
\tightlist
\item
  The cap \(K\) have vertices \(A_K(t) = \mathbf{A}(t)\) and \(C_K(t) = \mathbf{C}(t)\) and the inner corner \(\mathbf{x}_K(t) = \mathbf{x}(t)\) over \(t \in [0, \pi/2]\).
\item
  The niche \(\mathcal{N}(K)\) is the region enclosed counterclockwise by the following curves concatenated in order.

  \begin{enumerate}
  \def\labelenumii{\arabic{enumii}.}
  \tightlist
  \item
    The curve \(\mathbf{B} : [t_3, t_5] \to \mathbb{R}^2\) reversed in direction.
  \item
    The curve \(\mathbf{x}\) restricted to \([t_1, t_4] \to \mathbb{R}^2\).
  \item
    The curve \(\mathbf{D} : [t_0, t_2] \to \mathbb{R}^2\) reversed in direction.
  \item
    The horizontal line segment on \(l(\pi/2, 0)\) from \(\mathbf{D}(0)\) to \(\mathbf{B}(\pi/2)\).
  \end{enumerate}
\item
  The half-line inner wall \(\vec{b}_K(t)\) (resp. \(\vec{d}_K(t)\)) of the supporting hallway \(L_K(t)\) passes through the point \(\mathbf{B}(t)\) in the domain \(t \in [t_3, t_5]\) of \(\mathbf{B}\) (resp. \(\mathbf{D}(t)\) in the domain \(t \in [t_0, t_2]\) of \(\mathbf{D}\)).
\item
  The tangent direction \(\mathbf{B}'(t)\) of \(\mathbf{B}\) is parallel to \(v_t\), and \(\mathbf{B}'(t) \cdot v_t < 0\) on the domain \(t \in [t_3, t_5]\) of \(\mathbf{B}\). The tangent direction \(\mathbf{D}'(t)\) of \(\mathbf{D}\) is parallel to \(u_t\), and \(\mathbf{D}'(t) \cdot u_t > 0\) on the domain \(t \in [t_0, t_2]\) of \(\mathbf{D}\).
\end{enumerate}

\label{thm:gerver-monotone}
\end{theorem}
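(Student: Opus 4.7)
The plan is to translate Romik's explicit construction of $G$ in Section~4 of \autocite{romikDifferentialEquationsExact2018} into the cap framework developed in \Cref{sec:monotone-sofas-and-caps} and \Cref{sec:injectivity-condition}. By Theorem~2 of \autocite{gerverMovingSofaCorner1992}, $G$ is the limit of balanced polygons and hence is a balanced maximum sofa; in particular $G$ is monotone, its cap $K := \mathcal{C}(G)$ is well-defined, and $K \in \mathcal{K}^\mathrm{i}$ by \Cref{thm:injectivity-gerver}. This justifies the preamble and lets us speak unambiguously of $\mathbf{x}_K$, $A_K$, $C_K$, and the arm lengths $f_K, g_K$. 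For part (1), Romik defines $\mathbf{x}$ so that $G = H \cap \bigcap_t L_t$, where $L_t$ is the hallway with inner corner $\mathbf{x}(t)$ rotated by angle $t$ (Equation~8 of \autocite{romikDifferentialEquationsExact2018}); since $L_t$ contains $G$ with outer walls supporting $G$, it coincides with $L_K(t)$ and thus $\mathbf{x}_K(t) = \mathbf{x}(t)$. Romik further sets $\mathbf{A}(t) = \mathbf{y}(t) - f(t) v_t$ and $\mathbf{C}(t) = \mathbf{y}(t) - g(t) u_t$ (\Cref{def:gerver-abcd}), matching $A_K(t)$ and $C_K(t)$ via \Cref{pro:cap-tangent-arm-length}.

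For (3) and (4), I start from $\mathbf{B}(t) = \mathbf{A}(t) - u_t$ on $[t_3, t_5]$ (Equation~10 of \autocite{romikDifferentialEquationsExact2018}). Substituting $\mathbf{A}(t) = \mathbf{y}_K(t) - f_K(t) v_t$ and $\mathbf{y}_K(t) = \mathbf{x}_K(t) + u_t + v_t$ gives
\[
\mathbf{B}(t) = \mathbf{x}_K(t) - (f_K(t) - 1)\, v_t.
\]
Since $f_K(t) > 1$ on $(0, \pi/2) \supset [t_3, t_5]$ by \Cref{thm:lower-bound-one}, the point $\mathbf{B}(t)$ lies on the half-line $\vec{b}_K(t)$ from $\mathbf{x}_K(t)$ in the direction $-v_t$, proving the right half of (3); the claim for $\mathbf{D}$ follows by mirror symmetry. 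Differentiating and applying $\mathbf{x}_K'(t) = -(f_K(t) - 1) u_t + (g_K(t) - 1) v_t$ from \Cref{pro:cap-nondegenerate-continuity}, the $u_t$-components cancel, leaving
\[
\mathbf{B}'(t) = \bigl(g_K(t) - 1 - f_K'(t)\bigr)\, v_t,
\]
so $\mathbf{B}'(t) \parallel v_t$. The strict negativity of the coefficient on $[t_3, t_5]$ is enforced by the balancing ODEs on $I_4, I_5$ (Theorem~2 of \autocite{romikDifferentialEquationsExact2018}), which encode that the differential side at $\mathbf{B}(t)$ has positive length.

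Finally for (2), \Cref{thm:monotone-sofa-structure} gives $\mathcal{N}(K) = K \setminus G$, whose boundary consists of the horizontal segment on $l(\pi/2, 0)$ between $\mathbf{D}(0)$ and $\mathbf{B}(\pi/2)$ (the portion on $\partial K$) together with the upper envelope of the wedges $Q_K^-(t)$ for $t \in (0, \pi/2)$ (the portion in $\partial G$). Combining (3) with Romik's transition conditions (Equations~27--44 of \autocite{romikDifferentialEquationsExact2018}), which determine exactly the subintervals on which each of the four walls is tangent to $G$, the envelope assembles from the three arcs $\mathbf{B}|_{[t_3, t_5]}$, $\mathbf{x}|_{[t_1, t_4]}$, $\mathbf{D}|_{[t_0, t_2]}$ glued at the transition angles into a single Jordan curve oriented counterclockwise. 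The main obstacle lies here: one has to confirm that the niche boundary admits exactly this three-arc decomposition, with no stray contributions from the inner walls beyond those captured by $\mathbf{B}$ and $\mathbf{D}$. This reduces to a case-by-case analysis on the five sub-intervals $I_1, \ldots, I_5$, determining at each $t$ which component of $\partial Q_K^-(t)$ is exposed on the envelope, and the transition angles $\varphi, \theta$ are chosen in Romik's construction precisely so that the pieces glue consistently---so the verification reduces to invoking his boundary conditions alongside the injectivity condition.
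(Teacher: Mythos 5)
The paper does not prove \Cref{thm:gerver-monotone}: there is no proof environment attached to it. The paper imports the statement from the existing literature, and \Cref{rem:gerver-monotone} is explicit about this: the properties ``are easy to verify numerically and implicitly assumed in'' \autocite{gerverMovingSofaCorner1992,romikDifferentialEquationsExact2018}, their truth ``is not entirely self-evident,'' the construction of $G$ ``comes within a distance of $0.0012$ of `breaking' near the hallway $L_t$ with $t = \pi/4$,'' and ``a rigorous symbolic verification of \Cref{thm:gerver-monotone} would still be worthy.'' So you are attempting a proof where the paper deliberately declines to give one.

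Your translation of Romik's curves into the cap framework is the right starting point, and the derivations of part (1), of part (3) via $\mathbf{B}(t) = \mathbf{x}_K(t) - (f_K(t) - 1)v_t$ with $f_K > 1$, and of the parallelism of $\mathbf{B}'(t)$ to $v_t$ in part (4) are correct and clean. However, you gloss over the two points that the remark flags as the genuine obstruction. First, for the strict inequality $\mathbf{B}'(t) \cdot v_t < 0$ on $I_4$, the ODE (20) of \autocite{romikDifferentialEquationsExact2018} gives only an equality $\langle \mathbf{A}', v_t\rangle = \langle -\mathbf{B}' + \mathbf{x}', v_t\rangle$; writing $\mathbf{B}' = \mathbf{A}' - v_t$ and $\langle \mathbf{A}', v_t\rangle = r_K(t)$, $\langle \mathbf{x}', v_t\rangle = g_K(t) - 1$, the ODE rearranges to $r_K(t) = g_K(t)/2$, so the desired inequality $\langle \mathbf{B}', v_t\rangle = r_K(t) - 1 < 0$ is equivalent to the bound $g_K(t) < 2$ on $I_4$ --- which the ODE alone does not furnish, and which Romik only checks numerically. (On $I_5$, ODE (22) gives $r_K \equiv 1/2$ and the sign is automatic, but $I_4$ is not.) Second, part (2) --- that the niche boundary decomposes into exactly the three arcs $\mathbf{B}|_{[t_3,t_5]}$, $\mathbf{x}|_{[t_1,t_4]}$, $\mathbf{D}|_{[t_0,t_2]}$ plus a base segment with no stray wall contributions, glued consistently --- is the crux, and your proposal reduces it to an unperformed case analysis whose consistency ``reduces to invoking [Romik's] boundary conditions,'' which is exactly where the paper says the $0.0012$ near-breakage lurks. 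In short, you have the correct framework, but the two points you defer to the ODEs and to Romik's transition conditions are precisely the ones that still require numerical or symbolic verification and that the paper explicitly leaves open.
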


\begin{remark}

\Cref{thm:gerver-monotone} summarizes the properties of Gerver’s sofa \(G\) as described in the remark at the end of \autocite{gerverMovingSofaCorner1992} and recasts them our terminology. Rigorously speaking, these properties must be established logically \emph{before} asserting that \(G\) constitutes a valid moving sofa with five specific stages of movement as claimed in existing works on \(G\) \autocite{gerverMovingSofaCorner1992,romikDifferentialEquationsExact2018}.

The properties are easy to verify numerically and implicitly assumed in \autocite{gerverMovingSofaCorner1992,romikDifferentialEquationsExact2018}. Their truth is not entirely self-evident however. Indeed, Gerver observes a subtlety of the construction of \(G\) in his remark, that the endpoints \(\mathbf{x}(\varphi^R)\) and \(\mathbf{x}(\varphi^L)\) of the ‘core’ of the niche of \(G\) come within a distance of only \(0.0012\) from the inner walls \(b_K(t)\) and \(d_K(t)\) of the hallway \(L\) at \(t=\pi/4\). In other words, the construction of \(G\) comes within a distance of \(0.0012\) of ‘breaking’ near the hallway \(L_t\) with \(t=\pi/4\). With this, a rigorous symbolic verification of \Cref{thm:gerver-monotone} would still be worthy.

\label{rem:gerver-monotone}
\end{remark}

\subsubsection{\texorpdfstring{ODEs of Boundary Curves of \(G\)}{ODEs of Boundary Curves of G}}

We recall the concept of \emph{contact points} from \autocite{romikDifferentialEquationsExact2018}. Fix an interval \(I_i = [t_{i-1}, t_i]\) and let \(t \in I_i\) parametrize \(I_i\). Then Gerver’s sofa \(G\) with the cap \(K := \mathcal{C}(G)\) makes contact with the supporting hallway \(L_K(t)\) at the point \(\mathbf{A}(t)\) (resp. \(\mathbf{B}(t)\), \(\mathbf{C}(t)\), \(\mathbf{D}(t)\), and \(\mathbf{x}(t)\)) if and only if the interval \(I_i\) is contained in the domain of \(\mathbf{A}\) (resp. the domain of \(\mathbf{B}, \mathbf{C}, \mathbf{D}\), or the restricted domain \([t_1, t_4]\) of \(\mathbf{x}\)).

As this set of \emph{contact points} \(\Gamma(t)\), a subset of \(\left\{ \mathbf{A}, \mathbf{B}, \mathbf{C}, \mathbf{D}, \mathbf{x} \right\}\), that \(G\) makes with \(L_K(t)\) is determined, a corresponding ODE that balances the differential side lengths on those contact points can be derived. For example, the \Cref{eqn:ode-example} is derived in \Cref{sec:a-differential-inequality} by assuming \(\Gamma(t) = \left\{ \mathbf{A}, \mathbf{B}, \mathbf{C}, \mathbf{x} \right\}\). Each interval \(t \in I_i\) have a corresponding contact points \(\Gamma(t)\) fixed in \(t \in I_i\) and their respective ODE as follows.

\begin{definition}

Denote the dot product \(a \cdot b\) of \(a, b \in \mathbb{R}^2\) as also \(\left< a, b \right>\).

\label{def:bracket-dot-product}
\end{definition}

\begin{theorem}

For each \(t \in I_i\), the ODEs involving \(\mathbf{A}\), \(\mathbf{B}\), \(\mathbf{C}\), \(\mathbf{D}\), and \(\mathbf{x}\) in the last column of the following table holds.

\label{thm:gerver-odes}
\end{theorem}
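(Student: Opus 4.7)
The plan is to verify each ODE in the table as a direct translation of the corresponding balancing equation from Section 4 of \autocite{romikDifferentialEquationsExact2018}. Since the curves $\mathbf{A}, \mathbf{B}, \mathbf{C}, \mathbf{D}$ are defined from $\mathbf{x}$ via Equations 9--12 of \autocite{romikDifferentialEquationsExact2018} (\Cref{def:gerver-abcd}), and since $\mathbf{x}_i := \mathbf{x}|_{I_i}$ satisfies the $i$'th defining ODE of Theorem 2 of \autocite{romikDifferentialEquationsExact2018} (\Cref{def:gerver-x}), every ODE in our table is obtainable by routine differentiation and inner-product computations performed separately on each $I_i$.

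The main step is, for each interval $I_i$: differentiate the Equations 9--12 expressions for the active contact curves, substitute the form of $\mathbf{x}'(t)$ dictated by the $i$'th defining ODE, and take inner products with $u_t$ and $v_t$. Two auxiliary identities significantly reduce the bookkeeping. When both $\mathbf{A}(t)$ and $\mathbf{B}(t)$ are contact points, Theorem 1 of \autocite{romikDifferentialEquationsExact2018} forces $\mathbf{B}(t) = \mathbf{A}(t) - u_t$, hence $\mathbf{B}'(t) = \mathbf{A}'(t) - v_t$; symmetrically, $\mathbf{D}(t) = \mathbf{C}(t) - v_t$ and $\mathbf{D}'(t) = \mathbf{C}'(t) + u_t$ whenever $\mathbf{C}(t)$ and $\mathbf{D}(t)$ are both active. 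Using these in combination with $u_t' = v_t$ and $v_t' = -u_t$, one can re-express any derivative in terms of $\mathbf{A}'(t), \mathbf{C}'(t), \mathbf{x}'(t)$ and the fixed unit vectors, at which point the inner-product identity to be checked reduces to an algebraic consequence of the $i$'th ODE on $\mathbf{x}$.

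The only real obstacle is organizational: there are ten separate equations distributed over five intervals with five different active contact-point configurations, and each must be matched to the correct row of the table and to the correct equation in \autocite{romikDifferentialEquationsExact2018}. No new conceptual input beyond Romik's derivation is needed, because the ODEs of \autocite{romikDifferentialEquationsExact2018} were designed to enforce exactly the balance of differential side lengths at the active contact points that our table records, in parallel with the heuristic calculation culminating in \Cref{eqn:ode-example} of \Cref{sec:a-differential-inequality}. Once the transcription between the two notational conventions is fixed, each of the ten verifications is a short symbolic calculation, and the theorem follows by collecting these across the five intervals.
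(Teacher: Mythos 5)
Your proposal and the paper's proof both ultimately rest on Romik's derivation, but they travel in opposite directions. The paper's entire proof is a citation: the ten equations in the table carry Romik's own numbering, and the paper simply notes that they appear verbatim in his proof of Theorem 2. You instead propose to \emph{re-derive} the balancing equations from the paper's definitions — starting from the defining ODEs satisfied by $\mathbf{x}$ (\Cref{def:gerver-x}), differentiating the formulas for $\mathbf{A},\mathbf{B},\mathbf{C},\mathbf{D}$ from Equations 9--12 (\Cref{def:gerver-abcd}), and checking the inner products interval by interval. Both are valid, and your route is arguably more self-contained given how the paper sets up its definitions: \Cref{def:gerver-x} encodes only the \emph{conclusions} of Romik's derivation (the final ODEs on $\mathbf{x}$), so strictly speaking one must either verify that those ODEs imply the balancing equations (your route) or observe that the substitution of Equations 9--12 into (17)--(22) is the reversible step Romik used to produce his Theorem 2 ODEs in the first place (the implicit content of the paper's citation). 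Your auxiliary identities $\mathbf{B}' = \mathbf{A}' - v_t$ and $\mathbf{D}' = \mathbf{C}' + u_t$ are the right tools for the transcription. The cost of your route is the ten explicit symbolic verifications; the benefit is that the theorem is then checked against the paper's own definitions rather than trusted to hold because Romik writes the same equations somewhere in his proof.
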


\begin{longtable}[]{@{}
  >{\raggedright\arraybackslash}m{(\columnwidth - 6\tabcolsep) * \real{0.0685}}
  >{\raggedright\arraybackslash}m{(\columnwidth - 6\tabcolsep) * \real{0.1233}}
  >{\raggedright\arraybackslash}m{(\columnwidth - 6\tabcolsep) * \real{0.3425}}
  >{\raggedright\arraybackslash}m{(\columnwidth - 6\tabcolsep) * \real{0.4658}}@{}}
\toprule\noalign{}
\begin{minipage}[b]{\linewidth}\raggedright
\end{minipage} & \begin{minipage}[b]{\linewidth}\raggedright
Interval Contacts
\end{minipage} & \begin{minipage}[b]{\linewidth}\raggedright
Figure
\end{minipage} & \begin{minipage}[b]{\linewidth}\raggedright
Equation (Numbering in \autocite{romikDifferentialEquationsExact2018})
\end{minipage} \\
\midrule\noalign{}
\endhead
\bottomrule\noalign{}
\endlastfoot
(1)-v & \(t \in I_1\) \(\mathbf{A}, \mathbf{C}, \mathbf{D}\) & \includegraphics{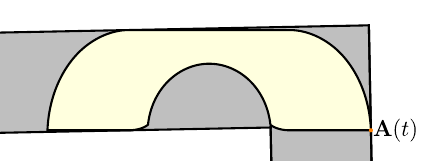} & \(\left< \mathbf{A}'(t), v_t \right> = 0\) (21) \\
(1)-u & \(t \in I_1\) \(\mathbf{A}, \mathbf{C}, \mathbf{D}\) & \includegraphics{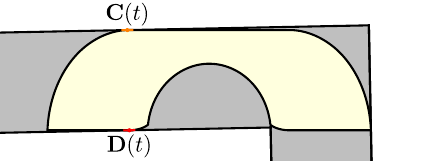} & \(\left< -\mathbf{C}'(t), u_t \right> = \left< \mathbf{D}'(t), u_t \right>\) (22) \\
(2)-v & \(t \in I_2\) \(\mathbf{A}, \mathbf{C}, \mathbf{D}, \mathbf{x}\) & \includegraphics{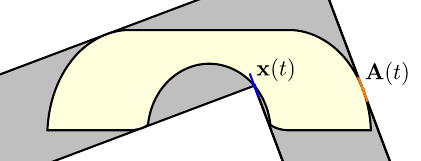} & \(\left< \mathbf{A}'(t), v_t \right> = \left< \mathbf{x}'(t), v_t \right>\) (17) \\
(2)-u & \(t \in I_2\) \(\mathbf{A}, \mathbf{C}, \mathbf{D}, \mathbf{x}\) & \includegraphics{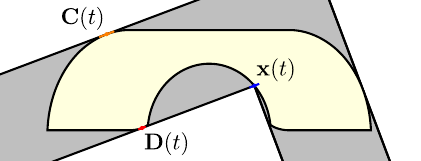} & \(\left< -\mathbf{C}'(t), u_t \right> = \left< \mathbf{D}'(t) - \mathbf{x}'(t), u_t \right>\) (19) \\
(3)-v & \(t \in I_3\) \(\mathbf{A}, \mathbf{C}, \mathbf{x}\) & \includegraphics{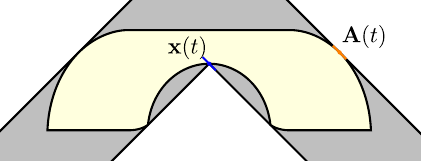} & \(\left< \mathbf{A}'(t), v_t \right> = \left< \mathbf{x}'(t), v_t \right>\) (17) \\
(3)-u & \(t \in I_3\) \(\mathbf{A}, \mathbf{C}, \mathbf{x}\) & \includegraphics{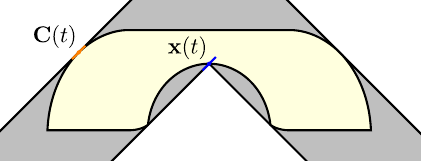} & \(\left< -\mathbf{C}'(t), u_t \right> = \left< -\mathbf{x}'(t), u_t \right>\) (18) \\
(4)-v & \(t \in I_4\) \(\mathbf{A}, \mathbf{B}, \mathbf{C}, \mathbf{x}\) & \includegraphics{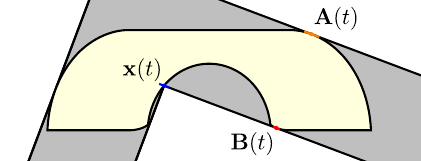} & \(\left< \mathbf{A}'(t), v_t \right> = \left< - \mathbf{B}'(t) + \mathbf{x}'(t) , v_t \right>\) (20) \\
(4)-u & \(t \in I_4\) \(\mathbf{A}, \mathbf{B}, \mathbf{C}, \mathbf{x}\) & \includegraphics{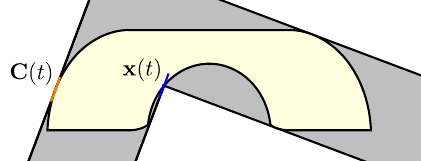} & \(\left< -\mathbf{C}'(t), u_t \right> = \left< -\mathbf{x}'(t), u_t \right>\) (18) \\
(5)-v & \(t \in I_5\) \(\mathbf{A}, \mathbf{B}, \mathbf{C}\) & \includegraphics{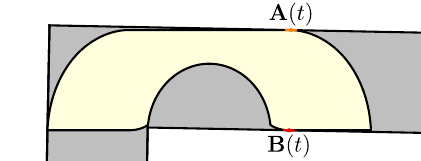} & \(\left< \mathbf{A}'(t), v_t \right> = \left<- \mathbf{B}'(t), v_t \right>\) (22)\footnote{\autocite{romikDifferentialEquationsExact2018} omits exact equations for the interval \(I_5\) as the cases \(I_1\) and \(I_5\) are symmetric to each other.} \\
(5)-u & \(t \in I_5\) \(\mathbf{A}, \mathbf{B}, \mathbf{C}\) & \includegraphics{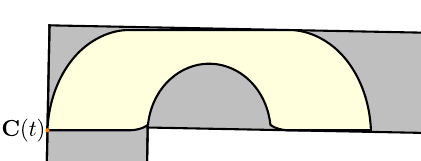} & \(\left< -\mathbf{C}'(t), u_t \right> = 0\) (21)\footnote{See above.} \\
\end{longtable}

\begin{proof}
They are exactly the equations with respective numberings in the proof of Theorem 2 of \autocite{romikDifferentialEquationsExact2018}.
\end{proof}

\subsubsection{\texorpdfstring{Left, Middle, and Right Parts of \(G\)}{Left, Middle, and Right Parts of G}}

\begin{theorem}

Let \(K := \mathcal{C}(G)\) be the cap of Gerver’s sofa. Then the followings are true.

\begin{enumerate}
\def\labelenumi{\arabic{enumi}.}
\tightlist
\item
  We have \(\mathbf{D}(t) = v_{D_K}^{\pm}(3\pi/2 + t)\) on \(t \in (t_0, t_2)\). Likewise, we have \(\mathbf{B}(t) = v_{B_K}^{\pm}(\pi + t)\) on \(t \in (t_3, t_5)\).
\item
  \(\mathbf{x}_K^\mathrm{L} = Y_{D_K} = \mathbf{D}(t_2) = v_{D_K}^{\pm}(3\pi/2 + t_2)\) and \(\mathbf{d}_{D_K} = \mathbf{D}\) as oriented curves. Likewise, \(\mathbf{x}_K^\mathrm{R} = X_{B_K} = \mathbf{D}(t_3) = v_{B_K}^{\pm}(\pi + t_3)\) and \(\mathbf{b}_{B_K} = \mathbf{B}\) as oriented curves.
\item
  We have \(h_K(\pi/2 + t) + h_{D_K}(3\pi/2 + t) = 1\) on \(t \in [t_0, t_2]\), and \(h_K(t) + h_{B_K}(\pi + t) = 1\) on \(t \in [t_3, t_5]\).
\end{enumerate}

\label{thm:gerver-left-right}
\end{theorem}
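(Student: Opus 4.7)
The plan is to reduce all three claims to the single geometric inclusion $\mathbf{D}(t) \in D_K$ for every $t \in [t_0, t_2]$, with the analogous statement $\mathbf{B}(t) \in B_K$ for $t \in [t_3, t_5]$ obtained by the mirror-symmetric argument. Granting this inclusion, the membership $\mathbf{D}(t) \in D_K \cap d_K(t)$ (second factor by \Cref{thm:gerver-monotone}(3)) certifies $h_{D_K}(3\pi/2+t) \geq 1 - h_K(\pi/2+t)$, and the reverse inequality from condition (4) of \Cref{def:cap-tail-space} applied to $(K, B_K, D_K) \in \mathcal{L}$ (valid via \Cref{thm:cap-tail-extension} and \Cref{thm:cap-space-special}) pinches to give claim (3). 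Equality in (3) then forces $l_{D_K}(3\pi/2+t) = d_K(t)$, and claim (1) reduces to the collapse $D_K \cap d_K(t) = \{\mathbf{D}(t)\}$; this is a first-order tangency argument, because for $p = \mathbf{D}(t) + \mu u_t$ on $d_K(t)$ with $\mu \neq 0$, the constraint $p \in H_K^{\mathrm{d}}(s)$ reads $\mu \sin(t-s) + \psi(s) \geq 0$ with $\psi(s) := \mathbf{D}(t) \cdot v_s - (h_K(s+\pi/2) - 1)$, and the vanishing $\psi(t) = \psi'(t) = 0$ (the latter via $C_K(t) - \mathbf{D}(t) = v_t$, so $(C_K(t) - \mathbf{D}(t)) \cdot u_t = 0$) reduces this to $-\mu(s-t) \geq 0$ on both sides of $s = t$, forcing $\mu = 0$.

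The crux is therefore $\mathbf{D}(t) \in D_K$, i.e., $\mathbf{D}(t) \in H_K^{\mathrm{d}}(s)$ for every $s \in [0, \varphi^{\mathrm{L}}]$. Rather than attempt a uniform-in-$s$ envelope argument, I would isolate the single case $s = \varphi^{\mathrm{L}}$ and then bootstrap. By \Cref{thm:gerver-monotone}(3)(4), $\mathbf{D}'(t) = \alpha(t) u_t$ with $\alpha(t) > 0$, so
\[
\frac{d}{dt}\bigl[\mathbf{D}(t) \cdot v_{\varphi^{\mathrm{L}}}\bigr] \;=\; -\alpha(t)\sin(\varphi^{\mathrm{L}} - t) \;<\; 0
\]
throughout $[t_0, t_2] \subset [0, \varphi^{\mathrm{L}})$. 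The Jordan-curve concatenation of \Cref{thm:gerver-monotone}(2) forces the endpoint identity $\mathbf{D}(t_2) = \mathbf{x}(t_4) = \mathbf{x}_K^{\mathrm{L}}$, which lies on $d_K^{\mathrm{L}}$ and so gives $\mathbf{D}(t_2) \cdot v_{\varphi^{\mathrm{L}}} = h_K(\varphi^{\mathrm{L}}+\pi/2) - 1$; strict monotonicity then yields $\mathbf{D}(t) \in \breve{H}_K^{\mathrm{L}}$ on all of $[t_0, t_2]$. The remaining inequalities are now automatic: $\mathbf{D}(t)$ sits on the Jordan boundary of the open niche $\mathcal{N}(K)$ by \Cref{thm:gerver-monotone}(2), hence outside every $Q_K^-(s)$, and \Cref{lem:monotonicity-intervals}(2) identifies $\breve{H}_K^{\mathrm{L}} \cap Q_K^-(s) = \breve{H}_K^{\mathrm{L}} \setminus H_K^{\mathrm{d}}(s)$ for $s \in [0, \varphi^{\mathrm{L}})$, forcing $\mathbf{D}(t) \in H_K^{\mathrm{d}}(s)$ there; the boundary case $s = \varphi^{\mathrm{L}}$ is the already-established $\mathbf{D}(t) \in \breve{H}_K^{\mathrm{L}}$.

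Claim (2) then reorganizes the edge data of $D_K$ along the full normal-angle arc $(3\pi/2, 3\pi/2 + \varphi^{\mathrm{L}})$ traced by $\mathbf{d}_{D_K}$: on $(3\pi/2 + t_0, 3\pi/2 + t_2)$ the edges collapse to singletons $\{\mathbf{D}(t)\}$ by claim (1), so this portion of $\mathbf{d}_{D_K}$ is exactly $\mathbf{D}$; on $(3\pi/2 + t_2, 3\pi/2 + \varphi^{\mathrm{L}}]$ I expect each edge to collapse to the single corner $\mathbf{x}_K^{\mathrm{L}}$, because any $p = \mathbf{x}_K^{\mathrm{L}} - \mu u_{\varphi^{\mathrm{L}}}$ on $d_K^{\mathrm{L}}$ with $\mu > 0$ falls out of $H_K^{\mathrm{d}}(t_2)$ by the same sign-of-sine monotonicity as in the previous paragraph, pinning down $Y_{D_K} = \mathbf{x}_K^{\mathrm{L}}$. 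The main obstacle I anticipate is not a single difficult step but the careful interplay between Romik's smooth parametrization of $\partial G$ via $\mathbf{A}, \mathbf{B}, \mathbf{C}, \mathbf{D}, \mathbf{x}$ and the abstract convex-body description via $(K, B_K, D_K)$; in particular, the whole scheme hinges on the endpoint identity $\mathbf{D}(t_2) = \mathbf{x}_K^{\mathrm{L}}$ extracted from \Cref{thm:gerver-monotone}(2), which itself rests on the implicit numerical verification noted in \Cref{rem:gerver-monotone}.
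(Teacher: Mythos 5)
Your proof follows the same backbone as the paper's: the crux is $\mathbf{D} \subseteq D_K$, and you establish it exactly as the paper does---first $\mathbf{D} \subseteq \breve{H}_K^{\mathrm{L}}$ via the strict sign of $\mathbf{D}'(t) \cdot v_{\varphi^{\mathrm{L}}} = -\alpha(t)\sin(\varphi^{\mathrm{L}}-t)$ together with the boundary value $\mathbf{D}(t_2) = \mathbf{x}_K^{\mathrm{L}} \in d_K^{\mathrm{L}}$, then bootstrap to the full family $H_K^{\mathrm{d}}(s)$ by combining $\mathbf{D} \cap \mathcal{N}(K) = \emptyset$ with \Cref{lem:monotonicity-intervals}(2). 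Where you diverge is in deducing the edge collapse: you derive (3) first by pinching between $\mathbf{D}(t) \in D_K \cap d_K(t)$ and condition~(4) of \Cref{def:cap-tail-space}, then reduce (1) to a first-order tangency computation on $\psi(s) = \mathbf{D}(t) \cdot v_s - (h_K(s+\pi/2)-1)$. The paper instead deduces (1) directly from $\mathbf{D}(t) \in D_K \cap d_K(t)$ (which makes $d_K(t)$ a supporting line, so $\mathbf{D}(t) \in e_{D_K}(3\pi/2+t)$) plus the one-sided limits of \Cref{thm:limits-converging-to-vertex}, and only then reads off (3). Your pinch is a pleasant simplification of (3); what it buys is that (3) no longer depends on the full vertex identification in (1).

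The soft spot in your tangency route is the assertion $\psi'(t) = 0$, which you justify via ``$C_K(t) - \mathbf{D}(t) = v_t$.'' That is \emph{not} a consequence of $\mathbf{C}(t) \in c_K(t)$ and $\mathbf{D}(t) \in d_K(t)$ being on parallel lines at distance one; it is the additional alignment fact (Theorem~1 of \cite{romikDifferentialEquationsExact2018}, cited in the paper only for the $\mathbf{A},\mathbf{B}$ version $\mathbf{B}(t) = \mathbf{A}(t) - u_t$) that the two contact points are directly opposite, not merely on opposite walls. You should state and cite it. You also implicitly use that $h_K$ is pointwise differentiable at $t+\pi/2$, which the injectivity condition alone (absolute continuity of $v_K^+$) does not give you at a specific point; it does hold here because Gerver's $\mathbf{C}$ is piecewise smooth by \Cref{def:gerver-abcd}, but that should be invoked. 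The paper's continuity-and-limits route via \Cref{thm:limits-converging-to-vertex} avoids both dependencies and is self-contained within the paper's toolkit. Your treatment of the degenerate arc $\mathbf{u}_{D_K}^{3\pi/2+t_2, 3\pi/2+\varphi^{\mathrm{L}}}$ in (2) is hedged (``I expect'') but the idea is sound; the clean way to finish it is the paper's: observe $v_{D_K}^-(3\pi/2+t_2)$ lies on both supporting lines $l_{D_K}(3\pi/2+t_2)$ and $l_{D_K}(3\pi/2+\varphi^{\mathrm{L}})$ and is in $D_K$, then invoke the degenerate case of \Cref{lem:convex-curve-cut}.
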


\begin{proof}
Recall that \(K \in \mathcal{K}^\mathrm{i}\) by \Cref{thm:injectivity-gerver}.

We first show (1). First, observe that \(\mathbf{D}\) is in the closure of \(\mathcal{N}(K)\) by (2) of \Cref{thm:gerver-monotone}, so we have \(\mathbf{D} \subseteq K\) by \Cref{thm:niche-in-cap}. By (2) of \Cref{thm:gerver-monotone}, we have the matching endpoints \(\mathbf{D}(t_2) = \mathbf{x}_K^\mathrm{L}\) of \(\mathbf{D}\) and \(\mathbf{x}\), so \(\mathbf{D}(t_2) \in d_K^\mathrm{L}\). Then by (4) of \Cref{thm:gerver-monotone}, we have \(\mathbf{D} \subseteq \breve{H}_K^\mathrm{L}\). Now \(\mathbf{D}\) is a subset of \(K \cap \breve{H}_K^\mathrm{L}\). Second, the curve \(\mathbf{D}\) itself is disjoint from \(\mathcal{N}(K)\) by (2) of \Cref{thm:gerver-monotone}, so by (2) of \Cref{lem:monotonicity-intervals}, \(\mathbf{D}\) is contained in all \(H_K^\mathrm{d}(t)\) over all \(t \in [0, \varphi^\mathrm{L})\). So we have \(\mathbf{D} \subseteq D_K\) by the definition of \(D_K\). Finally, as \(\mathbf{D}(t) \in D_K \cap d_K(t)\) for all \(t \in [t_0, t_2]\) by (3) of \Cref{thm:gerver-monotone}, and \(D_K \subseteq H_K^\mathrm{d}(t)\) by definition, the line \(d_K(t)\) should be a supporting line of \(D_K\) and \(\mathbf{D}(t) \in e_{D_K}(3\pi/2 + t)\) over all \(t \in [t_0, t_2]\). For each \(t \in (t_0, t_2)\), use the continuity of \(\mathbf{D}(t) \in e_{D_K}(3\pi/2 + t)\) at \(t\), and take the left and right limit on \(t\), then use \Cref{thm:limits-converging-to-vertex} to conclude \(\mathbf{D}(t) = v_{D_K}^{\pm}(3\pi/2 + t)\). Use a mirror-symmetric argument to prove the cooresponding statement of (1) on \(\mathbf{B}\).

We now show (2) using (1). Recall the definitions \(\mathbf{d}_{D_K} = \mathbf{u}_{D_K}^{3\pi/2, 3\pi/2 + \varphi^\mathrm{L}}\) and \(Y_{D_K} = v_{D_K}^-(3\pi/2 + \varphi^\mathrm{L})\). By taking the limits \(t \to t_0^+\) and \(t \to t_2^-\) to \(\mathbf{D}(t) = v_{D_K}^{\pm}(3\pi/2 + t)\) of (1) and using \Cref{thm:limits-converging-to-vertex}, we have \(\mathbf{D} = \mathbf{u}_{D_K}^{3\pi/2, 3\pi/2 + t_2}\) as oriented curves and \(\mathbf{D}(t_2) = v_{D_K}^-(3\pi/2 + t_2)\). We have \(\mathbf{D}(t_2) = \mathbf{x}_K^\mathrm{L}\) by (2) of \Cref{thm:gerver-monotone}, and the point is on the supporting line \(d_K^\mathrm{L} = l_{D_K}(3\pi/2 + \varphi^\mathrm{L})\) of \(D_K\) by (4) of \Cref{lem:right-left-body}. So we have
\[
v_{D_K}^-(3\pi/2 + t_2) \in l_{D_K}(3\pi/2 + \varphi^\mathrm{L})
\]
and this implies the degeneracy
\[
v_{D_K}^-(3\pi/2 + \varphi^\mathrm{L}) = v_{D_K}^\pm(3\pi/2 + t_2)
\]
and that \(\mathbf{u}_{D_K}^{3\pi/2 + t_2, 3\pi/2 + \varphi^\mathrm{L}}\) is a single point. Now the statement of (2) for the curve \(\mathbf{D}\) and convex body \(D_K\) holds. Use a mirror-symmetric argument to prove the corresponding statement of (2) on \(\mathbf{B}\) and \(B_K\).

We show (3) using (1) and (2). Let \(t \in (t_0, t_2]\) be arbitrary. We have \(\mathbf{C}(t) \in c_K(t)\) and \(\mathbf{D}(t) \in d_K(t)\) by (3) of \Cref{thm:gerver-monotone}, and the lines are parallel and of distance one. We also have \(\mathbf{C}(t) = C_K(t)\) by (1) of \Cref{thm:gerver-monotone} and \(\mathbf{D}(t) = v_{D_K}^{\pm}(3\pi/2 + t)\) by (1) and (2). So the first equality of (3) holds. Take limits to show the equality for \(t = t_0\) too. Again, use a mirror-symmetric argument to prove the second equality of (3) using (1).
\end{proof}

\subsubsection{\texorpdfstring{Surface Area Measures of \(G\)}{Surface Area Measures of G}}

Our next goal is to translate the ODEs of Romik in \Cref{thm:gerver-odes} in terms of surface area measure as differential side lengths. We will use the following \Cref{def:opposite-surface-area} for \(C = B, D\) to denote the surface area measure and support function of the bottom sides \(\mathbf{d}_D\) and \(\mathbf{b}_B\).

\begin{definition}

For any convex body \(C\), let \(\breve{\sigma}_C\) denote the measure on \(S^1\) such that \(\breve{\sigma}_C(X) = \sigma_C(X + \pi)\) for any Borel subset \(X \subseteq S^1\). Likewise, let \(\breve{h}_C(t) = h_C(t + \pi)\).

\label{def:opposite-surface-area}
\end{definition}

\begin{proposition}

Let \(K := \mathcal{C}(G)\) be the cap of Gerver’s sofa. Let \(B := B_K\) and \(D := D_K\). Then the followings are true.

\begin{enumerate}
\def\labelenumi{\arabic{enumi}.}
\tightlist
\item
  \(\left< \mathbf{A}'(t), v_t \right> \mathrm{d} t = \sigma_K\) as measures on \(t \in [0, \pi/2)\).
\item
  \(\left< -\mathbf{B}'(t), v_t \right> \mathrm{d} t = \breve{\sigma}_B\) as measures on \(t \in [t_3, t_5)\).
\item
  \(\left<-\mathbf{C}'(t - \pi/2), u_{t - \pi/2} \right> \mathrm{d} t = \sigma_K\) as measures on \(t \in (\pi/2, \pi]\).
\item
  \(\left< \mathbf{D}'(t), u_t \right> \mathrm{d} t = \breve{\sigma}_D\) as measures on \(t \in (t_0, t_2]\).
\end{enumerate}

\label{pro:measure-translation}
\end{proposition}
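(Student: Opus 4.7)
My plan is to deduce each of the four identities from Theorem~\ref{thm:boundary-measure} applied to the appropriate convex body, after identifying $\mathbf{A},\mathbf{B},\mathbf{C},\mathbf{D}$ with vertex curves of $K$, $B_K$, and $D_K$. By Definitions~\ref{def:gerver-x}--\ref{def:gerver-abcd} the curve $\mathbf{x}$ (and hence each of $\mathbf{A},\mathbf{B},\mathbf{C},\mathbf{D}$) is continuously differentiable on its domain, so Proposition~\ref{pro:lebesgue-stieltjes-abs-cont} gives $\mathrm{d}\mathbf{A}(t)=\mathbf{A}'(t)\,\mathrm{d}t$ as pairs of signed measures, and analogously for $\mathbf{B},\mathbf{C},\mathbf{D}$. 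In each case the scalar identity is extracted from a vector identity of pairs of measures by taking an inner product against a unit direction vector.

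For~(1), I would use Theorem~\ref{thm:gerver-monotone}(1) to write $\mathbf{A}(t)=A_K(t)$, and then Proposition~\ref{pro:cap-nondegenerate} (available since $K=\mathcal{C}(G)\in\mathcal{K}^\mathrm{i}$ by Theorem~\ref{thm:cap-space-special}) to rewrite $A_K(t)=v_K^+(t)$ on $[0,\pi/2)$. Theorem~\ref{thm:boundary-measure} on $(0,\pi/2]$ then gives $\mathbf{A}'(t)\,\mathrm{d}t=v_t\,\sigma_K$; taking the inner product with the unit vector $v_t$ (in the sense of Definition~\ref{def:function-measure-mult}) yields $\langle\mathbf{A}'(t),v_t\rangle\,\mathrm{d}t=\sigma_K$ on $(0,\pi/2]$. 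The left endpoint $\{0\}$ is adjoined for free, since $\sigma_K(\{0\})=0$ by Definition~\ref{def:injectivity-condition}(1) and $\mathrm{d}\mathbf{A}(\{0\})=0$ by the convention in Definition~\ref{def:lebesgue-stieltjes}. Part~(3) is entirely parallel: the identification $\mathbf{C}(s)=C_K(s)=v_K^+(s+\pi/2)$ together with the substitution $t=s+\pi/2$ gives $\mathbf{C}'(t-\pi/2)\,\mathrm{d}t=v_t\,\sigma_K$ on $(\pi/2,\pi]$; the minus sign in the statement arises from $v_t=v_{s+\pi/2}=-u_s=-u_{t-\pi/2}$, so inner-producting with $u_{t-\pi/2}$ gives the claim.

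For~(2) and~(4) the key input is Theorem~\ref{thm:gerver-left-right}(1), which identifies $\mathbf{B}(t)=v_{B_K}^{\pm}(\pi+t)$ and $\mathbf{D}(t)=v_{D_K}^{\pm}(3\pi/2+t)$ on the open intervals $(t_3,t_5)$ and $(t_0,t_2)$. For~(2), applying Theorem~\ref{thm:boundary-measure} to $B_K$ on $(\pi+t_3,\pi+t_5]$ and substituting $s=\pi+t$ produces $\mathbf{B}'(t)\,\mathrm{d}t=v_{\pi+t}\,\mu_B=-v_t\,\mu_B$, where $\mu_B$ is the push-forward of $\sigma_{B_K}$ along $s\mapsto s-\pi$; by Definition~\ref{def:opposite-surface-area} this $\mu_B$ is exactly the restriction of $\breve{\sigma}_B$ to $(t_3,t_5]$. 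Dotting with $v_t$ yields $\langle-\mathbf{B}'(t),v_t\rangle\,\mathrm{d}t=\breve{\sigma}_B$. For~(4), substituting $s=3\pi/2+t$ into Theorem~\ref{thm:boundary-measure} applied to $D_K$ and using $v_{3\pi/2+t}=u_t$ gives, after inner product with $u_t$, the stated identity with the appropriate pull-back of $\sigma_{D_K}$ playing the role of $\breve{\sigma}_D$.

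The routine but non-trivial bookkeeping that I expect to be the main obstacle is twofold: (i)~extending the vertex identifications from the \emph{open} intervals where Theorem~\ref{thm:gerver-left-right}(1) is stated to the relevant half-open intervals in the proposition, using the one-sided continuity of $v^{\pm}$ from Theorem~\ref{thm:limits-converging-to-vertex} together with Theorem~\ref{thm:gerver-left-right}(2) to pin down the remaining endpoint values; and (ii)~matching the half-open endpoint conventions of Theorem~\ref{thm:boundary-measure}, which only yields equality on intervals of the form $(a,b]$, against the differently-oriented half-open intervals appearing in the statement (e.g.\ $[0,\pi/2)$ in~(1) and $[t_3,t_5)$ in~(2)). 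Both issues are handled by checking that the relevant surface area measures carry no atom at the offending endpoint---which holds by the injectivity hypothesis for $\sigma_K$ at $0,\pi/2,\pi$ (Definition~\ref{def:injectivity-condition}(1)) and by the smooth construction in \autocite{romikDifferentialEquationsExact2018} for the tails $\mathbf{b}_{B_K}$ and $\mathbf{d}_{D_K}$.
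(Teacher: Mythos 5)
Your overall approach — identify each boundary curve with a vertex curve of $K$, $B_K$, or $D_K$ via Theorem~\ref{thm:gerver-monotone}(1) and Theorem~\ref{thm:gerver-left-right}(1), apply Theorem~\ref{thm:boundary-measure}, change variables, then take the inner product with the unit direction — is exactly the paper's proof, and your handling of the endpoints (using $\sigma_K(\{0\}) = \sigma_K(\{\pi\}) = 0$ from the injectivity condition, and the degeneracy in Theorem~\ref{thm:gerver-left-right}(2) to pin down the remaining endpoints) is a careful and correct way to fill in what the paper leaves terse.

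There is, however, a real issue in part~(4) that your phrase ``the appropriate pull-back of $\sigma_{D_K}$ playing the role of $\breve{\sigma}_D$'' correctly senses but does not resolve. Your substitution $s = 3\pi/2 + t$ into Theorem~\ref{thm:boundary-measure} produces $\langle \mathbf{D}'(t), u_t\rangle\,\mathrm{d}t = \mu_D$ with $\mu_D(Y) := \sigma_{D_K}(Y + 3\pi/2)$ on $t \in (t_0, t_2]$. But by Definition~\ref{def:opposite-surface-area}, $\breve{\sigma}_D(Y) = \sigma_D(Y + \pi)$ shifts by $\pi$, not $3\pi/2$. These are genuinely different measures: for $Y \subseteq (t_0, t_2]$ with $t_2 = \theta < \pi/4$, the set $Y + \pi$ lies in $(\pi, 3\pi/2)$ where $\sigma_{D_K}$ vanishes (the boundary of $D_K$ has no normal angles there), so $\breve{\sigma}_D|_{(t_0, t_2]} = 0$, while $\langle \mathbf{D}'(t), u_t \rangle > 0$ by Theorem~\ref{thm:gerver-monotone}(4). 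So identity~(4) does not hold as literally stated. For~(2) your analogous claim ``$\mu_B$ is exactly the restriction of $\breve{\sigma}_B$'' \emph{is} correct, because there the shift produced by the change of variables is $\pi$ and matches $\breve{\sigma}_B$; this is precisely why you could afford to be precise in~(2) but had to hedge in~(4). The resolution is to re-index~(4) exactly in the style of~(3): the identity that your computation actually proves is $\langle \mathbf{D}'(t - \pi/2), u_{t - \pi/2}\rangle\,\mathrm{d}t = \breve{\sigma}_D$ as measures on $t \in (\pi/2 + t_0,\, \pi/2 + t_2]$, and it is this re-indexed form that is consistent with the downstream use in Theorem~\ref{thm:upper-bound-q-gerver}(5)--(6) on $J_6, J_7 \subseteq (\pi/2, \pi]$. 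So: flag the $\pi/2$ shift explicitly rather than leaving it implicit.
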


\begin{proof}
(1) and (3) for \(t \in (0, \pi/2)\) and \(t \in (\pi/2, \pi)\) respectively follow from (1) of \Cref{thm:gerver-monotone} and \Cref{thm:boundary-measure}. (1) and (3) on the singletons \(\left\{ 0 \right\}\) and \(\left\{ \pi \right\}\) follow from \Cref{thm:injectivity-gerver} and (1) of \Cref{def:injectivity-condition}. (2) and (4) follow from (1) and (2) of \Cref{thm:gerver-left-right} and \Cref{thm:boundary-measure}.
\end{proof}

We also introduce a measure \(\iota_K\) that captures the differential side lengths contributed by the inner corner \(\mathbf{x}_K\) of cap \(K\).

\begin{definition}

For any cap \(K \in \mathcal{K}^{\mathrm{i}}\), define the function \(i_K : (0, \pi] \to \mathbb{R}\) as \(i_K(t) := \left< \mathbf{x}_K'(t), v_t \right>\) and \(i_K(t + \pi / 2) := \left< -\mathbf{x}_K'(t) , u_t \right>\) for every \(t \in (0, \pi/2]\). Define \(\iota_K\) as the measure on \([0, \pi]\) derived from the density function \(i_K\). That is, \(\iota_K(dt) = i_K(t) dt\) and \(\iota_K\left( \left\{ 0 \right\} \right) = 0\) in particular.

\label{def:i-cap}
\end{definition}

Now we translate the equations in \Cref{thm:gerver-odes} to the equations of aforementioned measures.

\begin{definition}

For \(1 \leq i \leq 5\), define \(J_i := [t_{i-1}, t_i)\) where the values \(t_i\) are as in \Cref{def:gerver-intervals}. For \(6 \leq i \leq 10\), define \(J_i := \pi - J_{11 - i}\).

\label{def:interval-j}
\end{definition}

The intervals \(J_1, \dots, J_{10}\) and the singleton \(\left\{ \pi/2 \right\}\) partition the whole interval \([0, \pi]\).

\begin{theorem}

Let \(K := \mathcal{C}(G)\) be the cap of Gerver’s sofa, and let \(B := B_K\) and \(D := D_K\).

\begin{enumerate}
\def\labelenumi{\arabic{enumi}.}
\tightlist
\item
  \(\sigma_K = 0\) on \(J_1\).
\item
  \(\sigma_K = \iota_K\) on \(J_2 \cup J_3\).
\item
  \(\sigma_K = \breve{\sigma}_B + \iota_K\) on \(J_4\).
\item
  \(\sigma_K = \breve{\sigma}_B\) on \(J_5\).
\item
  \(\sigma_K = \breve{\sigma}_D\) on \(J_6\).
\item
  \(\sigma_K = \breve{\sigma}_D + \iota_K\) on \(J_7\).
\item
  \(\sigma_K = \iota_K\) on \(J_8 \cup J_9\).
\item
  \(\sigma_K = 0\) on \(J_{10}\).
\end{enumerate}

\label{thm:upper-bound-q-gerver}
\end{theorem}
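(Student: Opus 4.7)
}
The plan is to translate each of the ten ODEs of Romik in \Cref{thm:gerver-odes} directly into a measure equality using \Cref{pro:measure-translation} and the \Cref{def:i-cap} of $\iota_K$. The ten intervals $J_1,\dots,J_{10}$ pair bijectively with the ten cases of \Cref{thm:gerver-odes}: the five intervals $J_i \subseteq [0,\pi/2)$ for $1 \leq i \leq 5$ correspond to the $v$-direction ODEs $(i)$-v on $I_i$, while the five intervals $J_{i} \subseteq (\pi/2, \pi]$ for $6 \leq i \leq 10$ correspond to the $u$-direction ODEs $(11-i)$-u on $I_{11-i}$ via the shift $s = t + \pi/2$ mapping $I_{11-i}$ to a superset of $J_{i}$. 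The eight numbered claims of the theorem are obtained by merging $J_2$ with $J_3$ (both give $\sigma_K = \iota_K$ via ODEs $(2)$-v and $(3)$-v) and $J_8$ with $J_9$ (similarly via $(3)$-u and $(2)$-u).

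First I would verify the pairings: $J_1 = [0,\varphi) \subseteq I_1$, $J_2 = [\varphi, \theta) \subseteq I_2$, $J_3 = [\theta, \pi/2 - \theta) \subseteq I_3$, $J_4 = [\pi/2 - \theta, \pi/2 - \varphi) \subseteq I_4$, $J_5 = [\pi/2 - \varphi, \pi/2) \subseteq I_5$, and for $i \geq 6$, $J_i - \pi/2 \subseteq I_{11-i}$. Next, on each interval I would write down the relevant Romik ODE and multiply both sides by $\mathrm{d}t$, then apply the following dictionary from \Cref{pro:measure-translation} and \Cref{def:i-cap}: $\langle \mathbf{A}'(t), v_t\rangle\mathrm{d}t = \sigma_K$ on $[0,\pi/2)$; $\langle -\mathbf{C}'(t), u_t\rangle\mathrm{d}t$, under $s=t+\pi/2$, equals $\sigma_K$ on $(\pi/2,\pi]$; $\langle -\mathbf{B}'(t), v_t\rangle\mathrm{d}t = \breve\sigma_B$ on $[t_3,t_5)$; $\langle \mathbf{D}'(t), u_t\rangle\mathrm{d}t = \breve\sigma_D$ on $(t_0,t_2]$; and $\langle \mathbf{x}'(t), v_t\rangle\mathrm{d}t = \iota_K$ on $(0,\pi/2]$ while $\langle -\mathbf{x}'(t), u_t\rangle\mathrm{d}t$, under $s=t+\pi/2$, equals $\iota_K$ on $(\pi/2,\pi]$.

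With the dictionary in place, each of the ten cases is a one-line check. For instance, on $J_5$ the ODE $(5)$-v reads $\langle\mathbf{A}'(t),v_t\rangle = \langle -\mathbf{B}'(t),v_t\rangle$, giving $\sigma_K = \breve\sigma_B$ immediately. On $J_4$ the ODE $(4)$-v reads $\langle\mathbf{A}'(t),v_t\rangle = \langle -\mathbf{B}'(t),v_t\rangle + \langle\mathbf{x}'(t),v_t\rangle$, giving $\sigma_K = \breve\sigma_B + \iota_K$. On $J_7$, with $t = s - \pi/2 \in I_2$, the ODE $(2)$-u reads $\langle -\mathbf{C}'(t),u_t\rangle = \langle \mathbf{D}'(t),u_t\rangle + \langle -\mathbf{x}'(t),u_t\rangle$, which becomes $\sigma_K = \breve\sigma_D + \iota_K$ on $J_7$. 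The remaining cases proceed identically, with $(1)$-v and $(5)$-u each giving the vanishing $\sigma_K = 0$ on $J_1$ and $J_{10}$ respectively.

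The main obstacle is bookkeeping rather than mathematical depth: making sure that each ODE is applied on the correct half-open subinterval (the $J_i$ are intentionally half-open so that they partition $[0,\pi]\setminus\{\pi/2\}$), that the $\pi/2$-shift between the parametrization domain of $\mathbf{C}$ and $\mathbf{D}$ on $[0,\pi/2]$ and the $\sigma_K$-domain on $(\pi/2,\pi]$ is handled consistently, and that the sign convention in \Cref{def:i-cap} yielding $i_K(t+\pi/2) = \langle -\mathbf{x}'(t),u_t\rangle$ matches the $u$-direction ODEs. Everything else is mechanical application of already-established identities.
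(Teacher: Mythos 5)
Your overall plan is the paper's plan: translate each of Romik's ten ODEs into a measure identity via \Cref{pro:measure-translation} and \Cref{def:i-cap}, one ODE per $J_i$. But your index formula for the $u$-direction half is reversed. From \Cref{def:interval-j}, $J_{i+5} = \pi - J_{6-i}$, and using $\pi - t_j = t_{5-j} + \pi/2$ one computes $J_{i+5} = (t_{i-1}+\pi/2,\, t_i+\pi/2] \subseteq I_i + \pi/2$; thus $J_{i+5}$ pairs with the ODE $(i)$-u on $I_i$, i.e.\ for $6 \leq i \leq 10$ you must use $(i-5)$-u on $I_{i-5}$, not $(11-i)$-u on $I_{11-i}$. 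Your claimed containment $J_i - \pi/2 \subseteq I_{11-i}$ fails for every $i \in \{6,7,9,10\}$: for example $J_6 - \pi/2 = (0,\varphi]$ is not contained in $I_5 = [\pi/2-\varphi, \pi/2]$, and $J_9 - \pi/2 = (\pi/2-\theta, \pi/2-\varphi] \subseteq I_4$, not $I_2$.

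This is not merely cosmetic. You state that $J_9$ should be treated ``via $(2)$-u,'' but $(2)$-u reads $\left<-\mathbf{C}'(t),u_t\right> = \left<\mathbf{D}'(t)-\mathbf{x}'(t),u_t\right>$ and would yield $\sigma_K = \breve\sigma_D + \iota_K$ on $J_9$, contradicting claim (7) of the theorem; the correct ODE is $(4)$-u, which gives $\sigma_K = \iota_K$. Applied systematically, your reversed index swaps the conclusions on the mirror pairs $J_6 \leftrightarrow J_{10}$ and $J_7 \leftrightarrow J_9$, giving the wrong measure on four of the ten intervals. Interestingly, your two worked $u$-direction examples ($J_7$ via $(2)$-u, $J_{10}$ via $(5)$-u) silently contradict your own formula and use the \emph{correct} ODEs --- suggesting the error is in the stated rule rather than your understanding --- but the formula as written, and the $J_9$ claim in particular, would lead a careful reader to a false identity. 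Fix the index to $(i-5)$-u on $I_{i-5}$ and the proof matches the paper's.
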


\begin{proof}
Let \(1 \leq i \leq 5\) be arbitrary. For the interval \(J_i\) (resp. \(J_{i+5}\)), convert the Equation (\(i\))-v (resp. Equation (\(i\))-u) of \Cref{thm:gerver-odes} on the interval \(t \in I_i\) and direction \(v_t\) (resp. \(u_t\)) using (1) of \Cref{thm:gerver-monotone}, \Cref{def:i-cap}, and \Cref{pro:measure-translation}. Check the endpoints carefully.
\end{proof}

\begin{theorem}

Let \(K := \mathcal{C}(G)\) be the cap of Gerver’s sofa. Then \(\mathcal{A}(K) = \mathcal{Q}(K, B_K, D_K)\).

\label{thm:upper-bound-q-gerver-match}
\end{theorem}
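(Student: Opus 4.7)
The plan is to compute $|\mathcal{N}(K)|$ directly via Green's theorem using the precise Jordan-curve description of $\partial \mathcal{N}(K)$ from \Cref{thm:gerver-monotone} (2), and then to translate the resulting expression into the five terms that make up $\mathcal{Q}(K, B_K, D_K) - |K|$ by invoking the identifications in \Cref{thm:gerver-left-right} (2). The crucial structural fact for Gerver's sofa is that the endpoints of the ``core'' and the two ``tails'' coincide, so the two connector segments $\mathcal{J}(Y_{D_K}, \mathbf{x}_K^{\mathrm{L}})$ and $\mathcal{J}(\mathbf{x}_K^{\mathrm{R}}, X_{B_K})$ collapse to zero, while the horizontal closing segment of the niche also contributes nothing to the curve area functional. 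This is what forces the chain of inequalities used to prove \Cref{thm:upper-bound-q} to become equalities at $G$.

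First, by \Cref{thm:gerver-monotone} (2), the niche $\mathcal{N}(K)$ is bounded counterclockwise by the concatenation of (i) $\mathbf{B}$ reversed, (ii) $\mathbf{x}|_{[t_1, t_4]} = \mathbf{x}_K|_{[\varphi^{\mathrm{R}}, \varphi^{\mathrm{L}}]}$, (iii) $\mathbf{D}$ reversed, and (iv) the horizontal segment on $l(\pi/2, 0)$ from $\mathbf{D}(0)$ to $\mathbf{B}(\pi/2)$. Applying \Cref{thm:curve-area-functional-area} together with the additivity of $\mathcal{J}$ under concatenation (\Cref{pro:curve-area-functional-additive}) yields
\[
|\mathcal{N}(K)| = -\mathcal{J}(\mathbf{B}) + \mathcal{J}\bigl(\mathbf{x}_K|_{[\varphi^{\mathrm{R}}, \varphi^{\mathrm{L}}]}\bigr) - \mathcal{J}(\mathbf{D}) + \mathcal{J}(\mathbf{D}(0), \mathbf{B}(\pi/2)).
\]
The final term vanishes: both $\mathbf{D}(0)$ and $\mathbf{B}(\pi/2)$ lie on the line $l(\pi/2, 0)$ through the origin, so \Cref{pro:curve-area-line-segment-colinear} gives $\mathcal{J}(\mathbf{D}(0), \mathbf{B}(\pi/2)) = 0$.

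Next, I invoke \Cref{thm:gerver-left-right} (2), which identifies $\mathbf{b}_{B_K} = \mathbf{B}$ and $\mathbf{d}_{D_K} = \mathbf{D}$ as oriented curves, and also $X_{B_K} = \mathbf{x}_K^{\mathrm{R}}$ and $Y_{D_K} = \mathbf{x}_K^{\mathrm{L}}$. Consequently $\mathcal{J}(\mathbf{B}) = \mathcal{J}(\mathbf{b}_{B_K})$, $\mathcal{J}(\mathbf{D}) = \mathcal{J}(\mathbf{d}_{D_K})$, and the degenerate segments give $\mathcal{J}(\mathbf{x}_K^{\mathrm{R}}, X_{B_K}) = 0$ and $\mathcal{J}(Y_{D_K}, \mathbf{x}_K^{\mathrm{L}}) = 0$. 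Substituting these into the above display,
\[
|\mathcal{N}(K)| = -\mathcal{J}(\mathbf{b}_{B_K}) + \mathcal{J}\bigl(\mathbf{x}_K|_{[\varphi^{\mathrm{R}}, \varphi^{\mathrm{L}}]}\bigr) - \mathcal{J}(\mathbf{d}_{D_K}).
\]
Subtracting this from $|K|$ and comparing with \Cref{def:upper-bound-q} (with the two connector terms now zero) gives $\mathcal{A}(K) = |K| - |\mathcal{N}(K)| = \mathcal{Q}(K, B_K, D_K)$, as desired.

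The only technical point is justifying the application of Green's theorem to $\partial \mathcal{N}(K)$, which requires knowing that the concatenation is indeed a rectifiable Jordan curve and that its orientation is counterclockwise; both are asserted by \Cref{thm:gerver-monotone} (2), which in turn rests on the explicit ODE solutions and the numerical verification noted in \Cref{rem:gerver-monotone}. Beyond this, the proof is a short bookkeeping exercise once the two structural theorems on Gerver's sofa and the additivity of $\mathcal{J}$ are in hand.
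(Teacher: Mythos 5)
Your proof is correct and takes essentially the same route as the paper: integrate the Jordan-curve boundary of $\mathcal{N}(K)$ from \Cref{thm:gerver-monotone}~(2), then use \Cref{thm:gerver-left-right}~(2) to identify $\mathbf{b}_{B_K}=\mathbf{B}$, $\mathbf{d}_{D_K}=\mathbf{D}$ and kill the two connector terms. Your extra remark that the closing horizontal segment contributes nothing because both endpoints lie on $l(\pi/2,0)$ through the origin (\Cref{pro:curve-area-line-segment-colinear}) is a detail the paper leaves implicit, but it is the same argument.
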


\begin{proof}
Write \(B := B_K\) and \(D := D_K\). Start from the expression of \(\mathcal{Q}(K, B, D)\) in \Cref{def:upper-bound-q}. By (2) of \Cref{thm:gerver-left-right}, we have \(\mathcal{J}(\mathbf{d}_{D}) = \mathcal{J}(\mathbf{D})\) and \(\mathcal{J}(\mathbf{b}_{B}) = \mathcal{J}(\mathbf{B})\). By (2) of \Cref{thm:gerver-left-right}, we have \(\mathcal{J} \left( Y_D, \mathbf{x}_K^\mathrm{L} \right) = \mathcal{J} \left( \mathbf{x}_K^\mathrm{R}, X_B \right) = 0\). By integrating the Jordan curve boundary of \(\mathcal{N}(K)\) as in (2) of \Cref{thm:gerver-monotone}, we have \(|\mathcal{N}(K)| = \mathcal{J}\left( \mathbf{x}|_{[\varphi^\mathrm{R}, \varphi^\mathrm{L}]} \right) - \mathcal{J}(\mathbf{B}) - \mathcal{J}(\mathbf{D})\). So the value of \(\mathcal{Q}(K, B, D)\) in \Cref{def:upper-bound-q} reduces to \(|K| - |\mathcal{N}(K)| = \mathcal{A}(K)\).
\end{proof}

\section{Directional Derivative of $\texorpdfstring{\mathcal{Q}}{Q}$}
\label{sec:directional-derivative-of-q}
We calculate the directional derivative \(D \mathcal{Q}\) of \(\mathcal{Q}\) at the point \((K, B_K, D_K) \in \mathcal{L}\) corresponding to the cap \(K := \mathcal{C}(G)\) of Gerver’s sofa \(G\). The value turns out to be non-positive (\Cref{thm:variation-a2-gerver}). Since \(\mathcal{Q}\) is concave and quadratic (\Cref{thm:upper-bound-concave}), the value \((K, B_K, D_K) \in \mathcal{L}\) is the maximizer of \(\mathcal{Q}\) (\Cref{thm:quadratic-variation}).

We first prepare the theorems that calculate the directional derivative on each curve of \(\mathcal{Q}\).

\begin{theorem}

The area \(|K|\) of a cap \(K \in \mathcal{K}^\mathrm{i}\) is quadratic as a functional on \(\mathcal{K}^\mathrm{i}\). Moreover, the directional derivative of \(f(K) := |K|\) from \(K\) to \(K^*\) evaluates to
\[
D f(K; K^*) := \int_{t \in [0, \pi]} \left( h_{K^*}(t) - h_K(t) \right) \, \sigma_K (dt).
\]

\label{thm:variation-convex-body}
\end{theorem}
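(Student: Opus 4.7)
The plan is to represent $|K|$ as the diagonal of a convex-bilinear form using the Brunn--Minkowski expression in \Cref{thm:area-quadratic-expression}, and then apply the general directional derivative formula \Cref{lem:derivative-calculation}. Define
\[
g(K_1, K_2) := \tfrac{1}{2}\int_{S^1} h_{K_1}(t) \, \sigma_{K_2}(dt).
\]
Both $h_K$ and $\sigma_K$ are convex-linear in $K$ by (1) and (3) of \Cref{thm:convex-body-linear}, so $g$ is convex-bilinear on $\mathcal{K} \times \mathcal{K}$ and hence on the convex subdomain $\mathcal{K}^{\mathrm{i}} \times \mathcal{K}^{\mathrm{i}}$. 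Since $|K| = g(K, K)$ by \Cref{thm:area-quadratic-expression}, $f(K) := |K|$ is quadratic on $\mathcal{K}^{\mathrm{i}}$ in the sense of \Cref{def:convex-space-quadratic}.

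Applying \Cref{lem:derivative-calculation} to $g$ gives
\[
D f(K; K^*) = g(K, K^*) + g(K^*, K) - 2\, g(K, K).
\]
The classical Minkowski symmetry of the mixed area, a consequence of the symmetry of mixed volumes (see, e.g., Section 5.1 of \autocite{schneider_2013}), provides
\[
\int_{S^1} h_K(t)\, \sigma_{K^*}(dt) = \int_{S^1} h_{K^*}(t)\, \sigma_K(dt),
\]
that is, $g(K, K^*) = g(K^*, K)$. Substituting this and combining with $g(K, K) = \tfrac{1}{2}\int h_K\, d\sigma_K$ yields
\[
Df(K; K^*) = 2\,g(K^*, K) - 2\, g(K, K) = \int_{S^1} \bigl(h_{K^*}(t) - h_K(t)\bigr)\, \sigma_K(dt).
\]

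It remains to reduce the integral from $S^1$ to $[0, \pi]$. By (2) of \Cref{def:cap}, any cap with rotation angle $\pi/2$ is an intersection of closed half-planes with normal angles in $[0, \pi] \cup \{3\pi/2\}$; hence for $t$ in the complementary open arcs $(\pi, 3\pi/2)$ and $(3\pi/2, 2\pi)$ the edge $e_K(t)$ collapses to a single bottom vertex of $K$, so $\sigma_K(\{t\}) = 0$ by \Cref{pro:surface-area-measure-side-length}. Therefore $\sigma_K$ is supported on $[0, \pi] \cup \{3\pi/2\}$. At the singleton $t = 3\pi/2$, (1) of \Cref{def:cap} forces $h_K(3\pi/2) = h_{K^*}(3\pi/2) = 0$, so the integrand vanishes there, and the $S^1$-integral collapses to the $[0, \pi]$-integral claimed.

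The only non-mechanical step is invoking the Minkowski symmetry $\int h_{K_1}\, d\sigma_{K_2} = \int h_{K_2}\, d\sigma_{K_1}$, which is standard in Brunn--Minkowski theory but is not explicitly proved within the excerpt. A self-contained derivation would recast both sides as line integrals around $\partial K_1$ and $\partial K_2$ using \Cref{thm:boundary-measure} and then apply integration by parts on the closed loop $S^1$, with boundary terms cancelling due to closedness; I expect this bookkeeping, if pursued directly, to be the main obstacle.
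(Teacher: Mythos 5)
Your argument is correct and takes the same route as the paper: express $|K|$ as the diagonal of the symmetric mixed-volume bilinear form $\tfrac{1}{2}\int h_{K_1}\,\sigma_{K_2}(dt)$, apply \Cref{lem:derivative-calculation}, and restrict the $S^1$-integral to $[0,\pi]$ using that $\sigma_K$ is supported on $[0,\pi]\cup\{3\pi/2\}$ with $h_K(3\pi/2)=0$. One small slip: pointwise vanishing $\sigma_K(\{t\})=0$ on the open arcs does not by itself imply $\sigma_K((\pi,3\pi/2))=0$; instead observe that $\bigcup_{t\in(\pi,3\pi/2)}e_K(t)$ is the \emph{same} single vertex $v$, so $\sigma_K((\pi,3\pi/2))=\mathcal{H}^1(\{v\})=0$ by \Cref{thm:surface-area-measure}.
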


\begin{proof}
Recall that the \emph{mixed volume} \(V(K_1, K_2)\) on convex bodies \(K_1, K_2 \in \mathcal{K}\) is a symmetric bilinear functional on \(\mathcal{K}\) such that \(|K| = V(K, K)\) (e.g.~Chapter 5 of \autocite{schneider_2013}). So by \Cref{lem:derivative-calculation}, we have
\[
Df(K; K^*) = 2V(K^*, K) - 2V(K, K).
\]
We use the formula \(V(K_1, K_2) = \frac{1}{2} \int_{t \in S^1} h_{K_1}(t)\, \sigma_{K_2}(dt)\) of mixed volume (Equation (5.19) of \autocite{schneider_2013}). Substitue this in the equation above, and use that for caps \(K, K^* \in \mathcal{K}^\mathrm{i}\) we have \(\sigma_K = \sigma_{K^*} = 0\) on \(t \in (\pi, 2\pi) \setminus \left\{ 3\pi/2 \right\}\) and \(h_K(3\pi/2) = h_{K^*}(3\pi/2) = 0\).
\end{proof}

\begin{theorem}

Fix \(a, b \in \mathbb{R}\) such that \(a < b < a + \pi\). Define \(f(K) := \mathcal{J}\left( \mathbf{u}_K^{a, b} \right)\) as a functional on \(K \in \mathcal{K}\). Then \(f\) is quadratic on \(\mathcal{K}\) and its directional derivative from \(K\) to \(K^*\) evaluates to
\[
D f(K; K^*) = \int_{t \in (a, b)} \left( h_{K^*}(t) - h_K(t) \right) \, \sigma_{K}(dt) + \left[ \mathcal{J}\left( v_K^-(b), v_{K^*}^- (b) \right) - \mathcal{J}\left( v_K^+(a), v_{K^*}^+ (a) \right) \right] .
\]

\label{thm:convex-curve-area-variation}
\end{theorem}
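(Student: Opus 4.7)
The plan is to reduce everything to the convex-bilinear form $\mathcal{B}(K_1, K_2) := \frac{1}{2}\int_{t \in (a, b)} h_{K_1}(t)\,\sigma_{K_2}(dt)$ of \Cref{lem:convex-curve-bilinear-computation}. By \Cref{thm:convex-curve-area-functional} we have $f(K) = \mathcal{B}(K, K)$, so $f$ is quadratic, and \Cref{lem:derivative-calculation} immediately yields
\[
D f(K; K^*) = \mathcal{B}(K, K^*) + \mathcal{B}(K^*, K) - 2\mathcal{B}(K, K).
\]
I would split this symmetric combination as $\bigl[\mathcal{B}(K, K^*) - \mathcal{B}(K^*, K)\bigr] + 2\bigl[\mathcal{B}(K^*, K) - \mathcal{B}(K, K)\bigr]$. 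The second bracket already matches the principal term of the claim: by the definition of $\mathcal{B}$, it equals $\int_{(a, b)} (h_{K^*}(t) - h_K(t))\,\sigma_K(dt)$. So the work reduces to showing that the antisymmetric piece $\mathcal{B}(K, K^*) - \mathcal{B}(K^*, K)$ equals the boundary correction $\mathcal{J}(v_K^-(b), v_{K^*}^-(b)) - \mathcal{J}(v_K^+(a), v_{K^*}^+(a))$.

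For this I would invoke the alternative cross-product representation $\mathcal{B}(K_1, K_2) = \frac{1}{2}\int_{(a, b)} v_{K_1}^+ \times d v_{K_2}^+$ also given by \Cref{lem:convex-curve-bilinear-computation}. The product rule for Lebesgue--Stieltjes measures, obtained by applying \Cref{lem:integration-by-parts} componentwise together with antisymmetry of $\times$, gives on $(a, b]$ the identity
\[
d\bigl(v_K^+(t) \times v_{K^*}^+(t)\bigr) = d v_K^+(t) \times v_{K^*}^+(t) + v_K^-(t) \times d v_{K^*}^+(t),
\]
where the left limit $v_K^+(t-) = v_K^-(t)$ comes from \Cref{thm:limits-converging-to-vertex}. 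Integrating on the \emph{open} interval $(a, b)$ and using right-continuity of $v_K^+$ and $v_{K^*}^+$ at $a$, the left-hand side telescopes to the desired endpoint expression
\[
\int_{(a, b)} d(v_K^+ \times v_{K^*}^+) = v_K^-(b) \times v_{K^*}^-(b) - v_K^+(a) \times v_{K^*}^+(a).
\]

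The key step is then to erase the distinction between $v_K^-$ and $v_K^+$ in the second term of the product rule. By \Cref{pro:surface-area-measure-side-length} we have $v_K^+(t) - v_K^-(t) = \sigma_K(\{t\}) v_t$, while $d v_{K^*}^+(t) = v_t\,\sigma_{K^*}(dt)$ by \Cref{thm:boundary-measure}; the resulting integrand is a scalar multiple of $v_t \times v_t = 0$, so $\int v_K^- \times d v_{K^*}^+ = \int v_K^+ \times d v_{K^*}^+$ on any measurable set. Combining this with antisymmetry $d v_K^+ \times v_{K^*}^+ = - v_{K^*}^+ \times d v_K^+$ collapses the right-hand side of the integrated product rule to $2\mathcal{B}(K, K^*) - 2\mathcal{B}(K^*, K)$; dividing by $2$ and recognizing $\mathcal{J}(p, q) = (p \times q)/2$ completes the identification. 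The main obstacle is the careful bookkeeping of one-sided limits on the open interval $(a, b)$: the atoms of $\sigma_{K^*}$ at which $v_{K^*}^+$ jumps need not coincide with those of $\sigma_K$, so one must verify that the $v_t \times v_t$ cancellation genuinely eliminates the $v_K^- \ne v_K^+$ discrepancy, and that excluding the endpoint $\{b\}$ from the integration domain is exactly what replaces $v_K^+(b) \times v_{K^*}^+(b)$ by the left-limit $v_K^-(b) \times v_{K^*}^-(b)$ on the boundary.
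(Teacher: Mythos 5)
Your proposal is correct and follows essentially the same route as the paper's proof: reduce to the bilinear form $\mathcal{B}$, apply \Cref{lem:derivative-calculation} for quadraticity and the symmetric combination, then use integration by parts (\Cref{lem:integration-by-parts}) on the cross-product representation to identify the antisymmetric piece $\mathcal{B}(K, K^*) - \mathcal{B}(K^*, K)$ with the boundary term. The one place you do a little extra work — re-deriving $\int v_K^- \times d v_{K^*}^+ = \int v_K^+ \times d v_{K^*}^+$ via the $v_t \times v_t = 0$ cancellation — is unnecessary, since \Cref{lem:convex-curve-bilinear-computation} already provides both expressions as equal to $2\mathcal{B}(K, K^*)$, but it is not wrong and is in fact the underlying reason the lemma holds.
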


\begin{proof}
Write \(f(K) = \mathcal{B}(K, K)\) where \(\mathcal{B}(K_1, K_2) := \frac{1}{2} \int_{t \in (a, b)} h_{K_1}(t) \, \sigma_{K_2}(dt)\) is defined as in \Cref{lem:convex-curve-bilinear-computation}. By integration by parts (\Cref{lem:integration-by-parts}) and that the left limit of \(v_{K_1}^+\) at \(t\) is \(v_{K_1}^-(t)\) (\Cref{thm:limits-converging-to-vertex}), we get
\[
\int_{t \in (a, b)} d v_{K_1}^+(t) \times v_{K_2}^+(t) + \int_{t \in (a, b)} v_{K_1}^-(t) \times d v_{K_2}^+(t) = v_{K_1}^-(b) \times v_{K_2}^-(b) - v_{K_1}^+(a) \times v_{K_2}^+(a).
\]
With \Cref{lem:convex-curve-bilinear-computation} and the anticommutativity \(a \times b = -b \times a\) of cross product, the equality becomes
\[
\mathcal{B}(K_1, K_2) - \mathcal{B}(K_2, K_1) = \mathcal{J}(v_{K_1}^-(b), v_{K_2}^-(b)) - \mathcal{J} (v_{K_1}^+(a), v_{K_2}^+(a))
\]
after dividing by two. Now let \(K_1 := K\) and \(K_2 := K^*\) then use \Cref{lem:derivative-calculation} on \(f(K) = \mathcal{B}(K, K)\) to get
\begin{align*}
D f(K; K^*) & = \mathcal{B}(K, K^*) + \mathcal{B}(K^*, K) - 2 \mathcal{B}(K, K) \\
& = 2\mathcal{B}(K^*, K) - 2 \mathcal{B}(K, K) + \mathcal{J}\left( v_K^-(b), v_{K^*}^- (b) \right) - \mathcal{J}\left( v_K^+(a), v_{K^*}^+ (a) \right)
\end{align*}
and use the definition of \(\mathcal{B}\).
\end{proof}

\begin{theorem}

Treat \(\mathbb{R}^2 \times \mathbb{R}^2\) as a convex domain. Then the curve area functional \(\mathcal{J}(p, q)\) on \((p, q) \in \mathbb{R}^2 \times \mathbb{R}^2\) is quadratic. For any \((p, q), (p^*, q^*) \in \mathbb{R}^2 \times \mathbb{R}^2\), the directional derivative of \(\mathcal{J}(-, -)\) evaluates to
\[
D\mathcal{J}(p, q ; p^*, q^*) = \frac{1}{2} \left( (p^* + q^*) \times (q - p) - 2(p \times q) \right) + \left[ \mathcal{J}(q, q^*) - \mathcal{J}(p, p^*) \right] 
\]
where the first term in the round bracket \(\left( - \right)\) equates to zero when \(p = q\).

\label{thm:variation-segment}
\end{theorem}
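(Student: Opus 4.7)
The plan is to recognize $\mathcal{J}(p,q) = (p \times q)/2$ as a quadratic functional on the product convex domain $\mathbb{R}^2 \times \mathbb{R}^2$, then apply the general directional derivative formula from \Cref{lem:derivative-calculation} and match the result to the stated expression by a routine algebraic rearrangement.

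First I would verify quadraticity. Define
\[
h\bigl((p_1, q_1), (p_2, q_2)\bigr) := \tfrac{1}{2}(p_1 \times q_2).
\]
Since $p_1 \times q_2$ is bilinear in $(p_1, q_2) \in \mathbb{R}^2 \times \mathbb{R}^2$ and constant in the remaining coordinates, the map $h : (\mathbb{R}^2 \times \mathbb{R}^2) \times (\mathbb{R}^2 \times \mathbb{R}^2) \to \mathbb{R}$ is convex-bilinear in the sense of \Cref{def:convex-bilinear}, and $h(V, V) = (p \times q)/2 = \mathcal{J}(p, q)$ for $V = (p, q)$. This realizes $\mathcal{J}$ as a quadratic functional per \Cref{def:convex-space-quadratic}.

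Next I would apply \Cref{lem:derivative-calculation} directly with $V := (p, q)$ and $V^* := (p^*, q^*)$ to obtain
\[
D\mathcal{J}(V; V^*) = h(V, V^*) + h(V^*, V) - 2 h(V, V) = \tfrac{1}{2}(p \times q^*) + \tfrac{1}{2}(p^* \times q) - (p \times q).
\]
It then remains to show this coincides with the stated expression. Expanding by bilinearity,
\[
\tfrac{1}{2}(p^* + q^*) \times (q - p) = \tfrac{1}{2}\bigl(p^* \times q - p^* \times p + q^* \times q - q^* \times p\bigr),
\]
and using $\mathcal{J}(q, q^*) - \mathcal{J}(p, p^*) = \tfrac{1}{2}(q \times q^*) - \tfrac{1}{2}(p \times p^*) = -\tfrac{1}{2}(q^* \times q) + \tfrac{1}{2}(p^* \times p)$ (by antisymmetry of $\times$), the two terms of the proposed formula combine to
\[
\tfrac{1}{2}(p^* \times q) - \tfrac{1}{2}(q^* \times p) - (p \times q) = \tfrac{1}{2}(p^* \times q) + \tfrac{1}{2}(p \times q^*) - (p \times q),
\]
which matches. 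The parenthetical remark about $p = q$ is immediate, since then $q - p = 0$ forces $(p^* + q^*) \times (q - p) = 0$.

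There is no real obstacle here; the result is a bookkeeping computation once one observes that $\mathcal{J}(p,q)$ is already a bilinear form, so that \Cref{lem:derivative-calculation} applies verbatim. The only minor care needed is in the cross-product algebra used to bring the raw output of \Cref{lem:derivative-calculation} into the asymmetric shape stated in the theorem, which packages a linear-in-$(p^*,q^*)$ piece together with the two curve-area boundary corrections $\mathcal{J}(q, q^*) - \mathcal{J}(p, p^*)$. This particular shape is presumably chosen to match the boundary terms appearing in \Cref{thm:convex-curve-area-variation}, so that the forthcoming computation of $D\mathcal{Q}$ at Gerver's sofa in \Cref{sec:directional-derivative-of-q} can cleanly combine derivatives on convex-curve segments with derivatives on connecting line segments.
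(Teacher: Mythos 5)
Your proof is correct and takes essentially the same approach as the paper's: exhibit $\mathcal{J}(p,q)$ as $h(V,V)$ for a convex-bilinear form on $\mathbb{R}^2 \times \mathbb{R}^2$, apply \Cref{lem:derivative-calculation}, and rearrange the cross-product algebra into the stated asymmetric form. The only cosmetic difference is that the paper works with the symmetrized form $\mathcal{B}((p_1,q_1),(p_2,q_2)) = p_1 \times q_2 + p_2 \times q_1$ (with $4\mathcal{J} = \mathcal{B}(V,V)$), whereas you use the non-symmetric $h((p_1,q_1),(p_2,q_2)) = \tfrac{1}{2}(p_1 \times q_2)$ directly; both are valid inputs to \Cref{lem:derivative-calculation} and land on the same intermediate expression.
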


\begin{proof}
Define the bilinear form \(\mathcal{B}((p_1, q_1), (p_2, q_2)) = p_1 \times q_2 + p_2 \times q_1\) on \((p_1, q_1), (p_2, q_2) \in \mathbb{R}^2 \times \mathbb{R}^2\). Apply \Cref{lem:derivative-calculation} to \(4\mathcal{J}(p, q) = \mathcal{B}((p, q), (p, q))\) and use that \(\mathcal{B}\) is symmetric to obtain
\begin{align*}
2 \, D \mathcal{J}(p, q; p^*, q^*) &= \mathcal{B}((p^*, q^*), (p, q)) - \mathcal{B}((p, q), (p, q)) \\
& = p^* \times q + p \times q^* - 2(p \times q) \\
& = \left[ (p^* + q^*) \times (q - p) - 2(p \times q) \right] + q \times q^* - p \times p^*
\end{align*}
and divide by two to conclude the proof.
\end{proof}

\begin{theorem}

The curve area functional \(\mathcal{J}(\mathbf{x})\) on \(\mathbf{x} \in C^\mathrm{BV}[a, b]\) is quadratic. Moreover, the directional derivative of \(\mathcal{J}\) from \(\mathbf{x}\) to \(\mathbf{x}^*\) evaluates to
\[
D \mathcal{J}(\mathbf{x} ; \mathbf{x}^*) = \int_a^b (\mathbf{x}^*(t) - \mathbf{x}(t))  \times d\mathbf{x} (t) + \left[ \mathcal{J}(\mathbf{x}(b), \mathbf{x}^*(b)) - \mathcal{J}(\mathbf{x}(a), \mathbf{x}^*(a)) \right] .
\]

\label{thm:variation-curve}
\end{theorem}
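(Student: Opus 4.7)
The plan is to mirror the proof of Theorem \ref{thm:convex-curve-area-variation} but with the bilinear form $\mathcal{B}$ defined on the space $C^{\mathrm{BV}}[a,b]$ instead of $\mathcal{K}$. First I would define
\[
\mathcal{B}(\mathbf{x}_1, \mathbf{x}_2) := \frac{1}{2} \int_a^b \mathbf{x}_1(t) \times d \mathbf{x}_2(t)
\]
so that $\mathcal{J}(\mathbf{x}) = \mathcal{B}(\mathbf{x}, \mathbf{x})$ by Definition \ref{def:curve-area-functional}. Since $\mathcal{B}$ is visibly bilinear (the cross product is bilinear, and both $\mathbf{x} \mapsto \mathbf{x}(t)$ and $\mathbf{x} \mapsto d\mathbf{x}$ are linear in $\mathbf{x} \in C^{\mathrm{BV}}[a,b]$), this establishes that $\mathcal{J}$ is quadratic. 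Applying Lemma \ref{lem:derivative-calculation} then gives
\[
D \mathcal{J}(\mathbf{x}; \mathbf{x}^*) = \mathcal{B}(\mathbf{x}, \mathbf{x}^*) + \mathcal{B}(\mathbf{x}^*, \mathbf{x}) - 2\mathcal{B}(\mathbf{x}, \mathbf{x}).
\]

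Next I would use integration by parts to swap the roles of $\mathbf{x}$ and $\mathbf{x}^*$ in one of the terms. Since both $\mathbf{x}$ and $\mathbf{x}^*$ are continuous and of bounded variation, Lemma \ref{lem:integration-by-parts} applies componentwise, and the left limit $\mathbf{x}^*(t-)$ equals $\mathbf{x}^*(t)$. Applying it to each of the two coordinates of the cross product (which is a signed sum of ordinary products) yields
\[
\int_a^b \mathbf{x}(t) \times d\mathbf{x}^*(t) + \int_a^b \mathbf{x}^*(t) \times d\mathbf{x}(t) \;=\; \mathbf{x}(b) \times \mathbf{x}^*(b) - \mathbf{x}(a) \times \mathbf{x}^*(a),
\]
where I use the antisymmetry $p \times q = -q \times p$ to combine terms in the standard integration-by-parts identity. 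Dividing by $2$ and invoking the definition $\mathcal{J}(p,q) = (p\times q)/2$ from Definition \ref{def:curve-area-line-segment} gives
\[
\mathcal{B}(\mathbf{x}, \mathbf{x}^*) - \mathcal{B}(\mathbf{x}^*, \mathbf{x}) = \mathcal{J}(\mathbf{x}(b), \mathbf{x}^*(b)) - \mathcal{J}(\mathbf{x}(a), \mathbf{x}^*(a)).
\]

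Substituting this identity into the expression for $D\mathcal{J}$ eliminates $\mathcal{B}(\mathbf{x}, \mathbf{x}^*)$ in favor of $\mathcal{B}(\mathbf{x}^*, \mathbf{x})$ plus boundary terms:
\[
D\mathcal{J}(\mathbf{x}; \mathbf{x}^*) = 2\mathcal{B}(\mathbf{x}^*, \mathbf{x}) - 2\mathcal{B}(\mathbf{x}, \mathbf{x}) + \mathcal{J}(\mathbf{x}(b), \mathbf{x}^*(b)) - \mathcal{J}(\mathbf{x}(a), \mathbf{x}^*(a)).
\]
Finally, unfolding $2\mathcal{B}(\mathbf{x}^*, \mathbf{x}) - 2\mathcal{B}(\mathbf{x}, \mathbf{x}) = \int_a^b (\mathbf{x}^*(t) - \mathbf{x}(t)) \times d\mathbf{x}(t)$ using bilinearity of $\times$ yields the claimed formula.

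There is no real obstacle here; the argument is the direct analogue of Theorem \ref{thm:convex-curve-area-variation} with the ``vertex curve'' $v_K^+$ replaced by the generic curve $\mathbf{x}$ and with the continuity of $\mathbf{x}^*$ eliminating the need to distinguish left-limits. The only point of care is the coordinatewise application of Lemma \ref{lem:integration-by-parts} to a cross product rather than an ordinary product, which is handled cleanly by bilinearity.
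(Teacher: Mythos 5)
Your argument is correct and follows the paper's own proof essentially verbatim: define the bilinear form $\mathcal{B}$, invoke Lemma \ref{lem:derivative-calculation}, then use Lemma \ref{lem:integration-by-parts} to convert $\mathcal{B}(\mathbf{x},\mathbf{x}^*)$ into $\mathcal{B}(\mathbf{x}^*,\mathbf{x})$ plus boundary terms and rearrange; the only cosmetic difference is that the paper drops the $\frac{1}{2}$ in $\mathcal{B}$ and writes $2\mathcal{J} = \mathcal{B}$.

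One small sign typo to fix: your first displayed integration-by-parts identity should read
\[
\int_a^b \mathbf{x}(t) \times d\mathbf{x}^*(t) - \int_a^b \mathbf{x}^*(t) \times d\mathbf{x}(t) = \mathbf{x}(b) \times \mathbf{x}^*(b) - \mathbf{x}(a) \times \mathbf{x}^*(a),
\]
with a minus between the two integrals — writing $\mathbf{x}\times d\mathbf{x}^* = x_1\,dx_2^* - x_2\,dx_1^*$ and applying the scalar identity $u\,dw + w\,du = d(uw)$ componentwise produces the companion integral $\int \mathbf{x}^*\times d\mathbf{x}$ with a flipped sign, precisely because of the antisymmetry you cite. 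Your next displayed line $\mathcal{B}(\mathbf{x},\mathbf{x}^*) - \mathcal{B}(\mathbf{x}^*,\mathbf{x}) = \mathcal{J}(\mathbf{x}(b),\mathbf{x}^*(b)) - \mathcal{J}(\mathbf{x}(a),\mathbf{x}^*(a))$ already carries the correct minus and is what the rest of the derivation actually uses, so the conclusion stands.
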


\begin{proof}
Consider the bilinear form \(\mathcal{B}(\mathbf{x}_1, \mathbf{x}_2) = \int_a ^b \mathbf{x}_1(t) \times d \mathbf{x}_2(t)\) on \(\mathbf{x}_1, \mathbf{x}_2 \in C^\mathrm{BV}[a, b]\). Apply \Cref{lem:derivative-calculation} to \(2\mathcal{J}(\mathbf{x}) = \mathcal{B}(\mathbf{x}, \mathbf{x})\) to get
\begin{equation}
\label{eqn:variation-curve}
2 D \mathcal{J}(\mathbf{x} ; \mathbf{x}^*) = \mathcal{B}(\mathbf{x}, \mathbf{x}^*) + \mathcal{B}(\mathbf{x}^*, \mathbf{x}) - 2 \mathcal{B}(\mathbf{x}, \mathbf{x}).
\end{equation}
With integration by parts (\Cref{lem:integration-by-parts}), we get
\[
\int_a^b \mathbf{x}(t) \times d \mathbf{x}^*(t) = \mathbf{x} (b) \times \mathbf{x}^*(b) - \mathbf{x}(a) \times \mathbf{x}^*(a) + \int_a^b \mathbf{x}^*(t) \times d\mathbf{x} (t)
\]
or
\[
\mathcal{B}(\mathbf{x}, \mathbf{x}^*) = 2\mathcal{J}(\mathbf{x}(b), \mathbf{x}^*(b)) - 2\mathcal{J}(\mathbf{x}(a), \mathbf{x}^*(a)) + \mathcal{B}(\mathbf{x}^*, \mathbf{x}).
\]
Plug this back in \Cref{eqn:variation-curve} and rearrange the terms to get the desired equality.
\end{proof}

\begin{theorem}

Let \(I := [\varphi^\mathrm{R}, \varphi^\mathrm{L}]\). The value \(f(K) := \mathcal{J}\left( \mathbf{x}_K|_{I} \right)\) on caps \(K \in \mathcal{K}^\mathrm{i}\) is quadratic. Moreover, the directional derivative of \(f(K)\) from \(K\) to \(K^*\) evaluates to
\[
D f(K; K^*) := \left< h_{K^*} - h_K, \iota_K \right>_{I \cup (I + \pi/2)} + \left[ \mathcal{J}(\mathbf{x}_K^\mathrm{L}, \mathbf{x}_{K^*}^\mathrm{L}) - \mathcal{J}(\mathbf{x}_K^\mathrm{R}, \mathbf{x}_{K^*}^\mathrm{R}) \right].
\]

\label{thm:variation-inner-corner}
\end{theorem}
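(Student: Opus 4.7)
The plan is to apply the generic curve-variation formula \Cref{thm:variation-curve} to the curve $\mathbf{x}_K|_I \in C^{\mathrm{BV}}[\varphi^{\mathrm{R}}, \varphi^{\mathrm{L}}]$, after observing that this curve depends convex-linearly on $K$. By \Cref{pro:rotating-hallway-parts}, we have
\[
\mathbf{x}_K(t) = (h_K(t) - 1) u_t + (h_K(t + \pi/2) - 1) v_t,
\]
which is convex-linear in $K$ by \Cref{thm:convex-body-linear}. Hence $K \mapsto \mathbf{x}_K|_I$ is a convex-linear map into $C^{\mathrm{BV}}[I]$, and pulling back the quadratic $\mathcal{J}$ by this map yields a quadratic functional $f$ on $\mathcal{K}^{\mathrm{i}}$, as required by the first assertion of the theorem.

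Applying \Cref{thm:variation-curve} directly gives
\[
Df(K; K^*) = \int_I (\mathbf{x}_{K^*}(t) - \mathbf{x}_K(t)) \times d\mathbf{x}_K(t) + \left[\mathcal{J}(\mathbf{x}_K^{\mathrm{L}}, \mathbf{x}_{K^*}^{\mathrm{L}}) - \mathcal{J}(\mathbf{x}_K^{\mathrm{R}}, \mathbf{x}_{K^*}^{\mathrm{R}})\right],
\]
where the boundary terms are already in the desired form since $\mathbf{x}_K(\varphi^{\mathrm{R}}) = \mathbf{x}_K^{\mathrm{R}}$ and $\mathbf{x}_K(\varphi^{\mathrm{L}}) = \mathbf{x}_K^{\mathrm{L}}$ by \Cref{def:cap-left-right} (and likewise for $K^*$). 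So only the integral needs to be recast.

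To this end, write $\Delta h := h_{K^*} - h_K$. The tangent difference expands in the frame $(u_t, v_t)$ as
\[
\mathbf{x}_{K^*}(t) - \mathbf{x}_K(t) = \Delta h(t) u_t + \Delta h(t + \pi/2) v_t,
\]
and since $K \in \mathcal{K}^{\mathrm{i}}$ satisfies the injectivity condition, \Cref{pro:cap-nondegenerate-continuity} together with \Cref{def:i-cap} gives
\[
\mathbf{x}_K'(t) = -i_K(t + \pi/2) u_t + i_K(t) v_t
\]
on $I \subset (0, \pi/2)$. Using $u_t \times u_t = v_t \times v_t = 0$ and $u_t \times v_t = -v_t \times u_t = 1$, the cross product reduces cleanly to
\[
(\mathbf{x}_{K^*}(t) - \mathbf{x}_K(t)) \times \mathbf{x}_K'(t) = \Delta h(t) \, i_K(t) + \Delta h(t + \pi/2) \, i_K(t + \pi/2).
\]
Integrating over $I$ and substituting $s = t + \pi/2$ in the second summand yields
\[
\int_I \Delta h(t) \, \iota_K(dt) + \int_{I + \pi/2} \Delta h(s) \, \iota_K(ds) = \left<h_{K^*} - h_K, \iota_K\right>_{I \cup (I + \pi/2)},
\]
which matches the first term of the claimed formula.

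I do not expect a serious obstacle; the heart of the argument is the identification $\mathbf{x}_K'(t) = -i_K(t + \pi/2) u_t + i_K(t) v_t$, which translates the geometric curve-area variation into an integral against $\iota_K$. The only care required is verifying the hypotheses of \Cref{thm:variation-curve}, namely that $\mathbf{x}_K|_I$ lies in $C^{\mathrm{BV}}[I]$ and that the convex-linear embedding $K \mapsto \mathbf{x}_K|_I$ intertwines the directional derivative on $\mathcal{K}^{\mathrm{i}}$ with the one on $C^{\mathrm{BV}}[I]$; both points are immediate from \Cref{pro:rotating-hallway-parts} and \Cref{pro:cap-nondegenerate-continuity}.
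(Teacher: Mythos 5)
Your proposal is correct and matches the paper's proof: both apply \Cref{thm:variation-curve} to $\mathbf{x}_K|_I$ and then rewrite the resulting integral by expanding $\mathbf{x}_{K^*}-\mathbf{x}_K$ and $\mathbf{x}_K'$ in the $(u_t,v_t)$ frame, identifying the coefficients as $h_{K^*}-h_K$ and $i_K$ via \Cref{pro:rotating-hallway-parts} and \Cref{def:i-cap}. The only cosmetic difference is that you compute the cross product via $u_t\times v_t=1$ directly, while the paper decomposes $\mathbf{x}_K'$ first; the substance is identical.
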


\begin{proof}
Apply \Cref{thm:variation-curve} to \(\mathbf{x} := \mathbf{x}_K|_I\) and \(\mathbf{x}^* := \mathbf{x}_{K^*}|_I\). Then compute
\begin{align*}
& \phantom{{}={}} \int_{t \in I} (\mathbf{x}^*(t) - \mathbf{x}(t))  \times d\mathbf{x} (t) \\
& = \int_{t \in I} (\mathbf{x}^*(t) - \mathbf{x}(t))  \times \left( \left<\mathbf{x}'(t), u_t\right> u_t + \left<\mathbf{x}'(t), v_t\right> v_t \right) dt \\
& = \int_{t \in I} \left<\mathbf{x}^*(t) - \mathbf{x}(t), v_t\right> i_K(\pi/2 + t) \, dt + \int_{t \in I} \left<\mathbf{x}^*(t) - \mathbf{x}(t), u_t\right> i_K(t) \, dt \\
& = \left< h_{K^*} - h_K, \iota_K \right>_{I \cup (I + \pi/2)}
\end{align*}
using the \Cref{def:i-cap} of \(i_K\) and \(\iota_K\) to finish the proof.
\end{proof}

\begin{theorem}

Let \(I := [\varphi^\mathrm{R}, \varphi^\mathrm{L}]\). Assume that \((K, B, D) \in \mathcal{L}\) satisfies the constraints \(X_B = \mathbf{x}_K^\mathrm{R}\) and \(Y_D = \mathbf{x}_K^\mathrm{L}\). Then the directional derivative of \(\mathcal{Q}\) at \((K, B, D)\) in the direction towards arbitrary \((K^*, B^*, D^*) \in \mathcal{L}\) is
\begin{align*}
& D\mathcal{Q}(K, B, D; K^*, B^*, D^*) = \\
& \phantom{{}={}} \left< h_{K^*} - h_K, \sigma_K \right>_{[0, \pi]} -
\left< h_{K^*} - h_K, \iota_K \right>_{I \cup (I + \pi/2)} \\
& \phantom{{}={}} + 
\left< \breve{h}_{B^*} - \breve{h}_B, \breve{\sigma}_B \right>_{(\varphi^\mathrm{R}, \pi/2)} + 
\left< \breve{h}_{D^*} - \breve{h}_D, \breve{\sigma}_D \right>_{(\pi/2, \pi/2 + \varphi^{\mathrm{L}})}.
\end{align*}

\label{thm:variation-a2}
\end{theorem}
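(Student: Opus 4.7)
The strategy is straightforward bookkeeping: apply linearity of the directional derivative operator $D$ to the six summands of $\mathcal{Q}(K, B, D)$ in \Cref{def:upper-bound-q}, evaluate each term with the variation formula appropriate to its type, and then watch the boundary corrections telescope.

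Concretely, I would compute as follows. First, $D|K|(K; K^*) = \langle h_{K^*} - h_K, \sigma_K\rangle_{[0,\pi]}$ by \Cref{thm:variation-convex-body}, using that $\sigma_K$ is supported on $[0,\pi]$ for $K \in \mathcal{K}^\mathrm{i}$. Next, $\mathcal{J}(\mathbf{d}_D) = \mathcal{J}(\mathbf{u}_D^{3\pi/2,\, 3\pi/2+\varphi^{\mathrm{L}}})$ is handled by \Cref{thm:convex-curve-area-variation}, producing the integral term $\langle \breve{h}_{D^*} - \breve{h}_D, \breve\sigma_D\rangle_{(\pi/2,\, \pi/2+\varphi^{\mathrm{L}})}$ after reindexing via \Cref{def:opposite-surface-area}, plus the boundary corrections $\mathcal{J}(Y_D, Y_{D^*}) - \mathcal{J}(Z_D, Z_{D^*})$; symmetrically for $\mathcal{J}(\mathbf{b}_B)$. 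For the core term, \Cref{thm:variation-inner-corner} gives $-\langle h_{K^*} - h_K, \iota_K\rangle_{I\cup(I+\pi/2)}$ together with corrections $-\mathcal{J}(\mathbf{x}_K^{\mathrm{L}}, \mathbf{x}_{K^*}^{\mathrm{L}}) + \mathcal{J}(\mathbf{x}_K^{\mathrm{R}}, \mathbf{x}_{K^*}^{\mathrm{R}})$. For the two line-segment terms $\mathcal{J}(Y_D, \mathbf{x}_K^{\mathrm{L}})$ and $\mathcal{J}(\mathbf{x}_K^{\mathrm{R}}, X_B)$, I would invoke \Cref{thm:variation-segment}, noting the hypothesis $Y_D = \mathbf{x}_K^{\mathrm{L}}$ and $X_B = \mathbf{x}_K^{\mathrm{R}}$ makes these degenerate ($p = q$), so the first bracketed term of \Cref{thm:variation-segment} vanishes and only $\mathcal{J}(\mathbf{x}_K^{\mathrm{L}}, \mathbf{x}_{K^*}^{\mathrm{L}}) - \mathcal{J}(Y_D, Y_{D^*})$ and $\mathcal{J}(X_B, X_{B^*}) - \mathcal{J}(\mathbf{x}_K^{\mathrm{R}}, \mathbf{x}_{K^*}^{\mathrm{R}})$ survive.

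The next step is to assemble the six contributions and observe the cancellations. Each boundary point appears in exactly two correction terms with opposite signs: $\mathcal{J}(Y_D, Y_{D^*})$ from $\mathcal{J}(\mathbf{d}_D)$ cancels its counterpart from $\mathcal{J}(Y_D, \mathbf{x}_K^{\mathrm{L}})$; the two occurrences of $\mathcal{J}(\mathbf{x}_K^{\mathrm{L}}, \mathbf{x}_{K^*}^{\mathrm{L}})$ cancel across $\mathcal{J}(Y_D, \mathbf{x}_K^{\mathrm{L}})$ and $-\mathcal{J}(\mathbf{x}_K|_I)$; analogously on the right side through $\mathbf{x}_K^{\mathrm{R}}$ and $X_B$. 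After this telescoping the only surviving correction terms are $-\mathcal{J}(Z_D, Z_{D^*}) + \mathcal{J}(W_B, W_{B^*})$. To finish, I would use that $Z_D = v_D^+(3\pi/2)$ and $Z_{D^*}$ both lie on $l_D(3\pi/2) = l(\pi/2, 0)$ by (4) of \Cref{lem:right-left-body}, hence collinear with the origin, so $\mathcal{J}(Z_D, Z_{D^*}) = 0$ by \Cref{pro:curve-area-line-segment-colinear}; an identical argument using (2) of \Cref{lem:right-left-body} kills $\mathcal{J}(W_B, W_{B^*})$.

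There is no real obstacle here beyond careful accounting; the hypothesis $X_B = \mathbf{x}_K^{\mathrm{R}}$ and $Y_D = \mathbf{x}_K^{\mathrm{L}}$ is exactly what is needed to force the corner terms to cancel, and the linearity properties of $\mathcal{J}(p,q)$ under colinearity with the origin are what make the two remaining collinear brackets disappear. The only place I would pause to double-check is the sign conventions in \Cref{thm:convex-curve-area-variation} (endpoints $b$ vs.\ $a$) and the reindexing between $\sigma$ on $[3\pi/2, 3\pi/2+\varphi^{\mathrm{L}}]$ and $\breve\sigma$ on $[\pi/2, \pi/2+\varphi^{\mathrm{L}}]$, since a misplaced sign would break the telescope. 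The resulting identity matches the claimed formula exactly.
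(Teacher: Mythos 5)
Your proposal is correct and matches the paper's proof step for step: the same six variation theorems are applied to the six summands of $\mathcal{Q}$, the same telescoping cancellation of the bracketed boundary corrections occurs, and the same observation that the leftover $\mathcal{J}(Z_D, Z_{D^*})$ and $\mathcal{J}(W_B, W_{B^*})$ terms vanish by collinearity with the origin (which the paper relegates to a footnote) closes the argument. You even correctly identify where the hypotheses $X_B = \mathbf{x}_K^\mathrm{R}$ and $Y_D = \mathbf{x}_K^\mathrm{L}$ enter, namely to kill the quadratic part of the two segment-variation terms via the $p=q$ degeneracy in \Cref{thm:variation-segment}.
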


\begin{proof}
Apply the following theorems to each term of \Cref{def:upper-bound-q}.

\begin{enumerate}
\def\labelenumi{\arabic{enumi}.}
\tightlist
\item
  \Cref{thm:variation-convex-body} on \(|K|\).
\item
  \Cref{thm:convex-curve-area-variation} on \(\mathcal{J}\left( \mathbf{d}_D \right) = \mathcal{J}\left( \mathbf{u}_D^{3\pi/2, 3\pi/2 + \varphi^\mathrm{L}} \right)\).
\item
  \Cref{thm:variation-segment} on \(\mathcal{J} \left( Y_D, \mathbf{x}_K^\mathrm{L} \right)\).
\item
  \Cref{thm:variation-inner-corner} on \(- \mathcal{J}\left( \mathbf{x}_K|_{[\varphi^\mathrm{R}, \varphi^\mathrm{L}]} \right)\).
\item
  \Cref{thm:variation-segment} on \(\mathcal{J} \left( \mathbf{x}_K^\mathrm{R}, X_B \right)\).
\item
  \Cref{thm:convex-curve-area-variation} on \(\mathcal{J}\left( \mathbf{b}_B \right) = \mathcal{J} \left( \mathbf{u}_B^{\pi + \varphi^\mathrm{R}, 3\pi/2} \right)\).
\end{enumerate}

Sum up all results from the theorems. The differences appearing in square brackets come from the matching endpoints, so they telescope and zeros in total.\footnote{Except for the endpoints \(W_B, W_{B^*}, Z_D\), and \(Z_{D^*}\) on the \(x\)-axis, but their cross product also zeroes out.} The remaining terms from (3) and (5) are zero by the assumptions \(X_B = \mathbf{x}_K^\mathrm{R}\) and \(Y_D = \mathbf{x}_K^\mathrm{L}\). The remaining terms from (1), (2), (4), and (6) correspond to the right-hand side in the claimed equality.
\end{proof}

We finally calculate the directional derivative at Gerver’s sofa.

\begin{theorem}

Let \(K := \mathcal{C}(G)\) be the cap of Gerver’s sofa \(G\). Let \(B := B_K\) and \(D := D_K\). Take arbitrary \((K^*, B^*, D^*) \in \mathcal{L}\). Then we have
\[
D \mathcal{Q}(K, B, D; K^*, B^*, D^*) \leq 0.
\]

\label{thm:variation-a2-gerver}
\end{theorem}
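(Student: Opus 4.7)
The plan is to plug Gerver's cap into the formula from \Cref{thm:variation-a2} and watch everything telescope into two manifestly non-positive integrals. First, observe that the hypotheses $X_B = \mathbf{x}_K^\mathrm{R}$ and $Y_D = \mathbf{x}_K^\mathrm{L}$ of \Cref{thm:variation-a2} hold for $(K, B_K, D_K)$ by (2) of \Cref{thm:gerver-left-right}, so writing $\Delta h_K := h_{K^*} - h_K$, $\Delta \breve{h}_B := \breve{h}_{B^*} - \breve{h}_B$, $\Delta \breve{h}_D := \breve{h}_{D^*} - \breve{h}_D$, we obtain
\[
D\mathcal{Q} = \left<\Delta h_K, \sigma_K\right>_{[0,\pi]} - \left<\Delta h_K, \iota_K\right>_{I \cup (I + \pi/2)} + \left<\Delta \breve{h}_B, \breve{\sigma}_B\right>_{(\varphi^\mathrm{R}, \pi/2)} + \left<\Delta \breve{h}_D, \breve{\sigma}_D\right>_{(\pi/2, \pi/2 + \varphi^\mathrm{L})},
\]
with $I = [\varphi^\mathrm{R}, \varphi^\mathrm{L}]$.

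Next I would expand the first integral by splitting $[0, \pi]$ along the intervals $J_1, \dots, J_{10}$ of \Cref{def:interval-j} and substituting the identities of \Cref{thm:upper-bound-q-gerver}: $\sigma_K = 0$ on $J_1, J_{10}$, $\sigma_K = \iota_K$ on $J_2 \cup J_3 \cup J_8 \cup J_9$, and $\sigma_K = \iota_K + \breve{\sigma}_B$, $\breve{\sigma}_B$, $\breve{\sigma}_D$, $\iota_K + \breve{\sigma}_D$ on $J_4, J_5, J_6, J_7$ respectively. Since $J_2 \cup J_3 \cup J_4$ and $J_7 \cup J_8 \cup J_9$ are (up to null sets) exactly $I$ and $I + \pi/2$, all $\iota_K$-contributions cancel the second term of $D\mathcal{Q}$ exactly, leaving
\[
D\mathcal{Q} = \left<\Delta h_K + \Delta \breve{h}_B, \breve{\sigma}_B\right>_{J_4 \cup J_5} + \left<\Delta h_K + \Delta \breve{h}_D, \breve{\sigma}_D\right>_{J_6 \cup J_7},
\]
where I also use that $\breve{\sigma}_B$ is supported on $[t_3, t_5)$ and $\breve{\sigma}_D$ on $(t_0, t_2] + \pi/2$ by \Cref{pro:measure-translation}, so the Gerver tail measures coincide with their restrictions to $J_4 \cup J_5$ and $J_6 \cup J_7$ inside the intervals appearing in \Cref{thm:variation-a2}.

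To finish I would invoke the linear constraints defining $\mathcal{L}$ in \Cref{def:cap-tail-space} together with the equality conditions of (3) of \Cref{thm:gerver-left-right}. On $[t_3, t_5] \subseteq [\varphi^\mathrm{R}, \pi/2]$ one has $h_K(t) + \breve{h}_B(t) = 1 \geq h_{K^*}(t) + \breve{h}_{B^*}(t)$, hence $\Delta h_K + \Delta \breve{h}_B \leq 0$ on $J_4 \cup J_5$; symmetrically $\Delta h_K + \Delta \breve{h}_D \leq 0$ on $J_6 \cup J_7 \subseteq [\pi/2, \pi/2 + \theta]$. Since $\breve{\sigma}_B, \breve{\sigma}_D$ are non-negative Borel measures, both integrals are $\leq 0$, giving $D\mathcal{Q} \leq 0$.

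The main obstacle is bookkeeping: verifying that each $J_i$-piece of $\sigma_K$ lands exactly where the other measures in the formula can absorb it, and that the supports of $\breve{\sigma}_B, \breve{\sigma}_D$ arising from Gerver's sofa sit inside the intervals where the $\mathcal{L}$-constraints achieve equality, so that the pointwise sign $\Delta h_K + \Delta \breve{h}_B \leq 0$ actually holds on the full support of $\breve{\sigma}_B$ (and similarly for $D$). Everything else is routine algebra, so once the cancellation pattern is checked the conclusion follows, and combining with \Cref{thm:upper-bound-concave} and \Cref{thm:quadratic-variation} then completes the proof that Gerver's sofa is the global maximizer of $\mathcal{Q}$ and hence of sofa area.
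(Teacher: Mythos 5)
Your proposal is correct and takes essentially the same route as the paper. Like the paper's proof, you apply \Cref{thm:variation-a2} (justified by (2) of \Cref{thm:gerver-left-right}), expand $\left<\Delta h_K, \sigma_K\right>$ over $J_1, \dots, J_{10}$ using \Cref{thm:upper-bound-q-gerver}, and close with the equalities/inequalities $h_K + \breve{h}_B = 1 \geq h_{K^*} + \breve{h}_{B^*}$ on the $B$-side (and symmetrically for $D$) from (3) of \Cref{thm:gerver-left-right} and (2), (4) of \Cref{def:cap-tail-space}. Your observation that the $\iota_K$-mass of $\sigma_K$ over $J_2 \cup J_3 \cup J_4$ and $J_7 \cup J_8 \cup J_9$ is exactly $I \cup (I + \pi/2)$ up to null sets, and hence the $\iota_K$ contributions telescope against the second term, is a slightly cleaner packaging than the paper's eight-case table, but it is mathematically the same accounting. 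The one small citation to tidy: the fact that $\breve{\sigma}_B$ restricted to $(\varphi^\mathrm{R}, \pi/2)$ is supported on $[t_3, t_5)$ (and similarly for $\breve{\sigma}_D$) is not directly a statement of \Cref{pro:measure-translation}, which only identifies the density on that interval; what actually forces the vanishing on $(\varphi^\mathrm{R}, t_3)$ is the degeneracy $v_{B_K}^+(\pi + \varphi^\mathrm{R}) = v_{B_K}^{\pm}(\pi + t_3)$ from (2) of \Cref{thm:gerver-left-right} together with \Cref{pro:surface-area-measure-side-length}. You already flag this as the main bookkeeping point, so the logic is sound.
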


\begin{proof}
By (2) of \Cref{thm:gerver-left-right}, we can use \Cref{thm:variation-a2} to calculate the directional derivative. Split each term of \Cref{thm:variation-a2} into integrals over each interval \(J_i\) in \Cref{def:interval-j} as the following table. Note that the integrals on singleton \(\left\{ \pi/2 \right\}\) is zero because the differences of support functions are zero at \(t = \pi/2, 3\pi/2\) ((1) of \Cref{def:cap} and (2) and (4) of \Cref{lem:right-left-body}).

\begin{longtable}[]{@{}
  >{\raggedright\arraybackslash}m{(\columnwidth - 16\tabcolsep) * \real{0.2500}}
  >{\raggedright\arraybackslash}m{(\columnwidth - 16\tabcolsep) * \real{0.0938}}
  >{\raggedright\arraybackslash}m{(\columnwidth - 16\tabcolsep) * \real{0.0938}}
  >{\raggedright\arraybackslash}m{(\columnwidth - 16\tabcolsep) * \real{0.0938}}
  >{\raggedright\arraybackslash}m{(\columnwidth - 16\tabcolsep) * \real{0.0938}}
  >{\raggedright\arraybackslash}m{(\columnwidth - 16\tabcolsep) * \real{0.0938}}
  >{\raggedright\arraybackslash}m{(\columnwidth - 16\tabcolsep) * \real{0.0938}}
  >{\raggedright\arraybackslash}m{(\columnwidth - 16\tabcolsep) * \real{0.0938}}
  >{\raggedright\arraybackslash}m{(\columnwidth - 16\tabcolsep) * \real{0.0938}}@{}}
\toprule\noalign{}
\begin{minipage}[b]{\linewidth}\raggedright
\end{minipage} & \begin{minipage}[b]{\linewidth}\raggedright
\(J_1\)
\end{minipage} & \begin{minipage}[b]{\linewidth}\raggedright
\(J_2 \cup J_3\)
\end{minipage} & \begin{minipage}[b]{\linewidth}\raggedright
\(J_4\)
\end{minipage} & \begin{minipage}[b]{\linewidth}\raggedright
\(J_5\)
\end{minipage} & \begin{minipage}[b]{\linewidth}\raggedright
\(J_6\)
\end{minipage} & \begin{minipage}[b]{\linewidth}\raggedright
\(J_7\)
\end{minipage} & \begin{minipage}[b]{\linewidth}\raggedright
\(J_8 \cup J_9\)
\end{minipage} & \begin{minipage}[b]{\linewidth}\raggedright
\(J_{10}\)
\end{minipage} \\
\midrule\noalign{}
\endhead
\bottomrule\noalign{}
\endlastfoot
\(\left< h_{K^*} - h_K, \sigma_K \right>\) & O & O & O & O & O & O & O & O \\
\(- \left< h_{K^*} - h_K,\iota_K \right>\) & & O & O & & & O & O & \\
\(\left< \breve{h}_{B^*} - \breve{h}_B, \breve{\sigma}_B \right>\) & & & O & O & & & & \\
\(\left< \breve{h}_{D^*} - \breve{h}_D, \breve{\sigma}_D \right>\) & & & & & O & O & & \\
\end{longtable}

For each interval, the sum of integrals in \Cref{thm:variation-a2} on the interval \(J_i\) is non-positive as follows. For the \(i\)’th interval, we start with the \(i\)’th equality on measures in \Cref{thm:upper-bound-q-gerver}.

\begin{enumerate}
\def\labelenumi{\arabic{enumi}.}
\tightlist
\item
  As \(\sigma_K = 0\) on \(J_1\), the sum is zero.
\item
  As \(\sigma_K = \iota_K\) on \(J_2 \cup J_3\), the sum is zero.
\item
  As \(\sigma_K = \breve{\sigma}_B + \iota_K\) on \(J_4\), the sum equates to \(\left< h_{K^*} - h_K + h_{B^*} - h_B, \breve{\sigma}_B \right>\). We have \(h_K + h_{B} = 1\) on \(J_4 \cup J_5\) by (3) of \Cref{thm:gerver-left-right}, and \(h_{K^*} + h_{B^*} \leq 1\) on \(J_4 \cup J_5\) by (1) of \Cref{lem:right-left-body}. This with \(\breve{\sigma}_B \geq 0\) shows that the sum should be nonnegative.
\item
  As \(\sigma_K = \breve{\sigma}_B\) on \(J_5\), the sum equates to \(\left< h_{K^*} - h_K + h_{B^*} - h_B, \breve{\sigma}_B \right>\). Proceed as (3) above.
\item
  As \(\sigma_K = \breve{\sigma}_D\) on \(J_6\), the sum equates to \(\left< h_{K^*} - h_K + h_{D^*} - h_D, \breve{\sigma}_D \right>\). Proceed as (6) below.
\item
  As \(\sigma_K = \breve{\sigma}_D + \iota_K\) on \(J_7\), the sum equates to \(\left< h_{K^*} - h_K + h_{D^*} - h_D, \breve{\sigma}_D \right>\). We have \(h_K + h_{D} = 1\) on \(J_6 \cup J_7\) by (3) of \Cref{thm:gerver-left-right}, and \(h_{K^*} + h_{D^*} \leq 1\) on \(J_6 \cup J_7\) by (3) of \Cref{lem:right-left-body}. This with \(\breve{\sigma}_D \geq 0\) shows that the sum should be nonnegative.
\item
  As \(\sigma_K = \iota_K\) on \(J_8 \cup J_9\), the sum is zero.
\item
  As \(\sigma_K = 0\) on \(J_1\), the sum is zero.
\end{enumerate}

Summing up (1)-(8) above, the directional derivative is non-positive as desired.
\end{proof}

\begin{corollary}

Let \(K := \mathcal{C}(G)\) be the cap of Gerver’s sofa \(G\). Then the triple \((K, B_K, D_K) \in \mathcal{L}\) attains the maximum area of \(\mathcal{Q} : \mathcal{L} \to \mathbb{R}\).

\label{cor:gerver-max-cap}
\end{corollary}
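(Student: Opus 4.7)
The plan is to show this is a direct consequence of the three main results assembled earlier in this chapter, with essentially no additional work. By \Cref{pro:upper-bound-q-quadratic}, the functional $\mathcal{Q}$ is quadratic on the convex domain $\mathcal{L}$, and by \Cref{thm:upper-bound-concave}, it is concave on $\mathcal{L}$. The framework built in \Cref{sec:convex-domain}, specifically \Cref{thm:quadratic-variation}, reduces global maximality of a concave quadratic functional to non-positivity of the directional derivative at the candidate point.

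First I would set $K := \mathcal{C}(G)$, $B := B_K$, $D := D_K$, and note that $(K, B, D) \in \mathcal{L}$ by \Cref{thm:cap-tail-extension} (which applies since $K \in \mathcal{K}^{\mathrm{i}}$ by \Cref{thm:cap-space-special}). Then I would invoke \Cref{thm:variation-a2-gerver}, which states precisely that the directional derivative
\[
D\mathcal{Q}(K, B, D; K^*, B^*, D^*) \leq 0
\]
for every $(K^*, B^*, D^*) \in \mathcal{L}$. That is, the linear functional $D\mathcal{Q}(K, B, D; -) : \mathcal{L} \to \mathbb{R}$ is non-positive.

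Finally, I would apply \Cref{thm:quadratic-variation} to the concave quadratic functional $\mathcal{Q}$ on the convex domain $\mathcal{L}$: the condition that $D\mathcal{Q}(K, B, D; -) \leq 0$ is both necessary and sufficient for $(K, B, D)$ to be a global maximizer of $\mathcal{Q}$. This concludes the proof. There is no genuine obstacle here since all the heavy lifting is done by the preceding results; the only thing to verify is the bookkeeping that the hypotheses of \Cref{thm:quadratic-variation} are met, namely quadraticity, concavity, and non-positivity of the directional derivative at the Gerver point, each of which is cited verbatim from an earlier theorem in this chapter.
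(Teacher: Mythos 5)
Your proposal is correct and takes essentially the same approach as the paper, which simply cites \Cref{thm:variation-a2-gerver} and \Cref{thm:quadratic-variation}. You spell out the bookkeeping (quadraticity from \Cref{pro:upper-bound-q-quadratic}, concavity from \Cref{thm:upper-bound-concave}, membership in $\mathcal{L}$ from \Cref{thm:cap-tail-extension}) that the paper leaves implicit, but the underlying argument is identical.
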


\begin{proof}
By \Cref{thm:variation-a2-gerver} and \Cref{thm:quadratic-variation}.
\end{proof}

We finally prove the optimality of Gerver’s sofa \(G\) by assembling the pieces.

\begin{proof}[Proof of \Cref{thm:main}]
By \Cref{thm:rotation-angle-simple-bound}, \Cref{thm:limiting-maximum-sofa}, and \Cref{thm:angle}, some balanced maximum sofa \(S^*\) with rotation angle \(\pi/2\) attains the maximum area of a moving sofa. Let \(K^*\) be the cap of \(S^*\), then we have \(K^* \in \mathcal{K}^\mathrm{i}\) by \Cref{thm:cap-space-special}, and \((K^*, B_{K^*}, D_{K^*}) \in \mathcal{L}\) by \Cref{thm:cap-tail-extension}. We also have \(\left| S^* \right| = \mathcal{A}(K^*) \leq \mathcal{Q}(K^*, B_{K^*}, D_{K^*})\) by \Cref{thm:sofa-area-functional} and \Cref{thm:upper-bound-q}.

Let \(K := \mathcal{C}(G)\) be the cap of Gerver’s sofa. By \Cref{cor:gerver-max-cap} and \Cref{thm:upper-bound-q-gerver-match}, we have \(\mathcal{Q}(K^*, B_{K^*}, D_{K^*}) \leq \mathcal{Q}(K, B_K, D_K) = |G|\). Summarizing, we have \(|S^*| \leq |G|\) and Gerver’s sofa \(G\) attains the maximum area.
\end{proof}

\begin{remark}

The choice of constants \(\varphi^\mathrm{R}\) and \(\varphi^\mathrm{L}\) different from \((\varphi^\mathrm{R}, \varphi^\mathrm{L}) := (\varphi, \pi/2 - \varphi)\) of Gerver’s sofa will also give a valid upper bound \(\mathcal{Q} : \mathcal{L} \to \mathbb{R}\) of the sofa area. However, for such a \(\mathcal{Q}\), the maximizer \((K, B, D)\) of \(\mathcal{Q}\) does not give a valid moving sofa (that is, not in the embedding \(\mathcal{K}^\mathrm{i} \to \mathcal{L}\) of \Cref{thm:cap-tail-extension}) because the ends of the tails \(\mathbf{d}_D\), \(\mathbf{b}_B\) and core \(\mathbf{x}_K|_{[\varphi^\mathrm{R}, \varphi^\mathrm{L}]}\) do not match. The choice \((\varphi^\mathrm{R}, \varphi^\mathrm{L}) := (\varphi, \pi/2 - \varphi)\) ensures that the maximizer \((K, B, D)\) of \(\mathcal{Q}\) is in the image of \(\mathcal{K}^\mathrm{i} \to \mathcal{L}\) and gives back the moving sofa \(G\).

\label{rem:quadratic-upper-bound}
\end{remark}

\appendix
\chapter{Table of Symbols}
\label{sec:table-of-symbols}
\begin{longtable}[]{@{}
  >{\raggedright\arraybackslash}m{(\columnwidth - 4\tabcolsep) * \real{0.1522}}
  >{\raggedright\arraybackslash}m{(\columnwidth - 4\tabcolsep) * \real{0.6087}}
  >{\raggedright\arraybackslash}m{(\columnwidth - 4\tabcolsep) * \real{0.2391}}@{}}
\toprule\noalign{}
\begin{minipage}[b]{\linewidth}\raggedright
Symbol
\end{minipage} & \begin{minipage}[b]{\linewidth}\raggedright
Meaning
\end{minipage} & \begin{minipage}[b]{\linewidth}\raggedright
Location
\end{minipage} \\
\midrule\noalign{}
\endhead
\bottomrule\noalign{}
\endlastfoot
\(\lvert X \rvert\) & Area of \(X \subseteq \mathbb{R}^2\) & \Cref{def:area} \\
\(\overline{X}, X^\circ, \partial X\) & Topological closure, interior, and boundary of \(X \subseteq \mathbb{R}^2\) & \Cref{def:topological-operations} \\
\(L, H_L, V_L\) & The right-angled hallway of unit width and its horizontal/vertical sides & \Cref{def:hallway} \\
\(a_L, b_L, c_L, d_L\), \(\vec{b}_L, \vec{d}_L\) & Walls of the hallway \(L\) & \Cref{def:hallway-parts} \\
\(\mathbf{x}_L, \mathbf{y}_L\) & Inner and outer corner of the hallway \(L\) & \Cref{def:hallway-parts} \\
\(S\) & Moving sofa & \Cref{def:moving-sofa} \\
\(G\) & Gerver’s Sofa & \Cref{sec:introduction} \\
\(\alpha_{\max}\) & Maximum area of a moving sofa & \Cref{sec:introduction} \\
\(\omega\) & Rotation angle of a sofa & \Cref{def:rotation-angle} \\
\(H\) & Horizontal strip \([0, 1] \times \mathbb{R}\) & \Cref{def:strips} \\
\(V, V_\omega\) & Vertical strip \(\mathbb{R} \times [0, 1]\), and its counterclockwise rotation by \(\omega\) at the origin & \Cref{def:strips} \\
\(P_\omega\) & Parallelogram of rotation angle \(\omega\) & \Cref{def:parallelogram} \\
\(O\) & Origin \((0, 0)\) & \Cref{def:parallelogram} \\
\(o_\omega\) & Upper-right vertex of \(P_\omega\) & \Cref{def:parallelogram} \\
\(R_t\) & Rotation of \(\mathbb{R}^2\) at the origin by a counterclockwise angle \(t\) & \Cref{def:rotation-map} \\
\(u_t, v_t\) & Orthogonal unit vectors \((\cos t, \sin t)\) and \((-\sin t, \cos t)\) of angle \(t\) & \Cref{def:frame} \\
\(l(t, h)\) & Line with normal angle \(t\) and distance \(h\) from the origin & \Cref{def:line} \\
\(H_{\pm}(t, h)\), \(H_{\pm}^\circ(t, h)\) & Half-plane with normal angle \(t\) and distance \(h\) from the origin & \Cref{def:half-plane} \\
\(K\) & A cap or a planar convex body & \Cref{def:convex-body} \\
\(\mathcal{K}\) & Space of all planar convex bodies & \Cref{def:convex-body} \\
\(h_K(t)\) & Supporting function of a planar convex body \(K\) & \Cref{def:support-function} \\
\(l_K(t)\) & Supporting line of a planar convex body \(K\) with normal angle \(t\) & \Cref{def:supporting-line-half-plane} \\
\(H_K(t)\) & Supporting half-plane of a planar convex body \(K\) with normal angle \(t\) & \Cref{def:supporting-line-half-plane} \\
\(v_K^{\pm}(t)\) & Vertices of a planar convex body \(K\) & \Cref{def:convex-body-vertex} \\
\(v_K(a, b)\) & Intersection \(l_K(a) \cap l_K(b)\) & \Cref{def:convex-body-tangent-lines-intersection} \\
\(e_K(t)\) & Edge (face) of a planar convex body \(K\) & \Cref{def:convex-body-vertex} \\
\(\mathcal{H}^1\) & Hausdorff measure of dimension one on \(\mathbb{R}^2\) & \Cref{def:hausdorff-measure} \\
\(\sigma_K\) & Surface area measure of a planar convex body \(K\) & \Cref{def:surface-area-measure} \\
\(d_{\text{H}}\) & Hausdorff distance between two convex bodies & \Cref{def:hausdorff-distance} \\
\(L_S(t), L_K(t)\) & Supporting hallway of set \(S\) or cap \(K\) & \Cref{def:tangent-hallway} \\
\(a_K(t), b_K(t)\), \(c_K(t), d_K(t)\), \(\vec{b}_K(t), \vec{d}_K(t)\) & Walls of supporting hallway \(L_K(t)\) & \Cref{def:rotating-hallway-parts} \\
\(\mathbf{x}_K(t), \mathbf{y}_K(t)\) & Inner and outer corner of supporting hallway \(L_K(t)\) & \Cref{def:rotating-hallway-parts} \\
\(\mathcal{K}^\mathrm{c}_\omega\) & Space of all caps with rotation angle \(\omega\) & \Cref{def:cap-space} \\
\(\mathcal{K}^\mathrm{c}\) & Space of all caps with rotation angle \(\omega = \pi/2\) & \Cref{def:cap-space-right-angle} \\
\(\mathcal{K}^\mathrm{i}\) & Space of all caps with rotation angle \(\pi/2\) satisfying injectivity condition & \Cref{def:cap-space-special} \\
\(\mathcal{K}^\mathrm{c}_\Theta\) & Space of polygon caps with angle set \(\Theta\) & \Cref{def:angled-cap-space} \\
\(\mathcal{K}^\mathrm{t}_\Theta\) & Space of all translations of polygon caps with angle set \(\Theta\) & \Cref{def:cap-trans} \\
\(\mathcal{H}_\Theta\) & Space of generalized support functions with angle set \(\Theta\) & \Cref{def:height-space} \\
\(\mathcal{I}(S)\) & The intersection/monotone sofa arising from a moving sofa \(S\) in standard position & \Cref{def:monotonization} \\
\(\mathcal{C}(S)\) & The cap of a moving sofa & \Cref{def:cap-sofa} \\
\(\delta K\) & Upper boundary of a cap \(K\) & \Cref{def:upper-boundary-of-cap} \\
\(\mathcal{N}(K)\) & The niche of a cap & \Cref{def:niche} \\
\(T_K(t)\) & Wedge of angle \(t\) from cap \(K\) & \Cref{def:wedge} \\
\(W_K(t), Z_K(t)\) & Left and right endpoints of the wedge \(T_K(t)\) & \Cref{def:wedge-endpoints} \\
\(w_K(t), z_K(t)\) & Wedge gap of a cap \(K\) & \Cref{def:wedge-side-lengths} \\
\(A_K^\pm(t), C_K^\pm(t)\) & Vertices of a cap \(K\) & \Cref{def:cap-vertices} \\
\(\Theta\) & Angle set & \Cref{def:angle-set} \\
\(\Theta_{\omega, n}\) & Uniform angle set of \(n\) intervals and rotation angle \(\omega\) & \Cref{def:uniform-angle-set} \\
\(\Theta_n\) & Uniform angle set of \(n\) intervals and rotation angle \(\omega = \pi/2\) & \Cref{def:right-angle-set} \\
\(\tau_K\) & Edge length of the niche \(\mathcal{N}_\Theta(K)\) of polygon cap \(K\) & \Cref{def:polyline-length} \\
\(k_0, m_0\) & Real-valued functions used in the proof of injectivity condtion. & \Cref{def:magic-function} \\
\(f^{\pm}_K(t), g^{\pm}_K(t)\) & Arm lengths; distance from \(\mathbf{y}_K(t)\) to \(A^\pm_K(t)\) and \(C^\pm_K(t)\) respectively & \Cref{def:cap-tangent-arm-length} \\
\(\mathrm{d} f\) & Lebesgue–Stieltjes measure of \(f\) & \Cref{def:lebesgue-stieltjes} \\
\(\mathcal{V}, c_\lambda\) & Convex domain and its barycentric operation & \Cref{def:convex-spaces} \\
\(D f\) & Directional derivative of quadratic functional \(f\) on a convex domain & \Cref{def:convex-space-directional-derivative} \\
\(\theta, \varphi\) & Angle constants determining Gerver’s sofa & \Cref{def:gerver-constants} \\
\(\varphi^\mathrm{R}, \varphi^\mathrm{L}\) & Constants \(\varphi^\mathrm{R} := \varphi\) and \(\varphi^\mathrm{L} := \pi/2 - \varphi\) & \Cref{def:gerver-constants} \\
\(\mathcal{L}\) & Space of triples \((K, B, D)\) of convex bodies extending \(\mathcal{K}^\mathrm{i}\) & \Cref{def:cap-tail-space} \\
\(\mathcal{Q}\) & Upper bound of sofa area on the space \(\mathcal{L}\) & \Cref{def:upper-bound-q} \\
\(\mathcal{J}(\mathbf{x})\) & Curve area functional & \Cref{def:curve-area-functional} \\
\(\mathcal{J}(p, q)\) & Curve area functional from point \(p\) to \(q\) & \Cref{def:curve-area-line-segment} \\
\(\mathbf{u}_K^{a, b}\) & Convex curve segment of the boundary of \(K\) & \Cref{def:convex-curve} \\
\(\mathcal{M}_K(a, b; \mathbf{z})\) & The area of a Mamikon region bounded by \(\mathbf{u}_K^{a, b}\) and \(\mathbf{z}:[a, b] \to \mathbb{R}\). & \Cref{def:mamikon} \\
\(t_0, \dots, t_5\) & The angles \(0 = t_0 < \dots < t_5 = \pi/2\) used in Gerver’s sofa \(G\) & \Cref{def:gerver-intervals} \\
\(I_1, \dots, I_5\) & The intervals \(I_i = [t_{i-1}, t_i]\) & \Cref{def:gerver-intervals} \\
\(\mathbf{A}, \mathbf{B}, \mathbf{C}, \mathbf{D}, \mathbf{x}\) & The boundaries of Gerver’s sofa \(G\) & \Cref{def:gerver-abcd} \\
\end{longtable}

\backmatter
\printbibliography

\end{document}